\newtheorem{theorem}{Theorem}[section]
\newtheorem{proposition}[theorem]{Proposition}
\newtheorem{conjecture}[theorem]{Conjecture}
\newtheorem{corollary}[theorem]{Corollary}
\newtheorem{lemma}[theorem]{Lemma}
\theoremstyle{definition}
\newtheorem{remark}[theorem]{Remark}
\newtheorem{example}[theorem]{Example}
\newtheorem{definition}[theorem]{Definition}
\newtheorem{question}[theorem]{Question}
\def\CC{\mathbb{C}}
\def\TT{\mathbb{T}}
\def\RR{\mathbb{R}}
\def\PP{\mathbb{P}}
\def\ZZ{\mathbb{Z}}
\def\QQ{\mathbb{Q}}
\newcommand{\slmc}[1]{{SL}(#1,\mathbb{C})}
\title[SL(2,Z)-matrixizations of Generalized Markov numbers]{SL(2,Z)-matrixizations of Generalized Markov numbers}
\author{Yasuaki Gyoda}
\address[Yasuaki Gyoda]{Graduate School of Mathematical Sciences, The University of Tokyo, 3-8-1 Komaba Meguro-ku Tokyo 153-8914, Japan}
\email{gyoda-yasuaki@g.ecc.u-tokyo.ac.jp}
\author{Shuhei Maruyama}
\address[Shuhei Maruyama]{School of Mathematics and Physics, College of Science and Engineering, Kanazawa University, Kakuma-machi, Kanazawa, Ishikawa, 920-1192, Japan}
\email{smaruyama@se.kanazawa-u.ac.jp}
\author{Yusuke Sato}
\address[Yusuke Sato]{Department of Mathematics, Osaka Institute of Technology, 5-16-1 Ohmiya, Asahi-ku, Osaka, 535-8585, Japan}
\email{yusuke.sato@oit.ac.jp}
\keywords{Markov number, Cohn matrix, 4-punctured sphere, snake graph, perfect matching, toric surface}
\subjclass[2020]{11D25, 11A55, 05C70, 14M25, 14B05}
\begin{document}
\begin{abstract}
 For $k\geq 0$, a $k$-generalized Markov number is an integer which appears in some positive integer solution to the $k$-generalized Markov equation $x^2 + y^2 + z^2 + k(yz + zx + xy) = (3 + 3k)xyz$. In this paper, we discuss a combinatorial structure of generalized Markov numbers. To investigate this structure in detail, we use two families of matrices: the $k$-generalized Cohn matrices and the $k$-Markov-monodromy matrices, which are elements of $SL(2, \mathbb{Z})$ whose $(1,2)$-entries are $k$-generalized Markov numbers. We show that these two families of matrices recover the tree structure of the positive integer solutions to the generalized Markov equation, and we give geometric  interpretations and a combinatorial interpretation of $k$-generalized Markov numbers. As an application, we provide a computation algorithm of classical Markov number from a one-dimensional dynamical viewpoint. Moreover, we clarify a relation between $k$-generalized Markov numbers and toric surface singularities via continued fractions. 
\end{abstract}
\maketitle
\tableofcontents

\section{Introduction and main results}
\subsection{Background}
In this paper, we will deal with the following equation involving a fixed non-negative integer $ k $,
\[
x^2 + y^2 + z^2 + k(yz + zx + xy) = (3 + 3k)xyz,
\]
and the structure of its positive integer solutions. This equation is called the \emph{$ k $-generalized Markov equation}, and its positive integer solutions are referred to as \emph{$ k $-generalized Markov triples}, and positive integers appearing in these solutions are called \emph{$ k $-generalized Markov numbers}. In this paper, we abbreviate them as the \emph{$ k $-GM equation}, \emph{$ k $-GM triples}, and \emph{$ k $-GM numbers}.

When $ k = 0 $, i.e., $ x^2 + y^2 + z^2 = 3xyz $, it is known as the \emph{Markov equation} and was discovered by Markov around 1880 from the perspective of Diophantine approximation theory \cites{Mar1,Mar2}. This equation and its positive integer solutions are studied from the aspect of hyperbolic geometry and combinatorics. For details, see \cite{aig}. In recent years, there has been a lot of attempts to solve the following conjecture regarding Markov numbers. 
\begin{conjecture}[\cite{frobenius}]\label{conj:markov}
 For any Markov number $b$, there is a unique Markov triple $(a,b,c)$ up to order such that $\max\{a,b,c\}=b$.
 \end{conjecture}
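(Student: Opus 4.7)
This is the celebrated uniqueness conjecture of Frobenius, open since 1913, so a complete solution is certainly out of reach here; the most I can attempt is to outline the line of attack most natural in the context of the present paper.

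The plan is to exploit the bijection between Markov triples and Cohn matrices (as developed later in the paper) to translate the conjecture into an injectivity statement inside $SL(2,\mathbb{Z})$. Every Markov triple $(a,b,c)$ with $\max=b$ sits at a node of the Markov binary tree rooted at $(1,1,1)$ and is reached by a unique sequence of Vieta jumps; this sequence is exactly the data of a word in the two generators that index Cohn matrices, and the $(1,2)$-entry of the resulting matrix is $b$. Uniqueness of the triple with maximum $b$ is therefore equivalent to the following statement: if two such words produce Cohn matrices with the same $(1,2)$-entry, then they are related by the obvious involution swapping $a$ and $c$.

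First I would make the tree structure explicit and dispense with the easy half: along any single descending branch, each Vieta jump strictly increases the maximum coordinate, so two distinct triples on the same branch never share their maximum. The substantive problem is to rule out ``collisions'' between triples lying on different branches. Here one would combine the Cohn-matrix presentation, the continued-fraction expansion of $b$ (which will reappear later via toric singularities), and the cluster-algebra/snake-graph interpretation --- which is where the $k$-deformations of the paper enter and may suggest new invariants that persist across branches --- in order to reduce a putative collision to an equation among Christoffel-type words in $SL(2,\mathbb{Z})$.

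The hard part, and the reason the conjecture has resisted proof for more than a century, will be controlling such equations. Even the partial results (uniqueness when $b$ is a prime power, due to Baragar, Button, Schmutz, and Zhang) rely on divisibility arguments specific to the arithmetic of $b$ that do not extend to composite values. In matricial language, the $(1,2)$-entry of a long product of Cohn generators is a highly nonlinear function of the word, and distinguishing two distinct words that produce the same $(1,2)$-entry appears to require genuinely new Diophantine input well beyond the structural results developed in this paper.
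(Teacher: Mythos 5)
You have not proved anything here, but neither does the paper: the statement is the Frobenius uniqueness conjecture, which the paper explicitly records as open (it only cites partial results for prime powers and proves weaker statements elsewhere), so there is no ``paper's own proof'' to compare against, and your proposal quite rightly stops at an outline. Judged as an outline, it is broadly consistent with the strategy the paper actually pursues: the paper reformulates Conjecture~\ref{conj:markov} as an injectivity statement for a matrix/fraction labelling (it is stated to be equivalent to Conjecture~\ref{conj:LPT}, i.e.\ distinctness of the upper entries of the second components in $\mathrm{LP}\mathbb{T}(\ell)$, and the analogous statement for $k$-GM numbers is equivalent to injectivity of $t\mapsto m_{k,t}$), and the ``easy half'' you describe --- no collisions along a single tree, so that the maximum determines the triple within one branch structure --- corresponds to results the paper does prove, such as Theorem~\ref{thm:Cohn-distinct} and Theorem~\ref{thm:p/p'<q/q'<r/r'} (distinctness of second components inside a fixed tree $\mathrm{GC}\mathbb{T}(k,\ell)$, resp.\ $\mathrm{LP}\mathbb{T}(\ell)$).

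The genuine gap, which you acknowledge, is exactly the step you label ``ruling out collisions between different branches'': in the paper's language this is the passage from distinctness of the full second components (vectors, matrices, or fixed points) to distinctness of their $(1,2)$-entries or numerators alone, and no amount of repackaging via Cohn matrices, $k$-MM triples, snake graphs, or Christoffel words is known to supply it. One small caution about your sketch: the monotonicity of the maximum along a branch shows only that a single branch contains no repeated maximum; two distinct triples sharing the same maximum $b$ would necessarily lie on different branches, so the ``easy half'' does not reduce the problem at all --- the entire content of the conjecture is the cross-branch collision problem, and that is precisely what remains open both in your outline and in the paper.
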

This is conjectured in 1913, but it is still open. When $b$ is a prime power, it is proved by Baragar \cite{baragar}, Button \cite{button}, Schmutz \cite{schmutz}, Zhang \cite{zhang}, Lang and Tan \cite{lang-tan} and others using various methods. One of the weak versions of Conjecture \ref{conj:markov}, the \emph{Aigner conjecture} \cite{aig}*{Conjecture 10.11}, has been proved in the last few years by McShane \cite{mcshane} and Lee, Li, Rabideau, and Schiffler \cite{llrs}.

The $k$-GM equation, where $k\neq 0$, was first considered for $ k = 1 $ by the first author \cite{gyo21}. Furthermore, it has been studied by Banaian and Sen \cite{bansen}. For general $k$, the first and second authors \cite{gyo-maru} study the symmetric case in the broader class
\begin{align}\label{eq:k1k2k3}
x^2 + y^2 + z^2 + k_1yz + k_2zx + k_3xy = (3 + k_1 + k_2 + k_3)xyz,
\end{align}
which was introduced by the first author and Matsushita \cite{gyomatsu}. The $(k_1,k_2,k_3)$-type equations \eqref{eq:k1k2k3} are defined as a generalization of the Markov equation in the context of cluster algebra theory. It is known that the positive integer solutions of the Markov equation (the case where $k_1=k_2=k_3=0$) possesses an operation, known as the \emph{Vieta jumping}, which generates another positive integer solution from one. This operation can be described as a \emph{mutation} of a certain \emph{cluster algebra} (see \cite{bbh}). Derived from this, the $(k_1, k_2, k_3)$-type equations are given in \cite{gyomatsu} as a class of equations where positive integer solutions has the Vieta jumping described by mutations of \emph{generalized cluster algebras}, which is a broader class than cluster algebras. 

Subsequently, the first and second authors found that several facts about the Markov numbers can be extended to the $k$-GM case. One of them is a partial solution to the following conjecture, which is a generalization of Conjecture \ref{conj:markov}:
\begin{conjecture}[\cite{gyo-maru}*{Conjecture 1.8}]\label{conj:gen-Markov}
 Let $k\in \mathbb Z_{\geq 0}$. For any $k$-GM number $b$, there is a unique $k$-GM triple $(a,b,c)$ up to order such that $\max\{a,b,c\}=b$.
 \end{conjecture}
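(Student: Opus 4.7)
The plan is to exploit the matrix interpretation promised in the abstract: each $k$-GM triple should correspond, via the tree structure, to a specific $k$-generalized Cohn matrix (or $k$-Markov-monodromy matrix) in $SL(2,\ZZ)$ whose $(1,2)$-entry realizes the $k$-GM number in question. Uniqueness of the representation with $b$ as the maximum then translates into the statement that no integer can arise as the $(1,2)$-entry of two distinct such ``maximal'' matrices, which is a concrete arithmetic question about $SL(2,\ZZ)$-orbits of a specific submonoid.

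First I would set up the binary tree $T_k$ of $k$-GM triples, rooted at the fundamental solution, where the two children of a triple $(a,b,c)$ with $b=\max$ are obtained by Vieta jumping on $a$ and on $c$. Using the Markov-monodromy matrices, each vertex of $T_k$ is labeled by an element of $SL(2,\ZZ)$, and the label at a child differs from its parent by right-multiplication by a fixed pair of generators. The essential structural lemma is monotonicity of the $(1,2)$-entry along any infinite downward branch: once this is in place, each $k$-GM number $b$ can be the maximum at only finitely many vertices, reducing the conjecture to a finite check at those candidates.

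Next, for the special case that $b$ is a prime power $p^e$, I would mimic the $k=0$ arguments of Baragar \cite{baragar}, Button \cite{button}, Schmutz \cite{schmutz}, Zhang \cite{zhang}, and Lang--Tan \cite{lang-tan}. Two distinct maximal triples $(a,b,c)$ and $(a',b,c')$ produce Cohn matrices $M,M'$ with the same $(1,2)$-entry $b$. Working modulo $p^e$ and using the Cohn congruences satisfied by the diagonal entries, one expects $M$ and $M'$ to land in the same conjugacy class in $SL(2,\ZZ/p^e\ZZ)$, and the tree structure then forces the underlying triples to coincide. The main technical task is to verify that the classical Cohn congruences survive the $k$-deformation; this should follow from the explicit $SL(2,\ZZ)$-factorization of the Markov-monodromy matrices together with $k$-deformed recursions for their $(1,1)$- and $(2,2)$-entries analogous to the classical ones.

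The hard part is the composite case. Since Conjecture \ref{conj:markov} has been open since 1913, its $k$-analogue is at least as difficult, and I would not expect a full proof from these methods. A more realistic intermediate goal is an Aigner-type ordering statement along distinguished subtrees of $T_k$, approachable through the snake graph and perfect matching interpretations promised in the abstract, following the strategy of McShane \cite{mcshane} and Lee--Li--Rabideau--Schiffler \cite{llrs}. The main obstacle to extending their methods is the appearance of the extra parameter $k$ in the underlying generalized cluster algebra, which deforms the snake graph combinatorics and will require a careful rewriting of the associated Laurent expansion arguments before any monotonicity of $k$-GM numbers can be read off.
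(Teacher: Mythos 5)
There is a fundamental mismatch here: the statement you were asked to prove is an open conjecture, and the paper does not prove it either. Conjecture \ref{conj:gen-Markov} is the $k$-generalized uniqueness conjecture (for $k=0$ it is the classical Frobenius/Markov uniqueness conjecture, Conjecture \ref{conj:markov}, open since 1913); the paper merely records it, cites \cite{gyo-maru} for the case where $b$ is \emph{prime} (not prime power), and then proves statements \emph{around} it: the equivalence of the conjecture with injectivity of the fraction labeling $t\mapsto m_{k,t}$, the equivalence (for $k=0$) with Conjecture \ref{conj:LPT} about the tree $\mathrm{LP}\mathbb{T}(\ell)$, and the weak version Theorem \ref{thm:p/p'<q/q'<r/r'} that the second components (as vectors, not just their upper entries) of vertices of $\mathrm{LP}\mathbb{T}(\ell)$ are distinct. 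Your proposal is likewise not a proof: you concede at the end that the composite case is out of reach, so at best you are outlining a research program, and a program that leaves the generic case untouched cannot establish the statement.

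Beyond the global issue, two specific steps in your outline would not go through as written. First, the claim that monotonicity of the $(1,2)$-entry along branches ``reduces the conjecture to a finite check at those candidates'' is misleading: for each fixed $b$ it is trivially true that only finitely many triples have maximum at most $b$, but uniqueness must hold uniformly for \emph{every} $k$-GM number $b$, and the finiteness observation gives no mechanism for ruling out a coincidence of maxima at two different vertices; that coincidence is exactly the hard content of the conjecture. Second, your prime-power step assumes that the Cohn-type congruences used by Baragar, Button, Schmutz, Zhang, and Lang--Tan ``survive the $k$-deformation'' — but this is precisely what would need to be proved, and the paper's own matrixizations show that the deformation is nontrivial: the trace condition becomes $\mathrm{tr}(P)=(3k+3)p_{12}-k$ and the multiplicative relation is twisted by the matrix $S$, so the classical congruence arguments do not transfer automatically, and even \cite{gyo-maru} only obtains the case $b$ prime (via a different route), not prime powers. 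As it stands the proposal contains no complete argument for any case of the conjecture, so it cannot be accepted as a proof of the statement.
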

By generalizing the method in the case of the Markov numbers, Conjecture \ref{conj:gen-Markov} is proved to be correct for any $k$ when $b$ is prime (\cite{gyo-maru}*{Theorem 1.6}). This has led to increasing expectations that the theory of the Markov equations can be organized from the perspective of the $k$-GM equations.

We will construct a theory on positive integer solutions of the $ k $-GM equation in this paper, mainly from combinatorial and geometric perspectives, including the well-known case $ k = 0 $. In below, we will introduce the main results.

\subsection{Two ways of $SL(2,\mathbb Z)$-matrixization of $k$-GM triples}
We consider the following two operations, the \emph{Vieta jumpings}:
\begin{align}\label{intro:vieta-jumping}
(a,b,c) \mapsto \left(a, \frac{a^2+kab+b^2}{c}, b\right), \quad (a,b,c) \mapsto \left(b, \frac{b^2+kbc+c^2}{a}, c\right).
\end{align}
By repeatedly applying these operations from $(1,k+2,1)$, any $k$-GM triple such that the second component is strictly maximum can be obtained (\cite{gyomatsu}). The following is  the tree of $0$-GM triples.
\begin{align*}
\begin{xy}(0,0)*+{(1,2,1)}="1",(25,16)*+{(2,5,1)}="2",(25,-16)*+{(1,5,2)}="3", (50,24)*+{(5,13,1)}="4",(50,8)*+{(2,29,5)}="5",(50,-8)*+{(5,29,2)}="6",(50,-24)*+{(1,13,5)}="7", (85,28)*+{(13,34,1)\cdots}="8",(85,20)*+{(5,194,13)\cdots}="9",(85,12)*+{(29,433,5)\cdots}="10",(85,4)*+{(2,169,29)\cdots}="11",(85,-4)*+{(29,169,2)\dots}="12",(85,-12)*+{(5,433,29)\cdots}="13",(85,-20)*+{(13,194,5)\cdots}="14",(85,-28)*+{(1,34,13)\cdots}="15",\ar@{-}"1";"2"\ar@{-}"1";"3"\ar@{-}"2";"4"\ar@{-}"2";"5"\ar@{-}"3";"6"\ar@{-}"3";"7"\ar@{-}"4";"8"\ar@{-}"4";"9"\ar@{-}"5";"10"\ar@{-}"5";"11"\ar@{-}"6";"12"\ar@{-}"6";"13"\ar@{-}"7";"14"\ar@{-}"7";"15"
\end{xy}
\end{align*}
In contrast, we consider the following two operations, the inverse of the above Vieta jumpings: 
\begin{align}\label{intro:inverse-vieta-jumping}
(a,b,c) \mapsto \left(a, c, \frac{a^2+kac+c^2}{b}\right) ,\quad (a,b,c) \mapsto \left(\frac{a^2+kac+c^2}{b},a,c\right).
\end{align}
By repeatedly applying these operations from $(1,1,1)$, any $k$-GM triple such that the second component is not strictly maximum can be obtained. The following tree is for $k=0$ case.
\begin{align*}
\begin{xy}(0,0)*+{(1,1,1)}="1",(25,16)*+{(2,1,1)}="2",(25,-16)*+{(1,1,2)}="3", (50,24)*+{(5,2,1)}="4",(50,8)*+{(2,1,5)}="5",(50,-8)*+{(5,1,2)}="6",(50,-24)*+{(1,2,5)}="7", (85,28)*+{(13,5,1)\cdots}="8",(85,20)*+{(5,1,13)\cdots}="9",(85,12)*+{(29,2,5)\cdots}="10",(85,4)*+{(2,5,29)\cdots}="11",(85,-4)*+{(29,5,2)\cdots}="12",(85,-12)*+{(5,2,29)\cdots}="13",(85,-20)*+{(13,1,5)\cdots}="14",(85,-28)*+{(1,5,13)\cdots}="15",\ar@{-}"1";"2"\ar@{-}"1";"3"\ar@{-}"2";"4"\ar@{-}"2";"5"\ar@{-}"3";"6"\ar@{-}"3";"7"\ar@{-}"4";"8"\ar@{-}"4";"9"\ar@{-}"5";"10"\ar@{-}"5";"11"\ar@{-}"6";"12"\ar@{-}"6";"13"\ar@{-}"7";"14"\ar@{-}"7";"15"
\end{xy}
\end{align*}

Cohn introduced the \emph{Cohn triple} in \cite{cohn}, which enriches tree structures of (0-generalized) Markov triples. This triple consists of elements in $SL(2,\mathbb Z)$, whose $(1,2)$-entries form a Markov triple. This concept can be regarded as a $2\times 2$ matrixization of the Markov triple. As a further generalization, the first and second authors introduced the \emph{$k$-generalized Cohn triple} in \cite{gyo-maru}. It is defined as a triple $(P,Q,R)\in SL(2,\mathbb Z)^3$ which satisfies
\begin{itemize}\setlength{\leftskip}{-15pt}
     \item $(\mathrm{tr} (P),\mathrm{tr}(Q),\mathrm{tr}(R))=((3+3k)p_{12}-k,(3+3k)q_{12}-k,(3+3k)r_{12}-k)$, where $p_{12},q_{12},r_{12}$ are the $(1,2)$-entries of $P,Q,R$ respectively,
     \item $Q=PR-S$, where $S=\begin{bmatrix}
         k&0\\3k^2+3k&k
     \end{bmatrix}$,
     \item $(p_{12},q_{12},r_{12})$ is a $k$-GM triple.
\end{itemize}
In this paper, we abbreviate the $k$-generalized Cohn triple as the \emph{$k$-GC triple}. The following theorem is essentially proved in \cite{gyo-maru}:
\begin{theorem}[Proposition \ref{pr:all-cohn-triple}, Corollary \ref{cor:inverse-all-cone}]\label{thm:intro-cohn}The following statements hold:
\begin{itemize} \setlength{\leftskip}{-15pt}
\item[(1)] 
Every $k$-GC triple $(P,Q,R)$ with $q_{12}> \max\{p_{12},r_{12}\}$ is obtained by applying 
\begin{equation}\label{eq:k-gen-cohn-jumping}
    (P,Q,R)\mapsto (P,PQ-S,Q)\quad \text{or} \quad (P,Q,R)\mapsto (Q,QR-S,R)
\end{equation}
successively to a $k$-GC triple with $(p_{12},q_{12},r_{12})=(1,k+2,1)$. Moreover, the transformations of $(1,2)$-entries in \eqref{eq:k-gen-cohn-jumping} coincide with the operations \eqref{intro:vieta-jumping} of the $k$-GM triples. 
\item[(2)] 
Every $k$-GC triple $(P,Q,R)$ with $q_{12}\leq \max\{p_{12},r_{12}\}$ is obtained by applying 
\begin{equation}\label{eq:inverse-k-gen-cohn-jumping}
    (P,Q,R)\mapsto (P,R,P^{-1}(R+S))\quad \text{or} \quad (P,Q,R)\mapsto ((P+S)R^{-1},P,R)
\end{equation}
successively to a $k$-GC triple with $(p_{12},q_{12},r_{12})=(1,1,1)$. Moreover, the transformations of $(1,2)$-entries in \eqref{eq:inverse-k-gen-cohn-jumping} coincide with the operations \eqref{intro:inverse-vieta-jumping} of the $k$-GM triples. 
\end{itemize}
\end{theorem}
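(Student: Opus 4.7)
The plan is to prove each part by induction along the binary tree of $k$-GM triples, splitting the inductive step into a \emph{closure} claim (the matrix operations send $k$-GC triples to $k$-GC triples and induce the scalar Vieta jumpings on the $(1,2)$-entries) and a \emph{completeness} claim (every $k$-GC triple above a non-base $k$-GM triple is the image of a unique predecessor). The scalar trees are already known from the introduction to be generated from $(1,k+2,1)$ by \eqref{intro:vieta-jumping} and from $(1,1,1)$ by \eqref{intro:inverse-vieta-jumping}, so the induction skeleton is in place; all the content is in promoting these scalar facts to matrices.

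For closure in (1) applied to $(P, Q, R) \mapsto (P, PQ - S, Q)$, the matrix identity required of the image, namely $PQ - S = P \cdot Q - S$, is tautological, and likewise for the other branch $(P, Q, R) \mapsto (Q, QR - S, R)$. The substantive content is the $(1,2)$-entry identity
\[
(PQ - S)_{12} \;=\; \frac{p_{12}^{\,2} + k\, p_{12} q_{12} + q_{12}^{\,2}}{r_{12}}
\]
together with its trace companion $\mathrm{tr}(PQ - S) = (3+3k)(PQ - S)_{12} - k$. I would prove both by expanding $(PQ)_{12} = p_{11} q_{12} + p_{12} q_{22}$, substituting the trace hypotheses on $P$ and $Q$, using $\det P = \det Q = \det R = 1$, and invoking the given relation $Q = PR - S$ to reduce everything to $p_{12}, q_{12}, r_{12}$. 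The bilinear trace identity $\mathrm{tr}(PQ) + \mathrm{tr}(PQ^{-1}) = \mathrm{tr}(P)\mathrm{tr}(Q)$ valid on $SL(2,\mathbb{Z})$, together with Cayley--Hamilton, keeps the bookkeeping manageable. This explicit $(1,2)$-entry computation is the main technical obstacle.

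For completeness in (1), I would induct on $q_{12}$: when $q_{12} = k+2$ the only $k$-GM triple with strictly maximal middle entry is $(1, k+2, 1)$, so $(P, Q, R)$ lies over the base. Otherwise the scalar inverse Vieta jumping produces a strictly smaller predecessor triple, and the matrix predecessor is read directly off $Q = PR - S$ as $(P, R, P^{-1}(R+S))$ (or its mirror $((P+S)R^{-1}, P, R)$), depending on which side of the tree $(P, Q, R)$ lies. Integrality is immediate because $P^{-1}$ has integer entries; a short computation using $\mathrm{tr}(R) = (3+3k) r_{12} - k$ and $\det R = 1$ yields $\det(R + S) = 1$, so $P^{-1}(R+S) \in SL(2,\mathbb{Z})$, and the trace condition for the recovered matrix is established by the identities used in the closure step. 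Part (2) then follows by the mirror-image argument: the operations in \eqref{eq:inverse-k-gen-cohn-jumping} are the set-theoretic inverses of those in \eqref{eq:k-gen-cohn-jumping}, so the $(1,2)$-entry and trace identities transfer by reversing the closure computation, and completeness becomes an induction on $q_{12}$, now decreasing, with base case $(1,1,1)$.
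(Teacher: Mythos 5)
Your overall route — a closure step (the jumpings preserve $k$-GC triples and act on $(1,2)$-entries by the Vieta jumpings) plus completeness by descending the tree of $k$-GM triples — is exactly the mechanism behind the paper's proof, which delegates the closure statements to Theorem \ref{thm:cohn-markov} and Lemma \ref{Cohn-Markov2} and the completeness to Proposition \ref{pr:all-cohn-triple} and Corollary \ref{cor:inverse-all-cone} (all quoted from \cite{gyo-maru}). Your computations are feasible and your key identities are right: with Cayley--Hamilton one gets $(PQ-S)_{12}=\mathrm{tr}(P)q_{12}-r_{12}-kp_{12}=(3+3k)p_{12}q_{12}-kp_{12}-kq_{12}-r_{12}$, which equals the Vieta value by the $k$-GM equation, and $\det(R+S)=\det R+k\,\mathrm{tr}(R)+k^{2}-(3k^{2}+3k)r_{12}=1$ does follow from the trace and determinant hypotheses, as you claim. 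One small omission in the closure step: a $k$-GC matrix must lie in $SL(2,\mathbb Z)$, so you also need $\det(PQ-S)=1$; this follows by the same bookkeeping ($\det(PQ-S)=1+k^{2}-k\,\mathrm{tr}(PQ)+(3k^{2}+3k)(PQ)_{12}$ together with $\mathrm{tr}(PQ)=(3+3k)(PQ)_{12}+k$), but it should be stated.

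The genuine flaw is the induction measure you propose for part (2). Passing from a $k$-GC triple $(P,Q,R)$ with $q_{12}\leq\max\{p_{12},r_{12}\}$ to its predecessor under \eqref{eq:inverse-k-gen-cohn-jumping} — which, note, is given by the \emph{forward} maps \eqref{eq:k-gen-cohn-jumping}, namely $(P,PQ-S,Q)$ or $(Q,QR-S,R)$, so what you need there is the forward closure, not a ``reversal'' of it — the entry $q_{12}$ need not decrease. For $k=0$ the vertex $(13,1,5)$ of $\mathrm{M}\mathbb T^{\dag}(0)$ has parent $(1,2,5)$, so the middle entry jumps from $1$ to $2$ along your descent; an induction ``on $q_{12}$, now decreasing, with base case $(1,1,1)$'' does not terminate as stated (many vertices of the inverse tree have middle entry $1$ without being the root). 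The fix is easy and is what the tree structure actually provides: induct on $\max\{p_{12},q_{12},r_{12}\}$ (equivalently on $p_{12}+q_{12}+r_{12}$), which strictly decreases when passing to the parent in $\mathrm{M}\mathbb T^{\dag}(k)$ by Proposition \ref{magnitude}, with base case $(1,1,1)$; by contrast, for part (1) your induction on $q_{12}$ is fine, since the parent's middle entry is $\max\{p_{12},r_{12}\}<q_{12}$. Finally, in both parts you must choose the branch (left or right predecessor) according to whether $p_{12}$ or $r_{12}$ is the larger, so that the predecessor again lies in the class covered by the statement (strictly maximal, resp.\ non-maximal, middle entry) and the induction hypothesis applies; you gesture at this with ``depending on which side of the tree,'' but it is where Propositions \ref{singular-solution} and \ref{magnitude} are actually used and deserves an explicit sentence.
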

In addition to the aforementioned $k$-GC triple, we introduce another $2\times 2$ matrixization. The \emph{$k$-Markov-monodromy triple} $(X,Y,Z)\in SL(2,\mathbb Z)^3$ is defined as a triple satisfying these conditions:
\begin{itemize}\setlength{\leftskip}{-15pt}
     \item $\mathrm{tr} (X)=\mathrm{tr}(Y)=\mathrm{tr}(Z)=-k$,
     \item $XYZ=T$, where $T=\begin{bmatrix}
         -1&0\\3k+3&-1
     \end{bmatrix}$,
     \item $(x_{12},y_{12},z_{12})$ is a $k$-GM triple, where $x_{12},y_{12},z_{12}$ are the $(1,2)$-entries of $X,Y,Z$ respectively.
\end{itemize}
In this paper, we abbreviate this triple as the \emph{$k$-MM triple}. 
The background of the $k$-MM triple is as follows. The following trace identity in $SL(2,\mathbb C)$ holds: 
\[
x^2 + y^2 + z^2 + (ad+bc)x + (bd+ca)y + (cd+ab)z + a^2 + b^2 + c^2 + d^2 + abcd - 4 = xyz,
\] for any $(X,Y,Z)\in SL(2,\mathbb C)$, where $a:=-\mathrm{tr}(X), b:=-\mathrm{tr}(Y), c:=-\mathrm{tr}(Z), d:=-\mathrm{tr}(XYZ), x:=-\mathrm{tr}(YZ), y:=-\mathrm{tr}(ZX), z:=-\mathrm{tr}(XY)$ (for detail, see \cite{luo} or \cite{nana}). The \emph{second Markov equation} 
\[
x^2 + y^2 + z^2 = xyz,
\]
which is an equation closely related to the Markov equation, can be restored by substituting the trace identity with $a=b=c=0, d=2$. Inspired by this, the \emph{$k$-generalized second Markov equation} was defined by setting $a=b=c=k, d=2$, that is,
\begin{align}\label{eq:generalized-second-Markov}
x^2 + y^2 + z^2 + (2k+k^2)(x+y+z) + 2k^3 + 3k^2 = xyz.
\end{align}
It is stated in \cite{gyo-maru} that a triple of traces of a $k$-GC triple $(P,Q,R)$ is a solution to \eqref{eq:generalized-second-Markov}.
By these facts, we expect that there exists a suitable $SL(2,\mathbb C)$-triple $(X,Y,Z)$ satisfying the following conditions for $(P,Q,R)$:
\begin{itemize}\setlength{\leftskip}{15pt}
    \item [(MM-1)]$\mathrm{tr}(P)=-\mathrm{tr}(YZ),\ \mathrm{tr}(Q)=-\mathrm{tr}(ZX),\ \mathrm{tr}(R)=-\mathrm{tr}(XY)$,
    \item [(MM-2)]$\mathrm{tr} (X)=\mathrm{tr}(Y)=\mathrm{tr}(Z)=-k$,
    \item[(MM-3)]  $\mathrm{tr} (XYZ)=-2$.
\end{itemize}
The $k$-MM triple is introduced as a triple satisfying the above conditions. We can see that $(X,Y,Z)$ satisfies the conditions (MM-2) and (MM-3) immediately by definition. An explanation of the fact that this triple has property (MM-1) is deferred to the next subsection. 

Here, we will explain that $k$-MM triples have properties closely resembling those of $k$-GC triples. In fact, $k$-MM triples have the following property, which runs in parallel with Theorem \ref{thm:intro-cohn}: 
\begin{theorem}[Proposition \ref{pr:all-Markov-monodromy-triple}, Corollary \ref{pr:all-Markov-monodromy-triple2}]\label{thm:intro-Markov-monodromy}The following statements hold:
\begin{itemize}\setlength{\leftskip}{-15pt}
\item [(1)]
Every $k$-MM triple $(X,Y,Z)$ with $y_{12}> \max\{x_{12},z_{12}\}$ is obtained by applying 
\begin{equation}\label{eq:k-Markov-monodromy-jumping}
    (X,Y,Z)\mapsto (X,YZY^{-1},Y)\quad \text{or} \quad (X,Y,Z)\mapsto (Y,Y^{-1}XY,Z)
\end{equation}
successively to a $k$-MM triple with $(x_{12},y_{12},z_{12})=(1,1,1)$. Moreover, the transformations of $(1,2)$-entries in \eqref{eq:k-Markov-monodromy-jumping} coincide with the operations \eqref{intro:vieta-jumping} of the $k$-GM triples. 
\item[(2)] Every $k$-MM triple $(X,Y,Z)$ with $y_{12}\leq \max\{x_{12},z_{12}\}$ is obtained by applying 
\begin{equation}\label{eq:inverse-k-Markov-monodromy-jumping}
    (X,Y,Z)\mapsto (X,Z,Z^{-1}YZ)\quad \text{or} \quad (X,Y,Z)\mapsto (XYX^{-1},X,Z)
\end{equation}
successively to a $k$-MM triple with $(x_{12},y_{12},z_{12})=(1,k+2,1)$. Moreover, the transformations of $(1,2)$-entries in \eqref{eq:inverse-k-Markov-monodromy-jumping} coincide with the operations \eqref{intro:inverse-vieta-jumping} of the $k$-GM triples. 
\end{itemize}
\end{theorem}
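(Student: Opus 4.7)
The plan is to establish Theorem \ref{thm:intro-Markov-monodromy} by the same four-step scheme used for Theorem \ref{thm:intro-cohn}. First I would exhibit a specific $k$-MM triple realizing each of the two base cases $(x_{12}, y_{12}, z_{12}) = (1,1,1)$ for part (1) and $(1, k+2, 1)$ for part (2). Second I would verify that the four operations in \eqref{eq:k-Markov-monodromy-jumping} and \eqref{eq:inverse-k-Markov-monodromy-jumping} preserve the $k$-MM property. Third I would show that the induced transformations on the $(1,2)$-entries match the Vieta jumpings \eqref{intro:vieta-jumping} and their inverses \eqref{intro:inverse-vieta-jumping}. Finally I would close with a downward induction on $\max\{x_{12}, y_{12}, z_{12}\}$ to establish exhaustiveness.

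The first two steps should be routine. For the base cases I would write out explicit matrices in $SL(2,\mathbb{Z})$ of trace $-k$ with the prescribed $(1,2)$-entries and check $XYZ = T$ by direct multiplication. Preservation under the conjugation-type operations is then immediate: conjugation leaves the trace unchanged, so for instance $\operatorname{tr}(YZY^{-1}) = \operatorname{tr}(Z) = -k$, and the product is preserved since $X(YZY^{-1})Y = XYZ = T$, with the analogous identity for $(Y, Y^{-1}XY, Z)$. The same argument, read backwards, handles \eqref{eq:inverse-k-Markov-monodromy-jumping}, since these operations are literally the inverses of those in \eqref{eq:k-Markov-monodromy-jumping} after relabeling.

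The substantive step is the third one: showing $(YZY^{-1})_{12} = (x_{12}^2 + k\, x_{12} y_{12} + y_{12}^2)/z_{12}$ and its three analogues. Writing $a = x_{12}, b = y_{12}, c = z_{12}$, the direct plan is to use Cayley--Hamilton on $Y$, which gives $Y^{-1} = -kI - Y$ from $\operatorname{tr}(Y) = -k$ and $\det Y = 1$, to rewrite $YZY^{-1} = -kYZ - YZY$, extract the $(1,2)$-entry, and then eliminate the remaining entries of $Z$ using $Z = Y^{-1}X^{-1}T$ together with the trace and determinant constraints on $X$ and $Y$. The resulting polynomial should collapse to $(a^2 + kab + b^2)/c$ once the $k$-GM equation satisfied by $(a,b,c)$ is applied. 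A potentially cleaner route is to invoke the $SL(2,\mathbb{C})$ trace identity recalled in the paragraph preceding \eqref{eq:generalized-second-Markov}: once property (MM-1) is in place, the trace data of $(X,Y,Z)$ agrees with that of the associated $k$-GC triple, so the Vieta jumping of $(1,2)$-entries under \eqref{eq:k-Markov-monodromy-jumping} can be read off from its already-established counterpart \eqref{eq:k-gen-cohn-jumping} for $k$-GC triples via Theorem \ref{thm:intro-cohn}.

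With the Vieta jumping in hand, exhaustiveness in part (1) becomes a standard induction: for any $k$-MM triple with $y_{12} > \max\{x_{12}, z_{12}\}$, the inverse operation $(X,Y,Z) \mapsto (X, Z, Z^{-1}YZ)$ from \eqref{eq:inverse-k-Markov-monodromy-jumping} yields a new $k$-MM triple whose $(1,2)$-entries are obtained by the inverse Vieta jumping, which strictly decreases the maximum entry by the known tree structure of $k$-GM triples \cite{gyomatsu}. Iterating terminates at a base-case triple with $(1,2)$-entries $(1,1,1)$; part (2) is symmetric, reducing toward $(1, k+2, 1)$. The main obstacle I expect is the algebraic identity of the previous paragraph: although conceptually transparent, the direct matrix computation is awkward due to the interlocking of the three trace constraints with $XYZ = T$, so the most practical route is probably to first nail down the trace correspondence (MM-1) carefully and then transfer \eqref{eq:k-gen-cohn-jumping} to \eqref{eq:k-Markov-monodromy-jumping} through that bridge.
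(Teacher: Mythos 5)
Your plan is correct, but it takes a genuinely different route from the paper. The paper never argues directly on the $k$-MM side: it builds the entrywise bijection $\psi$ (Proposition \ref{pr:Cohn-Markov-monodromy}), shows $\Psi$ carries $k$-MM triples to $k$-GC triples (Proposition \ref{pr:Markov-monodromy-cohn-triple}), proves the intertwining identities $\psi(YZY^{-1})=\psi(X)\psi(Y)-S$ and $\psi(Y^{-1}XY)=\psi(Y)\psi(Z)-S$ (Lemma \ref{lem:psi-cohn}), and then transports the whole statement to the $k$-GC trees, where the Vieta-jumping behaviour and exhaustiveness are already available from \cite{gyo-maru} (Theorem \ref{thm:cohn-markov}, Lemma \ref{Cohn-Markov2}, Proposition \ref{pr:all-cohn-triple}); since $\psi$ fixes $(1,2)$-entries, the ``Moreover'' clauses come for free. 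You instead argue intrinsically: check that \eqref{eq:k-Markov-monodromy-jumping} and \eqref{eq:inverse-k-Markov-monodromy-jumping} preserve the trace and product conditions, prove the entry identity $(YZY^{-1})_{12}=(x_{12}^2+kx_{12}y_{12}+y_{12}^2)/z_{12}$ by direct computation (note that conditions (1)--(2) of Definition \ref{def:Markov-monodromy-triple} are preserved immediately, but condition (3) needs this identity, so your steps 2 and 3 must be interleaved), and finish by descent on the maximal entry using Propositions \ref{singular-solution} and \ref{magnitude}. This works, and the computation is in fact easier than you anticipate: writing $YZY^{-1}=X^{-1}TY^{-1}$ and $z_{12}=(Y^{-1}X^{-1}T)_{12}=x_{12}y_{22}+x_{11}y_{12}$, the identity $z_{12}\,(YZY^{-1})_{12}=x_{12}^2+kx_{12}y_{12}+y_{12}^2$ reduces, after using $\det X=\det Y=1$ and $\mathrm{tr}(X)=\mathrm{tr}(Y)=-k$, to $\mathrm{tr}(XY)=k-(3k+3)z_{12}$, which follows from $XY=TZ^{-1}$ and $\mathrm{tr}(Z)=-k$; so the $k$-GM equation is not needed for the identity at all. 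That generality matters, because it lets you handle the inverse operations (needed both for condition (3) under \eqref{eq:inverse-k-Markov-monodromy-jumping} and for your descent step) without any circularity. As for what each approach buys: the paper's transfer yields, beyond the statement, the parametrized trees $\mathrm{MM}\mathbb T(k,\ell)$, uniqueness of $\ell$ and of the vertex, and Corollary \ref{thm:Markov-monodromy-distinct}, at the price of relying on \cite{gyo-maru}; your argument is self-contained and more elementary but does not produce that finer structure. Your suggested ``cleaner route'' via (MM-1) is the weak point: (MM-1) is only established through $\Phi$ (Corollary \ref{cor:bijection-triples}), whose compatibility with the operations is comparable work, and since $\Phi$ twists the $(1,2)$-entries by $\mu$ (Theorem \ref{thm:phi-psi-1}) it is not a shortcut --- stick with your direct computation.
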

To the best of the authors' knowledge, the definition and tree structure of $k$-MM triples are not known concepts even in the case $k=0$, unlike $k$-GC triples.
\subsection{Relations between $k$-GC triples and $k$-MM triples}
As stated in the previous subsection, the $k$-GC triple and the $k$-MM triple have an analogy. We explicitly provide the correspondences between these two triples, $\Psi$ and $\Phi$, as follows. Let $M(2,\mathbb Z)$ be the set of $2\times 2$ matrices whose entries are integers. First, we consider the following map
$\psi\colon M(2,\mathbb Z) \to M(2,\mathbb Z)$:
\[\psi\colon \begin{bmatrix}
    m_{11}&m_{12}\\m_{21}&m_{22}
\end{bmatrix}\mapsto \begin{bmatrix}
    -m_{11}+m_{12}k-k&m_{12}\\m_{21}-(k+3)m_{11}+k(2k+3)(m_{12}-1) & -m_{22}+(2k+3)m_{12}-k
\end{bmatrix}.\] 
This map is a bijection. More strongly, the following holds:
\begin{theorem}[Proposition \ref{pr:Markov-monodromy-cohn-triple}, Theorem \ref{thm:BT-CT2}]The following statements hold:
\begin{itemize}\setlength{\leftskip}{-15pt}
\item [(1)] the map $\Psi\colon M(2,\mathbb Z)^3 \to M(2,\mathbb Z)^3,\ (X,Y,Z)\mapsto(\psi(X),\psi(Y),\psi(Z))$ induces a bijection from the set of $k$-MM triples to the set of $k$-GC triples,
\item [(2)] the map $\Psi$ is compatible with the opetation \eqref{eq:k-Markov-monodromy-jumping} of $k$-MM triples and  the operation \eqref{eq:k-gen-cohn-jumping} of $k$-GC triples, that is, the following two diagrams commute:  
\[\begin{tikzcd}
	{(X,Y,Z)} &  {(P,Q,R)} \\
	{(X,YZY^{-1},Y)} &  {(P,PQ-S,Q)}
	\arrow["\Psi", maps to, from=1-1, to=1-2]
	\arrow[maps to, from=1-1, to=2-1]
	\arrow[maps to, from=1-2, to=2-2]
	\arrow["\Psi"', maps to, from=2-1, to=2-2]
\end{tikzcd}
\begin{tikzcd}
	{(X,Y,Z)} & {(P,Q,R)} \\
	{(Y,Y^{-1}XY,Z)} & {(Q,QR-S,R).}
	\arrow["\Psi", maps to, from=1-1, to=1-2]
	\arrow[maps to, from=1-1, to=2-1]
	\arrow[maps to, from=1-2, to=2-2]
	\arrow["\Psi"', maps to, from=2-1, to=2-2]
\end{tikzcd}\]
\end{itemize}
\end{theorem}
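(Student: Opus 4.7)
The plan is to reduce Part (1) to elementary properties of the single-matrix map $\psi$, and to derive Part (2) as an immediate consequence of one key algebraic identity established in Part (1).

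The bijectivity of $\Psi$ on $M(2,\mathbb{Z})^3$ reduces to that of $\psi$: since $\psi$ is affine in the entries of $M$ and fixes the $(1,2)$-entry, the other three entries of $M$ can be recovered uniquely from those of $\psi(M)$. For $M \in SL(2,\mathbb{Z})$ with $\mathrm{tr}(M) = -k$, a short computation from the defining formula gives $\mathrm{tr}(\psi(M)) = -\mathrm{tr}(M) + (3k+3)m_{12} - 2k = (3k+3)m_{12} - k$, which matches the trace condition for a $k$-GC triple; a longer but mechanical expansion of $\det\psi(M)$ shows that all cross terms cancel after using $\mathrm{tr}(M) = -k$ and $\det M = 1$, giving $\det\psi(M) = 1$. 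Because $\psi$ fixes the $(1,2)$-entry, the $k$-GM triple condition on the $(1,2)$-entries transfers automatically. What remains for Part (1) is the key matrix identity
\[
XYZ = T \quad\Longleftrightarrow\quad \psi(Y) = \psi(X)\psi(Z) - S
\]
for $X, Y, Z \in SL(2,\mathbb{Z})$ each of trace $-k$.

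This matrix identity is the main obstacle. I would prove the forward implication by writing $Y = X^{-1}TZ^{-1}$ and comparing the four entries of $\psi(Y)$ with those of $\psi(X)\psi(Z) - S$. Using $\det M = 1$ and $\mathrm{tr}(M) = -k$ we have $M^{-1} = -kI - M$, so the inverses on the left-hand side are converted into affine expressions in $M$. After eliminating $x_{22}, z_{22}$ via the trace constraints, both sides become explicit polynomials in the six entries $x_{11}, x_{12}, x_{21}, z_{11}, z_{12}, z_{21}$, and the constant matrix $S$ absorbs precisely the $k$-dependent terms that distinguish $\psi(Y)$ from $\psi(X)\psi(Z)$; the entrywise comparison is then routine algebra. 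The reverse implication holds because $Y$ is uniquely determined by $X, Z$ in both formulations and $\psi$ is a bijection.

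Part (2) follows as a direct corollary of the key identity. For the first diagram, the mutated triple $(X, YZY^{-1}, Y)$ still satisfies $X \cdot (YZY^{-1}) \cdot Y = XYZ = T$, and all three matrices have trace $-k$ because conjugation preserves trace, so it is again a $k$-MM triple; applying the key identity to this new triple immediately yields $\psi(YZY^{-1}) = \psi(X)\psi(Y) - S$, which is precisely the middle entry of the mutated $k$-GC triple $(P, PQ-S, Q)$. The second diagram is handled identically using the factorization $Y \cdot (Y^{-1}XY) \cdot Z = XYZ = T$, so that $(Y, Y^{-1}XY, Z)$ is also a $k$-MM triple and the key identity gives $\psi(Y^{-1}XY) = \psi(Y)\psi(Z) - S$, matching the Cohn mutation $(P, Q, R) \mapsto (Q, QR - S, R)$.
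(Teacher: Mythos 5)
Your approach is correct in substance and, for Part (2), genuinely more economical than the paper's. The paper proves the triple-level equivalence (Proposition \ref{pr:Markov-monodromy-cohn-triple}) only for honest $k$-MM/$k$-GC triples and then needs a separate statement (Lemma \ref{lem:psi-cohn}), established by a second entry-by-entry computation, to obtain the two commutative diagrams: it rewrites $\psi(YZY^{-1})=\psi(X)\psi(Y)-S$ as an identity in $\psi(Y),\psi(Z)$ and verifies entries. You instead isolate the key identity at the level of arbitrary trace-$(-k)$ matrices in $SL(2,\mathbb Z)$ with product $T$, with no $k$-GM hypothesis on the $(1,2)$-entries; since $(X,YZY^{-1},Y)$ and $(Y,Y^{-1}XY,Z)$ again have product $T$ and all traces $-k$, Part (2) follows by re-applying the same identity with no new computation. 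This is legitimate: the paper's own entrywise computation never uses the $k$-GM condition, so the identity does hold at your level of generality, provided you compare all four entries directly as you propose (the paper pins the $(2,1)$-entry via determinants, which requires the common $(1,2)$-entry to be nonzero; in your generality that shortcut is not available, though in every application here $y_{12}\geq 1$). One cosmetic remark: at the moment you invoke the identity you do not yet know that $(X,YZY^{-1},Y)$ is a $k$-MM triple (the $(1,2)$-entry condition is not yet verified), but your argument never uses that, and it follows afterwards from the commuting diagram together with Theorem \ref{thm:cohn-markov}.

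The one step that is too quick is the reverse implication of your key identity. You justify it by saying that $Y$ is uniquely determined by $X,Z$ in both formulations, but to run that argument you would apply the forward implication to $Y':=X^{-1}TZ^{-1}$, and the forward implication requires $\mathrm{tr}(Y')=-k$, which is not among your hypotheses and is not automatic: for $k=0$ and $X=Z=\begin{bmatrix}0&1\\-1&0\end{bmatrix}$ one gets $X^{-1}TZ^{-1}=\begin{bmatrix}1&3\\0&1\end{bmatrix}$ of trace $2$. The gap is fixable inside your own framework: the comparison of the $(1,1)$, $(1,2)$, $(2,2)$ entries of $\psi(X^{-1}TZ^{-1})$ with those of $\psi(X)\psi(Z)-S$ uses no trace condition on the middle matrix, and since $\psi$ recovers $m_{12},m_{11},m_{22}$ from exactly these three entries, the hypothesis $\psi(Y)=\psi(X)\psi(Z)-S$ already forces $\mathrm{tr}(X^{-1}TZ^{-1})=\mathrm{tr}(Y)=-k$; then the forward identity (or the determinant comparison, using $y_{12}\geq 1$ in the $k$-GC setting) gives $Y=X^{-1}TZ^{-1}$, i.e.\ $XYZ=T$. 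The paper avoids the issue differently, by a second direct computation verifying $\psi^{-1}(PR-S)=(\psi^{-1}(P))^{-1}T(\psi^{-1}(R))^{-1}$. With that repair, your proof is complete.
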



We will define another map $\Phi$ as 
\[\Phi \colon M(2,\mathbb Z)^3 \to M(2,\mathbb Z)^3,\ (X,Y,Z)\mapsto(-(YZ)^{-1},-(XZ)^{-1},-(XY)^{-1}).\]
This map gives another relation between $k$-MM triples and $k$-GC triples. 
\begin{theorem}[Corollaries \ref{cor:bijection-triples}, \ref{cor:tree-iso-BT-CTdag}]\label{intro:k-Markov-monodromy-decomp}
The following statements hold: 
\begin{itemize}\setlength{\leftskip}{-15pt}
\item [(1)]
the map $\Phi$ induces a bijection from the set of $k$-MM triples to the set of $k$-GC triples,
\item[(2)] the map $\Phi$ is compatible with the operation \eqref{eq:k-Markov-monodromy-jumping} of $k$-MM triples and the operation \eqref{eq:inverse-k-gen-cohn-jumping} of $k$-GC triples, that is, the following two diagrams commute:  
\[\begin{tikzcd}
	{(X,Y,Z)} &  {(P,Q,R)} \\
	{(X,YZY^{-1},Y)} &  {(P,R,P^{-1}(R+S))}
	\arrow["\Phi", maps to, from=1-1, to=1-2]
	\arrow[maps to, from=1-1, to=2-1]
	\arrow[maps to, from=1-2, to=2-2]
	\arrow["\Phi"', maps to, from=2-1, to=2-2]
\end{tikzcd}
\begin{tikzcd}
	{(X,Y,Z)} & {(P,Q,R)} \\
	{(Y,Y^{-1}XY,Z)} & {((P+S)R^{-1},P,R).}
	\arrow["\Phi", maps to, from=1-1, to=1-2]
	\arrow[maps to, from=1-1, to=2-1]
	\arrow[maps to, from=1-2, to=2-2]
	\arrow["\Phi"', maps to, from=2-1, to=2-2]
\end{tikzcd}\]
\end{itemize}
\end{theorem}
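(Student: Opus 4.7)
The plan is to first establish the compatibility statement (2) by a direct matrix computation, and then deduce the bijection (1) from (2) together with the tree descriptions in Theorems \ref{thm:intro-cohn} and \ref{thm:intro-Markov-monodromy}.

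For (2), I would expand both sides entry by entry. In the first diagram, the three coordinates of $\Phi(X, YZY^{-1}, Y)$ are $-((YZY^{-1})Y)^{-1} = -(YZ)^{-1} = P$, then $-(XY)^{-1} = R$, and finally $-YZ^{-1}Y^{-1}X^{-1}$, so the content reduces to verifying $-YZ^{-1}Y^{-1}X^{-1} = P^{-1}(R+S)$. Using $P^{-1} = -YZ$, the right-hand side computes as $YZY^{-1}X^{-1} - YZS$; the Cayley--Hamilton identity $Z + Z^{-1} = \mathrm{tr}(Z) I = -kI$ on one side and the $k$-MM relation $XYZ = T$ on the other reduce the claim to the $2\times 2$ identity $TS = -kI$, which is immediate from the explicit matrices $T$ and $S$. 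The second diagram is handled analogously: after similar manipulations, the reduction ends at $X + X^{-1} = -kI$, i.e., $\mathrm{tr}(X) = -k$, which again holds by Cayley--Hamilton combined with $S = -kT^{-1}$ and $XYZ = T$.

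For (1), injectivity is immediate: if $\Phi(X,Y,Z) = \Phi(X',Y',Z')$ then $YZ = Y'Z'$, $XZ = X'Z'$, $XY = X'Y'$, and combined with $XYZ = X'Y'Z' = T$ successive cancellation yields $X = X'$, $Y = Y'$, $Z = Z'$. For the bijection, I first verify by direct computation two base cases: that some initial $k$-MM triple with $(x_{12}, y_{12}, z_{12}) = (1,1,1)$ is sent by $\Phi$ to a $k$-GC triple with $(p_{12}, q_{12}, r_{12}) = (1,1,1)$, and similarly that some initial $k$-MM triple with $(1, k+2, 1)$ is sent to a $k$-GC triple with $(1, k+2, 1)$. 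Then, combining (2) with Theorem \ref{thm:intro-Markov-monodromy}(1) and Theorem \ref{thm:intro-cohn}(2), the forward MM-jumpings from the $(1,1,1)$ initial generate all $k$-MM triples with $y_{12} > \max\{x_{12}, z_{12}\}$, and $\Phi$ transports them via inverse GC-jumpings to all $k$-GC triples with $q_{12} \leq \max\{p_{12}, r_{12}\}$; inverting the diagrams in (2) and starting from the $(1, k+2, 1)$ initials covers the complementary subsets, yielding the full bijection.

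The main obstacle is the matrix computation in (2): although each step is elementary, the reductions require careful bookkeeping of Cayley--Hamilton, the $XYZ = T$ constraint, and the identity $TS = -kI$ (equivalently $S = -kT^{-1}$), with the second diagram in particular requiring one more layer of manipulation before the Cayley--Hamilton relation for $X$ can be invoked. The base-case verifications for (1) are routine but each requires an explicit initial triple to be constructed.
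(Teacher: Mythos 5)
Your verification of part (2) is correct, and it is a legitimate shortcut: expanding $\Phi$ on the jumped triple and using Cayley--Hamilton ($Z+Z^{-1}=-kI$, resp.\ $X+X^{-1}=-kI$) together with $XYZ=T$ does reduce both diagrams to the identity $TS=ST=-kI$, which holds. The paper proves the same intertwining as Lemma \ref{lem:inverseBT-CT}, in the slightly more general setting of MM decompositions, using $Q=PR-S$ in the form $ZSX=Y^{-2}+I$ and only the equal-trace condition; your computation is an acceptable substitute when the input is an honest $k$-MM triple. Your injectivity argument for (1), cancelling against $XYZ=T$, is also fine and is more direct than the paper's uniqueness-up-to-sign route.

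The genuine gap is in your base cases for surjectivity: they are false as stated. An initial $k$-MM triple associated with $(1,1,1)$, i.e.\ $(X_{1;\ell},Y_{1;\ell},Z_{1;\ell})$, is sent by $\Phi$ to a $k$-GC triple associated with $(1,k+2,1)$ (it is the root $(P_\ell,Q_\ell,R_\ell)$ of $\mathrm{GC}\mathbb{T}(k,\ell)$; see Lemma \ref{thm:Markov-monodromy-uniqueness-initial2} and the example computation for $k=1$, $\ell=0$), while an initial $k$-MM triple associated with $(1,k+2,1)$ is sent to a $k$-GC triple associated with $(1,1,1)$. On $(1,2)$-entries $\Phi$ induces the involution $\mu$ of Proposition \ref{pr:rho-mor}, not the identity; indeed this is forced by your own diagrams, since a map converting forward Vieta jumps into inverse ones must exchange triples whose middle entry is maximal with triples whose middle entry is not. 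Consequently the alignment your surjectivity argument relies on---MM-$(1,1,1)$ initials mapping onto the GC-$(1,1,1)$ initials from which Theorem \ref{thm:intro-cohn}(2) generates all triples with $q_{12}\leq\max\{p_{12},r_{12}\}$---breaks down: the actual images are the roots $(P_\ell,Q_\ell,R_\ell)$, which have $q_{12}>\max\{p_{12},r_{12}\}$ and are not even in that class. The argument is repairable: by Proposition \ref{cohn-mat-with-111} every GC triple with entries $(1,1,1)$ is some $(P_{1;\ell},Q_{1;\ell},R_{1;\ell})$, and since $Q_{1;\ell}+S=P_{1;\ell}R_{1;\ell}$ this triple is exactly the inverse-jump child $(P_\ell,R_\ell,P_\ell^{-1}(R_\ell+S))$ of the root, so the inverse-jump descendants of your images do exhaust all GC triples with $q_{12}\leq\max$, and symmetrically for the other half (using also the ``moreover'' clauses of Theorem \ref{thm:intro-Markov-monodromy} to keep the lifted triples inside the set of $k$-MM triples). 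But this corrected bookkeeping is precisely what your sketch omits. For comparison, the paper reaches (1) by a different route: it constructs $\Phi^{-1}$ directly as the MM decomposition, proving existence and uniqueness up to sign at the roots via the square-root lemma (Lemma \ref{lem:sqrt-matrix}) and propagating along the trees; your route avoids that machinery, but only works once the base cases are stated correctly.
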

Theorem \ref{intro:k-Markov-monodromy-decomp} (1) implies that $k$-MM triples have the property (MM-1) in the previous subsection. 


Let us compare $\Psi$ and $\Phi$. The map $\Psi$ preserves Vieta jumpings, whereas $\Phi$ transfers Vieta jumpings to the inverses. Furthermore, these two maps have the following relation. 
\begin{theorem}[Corollary \ref{cor:Markov-monodromy-decom-algorithm}]
The composition map $(\Phi\circ\Psi^{-1})^2$ (resp. $(\Psi\circ\Phi^{-1})^2$) induces the identity map on the set of $k$-GC triples (resp. $k$-MM triples).
\end{theorem}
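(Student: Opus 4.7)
The plan is to exploit the Vieta-jumping intertwinings of $\Psi$ and $\Phi$ to reduce the global identity to a single base case, and then to verify that base case by a direct matrix computation. The two displayed identities are equivalent: each of the compositions $\Phi\circ\Psi^{-1}$ and $\Psi\circ\Phi^{-1}$ is the inverse of the other as a bijection, so I set $f:=\Phi\circ\Psi^{-1}$ and concentrate on proving $f^{2}=\mathrm{id}$ on the set of $k$-GC triples.

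First, I will observe that $f$ interchanges the two families of $k$-GC Vieta jumpings. Write $\mu_{GC}$ for the normal jumping \eqref{eq:k-gen-cohn-jumping}, $\mu^{*}_{GC}$ for the inverse jumping \eqref{eq:inverse-k-gen-cohn-jumping}, and $\mu_{MM}$ for the normal $k$-MM jumping \eqref{eq:k-Markov-monodromy-jumping}. By Theorem~\ref{thm:BT-CT2}, $\Psi$ intertwines $\mu_{MM}$ with $\mu_{GC}$, and by Corollary~\ref{cor:tree-iso-BT-CTdag}, $\Phi$ intertwines $\mu_{MM}$ with $\mu^{*}_{GC}$. Composing gives $f\circ\mu_{GC}=\mu^{*}_{GC}\circ f$. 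The defining relation $Q=PR-S$ of $k$-GC triples forces $\mu^{*}_{GC}$ to be the two-sided inverse of $\mu_{GC}$, so also $f\circ\mu^{*}_{GC}=\mu_{GC}\circ f$. Squaring yields $f^{2}\circ\mu_{GC}=\mu_{GC}\circ f^{2}$ and the analogous identity for $\mu^{*}_{GC}$; thus $f^{2}$ commutes with every Vieta-type jumping on $k$-GC triples.

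Second, I reduce to a single base case using the tree structure. By Theorem~\ref{thm:intro-cohn}, every $k$-GC triple is reached from one of the two roots whose $(1,2)$-entries are $(1,k+2,1)$ or $(1,1,1)$ by successive Vieta jumpings, and on $(1,2)$-entries a single application of $\mu^{*}_{GC}$ sends $(1,k+2,1)$ to $(1,1,1)$, joining the two trees into a single $\langle\mu_{GC},\mu^{*}_{GC}\rangle$-orbit. Since $f^{2}$ commutes with both jumpings, $f^{2}=\mathrm{id}$ globally will follow from $f^{2}(T_{0})=T_{0}$ for any one chosen $k$-GC triple $T_{0}$.

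Finally, the base case is an explicit computation on a single concrete triple. I would pick a small root $T_{0}$—say a $k$-GC triple with $(p_{12},q_{12},r_{12})=(1,1,1)$—compute $\Psi^{-1}(T_{0})$ using the explicit inverse of $\psi$, apply $\Phi$ via the identities $-(XY)^{-1},-(YZ)^{-1},-(XZ)^{-1}$, and repeat once more to form $f^{2}(T_{0})$. The main obstacle is the bookkeeping in this final calculation: the matrix inverses appearing in $\Phi$, together with the $k$-dependent formula for $\psi^{-1}$, lead to somewhat intricate expressions, but the structural identities $Q=PR-S$ for $k$-GC triples and $XYZ=T$ for $k$-MM triples collapse many of the products, and a judicious choice of base point keeps the verification tractable.
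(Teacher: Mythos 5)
Your first step is fine: the intertwining $f\circ\mu_{GC}=\mu^{*}_{GC}\circ f$, read branch by branch, is exactly what Lemmas \ref{lem:psi-cohn} and \ref{lem:inverseBT-CT} provide, and on the set of $k$-GC triples the relation $Q=PR-S$ does make each branch of \eqref{eq:inverse-k-gen-cohn-jumping} a two-sided inverse of the corresponding branch of \eqref{eq:k-gen-cohn-jumping}, so $f^{2}$ commutes with all of these operations. The gap is in your second step. There is not ``a'' $k$-GC triple with $(1,2)$-entries $(1,1,1)$ but an infinite family $(P_{1;\ell},Q_{1;\ell},R_{1;\ell})$, $\ell\in\mathbb Z$ (Proposition \ref{cohn-mat-with-111}), and by Proposition \ref{pr:all-cohn-triple} and Corollary \ref{cor:inverse-all-cone} every $k$-GC triple lies in $\mathrm{GC}\mathbb T(k,\ell)\cup\mathrm{GC}\mathbb T^{\dag}(k,\ell)$ for a uniquely determined $\ell$. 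Your observation that one inverse jumping sends the root of $\mathrm{GC}\mathbb T(k,\ell)$ to the root of $\mathrm{GC}\mathbb T^{\dag}(k,\ell)$ only glues the two trees with the \emph{same} $\ell$; it does not show that all $k$-GC triples form a single $\langle\mu_{GC},\mu^{*}_{GC}\rangle$-orbit. (The jumpings can in fact change $\ell$ --- e.g. $(P,Q,R)\mapsto(Q,QR-S,R)$ sends $(P_{1;\ell},Q_{1;\ell},R_{1;\ell})$ to the root of $\mathrm{GC}\mathbb T(k,\ell+k+1)$, since $Q_{1;\ell}=P_{1;\ell+k+1}$ and $R_{1;\ell}=Q_{1;\ell+k+1}$ --- but you neither prove transitivity nor exhibit orbit representatives, so ``check one $T_{0}$'' is unjustified.) To make your scheme work you must verify $f^{2}(P_{1;\ell},Q_{1;\ell},R_{1;\ell})=(P_{1;\ell},Q_{1;\ell},R_{1;\ell})$ for \emph{every} $\ell$ (a parameterized computation, essentially the content of Lemmas \ref{thm:Markov-monodromy-uniqueness-initial} and \ref{thm:Markov-monodromy-uniqueness-initial2}), or relate distinct $\ell$ via the conjugation isomorphisms of Propositions \ref{prop:cohnmatrix-ltol'} and \ref{Markov-monodromy-matrix-ltol'} after determining how $\Psi$ and $\Phi$ transform under that conjugation; moreover the base-case computation itself is only announced in your proposal, not carried out.

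For comparison, the paper gets the corollary with no new computation: by Theorems \ref{thm:phi-psi-1} and \ref{thm:phi-psi-2}, $\Phi\circ\Psi^{-1}$ is the canonical graph isomorphism $\mathrm{GC}\mathbb T(k,\ell)\to\mathrm{GC}\mathbb T^{\dag}(k,-\ell)$ and also $\mathrm{GC}\mathbb T^{\dag}(k,-\ell)\to\mathrm{GC}\mathbb T(k,\ell)$ (similarly for $\Psi^{-1}\circ\Phi$ on the $\mathrm{MM}$ trees), so its square is a canonical graph automorphism of each tree and hence the identity, and every $k$-GC triple lies in one of these trees. All the $\ell$-bookkeeping that your argument is missing is already packaged in those theorems, whose proofs rest on the root identifications of Lemmas \ref{thm:Markov-monodromy-uniqueness-initial} and \ref{thm:Markov-monodromy-uniqueness-initial2}. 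Your commutation-plus-base-case strategy could be salvaged along the lines above, but as written it assumes a one-orbit structure that has not been established.
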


The correspondence $\Phi^{-1}\colon (P,Q,R)\mapsto (X,Y,Z)$ is called the \emph{Markov-monodromy decomposition}. While the explicit form of the map cannot be directly described from its definition, the theorem above implies that $\Phi^{-1} = \Psi^{-1} \circ \Phi \circ \Psi^{-1}$, and it is possible to compute this right-hand side explicitly.

\subsection{Realization of Markov triples as fixed points of $2$-MM triples}

We consider the case $k=2$. When we regard a $2$-MM matrix (i.e., a component of a $2$-MM triple) as a M\"obius transformation of $\mathbb RP^1=\mathbb R\cup \{\infty\}$, it is of parabolic type. Therefore, there is a unique fixed point, which is contained in $\mathbb Q\cup \{\infty\}$, of each $2$-MM matrix. We have the following result:

\begin{theorem}[Corollary \ref{parabolic-markov}]
Let $(X,Y,Z)$ be a $2$-MM triple. The following statements hold: 
\begin{itemize}\setlength{\leftskip}{-15pt}
\item [(1)] if $\dfrac{p}{p'},\dfrac{q}{q'},\dfrac{r}{r'}$ are irreducible fractions of the fixed points of $X,Y,Z$ respectively, then $(|p|,|q|,|r|)$ is a Markov triple, where we regard $\infty$ as $\dfrac{1}{0}$, 
\item[(2)] the correspondence $(X,Y,Z)\mapsto (|p|,|q|,|r|)$ is compatible with the opelation \eqref{eq:k-Markov-monodromy-jumping} of $2$-MM triples and the opelation \eqref{intro:vieta-jumping} of Markov triples.  
\end{itemize}
\end{theorem}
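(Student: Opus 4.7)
The plan is to induct along the tree of $2$-MM triples given by Theorem~\ref{thm:intro-Markov-monodromy}. The starting observation is that a $2$-MM matrix $X$ has characteristic polynomial $(\lambda+1)^{2}$ and is therefore parabolic, with a unique fixed point in $\mathbb{P}^{1}(\mathbb{Q})$. Using $\mathrm{tr}(X)=-2$, $\det(X)=1$, and $x_{12}>0$, a direct computation shows that $X$ admits the canonical form
\[
X=\begin{bmatrix} m\sigma-1 & m^{2} \\ -\sigma^{2} & -m\sigma-1\end{bmatrix}\qquad (m\in\mathbb{Z}_{>0},\ \sigma\in\mathbb{Z}),
\]
with $x_{12}=m^{2}$ and fixed point $-m/\sigma$. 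In lowest terms $-m/\sigma=p/p'$ one has $|p|=m/\gcd(m,\sigma)$, so the theorem reduces to producing, for every $2$-MM triple $(X,Y,Z)$, positive integers $m_{X},m_{Y},m_{Z}$ with $(m_{X},m_{Y},m_{Z})$ a Markov triple and $\gcd(m_{\star},\sigma_{\star})=1$ for $\star\in\{X,Y,Z\}$.

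The engine of the induction will be the two identities
\[
\Delta_{XY}:=\sigma_{X}m_{Y}-m_{X}\sigma_{Y}=3m_{Z},\qquad \Delta_{YZ}:=\sigma_{Y}m_{Z}-m_{Y}\sigma_{Z}=3m_{X},
\]
which I will show are preserved invariants of $2$-MM triples. In the two base cases of Theorem~\ref{thm:intro-Markov-monodromy}, solving $XYZ=T$ explicitly yields one-parameter families: $(\sigma_{X},\sigma_{Y},\sigma_{Z})=(q+3,q,q-3)$ with $(m_{X},m_{Y},m_{Z})=(1,1,1)$ for $(x_{12},y_{12},z_{12})=(1,1,1)$, and $(\sigma_{X},\sigma_{Y},\sigma_{Z})=(p,2p-3,p-3)$ with $(m_{X},m_{Y},m_{Z})=(1,2,1)$ for $(1,4,1)$. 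Both identities are immediate, and the reduced fixed-point numerators $(1,1,1)$ and $(1,2,1)$ are Markov triples. Preservation under the jumpings \eqref{eq:k-Markov-monodromy-jumping} is a short symbolic calculation: for $(X,Y,Z)\mapsto(X,YZY^{-1},Y)$ the new parameters satisfy $\sigma_{Y}^{\mathrm{new}}=3m_{X}\sigma_{Y}-\sigma_{Z}$ and $m_{Y}^{\mathrm{new}}=3m_{X}m_{Y}-m_{Z}$, and substitution gives $\Delta_{YZ}^{\mathrm{new}}=\Delta_{YZ}$ and $\Delta_{XY}^{\mathrm{new}}=3m_{X}\Delta_{XY}-\Delta_{XZ}$, both of which reduce correctly via the auxiliary identity $\Delta_{XZ}=9m_{X}m_{Z}-3m_{Y}$ (a consequence of the linear relation $m_{Z}\Delta_{XY}-m_{Y}\Delta_{XZ}+m_{X}\Delta_{YZ}=0$ and the Markov equation).

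Granting the identities, the inductive step is immediate. For $(X,Y,Z)\mapsto(X,YZY^{-1},Y)$, the Möbius action on the fixed point of $Z$ computes
\[
Y\cdot\bigl(-m_{Z}/\sigma_{Z}\bigr)=\frac{m_{Z}-m_{Y}\Delta_{YZ}}{-\sigma_{Z}+\sigma_{Y}\Delta_{YZ}}=\frac{-(3m_{X}m_{Y}-m_{Z})}{3m_{X}\sigma_{Y}-\sigma_{Z}}=\frac{-m'}{3m_{X}\sigma_{Y}-\sigma_{Z}},
\]
where $m':=3m_{X}m_{Y}-m_{Z}=(m_{X}^{2}+m_{Y}^{2})/m_{Z}$ is precisely the Markov-Vieta-jumped value of $m_{Y}$. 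It remains to show $\gcd(m',\,3m_{X}\sigma_{Y}-\sigma_{Z})=1$: any common prime divides $\Delta_{YZ}=3m_{X}$, hence either equals $3$ (ruled out, since no Markov number is divisible by $3$, a consequence of the Markov equation modulo $3$) or divides $m_{X}$ (ruled out by pairwise coprimality of Markov triples). The other Vieta $(X,Y,Z)\mapsto(Y,Y^{-1}XY,Z)$ is handled identically using $\Delta_{XY}=3m_{Z}$.

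The main obstacle lies in establishing and propagating the two $\Delta$-identities; once these are in place, statement~(1) is the inductive invariant, and statement~(2) is the explicit matching of new fixed-point numerators with the Markov Vieta jumping computed above. Theorem~\ref{thm:intro-Markov-monodromy} then exhausts every $2$-MM triple from the two base cases, completing the proof.
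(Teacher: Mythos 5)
Your argument is correct in outline but follows a genuinely different route from the paper. The paper assembles three ingredients: the fixed point of a parabolic $2$-MM matrix is $\pm\sqrt{-x_{12}/x_{21}}$ (Proposition \ref{prop:fixed-point}); the squaring correspondence between Markov triples and $2$-GM triples imported from Gyoda--Matsushita (Proposition \ref{squre-markov}), which immediately makes the numerator $\sqrt{x_{12}}$ a Markov number and turns the tree correspondence into a canonical graph isomorphism; and coprimality of $\sqrt{x_{12}}$ and $\sqrt{-x_{21}}$, proved by pushing a hypothetical common divisor upstream to the explicit root (Proposition \ref{prop:relative-prime-y12-y21}). You instead run a downward induction along the tree of Theorem \ref{thm:intro-Markov-monodromy}, carrying the explicit parametrization $X=\bigl[\begin{smallmatrix} m\sigma-1 & m^{2}\\ -\sigma^{2} & -m\sigma-1\end{smallmatrix}\bigr]$ together with the invariants $\sigma_{X}m_{Y}-m_{X}\sigma_{Y}=3m_{Z}$ and $\sigma_{Y}m_{Z}-m_{Y}\sigma_{Z}=3m_{X}$; the Markov recursion $m'=3m_{X}m_{Y}-m_{Z}$ then falls out of the M\"obius action, and coprimality of numerator and denominator follows from the invariants plus the standard facts that Markov triples are pairwise coprime and never divisible by $3$. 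Your base cases and parameter updates agree with the paper's explicit matrices $X_{1;\ell},Y_{1;\ell},Z_{1;\ell}$ and $X_{\ell},Y_{\ell},Z_{\ell}$ (for $k=2$), and your $\Delta$-identities are essentially the determinant identities the paper only establishes later, in Theorem \ref{thm:iso-PT-MTdag}. What your route buys is independence from Proposition \ref{squre-markov} (you re-derive the needed direction of it) and explicit control of the denominators; what the paper's route buys is brevity, since the Markov property of the numerators is outsourced to the known squaring theorem and only the coprimality requires a tree argument.

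One step is overstated and needs repair. Trace $-2$, determinant $1$ and $x_{12}>0$ alone do \emph{not} give your canonical form: they only force $-x_{12}x_{21}$ to be a perfect square, not $x_{12}$ and $-x_{21}$ separately (consider $\bigl[\begin{smallmatrix}1 & 2\\ -2 & -3\end{smallmatrix}\bigr]$). The separate squareness of $x_{12}$ is exactly the content of Proposition \ref{squre-markov}, which you do not invoke. Your induction can absorb this, but only if the canonical form is made part of the inductive hypothesis and the ``short symbolic calculation'' is actually carried out to show that $YZY^{-1}$ and $Y^{-1}XY$ again have that form with the asserted parameters (using $XYZ=T$ and the Markov equation for $(m_{X},m_{Y},m_{Z})$ to see that the new $(1,2)$-entry equals $(3m_{X}m_{Y}-m_{Z})^{2}$, and the trace--determinant constraint plus the fixed-point computation to pin down the remaining entries); the ``reduction'' announced in your first paragraph is licensed only after this. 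Finally, note that your induction covers the triples reachable from the $(1,1,1)$ families, i.e.\ the trees $\mathrm{MM}\mathbb{T}(2,\ell)$, which is precisely the scope of Corollary \ref{parabolic-markov}; triples in the inverse trees would require the symmetric argument with the inverse jumpings.
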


By using the above theorem, we obtain the following algorithm to calculate Markov numbers. We consider the following tree $\mathrm{P}\mathbb{T}(\ell)$ for an integer $\ell$:

\begin{itemize}\setlength{\leftskip}{-15pt}
    \item [(1)] the root vertex is 
        \[\left(\begin{bmatrix}
            1\\-\ell-1
        \end{bmatrix},\begin{bmatrix}
            2\\-2\ell+1
        \end{bmatrix},\begin{bmatrix}
            1\\-\ell+2
        \end{bmatrix}\right),\]
    \item [(2)] for a vertex $\left(\begin{bmatrix}
        p\\p'
    \end{bmatrix},\begin{bmatrix}
        q\\q'
    \end{bmatrix},\begin{bmatrix}
        r\\r'
    \end{bmatrix}\right)$, we consider the following two children of it:
\[\begin{xy}(0,0)*+{\left(\begin{bmatrix}
        p\\p'
    \end{bmatrix},\begin{bmatrix}
        q\\q'
    \end{bmatrix},\begin{bmatrix}
        r\\r'
    \end{bmatrix}\right)}="1",(-40,-15)*+{\left(\begin{bmatrix}
        p\\p'
    \end{bmatrix},\begin{bmatrix}
        q^2r'-qq'r-r\\-q'^2r+qq'r'-r'
    \end{bmatrix},\begin{bmatrix}
        q\\q'
    \end{bmatrix}\right)}="2",(40,-15)*+{\left(\begin{bmatrix}
        q\\q'
    \end{bmatrix},\begin{bmatrix}
        -q^2p'+qq'p-p\\q'^2p-qq'p'-p'
    \end{bmatrix},\begin{bmatrix}
        r\\r'
    \end{bmatrix}\right).}="3", \ar@{-}"1";"2"\ar@{-}"1";"3"
\end{xy}\]
\end{itemize}
We have the following theorem:
\begin{theorem}[Theorem \ref{thm:PT-LLMT}]
For a vertex $\left(\begin{bmatrix}
        p\\p'
    \end{bmatrix},\begin{bmatrix}
        q\\q'
    \end{bmatrix},\begin{bmatrix}
        r\\r'
    \end{bmatrix}\right)$ in the tree $\mathrm{P}\mathbb{T}(\ell)$, the following statements hold: 
\begin{itemize}\setlength{\leftskip}{-15pt}
\item [(1)] $(p,q,r)$ is a Markov triple,
\item[(2)] the transformations
\[ (p,q,r)\mapsto (p,q^2r'-qq'r-r,q), \quad (p,q,r)\mapsto (q,-q^2p'+qq'p-p,r)\] coincide with the Vieta jumpings of a Markov triple.
\end{itemize}
\end{theorem}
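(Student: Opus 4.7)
The plan is to argue by induction on the depth of a vertex in $\mathrm{P}\mathbb{T}(\ell)$, maintaining two auxiliary determinantal identities alongside the Markov property. Concretely, for every vertex $\left(\begin{bmatrix}p\\p'\end{bmatrix},\begin{bmatrix}q\\q'\end{bmatrix},\begin{bmatrix}r\\r'\end{bmatrix}\right)$, I would aim to prove the following three properties simultaneously:
\begin{itemize}
\item[(I)] $(p,q,r)$ is a Markov triple;
\item[(II)] $qr'-q'r=3p$;
\item[(III)] $pq'-p'q=3r$.
\end{itemize}
At the root, one checks directly that $(p,q,r)=(1,2,1)$, which is a Markov triple, and that both $qr'-q'r$ and $pq'-p'q$ simplify to $3$ after the $\ell$-dependence cancels.

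For the inductive step at the left child, invariant (II) allows the first coordinate of the new middle vector to be rewritten as
\[
q^{2}r'-qq'r-r=q(qr'-q'r)-r=3pq-r,
\]
so the tree transformation acts on $(p,q,r)$ exactly as the Vieta jumping $(p,q,r)\mapsto(p,3pq-r,q)$. This proves statement (2) of the theorem for the left child, and statement (1) then follows at the child because Vieta jumping preserves the Markov property. The right child is handled by the symmetric use of invariant (III), producing the other Vieta jumping $(p,q,r)\mapsto(q,3qr-p,r)$.

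The main obstacle is to verify that the two determinantal identities themselves pass to the children. At the left child, checking (II) for the new triple $(\mathbf{p},\mathbf{q}_{\mathrm{new}},\mathbf{q})$ reduces by a direct expansion to the previous (II). Checking (III), however, leaves a residual cross term $pr'-p'r$ that is not part of the induction hypothesis. The crux is to observe that three vectors in $\mathbb{Z}^{2}$ are linearly dependent: writing $\mathbf{q}=\alpha\mathbf{p}+\beta\mathbf{r}$ and pairing against $\mathbf{p}$ and $\mathbf{r}$, invariants (II) and (III) pin down $\alpha$ and $\beta$ in terms of $p,r,\det(\mathbf{p},\mathbf{r})$; reading off the first coordinate of the linear relation and invoking the Markov identity $p^{2}+r^{2}=3pqr-q^{2}$ yields the missing relation $pr'-p'r=9pr-3q$. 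With this in hand, the routine expansion of $\det(\mathbf{p},\mathbf{q}_{\mathrm{new}})$ collapses to $(pq'-p'q)(qr'-q'r)-(pr'-p'r)=9pr-(9pr-3q)=3q$, which matches $3$ times the new third coordinate $q$. The right child case is treated symmetrically, and the induction then completes the proof of both assertions.
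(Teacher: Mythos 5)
Your proof is correct, but it takes a genuinely different route from the paper. The paper deduces Theorem \ref{thm:PT-LLMT} from the $2$-MM machinery: by construction the vertices of $\mathrm{P}\mathbb{T}(\ell)$ are the fixed-point triples of the parabolic $2$-MM triples in $\mathrm{MM}\mathbb{T}(2,\ell)$, and Corollary \ref{parabolic-markov} (which rests on Proposition \ref{squre-markov}, identifying $\mathrm{M}\mathbb{T}(0)$ with $\mathrm{M}\mathbb{T}(2)$ via $(a,b,c)\mapsto(a^2,b^2,c^2)$, and on the coprimality statement in Proposition \ref{prop:relative-prime-y12-y21}) shows that the numerators of the reduced fixed points form a Markov triple; Proposition \ref{pr:pqr-positivity} then removes the absolute values. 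You instead run a self-contained induction directly on the tree, carrying the determinant invariants $qr'-q'r=3p$ and $pq'-p'q=3r$, which immediately convert the tree recursion into the Vieta jumpings $(p,q,r)\mapsto(p,3pq-r,q)$ and $(p,q,r)\mapsto(q,3qr-p,r)$; moreover your derived relation $pr'-p'r=9pr-3q$ together with the two invariants is precisely the content of the paper's Theorem \ref{thm:iso-PT-MTdag} (the determinant triple equals $3\mu(p,q,r)$), which the paper proves by a separate induction, so in effect you merge both results into one elementary argument that never invokes matrices, M\"obius fixed points, or coprimality. One small point to tighten: you justify the missing relation by writing $\mathbf{q}=\alpha\mathbf{p}+\beta\mathbf{r}$, which presupposes that $\mathbf{p},\mathbf{r}$ are linearly independent, i.e. $pr'-p'r\neq 0$, something not yet known at that stage. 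This is a one-line repair: either use the unconditional identity $\det(\mathbf{p},\mathbf{q})\,\mathbf{r}-\det(\mathbf{p},\mathbf{r})\,\mathbf{q}+\det(\mathbf{q},\mathbf{r})\,\mathbf{p}=0$, whose first coordinate gives $(pr'-p'r)\,q=3(p^2+r^2)=3(3pqr-q^2)$ directly, or observe that dependence of $\mathbf{p},\mathbf{r}$ combined with $\det(\mathbf{q},\mathbf{r})=3p>0$ and $\det(\mathbf{p},\mathbf{q})=3r>0$ would force $p<0$, contradicting positivity. With that fix the induction closes and your proof of both assertions is complete.
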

We denote by $\mathrm{LP}\mathbb{T}(\ell)$ the full subtree of $\mathrm{P}\mathbb{T}(\ell)$ whose root is the left child of the root of $\mathrm{P}\mathbb{T}(\ell)$. We have the following conjecture:
\begin{conjecture}\label{conj:LPT}
For some (in fact, for all) $\ell\in \mathbb Z$, the upper entries of second components of all vertices of $\mathrm{LP}\mathbb{T}(\ell)$ are distinct.      
\end{conjecture}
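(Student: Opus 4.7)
The strategy is to show that Conjecture~\ref{conj:LPT} is equivalent to the classical Frobenius uniqueness conjecture (Conjecture~\ref{conj:markov}); this explains the parenthetical assertion \emph{some $\Leftrightarrow$ all} in the statement, and also indicates why a full proof is likely out of reach.

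First, I would establish the $\ell$-independence of the upper entries of the vertices. By Theorem~\ref{thm:PT-LLMT}, at every vertex of $\mathrm{P}\mathbb{T}(\ell)$ the upper entries $(p,q,r)$ form a Markov triple, and the two child transformations on upper entries coincide with the ordinary Vieta jumpings $(p,q,r)\mapsto (p,3pq-r,q)$ and $(p,q,r)\mapsto (q,3qr-p,r)$, neither of which depends on $\ell$. Since the root upper entries are $(1,2,1)$ for every $\ell$, an induction on the depth of the tree shows that the upper-entry triple at every vertex of $\mathrm{P}\mathbb{T}(\ell)$ is the same for all $\ell$. In particular Conjecture~\ref{conj:LPT} holds for one value of $\ell$ if and only if it holds for every value of $\ell$, which will establish the parenthetical half of the statement.

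Next, I would identify the upper-entry tree of $\mathrm{P}\mathbb{T}(\ell)$ with the Markov-triple tree displayed at the beginning of the introduction---the tree rooted at $(1,2,1)$ whose vertices are exactly the ordered Markov triples with middle entry strictly maximal. Under this identification, the reflection $(a,b,c)\leftrightarrow (c,b,a)$ exchanges the two subtrees rooted at the children of $(1,2,1)$ and so provides a tree isomorphism between them; this is the step that requires the most care, since one must check that each descendant's reflection really does land in the sibling subtree, via the explicit Vieta-jumping formulas. Once this is verified, every unordered Markov triple with strict maximum $b>2$ appears as the upper-entry triple of exactly one vertex of $\mathrm{LP}\mathbb{T}(\ell)$, and Conjecture~\ref{conj:LPT} becomes the assertion that distinct unordered Markov triples with strict maxima have distinct maxima---which is precisely Conjecture~\ref{conj:markov}.

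The main obstacle is of course that Conjecture~\ref{conj:markov} has been open since 1913, so this reduction cannot by itself yield a full proof. It does however import all known partial results on Frobenius's conjecture---in particular the prime-power case of \cites{baragar,button,schmutz,zhang}---as distinctness of the corresponding $q$'s in $\mathrm{LP}\mathbb{T}(\ell)$. A potentially novel avenue afforded by the matrix setting is to exploit the lower entries $q'$, which, unlike the upper entries, depend nontrivially on $\ell$ and record the specific parabolic M\"obius transformation having $q/q'$ as its fixed point; any arithmetic invariant extracted from $q'$ that forces $q'$ to determine $q$ would separate two hypothetical vertices sharing the same $q$ and thereby settle the conjecture.
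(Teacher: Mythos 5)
The statement you were asked about is a conjecture, not a theorem: the paper offers no proof of it, and none is known, since (as the paper itself asserts in the Proposition immediately following) it is equivalent to Frobenius's 1913 uniqueness conjecture (Conjecture~\ref{conj:markov}). Your proposal correctly recognizes this and reproduces, in outline, exactly the content the paper leaves implicit: the $\ell$-independence of the upper entries follows from Theorem~\ref{thm:PT-LLMT} (whose part (2), via the determinant identities of Theorem~\ref{thm:iso-PT-MTdag}, shows the child maps on upper entries are the $k=0$ Vieta jumpings $(p,q,r)\mapsto(p,3pq-r,q)$ and $(p,q,r)\mapsto(q,3qr-p,r)$, independent of $\ell$, with root $(1,2,1)$), which justifies the parenthetical ``some $\Leftrightarrow$ all''; and the identification of the upper-entry tree with $\mathrm{M}\mathbb{T}(0)$ together with the reflection $(a,b,c)\leftrightarrow(c,b,a)$ exchanging the two subtrees below $(1,2,1)$ shows that distinctness of the maxima over $\mathrm{LP}\mathbb{T}(\ell)$ is precisely the uniqueness assertion of Conjecture~\ref{conj:markov} for Markov numbers $b>2$ (the cases $b=1,2$ being handled by Proposition~\ref{singular-solution}). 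So your reduction is sound and is essentially the same equivalence the paper states; it is not, and cannot be, a proof of the conjecture itself. Note also that your closing suggestion---using the lower entries $q'$ to separate vertices sharing the same $q$---is exactly the direction the paper actually carries out in its proved weak version, Theorem~\ref{thm:p/p'<q/q'<r/r'}, which shows via the ordering of the parabolic fixed points that the full second components $\begin{bmatrix} q\\ q'\end{bmatrix}$ are pairwise distinct in $\mathrm{LP}\mathbb{T}(\ell)$.
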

From the following viewpoint, Conjecture \ref{conj:LPT} is important:
 \begin{proposition} Conjecture \ref{conj:LPT} is equivalent to Conjecture \ref{conj:markov}. 
\end{proposition}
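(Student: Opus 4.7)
The plan is to unwind $\mathrm{LP}\mathbb{T}(\ell)$ into a subtree of the classical Markov tree and then compare the two uniqueness statements.

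First I would establish that the assignment, to each vertex of $\mathrm{P}\mathbb{T}(\ell)$, of its triple of upper entries $(p,q,r)$ is independent of $\ell$. By the theorem immediately preceding the conjecture, $(p,q,r)$ is always a Markov triple and the two child transformations of $\mathrm{P}\mathbb{T}(\ell)$ act on upper entries as the Vieta jumpings \eqref{intro:vieta-jumping} with $k=0$. Since the root has upper entries $(1,2,1)$ for every $\ell$, induction on the depth forces the upper-entry triple at each vertex to depend only on the combinatorial position in the tree, and in particular identifies the upper-entry tree of $\mathrm{P}\mathbb{T}(\ell)$ with the Markov tree displayed at the start of the paper. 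This already settles the ``some iff all'' clause of Conjecture \ref{conj:LPT}, since the condition in question is on $\ell$-independent data.

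Next I would identify $\mathrm{LP}\mathbb{T}(\ell)$, via the upper-entry map, with one of the two maximal subtrees of the Markov tree at its root $(1,2,1)$. The involution $(a,b,c) \leftrightarrow (c,b,a)$ intertwines the two Vieta jumpings, hence induces a tree automorphism of the full Markov tree that exchanges these two subtrees. Combined with Theorem \ref{thm:intro-cohn}(1), which produces every Markov triple with strictly maximal middle entry as some vertex of the full Markov tree, and with the fact that no ordered triple appears at two vertices (each vertex has a unique regression path back to $(1,2,1)$ via reverse Vieta jumpings, which strictly decrease the middle entry), one deduces that every unordered Markov triple $\{a,b,c\}$ with three distinct entries is realized exactly once as the upper-entry triple of some vertex of $\mathrm{LP}\mathbb{T}(\ell)$, with its maximum sitting in the middle position. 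Note that these are precisely the unordered Markov triples with maximum $\geq 5$, since the only Markov triples with repeated entries are $\{1,1,1\}$ and $\{1,1,2\}$.

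Under this bijection Conjecture \ref{conj:LPT} translates into the statement that distinct unordered Markov triples with maximum $\geq 5$ have distinct maxima. For $b \in \{1, 2\}$ the assertion of Conjecture \ref{conj:markov} is trivial (the only triples are $\{1,1,1\}$ and $\{1,1,2\}$), so the translated condition is equivalent to Conjecture \ref{conj:markov} in full. The main delicate step I anticipate is the ``exactly once'' count in the previous paragraph; this requires carefully separating the no-repeats property of the full Markov tree from the mirror symmetry between its two maximal subtrees at the root, both of which follow from the structural statement of Theorem \ref{thm:intro-cohn}(1) but must be argued explicitly to make the bijection clean.
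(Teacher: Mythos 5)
Your argument is correct. The paper states this proposition without an explicit proof, and your route is the one its results suggest: Theorem \ref{thm:PT-LLMT} identifies the upper-entry tree of $\mathrm{P}\mathbb{T}(\ell)$ with $\mathrm{M}\mathbb{T}(0)$ independently of $\ell$ (which settles the ``some iff all'' clause), and the fact that each unordered Markov triple other than $(1,1,1)$ and $(1,2,1)$ occurs exactly once in the left subtree $\mathrm{LM}\mathbb{T}(0)$ with its maximum in the middle position --- which you re-derive via the mirror involution $(a,b,c)\leftrightarrow(c,b,a)$ together with the descent/uniqueness property, and which the paper itself asserts in Section 7.1 and encodes in Proposition \ref{prop:all-markov} --- converts Conjecture \ref{conj:LPT} into the Frobenius uniqueness statement for maxima $\geq 5$; since uniqueness for $b\in\{1,2\}$ is automatic by Proposition \ref{singular-solution}, the equivalence with Conjecture \ref{conj:markov} follows. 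The only (harmless) imprecision is citing Theorem \ref{thm:intro-cohn}(1) for the tree-exhaustion and no-repetition facts, where Proposition \ref{prop:all-markov} or Theorem \ref{Diophantinetheorem} would be the direct references.
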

In this paper, we will prove a weak version of Conjecture \ref{conj:LPT}.

\begin{theorem}[Theorem \ref{thm:p/p'<q/q'<r/r'}]
We fix $\ell\in \mathbb Z$. The second components of all vertices of $\mathrm{LP}\mathbb{T}(\ell)$ are distinct.    
\end{theorem}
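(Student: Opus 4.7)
I plan to prove the injectivity of the map $v \mapsto (q_v, q'_v)$ on $\mathrm{LP}\mathbb{T}(\ell)$ by establishing a stronger arc-separation property at every vertex. Since each column is in lowest terms (as the irreducible representative of a parabolic fixed point), this is equivalent to the injectivity of the slope map $v \mapsto s_v := q_v/q'_v \in \mathbb{R}P^1 = \mathbb{Q} \cup \{\infty\}$.

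The key structural claim is: at every vertex $v$ of $\mathrm{LP}\mathbb{T}(\ell)$, the three column slopes $s_{\mathbf p}(v), s_{\mathbf q}(v), s_{\mathbf r}(v)$ are pairwise distinct on $\mathbb{R}P^1$; moreover, the middle slope of the left child lies strictly inside the open arc $A_{\mathbf{pq}}(v)$ of $\mathbb{R}P^1$ bounded by $s_{\mathbf p}(v)$ and $s_{\mathbf q}(v)$ and not containing $s_{\mathbf r}(v)$, and the middle slope of the right child lies strictly inside the analogous arc $A_{\mathbf{qr}}(v)$. Granted this claim, an induction on depth shows that the entire left subtree of $v$ is confined to $A_{\mathbf{pq}}(v)$ and the entire right subtree to $A_{\mathbf{qr}}(v)$; since these arcs are disjoint and both avoid $\{s_{\mathbf q}(v)\}$, distinct vertices yield distinct middle slopes, which is the theorem.

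To prove the claim, I recast the left Vieta jumping on column vectors as a M\"obius action. By Theorem \ref{thm:intro-Markov-monodromy}, the new middle matrix is $YZY^{-1}$, so its fixed point is $Y \cdot s_{\mathbf r}(v)$. A direct verification shows that the explicit column formula $(q^2 r' - qq'r - r,\, -q'^2 r + qq'r' - r')^{\top}$ from the theorem statement equals the matrix--vector product of the parabolic $Y \in SL(2,\mathbb{Z})$ with trace $-2$, fixed point $q/q'$, and $(1,2)$-entry $q^2$ applied to $(r, r')^{\top}$; consistency of the two formulas for the new fixed point then forces $Y$ to be exactly this matrix. Since $Y$ acts on $\mathbb{R}P^1$ as an orientation-preserving parabolic fixing only $s_{\mathbf q}(v)$, its image $Y \cdot s_{\mathbf r}(v)$ lies in a definite arc of $\mathbb{R}P^1 \setminus \{s_{\mathbf p}(v), s_{\mathbf q}(v), s_{\mathbf r}(v)\}$, and an analogous analysis with $Y^{-1}$ acting on $(p,p')^{\top}$ handles the right child.

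The main obstacle is pinning down which arc, i.e., verifying that $Y \cdot s_{\mathbf r}(v) \in A_{\mathbf{pq}}(v)$ rather than in the arc on the $s_{\mathbf r}(v)$-side of $s_{\mathbf q}(v)$. I handle this by induction on depth. The base case is a direct computation at the root of $\mathrm{LP}\mathbb{T}(\ell)$, whose columns $(1,-\ell-1)$, $(5,-5\ell+1)$, $(2,-2\ell+1)$ I obtain by applying the left Vieta jumping to the root of $\mathrm{P}\mathbb{T}(\ell)$; the arc containments then follow from a short case analysis in $\ell$. The inductive step reduces to tracking how the cyclic order of $(s_{\mathbf p}(v), s_{\mathbf q}(v), s_{\mathbf r}(v))$ on $\mathbb{R}P^1$ transforms under the Vieta jumpings, which is controlled by the signs of the $2 \times 2$ determinants $\det(\mathbf p \,|\, \mathbf q)$ and $\det(\mathbf q \,|\, \mathbf r)$ of successive column pairs.
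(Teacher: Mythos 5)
Your overall skeleton is sound: the identification of the left child's middle slope with $Y\cdot s_{\mathbf r}(v)$ (and the right child's with $Y^{-1}\cdot s_{\mathbf p}(v)$) is correct, and the nested-arc argument does reduce the theorem to the claim that each child's middle slope lands in the arc bounded by the two columns it inherits and avoiding the third. The genuine gap is in your inductive step: you assert that which arc $Y\cdot s_{\mathbf r}(v)$ falls into is ``controlled by the signs of the determinants $\det(\mathbf p\,|\,\mathbf q)$ and $\det(\mathbf q\,|\,\mathbf r)$'' together with the cyclic order, and that is false. A parabolic fixing $s_{\mathbf q}$ is, in a chart sending $s_{\mathbf q}$ to $\infty$, a translation; whether $Y\cdot s_{\mathbf r}$ stops short of $s_{\mathbf p}$ or overshoots it is a question of magnitudes, not signs. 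Concretely, take $\mathbf q=(1,0)^{\top}$, $\mathbf r=(0,1)^{\top}$, so $Y=\begin{bmatrix}-1&1\\0&-1\end{bmatrix}$ and the new middle slope is $-1$: if $s_{\mathbf p}=-2$ the new slope lies in the arc containing $s_{\mathbf r}=0$, while if $s_{\mathbf p}=-1/2$ it lies in the arc avoiding $s_{\mathbf r}$; both configurations have the same cyclic order and the same (positive) signs of $\det(\mathbf p\,|\,\mathbf q)$ and $\det(\mathbf q\,|\,\mathbf r)$. So no bookkeeping of cyclic order plus these two signs can close the induction.

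What is actually needed is a quantitative input. Writing the new middle column as $\det(\mathbf q\,|\,\mathbf r)\,\mathbf q-\mathbf r$ and expressing everything in the basis $(\mathbf q,\mathbf r)$, the arc condition for the left child is exactly the inequality $\det(\mathbf q\,|\,\mathbf r)\det(\mathbf p\,|\,\mathbf q)>\det(\mathbf p\,|\,\mathbf r)$, which does not follow from signs but does follow from Theorem \ref{thm:iso-PT-MTdag}: the three determinants equal $3\bigl(p,\tfrac{p^2+r^2}{q},r\bigr)$, so the inequality is $p^2+r^2<3pqr$, i.e.\ the Markov equation with $q^2>0$. So your route can be repaired by carrying the determinant identity (or, equivalently, proving inductively that the next determinant $\det(\mathbf p\,|\,\mathbf q)\det(\mathbf q\,|\,\mathbf r)-\det(\mathbf p\,|\,\mathbf r)$ stays positive), together with Proposition \ref{pr:pqr-positivity} for reducedness. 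The paper takes a different but related road: it avoids arcs altogether and proves explicit real-line orderings $\tfrac{r}{r'}<\tfrac{q}{q'}<\tfrac{p}{p'}$ (with a case analysis in $\ell$ and in whether $p=1$, Theorem \ref{thm:p/p'<q/q'<r/r'}), using the fixed-point formula of Lemma \ref{lem:fixed-point2}, comparisons of the fixed points against the landmarks $Y^{\pm1}(0),Y^{\pm1}(\infty),Z(0),Z(\infty),X^{-1}(0),X^{-1}(\infty)$ (Lemmas \ref{lem:Y-inf-Z-inf}, \ref{lem:Y-0-Z-0}, \ref{lem:Y-0-X-0}, \ref{lem:X-inf-Y-inf}), and an inductive sign analysis of the $2$-MM matrix entries (Lemma \ref{lem:y21<0}, Corollary \ref{cor:sign-Markov-monodromy}, Remark \ref{rem:aboutXZ}); that entrywise analysis is precisely the quantitative control your sketch is missing.
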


\subsection{Calculation algorithm of $k$-GM number from irreducible fraction}
In this paper, we also introduce a calculation algorithm of $k$-GM number from an irreducible fraction. In the case $k=0$, some mathematicians discovered methods constructing a Markov number from an irreducible fraction,  for example, Propp \cites{propp} and {{\c{C}}anak\c{c}\i} and Schiffler \cite{cs18}. Moreover, Banaian and Sen found a method which can be applied to the cases $k=0,1$ in \cite{bansen}. We will introduce a generalization of the Banaian--Sen's method that can be applied to arbitrary $k$. Furthermore, we prove that this method yields $k$-GM numbers by using $k$-MM triples and $k$-GC triples. 

Here we only provide an overview of the method and its results. For a given irreducible fraction $ t\in (0,1] $, we consider a line segment with slope $ t $ in $ \mathbb{R}^2 $. Using this line segment, we construct a figure called a \emph{pre-snake graph} (see Section 7.2). For the components of this pre-snake graph, we assign signs $ \{+,-\} $ according to a certain rule, and from these signs, we construct a continued fraction $ F^+(k,t) $. To describe the result, we use the \emph{Farey triple}. For a triple of irreducible fractions $\left(\dfrac{a}{b},\dfrac{c}{d},\dfrac{e}{f}\right)$, it is called the Farey triple if $|ad-bc|=|cf-de|=|af-be|=1$. The result is the following theorem:
\begin{theorem}[Theorem \ref{thm:M_t-C_t-combinatorics}, Corollary \ref{cor:k-gen.snake-markov}] Let $m_{k,t}$ be the numerator of $F^{+}(k,t)$ for an irreducible fraction $t\in (0,1]$. The following statements hold:
\begin{itemize}\setlength{\leftskip}{-15pt}
\item [(1)] $m_{k,t}$ is a $k$-GM number,
\item [(2)] for any $k$-GM number $b\neq 1$, there exists $t\in (0,1]$ such that $b=m_{k,t}$,
\item[(3)] $(m_{k,r},m_{k,t},m_{k,s})$ is a $k$-GM triple if and only if $(r,t,s)$ is a Farey triple. 
\end{itemize}
\end{theorem}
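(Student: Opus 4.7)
The plan is to prove (3) first (the Farey-triple characterization) and to deduce (1) and (2) from it. The idea is to encode the continued fraction $F^{+}(k,t)$ as the $(1,2)$-entry of a $2\times 2$ integer matrix, and then to identify triples of such matrices, indexed by a Farey triple $(r,t,s)$, with either $k$-GC triples or $k$-MM triples. The trees of Theorems \ref{thm:intro-cohn} and \ref{thm:intro-Markov-monodromy} will then be transported to the tree of Farey triples in $(0,1]\cap\QQ$.

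First, I would construct the matrix model. A finite continued fraction can be written as the $(1,2)$-entry of a product of $2\times 2$ matrices of the form $\bigl[\begin{smallmatrix} a_i & 1 \\ 1 & 0\end{smallmatrix}\bigr]$ (or a signed variant adapted to $F^{+}(k,t)$); call the resulting matrix $M(k,t)\in SL(2,\ZZ)$, whose $(1,2)$-entry equals $m_{k,t}$. The key technical step is to analyse how $M(k,t)$ transforms under the Farey mediant: if $(r,t,s)$ is a Farey triple with $t$ the mediant of $r$ and $s$, I aim to prove a matrix identity of the shape $M(k,t)=M(k,r)\,M(k,s)-S$, matching the $k$-GC jumping \eqref{eq:k-gen-cohn-jumping}, or its $k$-MM analogue. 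This will require a careful surgery on the pre-snake graph of $t$, cutting it into the pre-snake graphs of $r$ and $s$ and tracking how the sign pattern, and hence the continued fraction, factors accordingly.

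With the recursion in hand, I would verify the base case: for a generating Farey triple in $(0,1]$, a direct computation from the pre-snake graph should yield $m_{k,t}\in\{1,k+2\}$, matching the root $(1,k+2,1)$ of the tree of $k$-GC triples in Theorem \ref{thm:intro-cohn}. Statement (3) then follows by induction along the Farey tree: each mediant step corresponds to a Cohn jumping, which preserves the $k$-GC-triple property, and conversely every $k$-GC triple is reached this way. Statement (1) is then immediate, since any $m_{k,t}$ sits inside some $k$-GC triple as a $(1,2)$-entry and hence is a $k$-GM number. Statement (2) follows from Theorem \ref{thm:intro-cohn}: every $k$-GM triple with strictly maximal middle coordinate is obtainable by Vieta jumping from $(1,k+2,1)$, so every $k$-GM number $b\neq 1$ appears as some $m_{k,t}$.

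The main obstacle is the first step: verifying the continued-fraction/matrix recursion directly from the combinatorial definition of $F^{+}(k,t)$. For $k=0,1$ this is essentially the content of \cite{bansen}, but for general $k$ the $k$-dependence of the sign-assignment rule makes the decomposition of $F^{+}(k,t)$ along a mediant more delicate. I would address this by inducting on the depth in the Farey tree, explicitly comparing the pre-snake graph of the mediant with those of its parents and using the multiplicativity of continued-fraction matrix products. Once this recursion is established, the remainder of the argument is essentially bookkeeping transported across the bijection with $k$-GC triples.
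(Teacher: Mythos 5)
Your proposal is correct and follows essentially the same route as the paper: your mediant-surgery step is exactly the paper's key lemma (Proposition \ref{prop:presnake-relation}, proved by cutting the pre-snake graph along the Farey mediant via Christoffel words, Corollary \ref{lem:decomposition-lemma}), and your induction along the Farey tree transported to the $k$-GC/$k$-MM trees through the recursions $C_t=C_rC_s-S$ and $M_t=M_r^{-1}TM_s$ is precisely how Theorem \ref{thm:M_t-C_t-combinatorics} and Corollary \ref{cor:k-gen.snake-markov} are proved. The only difference is bookkeeping: you verify the recursion by multiplying continuant-type matrices, whereas the paper writes the matrix entries as perfect-matching counts of snake graphs (via Theorem \ref{thm:snakegraph-continuedfraction}) and checks the resulting identities by counting matchings case by case.
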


We note that we do not know whether the uniqueness of $m_{k,t}$ in (2) holds or not. We have the following proposition (this result is essentially given by \cite{gyo-maru}*{Corollary 4.2}):

\begin{proposition}
The following condition is equivalent to Conjecture \ref{conj:gen-Markov}: the map $(0,1]\cap\mathbb Q\to \mathbb Z_{\geq 0},\ t\mapsto m_{k,t}$ is an injective map.   
\end{proposition}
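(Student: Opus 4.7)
The plan is to factor the map $t\mapsto m_{k,t}$ through the set of $k$-GM triples and use the Stern--Brocot/Farey structure to transfer the uniqueness statement. I would first recall two classical facts about Farey triples in $(0,1]\cap\QQ$: every Farey triple $(r,t,s)$ has a unique entry (its \emph{mediant}) equal to the Farey mediant of the other two, and every $t\in(0,1)\cap\QQ$ arises as the mediant of a unique unordered Farey pair $\{r,s\}$. By part (3) of the preceding theorem, the tree of Farey triples rooted at the mediant $1/2$ is then identified with the tree of $k$-GM triples rooted at $(1,k+2,1)$ displayed in the Introduction.

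The key technical ingredient I would next establish is the following monotonicity: if $(r,t,s)$ is a Farey triple with $t$ the mediant, then $m_{k,t}>\max\{m_{k,r},m_{k,s}\}$. I expect this to be the main obstacle, and I would prove it by induction along the Stern--Brocot tree, using part (3) to match Farey triples with $k$-GM triples and the Vieta-jumping formula \eqref{intro:vieta-jumping} to observe that each Vieta step strictly increases the middle entry. This content appears to be essentially covered by \cite{gyo-maru}*{Corollary 4.2}.

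With this in place, define
\[
\Theta\colon (0,1]\cap\QQ \longrightarrow \{\text{$k$-GM triples with marked maximum}\},\qquad t\longmapsto (m_{k,r},\,m_{k,t},\,m_{k,s}),
\]
where $\{r,s\}$ is the unique Farey-parent pair of $t$ (with the boundary convention $\Theta(1)=(1,1,1)$). The monotonicity ensures that $m_{k,t}$ is indeed the marked maximum, while parts (2) and (3) ensure that $\Theta$ is \emph{surjective on fibres}: for every $k$-GM number $b$, $\Theta$ maps $\mu^{-1}(b):=\{t\in(0,1]\cap\QQ : m_{k,t}=b\}$ onto the set of $k$-GM triples with maximum $b$.

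The equivalence then reads off from this fibrewise surjectivity. If $t\mapsto m_{k,t}$ is injective, every $\mu^{-1}(b)$ is a singleton and hence so is its $\Theta$-image, which is Conjecture \ref{conj:gen-Markov}. Conversely, assume the conjecture and suppose $t\neq t'$ in $(0,1]\cap\QQ$ satisfy $m_{k,t}=m_{k,t'}=b$; I argue by induction on the maximum of the complexities (numerator plus denominator in lowest terms) of $t$ and $t'$. The conjecture forces $\Theta(t)=\Theta(t')$ as unordered triples, so after relabeling $m_{k,r}=m_{k,r'}$ and $m_{k,s}=m_{k,s'}$, where $\{r,s\},\{r',s'\}$ are the Farey parents of $t,t'$ respectively. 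Since Farey parents have strictly smaller complexity, the inductive hypothesis yields $r=r'$ and $s=s'$, whence $t=t'$ is the common mediant --- a contradiction. The base cases of minimal complexity are checked directly from the root triples $(1,k+2,1)$ and $(1,1,1)$.
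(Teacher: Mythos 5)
The paper does not actually supply a proof of this proposition: it is stated in the introduction with the remark that it is ``essentially given by'' \cite{gyo-maru}*{Corollary 4.2}, so there is no in-paper argument to compare line by line. Your route --- identify $t\in(0,1]$ with a vertex of $\mathrm{LM}\mathbb T(k)$ via the Farey/labeling tree of Section 7.1, use the fact that the middle entry is the strict maximum, and transfer uniqueness of the triple with given maximum back and forth along this identification --- is the natural argument and is sound in substance. (Two remarks: the ``monotonicity'' you single out as the main obstacle is immediate from the construction of the labeling, since $m_{k,t}$ is by definition the middle entry of a vertex of $\mathrm{LM}\mathbb T(k)$ and Proposition \ref{prop:all-markov}(1) says that entry is the strict maximum; and your induction in the converse direction can be bypassed entirely, because Section 7.1 already records that every unordered $k$-GM triple other than $(1,1,1)$ and $(1,k+2,1)$ appears \emph{exactly once} in $\mathrm{LM}\mathbb T(k)$, so distinct labels $t\neq t'$ give distinct triples with the same maximum, contradicting Conjecture \ref{conj:gen-Markov} directly.)

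There are, however, concrete boundary slips as written. Your convention $\Theta(1)=(1,1,1)$ is inconsistent: $m_{k,1}=k+2$, while $(1,1,1)$ has maximum $1$, so under your convention the requirement that $m_{k,t}$ be the marked maximum fails at $t=1$, and the triple $(1,k+2,1)$ is never in the image of $\Theta$, so ``surjectivity on fibres'' fails at $b=k+2$; the convention must be $\Theta(1)=(1,k+2,1)$. Fibrewise surjectivity also fails at $b=1$, since $0/1$ is excluded from the domain and $m_{k,t}\geq k+2$ for all $t\in(0,1]$, so the fibre over $1$ is empty while $(1,1,1)$ is a triple with maximum $1$; the case $b=1$ must be treated separately (trivially, by Proposition \ref{singular-solution} the only triple with maximum $1$ is $(1,1,1)$). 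Finally, in your inductive converse the Farey parents of $t$ may include $0/1\notin(0,1]$, where the inductive injectivity hypothesis does not apply; this needs the (easy) extra observation that $m_{k,r}=1$ forces $r=0/1$ whereas $m_{k,r}\geq k+2$ on $(0,1]$. All of these are repairable in a line or two, but as stated the ``for every $k$-GM number $b$'' surjectivity claim is false and the $t=1$ convention contradicts your own setup, so the write-up needs these corrections before it is a complete proof.
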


The denominator of $ F^+(k,t) $ also carries significant meaning. Let $(r,t,s)$ be a Farey triple with $r<t<s$. We consider solutions $x$ to equations
\begin{align*}
    m_{k,r}x&\equiv m_{k,s} \mod m_{k,t},\\
     m_{k,r}x&\equiv -m_{k,s} \mod m_{k,t},\\
    m_{k,s}x&\equiv  m_{k,r} \mod m_{k,t},\\
    m_{k,s}x&\equiv - m_{k,r} \mod m_{k,t}.
\end{align*} Each solution is unique in the range $\left(0,m_t\right)$ in this situation. These numbers are called the \emph{characteristic numbers} and we denote them by $u^+_{k,t}, u^-_{k,t},v^+_{k,t}, v^-_{k,t}$, respectively. Note that it seems that $u^+_{k,t}, u^-_{k,t},v^+_{k,t}, v^-_{k,t}$ are depend on $k$ and a Farey triple $(r,t,s)$, but since $t$ determines a Farey triple $(r,t,s)$ with $r<t<s$ uniquely, it depends only on $k$ and $t$. We will prove the following theorem in this paper:
\begin{theorem}[Theorem \ref{thm:u_t-denominator}]
 For any $k\in \mathbb Z_{\geq 0}$ and an irreducible fraction $t\in (0,1]$, we have $F^+(k,t)=\dfrac{m_{k,t}}{u^+_{k,t}}$.   
\end{theorem}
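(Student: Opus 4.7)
The plan is the following. The numerator of $F^+(k,t)$ is $m_{k,t}$ by Theorem~\ref{thm:M_t-C_t-combinatorics}, so the essential task is to show that the denominator $d_{k,t}$ of $F^+(k,t)$ (in lowest terms) equals $u^+_{k,t}$, i.e.\ that
\[
0<d_{k,t}<m_{k,t}\quad\text{and}\quad m_{k,r}\,d_{k,t}\equiv m_{k,s}\pmod{m_{k,t}},
\]
where $(r,t,s)$ is the unique Farey triple with $r<t<s$. I would prove this by induction along the Farey tree on $(0,1]\cap\QQ$, using the $k$-GC triple mutation \eqref{eq:k-gen-cohn-jumping} as the engine of the induction.

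For the induction I would set up the following matrix dictionary. Associate to $t$ the $k$-GC triple $(P,Q,R)$ with $(p_{12},q_{12},r_{12})=(m_{k,r},m_{k,t},m_{k,s})$ provided by Theorem~\ref{thm:intro-cohn}(1). From $Q=PR-S$ and $s_{12}=0$, the $(1,2)$-entry identity
\[
m_{k,t}=p_{11}\,m_{k,s}+m_{k,r}\,r_{22}
\]
yields immediately $p_{11}\,m_{k,s}\equiv -m_{k,r}\,r_{22}\pmod{m_{k,t}}$. I would then show, by tracking the sign assignment on the pre-snake graph at each Vieta jumping step, that the continued fraction $F^+(k,t)=[a_1,\dots,a_n]$ corresponds via $[a_1,\dots,a_n]\mapsto\prod_i\begin{bmatrix}a_i&1\\1&0\end{bmatrix}$ to a $2\times 2$ matrix whose first column is $(m_{k,t},d_{k,t})^{\mathrm{T}}$, and that this matrix is obtained by a prescribed normalization from the $k$-GC matrix $Q$. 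In particular, $d_{k,t}$ is identified with a concrete entry expressible in terms of the entries of $P,Q,R$, and the congruence above then reads exactly as $m_{k,r}\,d_{k,t}\equiv m_{k,s}\pmod{m_{k,t}}$.

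The base case $t=1$ gives $F^+(k,1)=(k+2)/1$ with Farey triple $(0/1,1/1,1/0)$ and $(m_{k,r},m_{k,t},m_{k,s})=(1,k+2,1)$, for which $u^+_{k,1}=1$ trivially. The induction step takes $t$ to a Farey child $t'=r\oplus t$ or $t'=t\oplus s$: on the snake-graph side the pre-snake graph for $t'$ is a prescribed concatenation of those for $r$ and $t$ (or $t$ and $s$), so the corresponding matrix product decomposes accordingly, while on the $k$-GC side the mutation \eqref{eq:k-gen-cohn-jumping} takes $(P,Q,R)$ to $(P,PQ-S,Q)$ (or $(Q,QR-S,R)$). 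The compatibility of these two rules reduces the inductive step to a direct entry-wise computation. The range bound $0<d_{k,t}<m_{k,t}$ is automatic from the positivity and strict monotonicity of the entries arising in the $k$-GC matrices along the tree.

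The main obstacle I expect is pinning down the matrix identification between $F^+(k,t)$ and the $k$-GC matrix $Q$: precisely, that the signs on the pre-snake graph assemble into a continued fraction whose convergent matrix is controlled by $Q$, and that this controlling relationship is preserved under the gluing of pre-snake graphs along Farey mediants. This is the bookkeeping-heavy heart of the argument; once it is in place, both the congruence and the range bound fall out from the identity $q_{12}=p_{11}r_{12}+p_{12}r_{22}$ and the induction closes.
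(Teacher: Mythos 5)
Your outline is essentially sound, but it is worth seeing how it sits relative to the paper: there the proof of Theorem \ref{thm:u_t-denominator} is a two-line combination of Theorem \ref{thm:M_t-C_t-combinatorics}\,(2) (which says that $C_t(k,-k)$ has first row $\bigl(m(\mathcal{G}[a_2,\dots,a_\ell]),\,m(\mathcal{G}[a_1,\dots,a_\ell])\bigr)$, so by Theorem \ref{thm:snakegraph-continuedfraction} the denominator of $F^+(k,t)$ is the $(1,1)$-entry of $C_t(k,-k)$) with the quoted Proposition \ref{lem:index-of-Ct}, which already identifies that $(1,1)$-entry with $u^+_{k,t}$. Your ``matrix dictionary'' --- that the convergent matrix of $F^+(k,t)$ is a normalization of $Q=C_t(k,-k)$, compatible with Farey gluing of pre-snake graphs and with the mutation \eqref{eq:k-gen-cohn-jumping} --- is therefore not something you need to rebuild by tracking signs along Vieta jumps: it is exactly part (2) of the theorem you are already citing for the numerator (and reproving it would duplicate the long induction of Section 7.3, including the separate boundary cases $r=\tfrac{0}{1}$, $s=\tfrac{1}{1}$ handled in Proposition \ref{prop:presnake-relation}). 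The genuinely different ingredient in your plan is the endgame: instead of invoking Proposition \ref{lem:index-of-Ct}, you verify directly that $d_{k,t}=q_{11}$ satisfies the defining congruence of $u^+_{k,t}$ and lies in $(0,m_{k,t})$, then conclude by uniqueness (which needs $\gcd(m_{k,r},m_{k,t})=1$, i.e.\ Proposition \ref{relatively-prime}). That endgame does work, and it buys a self-contained argument in place of a citation to \cite{gyo-maru}.

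One imprecision to fix: the congruence does not follow ``immediately'' from the $(1,2)$-entry identity $m_{k,t}=p_{11}m_{k,s}+m_{k,r}r_{22}$, which only relates $p_{11}$ and $r_{22}$ and never mentions $q_{11}$. You also need the trace normalization of $Q$. A clean derivation: from $Q=PR-S$ one has $R=P^{-1}(Q+S)$, whence
\begin{equation*}
m_{k,s}=r_{12}=p_{22}q_{12}-p_{12}(q_{22}+k)\equiv -m_{k,r}(q_{22}+k)\pmod{m_{k,t}},
\end{equation*}
and the trace condition $q_{11}+q_{22}=(3k+3)q_{12}-k$ gives $q_{22}+k\equiv -q_{11}\pmod{m_{k,t}}$, so $m_{k,r}\,q_{11}\equiv m_{k,s}\pmod{m_{k,t}}$. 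Together with $0<q_{11}=m(\mathcal{G}[a_2,\dots,a_\ell])<m(\mathcal{G}[a_1,\dots,a_\ell])=m_{k,t}$ (the fraction is in lowest terms by Corollary \ref{cor:trdet}\,(2), and $F^+(k,t)>1$), uniqueness of the solution of the congruence in $(0,m_{k,t})$ yields $d_{k,t}=u^+_{k,t}$, which is your claim. With that repair, and with Theorem \ref{thm:M_t-C_t-combinatorics}\,(2) cited rather than re-proved, your argument is correct and is a legitimate alternative to the paper's appeal to Proposition \ref{lem:index-of-Ct}.
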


We will introduce the results on the characteristic numbers in Section 7.4.

\subsection{$k$-GM numbers and HJ-continued fractions}
Section 8 deals with negative type continued fractions for $k$-GM numbers. 
Let $r$ and $a$ are positive integers such that $1 \leq a < r$  and ${\rm gcd}(r, a)=1$. 
Then the Hirzebruch-Jung continued fraction (shortly, HJ-continued fraction) of $r/a$ is defined by
\[\frac{r}{a}=b_1-\dfrac{1}{b_2-\dfrac{1}{\ddots-\dfrac{\ddots}{b_{\ell-1}-\dfrac{1}{b_\ell}}}}.\]
For simplicity of notation, we write this continued fraction by $[[b_1,\dots,b_\ell]]$. The HJ-continued fraction is closely related to the cyclic quotient singularity in algebraic geometry.
For a $k$-GM number $m_{k,t}$ and its characteristic number $u_{k,t}^+$, we consider the cyclic quotient singularity of type $\frac{1}{m_{k,t}}(1,u_{k,t}^+)$ and its minimal resolution. Then the self-intersection numbers of exceptional curves of the minimal resolution are given by the HJ-continued fraction of $m_{k,t}/u_{k,t}^+$. In other words, the characterization of continued fractions is nothing but the characterization of cyclic quotient singularities.

\begin{definition}\label{def:kwahlchains}
Let $k\in \mathbb Z_{\geq0}$. \emph{$k$-Wahl chains} are defined as follows.
\begin{itemize}\setlength{\leftskip}{-15pt}
\item [(i)] $[[k+2]]$ is a $k$-Wahl chain.
\item[(ii)] If $[[b_1,\dots,b_l]]$ is a $k$-Wahl chain, then $[[b_1+1,b_2,\dots,b_\ell,2]]$ and $[[2,b_1,\dots, b_{\ell-1},b_{\ell}+1]]$ are also $k$-Wahl chains.
\end{itemize}
\end{definition}

We will show the following theorem:

\begin{theorem}[Theorem \ref{thm:k-wahl-chain-Markov}]
Let $m_{k,t}$ be a $k$-GM number labeled with an irreducible fraction $t \in (0,1]$, and let $u_{k,t}^+$ be its characteristic number. Then the HJ-continued fraction of $m_{k,t}/u_{k,t}^+$ is a $k$-Wahl chain.
\end{theorem}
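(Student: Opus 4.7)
The plan is to prove the theorem by induction on the depth of $t$ in the Farey tree, mirroring the recursive definition of $k$-Wahl chains. The base case corresponds to the root of the Farey tree, and the inductive step uses the Vieta jumping of $k$-GM triples, lifted to $SL(2,\mathbb Z)$ via the $k$-GC (or $k$-MM) matrix machinery of Sections 3--4, as the bridge between the arithmetic transformation on the pair $(m_{k,t},u^+_{k,t})$ and the combinatorial operations on HJ-continued fractions.

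\textbf{Base case.} For $t=1$, Theorem \ref{thm:M_t-C_t-combinatorics} gives $m_{k,1}=k+2$, and the defining congruence $1\cdot x\equiv 1 \pmod{k+2}$ yields $u^+_{k,1}=1$. Hence $m_{k,1}/u^+_{k,1}=k+2$ has HJ-continued fraction $[[k+2]]$, matching clause (i) of Definition \ref{def:kwahlchains}.

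\textbf{Inductive step.} Fix a Farey triple $(r,t,s)$ with $r<t<s$ and assume $m_{k,t}/u^+_{k,t}=[[b_1,\dots,b_\ell]]$ is a $k$-Wahl chain. The two children in the Farey tree are $(r,t_L,t)$ with $t_L=\mathrm{med}(r,t)$ and $(t,t_R,s)$ with $t_R=\mathrm{med}(t,s)$; by Theorem \ref{thm:M_t-C_t-combinatorics}(3) the associated $k$-GM triples are obtained from $(m_{k,r},m_{k,t},m_{k,s})$ by Vieta jumping. The goal is to prove
\[
\frac{m_{k,t_L}}{u^+_{k,t_L}}=[[b_1+1,b_2,\dots,b_\ell,2]], \qquad \frac{m_{k,t_R}}{u^+_{k,t_R}}=[[2,b_1,\dots,b_{\ell-1},b_\ell+1]],
\]
which is precisely clause (ii) of Definition \ref{def:kwahlchains}. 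To establish this, I would identify the pair $(m_{k,t},u^+_{k,t})$ with explicit entries (or determinants of $2\times 2$ minors) of the middle matrix $Q$ of the $k$-GC triple $(P,Q,R)$ assigned to $(r,t,s)$; Theorem \ref{thm:u_t-denominator} already identifies $u^+_{k,t}$ with the denominator of $F^+(k,t)$ from the pre-snake graph, and this denominator is naturally visible as a matrix entry once $Q$ is factored through the Hirzebruch--Jung generators $\bigl[\begin{smallmatrix} b_i & -1\\ 1 & 0\end{smallmatrix}\bigr]$. The Vieta jumping $(P,Q,R)\mapsto(P,PQ-S,Q)$ from \eqref{eq:k-gen-cohn-jumping} then produces the new middle matrix, and I would show by direct matrix computation that the resulting HJ-factorization is exactly the old one with $b_1$ incremented and a trailing $[[\cdots,2]]$ appended; the symmetric statement for the right child follows from the other jumping rule.

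\textbf{Main obstacle.} The hard part is the explicit matching between the algebraic jumping $Q\mapsto PQ-S$ and the combinatorial chain move $[[b_1,\dots,b_\ell]]\mapsto[[b_1+1,\dots,b_\ell,2]]$. A direct attack requires understanding how multiplication by $P$ followed by subtraction of the shift $S=\bigl[\begin{smallmatrix} k & 0\\ 3k^2+3k & k\end{smallmatrix}\bigr]$ acts on the column vector $\bigl(\begin{smallmatrix}m_{k,t}\\u^+_{k,t}\end{smallmatrix}\bigr)$ modulo $m_{k,t}$. I expect that the cleanest route is combinatorial: use the pre-snake graph construction of Section 7 to read off the HJ-continued fraction directly from the sign-decorated graph, then show that enlarging the graph along a Farey mediant adds exactly one new corner tile on each end, which translates into the simultaneous $+1$ at one end and appended $2$ at the other in the HJ-expansion. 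Modulo this graph-theoretic step, the rest of the induction is bookkeeping driven by the $k$-GC matrix formalism.
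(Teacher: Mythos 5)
Your base case is fine, but the inductive step you propose is false, so the induction along the Farey tree collapses. Take $k=0$ and the parent $t=\tfrac12$, for which $m_{0,t}/u^+_{0,t}=5/2=[[3,2]]$. For the right Farey child $t_R=\tfrac23$ the triple is $(m_r,m_t,m_s)=(5,29,2)$, so $u^+_{0,2/3}=12$ and $m_{0,t_R}/u^+_{0,t_R}=29/12=[[3,2,4,2]]$, which is neither $[[4,2,2]]=10/3$ nor $[[2,3,3]]=13/8$; similarly the left child gives $13/5=[[3,3,2]]$, again not one Wahl move from $[[3,2]]$. In fact the predecessor of $[[3,2,4,2]]$ in the $0$-Wahl chain tree is $[[2,2,4]]=10/7$, whose numerator is not even a Markov number, so there is no vertex-by-vertex matching of the kind you need. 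The structural reason is quantitative: by Corollary \ref{lem:decomposition-lemma} the pre-snake graph of the mediant is the concatenation of the two parents' pre-snake graphs joined by one tile, not the parent's graph with one extra corner tile at each end, so the continued-fraction lengths add along the Farey tree instead of growing by one. The true parent--child relation is the concatenation formula of Theorem \ref{thm:smallk-GM}, $m_{k,t}/u^+_{k,t}=[[\,m_{k,r}/v^-_{k,r},\,3k+4,\,m_{k,s}/v^-_{k,s}\,]]$, which moreover involves the characteristic numbers $v^-$ of the parents rather than $u^+$. Consequently the number of Wahl moves needed to build $m_{k,t}/u^+_{k,t}$ from $[[k+2]]$ is of the order of the length of the chain, far larger than the Farey depth of $t$, and the matrix computation you sketch for $Q\mapsto PQ-S$ would be attempting to verify an identity that simply does not hold; your fallback graph-theoretic step fails for the same reason.

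The paper's proof uses no induction on the Farey tree. It writes $m_{k,t}/u^+_{k,t}$ explicitly as the semi-palindromic regular continued fraction $[a_1,\dots,a_{\ell/2},a_{\ell/2}\pm k,a_{\ell/2-1},\dots,a_1]$ (Lemma \ref{lem:semi-palindrome} combined with Theorem \ref{thm:u_t-denominator}), converts this to an HJ-continued fraction via Corollary \ref{cor:posi-to-negative-cf}, and then runs Definition \ref{def:kwahlchains} backwards: since $[[b_1+2,b_2,\dots,b_s,(2)^{b_1}]]$ is a $k$-Wahl chain if and only if $[[2,b_2,\dots,b_s]]$ is, one can repeatedly strip a block from each end, and the palindromic symmetry guarantees this terminates at $[[k+2]]$. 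If you want a recursive argument, the recursion must be on the continued-fraction expansion itself (as in the paper) or on the concatenation structure of Theorem \ref{thm:smallk-GM}, not one Wahl move per Farey edge.
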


In the case $k=2$, the cyclic quotient singularity of type $\frac{1}{m_{k,t}}(1,u_{k,t}^+)$
admits a $\QQ$-Gorenstein one parameter smoothing (see \cite{hp10} and \cite{Per22}). In other words, it is a singularity of class $T$. The singularities of class $T$ are fundamental objects for understanding the deformation theory of surface singularities (\cite{KSB}). This theorem suggests a relation between $k$-GM numbers and deformation theory.
In addition, we obtain the following result, which says that the HJ-continued fraction of a $k$-GM number can be obtained from the HJ-continued fraction of a smaller $k$-GM number.

\begin{theorem}[Theorem \ref{thm:smallk-GM}]\label{thm:k-GMofHJCF}
For a $k$-GM triple $(m_{k,r},m_{k,t},m_{k,s})$ and these characteristic numbers $u^+_{k,t}, v^-_{k,r},$ and $ v^-_{k,s}$, we have 
\[
\frac{m_{k,t}}{u^+_{k,t}}=\left[\left[ \frac{m_{k,r}}{v^-_{k,r}}, 3k+4, \frac{m_{k,s}}{v^-_{k,s}} \right]\right].
\]
\end{theorem}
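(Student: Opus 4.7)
The plan is to prove the identity by induction on the depth of $t$ in the Stern--Brocot / Farey tree, using the Farey mediant operation as the recursive step. By Theorem~\ref{thm:u_t-denominator}, the left-hand side equals $F^{+}(k,t)$, so the assertion is really a statement about how the signed continued fraction $F^{+}(k,t)$ coming from the pre-snake graph of $t$ decomposes when $t$ is the Farey mediant of $r$ and $s$. Since the pre-snake graph of $t$ is obtained by gluing those of $r$ and $s$ along a single distinguished junction tile, one expects the HJ-continued fraction to inherit a decomposition $[[A,X,B]]$ where $A,B$ come from the two half-graphs and $X$ comes from the junction tile.

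For the base case I would verify the formula directly at the shallowest Farey triple of the tree (where $m_{k,t}=k+2$). For the inductive step I would first pin down $X=3k+4$ by inspection of the sign rule at the junction tile, and then identify the flanking blocks as $m_{k,r}/v^{-}_{k,r}$ and $m_{k,s}/v^{-}_{k,s}$ (rather than $m_{k,r}/u^{+}_{k,r}$ and $m_{k,s}/u^{+}_{k,s}$) by reducing the $k$-GM equation
\[
m_{k,r}^{2}+m_{k,t}^{2}+m_{k,s}^{2}+k(m_{k,r}m_{k,t}+m_{k,t}m_{k,s}+m_{k,s}m_{k,r})=(3+3k)\,m_{k,r}m_{k,t}m_{k,s}
\]
modulo $m_{k,r}$ and modulo $m_{k,s}$. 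These congruences should match the defining congruences of $v^{-}_{k,r}$ and $v^{-}_{k,s}$ once one checks that $s$ is a Farey partner of $r$ in the triple centered at $r$, and symmetrically for $s$.

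The cleanest technical route is likely through the matrix machinery of Section~3. An HJ-continued fraction $[[b_1,\dots,b_\ell]]=m/u$ is equivalent to a factorization of a $2\times 2$ matrix with first column $(m,u)^{\mathrm{T}}$ as a product of elementary matrices $\bigl(\begin{smallmatrix}b_i&-1\\1&0\end{smallmatrix}\bigr)$, while on the $k$-GC side Theorem~\ref{thm:intro-cohn} encodes the Farey mediant step as $Q=PR-S$. Reading off the appropriate entries from this matrix identity should produce exactly the claimed concatenation of HJ-continued fractions, with the junction constant $3k+4$ emerging naturally from this matrix calculation together with the determinant normalization. The inductive step then reduces to checking that the two $k$-Wahl chain extension operations of Definition~\ref{def:kwahlchains} match the two Farey mediants $(r,t\oplus r,t)$ and $(t,t\oplus s,s)$.

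The main obstacle, in my view, is bookkeeping which Farey triple each characteristic number refers to: $v^{-}_{k,r}$ is defined via the Farey triple centered at $r$, which is not $(r,t,s)$, so one must verify that using $s$ as the right-hand partner of $r$ in the defining congruence is legitimate, and that the role swap between $u^{+}_{k,t}$ (old center) and $v^{-}_{k,t}$ (new right neighbor) under a mediant step is consistent. I expect this reduces to a short modular arithmetic identity that becomes transparent when lifted to the matrix level, but it is the step most likely to require careful attention.
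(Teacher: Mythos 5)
Your overall strategy (glue the pre-snake graphs of $r$ and $s$, convert to an HJ-continued fraction, then identify the numerators and denominators of the two flanking blocks) is the right shape and close in spirit to the paper, but the plan has a genuine gap at its central step: the identification of the flanking blocks as $m_{k,r}/v^-_{k,r}$ and $m_{k,s}/v^-_{k,s}$. A congruence argument of the kind you propose cannot do this by itself. By Lemma \ref{lem:characteristic equation} and Proposition \ref{prop:uv-relation}, $u^+_{k,r}$ and $v^-_{k,r}=u^+_{k,r}+k$ are the two roots of $x^2+kx+1\equiv 0 \pmod{m_{k,r}}$, and by Lemma \ref{lem:ut+k<mt/2} both lie in $(0,m_{k,r}/2)$; so ``reducing the $k$-GM equation modulo $m_{k,r}$'' together with a range restriction cannot distinguish them when $k>0$. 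What actually decides between them is a structural fact your sketch never invokes: inside the pre-snake graph of $t$ the $r$-block occurs \emph{reversed}, so its continued fraction is $[a_\ell,\dots,a_1]$ rather than $F^+(k,r)=[a_1,\dots,a_\ell]$, and by the semi-palindrome property (Lemma \ref{lem:semi-palindrome} together with Corollary \ref{cor:trdet}(1)) reversal shifts the denominator by exactly $k$, giving $G^+(k,r)=m_{k,r}/v^-_{k,r}$ (Theorem \ref{thm:uv-continued-fraction}(2)). This reversal-plus-shift mechanism is the crux of the paper's argument and is absent from your proposal. Your bookkeeping fix is also half false: for a Farey triple $(r,t,s)$ exactly one of $r,s$ is the centre of the parent triple, so the statement ``$s$ is a partner of $r$ in the triple centred at $r$, and symmetrically for $s$'' fails in general; for $(1/3,2/5,1/2)$ the fraction $1/2$ does lie in the triple $(0/1,1/3,1/2)$ centred at $1/3$, but $1/3$ does not lie in $(0/1,1/2,1/1)$, the triple centred at $1/2$.

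There are also structural problems. The induction does no work: your inductive step never uses the inductive hypothesis, and the proposed base case $m_{k,t}=k+2$ (i.e.\ $t=1/1$) lies outside the theorem's hypotheses ($r\neq 0/1$, $s\neq 1/1$), where the characteristic numbers of $m=1$ are not even defined. The junction constant $3k+4$ also does not come from the sign count at the junction tile alone: the decomposition of the sign sequence (Proposition \ref{prop:presnake-relation}(3), which itself relies on the Christoffel-word concatenation of Theorem \ref{thm:christoffel} to control how the two halves sit inside the big graph) produces interior entries $3k+2$ and $1$ plus a perturbation $b_m-1$ of the $s$-block, and the extra $+2$ only appears after the regular-to-HJ conversion of Corollary \ref{cor:posi-to-negative-cf}. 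The paper's proof is direct, not inductive: it writes $F^+(k,t)=[a_\ell,\dots,a_1,3k+2,1,b_m-1,\dots,b_1]$, recognizes the reversed blocks as $G^+(k,r)$ and $G^+(k,s)$, converts with Corollary \ref{cor:posi-to-negative-cf} to get $F^-(k,t)=[[\,G^-(k,r),\,3k+4,\,G^-(k,s)\,]]$, and concludes with Theorem \ref{thm:u_t-denominator} and Theorem \ref{thm:uv-continued-fraction}(2). Your alternative matrix route could perhaps be pushed through, but since $S\neq 0$ for $k>0$ the relation $Q=PR-S$ is not a product of HJ elementary matrices, and the claim that $3k+4$ ``emerges naturally from the determinant normalization'' is precisely the part that would have to be proved.
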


In the case $k=0$, the characteristic numbers $u^+_{k,t}, u^-_{k,t}, v^+_{k,t}$  and $ v^-_{k,t}$ are the same number. Theorem \ref{thm:k-GMofHJCF} is a generalization of the following proposition.

\begin{proposition}[\cite{UZ}*{Proposition 3.4}]\label{UZ}
Let $\displaystyle \frac{m_{0,r}}{u^+_{0,r}},\frac{m_{0,t}}{u^+_{0,t}},\frac{m_{0,s}}{u^+_{0,s}}$ be the fractions of $0$-Wahl chains. Then $\displaystyle \frac{m_{0,t}}{u^+_{0,t}}=\left[\left[\frac{m_{0,r}}{u^+_{0,r}},4,\frac{m_{0,s}}{u^+_{0,s}}\right]\right]$ if and only if $(m_{0,r},m_{0,t},m_{0,s})$ is a Markov triple with $m_{0,r} < m_{0,s} < m_{0,t}$.
\end{proposition}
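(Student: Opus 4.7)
The plan is to derive Proposition \ref{UZ} as the $k=0$ specialization of Theorem \ref{thm:k-GMofHJCF}, together with a short bijectivity argument handling the converse.

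For the \emph{if} direction, suppose $(m_{0,r}, m_{0,t}, m_{0,s})$ is a Markov triple with $m_{0,r} < m_{0,s} < m_{0,t}$. Then $(r,t,s)$ is the corresponding Farey triple with $r < t < s$, so $t$ is the Farey mediant of $r$ and $s$. Applying Theorem \ref{thm:k-GMofHJCF} at $k=0$ yields
\[
\frac{m_{0,t}}{u^+_{0,t}} \;=\; \left[\left[\, \frac{m_{0,r}}{v^-_{0,r}},\; 4,\; \frac{m_{0,s}}{v^-_{0,s}} \,\right]\right],
\]
and the coincidence $u^+_{0,\bullet} = v^-_{0,\bullet}$, which is noted explicitly in the $k=0$ case in the discussion preceding Theorem \ref{thm:k-GMofHJCF}, converts this into the desired continued fraction identity.

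For the \emph{only if} direction, I would first establish that the assignment $t \mapsto m_{0,t}/u^+_{0,t}$ gives a bijection between irreducible fractions $t \in (0,1]$ and $0$-Wahl chains. This rests on two observations: (a) the Farey-mediant construction of rationals in $(0,1]$ is naturally isomorphic to the Markov tree via the Farey-triple correspondence stated earlier in the introduction; (b) under this correspondence, the two Wahl-chain expansion rules in Definition \ref{def:kwahlchains} match the two Vieta jumpings \eqref{intro:vieta-jumping} restricted to the $0$-GM tree, so an induction on tree-distance from the root propagates the bijection. Given the bijection, together with the uniqueness of HJ-continued fractions with entries $\geq 2$, the converse follows: the stated continued fraction identity combined with the forward direction forces $(r,t,s)$ to equal the unique Farey triple obtained from the three Wahl chains, so $(m_{0,r},m_{0,t},m_{0,s})$ is a Markov triple with the prescribed ordering.

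The main obstacle will be the bookkeeping for part (b): verifying that Vieta jumping on a Markov triple induces precisely the two elementary Wahl-chain expansions $[[b_1,\dots,b_\ell]] \mapsto [[b_1+1,b_2,\dots,b_\ell,2]]$ and $[[b_1,\dots,b_\ell]] \mapsto [[2,b_1,\dots,b_{\ell-1},b_\ell+1]]$ on the fractions $m_{0,t}/u^+_{0,t}$. I would handle this using the $0$-MM triple machinery from Theorem \ref{thm:intro-Markov-monodromy}: the defining congruence $m_{0,r}\, u^+_{0,t} \equiv m_{0,s} \pmod{m_{0,t}}$ can be read off from an appropriate $(1,2)$-entry of a product of $0$-MM matrices, and the matrix conjugations in \eqref{eq:k-Markov-monodromy-jumping} track the required continued-fraction expansion step-by-step, avoiding any direct manipulation of long continued fractions.
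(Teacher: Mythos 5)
This proposition is not proved in the paper at all: it is quoted from \cite{UZ}, and the paper's only contribution to it is the remark after Theorem \ref{thm:smallk-GM} that the latter generalizes it (using $v^-_{0,t}=u^+_{0,t}$, which follows from Proposition \ref{prop:uv-relation}(2) at $k=0$). So the natural comparison is with your claim that the statement follows formally from Theorem \ref{thm:smallk-GM}, and already your \emph{if} direction has a gap: you identify the magnitude ordering $m_{0,r}<m_{0,s}$ with the Farey ordering $r<s$, but these disagree in general, and the discrepancy matters because $u^+_t$ is defined through the Farey-ordered triple and HJ-splicing is not symmetric. Concretely, for the Markov triple $\{2,5,29\}$ the Farey-ordered triple is $(1/2,\,2/3,\,1/1)$, i.e.\ $(m_r,m_t,m_s)=(5,29,2)$ with $u^+_{2/3}=12$, and Theorem \ref{thm:smallk-GM} gives $\frac{29}{12}=[[3,2,4,2]]=\left[\left[\frac{5}{2},4,\frac{2}{1}\right]\right]$; the magnitude-ordered splice $\left[\left[\frac{2}{1},4,\frac{5}{2}\right]\right]=[[2,4,3,2]]=\frac{31}{18}$ is not even of the form $29/u$. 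Hence the sentence ``then $(r,t,s)$ is the corresponding Farey triple with $r<t<s$'' is false, and matching the quoted normalization requires an extra relabeling/reversal argument (e.g.\ via the fact that reversing $m_t/u^+_t$ yields $m_t/u^-_t$), plus attention to the degenerate case $m_r=1$, where the leading entry is $3$ rather than a spliced $4$ (compare Proposition \ref{prop:fibonacchi-branch}: $34/13=[[3,3,3,2]]$).

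The \emph{only if} direction is where the proposal genuinely fails. Your plan rests on the claim that $t\mapsto m_{0,t}/u^+_{0,t}$ is a bijection onto the set of $0$-Wahl chains, with the two Vieta jumpings \eqref{intro:vieta-jumping} matching the two elementary expansions of Definition \ref{def:kwahlchains}. Both halves are false. The map is far from surjective: $[[2,3]]=5/3$ is a $0$-Wahl chain, yet $m_t/u^+_t>2$ always (Lemma \ref{lem:ut+k<mt/2} gives $u^+_t<m_t/2$ for $k=0$), and the Remark after Theorem \ref{thm:k-wahl-chain-Markov} exhibits $[[4,2,2]]=10/3$, a $0$-Wahl chain whose numerator is not even a Markov number. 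Correspondingly, a single Vieta jump does not act by a single elementary expansion on these fractions: by Theorem \ref{thm:smallk-GM} it splices the two entire neighbouring chains around a new entry $4$, roughly doubling the length, i.e.\ it is a long composition of elementary moves whose intermediate chains leave the Markov family (this is exactly what the reduction in the proof of Theorem \ref{thm:k-wahl-chain-Markov} does). Finally, your appeal to uniqueness of HJ-expansions does not close the argument: equality of digit strings does not locate the splicing ``$4$'' (the constituent chains can themselves contain $4$'s, and in the degenerate case there is no $4$ at all), so it does not force $(m_{0,r},m_{0,t},m_{0,s})$ to be the Markov triple attached to $t$. A correct proof of the converse needs an independent computation — e.g.\ expressing the numerator and denominator of an arbitrary splice $\left[\left[\frac{m_r}{u^+_r},4,\frac{m_s}{u^+_s}\right]\right]$ in terms of $m_r,u^+_r,m_s,u^+_s$ and comparing with the Markov relation — which is what \cite{UZ} supplies and which is not a formal consequence of the results in this paper.
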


\section{Generalized Markov equation}
In this section, we recall facts about the generalized Markov equation according to \cite{gyo-maru}*{Section 2}. Let $k\in \ZZ_{\geq0}$. We consider the following equation:
\[
    x^2+y^2+z^2+k(yz+xz+xy)=(3+3k)xyz.
\]
It is called the \emph{$k$-generalized Markov equation}, or  abbreviated as the \emph{$k$-GM equation} and we denote it by GME$(k)$. Recall that $n$ is a \emph{$k$-generalized Markov number} (or  abbreviated as the \emph{$k$-GM number}) if $n$ appears in some positive integer solutions to $\mathrm{GME}(k)$. A triple $(a,b,c)\in \mathbb Z_{\geq 1}^3$ is called a \emph{$k$-generalized Markov triple} (or  abbreviated as the \emph{$k$-GM triple}) if $(a,b,c)$ is a positive integer solution to $\mathrm{GME}(k)$. 

There is an algorithm that enumerates all $k$-GM triples.
We give a tree $\mathbb T^{k}$ with triples of positive integers as its vertices in the following steps.
\begin{itemize}\setlength{\leftskip}{-15pt}
    \item [(1)] The root vertex is $(1,1,1)$,
    \item[(2)] the triple $(1,1,1)$ has three children, $(k+2,1,1),(1,k+2,1),(1,1,k+2),$ and
    \item [(3)] the generation rule below $(k+2,1,1),(1,k+2,1),(1,1,k+2)$ is as follows: 
\begin{itemize}\setlength{\leftskip}{-15pt}
        \item [(i)] if $a$ is the maximal number in $(a,b,c)$, then $(a,b,c)$ has two children 
        \[\left(a,\dfrac{a^2+kac+c^2}{b},c\right)\text{ and } \left(a,b,\dfrac{a^2+kab+b^2}{c}\right),\] 
        \item [(ii)] if $b$ is the maximal number in $(a,b,c)$, then $(a,b,c)$ has two children
        \[\left(\dfrac{b^2+kbc+c^2}{a},b,c\right)\text{ and } \left(a,b,\dfrac{a^2+kab+b^2}{c}\right),\] 
        \item [(iii)] if $c$ is the maximal number in $(a,b,c)$, then $(a,b,c)$ has two children 
        \[\left(\dfrac{b^2+kbc+c^2}{a},b,c\right)\text{ and } \left(a,\dfrac{a^2+kac+c^2}{b},c\right).\] 
\end{itemize}
\end{itemize}
We remark that when $(a,b,c)$ is a $k$-GM triple, $\dfrac{b^2+kbc+c^2}{a}$, $\dfrac{a^2+kac+c^2}{b}$, $\dfrac{a^2+kbc+b^2}{c}$ are also integers, because 
\begin{align*}
    \dfrac{b^2+kbc+c^2}{a}=(3+3k)bc-a-kc-kb,\\
    \dfrac{a^2+kac+c^2}{b}=(3+3k)ac-b-kc-ka,\\
    \dfrac{a^2+kab+b^2}{c}=(3+3k)ab-c-kb-ka.
\end{align*}
\begin{example}
When $k=1$, $\mathbb T^{k}$ is the following.
 \begin{align*}\label{tree}
\begin{xy}(-5,0)*+{(1,1,1)}="0",(20,15)*+{(3,1,1)}="1",(20,0)*+{(1,3,1)}="1'",(20,-15)*+{(1,1,3)}="1''",(45,25)*+{(3,13,1)}="20",(45,15)*+{(3,1,13)}="21",(45,5)*+{(13,3,1)}="22",(45,-5)*+{(1,3,13)}="23",(45,-15)*+{(13,1,3)}="24",(45,-25)*+{(1,13,3)}="25",(80,33)*+{(61,13,1)\cdots}="40",(80,27)*+{(3,13,217)\cdots}="41", (80,21)*+{(61,1,13)\cdots}="42", (80,15)*+{(3,217,13)\cdots}="43", (80,9)*+{(13,61,1)\cdots}="44", (80,3)*+{(13,3,217)\cdots}="45", (80,-3)*+{(217,3,13)\cdots}="46", (80,-9)*+{(1,61,13)\cdots}="47", (80,-15)*+{(13,217,3)\cdots}="48", (80,-21)*+{(13,1,61)\cdots}="49", (80,-27)*+{(217,13,3)\cdots}="410", (80,-33)*+{(1,13,61)\cdots}="411", \ar@{-}"0";"1"\ar@{-}"0";"1'"\ar@{-}"0";"1''"\ar@{-}"1";"20"\ar@{-}"1";"21"\ar@{-}"1'";"22"\ar@{-}"1'";"23"\ar@{-}"1''";"24"\ar@{-}"1''";"25"\ar@{-}"20";"40"\ar@{-}"20";"41"\ar@{-}"21";"42"\ar@{-}"21";"43"\ar@{-}"22";"44"\ar@{-}"22";"45"\ar@{-}"23";"46"\ar@{-}"23";"47"\ar@{-}"24";"48"\ar@{-}"24";"49"\ar@{-}"25";"410"\ar@{-}"25";"411"
\end{xy}
\end{align*}   
\end{example}

\begin{theorem}[\cite{gyomatsu}*{Theorem 1}]\label{Diophantinetheorem}
Every $k$-GM triple appears exactly once in $\mathbb T^{k}$.
\end{theorem}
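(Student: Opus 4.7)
The plan is to split the argument into (i) \emph{soundness} — every vertex of $\mathbb T^k$ is a $k$-GM triple — and (ii) \emph{existence with uniqueness} — every $k$-GM triple occurs exactly once. For (i) I would induct on depth in $\mathbb T^k$. The root $(1,1,1)$ plainly satisfies $\mathrm{GME}(k)$, and for any $k$-GM triple $(a,b,c)$ the equation, viewed as a monic quadratic in one coordinate with the other two fixed, has positive integer constant term (e.g.\ $b^2+kbc+c^2$ in the case of $a$) and positive integer sum of roots $(3+3k)bc-k(b+c)$. By Vieta's formulas the other root is then a positive integer, so each child map in $\mathbb T^k$ — being one such root-swap — produces another $k$-GM triple.

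For (ii) I will run a descent. Define the \emph{parent} of a $k$-GM triple $(a,b,c)$ by Vieta jumping on whichever coordinate is the strict maximum. The central technical claim is: if $a>\max\{b,c\}$, then
\[
a' := \frac{b^2+kbc+c^2}{a} \;<\; a,
\]
so the parent's maximum is strictly less than $a$. Since $aa'=b^2+kbc+c^2>0$ and $a+a'=(3+3k)bc-k(b+c)>0$, both roots of the associated quadratic are positive, and $a>a'$ is equivalent to $a^2>b^2+kbc+c^2$. I plan to deduce this inequality from the strict bounds $a>b,c$ combined with the $\mathrm{GME}(k)$ identity itself, using the manipulation $a^2-(b^2+kbc+c^2)=a(a-a')$. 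Iterating the parent map produces triples with strictly decreasing maximum, so the descent terminates at a \emph{base triple} in which the maximum is not strict.

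A short analysis of $\mathrm{GME}(k)$ with two coordinates equal — substituting, say, $b=c$ and solving the resulting univariate quadratic in $a$ under positive integrality — will classify the base triples as exactly $(1,1,1)$ together with the three coordinate permutations of $(k+2,1,1)$. These are precisely the depth-$0$ and depth-$1$ vertices of $\mathbb T^k$. Because Vieta jumping on a fixed coordinate is an involution, the parent map is the exact inverse of the child map that produced it, so the descent traces out a unique path from $(a,b,c)$ back to the root; this delivers both existence and uniqueness.

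\textbf{Main obstacle.} The decisive step is the descent inequality $a^2>b^2+kbc+c^2$ assuming only that $a$ is the strict maximum: naive size comparisons like $a\geq \max\{b,c\}+1$ do not suffice, particularly for large $k$, and the full $\mathrm{GME}(k)$ relation must be used in an essential way. A secondary hurdle is the base-triple classification, where one must verify that the discriminant of the univariate specialisation of $\mathrm{GME}(k)$ obtained by forcing two coordinates to agree is a perfect square only in the finitely many listed cases. Both points are sharp in the coefficient $3+3k$ of $xyz$, and it is exactly this coefficient that keeps the descent strictly decreasing on every non-base triple.
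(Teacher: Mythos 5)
Your overall route---soundness via Vieta's formulas, descent on the strict maximum, classification of triples with a repeated coordinate, and uniqueness from the involutive character of each jump---is the standard one; note that the paper itself does not reprove this statement but imports it from \cite{gyomatsu}, and the two facts your descent leans on are precisely the quoted Proposition~\ref{singular-solution} and Proposition~\ref{magnitude}, which come from the same source. So the architecture is right, but as written the proposal leaves its pivotal step unproved.

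The genuine gap is exactly the step you flag as the main obstacle: the descent inequality is never established, and the manipulation you offer, $a^2-(b^2+kbc+c^2)=a(a-a')$, is circular, since $a-a'>0$ is by definition equivalent to the inequality you want. The missing idea is to work with the monic quadratic $f(x)=x^2+\bigl(k(b+c)-(3+3k)bc\bigr)x+(b^2+kbc+c^2)$, whose roots are $a$ and $a'$, and evaluate it at the second-largest coordinate: for $a>b\ge c\ge 1$ one computes $f(b)=2b^2+c^2+kb^2+2kbc-(3+3k)b^2c\le (c-b)(b+c+2kb)$, using $(3+3k)b^2c\ge(3+3k)b^2$, and this is negative whenever $b>c$; hence $b$ lies strictly between the two roots, so $a'<b<a$ (the case $b=c$ is absorbed by the repeated-coordinate classification). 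This yields the stronger conclusion $a'\le b$ of Proposition~\ref{magnitude}, and you genuinely need that stronger form for the uniqueness half: to see that each reversed descent step is an actual edge of $\mathbb T^{k}$ you must know the replaced entry of the parent is \emph{non-maximal} (the tree only jumps non-maximal coordinates), which is what $a'\le b$ guarantees, not merely $a'<a$. Two smaller slips to repair: the permutations of $(k+2,1,1)$ are not descent-terminal (their maximum is strict, and jumping it returns $(1,1,1)$), so the only triple with non-strict maximum is $(1,1,1)$; and your discriminant analysis of the repeated-coordinate case needs the accompanying estimate that two squares cannot differ by $4(k+2)$ once the repeated value exceeds $1$ (a gap-between-consecutive-squares argument), which is what pins the base solutions down to $(1,1,1)$ and the permutations of $(k+2,1,1)$.
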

The operation $(a,b,c)\mapsto \left(\dfrac{b^2+kbc+c^2}{a},b,c\right)$ is called the \emph{first Vieta jumping}, $(a,b,c)\mapsto\left(a,\dfrac{a^2+kac+c^2}{b},c\right)$ the \emph{second Vieta jumping}, and $(a,b,c)\mapsto \left(a,b,\dfrac{a^2+kab+b^2}{c}\right)$ the \emph{third Vieta jumping}. 

The following are important properties of the $k$-GM triple.

\begin{proposition}[\cite{gyomatsu}*{Lemma 4}]\label{singular-solution}
If a triple $(a,b,c)$ is a $k$-GM triple with $a=b$, $b=c$ or $c=a$, then $(a,b,c)$ is any one of $(1,1,1),(k+2,1,1),(1,k+2,1)$, or $(1,1,k+2)$.   
\end{proposition}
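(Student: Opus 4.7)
The plan is to use the $\mathfrak{S}_3$-symmetry of $\mathrm{GME}(k)$ in $x,y,z$ to reduce to the case $a=b$, since the cases $b=c$ and $a=c$ then yield the remaining triples $(k+2,1,1)$ and $(1,k+2,1)$ by permuting coordinates. Substituting $a=b$ turns the equation into the quadratic
\[
c^{2} - aN\,c + (k+2)a^{2}=0, \qquad N:=3(1+k)a-2k,
\]
in the unknown $c$. Since $a\geq 1$ and $k\geq 0$ one has $N\geq k+3>0$, so any positive integer root $c$ forces the discriminant $a^{2}\bigl(N^{2}-4(k+2)\bigr)$ to be a perfect square.

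Writing $M^{2}:=N^{2}-4(k+2)\geq 0$ with $M\geq 0$, I obtain the factorization $(N-M)(N+M)=4(k+2)$. The two factors share parity, and if both were odd their product would be odd, contradicting $4\mid 4(k+2)$; hence both are even. Setting $N-M=2p$ and $N+M=2q$ with $1\leq p\leq q$, this becomes $pq=k+2$ and $p+q=N=3(1+k)a-2k$. Now the elementary inequality $p+q\leq pq+1$ for positive integers (which follows from $(p-1)(q-1)\geq 0$, with equality iff $p=1$) yields
\[
3(1+k)a-2k \;\leq\; (k+2)+1 \;=\; k+3,
\]
which forces $a\leq 1$ and hence $a=1$. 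In that case equality $p+q=pq+1$ holds, pinning down $\{p,q\}=\{1,k+2\}$, so $M=k+1$ and $c=\tfrac{a(N\pm M)}{2}\in\{1,k+2\}$. This recovers exactly the triples $(1,1,1)$ and $(1,1,k+2)$ in the case $a=b$; the full list then follows by the $\mathfrak{S}_3$-symmetry.

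I do not expect a significant obstacle. The only step that deserves care is verifying that $N$ and $M$ are honest non-negative integers of the same parity, so that $p,q$ become positive integers to which the inequality $p+q\leq pq+1$ can be applied. Once that is in place the argument is immediate, and does not require any appeal to the tree structure $\mathbb{T}^{k}$ or to Theorem \ref{Diophantinetheorem}.
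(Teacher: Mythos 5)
Your proof is correct and complete. The paper itself gives no argument for this proposition: it is imported verbatim as a citation of \cite{gyomatsu}*{Lemma 4}, so there is no internal proof to compare against. Your route is a clean, self-contained elementary one: by the full $\mathfrak{S}_3$-symmetry of $\mathrm{GME}(k)$ you reduce to $a=b$, where the equation becomes the monic quadratic $c^{2}-aNc+(k+2)a^{2}=0$ with $N=3(1+k)a-2k$; integrality of $c$ forces $N^{2}-4(k+2)=M^{2}$, the factorization $(N-M)(N+M)=4(k+2)$ together with the parity argument gives positive integers $p,q$ with $pq=k+2$ and $p+q=N$, and the inequality $p+q\leq pq+1$ pins down $a=1$, $\{p,q\}=\{1,k+2\}$, hence $c\in\{1,k+2\}$. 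I checked the computation of the quadratic (the constant term is indeed $(k+2)a^{2}$ and the linear coefficient $aN$), the positivity $N\geq k+3$ needed for $p\geq 1$, and the recovery of all four listed triples under permutation; everything goes through, including the edge case $k=0$. The only stylistic remark is that you could state explicitly that $\sqrt{D}=|2c-aN|$ is an integer whenever $c$ is an integer root of a monic integer quadratic, which is the precise justification for the perfect-square step, but this is routine. Your argument does not rely on the tree $\mathbb{T}^{k}$ or Theorem \ref{Diophantinetheorem}, so it gives an independent verification of the cited lemma rather than a circular appeal to the structure theory built on it.
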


\begin{proposition}\label{magnitude}
For any $k$-GM triple $(a,b,c)$, if $a>b\geq c$, then we have \vspace{3mm}
\begin{itemize}\setlength{\leftskip}{-15pt}
\item[(1)] $\dfrac{a^2+kac+c^2}{b}>a(>c)$\vspace{3mm},
\item[(2)] $\dfrac{a^2+kab+b^2}{c}>a(>b)$\vspace{3mm},
\item[(3)] $b\geq \dfrac{b^2+kbc+c^2}{a}$.
\end{itemize}
\end{proposition}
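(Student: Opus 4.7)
For parts (1) and (2), the inequalities are essentially immediate from positivity together with the strict ordering $a > b \geq c \geq 1$. My plan is to observe for (1) that $a^2 > ab$ (since $a > b$ and $a > 0$) and $kac + c^2 > 0$, hence $a^2 + kac + c^2 > ab$, which rearranges to give (1). For (2), the ordering $a > b \geq c$ gives $a > c$, so $a^2 > ac$, and adding the positive quantity $kab + b^2$ yields $a^2 + kab + b^2 > ac$, which is (2).

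The substantive statement is (3). The natural approach is to exploit the quadratic structure of $\mathrm{GME}(k)$ in its first argument. Fixing $b$ and $c$ and setting
\[
f(x) := x^2 + \bigl(k(b+c) - (3+3k)bc\bigr) x + (b^2 + kbc + c^2),
\]
the GM equation becomes $f(a) = 0$. By Vieta's formulas, the two roots of $f$ are $a$ and $a' := (b^2 + kbc + c^2)/a$. Since $f$ opens upward, showing $f(b) \leq 0$ forces $b$ to lie in the closed interval between the two roots; combined with the hypothesis $a > b$, this yields $a' \leq b$, which is precisely (3).

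Hence the crux reduces to the single inequality
\[
f(b) = (2+k)b^2 + 2kbc + c^2 - (3+3k)b^2 c \leq 0.
\]
I would establish this by the chain $(2+k)b^2 + 2kbc + c^2 \leq (2+k)b^2 + 2kb^2 + b^2 = (3+3k)b^2 \leq (3+3k)b^2 c$, where the first step uses $c \leq b$ and the second uses $c \geq 1$. The only subtle point, and what I view as the main (mild) obstacle, is the equality case: both of these inequalities become equalities precisely when $b = c = 1$, in which case Proposition \ref{singular-solution} forces $(a,b,c) = (k+2,1,1)$, and a direct calculation gives $a' = (1 + k + 1)/(k+2) = 1 = b$, consistent with the weak inequality in (3). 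The remaining manipulations are routine.
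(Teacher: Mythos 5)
Your proof is correct, and it takes a genuinely more self-contained route than the paper's. The paper disposes of the proposition in two lines: the case $a>b>c$ is quoted from Proposition 5 of \cite{gyomatsu}, and the boundary case $a>b=c$ is reduced via Proposition \ref{singular-solution} to the single triple $(k+2,1,1)$, where the claims are checked directly. You instead prove everything from scratch: (1) and (2) by the one-line positivity estimates (which are fine, since $a>b\geq c\geq 1$ gives $a^2>ab$, respectively $a^2>ac$, and the remaining terms are nonnegative with $c^2,b^2>0$), and (3) by viewing $\mathrm{GME}(k)$ as the monic quadratic $f(x)=x^2+\bigl(k(b+c)-(3+3k)bc\bigr)x+(b^2+kbc+c^2)$ in the first variable, identifying $(b^2+kbc+c^2)/a$ as the second root by Vieta, and showing $f(b)\leq 0$, so that $b$ is pinched between the two roots and the hypothesis $a>b$ forces the other root to be at most $b$. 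Your chain $(2+k)b^2+2kbc+c^2\leq(3+3k)b^2\leq(3+3k)b^2c$ is valid under $1\leq c\leq b$, so the key inequality $f(b)\leq 0$ holds, and the argument treats $b>c$ and $b=c$ uniformly, removing both the case split and the external citation. One remark: the equality-case digression is not logically needed, since (3) is a non-strict inequality and $f(b)\leq 0$ already suffices; your verification that $(k+2,1,1)$ attains equality (consistent with Proposition \ref{singular-solution}) is a consistency check rather than a required step.
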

\begin{proof}
The case $a>b>c$ follows from \cite{gyomatsu}*{Proposition 5}. When $a>b=c$, we can check the statement directly because of Proposition \ref{singular-solution}.     
\end{proof}

\begin{proposition}[\cite{gyomatsu}*{Corollary 8}]\label{relatively-prime}
For any $k$-GM triple $(a,b,c)$, all pairs in $a,b,c$ are relatively prime.
\end{proposition}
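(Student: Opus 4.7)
The plan is to prove Proposition \ref{relatively-prime} by structural induction on the tree $\mathbb{T}^{k}$, using Theorem \ref{Diophantinetheorem} which guarantees that every $k$-GM triple appears as a vertex of $\mathbb{T}^{k}$. The hypothesis to carry along the induction is simply ``all three pairs of entries are relatively prime.''

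For the base case, I would check that the root $(1,1,1)$ and its three children $(k+2,1,1)$, $(1,k+2,1)$, $(1,1,k+2)$ all have pairwise coprime entries, which is immediate because $\gcd(n,1)=1$ for every $n$. For the inductive step, suppose $(a,b,c)$ is a vertex with $\gcd(a,b)=\gcd(b,c)=\gcd(a,c)=1$, and consider an arbitrary child; by the symmetry of the three Vieta jumpings it suffices to treat the second Vieta jumping, which produces $(a,b',c)$ with $b':=(a^2+kac+c^2)/b$. The pair $(a,c)$ is unchanged, so it remains coprime. To control $\gcd(a,b')$, note that $bb'=a^2+kac+c^2$, so any prime $p$ dividing both $a$ and $b'$ must divide $bb'-a(a+kc)=c^{2}$, hence $p\mid c$, contradicting $\gcd(a,c)=1$. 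The same argument with the roles of $a$ and $c$ swapped shows $\gcd(b',c)=1$. This completes the inductive step for the second jumping, and the first and third jumpings are handled identically.

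There is essentially no obstacle in this proof beyond making sure the induction is set up correctly: the only arithmetic input is the defining relation $bb'=a^{2}+kac+c^{2}$ (and its two analogues), combined with the trivial observation that a prime dividing $a$ and $c^2$ must divide $c$. Because the tree $\mathbb{T}^k$ reaches every $k$-GM triple exactly once, the induction covers all positive integer solutions of $\mathrm{GME}(k)$, yielding the claim.
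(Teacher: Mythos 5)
Your proof is correct: the induction along $\mathbb{T}^{k}$ (justified by Theorem \ref{Diophantinetheorem}), with the identity $bb'=a^{2}+kac+c^{2}$ forcing any common prime of $a$ and $b'$ to divide $c$, is a complete and sound argument. The paper itself gives no proof of Proposition \ref{relatively-prime} but simply cites \cite{gyomatsu}*{Corollary 8}; your argument is the standard tree-induction proof of such coprimality statements and supplies a self-contained justification consistent with that reference.
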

In this paper, we introduce another equation:
\[
    x^2+y^2+z^2+(2k+k^2)(x+y+z)+2k^3+3k^2=xyz.
\]
It is called the \emph{$k$-generalized second Markov equation} and we denote it by GSME$(k)$.

By a straightforward calculation, we have the following proposition:
\begin{proposition}[\cite{gyo-maru}*{Proposition 2.4}]\label{pr:rationalsolution}
A triple $(a,b,c)$ is one of the rational solutions to $\mathrm{GME}(k)$ if and only if the triple
\[((3+3k)a-k,(3+3k)b-k,(3+3k)c-k)\] is one of the rational solutions to $\mathrm{GSME}(k)$.     
\end{proposition}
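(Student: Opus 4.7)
The plan is to verify the proposition by a direct substitution and comparison of the two polynomial expressions. Set $m := 3+3k$, and define the affine change of variables
\[
X := ma - k, \quad Y := mb - k, \quad Z := mc - k.
\]
I will substitute $(X,Y,Z)$ into the left-hand side of $\mathrm{GSME}(k)$ and show that the resulting polynomial in $a,b,c$ factors as $m^{2}$ times the left-hand side of $\mathrm{GME}(k)$. Because $m = 3+3k \neq 0$ for $k \in \mathbb Z_{\geq 0}$, this factorization immediately gives the stated equivalence of rational solutions.

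First I would expand the quadratic terms: $X^{2}+Y^{2}+Z^{2} = m^{2}(a^{2}+b^{2}+c^{2}) - 2mk(a+b+c) + 3k^{2}$, and the linear term $(2k+k^{2})(X+Y+Z) = k(k+2)\bigl(m(a+b+c) - 3k\bigr)$. Next I would expand the cubic term by writing $XY = m^{2}ab - mk(a+b) + k^{2}$, then multiplying by $Z = mc-k$ to obtain
\[
XYZ = m^{3}abc - m^{2}k(ab+bc+ca) + mk^{2}(a+b+c) - k^{3}.
\]
Collecting everything in
\[
X^{2}+Y^{2}+Z^{2}+(2k+k^{2})(X+Y+Z)+2k^{3}+3k^{2} - XYZ,
\]
the coefficient of $(a+b+c)$ becomes $m\bigl(-2k + k(k+2) - k^{2}\bigr) = 0$, and the constant contribution collapses via $3k^{2} - 3k^{2}(k+2) + k^{3} + 2k^{3} + 3k^{2} = 0$. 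What remains is $m^{2}(a^{2}+b^{2}+c^{2}) + m^{2}k(ab+bc+ca) - m^{3}abc = m^{2}\bigl(a^{2}+b^{2}+c^{2}+k(ab+bc+ca)-(3+3k)abc\bigr)$, which is exactly $m^{2}$ times the $\mathrm{GME}(k)$ expression in $(a,b,c)$.

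The only nontrivial step is the bookkeeping: verifying that all linear-in-$(a+b+c)$ terms and all pure constants cancel. These cancellations are the reason the shift $a\mapsto ma-k$ is the correct one, and they are what the paper means by ``a straightforward calculation.'' Since $m \neq 0$, the vanishing of the transformed expression is equivalent to the vanishing of the original, which proves the proposition in both directions.
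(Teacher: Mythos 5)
Your computation is correct: substituting $X=ma-k$, $Y=mb-k$, $Z=mc-k$ with $m=3+3k$ into the GSME$(k)$ expression, the linear and constant terms cancel exactly as you state, and the remainder is $m^{2}$ times the GME$(k)$ expression, which (since $m\neq 0$ and the affine change of variables is a bijection on $\mathbb{Q}^{3}$) gives both directions. This is precisely the ``straightforward calculation'' the paper invokes for Proposition \ref{pr:rationalsolution}, so your approach matches the paper's.
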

By Proposition \ref{pr:rationalsolution}, if $(a,b,c)$ is a $k$-GM triple, then $((3+3k)a-k,(3+3k)b-k,(3+3k)c-k)$ is a positive integer solution to GSME$(k)$. In the case $k=0$, the converse holds (cf. \cite{aig}*{Proposition 2.2}), but in general, this does not hold. 

\begin{example}
We set $k=4$. Then $(9,9,22)$ is a positive integer solution to $\mathrm{GSME}(4)$, but the corresponding solution $\left(\dfrac{13}{15},\dfrac{13}{15},\dfrac{26}{15}\right)$ to $\mathrm{GME}(4)$ is not a $4$-GM triple.
\end{example}

We can get the Vieta jumping of $\mathrm{GSME}(k)$ from that of  $\mathrm{GME}(k)$.

\begin{proposition}[\cite{gyo-maru}*{Proposition 2.6}]\label{pr:vietajumpGSME}
Let $(a,b,c)$ be an integer solution to $\mathrm{GSME}(k)$. Then
\[(bc-a-k^2-2k,b,c),(a, ac-b-k^2-2k,c), (a,b,ab-c-k^2-2k)\]
are also integer solutions to $\mathrm{GSME}(k)$.
\end{proposition}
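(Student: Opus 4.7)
The plan is to apply the standard Vieta jumping argument directly to the defining equation of $\mathrm{GSME}(k)$. Since the equation
\[
x^2+y^2+z^2+(2k+k^2)(x+y+z)+2k^3+3k^2=xyz
\]
is symmetric in $x,y,z$, it suffices to establish the first of the three claims; the other two follow by the obvious permutations.

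Fixing $y=b$ and $z=c$, I would rewrite the equation as a monic quadratic in the remaining variable $x$:
\[
x^2+\bigl((k^2+2k)-bc\bigr)x+\bigl(b^2+c^2+(k^2+2k)(b+c)+2k^3+3k^2\bigr)=0.
\]
By hypothesis $x=a$ is a root of this quadratic. Let $a'$ denote the second root (counted with multiplicity). Vieta's formulas give
\[
a+a'=bc-(k^2+2k),\qquad a\cdot a'=b^2+c^2+(k^2+2k)(b+c)+2k^3+3k^2.
\]
From the sum relation, $a'=bc-a-k^2-2k$, which is manifestly an integer since $a,b,c,k\in\mathbb Z$. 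Because $a'$ is a root of the same monic quadratic, the triple $(a',b,c)=(bc-a-k^2-2k,\,b,\,c)$ is an integer solution of $\mathrm{GSME}(k)$. Applying the same argument with $x$ replaced by $y$ and then by $z$ yields the remaining two jumpings.

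I do not expect any serious obstacle here: the claim is the textbook Vieta jumping for a quadratic in one variable, and the only thing to verify is that the coefficient of $x$ in the monic quadratic produces the claimed formula $bc-a-k^2-2k$ after solving for the conjugate root. Integrality is automatic from the sum-of-roots relation, so no divisibility argument (of the sort needed for $\mathrm{GME}(k)$ itself) is required.
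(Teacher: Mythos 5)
Your argument is correct: $\mathrm{GSME}(k)$ is symmetric in $x,y,z$, the rewriting as a monic quadratic in $x$ with coefficients $(k^2+2k)-bc$ and $b^2+c^2+(k^2+2k)(b+c)+2k^3+3k^2$ is accurate, and the sum-of-roots relation immediately gives the conjugate root $bc-a-k^2-2k$, which is an integer and satisfies the same equation; the permuted cases follow by symmetry. The paper itself does not reproduce a proof (it cites \cite{gyo-maru}*{Proposition 2.6}), but the sentence preceding the statement, ``We can get the Vieta jumping of $\mathrm{GSME}(k)$ from that of $\mathrm{GME}(k)$,'' indicates a different intended route: transport the $\mathrm{GME}(k)$ Vieta jumping $a\mapsto (3+3k)bc-a-kb-kc$ through the affine correspondence $(a,b,c)\mapsto((3+3k)a-k,(3+3k)b-k,(3+3k)c-k)$ of Proposition \ref{pr:rationalsolution}, under which it becomes exactly $A\mapsto BC-A-k^2-2k$. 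Your direct quadratic argument is self-contained and more elementary, since it never invokes the correspondence between the two equations (which is stated for rational solutions and would require a word of care for arbitrary integer solutions of $\mathrm{GSME}(k)$ that are not induced); the transfer argument, on the other hand, makes the structural link between $\mathrm{GME}(k)$ and $\mathrm{GSME}(k)$ explicit, which is the theme the paper is emphasizing at that point. Either way, no divisibility issue arises, exactly as you observe.
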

We call the operation $(a,b,c)\mapsto (bc-a-k^2-2k,b,c)$ (resp. $(a, ac-b-k^2-2k,c)$,$(a,b,ab-c-k^2-2k)$) the \emph {first} (resp. \emph {second, third}) Vieta jumping.

Even if $(a,b,c)$ is a positive integer solution to $\mathrm{GSME}(k)$, $\left(\dfrac{a+k}{3+3k},\dfrac{b+k}{3+3k},\dfrac{c+k}{3+3k}\right)$ is not a $k$-GM triple in general.
It is said that a positive integer solution $(a,b,c)$ to $\mathrm{GSME}(k)$ is an \emph{induced (positive) solution (from $\mathrm{GME}(k)$)} if $\left(\dfrac{a+k}{3+3k},\dfrac{b+k}{3+3k},\dfrac{c+k}{3+3k}\right)$ is a $k$-GM triple.
\begin{proposition}[\cite{gyo-maru}*{Proposition 2.7}]
Let $(a,b,c)$ be an induced solution to $\mathrm{GSME}(k)$. Then
\[(bc-a-k^2-2k,b,c),(a, ac-b-k^2-2k,c), (a,b,ab-c-k^2-2k)\]
are also induced solutions to $\mathrm{GSME}(k)$.    
\end{proposition}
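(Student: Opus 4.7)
The plan is to use the bijection of Proposition \ref{pr:rationalsolution} between rational solutions of $\mathrm{GME}(k)$ and $\mathrm{GSME}(k)$, and to verify that, under this bijection, each Vieta jumping of $\mathrm{GSME}(k)$ (from Proposition \ref{pr:vietajumpGSME}) corresponds to the analogous Vieta jumping of $\mathrm{GME}(k)$. Once this equivariance is established, the claim is immediate: by Theorem \ref{Diophantinetheorem} the Vieta jumpings of $\mathrm{GME}(k)$ send $k$-GM triples to $k$-GM triples, so the image of an induced solution will again be induced.

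Concretely, I would fix an induced positive solution $(A,B,C)$ and write $(A,B,C) = ((3+3k)a-k,\,(3+3k)b-k,\,(3+3k)c-k)$ where $(a,b,c)$ is a $k$-GM triple (this is the definition of ``induced''). To handle the first Vieta jumping, I would expand
\[
BC = ((3+3k)b-k)((3+3k)c-k) = (3+3k)^{2}bc - (3+3k)k(b+c) + k^{2},
\]
and then compute
\[
BC - A - k^{2} - 2k \;=\; (3+3k)^{2}bc - (3+3k)k(b+c) - (3+3k)a - k \;=\; (3+3k)\,a' - k,
\]
where $a' := (3+3k)bc - k(b+c) - a$. Rewriting $\mathrm{GME}(k)$ as a quadratic in $a$ (or using the identities listed in the remark following Theorem \ref{Diophantinetheorem}) gives the equality
\[
a' \;=\; (3+3k)bc - k(b+c) - a \;=\; \frac{b^{2}+kbc+c^{2}}{a},
\]
which is precisely the first Vieta jumping of $(a,b,c)$ in $\mathrm{GME}(k)$. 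Hence $(a',b,c)$ is again a $k$-GM triple, and therefore $(BC-A-k^{2}-2k,\,B,\,C)=((3+3k)a'-k,(3+3k)b-k,(3+3k)c-k)$ is an induced solution to $\mathrm{GSME}(k)$.

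The second and third Vieta jumpings are handled by exactly the same computation with the roles of the coordinates permuted, so no additional idea is required. There is no serious obstacle in the argument; the only point of care is the bookkeeping needed to match the shift $-k$ appearing in the bijection $(a,b,c)\mapsto((3+3k)a-k,\dots)$ with the constant $-k^{2}-2k$ appearing in the Vieta jumping of $\mathrm{GSME}(k)$, and this is exactly what the display above checks.
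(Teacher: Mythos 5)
Your proposal is correct, and the computation is the expected one: transporting through the correspondence of Proposition \ref{pr:rationalsolution} and checking $BC-A-k^2-2k=(3+3k)a'-k$ with $a'=(3+3k)bc-kb-kc-a=\frac{b^2+kbc+c^2}{a}$ exactly matches the GSME$(k)$ jump with the GME$(k)$ Vieta jumping; note that this paper does not reprove the statement but cites it from \cite{gyo-maru}*{Proposition 2.7}, and your argument is the natural one behind it. One small point of precision: the closure of $k$-GM triples under Vieta jumping is most directly justified by Vieta's formulas together with the integrality identities displayed after the construction of $\mathbb T^{k}$ (the jumped entry is a positive integer and satisfies the equation as the second root of the quadratic), rather than by Theorem \ref{Diophantinetheorem} itself, which only asserts that every $k$-GM triple occurs exactly once in the tree.
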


We denote by $\tilde{\mathbb T}^{k}$ the tree obtained from ${\mathbb T}^{k}$ by replacing $(a,b,c)$ with $((3+3k)a-k,(3+3k)b-k,(3+3k)c-k)$. 

\begin{corollary}
Every induced solution to $\mathrm{GSME}(k)$ appears exactly once in $\tilde{\mathbb T}^{k}$.
\end{corollary}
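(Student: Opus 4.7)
The plan is to derive the corollary as an immediate transfer of Theorem \ref{Diophantinetheorem} across the affine change of variables that relates $\mathrm{GME}(k)$ to $\mathrm{GSME}(k)$. Concretely, I will work with the map
\[
\varphi\colon \ZZ^3\to\ZZ^3,\qquad (a,b,c)\mapsto \bigl((3+3k)a-k,\,(3+3k)b-k,\,(3+3k)c-k\bigr),
\]
which is induced coordinate-wise by the single-variable map $x\mapsto (3+3k)x-k$. Since $3+3k>0$, this single-variable map is injective, so $\varphi$ itself is injective on $\ZZ^3$.

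The first step is to identify the vertex set of $\tilde{\mathbb T}^{k}$ with the set of induced positive integer solutions of $\mathrm{GSME}(k)$. By construction $\tilde{\mathbb T}^{k}$ has vertex set $\varphi(V(\mathbb T^{k}))$, and by Theorem~\ref{Diophantinetheorem} the vertices of $\mathbb T^{k}$ are exactly the $k$-GM triples; hence $V(\tilde{\mathbb T}^{k})=\varphi(\{k\text{-GM triples}\})$. On the other hand, unwinding the definition of an induced solution, the set of induced positive integer solutions of $\mathrm{GSME}(k)$ is precisely $\varphi(\{k\text{-GM triples}\})$, because $\left(\tfrac{a+k}{3+3k},\tfrac{b+k}{3+3k},\tfrac{c+k}{3+3k}\right)$ is the preimage of $(a,b,c)$ under $\varphi$, and Proposition \ref{pr:rationalsolution} guarantees that $\varphi$ does send $\mathrm{GME}(k)$-solutions to $\mathrm{GSME}(k)$-solutions.

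The second step is to upgrade this equality of sets to the "exactly once" statement. By Theorem~\ref{Diophantinetheorem} each $k$-GM triple occurs exactly once as a vertex of $\mathbb T^{k}$; applying the injective map $\varphi$ vertex-wise preserves this property, so each induced solution occurs exactly once in $\tilde{\mathbb T}^{k}$.

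There is no substantive obstacle here, since the only non-trivial input — that every $k$-GM triple appears exactly once in $\mathbb T^{k}$ — is precisely Theorem~\ref{Diophantinetheorem}. The only thing to be careful about is articulating clearly that the definition of "induced solution" is tailor-made to match the image of $\varphi$, and that $\varphi$ is injective, so no duplication can occur among the induced solutions when the tree is transported.
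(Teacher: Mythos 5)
Your proof is correct and is essentially the argument the paper leaves implicit: the corollary is stated as an immediate consequence of Theorem \ref{Diophantinetheorem}, Proposition \ref{pr:rationalsolution}, and the definitions of $\tilde{\mathbb T}^{k}$ and of induced solutions, which is exactly the transfer-by-the-injective-affine-map argument you spell out.
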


\begin{example}
When $k=1$, $\tilde{\mathbb T}^{k}$ is the following.
 \begin{align*}\label{tree}
\begin{xy}(-10,0)*+{(5,5,5)}="0",(20,15)*+{(17,5,5)}="1",(20,0)*+{(5,17,5)}="1'",(20,-15)*+{(5,5,17)}="1''",(50,25)*+{(17,77,5)}="20",(50,15)*+{(17,5,77)}="21",(50,5)*+{(77,17,5)}="22",(50,-5)*+{(5,17,77)}="23",(50,-15)*+{(77,5,17)}="24",(50,-25)*+{(5,77,17)}="25",(90,33)*+{(365,77,5)\cdots}="40",(90,27)*+{(17,77,1301)\cdots}="41", (90,21)*+{(365,5,77)\cdots}="42", (90,15)*+{(17,1301,77)\cdots}="43", (90,9)*+{(77,365,5)\cdots}="44", (90,3)*+{(77,17,1301)\cdots}="45", (90,-3)*+{(1301,17,77)\cdots}="46", (90,-9)*+{(5,365,77)\cdots}="47", (90,-15)*+{(77,1301,17)\cdots}="48", (90,-21)*+{(77,5,365)\cdots}="49", (90,-27)*+{(1301,77,17)\cdots}="410", (90,-33)*+{(5,77,365)\cdots}="411", \ar@{-}"0";"1"\ar@{-}"0";"1'"\ar@{-}"0";"1''"\ar@{-}"1";"20"\ar@{-}"1";"21"\ar@{-}"1'";"22"\ar@{-}"1'";"23"\ar@{-}"1''";"24"\ar@{-}"1''";"25"\ar@{-}"20";"40"\ar@{-}"20";"41"\ar@{-}"21";"42"\ar@{-}"21";"43"\ar@{-}"22";"44"\ar@{-}"22";"45"\ar@{-}"23";"46"\ar@{-}"23";"47"\ar@{-}"24";"48"\ar@{-}"24";"49"\ar@{-}"25";"410"\ar@{-}"25";"411"
\end{xy}
\end{align*}   
\end{example}    

\section{Generalized Markov tree and inverse generalized Markov tree}
In the previous section, we gave a tree consisting of all $k$-GM triples. In this section, we divide these $k$-GM triples into two trees, and we see the relation between them. 

First, we consider the following binary tree $\mathrm{M}\mathbb T(k)$:
\begin{itemize}\setlength{\leftskip}{-15pt}
\item [(1)] the root vertex is $(1,k+2,1)$,
\item [(2)] for a vertex $(a,b,c)$, there are the following two children of it:
\[\begin{xy}(0,0)*+{(a,b,c)}="1",(30,-15)*+{\left(b,\dfrac{b^2+kbc+c^2}{a},c\right).}="2",(-30,-15)*+{\left(a,\dfrac{a^2+kab+b^2}{c},b\right)}="3", \ar@{-}"1";"2"\ar@{-}"1";"3"
\end{xy}\]
\end{itemize}
 It is called the \emph{$k$-generalized Markov tree}, or abbreviated as the \emph{$k$-GM tree}. We see that for any $(a,b,c) \in \mathrm{M}\mathbb T(k)$, $b$ is the unique maximal number in $a,b,c$ according to Proposition \ref{magnitude} (note that this tree is different from $\mathbb T^{k}$).

 \begin{example}
 When $k=1$, $\mathrm{M}\mathbb T(k)$ is the following.
 \begin{align*}
\begin{xy}(0,0)*+{(1,3,1)}="1",(25,16)*+{(3,13,1)}="2",(25,-16)*+{(1,13,3)}="3", (50,24)*+{(13,61,1)}="4",(50,8)*+{(3,217,13)}="5",(50,-8)*+{(13,217,3)}="6",(50,-24)*+{(1,61,13)}="7", (85,28)*+{(61,291,1)\cdots}="8",(85,20)*+{(13,4683,61)\cdots}="9",(85,12)*+{(217,16693,13)\cdots}="10",(85,4)*+{(3,3673,217)\cdots}="11",(85,-4)*+{(217,3673,3)\cdots}="12",(85,-12)*+{(13,16693,217)\cdots}="13",(85,-20)*+{(61,4683,13)\cdots}="14",(85,-28)*+{(1,291,61)\cdots}="15",\ar@{-}"1";"2"\ar@{-}"1";"3"\ar@{-}"2";"4"\ar@{-}"2";"5"\ar@{-}"3";"6"\ar@{-}"3";"7"\ar@{-}"4";"8"\ar@{-}"4";"9"\ar@{-}"5";"10"\ar@{-}"5";"11"\ar@{-}"6";"12"\ar@{-}"6";"13"\ar@{-}"7";"14"\ar@{-}"7";"15"
\end{xy}
\end{align*}
 \end{example}
The following proposition follows from Theorem \ref{Diophantinetheorem}:
\begin{proposition}\label{prop:all-markov}
The following statements hold:
\begin{itemize}\setlength{\leftskip}{-15pt}
    \item[(1)] each vertex $(a,b,c)$ in $\mathrm{M}\mathbb T(k)$ is a $k$-GM triple with $b> \max\{a,c\}$, \item[(2)] every $k$-GM triple $(a,b,c)$ with $b> \max\{a,c\}$ appears exactly once in $\mathrm{M}\mathbb T(k)$.
\end{itemize}
\end{proposition}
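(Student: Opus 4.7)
My plan is to prove (1) by induction on the depth in $\mathrm{M}\mathbb T(k)$ and to prove (2) by constructing an explicit ``parent'' operation on $k$-GM triples and matching it with $\mathrm{M}\mathbb T(k)$ via strong induction on the middle coordinate.

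For (1), the root $(1,k+2,1)$ satisfies $\mathrm{GME}(k)$ by direct substitution and has $b=k+2>1=a=c$. For the inductive step, let $(a,b,c)$ be a $k$-GM triple with $b>\max\{a,c\}$. The left child $(a,(a^2+kab+b^2)/c,b)$ is a permutation of the third Vieta jumping of $(a,b,c)$, hence again a $k$-GM triple by the symmetry of $\mathrm{GME}(k)$ in its three variables. Applying Proposition~\ref{magnitude} to the rearrangement $(b,a,c)$ when $a\geq c$ (or $(b,c,a)$ when $c\geq a$), one of parts (1) or (2) gives $(a^2+kab+b^2)/c>b$, so the middle entry strictly exceeds both $b$ and $a$. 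The right child is handled symmetrically.

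For (2), given a $k$-GM triple $(a,b,c)$ with $b>\max\{a,c\}$, define $b^{\ast}:=(a^2+kac+c^2)/b$. Part (3) of Proposition~\ref{magnitude} applied to the appropriate rearrangement of $(b,a,c)$ yields $b^{\ast}\leq\max\{a,c\}<b$, so $b^{\ast}$ is a positive integer, and $(b^{\ast},a,c)$ is a $k$-GM triple as the first Vieta jumping of $(b,a,c)$. Unless $(a,b,c)=(1,k+2,1)$ is the base case (verified by solving the quadratic $b^2-(k+3)b+(k+2)=0$ arising from $a=c=1$), Proposition~\ref{singular-solution} forces $a\neq c$; assume $a>c$ (the case $c>a$ is symmetric). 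Proposition~\ref{singular-solution} applied to the hypothetical equality $a=b^{\ast}$ rules it out, since then $(a,a,c)$ would be one of $(1,1,1),(k+2,1,1),(1,k+2,1),(1,1,k+2)$, none compatible with $a>c\geq 1$ for $k\geq 0$. Hence $(b^{\ast},a,c)$ has $a$ as its strict middle maximum and $a<b$, so the induction hypothesis places it in $\mathrm{M}\mathbb T(k)$, and using $b^{\ast}b=a^2+kac+c^2$ one checks that its right child is exactly $(a,b,c)$.

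For uniqueness, the parent operation $(a,b,c)\mapsto(b^{\ast},a,c)$ or $(a,c,b^{\ast})$ (with the case dictated by the sign of $a-c$) depends only on the triple, so the path from any triple back to the root is unique and each triple occupies at most one vertex of $\mathrm{M}\mathbb T(k)$. The main obstacle I anticipate is the careful case analysis --- tracking which permutation of $(a,b,c)$ feeds into Proposition~\ref{magnitude}, and invoking Proposition~\ref{singular-solution} to rule out the degenerate equalities $b^{\ast}\in\{a,c\}$; the base-case quadratic computation is routine but essential for termination of the induction.
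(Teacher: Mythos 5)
Your proof is correct, but it takes a genuinely different route from the paper. The paper offers no direct argument for this proposition: it records it as an immediate consequence of Theorem \ref{Diophantinetheorem} (the enumeration of all $k$-GM triples by the tree $\mathbb T^{k}$, quoted from earlier work), so all the descent machinery is outsourced to that citation. You instead reprove the statement from scratch inside $\mathrm{M}\mathbb T(k)$ itself: part (1) by induction down the tree, applying Proposition \ref{magnitude} to the rearrangements $(b,a,c)$ and $(b,c,a)$ to see that each child again has a strict middle maximum, and part (2) by strong induction on the middle entry, with Proposition \ref{singular-solution} ruling out the degenerate equalities $a=c$ (away from the root) and $b^{\ast}=a$, and the identity $b^{\ast}b=a^{2}+kac+c^{2}$ identifying $(a,b,c)$ as the appropriate child of its parent triple. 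This buys self-containedness — only Propositions \ref{singular-solution} and \ref{magnitude} are needed, not the full classification theorem — at the cost of redoing work the paper simply cites. One small point worth spelling out in your uniqueness paragraph: to see that a non-root occurrence of $(a,b,c)$ is forced to be a right child when $a>c$ (left when $c>a$), note that part (1) applied to the tree-parent shows a left child always has third entry exceeding its first; together with the observation that $(1,k+2,1)$ cannot label a non-root vertex (its would-be parent would have to be $(1,1,1)$, contradicting part (1)), this makes the ``at most once'' induction airtight. These are one-line checks, not gaps.
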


Next, we define another tree whose vertices are $k$-GM triples.
We consider the following binary tree $\mathrm{M}\mathbb T^\dag(k)$:
\begin{itemize}\setlength{\leftskip}{-15pt}
\item [(1)] the root vertex is $(1,1,1)$,
\item [(2)] for a vertex $(a,b,c)$, there are the following two children of it:
\[\begin{xy}(0,0)*+{(a,b,c)}="1",(-30,-15)*+{\left(a,c,\dfrac{a^2+kac+c^2}{b}\right)}="2",(30,-15)*+{\left(\dfrac{a^2+kac+c^2}{b},a,c\right).}="3", \ar@{-}"1";"2"\ar@{-}"1";"3"
\end{xy}\]
\end{itemize}
It is called the \emph{inverse $k$-generalized Markov tree}, or abbreviated as the \emph{inverse $k$-GM tree}.  The operation taking left (resp. right) child in the inverse $k$-GM tree is the inverse of the operation taking the left (resp. right) child in the $k$-GM tree. 

\begin{example}
 When $k=1$, $\mathrm{M}\mathbb T^\dag(k)$ is the following.
 \begin{align*}
\begin{xy}(0,0)*+{(1,1,1)}="1",(25,16)*+{(3,1,1)}="2",(25,-16)*+{(1,1,3)}="3", (50,24)*+{(13,3,1)}="4",(50,8)*+{(3,1,13)}="5",(50,-8)*+{(13,1,3)}="6",(50,-24)*+{(1,3,13)}="7", (85,28)*+{(61,13,1)\cdots}="8",(85,20)*+{(13,1,61)\cdots}="9",(85,12)*+{(217,3,13)\cdots}="10",(85,4)*+{(3,13,217)\cdots}="11",(85,-4)*+{(217,13,3)\cdots}="12",(85,-12)*+{(13,3,217)\cdots}="13",(85,-20)*+{(61,1,13)\cdots}="14",(85,-28)*+{(1,13,61)\cdots}="15",\ar@{-}"1";"2"\ar@{-}"1";"3"\ar@{-}"2";"4"\ar@{-}"2";"5"\ar@{-}"3";"6"\ar@{-}"3";"7"\ar@{-}"4";"8"\ar@{-}"4";"9"\ar@{-}"5";"10"\ar@{-}"5";"11"\ar@{-}"6";"12"\ar@{-}"6";"13"\ar@{-}"7";"14"\ar@{-}"7";"15"
\end{xy}
\end{align*}
 \end{example}

Before describing the relation between $\mathrm{M}\mathbb T(k)$ and $\mathrm{M}\mathbb T^\dag(k)$, we will introduce the \emph{canonical graph isomorphism} between two trees.

\begin{definition}
Let $\mathbb T$ and $\mathbb T'$ be full planar binary trees. If a graph isomorphism $f\colon \mathbb T \to \mathbb T'$ preserves the left child and the right child, then $f$ is called the \emph{canonical graph isomorphism}.
\end{definition}
\begin{proposition}\label{pr:rho-mor}
The correspondence $\mu\colon(a,b,c)\mapsto\left(a,\dfrac{a^2+kac+c^2}{b},c\right)$ induces the canonical graph isomorphism from $\mathrm{M}\mathbb T(k)$ to $\mathrm{M}\mathbb T^\dag(k)$.
\end{proposition}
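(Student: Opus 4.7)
The plan is to check the three conditions that together characterize a canonical graph isomorphism: that $\mu$ sends the root of $\mathrm{M}\mathbb T(k)$ to the root of $\mathrm{M}\mathbb T^\dag(k)$, and that $\mu$ intertwines the left-child rule (respectively the right-child rule) in the source with the corresponding rule in the target. Once these three checks are in hand, a straightforward induction on the depth of a vertex promotes $\mu$ to a bijection between the vertex sets that preserves left and right descendants, which is exactly a canonical graph isomorphism.

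First I would verify the root condition by direct substitution: $\mu(1,k+2,1) = \left(1,\tfrac{1+k\cdot 1\cdot 1+1}{k+2},1\right)=(1,1,1)$, which is the root of $\mathrm{M}\mathbb T^\dag(k)$. Next, for an arbitrary vertex $(a,b,c)$ of $\mathrm{M}\mathbb T(k)$, I set $b'=\tfrac{a^2+kac+c^2}{b}$ so that $\mu(a,b,c)=(a,b',c)$, and record the three Vieta reciprocity identities
\[
b\cdot b'=a^2+kac+c^2,\qquad c\cdot\tfrac{a^2+kab+b^2}{c}=a^2+kab+b^2,\qquad a\cdot\tfrac{b^2+kbc+c^2}{a}=b^2+kbc+c^2,
\]
which come from viewing the $k$-GM equation as a quadratic in the middle, first, and third slots respectively.

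Using these identities, the two required squares compute as follows. For the left-child square, the $\mathrm{M}\mathbb T(k)$-child of $(a,b,c)$ is $(a, b_L, b)$ with $b_L=\tfrac{a^2+kab+b^2}{c}$, and applying $\mu$ (which replaces the middle slot by its Vieta partner with respect to the outer two) yields $\mu(a,b_L,b)=(a,c,b)$; going the other way, the $\mathrm{M}\mathbb T^\dag(k)$-left-child of $\mu(a,b,c)=(a,b',c)$ is $\left(a,c,\tfrac{a^2+kac+c^2}{b'}\right)=(a,c,b)$. The right-child square is parallel: the two paths land on $(b,a,c)$. This is just a matter of canceling the Vieta reciprocity identities, so I will keep the calculation compact rather than expanding every fraction.

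Finally I would conclude. Because $\mu$ is its own inverse as a map on triples (it is the second Vieta jumping, and in $\mathrm{M}\mathbb T(k)$ the middle slot is a strict maximum, so $b'$ is a well-defined positive integer by Proposition \ref{magnitude}), the root-plus-child compatibility above automatically gives both injectivity and surjectivity of the induced map on vertex sets: any vertex of $\mathrm{M}\mathbb T^\dag(k)$ is reached from $(1,1,1)$ by a word in the two child operations, and the same word applied to $(1,k+2,1)$ produces its $\mu$-preimage. The only point that could cause trouble is making sure $\mu$ truly maps into $\mathrm{M}\mathbb T^\dag(k)$ rather than out into some other $k$-GM triples, but this is settled by the inductive picture: once the two squares commute, the image of every vertex is, by construction, a vertex obtained from $(1,1,1)$ by the $\mathrm{M}\mathbb T^\dag(k)$ recursion. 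The essential work is therefore the two one-line cancellations above; the rest is bookkeeping.
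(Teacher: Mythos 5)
Your proposal is correct and follows essentially the same route as the paper: verify $\mu(1,k+2,1)=(1,1,1)$ at the root, then check the left-child and right-child commuting squares by the same one-line Vieta cancellations (both paths giving $(a,c,b)$ and $(b,a,c)$ respectively), and conclude by induction along the trees. The additional remarks on well-definedness via Proposition \ref{magnitude} and on $\mu$ being an involution are consistent with the paper's surrounding discussion and do not change the argument.
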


\begin{proof}
We can check that the statement holds for the root vertex in $\mathrm{M}\mathbb T(k)$ directly. We assume that the statement holds for $(a,b,c) \in \mathrm{M}\mathbb T(k)$. We denote by $\sigma_L$ (resp. $\sigma_R$)  the operation taking the left (resp. right) child in $\mathrm{M}\mathbb T(k)$, and $\sigma^{\dag}_L$ (resp. $\sigma^{\dag}_R$)  the operation taking the left (resp. right) child in $\mathrm{M}\mathbb T^{\dag}(k)$. It suffices to show $\mu\circ\sigma_L(a,b,c)=\sigma^{\dag}_L\circ\mu(a,b,c)$ and $\mu\circ \sigma_R(a,b,c)=\sigma^{\dag}_R\circ\mu(a,b,c)$. We will prove only the first statement. The left-hand side is
\[\mu\circ\sigma_L(a,b,c)=\mu\left(a,\dfrac{a^2+kab+b^2}{c},b\right)=(a,c,b),\]
and the right-hand side is
\[\sigma_L^{\dag}\circ\mu(a,b,c)=\sigma_L^{\dag}\left(a,\dfrac{a^2+kac+c^2}{b},c\right)=(a,c,b),\] as desired.
\end{proof}

\begin{remark}
Since the correspondence $\mu$ is an involution, we can also regard $\mu$ as the map  from $\mathrm{M}\mathbb T^\dag(k)$ to $\mathrm{M}\mathbb T(k)$.
\end{remark}

In parallel with the $k$-GM tree, we have the following proposition:

\begin{proposition}
The following statements hold:
\begin{itemize}\setlength{\leftskip}{-15pt}
    \item[(1)] Each vertex $(a,b,c)$ in $\mathrm{M}\mathbb T^\dag(k)$ is a $k$-GM triple with $b\leq \max\{a,c\}$.
    \item[(2)] Every $k$-GM triple $(a,b,c)$ with $b\leq \max\{a,c\}$ appears exactly once in $\mathrm{M}\mathbb T^\dag(k)$. 
\end{itemize}
\end{proposition}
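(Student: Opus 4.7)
The plan is to transfer the analogous statement for $\mathrm{M}\mathbb T(k)$, namely Proposition \ref{prop:all-markov}, across the canonical graph isomorphism $\mu\colon\mathrm{M}\mathbb T(k)\to\mathrm{M}\mathbb T^\dag(k)$ established in Proposition \ref{pr:rho-mor}. Since $\mu$ is the second Vieta jumping $(a,b,c)\mapsto(a,(a^{2}+kac+c^{2})/b,c)$ and Vieta jumpings send $k$-GM triples to $k$-GM triples, every vertex of $\mathrm{M}\mathbb T^\dag(k)$ is automatically a $k$-GM triple. Thus the content of the proposition lies entirely in the magnitude condition $b\leq\max\{a,c\}$ and in uniqueness of appearance.

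For (1), I would let $(a',b',c')$ be a vertex of $\mathrm{M}\mathbb T^\dag(k)$ and take the unique $(a,b,c)\in\mathrm{M}\mathbb T(k)$ with $\mu((a,b,c))=(a',b',c')$; in particular $a'=a$, $c'=c$ and $b'=(a^{2}+kac+c^{2})/b$. By Proposition \ref{prop:all-markov} (1), $b>\max\{a,c\}$, so $b$ is the strict maximum of $\{a,b,c\}$. Applying Proposition \ref{magnitude} (3) to the triple after relabeling so that $b$ occupies the first position (with $a$ and $c$ ordered by size) then yields $b'=(a^{2}+kac+c^{2})/b\leq\max\{a,c\}$, as required.

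For (2), given a $k$-GM triple $(a',b',c')$ with $b'\leq\max\{a',c'\}$, I would set $b^{*}:=(a'^{2}+ka'c'+c'^{2})/b'$ so that $\mu((a',b',c'))=(a',b^{*},c')$, once more a $k$-GM triple. The key step is to show $(a',b^{*},c')\in\mathrm{M}\mathbb T(k)$, i.e.\ $b^{*}>\max\{a',c'\}$. If $(a',b',c')=(1,1,1)$, this is direct since $\mu((1,1,1))=(1,k+2,1)$, the root of $\mathrm{M}\mathbb T(k)$; otherwise $b'$ cannot be the maximum of $\{a',b',c'\}$ (the only way $b'$ could equal $\max\{a',c'\}$ while also being the overall maximum is to have two equal entries, which by Proposition \ref{singular-solution} forces the triple to be $(1,1,1)$, contradicting our assumption). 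So $b'<\max\{a',b',c'\}$, and after relabeling to put the maximum in the first coordinate, the appropriate case of Proposition \ref{magnitude} (1) or (2) yields $b^{*}>\max\{a',c'\}$. Once this is established, Proposition \ref{prop:all-markov} (2) applied to $(a',b^{*},c')$, combined with the fact that $\mu$ is an involution on $k$-GM triples, shows that $(a',b',c')=\mu(\mu((a',b',c')))$ appears exactly once in $\mathrm{M}\mathbb T^\dag(k)$.

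The main (though essentially routine) obstacle will be the case analysis in (2) verifying that $b^{*}$ strictly exceeds $\max\{a',c'\}$: one has to symmetrize Proposition \ref{magnitude} to whatever ordering of $a',b',c'$ is at hand and treat the degenerate triples $(k+2,1,1)$ and $(1,1,k+2)$ (where $b'=c'$ or $b'=a'$) together with the root $(1,1,1)$ separately via Proposition \ref{singular-solution}.
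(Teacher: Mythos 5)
Your proposal is correct and follows essentially the same route as the paper: transfer the statement across the involution $\mu$ of Proposition \ref{pr:rho-mor}, using Proposition \ref{magnitude} to control the middle entry and Proposition \ref{prop:all-markov} for existence and uniqueness in $\mathrm{M}\mathbb T(k)$. The only difference is cosmetic — the paper argues (1) by contradiction and is terser in (2), while you spell out the tie cases (via Proposition \ref{singular-solution}) and the root $(1,1,1)$ explicitly, which if anything makes the case analysis more complete.
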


\begin{proof}
First, we will prove (1). We assume that $(a,b,c) \in \mathrm{M}\mathbb T^\dag(k)$ satisfies $b> \max\{a,c\}$. By Propositions \ref{magnitude} (3) and \ref{pr:rho-mor}, $(a,b',c):=\mu(a,b,c)$ satisfies $b'\leq a$ or $b'\leq c$ and it is a vertex in $\mathrm{M}\mathbb T(k)$. It is in contradiction to Proposition \ref{prop:all-markov} (1). Second, we will prove (2). If $a> \max\{b,c\}$ or $c>\max\{a,b\}$, then $(a,b',c):=\mu(a,b,c)$ satisfies $b'> \max\{a,c\}$ and it is a vertex in $\mathrm{M}\mathbb T(k)$ by Propositions \ref{magnitude} and \ref{pr:rho-mor}. By Proposition \ref{prop:all-markov}, $(a,b',c)$ appears exactly once in $\mathrm{M}\mathbb T(k)$. Therefore, since $(a,b,c)=\mu(a,b',c)$, we have the conclusion.
\end{proof}

\section{Generalized Cohn tree and inverse generalized Cohn tree}
\subsection{$k$-generalized Cohn tree}
We will recall the $k$-generalized Cohn matrix and $k$-generalized Cohn triple according to \cite{gyo-maru}, and give some properties of them.
\begin{definition}
For $k\in \mathbb {Z}_{\geq 0}$, we define a \emph{$k$-generalized Cohn matrix} $P=\begin{bmatrix}p_{11}&p_{12}\\p_{21}&p_{22}\end{bmatrix} \in SL(2,\mathbb Z)$ as a matrix satisfying the following conditions: 
\begin{itemize}\setlength{\leftskip}{-15pt}
    \item [(1)] $p_{12}$ is a $k$-GM number, and
    \item [(2)] $\mathrm{tr}(P)=(3k+3)p_{12}-k$.
\end{itemize}
\end{definition}

\begin{definition}\label{def:gen-Cohn-triple}
For $k\in \mathbb {Z}_{\geq 0}$, we define a \emph{$k$-generalized Cohn triple $(P,Q,R)$} as a triple satisfying the following conditions:
\begin{itemize}\setlength{\leftskip}{-15pt}
    \item [(1)] $P,Q,R$ are $k$-generalized Cohn matrices, 
    \item[(2)] $Q=PR-S$, where $S=\begin{bmatrix}
        k&0\\3k^2+3k &k
    \end{bmatrix}$, and
    \item[(3)] $(p_{12},q_{12},r_{12})$ is a $k$-GM triple, where $p_{12},q_{12},r_{12}$ are the $(1,2)$-entries of $P,Q,R$, respectively.
\end{itemize}
The triple $(P,Q,R)$ is said to be \emph{associated with} $(p_{12},q_{12},r_{12})$.
\end{definition}
Note that this definition coincides with the definition of the $k$-generalized Cohn triple in Section 1.
In this paper, we abbreviate the $k$-generalized Cohn matrix as the \emph{$k$-GC matrix} and the $k$-generalized triple as the \emph{$k$-GC triple}.
By the definition of the $k$-GC matrix, (3) in Definition \ref{def:gen-Cohn-triple} can be replaced by the following condition:
\begin{itemize}\setlength{\leftskip}{-15pt}
    \item [(3')] $(\mathrm{tr}(P),\mathrm{tr}(Q),\mathrm{tr}(R))$ is an induced solution to $\mathrm{GSME}(k)$. 
\end{itemize}

The definition of $k$-GC matrix does not refer to the existence of the triple satisfying their conditions. In \cite{gyo-maru}, this existence was proved by constructing the \emph{$k$-generalized Cohn tree}. Here, we recall the argument of it. First, we prove the case $(a,b,c)=(1,1,1)$. For any $\ell\in \mathbb Z$, we set
\begin{align*}
    P_{1;\ell}&=\begin{bmatrix}
        \ell&1\\-\ell^2+2k\ell+3\ell-1&-\ell+2k+3
    \end{bmatrix}\\
    Q_{1;\ell}&=\begin{bmatrix}
        k+\ell+1&1\\k^2-\ell^2+3k+\ell+1&k-\ell+2
    \end{bmatrix}\\
    R_{1;\ell}&=\begin{bmatrix}
        2k+\ell+2&1\\-\ell^2-2k\ell+2k-\ell+1&-\ell+1
    \end{bmatrix}.
\end{align*}
\begin{proposition}[\cite{gyo-maru}*{Proposition 3.4}]\label{cohn-mat-with-111}
 The triple $(P_{1;\ell},Q_{1;\ell},R_{1;\ell})$ is a $k$-GC triple. Conversely, for a $k$-GC triple $(P,Q,R)$ satisfying $(p_{12},q_{12},r_{12})=(1,1,1)$, there exists $\ell\in \mathbb Z$ such that $(P,Q,R)=(P_{1;\ell},Q_{1;\ell},R_{1;\ell})$.
\end{proposition}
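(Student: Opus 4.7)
The plan is to prove both assertions by direct computation, exploiting the following rigidity: a $k$-GC matrix $M$ with $m_{12}=1$ is completely determined by its $(1,1)$-entry, since the trace condition $m_{11}+m_{22}=(3k+3)\cdot 1-k=2k+3$ forces $m_{22}=2k+3-m_{11}$ and $\det M=1$ then forces $m_{21}=m_{11}m_{22}-1$. Both directions will rest on this one-parameter parametrization.

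For the forward direction, I would fix $k\geq 0$ and $\ell\in\mathbb Z$ and verify the defining conditions of a $k$-GC triple for $(P_{1;\ell},Q_{1;\ell},R_{1;\ell})$ one by one: the determinant of each of the three matrices equals $1$, the traces all equal $2k+3$, and $(1,1,1)$ is trivially a $k$-GM triple. What remains is the identity $Q_{1;\ell}=P_{1;\ell}R_{1;\ell}-S$, which I would carry out by expanding the matrix product entrywise as a polynomial in $\ell$ and $k$ and comparing with $Q_{1;\ell}+S$.

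For the converse, let $(P,Q,R)$ be a $k$-GC triple with $p_{12}=q_{12}=r_{12}=1$. By the rigidity above, setting $\ell:=p_{11}$ immediately yields $P=P_{1;\ell}$. Writing $m:=r_{11}$ and expanding, the $(1,2)$-entry of $PR-S$ equals $\ell+2k+3-m$, which must equal $q_{12}=1$, forcing $m=\ell+2k+2$ and hence $R=R_{1;\ell}$. A short computation then shows $(PR-S)_{11}=k+\ell+1$, which must equal $q_{11}$ by the matrix equation $Q=PR-S$; a final application of the rigidity gives $Q=Q_{1;\ell}$, and the three matrices share the same $\ell$ as required.

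The main obstacle is nothing conceptual but rather bookkeeping in the polynomial arithmetic of the matrix product $P_{1;\ell}R_{1;\ell}$: the $(2,1)$-entry is a degree-three polynomial in $\ell$ that must simplify to match $(Q_{1;\ell})_{21}+3k^2+3k$, and a routine expansion error is the most likely pitfall. Once the $(1,1)$- and $(1,2)$-entries of $PR-S$ have been matched with those of $Q_{1;\ell}$, the remaining entries follow automatically from the trace and determinant conditions, so no further work is needed for either direction.
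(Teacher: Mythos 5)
Your proposal is correct, and in fact the paper itself offers no argument to compare against: Proposition \ref{cohn-mat-with-111} is quoted from \cite{gyo-maru}*{Proposition 3.4} without proof, so your direct verification is a legitimate self-contained substitute. The rigidity observation is sound and does the real work: for a $k$-GC matrix with $(1,2)$-entry $1$ the trace condition forces $m_{22}=2k+3-m_{11}$ and $\det=1$ forces $m_{21}=m_{11}m_{22}-1$, and one checks that $P_{1;\ell},Q_{1;\ell},R_{1;\ell}$ are exactly the members of this one-parameter family with $(1,1)$-entries $\ell$, $\ell+k+1$, $\ell+2k+2$. The converse then goes through exactly as you say: $\ell:=p_{11}$ gives $P=P_{1;\ell}$, the $(1,2)$-entry of $PR-S$ being $\ell+2k+3-r_{11}=1$ gives $r_{11}=\ell+2k+2$ and hence $R=R_{1;\ell}$, and then $Q=PR-S=Q_{1;\ell}$ (here you do not even need rigidity again, since the forward identity already computes $PR-S$). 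The only point to tighten is your closing remark that, in the forward direction, the lower-row entries of $P_{1;\ell}R_{1;\ell}-S$ ``follow automatically from the trace and determinant conditions'': those conditions are not free for the product, since $\mathrm{tr}(PR)$ and $\det(PR-S)$ must themselves be computed (the diagonal of $PR$ gives $\mathrm{tr}(PR)=4k+3$, and then $\det(PR-S)=\det(PR)-k\,\mathrm{tr}(PR)+(3k^2+3k)(PR)_{12}+k^2=1$); alternatively, just carry out the full entrywise expansion as your primary plan states, which indeed confirms $(PR)_{21}=-\ell^2+\ell+4k^2+6k+1$ and $(PR-S)_{21}=k^2-\ell^2+3k+\ell+1=(Q_{1;\ell})_{21}$. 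With that small caveat addressed, the proof is complete.
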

Now, we consider a binary tree, the \emph{$k$-generalized Cohn tree} $\mathrm{GC}\mathbb T(k,\ell)$ for $\ell\in \mathbb Z$:
\begin{itemize}\setlength{\leftskip}{-15pt}
\item [(1)] the root vertex is $(P_{\ell},Q_{\ell},R_{\ell}):=(P_{1;\ell},P_{1;\ell}Q_{1;\ell}-S,Q_{1;\ell})$, that is,
\begin{align*}
    P_{\ell}&=\begin{bmatrix}
        \ell&1\\-\ell^2+2k\ell+3\ell-1&-\ell+2k+3
    \end{bmatrix},\\
     Q_{\ell}&=\begin{bmatrix}
        k^2 + k\ell + 2k + 2\ell + 1 & k + 2\\
        2k^3 + k^2\ell - k\ell^2 + 6k^2 + 4k\ell - 2\ell^2 + 7k + 4\ell + 2&  2k^2 - k\ell + 6k - 2\ell + 5
    \end{bmatrix},\\
    R_{\ell}&=\begin{bmatrix}
        k+\ell+1&1\\k^2-\ell^2+3k+\ell+1&k-\ell+2
    \end{bmatrix},
\end{align*}
\item[(2)]for a vertex $(P,Q,R)$, there are the following two children of it:
\[\begin{xy}(0,0)*+{(P,Q,R)}="1",(30,-15)*+{(Q,QR-S,R).}="2",(-30,-15)*+{(P,PQ-S,Q)}="3", \ar@{-}"1";"2"\ar@{-}"1";"3"
\end{xy}\]
\end{itemize}
In this paper, we abbreviate this tree as the \emph{$k$-GC tree}.
\begin{example}\label{ex:cohn-tree}
 When $k=1$ and $\ell=-1$, $\mathrm{GC}\mathbb T(k,\ell)$ is the following.
\relsize{-2}
\begin{align*}
\begin{xy}(-20,0)*+{\left(\begin{bmatrix}-1&1\\-7&6\end{bmatrix},\begin{bmatrix}1&3\\5&16\end{bmatrix},\begin{bmatrix}1&1\\3&4\end{bmatrix}\right)}="1",(30,-16)*+{\left(\begin{bmatrix}-1&1\\-7&6\end{bmatrix},\begin{bmatrix}3&13\\17&74\end{bmatrix},\begin{bmatrix}1&3\\5&16\end{bmatrix}\right)}="2",(30,16)*+{\left(\begin{bmatrix}1&3\\5&16\end{bmatrix},\begin{bmatrix}9&13\\47&68\end{bmatrix},\begin{bmatrix}1&1\\3&4\end{bmatrix}\right)}="3", 
(90,-24)*+{\left(\begin{bmatrix}-1&1\\-7&6\end{bmatrix},\begin{bmatrix}13&61\\75&352\end{bmatrix},\begin{bmatrix}3&13\\17&74\end{bmatrix}\right)\cdots}="4",(90,-8)*+{\left(\begin{bmatrix}3&13\\17&74\end{bmatrix},\begin{bmatrix}67&217\\381&1234\end{bmatrix},\begin{bmatrix}1&3\\5&16\end{bmatrix}\right)\cdots}="5",(90,8)*+{\left(\begin{bmatrix}1&3\\5&16\end{bmatrix},\begin{bmatrix}149&217\\791&1152\end{bmatrix},\begin{bmatrix}9&13\\47&68\end{bmatrix}\right)\cdots}="6",(90,24)*+{\left(\begin{bmatrix}9&13\\47&68\end{bmatrix},\begin{bmatrix}47&61\\245&318\end{bmatrix},\begin{bmatrix}1&1\\3&4\end{bmatrix}\right)\cdots}="7",\ar@{-}"1";"2"\ar@{-}"1";"3"\ar@{-}"2";"4"\ar@{-}"2";"5"\ar@{-}"3";"6"\ar@{-}"3";"7"
\end{xy}
\end{align*}
\relsize{+2}
\end{example}
\begin{theorem}[\cite{gyo-maru}*{Theorem 3.5}]\label{thm:cohn-markov}
If $(P,Q,R)$ is a $k$-GC triple associated with $(a,b,c)$, then $(P,PQ-S,Q)$ (resp. $(Q,QR-S,R)$) is a $k$-GC triple associated with $\left(a,c',b\right)$ (resp. $\left(b,a',c\right)$), where $c'=\dfrac{a^2+kab+b^2}{c}$ and $a'=\dfrac{b^2+kbc+c^2}{a}$.    
\end{theorem}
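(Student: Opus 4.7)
The plan is to verify each of the three conditions in Definition \ref{def:gen-Cohn-triple} for the candidate triple $(P, N, Q)$ with $N := PQ - S$; the case $(Q, QR-S, R)$ will follow by a symmetric argument. A useful upfront reduction is this: since $S$ has a zero $(1,2)$-entry, $n_{12}$ equals the $(1,2)$-entry of $PQ$, and a direct expansion of $\det(PQ - S)$ using the specific shape of $S$ gives
\[
\det(PQ - S) \;=\; 1 + k^2 - k\,\mathrm{tr}(PQ) + (3k^2 + 3k)\,n_{12}.
\]
Meanwhile $\mathrm{tr}(N) = \mathrm{tr}(PQ) - 2k$. Comparing the two, the conditions $\det N = 1$ and $\mathrm{tr}(N) = (3k+3)\,n_{12} - k$ are both equivalent to the single identity $\mathrm{tr}(PQ) = (3k+3)\,n_{12} + k$, so it suffices to establish (a) $n_{12} = c'$ and (b) $\mathrm{tr}(PQ) = (3k+3)\,c' + k$.

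For (a), I would expand $n_{12} = p_{11} q_{12} + p_{12} q_{22} = p_{11} b + a\,q_{22}$. Reading the $(1,2)$- and $(2,2)$-entries of $Q = PR - S$ gives $b = p_{11} c + a\,r_{22}$ and $q_{22} = p_{21} c + p_{22} r_{22} - k$. Eliminating $r_{22}$ and collecting terms, the coefficient of $c$ becomes $a p_{21} - p_{11} p_{22}$, which equals $-1$ by $\det P = 1$ and $p_{12} = a$; the remaining terms assemble into $\mathrm{tr}(P)\,b - ak$. Substituting $\mathrm{tr}(P) = (3k+3)a - k$ delivers $n_{12} = (3k+3)ab - k(a+b) - c$, which is exactly $(a^2+kab+b^2)/c = c'$ by the $\mathrm{GME}(k)$ identity recorded in Section 2.

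For (b), I would use the standard $SL(2)$ trace identity $\mathrm{tr}(PQ) = \mathrm{tr}(P)\,\mathrm{tr}(Q) - \mathrm{tr}(P^{-1}Q)$ together with $P^{-1}Q = R - P^{-1}S$ read off from $Q = PR - S$. A direct entrywise computation yields
\[
\mathrm{tr}(P^{-1}S) \;=\; k\,\mathrm{tr}(P) - (3k^2+3k)\,p_{12} \;=\; k\bigl((3k+3)a - k\bigr) - (3k^2+3k)a \;=\; -k^2,
\]
so $\mathrm{tr}(P^{-1}Q) = \mathrm{tr}(R) + k^2$. Substituting the three trace values and simplifying, the expression factors through the same quantity $(3k+3)ab - k(a+b) - c = c'$ obtained in step (a), giving $\mathrm{tr}(PQ) = (3k+3)\,c' + k$. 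Finally, condition (3) of Definition \ref{def:gen-Cohn-triple} holds because $(a, c', b)$ is the output of the third Vieta jumping applied to $(a, b, c)$, which preserves $\mathrm{GME}(k)$.

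The main obstacle is step (a): the clean collapse $a p_{21} - p_{11} p_{22} = -1$ coming from $\det P = 1$ with $p_{12} = a$ is what unlocks everything, but it is hidden inside a slightly messy expansion that mixes entries of $P$, $Q$, $R$, and $S$ with both the trace conditions and two $SL(2,\mathbb Z)$ constraints. Once the compact form $(3k+3)ab - k(a+b) - c$ emerges, the trace computation in step (b) becomes routine and the determinant comes along for free via the upfront reduction, completing the verification for $(P, PQ - S, Q)$; the analogous argument for $(Q, QR-S, R)$ is obtained by replacing $(P, Q, R)$ with $(Q, R, \cdot\,)$ in the same chain of identities.
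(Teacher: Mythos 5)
Your proposal is correct, and every step checks out: the determinant expansion $\det(PQ-S)=1+k^2-k\,\mathrm{tr}(PQ)+(3k^2+3k)n_{12}$, the collapse $ap_{21}-p_{11}p_{22}=-1$ giving $n_{12}=\mathrm{tr}(P)b-c-ak=(3k+3)ab-k(a+b)-c=c'$, and the trace step via $\mathrm{tr}(P^{-1}S)=k\,\mathrm{tr}(P)-(3k^2+3k)p_{12}=-k^2$ all verify. Note that this paper does not prove Theorem \ref{thm:cohn-markov} at all — it is quoted from \cite{gyo-maru}*{Theorem 3.5} — so there is no in-paper argument to compare against; your verification is a self-contained replacement. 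Two small points are worth tightening. First, for $k=0$ the condition $\det(PQ-S)=1$ is automatic (then $S=0$), so ``both equivalent to'' should be ``both implied by''; the implication you actually need holds for every $k\geq 0$, so nothing breaks. Second, the right child is not literally obtained by the substitution you indicate: for $(QR)_{12}=q_{11}c+br_{22}$ one expands $q_{11}=p_{11}r_{11}+ar_{21}-k$ and $b=p_{11}c+ar_{22}$ from $Q=PR-S$ and uses $\det R=1$ (rather than $\det P=1$) to get $(QR)_{12}=b\,\mathrm{tr}(R)-a-kc=a'$, while the trace step goes through verbatim with $QR^{-1}=P-SR^{-1}$ and $\mathrm{tr}(SR^{-1})=-k^2$. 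Alternatively, you can deduce the second case formally from the first by conjugating with the anti-diagonal flip $M\mapsto JM^{T}J$, $J=\begin{bmatrix}0&1\\1&0\end{bmatrix}$, which preserves traces, determinants and $(1,2)$-entries, fixes $S$, and reverses products, sending the $k$-GC triple $(P,Q,R)$ associated with $(a,b,c)$ to one associated with $(c,b,a)$.
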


Theorem \ref{thm:cohn-markov} is a theorem about moving to the lower vertex on $\mathrm{GC}\mathbb T(k,\ell)$, while the following lemma is a statement about moving to the upper vertex on $\mathrm{GC}\mathbb T(k,\ell)$.

\begin{lemma}[\cite{gyo-maru}*{Lemma 3.11}]\label{Cohn-Markov2}
If $(P,Q,R)$ is a $k$-GC triple associated with $(a,b,c)$, then $(P,R,P^{-1}(R+S))$ (resp. $((P+S)R^{-1},P,R)$) is a $k$-GC triple associated with $\left(a,c,b'\right)$ (resp. $\left(b',a,c\right)$), where $b'=\dfrac{a^2+kac+c^2}{b}$.  
\end{lemma}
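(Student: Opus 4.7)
The plan is to verify directly that $(P,R,R')$ with $R' := P^{-1}(R+S)$ satisfies the three conditions of Definition \ref{def:gen-Cohn-triple}. Since $P$ and $R$ are already $k$-GC matrices, only $R'$ must be checked. First I would show $R' \in SL(2,\mathbb{Z})$: integrality is automatic, and $\det(R') = \det(R+S)$ expands as $\det R + k\,\mathrm{tr}(R) + k^2 - c(3k^2+3k)$, which collapses to $1$ upon substituting $\det R = 1$ and $\mathrm{tr}(R) = (3k+3)c - k$. The Cohn product relation $R = PR' - S$ is immediate from the definition of $R'$.

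Next I would identify $r'_{12}$ with $b'$. The $(1,2)$-entry of $Q = PR - S$ reads $b = p_{11}c + a r_{22}$, so
\[
r'_{12} = p_{22}c - a(r_{22}+k) = (p_{11}+p_{22})c - b - ak = \mathrm{tr}(P)\cdot c - b - ak = (3k+3)ac - k(a+c) - b.
\]
Treating the $k$-GM equation as a quadratic in its middle entry with roots $b$ and $b'$, Vieta's formulas give $b+b' = (3k+3)ac - k(a+c)$ and $bb' = a^2+kac+c^2$, so $r'_{12} = b'$ with $b' = (a^2+kac+c^2)/b$ as claimed. Since $(a,c,b')$ is the Vieta jump of $(a,b,c)$, it is a $k$-GM triple and $b'$ is a $k$-GM number.

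The trace of $R'$ is the one delicate step. I would write $\mathrm{tr}(R') = \mathrm{tr}(P^{-1}R) + \mathrm{tr}(P^{-1}S)$ and apply the $SL(2)$ identity $\mathrm{tr}(P^{-1}R) = \mathrm{tr}(P)\mathrm{tr}(R) - \mathrm{tr}(PR)$, combined with $\mathrm{tr}(PR) = \mathrm{tr}(Q+S) = \mathrm{tr}(Q) + 2k$. A short calculation from the explicit form of $S$ yields $\mathrm{tr}(P^{-1}S) = k\,\mathrm{tr}(P) - a(3k^2+3k) = -k^2$, the key cancellation that makes the final expression telescope. Substituting the trace formulas for $P, Q, R$ and factoring gives $\mathrm{tr}(R') = (3k+3)b' - k$, so $R'$ is a $k$-GC matrix and $(P,R,R')$ is a $k$-GC triple associated with $(a,c,b')$.

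The second statement is handled symmetrically with $P' := (P+S)R^{-1}$: the product relation $P = P'R - S$ is immediate, $\det(P+S) = 1$ follows by the same calculation with $(P,a)$ and $(R,c)$ interchanged, and $p'_{12} = b'$ is read off from the first row of $P'R = P+S$ by eliminating $r_{11}$ via $\mathrm{tr}(R) = (3k+3)c - k$. The trace $\mathrm{tr}(P') = (3k+3)b' - k$ is then computed by the same $SL(2)$ identity with the roles of $P$ and $R$ swapped, using $\mathrm{tr}(SR^{-1}) = k\,\mathrm{tr}(R) - c(3k^2+3k) = -k^2$. Throughout, the only obstacle is the bookkeeping introduced by the constant matrix $S$; the twin identities $\mathrm{tr}(P^{-1}S) = \mathrm{tr}(SR^{-1}) = -k^2$ are what make this asymmetry cancel cleanly.
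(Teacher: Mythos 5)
Your proof is correct, and all the computations check out: $\det(R+S)=\det R+k\,\mathrm{tr}(R)+k^2-c(3k^2+3k)=1$, the identification $r'_{12}=\mathrm{tr}(P)c-b-ak=(3k+3)ac-k(a+c)-b=b'$ via the Vieta relation, and the trace computation $\mathrm{tr}(R')=\mathrm{tr}(P)\mathrm{tr}(R)-\mathrm{tr}(Q)-2k-k^2=(3k+3)b'-k$ using Cayley--Hamilton ($P^{-1}=\mathrm{tr}(P)I-P$) together with the key identities $\mathrm{tr}(P^{-1}S)=\mathrm{tr}(SR^{-1})=-k^2$; the product conditions $R=PR'-S$ and $P=P'R-S$ are indeed immediate, and $(a,c,b')$, $(b',a,c)$ are $k$-GM triples by symmetry of the equation and integrality of $b'=(3+3k)ac-b-k(a+c)$. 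Note that the present paper does not prove this lemma at all --- it is imported as Lemma 3.11 of the cited reference --- so there is no in-text proof to compare against; your direct verification of the three conditions of Definition \ref{def:gen-Cohn-triple} is a self-contained and adequate substitute, and is in the same computational spirit as the verifications the paper does carry out (e.g.\ Propositions \ref{pr:Cohn-Markov-monodromy} and \ref{pr:Markov-monodromy-cohn-triple}). One cosmetic remark: the phrase ``read off from the first row of $P'R=P+S$'' really amounts to computing $(P+S)R^{-1}$ entrywise and eliminating $r_{11}$ via $\mathrm{tr}(R)=(3k+3)c-k$ and $b=p_{11}c+ar_{22}$, which is what you do; stating it that way would avoid any ambiguity about solving a linear system.
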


By using Theorem \ref{thm:cohn-markov}, for any $k$-GM triple $(a,b,c)$, we can prove the existence of a $k$-GC triple associated with $(a,b,c)$:

\begin{theorem}[\cite{gyo-maru}*{Corollary 3.15}]\label{cor:CT-MT}
We fix $\ell\in \mathbb Z$. The correspondence from $(P,Q,R)$ in $\mathrm{GC}\TT(k,\ell)$ to $(p_{12},q_{12},r_{12})$ induces the canonical graph isomorphism between $\mathrm{GC}\TT(k,\ell)$ and $\mathrm{M}\TT(k)$. In particular, for any $k$-GM triple with $b>\max\{a,c\}$, there is a $k$-GC matrix associated with $(a,b,c)$.
\end{theorem}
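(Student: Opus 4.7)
The plan is to prove the statement by induction on the depth in the two trees, using Theorem \ref{thm:cohn-markov} as the crucial inductive step. The root of $\mathrm{GC}\mathbb T(k,\ell)$ is the triple $(P_\ell, Q_\ell, R_\ell)$; reading off the $(1,2)$-entries directly from the explicit formulas given just before Example \ref{ex:cohn-tree} yields $(1, k+2, 1)$, which is exactly the root of $\mathrm{M}\mathbb T(k)$. This handles the base case.

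For the inductive step, suppose $(P,Q,R)$ is a vertex of $\mathrm{GC}\mathbb T(k,\ell)$ whose image under $(P,Q,R) \mapsto (p_{12},q_{12},r_{12})$ is a $k$-GM triple $(a,b,c)$; by the induction hypothesis and Proposition \ref{prop:all-markov}, we have $b > \max\{a,c\}$. Theorem \ref{thm:cohn-markov} then asserts that the left child $(P, PQ-S, Q)$ is a $k$-GC triple associated with $\bigl(a,\, \tfrac{a^2+kab+b^2}{c},\, b\bigr)$, and the right child $(Q, QR-S, R)$ is associated with $\bigl(b,\, \tfrac{b^2+kbc+c^2}{a},\, c\bigr)$. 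These are by construction the left and right children of $(a,b,c)$ in $\mathrm{M}\mathbb T(k)$, so the map on vertices commutes with both child operations and in particular lands again in $\mathrm{M}\mathbb T(k)$.

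Consequently the map sends root to root and preserves both the left-child and right-child operations, i.e.\ it is a canonical morphism of rooted planar binary trees. Since $\mathrm{GC}\mathbb T(k,\ell)$ and $\mathrm{M}\mathbb T(k)$ are both full infinite binary trees (each vertex has exactly two children labeled left and right), such a canonical morphism is automatically bijective: the vertex of $\mathrm{GC}\mathbb T(k,\ell)$ reached from the root by a given finite sequence of left/right turns corresponds to the vertex of $\mathrm{M}\mathbb T(k)$ reached by the same sequence, and since distinct positions in $\mathrm{M}\mathbb T(k)$ carry distinct triples by Proposition \ref{prop:all-markov}, distinct positions in $\mathrm{GC}\mathbb T(k,\ell)$ carry distinct triples as well.

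The ``in particular'' assertion is then immediate: given a $k$-GM triple $(a,b,c)$ with $b > \max\{a,c\}$, Proposition \ref{prop:all-markov} locates it uniquely as a vertex of $\mathrm{M}\mathbb T(k)$, and its preimage under the canonical isomorphism is a $k$-GC triple associated with $(a,b,c)$. The genuine content of the proof is therefore concentrated in Theorem \ref{thm:cohn-markov}, whose verification relies on direct matrix identities for $PQ-S$ and $QR-S$; the tree-level argument here is just a bookkeeping induction, and I do not expect any serious obstacle beyond a careful verification of the base case from the formulas for $P_\ell, Q_\ell, R_\ell$.
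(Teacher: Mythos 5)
Your proof is correct and follows essentially the same route the paper intends: the result is quoted from \cite{gyo-maru}*{Corollary 3.15}, and the surrounding text makes clear it is obtained exactly by your induction, propagating Theorem \ref{thm:cohn-markov} down the tree from the root. The only point to tighten is the base case: the induction hypothesis must be that each vertex \emph{is a $k$-GC triple} associated with the corresponding vertex of $\mathrm{M}\mathbb T(k)$ (not merely that its $(1,2)$-entries form a $k$-GM triple), so besides reading off $(1,k+2,1)$ you should note that the root $(P_\ell,Q_\ell,R_\ell)=(P_{1;\ell},P_{1;\ell}Q_{1;\ell}-S,Q_{1;\ell})$ is a $k$-GC triple, which follows from Proposition \ref{cohn-mat-with-111} together with one application of Theorem \ref{thm:cohn-markov}.
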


Moreover, by using Lemma \ref{Cohn-Markov2}, we have a stronger result.

\begin{proposition}[\cite{gyo-maru}*{Proposition 3.17}]\label{pr:all-cohn-triple}
Let $(P,Q,R)$ be a $k$-GC triple associated with $(a,b,c)$. We assume that $b> \max\{a,c\}.$ Then, there exist a unique $\ell\in \mathbb Z$ and a unique vertex $v$ in $\mathrm{GC}\mathbb T(k,\ell)$ such that $v=(P,Q,R)$. 
\end{proposition}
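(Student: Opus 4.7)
The plan is to ascend from $(P,Q,R)$ along the unique path in $\mathrm{M}\mathbb T(k)$ from $(a,b,c)$ to the root $(1,k+2,1)$, take one extra inverse step into the $(1,1,1)$-case already classified by Proposition \ref{cohn-mat-with-111}, and then descend back.

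First, I would record the single-step inverse on $\mathrm{M}\mathbb T(k)$. For a non-root vertex $(a,b,c)\in\mathrm{M}\mathbb T(k)$, Proposition \ref{singular-solution} forces $a\neq c$, so $(a,b,c)$ is either a left or a right child, with unique parent $(a,c,b')$ or $(b',a,c)$ respectively, where $b':=(a^2+kac+c^2)/b$. By Lemma \ref{Cohn-Markov2}, the lift to the $k$-GC level is $(P,Q,R)\mapsto(P,R,P^{-1}(R+S))$ or $(P,Q,R)\mapsto((P+S)R^{-1},P,R)$; each lands on a $k$-GC triple associated with the parent $k$-GM triple, and each is inverted by one of the descending moves in Theorem \ref{thm:cohn-markov}. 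Since $(a,c,b')$ or $(b',a,c)$ lies in $\mathrm{M}\mathbb T(k)$ again (Proposition \ref{prop:all-markov}), the maximum strictly decreases from $b$ to $c$ (or $a$), so finitely many iterations bring us to a $k$-GC triple $(P_0,Q_0,R_0)$ associated with $(1,k+2,1)$.

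Second, I take one more inverse step. Because $(1+k+1)/(k+2)=1$, Lemma \ref{Cohn-Markov2} applied to $(P_0,Q_0,R_0)$ yields a $k$-GC triple $(P_0,R_0,P_0^{-1}(R_0+S))$ associated with $(1,1,1)$. By Proposition \ref{cohn-mat-with-111}, there is a unique $\ell\in\mathbb Z$ with $(P_0,R_0,P_0^{-1}(R_0+S))=(P_{1;\ell},Q_{1;\ell},R_{1;\ell})$. Combining $P_0=P_{1;\ell}$ and $R_0=Q_{1;\ell}$ with the defining relation $Q_0=P_0R_0-S$ of a $k$-GC triple, we identify $(P_0,Q_0,R_0)=(P_{1;\ell},P_{1;\ell}Q_{1;\ell}-S,Q_{1;\ell})=(P_\ell,Q_\ell,R_\ell)$, the root of $\mathrm{GC}\mathbb T(k,\ell)$.

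Finally, for existence I reverse the ascent: applying the corresponding descending moves of Theorem \ref{thm:cohn-markov} to $(P_\ell,Q_\ell,R_\ell)$ recovers $(P,Q,R)$ as a vertex of $\mathrm{GC}\mathbb T(k,\ell)$. For uniqueness, the path in $\mathrm{M}\mathbb T(k)$ from $(1,k+2,1)$ to $(a,b,c)$ is unique by Proposition \ref{prop:all-markov}, and Theorem \ref{cor:CT-MT} lifts this uniqueness to the position of $v$ within any fixed $\mathrm{GC}\mathbb T(k,\ell)$; the index $\ell$ itself is unique by Proposition \ref{cohn-mat-with-111}. The principal obstacle I anticipate is the bookkeeping to verify that the moves of Lemma \ref{Cohn-Markov2} are genuine matrix-level inverses of those of Theorem \ref{thm:cohn-markov}, and to confirm along the way that $b'$ remains a positive integer strictly less than the previous maximum, so that the induction is well founded.
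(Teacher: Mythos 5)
Your argument is correct and is essentially the intended one: the paper does not reprove this proposition but quotes it from \cite{gyo-maru}*{Proposition 3.17}, and the proof strategy there (and the machinery assembled in this paper) is exactly your ascent via Lemma \ref{Cohn-Markov2} along the unique path of $\mathrm{M}\mathbb T(k)$ to a triple associated with $(1,k+2,1)$, one further upward step into the $(1,1,1)$ case classified by Proposition \ref{cohn-mat-with-111}, identification of the root of $\mathrm{GC}\mathbb T(k,\ell)$ via $Q_0=P_0R_0-S$, and descent back with Theorem \ref{thm:cohn-markov} together with Propositions \ref{singular-solution}, \ref{prop:all-markov} and Theorem \ref{cor:CT-MT} for uniqueness. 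The only point you flagged as a potential obstacle, that the upward moves of Lemma \ref{Cohn-Markov2} genuinely invert the downward moves, is immediate from the defining relation $Q=PR-S$: applying the left-child move to $(P,R,P^{-1}(R+S))$ gives $(P,PR-S,R)=(P,Q,R)$, and the right-hand pair is checked the same way.
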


When we fix $\ell\in \mathbb Z$, we have the following property for $\mathrm{GC}\mathbb T(k, \ell)$.

\begin{theorem}[\cite{gyo-maru}*{Corollary 3.16}]\label{thm:Cohn-distinct}
We fix $k\in \mathbb Z_{\geq0}$ and $\ell\in \mathbb Z$. The second entries of $k$-GC triples in $\mathrm{GC}\mathbb T(k, \ell)$ are distinct.   
\end{theorem}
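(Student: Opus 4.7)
The goal is to show injectivity of the map $v \mapsto Q_v$ from the vertex set of $\mathrm{GC}\mathbb{T}(k,\ell)$ into $SL(2,\mathbb{Z})$. The first move is a reduction via Theorem~\ref{cor:CT-MT}: the correspondence $(P,Q,R) \mapsto (p_{12},q_{12},r_{12})$ is a canonical graph isomorphism between $\mathrm{GC}\mathbb{T}(k,\ell)$ and $\mathrm{M}\mathbb{T}(k)$, so two distinct vertices $v_1 \neq v_2$ necessarily carry distinct $k$-GM triples $(a_i, b_i, c_i)$. If $b_1 \neq b_2$, then the middle matrices $Q_{v_1}$ and $Q_{v_2}$ already differ in their $(1,2)$-entries, and we are done. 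So it suffices to treat the remaining case: two distinct vertices producing $k$-GM triples $(a_1, b, c_1) \neq (a_2, b, c_2)$ with the same middle value $b$.

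Assume for contradiction that in this remaining case $Q_{v_1} = Q_{v_2} =: Q$. Using $\det Q = 1$ together with $\mathrm{tr}\,Q = (3k+3)b - k$ (from the definition of a $k$-GC matrix), one checks that the $(1,1)$-entry satisfies $q_{11}^2 + k\,q_{11} + 1 \equiv 0 \pmod{b}$. On the other hand, reducing the $k$-GM equation modulo $b$ gives $a_i^2 + c_i^2 + k\,a_i c_i \equiv 0 \pmod{b}$; combined with $\gcd(a_i,b) = \gcd(c_i,b) = 1$ from Proposition~\ref{relatively-prime}, the two roots of the quadratic $x^2 + k x + 1 \pmod{b}$ are exactly $a_i c_i^{-1}$ and $c_i a_i^{-1}$. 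Hence $q_{11} \bmod b$ lies in the intersection $\{a_1 c_1^{-1}, c_1 a_1^{-1}\} \cap \{a_2 c_2^{-1}, c_2 a_2^{-1}\}$ modulo $b$.

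The heart of the argument is to upgrade this modular coincidence into an equality of triples. The plan is to use induction on tree depth to track the actual integer value of $q_{11}$ along descending edges: the child operations $(P,Q,R) \mapsto (P, PQ-S, Q)$ and $(P,Q,R) \mapsto (Q, QR-S, R)$ transform the $(1,1)$-entries in a prescribed linear way, and with a suitable monotonicity statement (analogous to Proposition~\ref{magnitude} for the $(1,2)$-entries) one hopes to show that the absolute value $|q_{11}|$ strictly increases along any descending branch. This monotonicity, together with the uniqueness guaranteed by Proposition~\ref{pr:all-cohn-triple}, would then force $v_1 = v_2$, contradicting the initial assumption.

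The principal obstacle is precisely this last step: the modular relation only pins down $q_{11}$ within a two-element set modulo $b$, so the proof needs additional tree-structural input to rule out integer coincidences across different branches. A possibly cleaner route is to establish directly that the factorization $Q + S = P R$ of $Q + S$ into a product of two $k$-GC matrices whose $(1,2)$-entries $(a, c)$ complete a $k$-GM triple with middle $b$ is \emph{unique}; once this rigidity is in hand, $Q$ determines $(P,Q,R)$, and Proposition~\ref{pr:all-cohn-triple} immediately yields $v_1 = v_2$. I expect this factorization rigidity to follow from the Stern--Brocot / continued fraction structure of $SL(2,\mathbb Z)$, but making it explicit is the main work.
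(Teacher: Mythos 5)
Your reduction to the equal-middle case is the right first move, and it is worth stressing why it is unavoidable: ruling out two distinct vertices with the same $(1,2)$-entry $b$ outright would amount to Conjecture \ref{conj:gen-Markov}, which is open, so the entire content of Theorem \ref{thm:Cohn-distinct} sits in the step you leave open. Neither of your two proposed continuations is carried out, and neither works as stated. For the monotonicity route: even if $|q_{11}|$ grew strictly along every descending branch, that says nothing about coincidences \emph{between} different branches, which is exactly what must be excluded; moreover, the appeal to Proposition \ref{pr:all-cohn-triple} is circular at that point, since that proposition gives uniqueness of the vertex carrying a given triple $(P,Q,R)$, so invoking it requires already knowing that $Q$ determines $P$ and $R$ --- the very statement under proof. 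The factorization-rigidity route (uniqueness of factorizations $Q+S=PR$ into $k$-GC matrices whose $(1,2)$-entries complete a $k$-GM triple with middle $b$) would indeed suffice, but you give no proof, and it is not a soft consequence of the Stern--Brocot structure: a priori $Q+S$ could admit such factorizations for two different triples $(a_1,b,c_1)\neq(a_2,b,c_2)$, and excluding this is essentially the whole difficulty. There is also a smaller inaccuracy: since $b$ need not be prime, $x^2+kx+1\equiv 0 \pmod b$ can have more than two roots, so knowing $q_{11}$ is a root does not place it in $\{a_ic_i^{-1},\,c_ia_i^{-1}\}$; to locate $q_{11}$ modulo $b$ one should compute it directly from $Q=PR-S$, which gives $q_{11}=p_{11}r_{11}+p_{12}r_{21}-k$.

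For calibration: this paper does not reprove the statement but imports it from \cite{gyo-maru}*{Corollary 3.16}, so there is no in-paper proof to compare with; however, the machinery developed later in the paper indicates what the missing ingredient is. After normalizing $\ell$ (harmless by Proposition \ref{prop:cohnmatrix-ltol'}, which shifts $q_{11}$ by a multiple of $q_{12}$), Proposition \ref{lem:index-of-Ct} identifies the $(1,1)$-entry of the middle matrix with the characteristic number $u^+_{k,t}$, and Theorem \ref{thm:u_t-denominator} shows that $m_{k,t}/u^+_{k,t}=F^+(k,t)$, from which the continued fraction, hence the Farey label $t$, hence the vertex, can be recovered. In other words, a complete proof must show that the \emph{pair} $(q_{12},q_{11})$, not $q_{12}$ alone, pins down the vertex; your proposal correctly isolates this as the key point but does not establish it.
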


Finally, we give a relation between $\mathrm{GC}\mathbb T(k, \ell)$ and $\mathrm{GC}\mathbb T(k, \ell')$. It is an analogue of \cite{aig}*{Proposition 4.15}.

\begin{proposition}\label{prop:cohnmatrix-ltol'}
Let $L=\begin{bmatrix}
    1&0\\\ell'-\ell&1
\end{bmatrix}$. The right conjugation of $P$ by $L$ induces the canonical graph isomorphism from $\mathrm{GC}\mathbb T(k, \ell)$ to $\mathrm{GC}\mathbb T(k, \ell')$.   
\end{proposition}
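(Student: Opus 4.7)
The plan is to define the map $\Lambda\colon (P,Q,R)\mapsto (L^{-1}PL,\,L^{-1}QL,\,L^{-1}RL)$ and to verify (i) that $\Lambda$ carries $k$-GC triples to $k$-GC triples, (ii) that $\Lambda$ commutes with both child operations $(P,Q,R)\mapsto(P,PQ-S,Q)$ and $(P,Q,R)\mapsto(Q,QR-S,R)$, and (iii) that $\Lambda$ sends the root of $\mathrm{GC}\mathbb{T}(k,\ell)$ to the root of $\mathrm{GC}\mathbb{T}(k,\ell')$. Since $L\in SL(2,\mathbb{Z})$, the map $\Lambda$ is a bijection with inverse given by conjugation by $L^{-1}$, and these three properties together are precisely what is needed for $\Lambda$ to be the canonical graph isomorphism.

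For (i), conjugation preserves both membership in $SL(2,\mathbb{Z})$ and the trace, so the only nontrivial content is that conjugation by $L$ fixes the $(1,2)$-entry of every $2\times 2$ matrix. This follows directly from the shape of $L$: for any $M\in M(2,\mathbb{Z})$ the matrix $L^{-1}ML$ has the same $(1,2)$-entry as $M$. Hence $\Lambda$ preserves the associated $k$-GM triple $(p_{12},q_{12},r_{12})$, and in particular sends $k$-GC matrices to $k$-GC matrices. The relation $Q=PR-S$ is preserved once I check $L^{-1}SL=S$; this again follows from a short direct computation, in which the only entry of $S$ at risk of changing is the $(2,1)$-entry, and the two contributions $-dk$ and $+dk$ (with $d=\ell'-\ell$) cancel. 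Assertion (ii) is then automatic, since conjugation distributes across products and $L^{-1}SL=S$.

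For (iii), note that $R_\ell=Q_{1;\ell}$ and $Q_\ell=P_{1;\ell}Q_{1;\ell}-S$ by definition, and similarly for $\ell'$. Hence, once I verify $L^{-1}P_{1;\ell}L=P_{1;\ell'}$ and $L^{-1}Q_{1;\ell}L=Q_{1;\ell'}$, the corresponding identity for $Q_\ell$ follows from (ii). These two matrix identities I would confirm by direct $2\times 2$ computation using the explicit formulas of Proposition~\ref{cohn-mat-with-111}: the $(1,1)$-entry shifts by $d$, the $(1,2)$-entry remains $1$, and the $(2,1)$- and $(2,2)$-entries match after expanding $\ell'^2=(\ell+d)^2=\ell^2+2d\ell+d^2$.

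The main obstacle is the bookkeeping in this final computational step: the quadratic dependence on $\ell$ of the $(2,1)$-entry of $P_{1;\ell}$ means that the cross-terms produced by the two factors of $L$ must be tracked carefully. No conceptual difficulty is anticipated; once the identity $L^{-1}SL=S$ is in hand, everything else reduces to elementary matrix algebra.
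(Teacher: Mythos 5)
Your proposal is correct and follows essentially the same route as the paper: verify the conjugation identities for the initial matrices directly, note that $L^{-1}SL=S$ and that conjugation distributes over products, and conclude inductively down the tree. The only cosmetic difference is that you reduce the base case to the two identities for $P_{1;\ell}$ and $Q_{1;\ell}$ (using $R_\ell=Q_{1;\ell}$ and $Q_\ell=P_{1;\ell}Q_{1;\ell}-S$), whereas the paper checks all three of $P_{1;\ell},Q_{1;\ell},R_{1;\ell}$.
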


\begin{proof}
 We can check $P_{1;\ell'}=L^{-1}P_{1;\ell}L$, $Q_{1;\ell'}=L^{-1}Q_{1;\ell}L$, $R_{1;\ell'}=L^{-1}R_{1;\ell}L$ directly. For general cases, the statement follows inductively from 
 \[L^{-1}(PQ)L=(L^{-1}PL)(L^{-1}QL),\quad L^{-1}(QR)L=(L^{-1}QL)(L^{-1}RL),\quad L^{-1}SL=S.\]
\end{proof}

\begin{remark}\label{rmk:Markov-monodromy-structure-cohn}
Let $A$ be the set of $k$-GC triples.  We set $\tau_1,\tau_2\colon A\to A$ by 
\[\tau_1(P,Q,R)=(P,R,P^{-1}(R+S)),\quad \tau_2(P,Q,R)=(Q,QR-S,R).\] 
Then, it can be seen that in $\mathrm{GC}\mathbb T(k,\ell)$ the operation giving the left child of $(P,Q,R)$ is given by $\tau_1^{-1}$ and the operation giving the right child by $\tau_2$. Moreover, we have the braid relation $\tau_1\tau_2\tau_1=\tau_2\tau_1\tau_2$.
\end{remark}

\subsection{Inverse $k$-generalized Cohn tree}

In the discussion in the previous subsection, we assume $b> \max\{a,c\}$, but we can also construct the $k$-GC triple with $b\leq \max\{a,c\}$. Indeed, according to Lemma \ref{Cohn-Markov2}, for $(P,Q,R)$ associated with $(a,b,c)$ where $b> a> c$, $(P,R,P^{-1}(R+S))$ is a $k$-GC triple associated with $(a',b',c')$ where $a'=a$ and $b'=c$, in particular, $a'>b'$. In the same way, for $(P,Q,R)$ associated with $(a,b,c)$ where $b> c> a$, $((P+S)R^{-1},P,R)$ is a $k$-GC triple associated with $(a',b',c')$ where $c'=c$ and $b'=a$, in particular, $c'>b'$. 

In this subsection, we deal with $k$-GC triple with $b\leq \max\{a,c\}$. We can discuss this case in parallel with $b> \max\{a,c\}$ case. We consider the following binary tree $\mathrm{GC}\mathbb T^{\dag}(k,\ell)$:
\begin{itemize}\setlength{\leftskip}{-15pt}
\item [(1)] the root vertex is $(P_{1;\ell},Q_{1;\ell},R_{1;\ell})$,

\item[(2)]for a vertex $(P,Q,R)$, there are the following two children of it:
\[\begin{xy}(0,0)*+{(P,Q,R)}="1",(-30,-15)*+{(P,R,P^{-1}(R+S))}="2",(30,-15)*+{((P+S)R^{-1},P,R).}="3", \ar_{\tau_1}@{-}"1";"2"\ar^{\tau_2^{-1}}@{-}"1";"3"
\end{xy}\]
\end{itemize}

We call $\mathrm{GC}\mathbb T^{\dag}(k,\ell)$ the \emph{inverse $k$-generalized Cohn tree}. In this paper, we abbreviate this tree as the \emph{inverse $k$-GC tree}. The operation taking left (resp. right) child in the inverse $k$-GC tree is the inverse of the operation taking the left (resp. right) child in the $k$-GC tree. 

\begin{example}\label{ex:inverse-cohn-tree}
 When $k=1$ and $\ell=-1$, $\mathrm{GC}\mathbb T^\dag(k,\ell)$ is the following.
\relsize{-2}
\begin{align*}
\begin{xy}(-20,0)*+{\left(\begin{bmatrix}-1&1\\-7&6\end{bmatrix},\begin{bmatrix}1&1\\3&4\end{bmatrix},\begin{bmatrix}3&1\\5&2\end{bmatrix}\right)}="1",(30,-16)*+{\left(\begin{bmatrix}-1&1\\-7&6\end{bmatrix},\begin{bmatrix}3&1\\5&2\end{bmatrix},\begin{bmatrix}13&3\\17&4\end{bmatrix}\right)}="2",(30,16)*+{\left(\begin{bmatrix}-5&3\\-37&22\end{bmatrix},\begin{bmatrix}-1&1\\-7&6\end{bmatrix},\begin{bmatrix}3&1\\5&2\end{bmatrix}\right)}="3",(90,-24)*+{\left(\begin{bmatrix}-1&1\\-7&6\end{bmatrix},\begin{bmatrix}13&3\\17&4\end{bmatrix},\begin{bmatrix}61&13\\75&16\end{bmatrix}\right)\cdots}="4",(90,-8)*+{\left(\begin{bmatrix}
 -17&13\\-123&94   
\end{bmatrix},\begin{bmatrix}-1&1\\-7&6\end{bmatrix},\begin{bmatrix}13&3\\17&4\end{bmatrix}\right)\cdots}="5",(90,8)*+
{\left(\begin{bmatrix}-5&3\\-37&22\end{bmatrix},\begin{bmatrix}3&1\\5&2\end{bmatrix},\begin{bmatrix}55&13\\93&22\end{bmatrix}\right)\cdots}="6",(90,24)*+{\left(\begin{bmatrix}-23&13\\-177&100\end{bmatrix},\begin{bmatrix}-5&3\\-37&22\end{bmatrix},\begin{bmatrix}3&1\\5&2\end{bmatrix}\right)\cdots}="7",\ar@{-}"1";"2"\ar@{-}"1";"3"\ar@{-}"2";"4"\ar@{-}"2";"5"\ar@{-}"3";"6"\ar@{-}"3";"7"
\end{xy}
\end{align*}
\relsize{+2}
\end{example}

By exchanging the role of Theorem \ref{thm:cohn-markov} and Lemma \ref{Cohn-Markov2}, we have the following corollaries:

\begin{corollary}\label{cor:CTdag-MTdag}
We fix $\ell\in \mathbb Z$. The correspondence from $(P,Q,R)$ in $\mathrm{GC}\TT(k,\ell)$ to $(p_{12},q_{12},r_{12})$ induces the canonical graph isomorphism between $\mathrm{GC}\TT^{\dag}(k,\ell)$ and $\mathrm{M}\TT^\dag(k)$. In particular, for any $k$-GM triple $(a,b,c)$ with $b\leq \max\{a,c\}$, there is a $k$-GC triple associated with $(a,b,c)$.
\end{corollary}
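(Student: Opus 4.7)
The plan is to mirror the proof of Theorem \ref{cor:CT-MT} verbatim, but with the roles of Theorem \ref{thm:cohn-markov} and Lemma \ref{Cohn-Markov2} exchanged, as suggested by the preamble of the subsection.

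First, I would verify the isomorphism at the roots. The root of $\mathrm{GC}\TT^{\dag}(k,\ell)$ is $(P_{1;\ell},Q_{1;\ell},R_{1;\ell})$, which by Proposition \ref{cohn-mat-with-111} is a $k$-GC triple associated with $(1,1,1)$, matching the root $(1,1,1)$ of $\mathrm{M}\TT^{\dag}(k)$. Then I would induct on the depth $n$ of a vertex: given a vertex $(P,Q,R)$ of $\mathrm{GC}\TT^{\dag}(k,\ell)$ at depth $n$ which is a $k$-GC triple associated with the vertex $(a,b,c)$ of $\mathrm{M}\TT^{\dag}(k)$ at the same depth, Lemma \ref{Cohn-Markov2} tells us that its left child $\tau_1(P,Q,R) = (P,R,P^{-1}(R+S))$ is a $k$-GC triple associated with $(a,c,(a^2+kac+c^2)/b)$, and its right child $\tau_2^{-1}(P,Q,R) = ((P+S)R^{-1},P,R)$ is a $k$-GC triple associated with $((a^2+kac+c^2)/b,a,c)$. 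These are precisely the left and right children of $(a,b,c)$ in $\mathrm{M}\TT^{\dag}(k)$. Hence the $(1,2)$-entry map is a graph morphism that preserves left/right children, i.e., a canonical graph morphism.

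Next I would argue it is a graph isomorphism. Both trees are full planar binary trees, and any canonical graph morphism between such trees that sends the root to the root is automatically a canonical graph isomorphism in the tree-structure sense, since it induces a bijection on vertices at each fixed depth by recursion on the left/right address. Finally, since Proposition $3.6$ (the statement immediately before Section 4) shows that every $k$-GM triple $(a,b,c)$ with $b\leq \max\{a,c\}$ appears (exactly once) as some vertex of $\mathrm{M}\TT^{\dag}(k)$, surjectivity of the projection $(P,Q,R)\mapsto (p_{12},q_{12},r_{12})$ yields the ``in particular'' assertion: every such $(a,b,c)$ is the associated triple of some $k$-GC triple obtained by applying a sequence of $\tau_1$ and $\tau_2^{-1}$ to the root of $\mathrm{GC}\TT^{\dag}(k,\ell)$.

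The only non-formal content is the inductive step, and this is entirely supplied by Lemma \ref{Cohn-Markov2}; no genuine obstacle remains, the proof is essentially a bookkeeping exercise dual to that of Theorem \ref{cor:CT-MT} (and Proposition \ref{pr:all-cohn-triple}). If anything subtle arises, it would be in the uniqueness direction, but this is not claimed here—the statement asserts only existence of an associated $k$-GC triple, and the graph isomorphism follows from the surjectivity of the $(1,2)$-entry map together with its compatibility with the two child operations.
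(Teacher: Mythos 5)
Your proposal is correct and matches the paper's intended argument: the paper proves this corollary precisely by exchanging the roles of Theorem \ref{thm:cohn-markov} and Lemma \ref{Cohn-Markov2}, i.e., checking the roots agree via Proposition \ref{cohn-mat-with-111} and then inducting with Lemma \ref{Cohn-Markov2} exactly as you do, with the ``in particular'' part following from the surjectivity of $\mathrm{M}\TT^\dag(k)$ onto the $k$-GM triples with $b\leq\max\{a,c\}$.
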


\begin{corollary}\label{cor:inverse-all-cone}
Let $(P,Q,R)$ be a $k$-GC triple associated with $(a,b,c)$. We assume that $b\leq \max\{a,c\}.$ Then, there exist a unique $\ell\in \mathbb Z$ and a unique vertex $v$ in $\mathrm{GC}\mathbb T^\dag(k,\ell)$ such that $v=(P,Q,R)$.  
\end{corollary}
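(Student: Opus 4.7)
The plan is to mimic the proof of Proposition \ref{pr:all-cohn-triple} in the inverse tree. By Corollary \ref{cor:CTdag-MTdag}, for each $\ell\in\mathbb{Z}$ the map recording $(1,2)$-entries is a canonical graph isomorphism $\mathrm{GC}\mathbb T^{\dag}(k,\ell)\to \mathrm{M}\mathbb T^{\dag}(k)$, so the tree position of any candidate vertex in $\mathrm{GC}\mathbb T^{\dag}(k,\ell)$ is forced by the unique location of $(a,b,c)$ in $\mathrm{M}\mathbb T^{\dag}(k)$. The only thing to pin down is $\ell$.

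For existence, I would let $\sigma\in\{L,R\}^{n}$ be the unique word of left/right child operations taking $(1,1,1)$ to $(a,b,c)$ in $\mathrm{M}\mathbb T^{\dag}(k)$, and apply the corresponding parent operations of $\mathrm{GC}\mathbb T^{\dag}(k,\ell)$ to $(P,Q,R)$ in reverse order. By Remark \ref{rmk:Markov-monodromy-structure-cohn}, these parent operations are $\tau_1^{-1}\colon (P,Q,R)\mapsto (P,PQ-S,Q)$ and $\tau_2\colon (P,Q,R)\mapsto (Q,QR-S,R)$, which coincide with the child operations of the standard $k$-GC tree; Theorem \ref{thm:cohn-markov} records that their action on the $(1,2)$-entries reverses one step in $\mathrm{M}\mathbb T^{\dag}(k)$ via the appropriate Vieta jumping. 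After all $n$ applications, I therefore arrive at a $k$-GC triple $(P^{*},Q^{*},R^{*})$ whose $(1,2)$-entries are $(1,1,1)$, and Proposition \ref{cohn-mat-with-111} yields a unique $\ell\in\mathbb{Z}$ with $(P^{*},Q^{*},R^{*})=(P_{1;\ell},Q_{1;\ell},R_{1;\ell})$. Descending along $\sigma$ in $\mathrm{GC}\mathbb T^{\dag}(k,\ell)$ then inverts these parent operations step by step and recovers $(P,Q,R)$ at the required vertex $v$.

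For uniqueness, suppose $(P,Q,R)$ appeared as some vertex in both $\mathrm{GC}\mathbb T^{\dag}(k,\ell_1)$ and $\mathrm{GC}\mathbb T^{\dag}(k,\ell_2)$. By Corollary \ref{cor:CTdag-MTdag}, the word $\sigma$ labelling its position is the same in either tree, so applying the reverse sequence of parent operations to $(P,Q,R)$ is a deterministic procedure depending only on $(P,Q,R)$ and $\sigma$, producing the same root in both cases; hence $(P_{1;\ell_1},Q_{1;\ell_1},R_{1;\ell_1})=(P_{1;\ell_2},Q_{1;\ell_2},R_{1;\ell_2})$, whence $\ell_1=\ell_2$ by Proposition \ref{cohn-mat-with-111}. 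Uniqueness of $v$ inside a fixed $\mathrm{GC}\mathbb T^{\dag}(k,\ell)$ is immediate from the graph isomorphism.

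The main bookkeeping point, and essentially the only nontrivial one, is the identification of the parent operations of $\mathrm{GC}\mathbb T^{\dag}(k,\ell)$ with the child operations of $\mathrm{GC}\mathbb T(k,\ell)$, and the verification that their $(1,2)$-entry action is the Vieta jumping undoing a single descent in $\mathrm{M}\mathbb T^{\dag}(k)$; Remark \ref{rmk:Markov-monodromy-structure-cohn} together with Theorem \ref{thm:cohn-markov} handles both points, after which the rest of the argument is a straightforward induction on the length $n$ of $\sigma$.
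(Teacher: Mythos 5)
Your proposal is correct and follows exactly the route the paper intends: the paper derives Corollary \ref{cor:inverse-all-cone} by ``exchanging the roles'' of Theorem \ref{thm:cohn-markov} and Lemma \ref{Cohn-Markov2}, i.e.\ climbing the inverse tree via the standard-tree child operations $(P,Q,R)\mapsto(P,PQ-S,Q)$ and $(P,Q,R)\mapsto(Q,QR-S,R)$ until reaching a triple associated with $(1,1,1)$ and then invoking Proposition \ref{cohn-mat-with-111}, which is precisely the argument you spell out. Your handling of uniqueness of the vertex via the canonical graph isomorphism of Corollary \ref{cor:CTdag-MTdag}, and of uniqueness of $\ell$ via the deterministic ascent, also matches the intended proof.
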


In parallel with Proposition \ref{prop:cohnmatrix-ltol'}, we have the following proposition. 

\begin{proposition}
Let $L=\begin{bmatrix}
    1&0\\\ell'-\ell&1
\end{bmatrix}$. The right conjugation of $P$ by $L$ induces the canonical graph isomorphism from $\mathrm{GC}\mathbb T(k, \ell)$ to $\mathrm{GC}\mathbb T^\dag(k, \ell')$.   
\end{proposition}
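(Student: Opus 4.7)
The plan is to mirror the inductive argument of Proposition~\ref{prop:cohnmatrix-ltol'}. The candidate isomorphism is componentwise right conjugation
\[
\phi_L\colon (P,Q,R)\longmapsto (L^{-1}PL,\, L^{-1}QL,\, L^{-1}RL).
\]
The basic algebraic ingredients are the standard identities $L^{-1}(XY)L=(L^{-1}XL)(L^{-1}YL)$ and $L^{-1}X^{-1}L=(L^{-1}XL)^{-1}$, together with $L^{-1}SL=S$ (already verified in the previous proof). The proof then proceeds by induction on the depth of a vertex in $\mathrm{GC}\mathbb T(k,\ell)$.

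For the base case I would compute $\phi_L$ on the root $(P_\ell,Q_\ell,R_\ell)=(P_{1;\ell},P_{1;\ell}Q_{1;\ell}-S,Q_{1;\ell})$ of $\mathrm{GC}\mathbb T(k,\ell)$. Using $L^{-1}P_{1;\ell}L=P_{1;\ell'}$, $L^{-1}Q_{1;\ell}L=Q_{1;\ell'}$ from the proof of Proposition~\ref{prop:cohnmatrix-ltol'}, together with $L^{-1}SL=S$, the image comes out as $(P_{1;\ell'},\,P_{1;\ell'}Q_{1;\ell'}-S,\,Q_{1;\ell'})$. This must then be matched with the root $(P_{1;\ell'},Q_{1;\ell'},R_{1;\ell'})$ of $\mathrm{GC}\mathbb T^\dag(k,\ell')$, which amounts to verifying the matrix identities $P_{1;\ell'}Q_{1;\ell'}-S=Q_{1;\ell'}$ and $Q_{1;\ell'}=R_{1;\ell'}$ directly from the explicit formulas defining $P_{1;\cdot},Q_{1;\cdot},R_{1;\cdot}$.

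For the inductive step I would assume $\phi_L$ sends a vertex $(P,Q,R)$ of $\mathrm{GC}\mathbb T(k,\ell)$ to a vertex $(P',Q',R')$ of $\mathrm{GC}\mathbb T^\dag(k,\ell')$, and check that it sends the left child $(P,PQ-S,Q)$ of the source to the left child $(P',R',(P')^{-1}(R'+S))$ of the target, and similarly on the right. Each componentwise equality then reduces to the three conjugation identities above combined with the defining $k$-GC relation $Q=PR-S$.

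The main obstacle is the base case. The source and target trees have middle-entry presentations of very different shape (a product-minus-$S$ expression versus a single fundamental matrix), so the required identification does not follow automatically from the conjugation formalism but must be verified by a direct matrix computation using the explicit formulas for $P_{1;\ell'},Q_{1;\ell'},R_{1;\ell'}$. Once the base case is in place, the inductive step is essentially automatic from the conjugation identities and the structural relations of $k$-GC triples.
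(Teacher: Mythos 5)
The paper states this proposition without proof (it is introduced only as being ``in parallel with'' Proposition~\ref{prop:cohnmatrix-ltol'}), so your attempt has to stand on its own — and as written it cannot be completed, because the statement you are trying to prove is not literally true for the map you propose. Right conjugation by $L$ preserves the $(1,2)$-entry:
\[
L^{-1}\begin{bmatrix} m_{11}&m_{12}\\ m_{21}&m_{22}\end{bmatrix}L=\begin{bmatrix} m_{11}+(\ell'-\ell)m_{12}&m_{12}\\ \ast&\ast\end{bmatrix}.
\]
The root of $\mathrm{GC}\mathbb T(k,\ell)$ has $(1,2)$-entries $(1,k+2,1)$, while the root of $\mathrm{GC}\mathbb T^{\dag}(k,\ell')$ has $(1,2)$-entries $(1,1,1)$, so no componentwise conjugation can carry one to the other. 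Concretely, the two identities you defer to a ``direct matrix computation'' in your base case, namely $P_{1;\ell'}Q_{1;\ell'}-S=Q_{1;\ell'}$ and $Q_{1;\ell'}=R_{1;\ell'}$, are both false: the first already fails in the $(1,2)$-entry ($k+2$ versus $1$) and the second in the $(1,1)$-entry ($k+\ell'+1$ versus $2k+\ell'+2$). The inductive step breaks for the same reason: conjugation sends the child $(P,PQ-S,Q)$ to $(P',P'Q'-S,Q')$, a child of the \emph{same} type, and identifying this with the $\dag$-type child $(P',R',P'^{-1}(R'+S))$ would force $P'Q'-S=R'$, which does not follow from $Q'=P'R'-S$.

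The source tree in the printed statement is almost certainly a typo for $\mathrm{GC}\mathbb T^{\dag}(k,\ell)$: the proposition is the $\dag$-analogue of Proposition~\ref{prop:cohnmatrix-ltol'} and parallels Propositions~\ref{Markov-monodromy-matrix-ltol'} and~\ref{Markov-monodromy-matrix-ltol'2}, all of which keep the type of tree fixed and change only $\ell$. With that reading your scheme works verbatim and is surely the intended (omitted) proof: the roots of $\mathrm{GC}\mathbb T^{\dag}(k,\ell)$ and $\mathrm{GC}\mathbb T^{\dag}(k,\ell')$ are $(P_{1;\ell},Q_{1;\ell},R_{1;\ell})$ and $(P_{1;\ell'},Q_{1;\ell'},R_{1;\ell'})$, which correspond under conjugation by the computation already recorded in Proposition~\ref{prop:cohnmatrix-ltol'}, and the $\dag$-generation rules $(P,R,P^{-1}(R+S))$ and $((P+S)R^{-1},P,R)$ are preserved because conjugation respects products and inverses and fixes $S$; no nontrivial base-case identity is needed. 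If you do want a canonical isomorphism whose source is $\mathrm{GC}\mathbb T(k,\ell)$ and whose target is an inverse Cohn tree, that is not given by conjugation at all but by $\Phi\circ\Psi^{-1}$, and it lands in $\mathrm{GC}\mathbb T^{\dag}(k,-\ell)$ (Theorem~\ref{thm:phi-psi-1}), which requires the Markov-monodromy machinery of Section~5 rather than an elementary conjugation argument.
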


\begin{remark}
 The map corresponding to $\mu$ in the Section 3, i.e., the map that gives the relation between the $k$-GC tree and the inverse $k$-GC tree, is given in Theorem \ref{thm:phi-psi-1}.   
\end{remark}

\section{Markov-monodromy tree and Markov-monodromy decomposition}

In this section, we introduce the $k$-Markov-monodromy matrix and $k$-Markov-monodromy triple and describe their properties.
\subsection{Definition and existence of Markov-monodromy triple}
\begin{definition}
For $k\in \mathbb {Z}_{\geq 0}$, we define a \emph{$k$-Markov-monodromy matrix} $X=\begin{bmatrix}x_{11}&x_{12}\\x_{21}&x_{22}\end{bmatrix} \in SL(2,\mathbb Z)$ as a matrix satisfying the following conditions: 
\begin{itemize}\setlength{\leftskip}{-15pt}
    \item [(1)] $x_{12}$ is a $k$-GM number,
    \item [(2)] $\mathrm{tr}(X)=-k.$
\end{itemize}
\end{definition}
\begin{definition}\label{def:Markov-monodromy-triple}
For $k\in \mathbb {Z}_{\geq 0}$, we define a \emph{$k$-Markov-monodromy triple $(X,Y,Z)$} as a triple satisfying the following conditions:
\begin{itemize}\setlength{\leftskip}{-15pt}
    \item [(1)] $X,Y,Z$ are $k$-Markov-monodromy matrices,
    \item[(2)] $XYZ= T$, where $T=
        \begin{bmatrix}
            -1&0\\3k+3&-1
        \end{bmatrix}$ 
    \item[(3)] $(x_{12},y_{12},z_{12})$ is a $k$-GM triple, where $x_{12},y_{12},z_{12}$ are the $(1,2)$-entries of $X,Y,Z$, respectively.
\end{itemize}
The triple $(X,Y,Z)$ is said to be \emph{associated with} $(x_{12},y_{12},z_{12})$.
\end{definition}

Note that this definition coincides with the definition of the $k$-Markov-monodromy triple in Section 1. In this paper, we abbreviate the $k$-Markov-monodromy matrix as the $k$-MM matrix and the $k$-Markov-monodromy triple as the $k$-MM triple.
Each $k$-MM matrix is related to a $k$-GC matrix by the following bijective map:

\begin{proposition}\label{pr:Cohn-Markov-monodromy}
We fix $k\in \mathbb Z_{\geq 0}$. We consider the following map $\psi\colon M(2,\mathbb Z) \to M(2,\mathbb Z)$:
\[\psi\colon \begin{bmatrix}
    m_{11}&m_{12}\\m_{21}&m_{22}
\end{bmatrix}\mapsto \begin{bmatrix}
    -m_{11}+m_{12}k-k&m_{12}\\m_{21}-(k+3)m_{11}+k(2k+3)(m_{12}-1) & -m_{22}+(2k+3)m_{12}-k
\end{bmatrix}.\]    
This map induces a bijection from the set of $k$-MM matrices to the set of $k$-GC matrices. 
\end{proposition}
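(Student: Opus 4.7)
The proposal is to verify the claim by directly computing $\psi^{-1}$, checking that $\psi$ sends each defining condition of a $k$-MM matrix to the corresponding defining condition of a $k$-GC matrix, and then doing the same for $\psi^{-1}$. Since every entry of $\psi(X)$ is an explicit $\mathbb{Z}$-linear polynomial in the entries of $X$ (and similarly for its inverse), the map is visibly a bijection on $M(2,\mathbb{Z})$: from $\psi(X)=P$ one solves $x_{12}=p_{12}$, $x_{11}=-p_{11}+kp_{12}-k$, $x_{22}=-p_{22}+(2k+3)p_{12}-k$, and finally reads $x_{21}$ off the $(2,1)$-entry equation. Thus the only real content is the compatibility with the three conditions (membership in $SL(2,\mathbb{Z})$, the trace condition, and the fact that the $(1,2)$-entry is a $k$-GM number).

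The $(1,2)$-entry is preserved by construction, so it is a $k$-GM number for $\psi(X)$ iff it is one for $X$. For the trace, I would just add the two diagonal entries of $\psi(X)$:
\[
\mathrm{tr}(\psi(X))=-(m_{11}+m_{22})+km_{12}+(2k+3)m_{12}-2k=-\mathrm{tr}(X)+(3k+3)m_{12}-2k.
\]
Substituting $\mathrm{tr}(X)=-k$ gives $(3k+3)m_{12}-k$, which is exactly the trace condition for a $k$-GC matrix. Reading the identity in the opposite direction shows that $\psi^{-1}(P)$ has trace $-k$ whenever $\mathrm{tr}(P)=(3k+3)p_{12}-k$.

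The main (but still purely mechanical) obstacle is checking that $\det\psi(X)=1$. I would expand
\[
\det\psi(X)=(-m_{11}+km_{12}-k)(-m_{22}+(2k+3)m_{12}-k)-m_{12}\bigl(m_{21}-(k+3)m_{11}+k(2k+3)(m_{12}-1)\bigr),
\]
collect terms in $m_{11}m_{22}$, $m_{12}m_{21}$, $m_{11}m_{12}$, $m_{12}m_{22}$, $m_{12}^{2}$, and constants, and then substitute the two relations $m_{11}m_{22}-m_{12}m_{21}=\det X=1$ and $m_{11}+m_{22}=\mathrm{tr}(X)=-k$. A short bookkeeping shows that the quadratic-in-$m_{12}$ terms cancel and what remains collapses to $1$. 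Because the map $\psi$ is $\mathbb{Z}$-affine and the computation only invokes the defining relations of a $k$-MM matrix, the same computation read backwards (or a parallel one starting from a $k$-GC matrix $P$ with $\mathrm{tr}(P)=(3k+3)p_{12}-k$ and $\det P=1$) shows $\det\psi^{-1}(P)=1$.

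Combining these three checks, $\psi$ restricts to a well-defined map from $k$-MM matrices to $k$-GC matrices, and $\psi^{-1}$ restricts to a well-defined map in the reverse direction; since they are mutually inverse on all of $M(2,\mathbb{Z})$, they are mutually inverse on these subsets, which gives the desired bijection.
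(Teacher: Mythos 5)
Your proposal is correct and follows essentially the same route as the paper: construct the explicit inverse of $\psi$ on $M(2,\mathbb Z)$, observe the $(1,2)$-entry is untouched, and verify the trace and determinant conditions by direct expansion using $\det X=1$ and $\mathrm{tr}(X)=-k$ (and, for the reverse direction, the parallel computation from $\det P=1$ and $\mathrm{tr}(P)=(3k+3)p_{12}-k$). The only difference is presentational: the paper writes out the determinant expansions entry by entry, while you organize the same cancellation by collecting terms against the two defining relations.
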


\begin{proof}
First, we prove $\psi\colon M(2,\mathbb Z) \to M(2,\mathbb Z)$ is a bijection. We can construct the inverse map of $\psi$ as
\[\psi^{-1}\colon \begin{bmatrix}
    m_{11}&m_{12}\\m_{21}&m_{22}
\end{bmatrix}\mapsto \begin{bmatrix}
    -m_{11}+m_{12}k-k&m_{12}\\m_{21}-(k+3)m_{11}-k^2(m_{12}-1) & -m_{22}+(2k+3)m_{12}-k
\end{bmatrix}.\]
Second, for a $k$-MM matrix $X$, we prove that $\psi(X)$ is a $k$-GC matrix. Accoding to $X\in SL(2,\ZZ)$ and $x_{11}+x_{22}=-k$, we have 
\begin{align*}
\det(\psi(X))&=x_{11}x_{22}-kx_{11}x_{12}+kx_{11}-kx_{12}x_{22}-k^2x_{12}+kx_{22}+k^2-x_{21}x_{12}\\
&=x_{11}x_{22}-x_{21}x_{12}=1.
\end{align*}
Therefore, we have $\psi(X)\in SL(2,\ZZ)$. Moreover, we have
\begin{align*}
    \mathrm{tr}(\psi(X))=-x_{11}-x_{22}+(3k+3)x_{12}-2k=(3k+3)x_{12}-k.
\end{align*}
Therefore, $\psi(X)$ is a $k$-GC matrix. Finally, for a $k$-GC matrix $P$, we prove that $\psi^{-1}(P)$ is a $k$-MM matrix. According to $P\in SL(2,\ZZ)$ and $p_{11}+p_{22}= (3k+3)p_{12}-k,$ we have 
\begin{align*}
\det(\psi^{-1}(P))=&p_{11}p_{22}-kp_{11}p_{12}+kp_{11}-kp_{12}p_{22}+k(3k+3)p_{12}^2-k^2p_{12}+kp_{22}\\
&-k(3k+3)p_{12}+k^2-p_{21}p_{12}\\
=&p_{11}p_{22}-p_{21}p_{12}=1.
\end{align*}
Therefore, we have $\psi^{-1}(P)\in SL(2,\ZZ)$. Moreover, we have
\begin{align*}
    \mathrm{tr}(\psi^{-1}(P))=-p_{11}-p_{22}+(3k+3)p_{12}-2k=-k.
\end{align*}
Therefore, $\psi^{-1}(P)$ is a $k$-MM matrix.
\end{proof}

Moreover, $\psi$ induces a more strong bijection:

\begin{proposition}\label{pr:Markov-monodromy-cohn-triple}
For a $k$-MM triple $(X,Y,Z)$, $(\psi(X),\psi(Y),\psi(Z))$ is a $k$-GC triple. Conversely, for a $k$-GC triple $(P,Q,R)$, $(\psi^{-1}(P),\psi^{-1}(Q),\psi^{-1}(R))$ is a $k$-MM triple. 
\end{proposition}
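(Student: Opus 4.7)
The plan is to leverage Proposition \ref{pr:Cohn-Markov-monodromy} and reduce the proposition to a single matrix identity. That proposition already establishes that $\psi$ gives an entry-wise bijection between $k$-MM and $k$-GC matrices, so condition (1) of Definitions \ref{def:Markov-monodromy-triple} and \ref{def:gen-Cohn-triple} is immediate. Moreover, the formula for $\psi$ preserves the $(1,2)$-entry (namely $\psi(M)_{12} = m_{12}$), so condition (3)---that the triple of $(1,2)$-entries be a $k$-GM triple---transfers automatically in both directions. All that remains is to verify condition (2): the relation $XYZ = T$ on the Markov-monodromy side must correspond under $\psi$ to the relation $Q = PR - S$ on the Cohn side.

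For the forward direction, given a $k$-MM triple $(X, Y, Z)$, I would substitute $Y = X^{-1} T Z^{-1}$ (from $XYZ = T$) and verify the matrix identity $\psi(Y) = \psi(X)\,\psi(Z) - S$ by direct computation. Parameterizing $X = \begin{bmatrix} a & x \\ u & -k-a \end{bmatrix}$ and $Z = \begin{bmatrix} c & z \\ w & -k-c \end{bmatrix}$ (with determinant constraints $xu = -a^2-ka-1$ and $zw = -c^2-kc-1$) and using Cayley--Hamilton ($X^{-1} = -X - kI$ and $Z^{-1} = -Z - kI$), one writes $Y$ as a polynomial expression in the six parameters $(a,x,u,c,z,w)$. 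Comparing entries: the $(1,1)$, $(1,2)$, and $(2,2)$ entries agree as polynomial identities in these parameters without further input, while the $(2,1)$ entry additionally invokes the constraint $\mathrm{tr}(Y) = -k$, which is automatic because $Y$ is a $k$-MM matrix.

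For the converse direction, given a $k$-GC triple $(P, Q, R)$, set $X = \psi^{-1}(P)$, $Y = \psi^{-1}(Q)$, $Z = \psi^{-1}(R)$; each is a $k$-MM matrix by Proposition \ref{pr:Cohn-Markov-monodromy}. Let $\tilde Y := X^{-1} T Z^{-1}$. The same entry-wise calculation shows that $\psi(\tilde Y) - \psi(X)\psi(Z) + S$ is a multiple of $E_{21} := \begin{bmatrix} 0 & 0 \\ 1 & 0 \end{bmatrix}$ whose coefficient vanishes precisely when $\mathrm{tr}(\tilde Y) = -k$. Combined with $Q = PR - S$ and the direct observation $\psi(M + c E_{21}) = \psi(M) + c E_{21}$ (read off from the formula for $\psi$), injectivity of $\psi$ yields $Y - \tilde Y = c E_{21}$ for some scalar $c$; taking traces and using $\mathrm{tr}(Y) = -k$ forces $c = 0$, so $Y = \tilde Y$ and hence $XYZ = T$.

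The main obstacle is the bookkeeping in the $(2,1)$-entry. The $(2,1)$-component of $\psi$ carries the involved expression $-(k+3)m_{11} + k(2k+3)(m_{12} - 1)$, and matching $\psi(\tilde Y)_{21}$ against $(\psi(X)\psi(Z))_{21} - (3k^2 + 3k)$ involves many cross terms built from the constants $k$, $2k+3$, $3k+3$, and $3k^2 + 3k$. Useful organizing identities include $(3k+3)xz = kxz + (2k+3)xz$ and the recurring grouping $(k+a)(k+c)$. The residual polynomial discrepancy in the $(2,1)$-entry is proportional to $\mathrm{tr}(Y) + k$, and so vanishes exactly because $Y$ is a $k$-MM matrix; no further structural input is needed.
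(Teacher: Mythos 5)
Your proposal is correct, and it follows the paper's overall strategy (reduce everything to Proposition \ref{pr:Cohn-Markov-monodromy} plus the single identity $\psi(X^{-1}TZ^{-1})=\psi(X)\psi(Z)-S$, verified by a parameterized entry computation), but it closes the argument differently at the two places where the paper does extra work. For the $(2,1)$-entry, the paper matches only the $(1,1)$, $(1,2)$, $(2,2)$ entries and then pins down the last entry by comparing determinants, via $\det(\psi(X)\psi(Z)-S)=\det(X^{-1}TZ^{-1})+k\,\mathrm{tr}(X^{-1}TZ^{-1})+k^2$ together with $\mathrm{tr}(Y)=-k$ (this implicitly uses that the common $(1,2)$-entry $y_{12}$ is a nonzero $k$-GM number, so equal determinants force equal $(2,1)$-entries); you instead compute the $(2,1)$-discrepancy directly and identify it as $k\bigl(\mathrm{tr}(X^{-1}TZ^{-1})+k\bigr)$ up to sign, which I have checked is a genuine polynomial identity in the six free parameters, so your mechanism is right and even dispenses with the nonvanishing hypothesis (note only that for $k=0$ the discrepancy vanishes identically, so ``precisely when'' should be ``whenever''). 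For the converse, the paper carries out a second, symmetric computation showing $\psi^{-1}(PR-S)=(\psi^{-1}(P))^{-1}T(\psi^{-1}(R))^{-1}$, whereas you recycle the forward computation: since $\psi(M+cE_{21})=\psi(M)+cE_{21}$ and $\psi$ is injective, the identity $\psi(X^{-1}TZ^{-1})=PR-S+cE_{21}=\psi(Y)+cE_{21}$ with $c=k(\mathrm{tr}(X^{-1}TZ^{-1})+k)$ gives $X^{-1}TZ^{-1}=Y+cE_{21}$, and taking traces (using $\mathrm{tr}(E_{21})=0$ and $\mathrm{tr}(Y)=-k$) kills $c$, so $XYZ=T$. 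This is a neat economy: one heavy computation instead of two, at the cost of the small translation-by-$E_{21}$ and trace bookkeeping, while the paper's route trades that for a second explicit entry/determinant check.
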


\begin{proof}
First, we prove the former statement. By Proposition \ref{pr:Cohn-Markov-monodromy}, it suffices to show that $\psi(Y)=\psi(X)\psi(Z)-S$. By assumption, we have $\psi(Y)=\psi(X^{-1}TZ^{-1})$. Therefore, it is enough to show that $\psi(X^{-1}TZ^{-1})=\psi(X)\psi(Z)-S$. By the definition of the $k$-MM matrix, we can set
\[X=\begin{bmatrix}
    x_{11}& x_{12}\\x_{21}&-x_{11}-k
\end{bmatrix},\quad Z=\begin{bmatrix}
    z_{11}& z_{12}\\z_{21}&-z_{11}-k
\end{bmatrix}.\]
Now, we have $X^{-1}TZ^{-1}=\begin{bmatrix}
    m_{11}&m_{12}\\\ast &m_{22}
\end{bmatrix}$, where
\begin{align*}
m_{11}=&(3k+3)x_{12}z_{11}+(3k^2+3k)x_{12}-x_{11}z_{11} + 3x_{12}z_{11} - x_{12}z_{21} - kx_{11} - kz_{11} - k^2,\\
m_{12}=&(3k+3)x_{12}z_{12} + x_{12}z_{11} - x_{11}z_{12} - kz_{12},\\
m_{22}=& -(3k+3)x_{11}z_{12} - x_{11}z_{11}- x_{21}z_{12}.
\end{align*}
By applying $\psi$, we have $\psi(X^{-1}TZ^{-1})=\begin{bmatrix}
    m'_{11}&m'_{12}\\\ast &m'_{22}
\end{bmatrix}$, where
\begin{align*}
m'_{11}=&(3k^2+3k)x_{12}z_{12} - (2k+3)x_{12}z_{11} - kx_{11}z_{12} - (3k^2+3)x_{12} - k^2z_{12} + x_{11}z_{11}\\& + x_{12}z_{21} + kx_{11} + kz_{11} + k^2 - k,\\
m'_{12}=&(3k+3)x_{12}z_{12} + x_{12}z_{11} - x_{11}z_{12} - kz_{12},\\
m'_{22}=& 3(2k+3)(k+1)x_{12}z_{12} + (2k+3)x_{12}z_{11} + kx_{11}z_{12} - (2k^2+3k)z_{12} + x_{11}z_{11} \\&+ x_{21}z_{12} - k.
\end{align*}
On the other hand, by a direct calculation, we have $\psi(X)\psi(Z)-S=\begin{bmatrix}
    m'_{11}& m'_{12}\\ \ast & m'_{22}
\end{bmatrix}$. 
Moreover, we have $\det(\psi(X^{-1}TZ^{-1}))=1$ from Proposition \ref{pr:Cohn-Markov-monodromy}, and \begin{align*}\det(\psi(X)\psi(Z)-S)=&\det(X^{-1}TZ^{-1})+k\mathrm{tr}(X^{-1}TZ^{-1})+k^2\\=&\det(X^{-1}TZ^{-1})+k\mathrm{tr}(Y)+k^2\\=&\det(X^{-1}TZ^{-1})=1\end{align*}
by a direct calculation. Therefore, we have $\psi(X^{-1}TZ^{-1})=\psi(X)\psi(Z)-S$. 
Second, we prove the latter statement. By Proposition \ref{pr:Cohn-Markov-monodromy}, it suffices to show that \[\psi^{-1}(P)\psi^{-1}(Q)\psi^{-1}(R)=T.\] By assumption, it is enough to show that \[\psi^{-1}(PR-S)=(\psi^{-1}(P))^{-1}T(\psi^{-1}(R))^{-1}.\]
By the definition of the $k$-GC matrix, we can set
\[P=\begin{bmatrix}
    p_{11}& p_{12}\\p_{21}&-p_{11}+(3k+3)p_{12}-k
\end{bmatrix},\quad R=\begin{bmatrix}
    r_{11}& r_{12}\\r_{21}&-r_{11}+(3k+3)r_{12}-k
\end{bmatrix}.\]
By a direct calculation, we have $PR-S=\begin{bmatrix}
    n_{11}&n_{12}\\\ast & n_{22}
\end{bmatrix}$, where
\begin{align*}
n_{11}=& p_{11}r_{11} + p_{12}r_{21} - k, \\
n_{12}=&(3k+3)p_{12}r_{12} - kp_{12} - p_{12}r_{11} + p_{11}r_{12},\\
n_{22}=&9k^2p_{12}r_{12} - 3k^2p_{12} - 3kp_{12}r_{11} - 3k^2r_{12} - 3kp_{11}r_{12} + 18kp_{12}r_{12} + k^2 + kp_{11} \\&- 3kp_{12} + kr_{11} + p_{11}r_{11} - 3p_{12}r_{11} - 3kr_{12} - 3p_{11}r_{12} + 9p_{12}r_{12} + p_{21}r_{12} - k.
\end{align*}
By applying $\psi$, we have $\psi^{-1}(PR-S)=\begin{bmatrix}
    n'_{11}&n'_{12}\\\ast & n'_{22}
\end{bmatrix}$, where
\begin{align*}
n'_{11}=&(3k^2+3k)p_{12}r_{12} - k^2p_{12} - kp_{12}r_{11} + kp_{11}r_{12} - p_{11}r_{11} - p_{12}r_{21},\\
n'_{12}=&(3k+3)p_{12}r_{12} - kp_{12} - p_{12}r_{11} + p_{11}r_{12},\\
n'_{22}=&-3k^2p_{12}r_{12} + k^2p_{12} + kp_{12}r_{11} + 3k^2r_{12} + 5kp_{11}r_{12} - 3kp_{12}r_{12} - k^2 - kp_{11} - kr_{11} \\&- p_{11}r_{11} + 3kr_{12} + 6p_{11}r_{12} - p_{21}r_{12}.
\end{align*}
On the other hand, by a direct calculation, we have \[(\psi^{-1}(P))^{-1}T(\psi^{-1}(R))^{-1}=\begin{bmatrix}
 n'_{11}&n'_{12}\\ \ast & n'_{22}    
\end{bmatrix}.\]
Moreover, since $\det(\psi^{-1}(PR-S))=\det((\psi^{-1}(P))^{-1}T(\psi^{-1}(R))^{-1})=1$ holds, we have the desired equality.
\end{proof}
We set
$(X_{1;\ell},Y_{1;\ell},Z_{1;\ell}):=(\psi^{-1}(P_{1;-\ell}),\psi^{-1}(Q_{1;-\ell}),\psi^{-1}(R_{1;-\ell}))$, that is,
\begin{align*}
  X_{1;\ell}&=\begin{bmatrix}
    \ell &1\\-\ell^2-k\ell-1&-k-\ell
\end{bmatrix},\\
    Y_{1;\ell}&=\begin{bmatrix}
    -k+\ell-1 &1\\-\ell^2+k\ell+2\ell-k-2&-\ell+1
\end{bmatrix},\\
  Z_{1;\ell}&=\begin{bmatrix}
    -2k+\ell-2 &1\\-2k^2-\ell^2+3k\ell-6k+4\ell-5& k-\ell+2
\end{bmatrix}.
\end{align*}

We fix $k\in \ZZ_{\geq 0}$ and $\ell\in \mathbb Z$. We consider the following tree $\mathrm{MM}\mathbb T(k,\ell)$:
\begin{itemize}\setlength{\leftskip}{-15pt}
\item [(1)] the root vertex is $(X_\ell,Y_\ell,Z_\ell):=(X_{1;\ell},Y_{1;\ell}Z_{1;\ell}Y^{-1}_{1;\ell}, Y_{1;\ell})$, that is, 
\begin{align*}
  X_{\ell}&=\begin{bmatrix}
    \ell &1\\-\ell^2-k\ell-1&-k-\ell
\end{bmatrix},\\
    Y_\ell&=\begin{bmatrix}
    k\ell-k+2\ell-1 &k+2\\-k\ell^2-2\ell^2+k\ell+2\ell-1&-k\ell-2\ell+1
\end{bmatrix},\\
  Z_\ell&=\begin{bmatrix}
    -k+\ell-1 &1\\-\ell^2+k\ell+2\ell-k-2&-\ell+1
\end{bmatrix},
\end{align*}
\item[(2)]for a vertex $(X,Y,Z)$, there are the following two children of it:
\[\begin{xy}(0,0)*+{(X,Y,Z)}="1",(-30,-15)*+{(X,YZY^{-1},Y)}="2",(30,-15)*+{(Y,Y^{-1}XY,Z).}="3", \ar@{-}"1";"2"\ar@{-}"1";"3"
\end{xy}\]
\end{itemize}
We call $\mathrm{MM}\mathbb T(k,\ell)$ the \emph{$k$-Markov-monodromy tree}. In this paper, we abbreviate this tree as the \emph{$k$-MM tree}.
\begin{example}\label{ex:Markov-monodromy-tree}
 When $k=1$ and $\ell=0$, $\mathrm{MM}\mathbb T(k,\ell)$ is the following.
\relsize{-2}
\begin{align*}
\begin{xy}(-20,0)*+{\left(\begin{bmatrix}0&1\\-1&-1\end{bmatrix},\begin{bmatrix}-2&3\\-1&1\end{bmatrix},\begin{bmatrix}-2&1\\-3&1\end{bmatrix}\right)}="1",(30,-16)*+{\left(\begin{bmatrix}0&1\\-1&-1\end{bmatrix},\begin{bmatrix}-4&13\\-1&3\end{bmatrix},\begin{bmatrix}-2&3\\-1&1\end{bmatrix}\right)}="2",(30,16)*+{\left(\begin{bmatrix}-2&3\\-1&1\end{bmatrix},\begin{bmatrix}-10&13\\-7&9\end{bmatrix},\begin{bmatrix}-2&1\\-3&1\end{bmatrix}\right)}="3", (90,-24)*+{\left(\begin{bmatrix}0&1\\-1&-1\end{bmatrix},\begin{bmatrix}-14&61\\-3&13\end{bmatrix},\begin{bmatrix}-4&13\\-1&3\end{bmatrix}\right)\cdots}="4",(90,-8)*+{\left(\begin{bmatrix}-4&13\\-1&3\end{bmatrix},\begin{bmatrix}-68&217\\-21&67\end{bmatrix},\begin{bmatrix}-2&3\\-1&1\end{bmatrix}\right)\cdots}="5",(90,8)*+{\left(\begin{bmatrix}-2&3\\-1&1\end{bmatrix},\begin{bmatrix}-150&217\\-103&149\end{bmatrix},\begin{bmatrix}-10&13\\-7&9\end{bmatrix}\right)\cdots}="6",(90,24)*+{\left(\begin{bmatrix}-10&13\\-7&9\end{bmatrix},\begin{bmatrix}-48&61\\-37&47\end{bmatrix},\begin{bmatrix}-2&1\\-3&1\end{bmatrix}\right)\cdots}="7",\ar@{-}"1";"2"\ar@{-}"1";"3"\ar@{-}"2";"4"\ar@{-}"2";"5"\ar@{-}"3";"6"\ar@{-}"3";"7"
\end{xy}
\end{align*}
\relsize{+2}
\end{example}

\begin{theorem}\label{thm:BT-CT2}
For $(X,Y,Z)$ in $\mathrm{MM}\mathbb T(k,\ell)$, \[\Psi(X,Y,Z):=(\psi(X),\psi(Y),\psi(Z))\] is a $k$-GC triple in $\mathrm{GC}\mathbb T(k,-\ell)$. In particular, $(X,Y,Z)$ is a $k$-MM triple. Moreover, this correspondence induces the canonical graph isomorphism between $\mathrm{MM}\mathbb T(k,\ell)$ and $\mathrm{GC}\mathbb T(k,-\ell)$.
\end{theorem}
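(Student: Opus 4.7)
The plan is to induct on the depth of the vertex in $\mathrm{MM}\mathbb T(k,\ell)$ and match it vertex-by-vertex with $\mathrm{GC}\mathbb T(k,-\ell)$ via $\Psi$. The engine of the argument is the matrix identity already established (though not highlighted) inside the proof of Proposition~\ref{pr:Markov-monodromy-cohn-triple}: for any $A,B\in SL(2,\ZZ)$ with $\mathrm{tr}(A)=\mathrm{tr}(B)=-k$,
\[
\psi(A^{-1}TB^{-1}) \;=\; \psi(A)\,\psi(B)-S.
\]
This identity converts the conjugation-style child operations of the MM tree into the multiplication-minus-$S$ child operations of the GC tree, since in any $k$-MM triple $(X,Y,Z)$ the relation $XYZ=T$ lets one rewrite each conjugated entry as a product $A^{-1}TB^{-1}$.

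For the base case I would verify $\Psi(X_\ell,Y_\ell,Z_\ell)=(P_{-\ell},Q_{-\ell},R_{-\ell})$. The outer two coordinates are immediate from the defining relations $\psi(X_{1;\ell})=P_{1;-\ell}$ and $\psi(Y_{1;\ell})=Q_{1;-\ell}$. For the middle coordinate, applying Proposition~\ref{pr:Markov-monodromy-cohn-triple} to the $k$-GC triple $(P_{1;-\ell},Q_{1;-\ell},R_{1;-\ell})$ of Proposition~\ref{cohn-mat-with-111} shows $(X_{1;\ell},Y_{1;\ell},Z_{1;\ell})$ is a $k$-MM triple, so $X_{1;\ell}Y_{1;\ell}Z_{1;\ell}=T$ and hence $Y_{1;\ell}Z_{1;\ell}Y_{1;\ell}^{-1}=X_{1;\ell}^{-1}TY_{1;\ell}^{-1}$; the key identity then yields $\psi(Y_{1;\ell}Z_{1;\ell}Y_{1;\ell}^{-1})=P_{1;-\ell}Q_{1;-\ell}-S=Q_{-\ell}$.

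For the inductive step, suppose $(X,Y,Z)$ is a vertex of $\mathrm{MM}\mathbb T(k,\ell)$ that is already known to be a $k$-MM triple with $\Psi(X,Y,Z)=(P,Q,R)$ the corresponding vertex of $\mathrm{GC}\mathbb T(k,-\ell)$. For the left child $(X,YZY^{-1},Y)$: clearly $X(YZY^{-1})Y=XYZ=T$ and all three matrices have trace $-k$. From $YZ=X^{-1}T$ we get $YZY^{-1}=X^{-1}TY^{-1}$, and the key identity gives $\psi(YZY^{-1})=\psi(X)\psi(Y)-S=PQ-S$. By Theorem~\ref{thm:cohn-markov}, $(P,PQ-S,Q)$ is a $k$-GC triple, so its middle $(1,2)$-entry is a $k$-GM number and the three $(1,2)$-entries form a $k$-GM triple; because $\psi$ preserves the $(1,2)$-entry, the same holds for $(X,YZY^{-1},Y)$, so it is a $k$-MM triple whose $\Psi$-image is exactly the left child $(P,PQ-S,Q)$ of $(P,Q,R)$. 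The right child is handled symmetrically: from $XY=TZ^{-1}$ we get $Y^{-1}XY=Y^{-1}TZ^{-1}$, and the key identity yields $\psi(Y^{-1}XY)=QR-S$, matching the right child $(Q,QR-S,R)$.

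I expect the main friction to be bookkeeping rather than conceptual: confirming that the parameter flip $\ell\leftrightarrow -\ell$ propagates consistently (it does because the roots were set up that way) and that the key identity applies at every step, which is automatic from the trace-$-k$ condition being preserved by conjugation throughout the induction. The bijectivity of $\psi$ and the fact that the inductive construction surjects onto $\mathrm{GC}\mathbb T(k,-\ell)$ then upgrade the vertex-by-vertex match into the desired canonical graph isomorphism.
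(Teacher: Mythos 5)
Your argument follows essentially the same route as the paper: the child-compatibility you establish is exactly the paper's Lemma~\ref{lem:psi-cohn}, and the rest (root-to-root base case, induction down the tree, using Proposition~\ref{pr:Markov-monodromy-cohn-triple} together with the fact that $\psi$ preserves $(1,2)$-entries to see that the children are again $k$-MM triples) matches the paper's proof of Theorem~\ref{thm:BT-CT2}. The one genuine variation is how you justify the key step: you recycle the identity $\psi(X^{-1}TZ^{-1})=\psi(X)\psi(Z)-S$ from the proof of Proposition~\ref{pr:Markov-monodromy-cohn-triple}, applied after the rewritings $YZY^{-1}=X^{-1}TY^{-1}$ and $Y^{-1}XY=Y^{-1}TZ^{-1}$, whereas the paper proves Lemma~\ref{lem:psi-cohn} by a separate entry computation; your route is a little more economical. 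One correction is needed, though: the ``key identity'' is not valid for arbitrary $A,B\in SL(2,\mathbb Z)$ of trace $-k$, and it is not what that proof establishes in that generality. In the paper's verification only the $(1,1)$, $(1,2)$, $(2,2)$ entries are matched directly; the $(2,1)$-entry is pinned down by a determinant comparison that uses $\mathrm{tr}(A^{-1}TB^{-1})=-k$ (one checks $\det\psi(M)=\det M+k\,(\mathrm{tr}M+k)(1-m_{12})$, so equality of determinants genuinely needs the trace condition on the product), together with a nonzero $(1,2)$-entry. For $k\geq 1$ the identity does fail without that condition: e.g.\ for $k=1$ and $A=B=\begin{bmatrix}0&1\\-1&-1\end{bmatrix}$ one gets $\psi(A^{-1}TB^{-1})=\begin{bmatrix}-2&5\\-3&23\end{bmatrix}$ while $\psi(A)\psi(B)-S=\begin{bmatrix}-2&5\\-11&23\end{bmatrix}$. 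This costs you nothing in the end, because in every one of your applications $A^{-1}TB^{-1}$ is a conjugate of $Z$ or of $X$ (hence has trace $-k$) and its $(1,2)$-entry agrees with that of $PQ-S$ or $QR-S$, which is positive by Theorem~\ref{thm:cohn-markov}; but you should state the identity with these extra hypotheses rather than for all trace-$(-k)$ pairs.
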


The following lemma is a key to prove Theorem \ref{thm:BT-CT2}.

\begin{lemma}\label{lem:psi-cohn}
For a $k$-MM triple $(X,Y,Z)$, we have 
\begin{align*}
\Psi(X,YZY^{-1},Y)&=(\psi(X),\psi(X)\psi(Y)-S,\psi(Y)),\\
\Psi(Y,Y^{-1}XY,Z)&=(\psi(Y),\psi(Y)\psi(Z)-S,\psi(Z)).
\end{align*}
\end{lemma}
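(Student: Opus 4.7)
The plan is to reduce the lemma to the key identity already established inside the proof of Proposition \ref{pr:Markov-monodromy-cohn-triple}. There, a direct calculation shows that whenever $A, C \in SL(2,\mathbb{Z})$ satisfy $\mathrm{tr}(A) = \mathrm{tr}(C) = -k$, one has
\[
\psi(A^{-1} T C^{-1}) = \psi(A)\psi(C) - S.
\]
The computation used only the trace constraints (writing $a_{22} = -a_{11} - k$ and $c_{22} = -c_{11} - k$) together with $\det A = \det C = 1$, so it is an identity holding for any such pair $(A, C)$, not just for the first and third components of an MM triple. I would state this extraction as the engine of the proof and then check that both identities in the lemma are instances of it under the rewriting forced by $XYZ = T$.

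For the first identity, the MM-triple relation $XYZ = T$ yields
\[
X \cdot (YZY^{-1}) \cdot Y = XYZ = T,
\]
hence $YZY^{-1} = X^{-1} T Y^{-1}$. Since $X$ and $Y$ are MM matrices, their traces are both $-k$ and they lie in $SL(2,\mathbb{Z})$, so applying the identity above with $(A, C) = (X, Y)$ gives
\[
\psi(YZY^{-1}) = \psi(X^{-1} T Y^{-1}) = \psi(X)\psi(Y) - S,
\]
which is exactly the middle component of $\Psi(X, YZY^{-1}, Y)$. The first and third components are $\psi(X)$ and $\psi(Y)$ by definition of $\Psi$, so the first displayed equation is established.

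The second identity is symmetric: rewriting $XYZ = T$ as $Y \cdot (Y^{-1}XY) \cdot Z = T$ gives $Y^{-1}XY = Y^{-1} T Z^{-1}$, and applying the key identity with $(A, C) = (Y, Z)$ produces $\psi(Y^{-1}XY) = \psi(Y)\psi(Z) - S$, which is the middle component of $\Psi(Y, Y^{-1}XY, Z)$. The main (modest) obstacle is simply recognizing the computation in Proposition \ref{pr:Markov-monodromy-cohn-triple} as a trace-parameterized identity rather than a statement about one particular MM triple; once this is extracted, no further matrix computation is required beyond what has already been performed.
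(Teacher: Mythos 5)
Your reduction has the right shape, but the key identity you extract from Proposition \ref{pr:Markov-monodromy-cohn-triple} is stated in a generality in which it is false. You claim that for any $A,C\in SL(2,\mathbb Z)$ with $\mathrm{tr}(A)=\mathrm{tr}(C)=-k$ one has $\psi(A^{-1}TC^{-1})=\psi(A)\psi(C)-S$, on the grounds that the computation there ``used only the trace constraints together with $\det A=\det C=1$.'' That misreads the computation: the direct entry comparison in that proof only covers the $(1,1)$, $(1,2)$ and $(2,2)$ entries; the $(2,1)$-entry is handled by the determinant argument, and both determinant evaluations there use $\mathrm{tr}(A^{-1}TC^{-1})=\mathrm{tr}(Y)=-k$ (note that $\det\psi(M)=\det M$ only holds when $\mathrm{tr}(M)=-k$, as the proof of Proposition \ref{pr:Cohn-Markov-monodromy} shows). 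Without that extra hypothesis the identity fails. For instance, take $k=1$ and $A=C=\begin{bmatrix}0&1\\-1&-1\end{bmatrix}$; then
\begin{align*}
A^{-1}TC^{-1}=\begin{bmatrix}6&5\\1&1\end{bmatrix},\qquad
\psi(A^{-1}TC^{-1})=\begin{bmatrix}-2&5\\-3&23\end{bmatrix},\qquad
\psi(A)\psi(C)-S=\begin{bmatrix}-2&5\\-11&23\end{bmatrix},
\end{align*}
so the two sides agree in three entries but differ in the $(2,1)$-entry, exactly where the trace of the product (here $7\neq -1$) enters.

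The good news is that your argument is repairable with one added observation: in both of your applications the missing hypothesis is automatically satisfied, since $X^{-1}TY^{-1}=YZY^{-1}$ is conjugate to $Z$ and $Y^{-1}TZ^{-1}=Y^{-1}XY$ is conjugate to $X$, hence both have trace $-k$. If you restate the extracted identity as ``for $A,C\in SL(2,\mathbb Z)$ with $\mathrm{tr}(A)=\mathrm{tr}(C)=\mathrm{tr}(A^{-1}TC^{-1})=-k$ one has $\psi(A^{-1}TC^{-1})=\psi(A)\psi(C)-S$,'' then the computation in Proposition \ref{pr:Markov-monodromy-cohn-triple} does prove it (the $k$-GM condition on the $(1,2)$-entries is never used there), and your rewritings $X\,(YZY^{-1})\,Y=T$ and $Y\,(Y^{-1}XY)\,Z=T$ correctly put both claims of the lemma in that form. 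With this fix your route differs from the paper's, which instead substitutes $\psi(Y)=\psi(X)\psi(Z)-S$ and verifies $\psi(YZY^{-1})=\psi(Y)\psi(Z)^{-1}\psi(Y)+S\psi(Z)^{-1}\psi(Y)-S$ by a fresh entry computation; your version avoids any new matrix computation, but as written the blanket generality claim is a genuine gap.
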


\begin{proof}
It is enough to show \[\psi(YZY^{-1})=\psi(X)\psi(Y)-S,\quad \psi(Y^{-1}XY)=\psi(Y)\psi(Z)-S.\] We will only prove the former. By Proposition \ref{pr:Markov-monodromy-cohn-triple}, we have $\psi(Y)=\psi(X)\psi(Z)-S$. Therefore, we have
\[\psi(X)\psi(Y)-S=\psi(Y)\psi(Z)^{-1}\psi(Y)+S\psi(Z)^{-1}\psi(Y)-S.\]
Therefore, we will show that
\begin{equation}\label{eq:BT-CT2}
\psi(YZY^{-1})=\psi(Y)\psi(Z)^{-1}\psi(Y)+S\psi(Z)^{-1}\psi(Y)-S.
\end{equation}
We set $Y=\begin{bmatrix}
    y_{11}&y_{12}\\y_{21}&-y_{11}-k
\end{bmatrix},Z=\begin{bmatrix}
    z_{11}&z_{12}\\z_{21}&-z_{11}-k
\end{bmatrix}$, $YZY^{-1}=\begin{bmatrix}
    \alpha_{11}&\alpha_{12}\\\alpha_{21}&\alpha_{22}
\end{bmatrix}$, and the right-hand side of \eqref{eq:BT-CT2} by $\begin{bmatrix}
    \beta_{11}&\beta_{12}\\\beta_{21}&\beta_{22}
\end{bmatrix}$.
Then, by a direct calculation, we have
\begin{align*}
    \alpha_{11}=&   -y_{11}^2z_{11} + y_{12}y_{21}z_{11} - y_{11}y_{21}z_{12} - y_{11}y_{12}z_{21} - ky_{11}z_{11} + ky_{12}y_{21} - ky_{12}z_{21},\\  
    \alpha_{12}=&-2y_{11}y_{12}z_{11} + y_{11}^2z_{12} - y_{12}^2z_{21} - ky_{11}y_{12},\\
    \beta_{11}=&-2ky_{11}y_{12}z_{11} + ky_{11}^2z_{12} - ky_{12}^2z_{21} - k^2y_{11}y_{12} + y_{11}^2z_{11} - y_{12}y_{21}z_{11} + y_{11}y_{21}z_{12}\\
    & + y_{11}y_{12}z_{21}+ ky_{11}z_{11}- ky_{12}y_{21} + ky_{12}z_{21} - k,\\
    \beta_{12}=&-2y_{11}y_{12}z_{11} + y_{11}^2z_{12} - y_{12}^2z_{21} - ky_{11}y_{12},\\
    \beta_{22}=&-4ky_{11}z_{11}y_{12} + 2ky_{11}^2z_{12} - 2ky_{12}^2z_{21} - 2k^2y_{11}y_{12} - y_{11}^2z_{11} - 6y_{11}z_{11}y_{12} + 3y_{11}^2z_{12}\\& + z_{11}y_{12}y_{21} - y_{11}z_{12}y_{21}- y_{11}y_{12}z_{21} - 3y_{12}^2z_{21} - ky_{11}^2 - ky_{11}z_{11} - 3ky_{11}y_{12} - ky_{12}z_{21} \\&-k^2y_{11} - k.
\end{align*}
Therefore, we have $-\alpha_{11}+\alpha_{12}k-k=\beta_{11}$ and $\alpha_{12}=\beta_{12}$. Moreover, since $Y\in SL(2,\ZZ)$, we have
\begin{align*}
\beta_{22}-{\alpha}_{11}-(2k+3)\alpha_{12}=-ky_{11}^2 - ky_{12}y_{21} - k^2y_{11} - k=0,
\end{align*}
and this finishes the proof.
\end{proof}
\begin{proof}[Proof of Theorem \ref{thm:BT-CT2}]
By definition, $(P_{1;-\ell},Q_{1;-\ell},R_{1;-\ell})=\Psi(X_{1;\ell},Y_{1;\ell},Z_{1;\ell})$. Therefore, by Lemma \ref{lem:psi-cohn}, we have $(P_{1;-\ell},P_{1;-\ell}Q_{1;-\ell}-S,Q_{1;-\ell})=\Psi(X_{1;\ell},Y_{1;\ell}Z_{1;\ell}Y_{1;\ell}^{-1},Y_{1;\ell})$. This implies that the root of $\mathrm{MM}\mathbb T(k,\ell)$ maps to the root of $\mathrm{GC}\mathbb T(k,-\ell)$ by $\Psi$. We assume that $(X,Y,Z)$ is in $\mathrm{MM}\mathbb T(k,\ell)$. Then, by Lemma \ref{lem:psi-cohn} and Proposition \ref{pr:Markov-monodromy-cohn-triple},  $(\psi(X),\psi(YZY^{-1}),\psi(Y))$ and $(\psi(Y),\psi(Y^{-1}XY),\psi(Z))$ are $k$-GC triples and thus $(X,YZY^{-1},Y)$ and $(Y,Y^{-1}XY,Z)$ are $k$-MM triples.
\end{proof}


In pallarel with $k$-GC triple, we have the following proposition:

\begin{corollary}\label{cor:BT-MT}
We fix $\ell\in \mathbb Z$. The correspondence from $(X,Y,Z)$ in $\mathrm{MM}\TT(k,\ell)$ to $(x_{12},y_{12},z_{12})$ induces the canonical graph isomorphism between $\mathrm{MM}\TT(k,\ell)$ and $\mathrm{M}\TT(k)$. In particular, for a $k$-GM triple $(a,b,c)$ with $b> \max\{a,c\}$, there is a $k$-MM triple associated with $(a,b,c)$. 
\end{corollary}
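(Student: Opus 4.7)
The plan is to obtain the desired canonical graph isomorphism by composing two isomorphisms that are already available: the isomorphism $\Psi\colon \mathrm{MM}\TT(k,\ell)\to \mathrm{GC}\TT(k,-\ell)$ from Theorem \ref{thm:BT-CT2}, and the isomorphism $\mathrm{GC}\TT(k,-\ell)\to \mathrm{M}\TT(k)$ given by extracting $(1,2)$-entries from Theorem \ref{cor:CT-MT}. The only subtle point is to check that this composition literally is the map $(X,Y,Z)\mapsto(x_{12},y_{12},z_{12})$, rather than something more complicated.

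First, I would record the (trivial but crucial) observation that for every integer matrix $M$, the explicit formula defining $\psi$ in Proposition \ref{pr:Cohn-Markov-monodromy} gives $\psi(M)_{12}=m_{12}$; that is, $\psi$ preserves the $(1,2)$-entry. Consequently, the triplewise map $\Psi(X,Y,Z)=(\psi(X),\psi(Y),\psi(Z))$ sends $(X,Y,Z)$ to a $k$-GC triple whose tuple of $(1,2)$-entries is exactly $(x_{12},y_{12},z_{12})$.

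Second, by Theorem \ref{thm:BT-CT2}, $\Psi$ is a canonical graph isomorphism $\mathrm{MM}\TT(k,\ell)\to \mathrm{GC}\TT(k,-\ell)$, and by Theorem \ref{cor:CT-MT}, the $(1,2)$-entry map is a canonical graph isomorphism $\mathrm{GC}\TT(k,-\ell)\to \mathrm{M}\TT(k)$. The composition of two canonical graph isomorphisms is again a canonical graph isomorphism (this is immediate from the definition, since the property of preserving left/right children composes). By the first step, this composition evaluated on $(X,Y,Z)$ yields $(x_{12},y_{12},z_{12})$, which is the map claimed in the statement.

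For the ``in particular'' clause, let $(a,b,c)$ be any $k$-GM triple with $b>\max\{a,c\}$. By Proposition \ref{prop:all-markov}, such a triple appears as some vertex of $\mathrm{M}\TT(k)$, and by the canonical graph isomorphism just established, this vertex is the image of a unique vertex $(X,Y,Z)$ of $\mathrm{MM}\TT(k,\ell)$, which is a $k$-MM triple associated with $(a,b,c)$. Since every step of this argument is an immediate consequence of results already proved, the main (and only) obstacle is verifying that $\psi$ preserves the $(1,2)$-entry, which is instantaneous from its definition; there is no substantive calculation to grind through.
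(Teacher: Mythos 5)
Your proposal is correct and follows exactly the paper's own route: the paper also proves this corollary by composing the canonical graph isomorphism $\Psi\colon \mathrm{MM}\TT(k,\ell)\to \mathrm{GC}\TT(k,-\ell)$ of Theorem \ref{thm:BT-CT2} with the $(1,2)$-entry isomorphism $\mathrm{GC}\TT(k,-\ell)\to\mathrm{M}\TT(k)$ of Theorem \ref{cor:CT-MT}. Your additional remark that $\psi$ visibly preserves $(1,2)$-entries, so the composite really is $(X,Y,Z)\mapsto(x_{12},y_{12},z_{12})$, is a worthwhile explicit check that the paper leaves implicit, but it does not change the argument.
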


\begin{proof}
 It follows from Corollary \ref{cor:CT-MT} and Theorem \ref{thm:BT-CT2}.   
\end{proof}

\begin{proposition}\label{pr:all-Markov-monodromy-triple}
Let $(X,Y,Z)$ be a $k$-MM triple associated with $(a,b,c)$. We assume that $b> \max\{a,c\}.$ Then, there exist a unique $\ell\in \mathbb Z$ and a unique vertex $v$ in $\mathrm{MM}\mathbb T(k,\ell)$ such that $v=(X,Y,Z)$.  
\end{proposition}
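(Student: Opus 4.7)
The plan is to transport the analogous result for $k$-GC triples (Proposition \ref{pr:all-cohn-triple}) to $k$-MM triples via the bijection $\Psi$ and the tree isomorphism of Theorem \ref{thm:BT-CT2}. First I would apply $\Psi$ to $(X,Y,Z)$ to obtain the triple $(P,Q,R) := \Psi(X,Y,Z) = (\psi(X),\psi(Y),\psi(Z))$, which is a $k$-GC triple by Proposition \ref{pr:Markov-monodromy-cohn-triple}. Since $\psi$ preserves the $(1,2)$-entry by its very definition, $(P,Q,R)$ is associated with the same $k$-GM triple $(a,b,c)$, and in particular $q_{12} = b > \max\{a,c\} = \max\{p_{12},r_{12}\}$.

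Next I would invoke Proposition \ref{pr:all-cohn-triple}: there exist a unique $\ell' \in \mathbb{Z}$ and a unique vertex $v'$ of $\mathrm{GC}\mathbb{T}(k,\ell')$ such that $v' = (P,Q,R)$. Set $\ell := -\ell'$. By Theorem \ref{thm:BT-CT2}, the map $\Psi$ induces the canonical graph isomorphism $\mathrm{MM}\mathbb{T}(k,\ell) \xrightarrow{\sim} \mathrm{GC}\mathbb{T}(k,-\ell) = \mathrm{GC}\mathbb{T}(k,\ell')$. Therefore $v'$ pulls back to a unique vertex $v$ of $\mathrm{MM}\mathbb{T}(k,\ell)$ with $\Psi(v) = v' = \Psi(X,Y,Z)$, and since $\Psi$ is a bijection on triples we conclude $v = (X,Y,Z)$.

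For uniqueness of the pair $(\ell,v)$, suppose $(X,Y,Z)$ appears as a vertex in both $\mathrm{MM}\mathbb{T}(k,\ell_1)$ and $\mathrm{MM}\mathbb{T}(k,\ell_2)$. Applying $\Psi$ and using Theorem \ref{thm:BT-CT2} once more, $(P,Q,R)$ then appears as a vertex in both $\mathrm{GC}\mathbb{T}(k,-\ell_1)$ and $\mathrm{GC}\mathbb{T}(k,-\ell_2)$; by the uniqueness clause of Proposition \ref{pr:all-cohn-triple} this forces $-\ell_1 = -\ell_2$, hence $\ell_1 = \ell_2$, and uniqueness of the vertex within a single tree follows from the injectivity of $\Psi$ combined with uniqueness in the $k$-GC tree.

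No step here is a real obstacle; the only thing requiring care is verifying that the root of $\mathrm{MM}\mathbb{T}(k,\ell)$ maps via $\Psi$ to the root of $\mathrm{GC}\mathbb{T}(k,-\ell)$ (rather than some other vertex), but this is already embedded in the statement of Theorem \ref{thm:BT-CT2}, whose proof shows $\Psi(X_\ell,Y_\ell,Z_\ell) = (P_{-\ell},Q_{-\ell},R_{-\ell})$. Thus the proposition follows immediately by transport of structure from the $k$-GC case.
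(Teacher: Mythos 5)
Your proposal is correct and follows essentially the same route as the paper: apply $\Psi$, invoke Proposition \ref{pr:all-cohn-triple} to locate $\Psi(X,Y,Z)$ in a unique $\mathrm{GC}\mathbb{T}(k,-\ell)$, and transfer back via the canonical graph isomorphism of Theorem \ref{thm:BT-CT2}. You simply spell out the uniqueness bookkeeping and the preservation of $(1,2)$-entries more explicitly than the paper does.
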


\begin{proof}
Let $(X,Y,Z)$ be a $k$-MM triple associated with $(a,b,c)$. Then, $\Psi(X,Y,Z)$ is a $k$-GC triple. By Proposition \ref{pr:all-cohn-triple}, there exists a unique $\ell$ such that $\Psi(X,Y,Z)\in \mathrm{GC}\mathbb T(k,-\ell)$. Therefore, by Theorem \ref{thm:BT-CT2}, $(X,Y,Z)$ is in $\mathrm{MM}\mathbb T(k,\ell)$.
\end{proof}

\begin{corollary}\label{thm:Markov-monodromy-distinct}
We fix $k\in \mathbb Z_{\geq0}$ and $\ell\in \mathbb Z$. The second entries of $k$-MM triples in $\mathrm{MM}\mathbb T(k, \ell)$ are distinct.   
\end{corollary}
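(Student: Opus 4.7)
The plan is to deduce the corollary directly from its $k$-GC counterpart, Theorem \ref{thm:Cohn-distinct}, by transporting the distinctness statement along the canonical graph isomorphism provided by Theorem \ref{thm:BT-CT2}. Explicitly, the map $\Psi$ sends each vertex $(X,Y,Z)$ of $\mathrm{MM}\mathbb T(k,\ell)$ to the vertex $(\psi(X),\psi(Y),\psi(Z))$ of $\mathrm{GC}\mathbb T(k,-\ell)$, so the second (middle) entry of the image triple is exactly $\psi(Y)$. In particular, distinctness of second entries in $\mathrm{GC}\mathbb T(k,-\ell)$ should pull back through $\psi$ to distinctness of second entries in $\mathrm{MM}\mathbb T(k,\ell)$.

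The argument I would write is a short proof by contradiction. Assume two distinct vertices $(X,Y,Z)$ and $(X',Y',Z')$ of $\mathrm{MM}\mathbb T(k,\ell)$ share their second entry, that is, $Y=Y'$. Since $\Psi$ is a (canonical) graph isomorphism by Theorem \ref{thm:BT-CT2}, the images $\Psi(X,Y,Z)$ and $\Psi(X',Y',Z')$ are still distinct vertices of $\mathrm{GC}\mathbb T(k,-\ell)$. However, their second entries are $\psi(Y)$ and $\psi(Y')$, which coincide because $\psi$ is a well-defined map (Proposition \ref{pr:Cohn-Markov-monodromy}) and $Y=Y'$. This contradicts Theorem \ref{thm:Cohn-distinct} applied to $\mathrm{GC}\mathbb T(k,-\ell)$. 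Hence every two distinct vertices of $\mathrm{MM}\mathbb T(k,\ell)$ must have distinct second entries. No real obstacle is anticipated here: the entire content of the corollary is a formal consequence of the bijectivity of $\psi$ in Proposition \ref{pr:Cohn-Markov-monodromy} together with the already-established distinctness in the Cohn case.
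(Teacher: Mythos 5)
Your argument is correct and is exactly the paper's proof: the paper deduces the corollary from Theorem \ref{thm:Cohn-distinct} together with the graph isomorphism $\Psi$ of Theorem \ref{thm:BT-CT2}, which is precisely the transport-of-distinctness argument you spell out. Your added contradiction argument just makes explicit what the paper leaves implicit.
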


\begin{proof}
It follows from Theorem \ref{thm:Cohn-distinct} and Theorem \ref{thm:BT-CT2}.   
\end{proof}

In parallel with the $k$-GC triple, we have the following relation between $\mathrm{MM}\mathbb T(k,\ell)$ and $\mathrm{MM}\mathbb T(k,\ell')$. 

\begin{proposition}\label{Markov-monodromy-matrix-ltol'}
Let $L=\begin{bmatrix}
    1&0\\\ell'-\ell&1
\end{bmatrix}$. The right conjugation of $X$ by $L$ induces the canonical graph isomorphism from $\mathrm{MM}\mathbb T(k, \ell)$ to $\mathrm{MM}\mathbb T(k, \ell')$.   
\end{proposition}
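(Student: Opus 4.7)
The plan is to imitate the proof of Proposition \ref{prop:cohnmatrix-ltol'}. First I would verify that conjugation by $L$ sends the root of $\mathrm{MM}\mathbb T(k,\ell)$ to the root of $\mathrm{MM}\mathbb T(k,\ell')$, and second I would observe that the two child-producing rules of $\mathrm{MM}\mathbb T$ commute with conjugation. Together these show that the map $(X,Y,Z)\mapsto (L^{-1}XL,L^{-1}YL,L^{-1}ZL)$ is a bijection of vertex sets preserving the left and right child relations, i.e., a canonical graph isomorphism.

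For the root, it suffices to verify the three matrix identities
\[
L^{-1}X_{1;\ell}L = X_{1;\ell'},\quad L^{-1}Y_{1;\ell}L = Y_{1;\ell'},\quad L^{-1}Z_{1;\ell}L = Z_{1;\ell'}.
\]
Each is a routine $2\times 2$ computation using $L^{-1} = \begin{bmatrix} 1 & 0\\\ell-\ell' & 1\end{bmatrix}$ and the explicit formulas for $X_{1;\ell}, Y_{1;\ell}, Z_{1;\ell}$ recorded just before the definition of $\mathrm{MM}\mathbb T(k,\ell)$. Because $X_\ell = X_{1;\ell}$, $Z_\ell = Y_{1;\ell}$, and $Y_\ell = Y_{1;\ell}Z_{1;\ell}Y_{1;\ell}^{-1}$, and because conjugation commutes with products and inverses, the three identities above immediately yield $L^{-1}X_\ell L = X_{\ell'}$, $L^{-1}Y_\ell L = Y_{\ell'}$, and $L^{-1}Z_\ell L = Z_{\ell'}$, so the root maps to the root.

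For the inductive step, the child operations $(X,Y,Z)\mapsto (X, YZY^{-1}, Y)$ and $(X,Y,Z)\mapsto (Y, Y^{-1}XY, Z)$ are built purely from matrix multiplication and inversion. Since conjugation by $L$ is an inner automorphism of $GL(2,\mathbb Z)$, it commutes with both operations. Hence if a vertex $(X,Y,Z)$ of $\mathrm{MM}\mathbb T(k,\ell)$ is sent to a vertex $(X',Y',Z')$ of $\mathrm{MM}\mathbb T(k,\ell')$, then its two children are sent to the corresponding children of $(X',Y',Z')$. Induction on the depth of a vertex finishes the argument.

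The only real work is the base-case computation, which is mechanical and exactly parallel to the one in Proposition \ref{prop:cohnmatrix-ltol'}. Unlike that case, however, no auxiliary identity such as $L^{-1}SL = S$ is needed here, because the $\mathrm{MM}\mathbb T$ rules do not involve any fixed non-identity matrix — this actually makes the inductive step formally lighter than for the $k$-GC tree. Alternatively one could derive the statement by transporting Proposition \ref{prop:cohnmatrix-ltol'} across the isomorphism $\Psi$ of Theorem \ref{thm:BT-CT2}, but checking that $\psi$ intertwines the two conjugations looks no easier than the direct computation above.
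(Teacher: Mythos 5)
Your proposal is correct and follows essentially the same route as the paper: verify the three identities $X_{1;\ell'}=L^{-1}X_{1;\ell}L$, $Y_{1;\ell'}=L^{-1}Y_{1;\ell}L$, $Z_{1;\ell'}=L^{-1}Z_{1;\ell}L$ directly, then propagate down the tree using that conjugation by $L$ commutes with the child operations $(X,Y,Z)\mapsto(X,YZY^{-1},Y)$ and $(X,Y,Z)\mapsto(Y,Y^{-1}XY,Z)$. Your additional observation that the root $(X_\ell,Y_\ell,Z_\ell)=(X_{1;\ell},Y_{1;\ell}Z_{1;\ell}Y_{1;\ell}^{-1},Y_{1;\ell})$ is also handled by the same compatibility, and that no analogue of $L^{-1}SL=S$ is needed, matches the paper's (terser) argument.
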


\begin{proof}
 We can check $X_{1;\ell'}=L^{-1}X_{1;\ell}L$, $Y_{1;\ell'}=L^{-1}Y_{1;\ell}L$, $Z_{1;\ell'}=L^{-1}Z_{1;\ell}L$ directly. For general cases, the statement follows inductively from 
 \begin{align*}
    L^{-1}(YZY^{-1})L&=(L^{-1}YL)(L^{-1}ZL)(L^{-1}Y^{-1}L),\\ L^{-1}(Y^{-1}XY)L&=(L^{-1}Y^{-1}L)(L^{-1}XL)(L^{-1}YL).
\end{align*}
\end{proof}

\begin{remark}
Let $A$ be the set of $k$-MM triples. We set $\sigma_1,\sigma_2\colon A \to A$ by
\[\sigma_1(X,Y,Z)=(X,Z,Z^{-1}YZ),\quad \sigma_2(X,Y,Z)= (Y,Y^{-1}XY,Z).\] 
Then, it can be seen that in $\mathrm{MM}\mathbb T(k,\ell)$ the operation giving the left child of $(X,Y,Z)$ is given by $\sigma_1^{-1}$ and the operation giving the right child is given by $\sigma_2$. Moreover, in parallel with Remark \ref{rmk:Markov-monodromy-structure-cohn}, we have the braid relation $\sigma_1\sigma_2\sigma_1=\sigma_2\sigma_1\sigma_2$.
\end{remark}

\subsection{Inverse $k$-Markov-monodromy tree}
We consider the following tree $\mathrm{MM}\mathbb T^{\dag}(k,\ell)$:
\begin{itemize}\setlength{\leftskip}{-15pt}
\item [(1)] the root vertex is $(X_{1;\ell},Y_{1;\ell},Z_{1;\ell})$,
\item[(2)]for a vertex $(X,Y,Z)$, we consider the following two children of it:
\[\begin{xy}(0,0)*+{(X,Y,Z)}="1",(-30,-15)*+{(X,Z,Z^{-1}YZ)}="2",(30,-15)*+{(XYX^{-1},X,Z).}="3", \ar_{\sigma_1}@{-}"1";"2"\ar^{\sigma_2^{-1}}@{-}"1";"3"
\end{xy}\]
\end{itemize}
We call $\mathrm{MM}\mathbb T^{\dag}(k,\ell)$ the \emph{inverse Markov-monodromy tree}. In this paper, we abbreviate this tree as the \emph{inverse $k$-MM tree}.

\begin{example}\label{ex:inverse-Markov-monodromy-tree}
 When $k=1$ and $\ell=0$, $\mathrm{MM}\mathbb T^\dag(k,\ell)$ is the following.
\relsize{-2}
\begin{align*}
\begin{xy}(-20,0)*+{\left(\begin{bmatrix}0&1\\-1&-1\end{bmatrix},\begin{bmatrix}-2&1\\-3&1\end{bmatrix},\begin{bmatrix}-4&1\\-13&3\end{bmatrix}\right)}="1",(30,-16)*+{\left(\begin{bmatrix}0&1\\-1&-1\end{bmatrix},\begin{bmatrix}-4&1\\-13&3\end{bmatrix},\begin{bmatrix}-14&3\\-61&13\end{bmatrix}\right)}="2",(30,16)*+{\left(\begin{bmatrix}4&3\\-7&-5\end{bmatrix},\begin{bmatrix}0&1\\-1&-1\end{bmatrix},\begin{bmatrix}-4&1\\-13&3\end{bmatrix}\right)}="3", 
(90,-24)*+{\left(\begin{bmatrix}0&1\\-1&-1\end{bmatrix},\begin{bmatrix}-14&3\\-61&13\end{bmatrix},\begin{bmatrix}-62&13\\-291&61\end{bmatrix}\right)\cdots}="4",(90,-8)*+{\left(\begin{bmatrix}16&13\\
-21&-17\end{bmatrix},\begin{bmatrix}0&1\\-1&-1\end{bmatrix},\begin{bmatrix}-14&3\\-61&13\end{bmatrix}\right)\cdots}="5",(90,8)*+{\left(\begin{bmatrix}4&3\\-7&-5\end{bmatrix},\begin{bmatrix}-4&1\\-13&3\end{bmatrix},\begin{bmatrix}-56&13\\-237&55
\end{bmatrix}\right)\cdots}="6",(90,24)*+{\left(\begin{bmatrix}22& 13\\-39&-23\end{bmatrix},\begin{bmatrix}4&3\\-7&-5\end{bmatrix},\begin{bmatrix}-4&1\\-13&3\end{bmatrix}\right)\cdots}="7",\ar@{-}"1";"2"\ar@{-}"1";"3"\ar@{-}"2";"4"\ar@{-}"2";"5"\ar@{-}"3";"6"\ar@{-}"3";"7"
\end{xy}
\end{align*}
\relsize{+2}
\end{example}

\begin{theorem}\label{thm:BT-CT2dag}
For $(X,Y,Z)$ in $\mathrm{MM}\mathbb T^\dag(k,\ell)$, $\Psi(X,Y,Z)$ is a $k$-GC matrix in $\mathrm{GC}\mathbb T^{\dag}(k,-\ell)$. In particular, $(X,Y,Z)$ is a $k$-MM triple. Moreover, this correspondence induces the canonical graph isomorphism between $\mathrm{MM}\mathbb T^\dag(k,\ell)$ and $\mathrm{GC}\mathbb T^{\dag}(k,-\ell)$.
\end{theorem}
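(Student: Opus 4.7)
The plan is to mirror the proof of Theorem~\ref{thm:BT-CT2}, replacing Lemma~\ref{lem:psi-cohn} by its counterpart for the dagger operations. The first step is to establish the intertwining identities: for every $k$-MM triple $(X,Y,Z)$,
\begin{align*}
\Psi(X, Z, Z^{-1}YZ) &= (\psi(X),\,\psi(Z),\,\psi(X)^{-1}(\psi(Z)+S)),\\
\Psi(XYX^{-1}, X, Z) &= ((\psi(X)+S)\psi(Z)^{-1},\,\psi(X),\,\psi(Z)),
\end{align*}
whose right-hand sides are exactly $\tau_1(\Psi(X,Y,Z))$ and $\tau_2^{-1}(\Psi(X,Y,Z))$. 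This says that $\Psi$ intertwines the child operations $\sigma_1,\sigma_2^{-1}$ on $\mathrm{MM}\mathbb T^\dag$ with the child operations $\tau_1,\tau_2^{-1}$ on $\mathrm{GC}\mathbb T^\dag$.

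Rather than verify these identities by an entrywise computation like the one in Lemma~\ref{lem:psi-cohn}, I would deduce them by a uniqueness argument. Given a $k$-MM triple $(X,Y,Z)$, Proposition~\ref{pr:Markov-monodromy-cohn-triple} yields the $k$-GC triple $(\psi(X),\psi(Y),\psi(Z))$; Lemma~\ref{Cohn-Markov2} then produces the $k$-GC triple $(\psi(X),\psi(Z),\psi(X)^{-1}(\psi(Z)+S))$; applying the reverse direction of Proposition~\ref{pr:Markov-monodromy-cohn-triple} gives a $k$-MM triple $(X,Z,W)$ with $W:=\psi^{-1}(\psi(X)^{-1}(\psi(Z)+S))$. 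The product constraint $X Z W = T$ forces $W = Z^{-1}X^{-1}T$, and since $XYZ=T$ this simplifies to $W = Z^{-1}YZ$. Hence $\psi(Z^{-1}YZ)=\psi(X)^{-1}(\psi(Z)+S)$, giving the first identity; the second follows by the analogous use of the right-hand half of Lemma~\ref{Cohn-Markov2}.

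With the intertwining in hand, the theorem follows by induction on depth. The base case is immediate: by the very definitions $(X_{1;\ell},Y_{1;\ell},Z_{1;\ell})=(\psi^{-1}(P_{1;-\ell}),\psi^{-1}(Q_{1;-\ell}),\psi^{-1}(R_{1;-\ell}))$, so $\Psi$ sends the root of $\mathrm{MM}\mathbb T^\dag(k,\ell)$ to the root of $\mathrm{GC}\mathbb T^\dag(k,-\ell)$, which is a $k$-GC triple by Proposition~\ref{cohn-mat-with-111}, and Proposition~\ref{pr:Markov-monodromy-cohn-triple} then shows the root of $\mathrm{MM}\mathbb T^\dag(k,\ell)$ is a $k$-MM triple. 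For the inductive step, if $(X,Y,Z)\in\mathrm{MM}\mathbb T^\dag(k,\ell)$ is a $k$-MM triple with $\Psi(X,Y,Z)\in\mathrm{GC}\mathbb T^\dag(k,-\ell)$, then by the intertwining its two children map under $\Psi$ to the $\tau_1$- and $\tau_2^{-1}$-children of $\Psi(X,Y,Z)$, which are $k$-GC triples lying in $\mathrm{GC}\mathbb T^\dag(k,-\ell)$; Proposition~\ref{pr:Markov-monodromy-cohn-triple} then turns the children into $k$-MM triples. Since $\psi$ is a bijection on $M(2,\mathbb Z)$ and $\Psi$ preserves the tree structure (root together with left/right children), the inverse map $\Psi^{-1}$ provides the inverse tree morphism by the symmetric argument, so $\Psi$ is a canonical graph isomorphism $\mathrm{MM}\mathbb T^\dag(k,\ell)\cong\mathrm{GC}\mathbb T^\dag(k,-\ell)$.

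The main obstacle is the intertwining identity: a direct entrywise verification along the lines of Lemma~\ref{lem:psi-cohn} would be feasible but tedious, and the uniqueness argument above is the cleanest route, exploiting Proposition~\ref{pr:Markov-monodromy-cohn-triple} (bijection between $k$-MM and $k$-GC triples) together with Lemma~\ref{Cohn-Markov2} (upward/inverse operation on the $k$-GC side).
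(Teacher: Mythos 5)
Your proof is correct, and its overall skeleton (root goes to root, then induct down the tree via an intertwining lemma) is the same as the paper's intended argument, which simply says the proof is ``almost the same'' as that of Theorem~\ref{thm:BT-CT2}. Where you genuinely differ is in how you obtain the intertwining identities, i.e.\ Lemma~\ref{lem:psi-cohn2}: the paper's route is a direct entrywise matrix computation mirroring Lemma~\ref{lem:psi-cohn}, whereas you deduce the identities from already-proved structural facts --- both directions of Proposition~\ref{pr:Markov-monodromy-cohn-triple} together with Lemma~\ref{Cohn-Markov2} --- plus the rigidity of the monodromy constraint: once $(X,Z,W)$ is known to be a $k$-MM triple, $XZW=T$ forces $W=Z^{-1}X^{-1}T=Z^{-1}YZ$, and injectivity of $\psi$ converts this into $\psi(Z^{-1}YZ)=\psi(X)^{-1}(\psi(Z)+S)$ (and symmetrically $\psi(XYX^{-1})=(\psi(X)+S)\psi(Z)^{-1}$). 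Your derivation is shorter and explains \emph{why} the identities hold (they are forced by the product relation), at the cost of leaning on the triple-level bijection $\Psi$ and on Lemma~\ref{Cohn-Markov2}, whereas the paper's computation is self-contained at the level of single matrices; note also that your argument, like the paper's Lemma~\ref{lem:psi-cohn2}, establishes the identities only for $k$-MM triples $(X,Y,Z)$, which is exactly what the induction needs since the vertices of $\mathrm{MM}\mathbb T^\dag(k,\ell)$ are shown to be $k$-MM triples along the way. The base case (via Proposition~\ref{cohn-mat-with-111} and $(X_{1;\ell},Y_{1;\ell},Z_{1;\ell})=\Psi^{-1}(P_{1;-\ell},Q_{1;-\ell},R_{1;-\ell})$) and the conclusion that $\Psi$ is a canonical graph isomorphism are handled correctly.
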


The following lemma is a key to prove Theorem \ref{thm:BT-CT2dag}.

\begin{lemma}\label{lem:psi-cohn2}
For a $k$-MM triple $(X,Y,Z)$, we have 
\begin{align*}
\Psi(X,Z,Z^{-1}YZ)&=(\psi(X),\psi(Z),\psi(X)^{-1}(\psi(Z)+S))\\
\Psi(XYX^{-1},X,Z)&=((\psi(X)+S)\psi(Z)^{-1},\psi(X),\psi(Z)).
\end{align*}
\end{lemma}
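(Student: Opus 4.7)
The plan is to reduce Lemma \ref{lem:psi-cohn2} to the two matrix identities
\begin{align*}
\psi(Z^{-1}YZ) &= \psi(X)^{-1}(\psi(Z)+S), \\
\psi(XYX^{-1}) &= (\psi(X)+S)\psi(Z)^{-1},
\end{align*}
since the outer components of the claimed tuples are automatic from $\Psi(X,Y,Z)=(\psi(X),\psi(Y),\psi(Z))$. Rather than attempt a direct matrix-entry verification paralleling the proof of Lemma \ref{lem:psi-cohn}, I would bootstrap from the already-established Lemma \ref{Cohn-Markov2} by transporting it across the bijection of Proposition \ref{pr:Markov-monodromy-cohn-triple}.

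For the first identity, start with a $k$-MM triple $(X,Y,Z)$ and set $(P,Q,R):=\Psi(X,Y,Z)$, which is a $k$-GC triple by Proposition \ref{pr:Markov-monodromy-cohn-triple}. By Lemma \ref{Cohn-Markov2}, $(P,R,P^{-1}(R+S))$ is again a $k$-GC triple; applying Proposition \ref{pr:Markov-monodromy-cohn-triple} in the reverse direction, $\Psi^{-1}(P,R,P^{-1}(R+S))=(X,Z,W)$ is a $k$-MM triple, where $W:=\psi^{-1}(P^{-1}(R+S))$. Condition (2) of Definition \ref{def:Markov-monodromy-triple} forces $XZW=T$, so $W=Z^{-1}X^{-1}T$. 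On the other hand, $XYZ=T$ gives $Y=X^{-1}TZ^{-1}$, and hence $Z^{-1}YZ=Z^{-1}X^{-1}T=W$. Taking $\psi$ yields $\psi(Z^{-1}YZ)=\psi(W)=P^{-1}(R+S)=\psi(X)^{-1}(\psi(Z)+S)$, which is the first identity.

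The second identity is obtained by the same argument using the second half of Lemma \ref{Cohn-Markov2}: the triple $((P+S)R^{-1},P,R)$ is a $k$-GC triple, so $\Psi^{-1}$ of it is a $k$-MM triple $(W',X,Z)$ with $W'X Z=T$, forcing $W'=TZ^{-1}X^{-1}$. A direct calculation from $XYZ=T$ shows $XYX^{-1}=X(X^{-1}TZ^{-1})X^{-1}=TZ^{-1}X^{-1}$, so $W'=XYX^{-1}$ and $\psi(XYX^{-1})=(P+S)R^{-1}=(\psi(X)+S)\psi(Z)^{-1}$.

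I expect the main obstacle to be largely conceptual rather than computational: one has to notice that essentially all of the work has already been done, namely the bijectivity of $\Psi$ (Proposition \ref{pr:Markov-monodromy-cohn-triple}) and the companion statement on the $k$-GC side (Lemma \ref{Cohn-Markov2}). Once these are combined, the third matrix in each new triple is \emph{uniquely} pinned down by the simple equation $\text{(product)}=T$ from condition (2) of Definition \ref{def:Markov-monodromy-triple}, and the identification with $Z^{-1}YZ$ (resp.\ $XYX^{-1}$) follows by a one-line manipulation of $XYZ=T$. Any attempt to prove these identities by entrywise calculation, as in Lemma \ref{lem:psi-cohn}, would be substantially longer and would not illuminate why the formulas take this particular form.
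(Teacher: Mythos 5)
Your proof is correct, but it follows a genuinely different route from the paper: the authors omit the proof of Lemma \ref{lem:psi-cohn2} precisely because they intend it to be an entrywise matrix computation ``almost the same as'' the proofs of Theorem \ref{thm:BT-CT2} and Lemma \ref{lem:psi-cohn}, whereas you avoid all computation by transporting Lemma \ref{Cohn-Markov2} across the bijection of Proposition \ref{pr:Markov-monodromy-cohn-triple}. Your reduction is sound: the outer components of the claimed triples are immediate from the componentwise definition of $\Psi$; Lemma \ref{Cohn-Markov2} applies to an arbitrary $k$-GC triple with no hypothesis on the relative size of the $(1,2)$-entries, so $(P,R,P^{-1}(R+S))$ and $((P+S)R^{-1},P,R)$ are again $k$-GC triples; Proposition \ref{pr:Markov-monodromy-cohn-triple} then makes their $\psi^{-1}$-images $k$-MM triples, and condition (2) of Definition \ref{def:Markov-monodromy-triple} pins down the unknown member uniquely (the other two factors being invertible), after which the one-line manipulations $Z^{-1}YZ=Z^{-1}X^{-1}T$ and $XYX^{-1}=TZ^{-1}X^{-1}$ from $XYZ=T$ finish the identification. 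There is no circularity, since both ingredients are established (or imported from \cite{gyo-maru}) before this lemma is needed for Theorem \ref{thm:BT-CT2dag}. What the two approaches buy is different: the paper's computational route is self-contained at the level of matrix entries and mirrors the template already written out for Lemma \ref{lem:psi-cohn}, while yours is shorter, explains structurally why the formulas $\psi(X)^{-1}(\psi(Z)+S)$ and $(\psi(X)+S)\psi(Z)^{-1}$ must appear, and incidentally shows that Lemma \ref{lem:psi-cohn} itself could be derived the same way from Theorem \ref{thm:cohn-markov}, at the cost of leaning on the imported Lemma \ref{Cohn-Markov2} rather than verifying everything directly.
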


We omit the proofs of the above theorem and lemma, because it is almost the same as those of Theorem \ref{thm:BT-CT2} and Lemma \ref{lem:psi-cohn}. The following is a list of properties that hold in parallel with the $k$-MM tree case.

\begin{corollary}
We fix $\ell\in \mathbb Z$. The correspondence from $(X,Y,Z)$ in $\mathrm{MM}\TT^\dag(k,\ell)$ to $(x_{12},y_{12},z_{12})$ induces the canonical graph isomorphism between $\mathrm{MM}\TT^\dag(k,\ell)$ and $\mathrm{M}\TT^\dag(k)$. In particular, for a $k$-GM triple $(a,b,c)$ with $b\leq \max\{a,c\}$, there is a $k$-MM triple associated with $(a,b,c)$. 
\end{corollary}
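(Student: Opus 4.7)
The plan is to mirror the proof of Corollary \ref{cor:BT-MT} verbatim, replacing the non-dagger trees by their dagger counterparts. Specifically, I would obtain the canonical graph isomorphism as a composition of two previously established canonical graph isomorphisms, and then derive the existence statement as a direct consequence.

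First, I would invoke Theorem \ref{thm:BT-CT2dag}, which asserts that the map $\Psi\colon(X,Y,Z)\mapsto(\psi(X),\psi(Y),\psi(Z))$ is a canonical graph isomorphism from $\mathrm{MM}\TT^{\dag}(k,\ell)$ to $\mathrm{GC}\TT^{\dag}(k,-\ell)$. Next, I would invoke Corollary \ref{cor:CTdag-MTdag}, which provides a canonical graph isomorphism from $\mathrm{GC}\TT^{\dag}(k,-\ell)$ to $\mathrm{M}\TT^{\dag}(k)$ given by extracting the triple of $(1,2)$-entries. The composition is then a canonical graph isomorphism from $\mathrm{MM}\TT^{\dag}(k,\ell)$ to $\mathrm{M}\TT^{\dag}(k)$.

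The key observation that makes the composition agree with the prescribed map $(X,Y,Z)\mapsto(x_{12},y_{12},z_{12})$ is that $\psi$ preserves the $(1,2)$-entry: for any matrix $M=\begin{bmatrix}m_{11}&m_{12}\\ m_{21}&m_{22}\end{bmatrix}$, the $(1,2)$-entry of $\psi(M)$ is again $m_{12}$, by the explicit formula for $\psi$ in Proposition \ref{pr:Cohn-Markov-monodromy}. Hence, taking the $(1,2)$-entries of $\Psi(X,Y,Z)=(\psi(X),\psi(Y),\psi(Z))$ returns exactly $(x_{12},y_{12},z_{12})$, and the composition coincides with the stated correspondence.

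Finally, for the "in particular" statement, given a $k$-GM triple $(a,b,c)$ with $b\leq\max\{a,c\}$, the canonical graph isomorphism above is surjective onto $\mathrm{M}\TT^{\dag}(k)$; by (the dagger analogue of) the description of $\mathrm{M}\TT^{\dag}(k)$, the triple $(a,b,c)$ appears as some vertex there, and any preimage $(X,Y,Z)\in\mathrm{MM}\TT^{\dag}(k,\ell)$ is a $k$-MM triple associated with $(a,b,c)$. There is no serious obstacle here, since both ingredients (Theorem \ref{thm:BT-CT2dag} and Corollary \ref{cor:CTdag-MTdag}) are already established earlier in the paper; the only thing to check is the triviality that $\psi$ fixes the $(1,2)$-entry, which ensures the composed isomorphism is the one described in the statement.
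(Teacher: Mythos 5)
Your proposal is correct and is essentially the paper's intended argument: the corollary is stated as the dagger analogue of Corollary \ref{cor:BT-MT}, whose proof the paper gives as an immediate consequence of Corollary \ref{cor:CT-MT} and Theorem \ref{thm:BT-CT2}, so the dagger version follows in exactly the way you describe from Corollary \ref{cor:CTdag-MTdag} and Theorem \ref{thm:BT-CT2dag}. Your added observation that $\psi$ fixes the $(1,2)$-entry, so the composed isomorphism is precisely $(X,Y,Z)\mapsto(x_{12},y_{12},z_{12})$, is the (implicit) detail the paper leaves to the reader.
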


\begin{proposition}\label{pr:all-Markov-monodromy-triple2}
Let $(X,Y,Z)$ be a $k$-MM triple associated with $(a,b,c)$. We assume that $b\leq \max\{a,c\}.$ Then, there exist a unique $\ell\in \mathbb Z$ and a unique vertex $v$ in $\mathrm{MM}\mathbb T^\dag(k,\ell)$ such that $v=(X,Y,Z)$.
\end{proposition}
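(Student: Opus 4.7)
The plan is to reduce this statement to its $k$-GC analogue, namely Corollary~\ref{cor:inverse-all-cone}, via the bijection $\Psi$ established in Proposition~\ref{pr:Markov-monodromy-cohn-triple} and lifted to an isomorphism of trees in Theorem~\ref{thm:BT-CT2dag}. This mirrors exactly how Proposition~\ref{pr:all-Markov-monodromy-triple} was deduced from Proposition~\ref{pr:all-cohn-triple} via Theorem~\ref{thm:BT-CT2}.

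First, I would let $(X,Y,Z)$ be a $k$-MM triple associated with $(a,b,c)$ satisfying $b \leq \max\{a,c\}$, and apply $\Psi$ to obtain the triple $(\psi(X),\psi(Y),\psi(Z))$. By Proposition~\ref{pr:Markov-monodromy-cohn-triple}, this is a $k$-GC triple, and its $(1,2)$-entries are $(a,b,c)$ since $\psi$ preserves the $(1,2)$-entry. Hence $\Psi(X,Y,Z)$ is a $k$-GC triple associated with $(a,b,c)$ satisfying the same inequality $b \leq \max\{a,c\}$.

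Next, I invoke Corollary~\ref{cor:inverse-all-cone}: there exist a unique $\ell' \in \mathbb Z$ and a unique vertex in $\mathrm{GC}\mathbb T^{\dag}(k,\ell')$ equal to $\Psi(X,Y,Z)$. Setting $\ell := -\ell'$, I then apply Theorem~\ref{thm:BT-CT2dag}, which states that $\Psi$ restricts to a canonical graph isomorphism between $\mathrm{MM}\mathbb T^{\dag}(k,\ell)$ and $\mathrm{GC}\mathbb T^{\dag}(k,-\ell)$. Pulling back the unique vertex of $\mathrm{GC}\mathbb T^{\dag}(k,\ell')$ under this isomorphism yields a unique vertex of $\mathrm{MM}\mathbb T^{\dag}(k,\ell)$ equal to $(X,Y,Z)$.

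For uniqueness of $\ell$, suppose $(X,Y,Z)$ appears in $\mathrm{MM}\mathbb T^{\dag}(k,\ell_1)$ and in $\mathrm{MM}\mathbb T^{\dag}(k,\ell_2)$. Applying $\Psi$ and Theorem~\ref{thm:BT-CT2dag} transfers the $k$-GC triple $\Psi(X,Y,Z)$ into both $\mathrm{GC}\mathbb T^{\dag}(k,-\ell_1)$ and $\mathrm{GC}\mathbb T^{\dag}(k,-\ell_2)$, contradicting the uniqueness of $\ell'$ in Corollary~\ref{cor:inverse-all-cone} unless $\ell_1=\ell_2$. I do not anticipate a serious obstacle here: all the nontrivial content (the bijection between $k$-MM and $k$-GC triples, compatibility of $\Psi$ with the two jumping operations, and the uniqueness for $k$-GC triples) is already packaged in Proposition~\ref{pr:Markov-monodromy-cohn-triple}, Theorem~\ref{thm:BT-CT2dag}, and Corollary~\ref{cor:inverse-all-cone}, so this proof is essentially a one-line diagram chase once those ingredients are in hand.
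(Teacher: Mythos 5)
Your proof is correct and follows exactly the route the paper intends: the paper omits an explicit proof of this proposition (listing it as "parallel" to the $k$-MM tree case), and its proof of Proposition \ref{pr:all-Markov-monodromy-triple} is precisely your argument with $\mathrm{GC}\mathbb T(k,-\ell)$, Proposition \ref{pr:all-cohn-triple}, and Theorem \ref{thm:BT-CT2} replaced by their dagger analogues Corollary \ref{cor:inverse-all-cone} and Theorem \ref{thm:BT-CT2dag}. No gaps; your observation that $\psi$ preserves $(1,2)$-entries, so the associated triple and the inequality $b\leq\max\{a,c\}$ carry over, is the only point that needed checking and you handled it.
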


\begin{proposition}\label{Markov-monodromy-matrix-ltol'2}
Let $L=\begin{bmatrix}
    1&0\\\ell'-\ell&1
\end{bmatrix}$. The  right conjugation of $X$ by $L$ induces the canonical graph isomorphism from $\mathrm{MM}\mathbb T^\dag(k, \ell)$ to $\mathrm{MM}\mathbb T^\dag(k, \ell')$. 
\end{proposition}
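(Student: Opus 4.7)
The plan is to mimic the proof of Proposition \ref{Markov-monodromy-matrix-ltol'} essentially verbatim, adjusting only for the fact that the child-producing operations in $\mathrm{MM}\mathbb T^\dag(k,\ell)$ are $\sigma_1$ and $\sigma_2^{-1}$ rather than $\sigma_1^{-1}$ and $\sigma_2$. Let $\Phi_L\colon M(2,\mathbb Z)^3\to M(2,\mathbb Z)^3$ denote the triple-wise conjugation $(X,Y,Z)\mapsto (L^{-1}XL,L^{-1}YL,L^{-1}ZL)$. The goal is to show that $\Phi_L$ restricts to a canonical graph isomorphism $\mathrm{MM}\mathbb T^\dag(k,\ell)\to\mathrm{MM}\mathbb T^\dag(k,\ell')$, and this will be done by (i) verifying that it sends the root to the root and (ii) showing that it intertwines both child-producing operations, then inducting on the depth.

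First I would handle the root. The root of $\mathrm{MM}\mathbb T^\dag(k,\ell)$ is $(X_{1;\ell},Y_{1;\ell},Z_{1;\ell})$, which is the \emph{same} triple as the root triple used in the proof of Proposition \ref{Markov-monodromy-matrix-ltol'} (before the $\sigma_2$-twist that produces the root of $\mathrm{MM}\mathbb T(k,\ell)$). In that proof, the identities
\[
X_{1;\ell'}=L^{-1}X_{1;\ell}L,\quad Y_{1;\ell'}=L^{-1}Y_{1;\ell}L,\quad Z_{1;\ell'}=L^{-1}Z_{1;\ell}L
\]
were verified by direct matrix computation, so nothing needs to be reproved here; I would just cite that step.

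Next I would handle the inductive step. Suppose $(X,Y,Z)$ is a vertex in $\mathrm{MM}\mathbb T^\dag(k,\ell)$ and that $\Phi_L(X,Y,Z)$ is the corresponding vertex in $\mathrm{MM}\mathbb T^\dag(k,\ell')$. Its two children in $\mathrm{MM}\mathbb T^\dag(k,\ell)$ are
\[
\sigma_1(X,Y,Z)=(X,\,Z,\,Z^{-1}YZ),\qquad \sigma_2^{-1}(X,Y,Z)=(XYX^{-1},\,X,\,Z).
\]
The key observation is that conjugation by a fixed matrix is a group automorphism, hence commutes with any word in the entries; in particular
\[
L^{-1}(Z^{-1}YZ)L=(L^{-1}Z^{-1}L)(L^{-1}YL)(L^{-1}ZL),\quad
L^{-1}(XYX^{-1})L=(L^{-1}XL)(L^{-1}YL)(L^{-1}X^{-1}L).
\]
These two identities give precisely $\Phi_L\circ\sigma_1=\sigma_1\circ\Phi_L$ and $\Phi_L\circ\sigma_2^{-1}=\sigma_2^{-1}\circ\Phi_L$, so the image of each child under $\Phi_L$ is the corresponding child of $\Phi_L(X,Y,Z)$ in $\mathrm{MM}\mathbb T^\dag(k,\ell')$. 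Combined with the root step, induction on the depth of the vertex gives a bijection of vertices preserving left and right children, which is the canonical graph isomorphism.

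There is no real obstacle here: the proof is purely formal once the root-level matrix identity from Proposition \ref{Markov-monodromy-matrix-ltol'} is invoked, because $\Phi_L$ is a simultaneous conjugation and therefore automatically commutes with any expression built from products and inverses of the three matrices. The only point requiring minimal care is keeping track that the tree $\mathrm{MM}\mathbb T^\dag(k,\ell)$ uses the inverses of the generators used for $\mathrm{MM}\mathbb T(k,\ell)$, which is irrelevant for compatibility with $\Phi_L$ since conjugation commutes with taking inverses.
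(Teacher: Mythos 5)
Your proof is correct and follows essentially the same route as the paper, which proves the non-dagger version (Proposition \ref{Markov-monodromy-matrix-ltol'}) by checking the root identities $X_{1;\ell'}=L^{-1}X_{1;\ell}L$, etc., directly and then inducting via the fact that simultaneous conjugation commutes with the words defining the child operations; the dagger case is handled identically, with the children $(X,Z,Z^{-1}YZ)$ and $(XYX^{-1},X,Z)$ replacing $(X,YZY^{-1},Y)$ and $(Y,Y^{-1}XY,Z)$. Your observation that the root of $\mathrm{MM}\mathbb T^\dag(k,\ell)$ is exactly the triple $(X_{1;\ell},Y_{1;\ell},Z_{1;\ell})$ already verified in that earlier proof is precisely why no new computation is needed.
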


\subsection{Markov-monodromy decomposition of generalized Cohn matrix}
We introduce another connection between $k$-GC triples and $k$-MM triples. 
\begin{definition}\label{def:Markov-monodromy-dec}
 We fix $k\in \mathbb Z_{\geq 0}$. For a $k$-GC triple $(P,Q,R)$, we consider a triple $(X,Y,Z)$ satisfying the following conditions:
\begin{itemize}\setlength{\leftskip}{-15pt}
\item [(1)] $X,Y,Z\in SL(2,\mathbb Z)$,
\item [(2)] $P=-Z^{-1}Y^{-1},\quad Q=-Z^{-1}X^{-1},\quad R=-Y^{-1}X^{-1}$,
\item[(3)] $\mathrm{tr}(X)=\mathrm{tr}(Y)=\mathrm{tr}(Z)$.
\end{itemize}
This triple $(X,Y,Z)$ of $(P,Q,R)$ is called a \emph{Markov-monodromy decomposition of} $(P,Q,R)$. 
\end{definition}
In this paper, we abbreviate the Markov-monodromy decomposition as the \emph{MM decomposition}.
We note that we can replace (3) in Definition \ref{def:Markov-monodromy-dec} with
\begin{itemize}\setlength{\leftskip}{-15pt}
    \item [(3')] $X+X^{-1}=Y+Y^{-1}=Z+Z^{-1}$.
\end{itemize}

\begin{lemma}\label{thm:Markov-monodromy-uniqueness-initial}
 The $k$-MM triples $(X_\ell,Y_\ell,Z_\ell)$ i.e., the root of $\mathrm{MM}\mathbb T(k,\ell)$, and $(-X_\ell,-Y_\ell,-Z_\ell)$ are MM decompositions of the $k$-GC triple $(P_{1;\ell},Q_{1;\ell},R_{1;\ell})$. Moreover, there are no other MM decompositions of $(P_{1;\ell},Q_{1;\ell},R_{1;\ell})$ than these two.
\end{lemma}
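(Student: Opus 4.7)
The plan is to split the claim into existence (direct verification for both triples) and uniqueness (reduction to a square-root problem in $SL(2,\mathbb{Z})$ via Cayley--Hamilton).

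For existence, from the definitions $X_\ell = X_{1;\ell}$, $Y_\ell = Y_{1;\ell}Z_{1;\ell}Y_{1;\ell}^{-1}$, $Z_\ell = Y_{1;\ell}$ one reads off $Y_\ell Z_\ell = Y_{1;\ell}Z_{1;\ell}$, $X_\ell Z_\ell = X_{1;\ell}Y_{1;\ell}$, and $X_\ell Y_\ell = X_{1;\ell}Y_{1;\ell}Z_{1;\ell}Y_{1;\ell}^{-1}$. Combined with the $k$-MM identity $X_{1;\ell}Y_{1;\ell}Z_{1;\ell}=T$, the three MM-decomposition conditions collapse to the identities $P_{1;\ell}=-T^{-1}X_{1;\ell}$, $Q_{1;\ell}=-Z_{1;\ell}T^{-1}$, $R_{1;\ell}=-Y_{1;\ell}T^{-1}$, each of which is routine to verify from the explicit form of $T$. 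The trace equality $\mathrm{tr}(X_\ell)=\mathrm{tr}(Y_\ell)=\mathrm{tr}(Z_\ell)=-k$ is built into the $k$-MM definition. The triple $(-X_\ell,-Y_\ell,-Z_\ell)$ requires no new work: the defining equations $P=-Z^{-1}Y^{-1}$ etc.\ are visibly invariant under simultaneous negation (each inverse absorbs one sign and the leading minus absorbs the third), and the common trace simply flips to $k$.

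For uniqueness, let $(X,Y,Z)$ be any MM decomposition of $(P_{1;\ell},Q_{1;\ell},R_{1;\ell})$. The telescoping identity
\[(YZ)(XZ)^{-1}(XY) \;=\; Y\,(ZZ^{-1})(X^{-1}X)\,Y \;=\; Y^2\]
together with the defining relations yields $Y^2 = -P_{1;\ell}^{-1}Q_{1;\ell}R_{1;\ell}^{-1}$. Call this fixed matrix $M$; by the existence part $Y_\ell^2 = M$, so uniqueness reduces to classifying the square roots of $M$ in $SL(2,\mathbb{Z})$, since once $Y$ is known the relations force $X=-R_{1;\ell}^{-1}Y^{-1}$ and $Z=-Y^{-1}P_{1;\ell}^{-1}$, and the two signs of $Y$ produce the two asserted triples. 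Cayley--Hamilton in $SL(2)$ gives $\mathrm{tr}(Y)\cdot Y = M+I$; as long as $M\ne -I$ this forces $\mathrm{tr}(Y)\ne 0$, whence $\mathrm{tr}(Y)\cdot Y = M+I$ determines $Y$ from its trace. Taking the trace of both sides and using $\mathrm{tr}(M)=\mathrm{tr}(Y_\ell)^2-2=k^2-2$ gives $\mathrm{tr}(Y)^2=k^2$, so $\mathrm{tr}(Y)=\pm k$ and $Y=\pm Y_\ell$.

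The main obstacle is the degenerate case $M=-I$, which (via $\det(M+I)=k^2$ from Cayley--Hamilton applied to $Y_\ell$) occurs exactly when $k=0$, where a direct computation confirms $Y_\ell^2=-I$. In that case the Cayley--Hamilton step is vacuous and $Y^2=-I$ is satisfied by every trace-zero element of $SL(2,\mathbb{Z})$, so one must invoke the full common-trace hypothesis. Using $Y^{-1}=-Y$ for such $Y$, the conditions $\mathrm{tr}(X)=\mathrm{tr}(Z)=0$ translate into the linear conditions $\mathrm{tr}(R_{1;\ell}^{-1}Y)=0$ and $\mathrm{tr}(P_{1;\ell}^{-1}Y)=0$, which together with $\mathrm{tr}(Y)=0$ give three linear equations on $Y\in M(2,\mathbb{Z})$. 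Linear independence of $I$, $P_{1;\ell}^{-1}$, $R_{1;\ell}^{-1}$ in $M(2,\mathbb{Q})$ (which I expect to follow at once from the explicit $k=0$ formulas) cuts the solution space to a rank-one $\mathbb{Q}$-line, on which the quadric $\det Y=1$ selects exactly the two integer points $\pm Y_\ell$.
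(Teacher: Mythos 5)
Your proof is correct, and it follows the paper's overall skeleton (existence by direct verification; uniqueness by reducing to $Y^2=-P_{1;\ell}^{-1}Q_{1;\ell}R_{1;\ell}^{-1}$ and then recovering $X,Z$ from $Y$), but the way you pin down $Y$ is genuinely different and arguably cleaner. For the main case the paper invokes the square-root lemma (Lemma \ref{lem:sqrt-matrix}), which produces candidate roots $\pm(Y^2+\varepsilon I)/\sqrt{\mathrm{tr}(Y^2)+2\varepsilon}$ and therefore needs a separate case at $k=2$ (where $\mathrm{tr}(Y^2)^2=4\det(Y^2)$) plus the irrationality of $\sqrt{k^2-4}$ to exclude $\varepsilon=-1$; you instead apply Cayley--Hamilton together with the hypothesis $\det Y=1$ from condition (1) of the decomposition, getting $\mathrm{tr}(Y)\,Y=M+I=-kY_\ell$ and hence $\mathrm{tr}(Y)^2=k^2$, which treats all $k\geq 1$ uniformly, needs no external lemma, and never sees the $\varepsilon$-ambiguity because $\varepsilon=\det Y$ is fixed from the start. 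For $k=0$ the paper parametrizes the trace-zero solutions of $Y^2=-I$ explicitly and computes $\mathrm{tr}(X),\mathrm{tr}(Z)$ as rational functions of the parameters, while you recast the common-trace condition as the three linear constraints $\mathrm{tr}(Y)=\mathrm{tr}(P_{1;\ell}^{-1}Y)=\mathrm{tr}(R_{1;\ell}^{-1}Y)=0$ under the nondegenerate trace pairing; this is tidier, but the linear independence of $I,P_{1;\ell}^{-1},R_{1;\ell}^{-1}$ should be checked rather than merely expected. It does hold: a relation $aI+bP_{1;\ell}^{-1}+cR_{1;\ell}^{-1}=0$ forces $b+c=0$ from the $(1,2)$-entries and then $b(2-4\ell)=0$ from the $(2,1)$-entries, and $2-4\ell\neq 0$ for $\ell\in\mathbb{Z}$, so $a=b=c=0$. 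With that one-line verification supplied, your argument is complete; what it buys over the paper's route is the elimination of the $k=2$ case split and of the appeal to the square-root lemma, at the price of a slightly more abstract finish in the $k=0$ case.
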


To prove Lemma \ref{thm:Markov-monodromy-uniqueness-initial}, we use the following lemma.

\begin{lemma}[\cite{sul93}]\label{lem:sqrt-matrix}
Let $Y\in M(2,\mathbb C)$. We assume that $Y$ is not a scalar matrix. The following statements hold:
\begin{itemize}\setlength{\leftskip}{-15pt}
    \item [(1)] if $\mathrm{tr}(Y^2)^2\neq 4\det (Y^2)$, then we have $Y=\pm\dfrac{Y^2+\varepsilon I}{\sqrt{\mathrm{tr}(Y^2)+2\varepsilon} }$, where $\varepsilon=\pm 1$,
    \item [(2)] if $\mathrm{tr}(Y^2)^2= 4\det (Y^2)$,  then we have $Y=\pm\dfrac{1}{2}(Y^2+I)$.
\end{itemize}
\end{lemma}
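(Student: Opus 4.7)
The plan is to reduce the entire lemma to the Cayley--Hamilton identity. For a $2\times 2$ matrix $Y$ over $\mathbb{C}$, Cayley--Hamilton gives
\[
Y^2 - \mathrm{tr}(Y)\, Y + \det(Y)\, I = 0,
\]
which I rearrange as $\mathrm{tr}(Y)\cdot Y = Y^2 + \det(Y)\cdot I$. Together with Newton's identity $\mathrm{tr}(Y)^2 = \mathrm{tr}(Y^2) + 2\det(Y)$ and $\det(Y^2) = \det(Y)^2$, this should suffice to derive both formulas, with the implicit convention (as in the $k$-MM applications later in the paper, where $Y \in SL(2,\mathbb{Z})$) that $\det(Y) \in \{+1,-1\}$ and the symbol $\varepsilon$ records this sign.

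First I would compute
\[
\mathrm{tr}(Y^2)^2 - 4\det(Y^2) = \bigl(\mathrm{tr}(Y)^2 - 2\det(Y)\bigr)^2 - 4\det(Y)^2 = \mathrm{tr}(Y)^2\bigl(\mathrm{tr}(Y)^2 - 4\det(Y)\bigr),
\]
which shows that the hypothesis of case (1) is equivalent to $\mathrm{tr}(Y) \neq 0$ together with $Y$ having distinct eigenvalues, while case (2) collects the remaining possibilities. For case (1), dividing the rearranged Cayley--Hamilton identity by $\mathrm{tr}(Y) \neq 0$ yields $Y = (Y^2 + \det(Y) I)/\mathrm{tr}(Y)$. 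Writing $\varepsilon = \det(Y)$, Newton's identity becomes $\mathrm{tr}(Y) = \pm\sqrt{\mathrm{tr}(Y^2) + 2\varepsilon}$, and substituting this in produces the stated formula
\[
Y = \pm\frac{Y^2 + \varepsilon I}{\sqrt{\mathrm{tr}(Y^2) + 2\varepsilon}}.
\]
For case (2), the relation $\mathrm{tr}(Y)^2 = 4\det(Y)$ together with $\det(Y) = 1$ forces $\mathrm{tr}(Y) = \pm 2$, so Cayley--Hamilton reads $Y^2 \mp 2 Y + I = 0$, which is exactly $Y = \pm\tfrac{1}{2}(Y^2 + I)$.

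The main obstacle, which is really the only subtle point, is bookkeeping of the two nested sign ambiguities: the choice of $\varepsilon \in \{\pm 1\}$ encodes the sign of $\det(Y)$, while the overall $\pm$ in front encodes the sign of $\sqrt{\mathrm{tr}(Y^2) + 2\varepsilon}$ relative to $\mathrm{tr}(Y)$. The assumption that $Y$ is non-scalar is used implicitly to exclude the degenerate situation in which $\mathrm{tr}(Y) = 0$ and $Y^2$ is a scalar multiple of $I$: there $Y$ is not recoverable from $Y^2$ at all, so the stated formula could not possibly hold. Carefully verifying that this degenerate sub-case of (2) is ruled out by the hypothesis (or by the implicit $\det(Y) \in \{\pm 1\}$ convention paired with non-scalarness) is what I expect to require the most attention.
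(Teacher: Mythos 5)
The paper gives no proof of this lemma at all---it is quoted from the cited source---so your Cayley--Hamilton route is the standard (and surely the intended) derivation, and your treatment of case (1) is correct: the factorization $\mathrm{tr}(Y^2)^2 - 4\det(Y^2) = \mathrm{tr}(Y)^2\bigl(\mathrm{tr}(Y)^2 - 4\det(Y)\bigr)$ shows the hypothesis of (1) forces $\mathrm{tr}(Y)\neq 0$, and dividing $\mathrm{tr}(Y)\,Y = Y^2 + \det(Y)I$ by $\mathrm{tr}(Y) = \pm\sqrt{\mathrm{tr}(Y^2)+2\det(Y)}$ gives the displayed formula. Your reading of $\varepsilon$ as $\det(Y)\in\{\pm 1\}$ is not merely a convenient convention but a necessary one: for $Y=\mathrm{diag}(2,3)$ the identity in (1) fails for both choices of $\varepsilon$, so the lemma is false for general $Y\in M(2,\mathbb C)$ and must be understood with $\det(Y)^2=1$, as holds in every application in the paper (where $Y\in SL(2,\mathbb Z)$).

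The genuine gap is the point you flagged at the end but deferred, and it cannot be repaired by more careful bookkeeping: the degenerate sub-case of (2) is \emph{not} excluded by the stated hypotheses. Take $Y=\begin{bmatrix}0&1\\-1&0\end{bmatrix}$: it is non-scalar, $\det Y=1$, and $Y^2=-I$, so $\mathrm{tr}(Y^2)^2=4=4\det(Y^2)$ puts us in case (2); yet $\pm\tfrac{1}{2}(Y^2+I)=0\neq Y$. Conclusion (2) fails whenever $\mathrm{tr}(Y)=0$ and $Y$ is non-scalar, precisely because $Y^2$ is then scalar and $Y$ is not recoverable from it. Your argument for (2) silently keeps only the branch $\mathrm{tr}(Y)^2=4\det(Y)$ of your own factorization and discards the branch $\mathrm{tr}(Y)=0$; that is exactly where the proof and the statement part ways. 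The honest fix is to add the hypothesis $\mathrm{tr}(Y)\neq 0$ (equivalently, that $Y^2$ is not scalar) to case (2). The paper is implicitly aware of this: in the proof of Lemma \ref{thm:Markov-monodromy-uniqueness-initial} it invokes case (2) only for $k=2$, where $\mathrm{tr}(Y)=-k=-2\neq 0$, and handles the $k=0$ case (where $Y^2=-I$) by a separate direct parametrization of $Y$ rather than by the formula in (2). You should either state the extra hypothesis or restrict your proof's claim to the non-degenerate situation in which the lemma is actually applied.
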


\begin{proof}[Proof of Lemma \ref{thm:Markov-monodromy-uniqueness-initial}]
  We can check directly that $(X_\ell,Y_\ell,Z_\ell)$ and $(-X_\ell,-Y_\ell,-Z_\ell)$ are MM decompositions of $(P_{1;\ell},Q_{1;\ell},R_{1;\ell})$. Let $(X,Y,Z)$ be an MM decomposition of $(P_{1;\ell},Q_{1;\ell},R_{1;\ell})$. Then we have 
  \[Y^{2}=-P_{1;\ell}^{-1}Q_{1;\ell}R_{1;\ell}^{-1}=\begin{bmatrix}
      -k^2\ell+k^2-2k\ell+k-1& -k^2-2k\\k^2\ell^2-k^2\ell+2k\ell^2-2k\ell+k &k^2\ell+2k\ell-k-1
  \end{bmatrix}.\]
We will calculate $Y$ according to Lemma \ref{lem:sqrt-matrix}. 

First, we consider the cases $k\neq 0,2$. Then, $Y^2$ is not a diagonal matrix, and $\mathrm{tr}(Y^2)^2\neq 4\det (Y^2)$. Then we have
\[Y=\pm\dfrac{Y^2+\varepsilon I}{\sqrt{\mathrm{tr}(Y^2)+2\varepsilon} }=\pm\dfrac{Y^2+\varepsilon I}{\sqrt{k^2-2+2\varepsilon} },\]
where $\varepsilon=\pm 1$. When $\varepsilon=-1$, the denominator $\sqrt{k^2-4}$ is not an integer. Therefore, by the condition (1) in Definition \ref{def:Markov-monodromy-dec}, $\varepsilon$ must be $1$ and we have
\[Y=\pm\dfrac{1}{k}(Y^2+I)=\mp Y_\ell.\]
Then, we have
\[X=-R^{-1}(\mp Y_\ell^{-1})=\mp X_{\ell},\quad Z=-(\mp Y_{\ell}^{-1})P^{-1}=\mp Z_{\ell},\]as desired. 

Second, we consider the case $k=2$. Then $Y^2$ is not a diagonal matrix, and $\mathrm{tr}(Y^2)^2= 4\det (Y^2)$. Then we have
\[Y=\pm\dfrac{1}{2}(Y^2+I)=\mp Y_\ell.\]
The rest of the discussion is the same as for $k\neq 0,2$. 

Finally, we consider the case $k=0$. Then we have $Y^2=-I$ and $Y=\begin{bmatrix}
    \alpha& \beta \\ \dfrac{-1-\alpha^2}{\beta}&-\alpha
\end{bmatrix}$, where $\alpha\in\ZZ$ and $\beta\in\ZZ\setminus\{0\}$.
Then we have
\begin{align*}   
\mathrm{tr}(X)&=\mathrm{tr}(-R^{-1}Y^{-1})=-2\ell\alpha-\alpha+\dfrac{\alpha^2}{\beta}+\dfrac{1}{\beta}+\ell^2\beta+\ell\beta-\beta,\\
\mathrm{tr}(Z)&=\mathrm{tr}(-Y^{-1}P^{-1})=-2\ell\alpha+3\alpha+\dfrac{\alpha^2}{\beta}+\dfrac{1}{\beta}+\ell^2\beta-3\ell\beta+\beta.
\end{align*}
By the condition (3) in Definition \ref{def:Markov-monodromy-dec}, we have $\mathrm{tr}(X)=\mathrm{tr}(Y)=\mathrm{tr}(Z)=0$ and thus we get $\alpha=(2\ell-1)\varepsilon$ and $\beta=2\varepsilon$, where $\varepsilon=\pm1$. If $\varepsilon=1$, then we have $(X,Y,Z)=(X_\ell,Y_\ell,Z_{\ell})$, and $\varepsilon=-1$, then we have $(X,Y,Z)=(-X_\ell,-Y_\ell,-Z_{\ell})$.
\end{proof}

We will prove the following theorem:

\begin{theorem}\label{thm:Markov-monodromy-uniqueness}
For any $k$-GC triple $(P,Q,R)$ associated with $(a,b,c)$, if $b> \max\{a,c\}$ holds, then there is an MM decomposition $(X,Y,Z)$ of $(P,Q,R)$ and it is unique up to sign. 
\end{theorem}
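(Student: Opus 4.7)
My plan is to proceed by induction on the tree $\mathrm{GC}\mathbb T(k,\ell)$, which by Proposition \ref{pr:all-cohn-triple} parameterizes (as $\ell$ ranges over $\mathbb Z$) exactly those $k$-GC triples $(P,Q,R)$ satisfying $b>\max\{a,c\}$. Throughout, I carry the stronger inductive invariant that the MM decomposition $(X,Y,Z)$ under construction is in fact a $k$-MM triple, i.e., $\mathrm{tr}(X)=\mathrm{tr}(Y)=\mathrm{tr}(Z)=-k$ and $XYZ=T$. For the base case, the root $(P_\ell,Q_\ell,R_\ell)=(P_{1;\ell},P_{1;\ell}Q_{1;\ell}-S,Q_{1;\ell})$, I take the candidate $(X_{1;\ell},Y_{1;\ell},Z_{1;\ell})$. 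Two of the three product identities of Definition \ref{def:Markov-monodromy-dec}(2), namely $P_{1;\ell}=-(Y_{1;\ell}Z_{1;\ell})^{-1}$ and $Q_{1;\ell}=-(X_{1;\ell}Y_{1;\ell})^{-1}$, can be read off directly from Lemma \ref{thm:Markov-monodromy-uniqueness-initial} applied to the MM decomposition $(X_\ell,Y_\ell,Z_\ell)=(X_{1;\ell},Y_{1;\ell}Z_{1;\ell}Y_{1;\ell}^{-1},Y_{1;\ell})$ of $(P_{1;\ell},Q_{1;\ell},R_{1;\ell})$. The remaining identity $P_{1;\ell}Q_{1;\ell}-S=-(X_{1;\ell}Z_{1;\ell})^{-1}$, after substituting the first two into the left-hand side, rearranges to $S=Z_{1;\ell}^{-1}Y_{1;\ell}^{-1}(Z_{1;\ell}^{-1}+Z_{1;\ell})X_{1;\ell}^{-1}$; Cayley--Hamilton gives $Z_{1;\ell}^{-1}+Z_{1;\ell}=-kI$, and combining with $X_{1;\ell}Y_{1;\ell}Z_{1;\ell}=T$ and the direct identity $S=-kT^{-1}$ closes the verification.

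For the inductive step, suppose $(X,Y,Z)$ is a $k$-MM triple which is an MM decomposition of $(P,Q,R)$. I claim that $(X,Z,Z^{-1}YZ)$ is an MM decomposition of the left child $(P,PQ-S,Q)$, and $(XYX^{-1},X,Z)$ of the right child $(Q,QR-S,R)$. The invariants $XYZ=T$ and common trace $-k$ are visibly preserved by both operations (a cyclic conjugate or insertion), as are the outer two product identities in Definition \ref{def:Markov-monodromy-dec}(2). The only nontrivial middle identity---for example $-(X\cdot Z^{-1}YZ)^{-1}=PQ-S$ for the left child---reduces, by the same manipulation as in the base case, to $S=-k(XYZ)^{-1}=-kT^{-1}$ via $Z+Z^{-1}=-kI$; the right-child check is symmetric, using $X+X^{-1}=-kI$ in place of $Z+Z^{-1}=-kI$.

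For uniqueness up to sign, the product relations of Definition \ref{def:Markov-monodromy-dec}(2) alone imply the closed formula $Y^2=(YZ)(XZ)^{-1}(XY)=-P^{-1}QR^{-1}$, which depends only on $(P,Q,R)$. Applying Lemma \ref{lem:sqrt-matrix} to this square, any $Y\in SL(2,\mathbb Z)$ with this $Y^2$ is determined up to sign: the $\varepsilon=-1$ branch of the lemma is excluded by a determinant computation that would force $\det Y=-1$, while the non-scalar hypothesis is guaranteed inductively from the explicit form of $Y_{1;\ell}$ and its descendants under the inductive operations. Since $X=-R^{-1}Y^{-1}$ and $Z=-Y^{-1}P^{-1}$ are then determined by $Y$, any other MM decomposition differs from the constructed one by a simultaneous sign flip. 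The principal obstacle I foresee is weaving Cayley--Hamilton, the product relation $X_{1;\ell}Y_{1;\ell}Z_{1;\ell}=T$, and the key identification $S=-kT^{-1}$ into a single clean rearrangement in the base case; a secondary difficulty is excluding the degenerate cases of Lemma \ref{lem:sqrt-matrix} (where $Y^2$ is scalar or has coincident eigenvalues) uniformly in $k$ and $\ell$.
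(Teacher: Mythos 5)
Your existence argument is sound and essentially retraces the paper's: the root case rests on Lemma \ref{thm:Markov-monodromy-uniqueness-initial} plus the identity $S=-kT^{-1}$ (your verification is correct, up to the harmless slip that the natural factorization in the base case is $Z^{-1}Y^{-1}(Y+Y^{-1})X^{-1}$ rather than with $Z+Z^{-1}$; both are $-kI$ times the same product), and your inductive step is Proposition \ref{thm:BT-CT} re-proved under the strengthened invariant that $(X,Y,Z)$ stays a $k$-MM triple, which is indeed preserved. Your exclusion of the $\varepsilon=-1$ branch of Lemma \ref{lem:sqrt-matrix} by computing $\det\bigl((Y^2-I)/\sqrt{\mathrm{tr}(Y^2)-2}\bigr)=(4-k^2)/(k^2-4)=-1$ is also correct, and is a cleaner exclusion than the irrationality argument used in the paper.

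The uniqueness half, however, has a genuine gap at $k=0$, and the theorem is stated for all $k\geq 0$. For $k=0$ one has $S=0$, so $Q=PR$ and the quantity you rely on satisfies $Y^2=-P^{-1}QR^{-1}=-I$ for \emph{every} triple in the tree (equivalently: the constructed $Y$ has trace $0$, so Cayley--Hamilton gives $Y^2=-I$). A scalar square determines nothing: $SL(2,\mathbb Z)$ contains infinitely many trace-zero matrices with square $-I$, so ``any $Y$ with this $Y^2$ is determined up to sign'' is false, and Lemma \ref{lem:sqrt-matrix} cannot be invoked (its case (2) formula $Y=\pm\tfrac12(Y^2+I)$ degenerates to $0$ here). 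This is exactly why the paper's proof of Lemma \ref{thm:Markov-monodromy-uniqueness-initial} needs a separate $k=0$ computation that brings in condition (3) of Definition \ref{def:Markov-monodromy-dec} (equality of $\mathrm{tr}(-R^{-1}Y^{-1})$, $\mathrm{tr}(Y)$, $\mathrm{tr}(-Y^{-1}P^{-1})$) to pin down $Y$ up to sign --- a condition your uniqueness argument never uses --- and why the paper then proves uniqueness for a general $(P,Q,R)$ not pointwise but by transporting two competing decompositions up the tree with Lemma \ref{lem:inverseBT-CT} to the initial triple, where that special analysis applies. To repair your route you would have to carry out the analogue of that trace analysis for arbitrary $P,R$ in the tree, which is additional work you have not supplied. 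A smaller point: the non-scalar hypothesis of Lemma \ref{lem:sqrt-matrix} must be verified for the middle matrix of an \emph{arbitrary} competing decomposition, not ``inductively from the explicit form of $Y_{1;\ell}$''; this is easy (a scalar $Y=\pm I$ would force $Q=-PR$, hence $2PR=S$, which fails by determinant and by $q_{12}>0$), but as written it is aimed at the wrong object. For $k\geq 1$ your pointwise uniqueness argument does go through (case (1) with the determinant exclusion for $k\neq 2$, case (2) for $k=2$) and is genuinely lighter than the paper's transport argument; the missing piece is precisely $k=0$.
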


To prove Theorem \ref{thm:Markov-monodromy-uniqueness}, we prepare some propositions. The next one provides the existence of an MM decomposition of any $k$-GC triple:

\begin{proposition}\label{thm:BT-CT}
If $(X,Y,Z)$ is an MM decomposition of $(P,Q,R)$, then $(X,Z,Z^{-1}YZ)$ (resp. $(XYX^{-1},X,Z)$) is an MM decomposition of $(P,PQ-S,Q)$ (resp. $(Q,QR-S,R)$).   
\end{proposition}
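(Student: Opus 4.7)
The plan is to verify the three defining conditions of Definition \ref{def:Markov-monodromy-dec} directly for the new triple $(X',Y',Z'):=(X,Z,Z^{-1}YZ)$ against $(P',Q',R'):=(P,PQ-S,Q)$; the parallel statement for $(XYX^{-1},X,Z)$ against $(Q,QR-S,R)$ is handled symmetrically. Condition (1) is immediate since $SL(2,\mathbb Z)$ is closed under conjugation, and condition (3) follows because conjugation preserves trace, so the three new matrices still share the common value $t := \mathrm{tr}(X)=\mathrm{tr}(Y)=\mathrm{tr}(Z)$. For condition (2), two of the three factorizations collapse at once: $-(Y'Z')^{-1} = -(Z \cdot Z^{-1}YZ)^{-1} = -(YZ)^{-1} = P$ and $-(X'Y')^{-1} = -(XZ)^{-1} = Q$.

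The substantive content is the middle identity $PQ-S = -(X'Z')^{-1} = -Z^{-1}Y^{-1}ZX^{-1}$. My key step is to first establish the auxiliary identity
\[
S \;=\; t\,(XYZ)^{-1} \;=\; t\, Z^{-1}Y^{-1}X^{-1}.
\]
To prove this, I substitute the MM-decomposition expressions $P=-Z^{-1}Y^{-1}$, $Q=-Z^{-1}X^{-1}$, $R=-Y^{-1}X^{-1}$ into the $k$-GC relation $Q=PR-S$; after cancellation this rearranges to $S = Z^{-1}(I + Y^{-2})X^{-1}$. Applying Cayley--Hamilton to $Y\in SL(2,\mathbb Z)$ gives $Y+Y^{-1}=tI$, from which a short computation yields $I + Y^{-2} = t\,Y^{-1}$, delivering the auxiliary identity.

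With this in hand, Cayley--Hamilton for $Z$ gives $Z^{-1} - tI = -Z$, and the target identity is a two-line manipulation:
\[
PQ - S \;=\; Z^{-1}Y^{-1}Z^{-1}X^{-1} - t\,Z^{-1}Y^{-1}X^{-1} \;=\; Z^{-1}Y^{-1}(Z^{-1}-tI)X^{-1} \;=\; -Z^{-1}Y^{-1}ZX^{-1}.
\]
The second case closes by the same pattern, this time using Cayley--Hamilton for $X$ in the form $X^{-1}-tI=-X$ to compute $QR - S = Z^{-1}(X^{-1}-tI)Y^{-1}X^{-1} = -Z^{-1}XY^{-1}X^{-1} = -(X'Z')^{-1}$. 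The expected main obstacle is spotting the auxiliary identity $S = t(XYZ)^{-1}$: it is what translates the arithmetic constant $S$ appearing in the $k$-GC structure into the single trace datum $t$ of the MM decomposition, and it is this identity that allows both Vieta-type steps to be handled uniformly by invoking Cayley--Hamilton on the appropriate factor.
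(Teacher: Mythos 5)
Your proof is correct and follows essentially the same route as the paper: both substitute the decomposition $P=-Z^{-1}Y^{-1}$, $Q=-Z^{-1}X^{-1}$, $R=-Y^{-1}X^{-1}$ into the Cohn relation $Q=PR-S$ and then close the one nontrivial factorization identity using the equal-trace condition (the paper's condition (3'), your Cayley--Hamilton step). Your auxiliary identity $S=t\,(XYZ)^{-1}$ is just a repackaging of the paper's key relation $ZSX=Y^{-2}+I$ combined with $Y+Y^{-1}=tI$, so the difference is organizational rather than substantive.
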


\begin{proof}
By the assumption $Q=PR-S$, we have
\[-Z^{-1}X^{-1}=Z^{-1}Y^{-2}X^{-1}-S.\]
By multiplying the above equality by $Z$ from the left and by $X$ from the right, we have
\begin{equation}\label{eq:ZSX}
    ZSX=Y^{-2}+I.
\end{equation}
 We denote by $(X',Y',Z')$ (resp. $(X'',Y'',Z'')$) the left (resp. right) child of $(X,Y,Z)$ in $\mathrm{MM}\TT^\dag(k,\ell)$. We can easily see that  $(X',Y',Z')$ and $(X'',Y'',Z'')$ satisfy (1) and (3') in Definition \ref{def:Markov-monodromy-dec}.  We will prove that they satisfy (2). It suffices to show that
\begin{align*}
    -(Y'Z')^{-1}&=-(YZ)^{-1},\\
    -(X'Z')^{-1}&=(YZ)^{-1}(XZ)^{-1}-S,\\
    -(X'Y')^{-1}&=-(XZ)^{-1},\\
    -(Y''Z'')^{-1}&=-(XZ)^{-1},\\
    -(X''Z'')^{-1}&=(XZ)^{-1}(XY)^{-1}-S,\\
    -(X''Y'')^{-1}&=-(XY)^{-1}.
\end{align*}  
All but the second and fifth equality are clear. We only prove the second equality. It suffices to show that
\[-Z^{-1}Y^{-1}ZX^{-1}=Z^{-1}Y^{-1}Z^{-1}X^{-1}-S.\]
By multiplying the above equality by $Z$ from the left and by $X$ from the right and applying \eqref{eq:ZSX}, we have
\[-Y^{-1}Z=Y^{-1}Z^{-1}-Y^{-2}-I.\]
This equality can be obtained from (3') in Definition \ref{def:Markov-monodromy-dec}.
\end{proof}

In parallel with Lemma \ref{Cohn-Markov2}, we have the following lemma.
\begin{lemma}\label{lem:inverseBT-CT}
If $(X,Y,Z)$ is an MM decomposition of $(P,Q,R)$, then $(X,YZY^{-1},Y)$ (resp. $(Y,Y^{-1}XY,Z)$) is an MM decomposition of $(P,R,P^{-1}(R+S))$ (resp. $((P+S)R^{-1},P,R)$). 
\end{lemma}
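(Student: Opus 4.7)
The plan is to verify the three conditions of Definition~\ref{def:Markov-monodromy-dec} directly for each of the two proposed triples, mirroring the structure of the proof of Proposition~\ref{thm:BT-CT}. Write $t$ for the common value $\mathrm{tr}(X)=\mathrm{tr}(Y)=\mathrm{tr}(Z)$, so that Cayley--Hamilton yields $X+X^{-1}=Y+Y^{-1}=Z+Z^{-1}=tI$. Condition (1) is immediate because $SL(2,\mathbb Z)$ is closed under conjugation, and condition (3) follows at once from invariance of trace under conjugation: $\mathrm{tr}(YZY^{-1})=\mathrm{tr}(Z)=t$ and $\mathrm{tr}(Y^{-1}XY)=\mathrm{tr}(X)=t$.

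The real work is condition (2). Inverting the conjugations using $(YZY^{-1})^{-1}=YZ^{-1}Y^{-1}$ and $(Y^{-1}XY)^{-1}=Y^{-1}X^{-1}Y$, two of the three required matrix identities in each case collapse onto the original relations $P=-Z^{-1}Y^{-1}$ and $R=-Y^{-1}X^{-1}$ after telescoping factors of $Y$. What remains are the two nontrivial identities
\[
P^{-1}(R+S)\;=\;-YZ^{-1}Y^{-1}X^{-1} \quad\text{and}\quad (P+S)R^{-1}\;=\;-Z^{-1}Y^{-1}X^{-1}Y.
\]
The plan for the first is to substitute $P^{-1}=-YZ$ and $R=-Y^{-1}X^{-1}$, expand $P^{-1}(R+S)$, and use $Z+Z^{-1}=tI$ to combine the two $X^{-1}$-terms; this reduces the identity to $YZS=tX^{-1}$. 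An analogous manipulation with $R^{-1}=-XY$ and $X+X^{-1}=tI$ reduces the second identity to $SXY=tZ^{-1}$.

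Both reductions follow immediately from the key relation $ZSX=Y^{-2}+I$ already established as equation~\eqref{eq:ZSX} in the proof of Proposition~\ref{thm:BT-CT}. Left-multiplying \eqref{eq:ZSX} by $Y$ gives $YZSX=Y^{-1}+Y=tI$, so right-multiplication by $X^{-1}$ yields $YZS=tX^{-1}$. Right-multiplying \eqref{eq:ZSX} by $Y$ gives $ZSXY=tI$, so left-multiplication by $Z^{-1}$ yields $SXY=tZ^{-1}$.

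There is no serious obstacle: the only real care required is bookkeeping---tracking signs and correctly inverting the conjugated matrices. Conceptually, Lemma~\ref{lem:inverseBT-CT} is the ``inverse'' of Proposition~\ref{thm:BT-CT}: the child-taking operations on $(X,Y,Z)$ in the two statements invert one another on the Cohn side, and both are powered by the same relation \eqref{eq:ZSX} together with Cayley--Hamilton.
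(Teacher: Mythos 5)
Your proposal is correct and follows essentially the same route as the paper: both reduce the verification of Definition~\ref{def:Markov-monodromy-dec} to the single nontrivial identity in each case and settle it using the relation $ZSX=Y^{-2}+I$ of \eqref{eq:ZSX} together with the equal-trace condition (3'). The only difference is cosmetic ordering — the paper conjugates the identity by $Y^{-1}$ and $X$ and then invokes \eqref{eq:ZSX}, while you first apply $Z+Z^{-1}=tI$ (resp. $X+X^{-1}=tI$) to reduce to the symmetric statements $YZS=tX^{-1}$ and $SXY=tZ^{-1}$ and then invoke \eqref{eq:ZSX} — which is the same computation rearranged.
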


\begin{proof}
Similar to the proof Theorem \ref{thm:BT-CT}, we have \eqref{eq:ZSX}. We set $(X',Y',Z'):=(X,YZY^{-1},Y)$ and $(X'',Y'',Z''):=(Y,Y^{-1}XY,Z)$.  We can  easily see that  $(X',Y',Z')$ and $(X'',Y'',Z'')$ satisfy (1) and (3) in Definition \ref{def:Markov-monodromy-dec}.  We will prove that they satisfy (2). It suffices to show that
\[-(X'Y')^{-1}=-(YZ)(-(XY)^{-1}+S),\quad -(Y'Z')^{-1}=(-(YZ)^{-1}+S)(-XY).\]
We will only prove the former equality.
It suffices to show that
\[-YZ^{-1}Y^{-1}X^{-1}=YZY^{-1}X^{-1}-YZS.\]
By multiplying the above equality by $Y^{-1}$ from the left and by $X$ from the right and applying \eqref{eq:ZSX}, we have
\[-Z^{-1}Y^{-1}=ZY^{-1}-Y^{-2}-I.\]
This equality follows from (3') in Definition \ref{def:Markov-monodromy-dec}.
\end{proof}

\begin{proof}[Proof of Theorem \ref{thm:Markov-monodromy-uniqueness}]
By Proposition \ref{pr:all-cohn-triple}, there exists $\ell\in \ZZ$ such that $(P,Q,R)\in \mathrm{GC}\mathbb T(k,\ell)$. The existence follows from Lemma \ref{thm:Markov-monodromy-uniqueness-initial} and Proposition \ref{thm:BT-CT}. We assume that $(X_1,Y_1,Z_1)$ and $(X_2,Y_2,Z_2)$ are MM decompositions of $(P,Q,R)$. From Lemma \ref{lem:inverseBT-CT} by applying $(X,Y,Z)\mapsto(X,YZY^{-1},Y)$ and $(X,Y,Z)\mapsto (Y,Y^{-1}XY,Z)$ to $(X_1,Y_1,Z_1)$ and $(X_2,Y_2,Z_2)$ repeatedly, we get $(X'_1,Y'_1,Z'_1)$ and $(X'_2,Y'_2,Z'_2)$ such that they are MM decompositions of a $k$-GC triple associated with $(1,1,1)$. By Lemma \ref{thm:Markov-monodromy-uniqueness-initial}, we have $(X'_1,Y'_1,Z'_1)=\pm(X'_2,Y'_2,Z'_2)$. Therefore, we have $(X_1,Y_1,Z_1)=\pm(X_2,Y_2,Z_2)$. 
\end{proof}

Moreover, the following theorem holds:

\begin{theorem}\label{thm:Markov-monodromy-decomposition}
Let $(P,Q,R)$ be a $k$-GC triple associated $(a,b,c)$ with $b> \max\{a,c\}$. For an MM decomposition $(X,Y,Z)$ of a $k$-GC triple $(P,Q,R)$, if $x_{12}>0$, then it is in $\mathrm{MM}\mathbb T^{\dag}(k,\ell)$. In particular, $(X,Y,Z)$ is a $k$-MM triple.
\end{theorem}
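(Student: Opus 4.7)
The plan is to proceed by induction on the depth of $(P,Q,R)$ in the tree $\mathrm{GC}\mathbb T(k,\ell)$, where $\ell\in\mathbb Z$ is the unique integer (supplied by Proposition \ref{pr:all-cohn-triple}) with $(P,Q,R)\in \mathrm{GC}\mathbb T(k,\ell)$. The claim I will carry through the induction is that the MM decomposition of $(P,Q,R)$ selected by the sign condition $x_{12}>0$ occupies the vertex of $\mathrm{MM}\mathbb T^\dag(k,\ell)$ lying in the same tree position as $(P,Q,R)$. The theorem then follows, and the ``in particular'' clause will drop out as a consequence of Theorem \ref{thm:BT-CT2dag}.

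For the base case, I would take the root $(P_{1;\ell},P_{1;\ell}Q_{1;\ell}-S,Q_{1;\ell})$ of $\mathrm{GC}\mathbb T(k,\ell)$. Lemma \ref{thm:Markov-monodromy-uniqueness-initial} furnishes the two MM decompositions $\pm(X_\ell,Y_\ell,Z_\ell)$ of the neighboring triple $(P_{1;\ell},Q_{1;\ell},R_{1;\ell})$. A single application of Proposition \ref{thm:BT-CT}, transporting MM decompositions through the move $(P,Q,R)\mapsto(P,PQ-S,Q)$, converts these to $\pm(X_\ell,Z_\ell,Z_\ell^{-1}Y_\ell Z_\ell)$. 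Substituting the defining formulas $X_\ell=X_{1;\ell}$, $Y_\ell=Y_{1;\ell}Z_{1;\ell}Y_{1;\ell}^{-1}$, and $Z_\ell=Y_{1;\ell}$ collapses this to $\pm(X_{1;\ell},Y_{1;\ell},Z_{1;\ell})$, which is $\pm$ the root of $\mathrm{MM}\mathbb T^\dag(k,\ell)$. The $(1,2)$-entry of $X_{1;\ell}$ is $1$, so the hypothesis $x_{12}>0$ pins down the $+$ sign and the base case is settled.

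For the inductive step, assume the MM decomposition with $x_{12}>0$ of a vertex $(P,Q,R)$ of $\mathrm{GC}\mathbb T(k,\ell)$ equals the corresponding vertex $(X,Y,Z)$ of $\mathrm{MM}\mathbb T^\dag(k,\ell)$. By Proposition \ref{thm:BT-CT}, the MM decompositions of the two children $(P,PQ-S,Q)$ and $(Q,QR-S,R)$ of $(P,Q,R)$ are, respectively, $(X,Z,Z^{-1}YZ)$ and $(XYX^{-1},X,Z)$; but these are precisely the two children of $(X,Y,Z)$ in $\mathrm{MM}\mathbb T^\dag(k,\ell)$. Since Theorem \ref{thm:BT-CT2dag} asserts that every vertex of $\mathrm{MM}\mathbb T^\dag(k,\ell)$ is a $k$-MM triple, the $(1,2)$-entry of the first matrix of each child is a $k$-GM number, hence positive. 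The uniqueness of MM decompositions up to sign (Theorem \ref{thm:Markov-monodromy-uniqueness}) then identifies these children with the MM decompositions selected by $x_{12}>0$, closing the induction.

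The central conceptual observation is that the operations $\sigma_1$ and $\sigma_2^{-1}$ that define the children in $\mathrm{MM}\mathbb T^\dag(k,\ell)$ match exactly the transformations Proposition \ref{thm:BT-CT} prescribes on MM decompositions under the child operations of $\mathrm{GC}\mathbb T(k,\ell)$; once this alignment is in hand, the argument is essentially formal. I do not foresee serious difficulty beyond the small base-case verification $(X_\ell,Z_\ell,Z_\ell^{-1}Y_\ell Z_\ell)=(X_{1;\ell},Y_{1;\ell},Z_{1;\ell})$ and the sign-tracking sketched above.
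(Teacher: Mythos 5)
Your proposal is correct and takes essentially the same route as the paper: the paper's proof also settles the root case via Lemma \ref{thm:Markov-monodromy-uniqueness-initial2} (which it proves exactly as you do, from Lemma \ref{thm:Markov-monodromy-uniqueness-initial} and Proposition \ref{thm:BT-CT}) and then propagates down $\mathrm{GC}\mathbb T(k,\ell)$ using Proposition \ref{thm:BT-CT} together with the uniqueness up to sign from Theorem \ref{thm:Markov-monodromy-uniqueness}. Your write-up merely makes the induction and the sign-tracking explicit, which matches the intended argument.
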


To prove it, we will prove the following lemma:

\begin{lemma}\label{thm:Markov-monodromy-uniqueness-initial2}
The $k$-MM triple $(X_{1;\ell},Y_{1;\ell},Z_{1;\ell})$ (given in the definition of $\mathrm{MM}\TT^\dag(k,\ell)$) and $(-X_{1;\ell},-Y_{1;\ell},-Z_{1;\ell})$ are an MM decompositions of the root of  $\mathrm{GC}\TT(k,\ell)$. Moreover, there are no other MM decompositions of it than these two.
\end{lemma}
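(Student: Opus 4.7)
The proof will closely parallel that of Lemma \ref{thm:Markov-monodromy-uniqueness-initial}, split into existence and uniqueness. For existence, the plan is to exhibit $(X_{1;\ell}, Y_{1;\ell}, Z_{1;\ell})$ as an MM decomposition of the root $(P_\ell, Q_\ell, R_\ell) = (P_{1;\ell}, P_{1;\ell}Q_{1;\ell}-S, Q_{1;\ell})$ of $\mathrm{GC}\TT(k,\ell)$; the sign twin $(-X_{1;\ell}, -Y_{1;\ell}, -Z_{1;\ell})$ is then automatic, since the MM decomposition equations and trace conditions are invariant under negating the triple. Two of the three required matrix identities, namely $P_\ell = -Z_{1;\ell}^{-1}Y_{1;\ell}^{-1}$ and $R_\ell = -Y_{1;\ell}^{-1}X_{1;\ell}^{-1}$, come for free by applying Lemma \ref{thm:Markov-monodromy-uniqueness-initial} to the MM decomposition $(X_\ell, Y_\ell, Z_\ell) = (X_{1;\ell}, Y_{1;\ell}Z_{1;\ell}Y_{1;\ell}^{-1}, Y_{1;\ell})$ of $(P_{1;\ell}, Q_{1;\ell}, R_{1;\ell})$ and substituting.

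The sole nontrivial identity is $Q_\ell = -Z_{1;\ell}^{-1}X_{1;\ell}^{-1}$, which after using the two identities above is equivalent to $Z_{1;\ell}SX_{1;\ell} = Y_{1;\ell}^{-2}+I$. I would prove this from three elementary inputs: the $k$-MM triple identity $X_{1;\ell}Y_{1;\ell}Z_{1;\ell}=T$, the direct computation $TS = -kI$, and Cayley--Hamilton for $Y_{1;\ell}\in SL(2,\ZZ)$ with $\mathrm{tr}(Y_{1;\ell})=-k$, namely $Y_{1;\ell}^2 + kY_{1;\ell} + I = 0$. Concretely, eliminating $Z_{1;\ell} = Y_{1;\ell}^{-1}X_{1;\ell}^{-1}T$ on the left-hand side and invoking $TS=-kI$ collapses the desired identity onto Cayley--Hamilton.

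For uniqueness, I would mimic the argument of Lemma \ref{thm:Markov-monodromy-uniqueness-initial}: any MM decomposition $(X,Y,Z)$ of a $k$-GC triple $(P,Q,R)$ satisfies the formula $Y^2 = -P^{-1}QR^{-1}$, derived purely from the MM decomposition axioms in that proof. Specializing this to $(P_\ell, Q_\ell, R_\ell)$ and simplifying with the identities from the existence step yields $Y^2 = Y_{1;\ell}^2$; Lemma \ref{lem:sqrt-matrix} then gives $Y = \pm Y_{1;\ell}$ via the same case split $k\notin\{0,2\}$, $k=2$, $k=0$ as before, and $X = -R_\ell^{-1}Y^{-1}$, $Z = -Y^{-1}P_\ell^{-1}$ inherit the same sign. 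The main obstacle will again be the $k=0$ case, where $Y_{1;\ell}^2 = -I$ is scalar so Lemma \ref{lem:sqrt-matrix} alone does not determine $Y$; exactly as in the final paragraph of the proof of Lemma \ref{thm:Markov-monodromy-uniqueness-initial}, one invokes condition (3) of Definition \ref{def:Markov-monodromy-dec} (the trace equalities on $X, Y, Z$) to cut the one-parameter family $\{Y\in SL(2,\ZZ) : Y^2=-I\}$ down to the two choices $\pm Y_{1;\ell}$.
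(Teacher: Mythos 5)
Your proposal is correct, and it diverges from the paper's own (very short) proof in an instructive way. For existence the paper simply checks the claim directly or combines Lemma \ref{thm:Markov-monodromy-uniqueness-initial} with Proposition \ref{thm:BT-CT} (one step down the tree sends the decomposition $(X_\ell,Y_\ell,Z_\ell)$ of $(P_{1;\ell},Q_{1;\ell},R_{1;\ell})$ to $(X_{1;\ell},Y_{1;\ell},Z_{1;\ell})$ as a decomposition of the root of $\mathrm{GC}\TT(k,\ell)$); you instead extract two of the three identities from Lemma \ref{thm:Markov-monodromy-uniqueness-initial} and prove the remaining one, $Z_{1;\ell}SX_{1;\ell}=Y_{1;\ell}^{-2}+I$, from $X_{1;\ell}Y_{1;\ell}Z_{1;\ell}=T$, $TS=-kI$ and Cayley--Hamilton. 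This is a clean standalone derivation of the identity \eqref{eq:ZSX} (the paper obtains it inside the proof of Proposition \ref{thm:BT-CT} from $Q=PR-S$), and your computation checks out: $Z S X=Y^{-1}X^{-1}TSX=-kY^{-1}=Y^{-2}+I$. For uniqueness the paper just observes the claim is the special case of Theorem \ref{thm:Markov-monodromy-uniqueness}, which is already proved at this point (the root is associated with $(1,k+2,1)$ and $k+2>\max\{1,1\}$), whereas you re-run the matrix-square-root argument of Lemma \ref{thm:Markov-monodromy-uniqueness-initial} at the root: $Y^2=-P_\ell^{-1}Q_\ell R_\ell^{-1}=Y_{1;\ell}^2$, then Lemma \ref{lem:sqrt-matrix} for $k\neq 0$ (noting $\mathrm{tr}(Y^2)^2-4\det(Y^2)=k^2(k^2-4)$, so the case split $k\notin\{0,2\}$, $k=2$ is exactly as before and integrality kills $\varepsilon=-1$), and for $k=0$ the trace conditions of Definition \ref{def:Markov-monodromy-dec}. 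Both routes are valid and non-circular; the paper's is shorter because the general uniqueness theorem is available, while yours is self-contained but duplicates casework. The only soft spot is the $k=0$ uniqueness step, which you sketch rather than carry out: the elimination must be redone with $X=-R_\ell^{-1}Y^{-1}$, $Z=-Y^{-1}P_\ell^{-1}$ (i.e.\ with $R_\ell=Q_{1;\ell}$ in place of $R_{1;\ell}$), so the explicit formulas for $\mathrm{tr}(X)$, $\mathrm{tr}(Z)$ differ from those in the earlier proof even though the computation is structurally identical and does yield only $\pm Y_{1;\ell}$; if you take this route you should actually perform that elimination rather than cite the earlier paragraph verbatim.
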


\begin{proof}
We can check that the triple $(X_{1;\ell},Y_{1;\ell},Z_{1;\ell})$ and $(-X_{1;\ell},-Y_{1;\ell},-Z_{1;\ell})$ give the MM decomposition of the root of $\mathrm{GC}\TT(k,\ell)$ directly, or by using Lemma \ref{thm:Markov-monodromy-uniqueness-initial} and Proposition \ref{thm:BT-CT}.
The latter statement is the special case of Theorem \ref{thm:Markov-monodromy-uniqueness}.
\end{proof}

\begin{proof}[Proof of Theorem \ref{thm:Markov-monodromy-decomposition}]
By Lemma \ref{thm:Markov-monodromy-uniqueness-initial2}, the statement holds when $(P,Q,R)=(P_{\ell},Q_{\ell},R_{\ell})$. Moreover, by Theorem \ref{thm:Markov-monodromy-uniqueness} and Proposition \ref{thm:BT-CT}, we have the conclusion.
\end{proof}

The mapping $\Phi$ given next is the inverse operation of the MM decomposition. 

\begin{corollary}\label{cor:tree-iso-BT-CTdag}
We set $\Phi\colon GL(2,\mathbb C)^3\to GL(2,\mathbb C)^3$ by \[\Phi(X,Y,Z)= (-(YZ)^{-1},-(XZ)^{-1},-(XY)^{-1}).\] The map $\Phi$ induces the canonical graph isomorphism from $\mathrm{MM}\mathbb T^{\dag}(k,\ell)$ to $\mathrm{GC}\mathbb T(k,\ell)$. 
\end{corollary}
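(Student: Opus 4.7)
The plan is to verify the two conditions defining a canonical graph isomorphism between full planar binary trees: that $\Phi$ sends the root of $\mathrm{MM}\mathbb T^\dag(k,\ell)$ to the root of $\mathrm{GC}\mathbb T(k,\ell)$, and that $\Phi$ commutes with the operations of taking left and right children. Bijectivity and well-definedness as a map between these trees will then follow automatically because both trees are freely generated as full planar binary trees from their roots.

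First I would check the root. Lemma \ref{thm:Markov-monodromy-uniqueness-initial2} states precisely that $(X_{1;\ell},Y_{1;\ell},Z_{1;\ell})$ is an MM decomposition of $(P_\ell,Q_\ell,R_\ell)$. Unpacking condition (2) of Definition \ref{def:Markov-monodromy-dec} gives $P_\ell = -(Y_{1;\ell}Z_{1;\ell})^{-1}$, $Q_\ell = -(X_{1;\ell}Z_{1;\ell})^{-1}$, $R_\ell = -(X_{1;\ell}Y_{1;\ell})^{-1}$, which is exactly $\Phi(X_{1;\ell},Y_{1;\ell},Z_{1;\ell}) = (P_\ell,Q_\ell,R_\ell)$, the root of $\mathrm{GC}\mathbb T(k,\ell)$.

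Next I would verify the compatibility with the child operations. Suppose inductively that $\Phi(X,Y,Z) = (P,Q,R)$, i.e., $(X,Y,Z)$ is an MM decomposition of $(P,Q,R)$. Proposition \ref{thm:BT-CT} asserts that the left and right children of $(X,Y,Z)$ in $\mathrm{MM}\mathbb T^\dag(k,\ell)$, namely $(X,Z,Z^{-1}YZ)$ and $(XYX^{-1},X,Z)$, are MM decompositions of $(P,PQ-S,Q)$ and $(Q,QR-S,R)$ respectively. Translated through the definition of $\Phi$, this reads
\[
\Phi(X,Z,Z^{-1}YZ) = (P,PQ-S,Q), \qquad \Phi(XYX^{-1},X,Z) = (Q,QR-S,R),
\]
which are precisely the left and right children of $(P,Q,R)$ in $\mathrm{GC}\mathbb T(k,\ell)$. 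Induction on depth then guarantees that $\Phi$ carries the vertex at any address $w$ in $\mathrm{MM}\mathbb T^\dag(k,\ell)$ to the vertex at the same address in $\mathrm{GC}\mathbb T(k,\ell)$.

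Since the heavy lifting is already contained in Lemma \ref{thm:Markov-monodromy-uniqueness-initial2} and Proposition \ref{thm:BT-CT}, there is no serious obstacle. The only point requiring care is to ensure that the parameter $\ell$ is preserved by $\Phi$ (as opposed to being negated, as occurs for the map $\Psi$ in Theorem \ref{thm:BT-CT2}); this is transparent from the explicit form of the initial triple chosen in Lemma \ref{thm:Markov-monodromy-uniqueness-initial2}.
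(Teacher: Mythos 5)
Your proof is correct and follows essentially the same route the paper intends for this corollary: the root case is Lemma \ref{thm:Markov-monodromy-uniqueness-initial2}, the inductive step is Proposition \ref{thm:BT-CT}, and condition (2) of Definition \ref{def:Markov-monodromy-dec} translates ``MM decomposition'' into the defining formula for $\Phi$. The only slight imprecision is the ``i.e.''\ identifying $\Phi(X,Y,Z)=(P,Q,R)$ with being an MM decomposition, which also requires conditions (1) and (3) of Definition \ref{def:Markov-monodromy-dec}; these hold for every vertex of $\mathrm{MM}\mathbb T^{\dag}(k,\ell)$ because they are $k$-MM triples (Theorem \ref{thm:BT-CT2dag}), or can simply be carried along as part of the inductive hypothesis.
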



In the previous discussions, we assume that $b> \max\{a,c\}$. We can do the same under the assumption $b\leq \max\{a,c\}$ by considering $\mathrm{M}\mathbb T^{\dag}(k,\ell)$, $\mathrm{GC}\mathbb T^\dag(k,\ell)$ and $\mathrm{MM}\mathbb T(k,\ell)$ instead of $\mathrm{M}\mathbb T(k,\ell)$, $\mathrm{GC}\mathbb T(k,\ell)$ and $\mathrm{MM}\mathbb T^{\dag}(k,\ell)$:

\begin{corollary}\label{cor:tree-iso-BT-CT^dag2}
For any $k$-GC triple $(P,Q,R)$ associated with $(a,b,c)$, we assume that $b\leq \max\{a,c\}$. Then there is an MM decomposition of $(P,Q,R)$ and it is unique up to sign. Moreover, there exists $\ell\in \mathbb Z$ such that one of the MM decompositions of $(P,Q,R)$ is in  $\mathrm{MM}\mathbb T(k,\ell)$. 
\end{corollary}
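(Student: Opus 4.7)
The argument runs in parallel to the proofs of Theorem \ref{thm:Markov-monodromy-uniqueness} and Theorem \ref{thm:Markov-monodromy-decomposition}, with the roles of the non-dagger and dagger trees interchanged throughout. By Corollary \ref{cor:inverse-all-cone} there is a unique $\ell\in\mathbb Z$ with $(P,Q,R)\in\mathrm{GC}\mathbb T^{\dag}(k,\ell)$, and the whole proof will take place inside this tree. The root of $\mathrm{GC}\mathbb T^{\dag}(k,\ell)$ is $(P_{1;\ell},Q_{1;\ell},R_{1;\ell})$, and by Lemma \ref{thm:Markov-monodromy-uniqueness-initial} its only MM decompositions are the two triples $\pm(X_\ell,Y_\ell,Z_\ell)$, the latter being the root of $\mathrm{MM}\mathbb T(k,\ell)$. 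This will serve as the base case for both the existence and the uniqueness half of the corollary.

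For existence and for the last assertion, I would write the unique path from the root of $\mathrm{GC}\mathbb T^{\dag}(k,\ell)$ down to $(P,Q,R)$ as a word in the left-child operation $\tau_1\colon(P,Q,R)\mapsto(P,R,P^{-1}(R+S))$ and the right-child operation $\tau_2^{-1}\colon(P,Q,R)\mapsto((P+S)R^{-1},P,R)$. Starting from $(X_\ell,Y_\ell,Z_\ell)$ and applying in parallel the left-child operation $\sigma_1^{-1}\colon(X,Y,Z)\mapsto(X,YZY^{-1},Y)$ or the right-child operation $\sigma_2\colon(X,Y,Z)\mapsto(Y,Y^{-1}XY,Z)$ of $\mathrm{MM}\mathbb T(k,\ell)$, Lemma \ref{lem:inverseBT-CT} guarantees inductively that each intermediate triple is an MM decomposition of the corresponding $k$-GC triple. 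The terminal triple is therefore an MM decomposition of $(P,Q,R)$ and by construction lies in $\mathrm{MM}\mathbb T(k,\ell)$.

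For uniqueness up to sign, let $(X_1,Y_1,Z_1)$ and $(X_2,Y_2,Z_2)$ be two MM decompositions of $(P,Q,R)$; I would walk both back up the unique path to the root. The key identification is that going one step up in $\mathrm{GC}\mathbb T^{\dag}(k,\ell)$ coincides on the Cohn side with applying $\tau_1^{-1}\colon(P,Q,R)\mapsto(P,PQ-S,Q)$ or $\tau_2\colon(P,Q,R)\mapsto(Q,QR-S,R)$, which are literally the left- and right-child operations of $\mathrm{GC}\mathbb T(k,\ell)$. Theorem \ref{thm:BT-CT} therefore supplies the corresponding bijective operations $\sigma_1\colon(X,Y,Z)\mapsto(X,Z,Z^{-1}YZ)$ and $\sigma_2^{-1}\colon(X,Y,Z)\mapsto(XYX^{-1},X,Z)$ on MM decompositions. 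Walking both $(X_i,Y_i,Z_i)$ up to the root yields two MM decompositions of $(P_{1;\ell},Q_{1;\ell},R_{1;\ell})$, which by Lemma \ref{thm:Markov-monodromy-uniqueness-initial} differ only by an overall sign; reversing the walk back down (each $\sigma_1,\sigma_2$ is a bijection on $SL(2,\mathbb Z)^{3}$ and respects overall signs) forces $(X_1,Y_1,Z_1)=\pm(X_2,Y_2,Z_2)$.

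The principal bookkeeping to monitor is the swap between the two Cohn trees: the parent operation in $\mathrm{GC}\mathbb T^{\dag}(k,\ell)$ is computed by the same formula as the child operation in $\mathrm{GC}\mathbb T(k,\ell)$, so Theorem \ref{thm:BT-CT}, although stated for children in $\mathrm{GC}\mathbb T$, supplies exactly the parent-walking step needed here in $\mathrm{GC}\mathbb T^{\dag}$. Once this correspondence is recognized, no further computation is required beyond the base case input from Lemma \ref{thm:Markov-monodromy-uniqueness-initial}.
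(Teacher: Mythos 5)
Your proposal is correct and is essentially the paper's own proof: the paper proves Corollary \ref{cor:tree-iso-BT-CT^dag2} by running the argument of Theorem \ref{thm:Markov-monodromy-uniqueness} with the roles of Proposition \ref{thm:BT-CT} and Lemma \ref{lem:inverseBT-CT} exchanged, which is exactly the parallel-transport-down-the-tree (via Lemma \ref{lem:inverseBT-CT}, from the base case Lemma \ref{thm:Markov-monodromy-uniqueness-initial}) for existence and membership in $\mathrm{MM}\mathbb T(k,\ell)$, and the walk-up-the-tree (via Proposition \ref{thm:BT-CT}) for uniqueness up to sign that you describe. Your identification of the parent operations in $\mathrm{GC}\mathbb T^{\dag}(k,\ell)$ with the child operations of $\mathrm{GC}\mathbb T(k,\ell)$, and of the corresponding moves $\sigma_1^{-1},\sigma_2$ on the MM side, matches the paper's setup, so no gap remains.
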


\begin{proof}
We can prove the statement in the same way as Theorem \ref{thm:Markov-monodromy-uniqueness} (note that the role of Proposition \ref{thm:BT-CT} and Lemma \ref{lem:inverseBT-CT} are exchanged).
\end{proof}

\begin{corollary}
Let $(P,Q,R)$ be a $k$-GC triple associated with $(a,b,c)$ with $b\leq \max\{a,c\}$. For an MM decomposition $(X,Y,Z)$ of a $k$-GC triple $(P,Q,R)$, if $x_{12}>0$, then it is in $\mathrm{MM}\mathbb T(k,\ell)$. In particular, $(X,Y,Z)$ is a $k$-MM triple.
\end{corollary}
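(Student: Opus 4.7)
The plan is to mirror the proof of Theorem \ref{thm:Markov-monodromy-decomposition}, exchanging the roles of $\mathrm{MM}\mathbb T$ and $\mathrm{MM}\mathbb T^\dag$ (and likewise of $\mathrm{GC}\mathbb T$ and $\mathrm{GC}\mathbb T^\dag$). All the real content has been packaged into earlier results, so the task is to chain them together correctly, with the positivity of $x_{12}$ acting as the sign-selector.

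First, I would apply Corollary \ref{cor:inverse-all-cone}: the hypothesis $b \leq \max\{a,c\}$ provides a unique $\ell\in\mathbb Z$ with $(P,Q,R) \in \mathrm{GC}\mathbb T^{\dag}(k,\ell)$. Next, Corollary \ref{cor:tree-iso-BT-CT^dag2} furnishes some $\ell'\in\mathbb Z$ together with an MM decomposition $(X',Y',Z')$ of $(P,Q,R)$ that lies in $\mathrm{MM}\mathbb T(k,\ell')$. By the canonical graph isomorphism of Corollary \ref{cor:BT-MT}, the $(1,2)$-entries of $(X',Y',Z')$ form the $k$-GM triple $(a,b,c)$; in particular $x'_{12}=a>0$.

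Second, I would invoke the uniqueness-up-to-sign assertion of Corollary \ref{cor:tree-iso-BT-CT^dag2}. Any MM decomposition $(X,Y,Z)$ of $(P,Q,R)$ must therefore satisfy
\[
(X,Y,Z)=\pm(X',Y',Z').
\]
The hypothesis $x_{12}>0$, combined with $x'_{12}=a>0$, forces the positive sign, so $(X,Y,Z)=(X',Y',Z')\in\mathrm{MM}\mathbb T(k,\ell')$. The ``in particular'' clause then follows because every vertex of $\mathrm{MM}\mathbb T(k,\ell')$ is a $k$-MM triple, by (the proof of) Theorem \ref{thm:BT-CT2dag}'s analogue for $\mathrm{MM}\mathbb T(k,\cdot)$, namely Theorem \ref{thm:BT-CT2}.

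I do not expect any genuine obstacle: the statement is a bookkeeping corollary, symmetric to Theorem \ref{thm:Markov-monodromy-decomposition}. The one point that deserves a line of care is the observation that the sign condition $x_{12}>0$ is precisely what distinguishes the decomposition in $\mathrm{MM}\mathbb T(k,\ell')$ from its negative; this follows because $(1,2)$-entries of $k$-MM matrices in $\mathrm{MM}\mathbb T(k,\ell')$ are $k$-GM numbers and hence strictly positive.
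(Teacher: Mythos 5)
Your argument is correct and essentially the machinery the paper intends for this corollary: you combine the uniqueness-up-to-sign and existence-in-$\mathrm{MM}\mathbb T(k,\ell)$ parts of Corollary \ref{cor:tree-iso-BT-CT^dag2} with the positivity of $(1,2)$-entries of vertices of $\mathrm{MM}\mathbb T(k,\ell)$, so that $x_{12}>0$ rules out the negative decomposition, and then invoke Theorem \ref{thm:BT-CT2} for the ``in particular'' clause. One harmless slip: the $(1,2)$-entries of the decomposition $(X',Y',Z')$ are $\mu(a,b,c)=\left(a,\frac{a^2+kac+c^2}{b},c\right)$ rather than $(a,b,c)$ (cf.\ Theorem \ref{thm:phi-psi-2}), but since you only use $x'_{12}=a>0$, this does not affect the proof.
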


From Corollaries \ref{cor:tree-iso-BT-CTdag} and \ref{cor:tree-iso-BT-CT^dag2}, we have the following:

\begin{corollary}\label{cor:bijection-triples}
The map $\Phi$ induces a bijection from the set of $k$-MM triples to the set of $k$-GC triples.     
\end{corollary}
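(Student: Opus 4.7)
The plan is to reduce everything to two main ingredients: the tree-level bijection of Corollary \ref{cor:tree-iso-BT-CTdag} and the uniqueness (up to sign) of MM decompositions provided by Theorem \ref{thm:Markov-monodromy-uniqueness} and Corollary \ref{cor:tree-iso-BT-CT^dag2}, after partitioning both sides according to the maximality regime of the middle $(1,2)$-entry.

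First I would invoke the partitions on both sides. By Propositions \ref{pr:all-Markov-monodromy-triple} and \ref{pr:all-Markov-monodromy-triple2}, every $k$-MM triple lies in a unique $\mathrm{MM}\mathbb T(k,\ell)$ (when $b>\max\{a,c\}$) or $\mathrm{MM}\mathbb T^\dag(k,\ell)$ (when $b\leq\max\{a,c\}$); analogously, by Proposition \ref{pr:all-cohn-triple} and Corollary \ref{cor:inverse-all-cone}, every $k$-GC triple lies in a unique $\mathrm{GC}\mathbb T(k,\ell)$ or $\mathrm{GC}\mathbb T^\dag(k,\ell)$. Well-definedness on $\mathrm{MM}\mathbb T^\dag(k,\ell)$ is immediate from Corollary \ref{cor:tree-iso-BT-CTdag}; on $\mathrm{MM}\mathbb T(k,\ell)$ it follows by induction, propagating from the base case along the tree operations using Lemma \ref{lem:inverseBT-CT}.

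For surjectivity, I would split into two cases. If $(P,Q,R)$ is a $k$-GC triple with $b>\max\{a,c\}$, Corollary \ref{cor:tree-iso-BT-CTdag} directly supplies a preimage inside some $\mathrm{MM}\mathbb T^\dag(k,\ell)$. If instead $b\leq\max\{a,c\}$, Corollary \ref{cor:tree-iso-BT-CT^dag2} produces an MM decomposition $(X,Y,Z)$ of $(P,Q,R)$ that lies in some $\mathrm{MM}\mathbb T(k,\ell')$ and is therefore a $k$-MM triple; by the very shape of Definition \ref{def:Markov-monodromy-dec}, $\Phi(X,Y,Z)=(P,Q,R)$ holds by construction.

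For injectivity, suppose $(X,Y,Z)$ and $(X',Y',Z')$ are both $k$-MM triples with $\Phi(X,Y,Z)=\Phi(X',Y',Z')=(P,Q,R)$. Then each is an MM decomposition of $(P,Q,R)$, so by Theorem \ref{thm:Markov-monodromy-uniqueness} together with Corollary \ref{cor:tree-iso-BT-CT^dag2} they agree up to a global sign: $(X',Y',Z')=\pm(X,Y,Z)$. The principal obstacle is ruling out the minus case, and this is precisely where the rigidifying datum $XYZ=T$ of Definition \ref{def:Markov-monodromy-triple} is essential: a simultaneous sign flip would produce $(-X)(-Y)(-Z)=-T$, and since $T\neq -T$ the negated triple cannot be a $k$-MM triple. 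Hence exactly one representative of each $\pm$-orbit of MM decompositions qualifies, and $\Phi$ is injective on $k$-MM triples.
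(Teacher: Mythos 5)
Your proposal is correct and follows essentially the same route as the paper, which deduces the corollary directly from Corollaries \ref{cor:tree-iso-BT-CTdag} and \ref{cor:tree-iso-BT-CT^dag2} (tree isomorphism in one regime, existence and uniqueness up to sign of MM decompositions in the other). You merely make explicit the routine details the paper leaves implicit, such as the partition by the maximality of the middle $(1,2)$-entry and the exclusion of the global sign flip via $(-X)(-Y)(-Z)=-T\neq T$.
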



Now, we consider compositions of them, $\Phi\circ \Psi^{-1}$ and  $\Psi^{-1}\circ \Phi$. They are $k$-GC triple version and $k$-MM triple version of the map $\mu$ in Proposition \ref{pr:rho-mor}.

\begin{theorem}\label{thm:phi-psi-1}The following statements hold:
\begin{itemize}\setlength{\leftskip}{-15pt}
    \item[(1)] For the graph isomorphism $\Phi\circ \Psi^{-1}\colon \mathrm{GC}\mathbb T(k,\ell)\to \mathrm{GC}\mathbb T^\dag(k,-\ell)$, we have the following commutative diagram:
\[\begin{xy}(0,0)*+{\mathrm{GC}\mathbb T(k,\ell)}="1",(40,0)*+{\mathrm{GC}\mathbb T^\dag(k,-\ell)}="2",(0,-20)*+{\mathrm{M}\mathbb T(k)}="3",(40,-20)*+{\mathrm{M}\mathbb T^\dag(k),}="4", \ar^{\Phi\circ \Psi^{-1}}@{->}"1";"2"\ar@{->}"1";"3"\ar^{\mu}@{->}"3";"4"\ar@{->}"2";"4"
\end{xy}\]
where the vertical arrows are induced by the correspondence from $(X,Y,Z)$ to $(x_{12},y_{12},z_{12})$.
\item[(2)] For the graph isomorphism $\Phi\circ \Psi^{-1}\colon \mathrm{GC}\mathbb T^{\dag}(k,\ell)\to \mathrm{GC}\mathbb T(k,-\ell)$, we have the following commutative diagram:
\[\begin{xy}(0,0)*+{\mathrm{GC}\mathbb T^\dag(k,\ell)}="1",(40,0)*+{\mathrm{GC}\mathbb T(k,-\ell)}="2",(0,-20)*+{\mathrm{M}\mathbb T^{\dag}(k)}="3",(40,-20)*+{\mathrm{M}\mathbb T(k),}="4", \ar^{\Phi\circ \Psi^{-1}}@{->}"1";"2"\ar@{->}"1";"3"\ar^{\mu}@{->}"3";"4"\ar@{->}"2";"4"
\end{xy}\]
where the vertical arrows are induced by the correspondence from $(X,Y,Z)$ to $(x_{12},y_{12},z_{12})$.
\end{itemize}
\end{theorem}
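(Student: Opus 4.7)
The plan is to exploit the fact that every arrow in each diagram is a canonical graph isomorphism between complete infinite planar binary trees, so that commutativity reduces to a single check at the root. Indeed, the root of such a tree is the unique vertex of degree two, and the left/right labelings propagate the image of the root uniquely to every other vertex; hence two canonical graph isomorphisms with the same source and target that agree at the root must coincide.

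First I would confirm that each arrow is a canonical graph isomorphism between the indicated trees: the vertical maps via Corollaries \ref{cor:CT-MT} and \ref{cor:CTdag-MTdag}, the map $\mu$ via Proposition \ref{pr:rho-mor}, and $\Psi^{-1}$ via Theorems \ref{thm:BT-CT2} and \ref{thm:BT-CT2dag}. For $\Phi$, Corollary \ref{cor:tree-iso-BT-CTdag} gives $\Phi\colon\mathrm{MM}\mathbb T^{\dag}(k,\ell)\to\mathrm{GC}\mathbb T(k,\ell)$, which handles (2). For (1) one needs the parallel statement $\Phi\colon\mathrm{MM}\mathbb T(k,\ell)\to\mathrm{GC}\mathbb T^{\dag}(k,\ell)$ with the same $\ell$; this follows by combining Lemma \ref{thm:Markov-monodromy-uniqueness-initial}, which identifies $\Phi(X_{\ell},Y_{\ell},Z_{\ell})$ with the root $(P_{1;\ell},Q_{1;\ell},R_{1;\ell})$ of $\mathrm{GC}\mathbb T^{\dag}(k,\ell)$, with Lemma \ref{lem:inverseBT-CT}, which shows that $\Phi$ intertwines the left/right child operations of the two trees.

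With this in hand, both routes of each diagram are canonical graph isomorphisms between the same pair of trees, so it suffices to compare them at the root. For (1): the root of $\mathrm{GC}\mathbb T(k,\ell)$ is $(P_{\ell},Q_{\ell},R_{\ell})$ with $(1,2)$-entries $(1,k+2,1)$; the bottom-left route yields $\mu(1,k+2,1)=(1,(1+k+1)/(k+2),1)=(1,1,1)$, while the top-right route sends the root via $\Psi^{-1}$ to the root $(X_{-\ell},Y_{-\ell},Z_{-\ell})$ of $\mathrm{MM}\mathbb T(k,-\ell)$ and then via $\Phi$ to $(P_{1;-\ell},Q_{1;-\ell},R_{1;-\ell})$ (by Lemma \ref{thm:Markov-monodromy-uniqueness-initial}), whose $(1,2)$-entries are $(1,1,1)$. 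Statement (2) is verified identically: $\mu(1,1,1)=(1,k+2,1)$ matches the $(1,2)$-entries of the root $(P_{-\ell},Q_{-\ell},R_{-\ell})$ of $\mathrm{GC}\mathbb T(k,-\ell)$ reached by applying Corollary \ref{cor:tree-iso-BT-CTdag} to $\Psi^{-1}(P_{1;\ell},Q_{1;\ell},R_{1;\ell})=(X_{1;-\ell},Y_{1;-\ell},Z_{1;-\ell})\in\mathrm{MM}\mathbb T^{\dag}(k,-\ell)$.

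The one nonformal ingredient is the parallel of Corollary \ref{cor:tree-iso-BT-CTdag} invoked in (1), which is only informally indicated in the discussion following Corollary \ref{cor:tree-iso-BT-CT^dag2}. This is the main obstacle to a clean presentation and would best be formalized as a separate auxiliary corollary before invoking it here; thereafter the argument is a short root-level computation plus the canonical graph isomorphism structure.
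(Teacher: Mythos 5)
Your proposal is correct and follows essentially the same route as the paper: the paper's own (one-line) proof likewise rests on the canonical graph isomorphisms with $\mathrm{M}\mathbb T(k)$ and $\mathrm{M}\mathbb T^\dag(k)$ together with Lemma \ref{lem:inverseBT-CT}, which is exactly your intertwining step, and your root-level check via Lemma \ref{thm:Markov-monodromy-uniqueness-initial} matches the implicit root identification there. The only difference is presentational: you make explicit the auxiliary fact that $\Phi\colon\mathrm{MM}\mathbb T(k,\ell)\to\mathrm{GC}\mathbb T^{\dag}(k,\ell)$ is a canonical graph isomorphism (the unstated parallel of Corollary \ref{cor:tree-iso-BT-CTdag}) and then invoke uniqueness of canonical graph isomorphisms, which is a clean and valid packaging of the same ingredients.
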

\begin{proof}
The statement (1) follows from Corollaries \ref{cor:CTdag-MTdag}, \ref{cor:BT-MT} and Lemma \ref{lem:inverseBT-CT}. The statement (2) can be proved in the same way.   
\end{proof}

By using Theorems \ref{thm:Cohn-distinct}, \ref{thm:phi-psi-1}, we have the following corollary:

\begin{corollary}
We fix $k\in \mathbb Z_{\geq0}$ and $\ell\in \mathbb Z$. The second entries of $k$-GC triples in $\mathrm{GC}\mathbb T^{\dag}(k, \ell)$ are distinct.   
\end{corollary}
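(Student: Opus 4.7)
The plan is to apply the canonical graph isomorphism $\Phi\circ\Psi^{-1}\colon \mathrm{GC}\mathbb T^{\dag}(k,\ell)\to \mathrm{GC}\mathbb T(k,-\ell)$ provided by Theorem \ref{thm:phi-psi-1}(2), and transfer the distinctness of middle entries in $\mathrm{GC}\mathbb T(k,-\ell)$ (Theorem \ref{thm:Cohn-distinct}) back to the source tree. Since the isomorphism is a bijection of vertex sets, distinct triples on one side correspond to distinct triples on the other; the task reduces to linking the middle entries on the two sides.

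Concretely, I would argue by contradiction. Suppose two vertices $(P_1,Q,R_1)$ and $(P_2,Q,R_2)$ of $\mathrm{GC}\mathbb T^{\dag}(k,\ell)$ share a middle entry $Q$, so by the $k$-GC relation $P_1R_1 = P_2R_2 = Q+S$. Set $(X_i, Y, Z_i) := \Psi^{-1}(P_i, Q, R_i)$; then $Y = \psi^{-1}(Q)$ is common to both $i=1,2$, and each $k$-MM triple satisfies $X_iYZ_i = T$, with the trace constraint $\mathrm{tr}(X_i) = \mathrm{tr}(Z_i) = -k$. Unpacking the image,
\[
\Phi(X_i, Y, Z_i) = \left(-(YZ_i)^{-1},\ -(X_iZ_i)^{-1},\ -(X_iY)^{-1}\right),
\]
so the middle entries of the images in $\mathrm{GC}\mathbb T(k,-\ell)$ are $-(X_iZ_i)^{-1}$. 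The goal is to show $X_1Z_1 = X_2Z_2$, for then the two image vertices share a middle entry, contradicting Theorem \ref{thm:Cohn-distinct}, and hence forcing $(P_1,Q,R_1) = (P_2,Q,R_2)$ by injectivity of $\Phi\circ\Psi^{-1}$.

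The main obstacle is exactly this last identity $X_1Z_1 = X_2Z_2$. The hypothesis $Q_1 = Q_2$ only pins down the product $P_iR_i = Q+S$ at the $k$-GC level, and $\psi^{-1}$ is not multiplicative, so $\psi^{-1}(P_i)\psi^{-1}(R_i) = X_iZ_i$ is not a priori determined by $P_iR_i$. To bridge this gap, I would combine the equation $X_iYZ_i = T$ with $Y$ fixed (which gives $Z_i = Y^{-1}X_i^{-1}T$, so $X_iZ_i = X_iY^{-1}X_i^{-1}T$) and the trace restrictions $\mathrm{tr}(X_i) = \mathrm{tr}(Z_i) = -k$. The problem reduces to showing that, among all $X_i \in SL(2,\mathbb Z)$ with trace $-k$ for which $Y^{-1}X_i^{-1}T$ also has trace $-k$ and for which the corresponding $(P_i,R_i) = (\psi(X_i), \psi(Z_i))$ lies in $\mathrm{GC}\mathbb T^{\dag}(k,\ell)$, the conjugate $X_iY^{-1}X_i^{-1}$ is uniquely determined; once this is established, $X_1Z_1 = X_2Z_2$ follows.

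Finally, to carry out this last reduction I would use the explicit recursive structure of $\mathrm{GC}\mathbb T^{\dag}(k,\ell)$ (via $\tau_1$, $\tau_2^{-1}$) to induct on the tree depth: the two children operations together with the initial data $(P_{1;\ell}, Q_{1;\ell}, R_{1;\ell})$ give a constructive handle on the possible $X_i$, and the trace and determinant constraints, applied at each step, should rule out any spurious coincidence of the middle matrix beyond what is already accounted for by the global distinctness of triples. Combining this tree-inductive control with the algebraic identities above would complete the contradiction, yielding the desired distinctness.
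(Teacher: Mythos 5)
Your strategy mirrors the route the paper indicates (transport via $\Phi\circ\Psi^{-1}$ and invoke Theorem \ref{thm:Cohn-distinct}), and you correctly isolate the missing link: from a common second component $Q$ you would need $X_1Z_1=X_2Z_2$, since the middle of the image vertex is $-(X_iZ_i)^{-1}$. The problem is that this link is not merely unproven in your sketch — it is false, and no induction on the tree with trace and determinant constraints can repair it, because the statement itself fails under the reading of ``second entries'' that you (and the natural reading) adopt. In $\mathrm{GC}\mathbb T^{\dag}(k,\ell)$ the second component of a child is an outer component of its parent: for any vertex $v=(P,Q,R)$, the right child $((P+S)R^{-1},P,R)$ of $v$ and the right child $((P+S)(P^{-1}(R+S))^{-1},P,P^{-1}(R+S))$ of the left child of $v$ are distinct vertices (they coincide only if $Q=R$, which already fails at the root) and both have second component $P$. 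This is visible in the paper's own Example \ref{ex:inverse-cohn-tree}: for $k=1$, $\ell=-1$ the matrix $\begin{bmatrix}-1&1\\-7&6\end{bmatrix}$ occurs as the second component of two distinct vertices. For such a pair one necessarily has $X_1Z_1\neq X_2Z_2$, precisely because their images under $\Phi\circ\Psi^{-1}$ are distinct vertices of $\mathrm{GC}\mathbb T(k,-\ell)$ whose middles $-(X_iZ_i)^{-1}$ are distinct by Theorem \ref{thm:Cohn-distinct}; so the identity your contradiction hinges on is exactly what fails.

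What Theorems \ref{thm:Cohn-distinct} and \ref{thm:phi-psi-1} genuinely yield is weaker than the stated distinctness: since the middle of $\Phi\circ\Psi^{-1}(P,Q,R)$ equals $-\bigl(\psi^{-1}(P)\psi^{-1}(R)\bigr)^{-1}$, they show that the assignment $(P,Q,R)\mapsto\psi^{-1}(P)\psi^{-1}(R)$ is injective on the vertices of $\mathrm{GC}\mathbb T^{\dag}(k,\ell)$, and in particular that distinct vertices of the inverse tree are distinct as triples — but not that the second component alone determines the vertex, and the hypothesis $P_1R_1=P_2R_2=Q+S$ does not determine $\psi^{-1}(P_i)\psi^{-1}(R_i)$ because $\psi^{-1}$ is not multiplicative (your own observation). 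So the gap you flagged is irreparable for this interpretation: either ``second entries'' in the corollary must be understood differently, or the statement needs to be reformulated along the lines just described; in any case the paper's one-line citation does not contain an argument that your proposal is missing.
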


The following theorem is proved in the same way as the above theorem:

\begin{theorem}\label{thm:phi-psi-2}The following statements hold:
\begin{itemize}\setlength{\leftskip}{-15pt}
    \item[(1)] For the graph isomorphism $\Psi^{-1}\circ \Phi\colon \mathrm{MM}\mathbb T(k,\ell)\to \mathrm{MM}\mathbb T^\dag(k,-\ell)$, we have the following commutative diagram:
\[\begin{xy}(0,0)*+{\mathrm{MM}\mathbb T(k,\ell)}="1",(40,0)*+{\mathrm{MM}\mathbb T^\dag(k,-\ell)}="2",(0,-20)*+{\mathrm{M}\mathbb T(k)}="3",(40,-20)*+{\mathrm{M}\mathbb T^\dag(k),}="4", \ar^{\Psi^{-1}\circ \Phi}@{->}"1";"2"\ar@{->}"1";"3"\ar^{\mu}@{->}"3";"4"\ar@{->}"2";"4"
\end{xy}\]
where the vertical arrows are induced by the correspondence from $(X,Y,Z)$ to $(x_{12},y_{12},z_{12})$.
\item[(2)] For the graph isomorphism $\Psi^{-1}\circ \Phi\colon \mathrm{MM}\mathbb T^{\dag}(k,\ell)\to \mathrm{MM}\mathbb T(k,-\ell)$, we have the following commutative diagram:
\[\begin{xy}(0,0)*+{\mathrm{MM}\mathbb T^\dag(k,\ell)}="1",(40,0)*+{\mathrm{MM}\mathbb T(k,-\ell)}="2",(0,-20)*+{\mathrm{M}\mathbb T^{\dag}(k)}="3",(40,-20)*+{\mathrm{M}\mathbb T(k),}="4", \ar^{\Psi^{-1}\circ \Phi}@{->}"1";"2"\ar@{->}"1";"3"\ar^{\mu}@{->}"3";"4"\ar@{->}"2";"4"
\end{xy}\]
where the vertical arrows are induced by the correspondence from $(X,Y,Z)$ to $(x_{12},y_{12},z_{12})$.
\end{itemize}
\end{theorem}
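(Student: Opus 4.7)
The plan is to follow the pattern of the proof of Theorem \ref{thm:phi-psi-1}, interchanging the roles of $\mathrm{MM}\mathbb T$ and $\mathrm{GC}\mathbb T$. For each of (1) and (2), it suffices to (a) exhibit $\Psi^{-1}\circ\Phi$ as a canonical graph isomorphism between the indicated trees and (b) check that the displayed square of projections to $\mathrm{M}\mathbb T^{(\dag)}(k)$, with $\mu$ along the bottom, commutes on the root, since all four arrows will be canonical graph isomorphisms between full planar binary trees.

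For part (1), I first need to upgrade $\Phi$ to a canonical graph isomorphism $\Phi\colon \mathrm{MM}\mathbb T(k,\ell)\to \mathrm{GC}\mathbb T^\dag(k,\ell)$. This is the non-$\dag$ analogue of Corollary \ref{cor:tree-iso-BT-CTdag}. On roots: by Lemma \ref{thm:Markov-monodromy-uniqueness-initial}, the root $(X_\ell,Y_\ell,Z_\ell)$ of $\mathrm{MM}\mathbb T(k,\ell)$ is an MM decomposition of the root $(P_{1;\ell},Q_{1;\ell},R_{1;\ell})$ of $\mathrm{GC}\mathbb T^\dag(k,\ell)$, and since $\Phi$ inverts the MM decomposition we have $\Phi(X_\ell,Y_\ell,Z_\ell)=(P_{1;\ell},Q_{1;\ell},R_{1;\ell})$. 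On children: Lemma \ref{lem:inverseBT-CT} tells us that if $(X,Y,Z)$ is an MM decomposition of $(P,Q,R)$, then $(X,YZY^{-1},Y)$ (resp.\ $(Y,Y^{-1}XY,Z)$) is an MM decomposition of $(P,R,P^{-1}(R+S))$ (resp.\ $((P+S)R^{-1},P,R)$), which says precisely that $\Phi$ intertwines the $\mathrm{MM}\mathbb T$-child rules with the $\mathrm{GC}\mathbb T^\dag$-child rules. Composing with Theorem \ref{thm:BT-CT2dag}, which supplies $\Psi^{-1}\colon \mathrm{GC}\mathbb T^\dag(k,\ell)\to \mathrm{MM}\mathbb T^\dag(k,-\ell)$, yields the required canonical graph isomorphism.

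For the commutativity of the square in (1), Corollary \ref{cor:BT-MT} together with its $\dag$-counterpart says the two vertical arrows are canonical graph isomorphisms, and Proposition \ref{pr:rho-mor} says the same for $\mu$. Hence the square commutes iff it commutes on the root. The root $(X_\ell,Y_\ell,Z_\ell)$ has $(1,2)$-entries $(1,k+2,1)$, and $\mu(1,k+2,1)=(1,1,1)$; going the other way around, $\Psi^{-1}\circ\Phi$ sends $(X_\ell,Y_\ell,Z_\ell)$ to $\Psi^{-1}(P_{1;\ell},Q_{1;\ell},R_{1;\ell})=(X_{1;-\ell},Y_{1;-\ell},Z_{1;-\ell})$, whose $(1,2)$-entries are $(1,1,1)$. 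The two agree.

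Part (2) is analogous and in fact slightly easier, since the first step $\Phi\colon \mathrm{MM}\mathbb T^\dag(k,\ell)\to \mathrm{GC}\mathbb T(k,\ell)$ is already Corollary \ref{cor:tree-iso-BT-CTdag}; one then applies Theorem \ref{thm:BT-CT2} to get $\Psi^{-1}\colon \mathrm{GC}\mathbb T(k,\ell)\to \mathrm{MM}\mathbb T(k,-\ell)$, and the root check reverses the one above ($(1,1,1)\mapsto(1,k+2,1)$ via either route). I do not foresee a genuine obstacle: the only mildly non-routine point is isolating the isomorphism $\Phi\colon \mathrm{MM}\mathbb T(k,\ell)\to \mathrm{GC}\mathbb T^\dag(k,\ell)$ used in part (1), and even that amounts to reading Lemmas \ref{thm:Markov-monodromy-uniqueness-initial} and \ref{lem:inverseBT-CT} in the appropriate direction.
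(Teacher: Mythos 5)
Your proposal is correct and follows essentially the same route as the paper: the paper proves this "in the same way as" Theorem \ref{thm:phi-psi-1}, i.e.\ by combining the canonical graph isomorphisms of Corollary \ref{cor:BT-MT} and its $\dag$-counterpart with the compatibility Lemmas (\ref{thm:Markov-monodromy-uniqueness-initial}, \ref{thm:Markov-monodromy-uniqueness-initial2}, \ref{lem:inverseBT-CT}) and Theorems \ref{thm:BT-CT2}, \ref{thm:BT-CT2dag}, exactly as you do. Your explicit isolation of $\Phi\colon \mathrm{MM}\mathbb T(k,\ell)\to \mathrm{GC}\mathbb T^\dag(k,\ell)$ and the root check are just the spelled-out version of the paper's terse argument.
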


\begin{corollary}\label{thm:Markov-monodromy-distinct2}
We fix $k\in \mathbb Z_{\geq0}$ and $\ell\in \mathbb Z$. The second entries of $k$-MM triples in $\mathrm{MM}\mathbb T^\dag(k, \ell)$ are distinct.   
\end{corollary}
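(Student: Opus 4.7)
The plan is to transfer the distinctness statement from the inverse $k$-GC tree to the inverse $k$-MM tree via the componentwise bijection $\Psi$, exactly in parallel with how Corollary \ref{thm:Markov-monodromy-distinct} was deduced from Theorem \ref{thm:Cohn-distinct}. The key ingredient is Theorem \ref{thm:BT-CT2dag}, which provides a canonical graph isomorphism $\Psi\colon \mathrm{MM}\mathbb T^{\dag}(k,\ell) \to \mathrm{GC}\mathbb T^{\dag}(k,-\ell)$ obtained by applying the bijection $\psi\colon M(2,\mathbb Z)\to M(2,\mathbb Z)$ of Proposition \ref{pr:Cohn-Markov-monodromy} componentwise.

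First, I would record the elementary observation that, because $\Psi$ acts componentwise and $\psi$ is a bijection, two vertices $(X,Y,Z),(X',Y',Z')\in \mathrm{MM}\mathbb T^{\dag}(k,\ell)$ have the same second entry (i.e.\ $Y=Y'$) if and only if their $\Psi$-images have the same second entry (i.e.\ $\psi(Y)=\psi(Y')$). Next, I would invoke the corollary that appears immediately after Theorem \ref{thm:phi-psi-1}, which asserts that the second entries of $k$-GC triples in $\mathrm{GC}\mathbb T^{\dag}(k,-\ell)$ are pairwise distinct. Composing these two facts immediately gives that the second entries of $k$-MM triples in $\mathrm{MM}\mathbb T^{\dag}(k,\ell)$ are pairwise distinct, which is what is to be proved.

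There is no real obstacle in this argument: the structural work has already been done in Theorem \ref{thm:BT-CT2dag} and in the distinctness result for $\mathrm{GC}\mathbb T^{\dag}(k,-\ell)$, and the bijectivity of $\psi$ is built into Proposition \ref{pr:Cohn-Markov-monodromy}. The only thing to be mildly careful about is to apply $\Psi$ (rather than $\Psi^{-1}\circ \Phi$ from Theorem \ref{thm:phi-psi-2}), since $\Phi$ mixes the three components of a triple and so would not transfer the condition on a single component in a direct way, whereas $\Psi$ is componentwise and hence preserves the position of each matrix in the triple.
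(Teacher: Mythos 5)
Your argument is correct and complete modulo results already established in the paper: Theorem \ref{thm:BT-CT2dag} gives the canonical graph isomorphism $\Psi\colon \mathrm{MM}\mathbb T^{\dag}(k,\ell)\to \mathrm{GC}\mathbb T^{\dag}(k,-\ell)$, which acts componentwise via the bijection $\psi$, so two distinct vertices of $\mathrm{MM}\mathbb T^{\dag}(k,\ell)$ with equal second entries would map to two distinct vertices of $\mathrm{GC}\mathbb T^{\dag}(k,-\ell)$ with equal second entries, contradicting the corollary stated after Theorem \ref{thm:phi-psi-1} (applied with $-\ell$ in place of $\ell$). The paper gives no explicit proof of this corollary; its placement immediately after Theorem \ref{thm:phi-psi-2} suggests the authors intended a transfer along $\Psi^{-1}\circ\Phi$, parallel to how distinctness in $\mathrm{GC}\mathbb T^{\dag}(k,\ell)$ was deduced from Theorems \ref{thm:Cohn-distinct} and \ref{thm:phi-psi-1}, whereas your route instead mirrors the paper's proof of Corollary \ref{thm:Markov-monodromy-distinct} (distinctness in the GC tree plus a componentwise isomorphism). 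Your choice is in fact the sharper one: since $\Phi(X,Y,Z)=(-(YZ)^{-1},-(XZ)^{-1},-(XY)^{-1})$, the second entry of the $\Psi^{-1}\circ\Phi$-image depends only on the outer entries $X,Z$ of the source, so distinctness of second entries in $\mathrm{MM}\mathbb T(k,-\ell)$ does not transfer directly to a statement about $Y$ along that map; the componentwise $\Psi$ avoids this issue entirely, exactly as you observe, and only the trivial direction ($Y=Y'$ implies $\psi(Y)=\psi(Y')$) of your "if and only if" is actually needed.
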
 

Moreover, we also have the following result from Theorems \ref{thm:phi-psi-1} and \ref{thm:phi-psi-2}:

\begin{corollary}\label{cor:Markov-monodromy-decom-algorithm}
    We have $(\Psi^{-1}\circ\Phi)^2=\mathrm{id}$ and $(\Phi\circ\Psi^{-1})^2=\mathrm{id}$. In particular, the MM decomposition $\Phi^{-1}$ is given by $\Psi^{-1}\circ \Phi\circ \Psi^{-1}$.
\end{corollary}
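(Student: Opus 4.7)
The plan is to deduce both involutivity identities directly from the commutative diagrams in Theorems \ref{thm:phi-psi-1} and \ref{thm:phi-psi-2}, exploiting the fact that the map $\mu$ from Proposition \ref{pr:rho-mor} is itself an involution (a one-line computation: $\frac{a^2+kac+c^2}{(a^2+kac+c^2)/b}=b$, as also noted in the remark after Proposition \ref{pr:rho-mor}).

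For $(\Phi\circ\Psi^{-1})^2=\mathrm{id}$ I would argue as follows. Let $(P,Q,R)$ be a $k$-GC triple. By Propositions \ref{pr:all-cohn-triple} and \ref{cor:inverse-all-cone}, it belongs to exactly one tree of the form $\mathrm{GC}\TT(k,\ell)$ or $\mathrm{GC}\TT^\dag(k,\ell)$. In the first case, Theorem \ref{thm:phi-psi-1}(1) sends $(P,Q,R)$ to a triple in $\mathrm{GC}\TT^\dag(k,-\ell)$ whose underlying $k$-GM triple is $\mu(p_{12},q_{12},r_{12})$. Applying $\Phi\circ\Psi^{-1}$ again, now via Theorem \ref{thm:phi-psi-1}(2), returns a triple in $\mathrm{GC}\TT(k,-(-\ell))=\mathrm{GC}\TT(k,\ell)$ whose underlying $k$-GM triple is $\mu^2(p_{12},q_{12},r_{12})=(p_{12},q_{12},r_{12})$. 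Because the vertical map $\mathrm{GC}\TT(k,\ell)\to\mathrm{M}\TT(k)$ in the commutative diagram is a graph isomorphism by Theorem \ref{cor:CT-MT}, hence injective on vertices, the resulting triple must coincide with $(P,Q,R)$. The case $(P,Q,R)\in\mathrm{GC}\TT^\dag(k,\ell)$ is symmetric, applying Theorem \ref{thm:phi-psi-1}(2) first and then (1), and invoking Corollary \ref{cor:CTdag-MTdag} in place of Theorem \ref{cor:CT-MT}.

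The identity $(\Psi^{-1}\circ\Phi)^2=\mathrm{id}$ on $k$-MM triples is established in exactly the same manner, replacing Theorem \ref{thm:phi-psi-1} by Theorem \ref{thm:phi-psi-2} and Theorem \ref{cor:CT-MT} by Corollary \ref{cor:BT-MT} (together with its inverse-tree analogue). The final sentence is then automatic: from $(\Phi\circ\Psi^{-1})^2=\mathrm{id}$ we obtain $\Phi\circ\Psi^{-1}=(\Phi\circ\Psi^{-1})^{-1}=\Psi\circ\Phi^{-1}$, so $\Phi^{-1}=\Psi^{-1}\circ\Phi\circ\Psi^{-1}$ as claimed.

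The only step requiring genuine care is the bookkeeping of trees: the sign flip $\ell\mapsto -\ell$ produced by a single application of $\Phi\circ\Psi^{-1}$ (or of $\Psi^{-1}\circ\Phi$) is essential, since it is precisely what guarantees that two applications return the vertex to the \emph{same} tree, allowing the bijectivity on underlying $k$-GM triples to promote the conclusion $\mu^2=\mathrm{id}$ into equality of the triples themselves. Beyond this, no new matrix computations are required.
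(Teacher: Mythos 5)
Your proof is correct and follows essentially the same route as the paper: the paper gives no separate argument for this corollary but derives it directly from the commutative diagrams of Theorems \ref{thm:phi-psi-1} and \ref{thm:phi-psi-2}, which is exactly what you do, with the involutivity of $\mu$, the sign flip $\ell\mapsto-\ell$, and the injectivity of the vertex-to-$k$-GM-triple correspondence supplying the details the paper leaves implicit.
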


Note that Corollary \ref{cor:Markov-monodromy-decom-algorithm} implies that the MM decomposition of $(P, Q, R)$ can be computed with a certain algorithm.

\subsection{Interpretation as representation of fundamental group of 4-punctured sphere}

In this subsection, we will discuss the relation between $k$-MM triples and $SL(2,\mathbb{C})$-representations of the fundamental group $\pi_1(S_4^2)$ of the 4-punctured sphere. Note that $\pi_1(S_4^2)$ has the following presentation:
\[\pi_1(S_4^2)=\langle \alpha,\beta,\gamma, \delta \mid \alpha\beta\gamma\delta=1\rangle.\]

Let $\mathrm{Rep}(S_4^2)$ be the set of $SL(2,\mathbb{C})$-representations of $\pi_1(S_4^2)$. Since $\{\alpha,\beta,\gamma\}$ is a free generator of $\pi_1(S_4^2)$, a representation $\rho \in \mathrm{Rep}(S_4^2)$ is determined by the choice of $\rho(\alpha), \rho(\beta),\rho(\gamma)$. We define
\begin{align*}
&a=-\mathrm{tr}\rho(\alpha),\  b=-\mathrm{tr}\rho(\beta),\   c=-\mathrm{tr}\rho(\gamma),\  d=-\mathrm{tr}\rho(\delta),\\
&x=-\mathrm{tr}\rho(\alpha\beta),\  y=-\mathrm{tr}\rho(\beta\gamma),\ z=-\mathrm{tr}\rho(\gamma\alpha).
\end{align*}
We set the map $\chi\colon \mathrm{Rep}(S_4^2)\to \mathbb C^7$ by $\chi(\rho)=(x,y,z,a,b,c,d)$. Now, we have the following properties (see e.g. \cites{CaLo,gol09}):
\begin{itemize}\setlength{\leftskip}{-15pt}
\item[(1)] the algebra of polynomial functions on $\mathrm{Rep}(S_4^2)$ which are invariant under conjugation is generated by $a,b,c,d,x,y,z$,
\item[(2)] $a,b,c,d,x,y,z$ satisfy the following equality:
\begin{align}\label{eq:equation-rep}
    x^2+y^2+z^2+(ab+cd)x+(bc+ad)y+(ac+bd)z+a^2+b^2+c^2+d^2+abcd-4=xyz,
\end{align}
\item[(3)] let $\mathrm{Rep}(S^2_4)/\!/SL(2,\mathbb{C})$ be the GIT quotient of $\mathrm{Rep}(S^2_4)$ by the conjugate action of $SL(2,\mathbb C)$ (about the GIT quotient, see e.g. \cite{mar16}). We set
\[H:=\{(x,y,z,a,b,c,d)\in \mathbb C^7 \mid (x,y,z,a,b,c,d) \text{ satisfies \eqref{eq:equation-rep}}\}.\] Then $\chi$ induces a homeomorphism $\tilde{\chi}$ between $\mathrm{Rep}(S^2_4)/\!/SL(2,C)$ and $H$. The variety $\chi(S_4^2):=H$ is called the \emph{character variety}.
\end{itemize}

Now, we consider the representation $\rho_{X,Y,Z}$ which satisfies that $\rho(\alpha)=X,\rho(\beta)=Y,\rho(\gamma)=Z$, where $(X,Y,Z)$ is a $k$-MM triple. Since $XYZ=T$, we have $\rho_{XYZ}(\delta)=T^{-1}$. We set $(P,Q,R)=\Phi(X,Y,Z)$. Then, by the definition of $\Phi$, we have
\begin{align}\label{eq:basis-transformation}
    -P=\rho_{X,Y,Z}(\gamma^{-1}\beta^{-1}), -Q=\rho_{X,Y,Z}(\gamma^{-1}\alpha^{-1}),-R=\rho_{X,Y,Z}(\beta^{-1}\alpha^{-1}),
\end{align}

\begin{theorem}
 Let $(X,Y,Z)$ be a $k$-MM triple associated with $x_{12},y_{12},z_{12}$. We have \[\chi(\rho_{X,Y,Z})=((3+3k)x_{12}-k,(3+3k)y_{12}'-k,(3+3k)z_{12}-k,k,k,k,2),\]
 where $y'_{12}=\dfrac{x_{12}^2+kx_{12}z_{12}+z_{12}^2}{y_{12}}$.
\end{theorem}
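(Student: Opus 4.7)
The plan is to verify the seven coordinates of $\chi(\rho_{X,Y,Z})$ separately. The values $a=b=c=k$ are immediate from the defining trace condition $\mathrm{tr}(X)=\mathrm{tr}(Y)=\mathrm{tr}(Z)=-k$ of a $k$-MM matrix. For $d$, the presentation relation $\alpha\beta\gamma\delta=1$ forces $\rho_{X,Y,Z}(\delta)=(XYZ)^{-1}=T^{-1}$; because $T\in SL(2,\mathbb Z)$, its inverse has the same trace, so $d=-\mathrm{tr}(T^{-1})=-\mathrm{tr}(T)=2$.

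For the triple $(x,y,z)$, I would pass through the map $\Phi$. Set $(P,Q,R):=\Phi(X,Y,Z)=(-(YZ)^{-1},-(XZ)^{-1},-(XY)^{-1})$; by Corollary \ref{cor:bijection-triples} this is a $k$-GC triple. Using $\mathrm{tr}(A)=\mathrm{tr}(A^{-1})$ for $A\in SL(2)$, one obtains $\mathrm{tr}(P)=-\mathrm{tr}(YZ)$, $\mathrm{tr}(Q)=-\mathrm{tr}(XZ)$, and $\mathrm{tr}(R)=-\mathrm{tr}(XY)$. Combined with the $k$-GC trace identities $\mathrm{tr}(P)=(3k+3)p_{12}-k$ and its analogues, this converts each of $-\mathrm{tr}(YZ)$, $-\mathrm{tr}(XZ)$, $-\mathrm{tr}(XY)$ into an expression of the form $(3k+3)(\text{entry})-k$.

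The remaining task is to identify $(p_{12},q_{12},r_{12})$ in terms of $(x_{12},y_{12},z_{12})$. The identities $p_{12}=x_{12}$ and $r_{12}=z_{12}$ follow directly from $XYZ=T$: rewriting $YZ=X^{-1}T$ and $XY=TZ^{-1}$ and reading off the $(1,2)$-entries of these products with the lower-triangular matrix $T$ yields $(YZ)_{12}=x_{12}$ and $(XY)_{12}=z_{12}$, after which the identity $(A^{-1})_{12}=-A_{12}$ for $A\in SL(2,\mathbb Z)$ gives the claim. The harder step, and the main obstacle, is to prove $q_{12}=y_{12}'$: both $y_{12}$ and its Vieta partner $y_{12}'$ satisfy the same Markov quadratic with $(x_{12},z_{12})$ fixed, so a purely algebraic computation from $XYZ=T$ only pins $q_{12}$ down up to this two-fold ambiguity. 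To select the correct root, I would invoke Theorem \ref{thm:phi-psi-2}: the composite $\Psi^{-1}\circ\Phi$ is compatible with the involution $\mu$ on GM triples, and since $\Psi$ preserves $(1,2)$-entries, $\Phi$ itself must transform the GM triple by $\mu$. Hence $(p_{12},q_{12},r_{12})=\mu(x_{12},y_{12},z_{12})=(x_{12},y_{12}',z_{12})$, which gives $q_{12}=y_{12}'$ and completes the identification of all seven coordinates. The same argument goes through in the case $y_{12}\le\max\{x_{12},z_{12}\}$ by using part (2) of Theorem \ref{thm:phi-psi-2} in place of part (1), together with the involutivity of $\mu$.
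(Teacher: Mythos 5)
Your proposal is correct and takes essentially the same route as the paper: the paper's proof likewise sets $(P,Q,R):=\Phi(X,Y,Z)$, notes that it is a $k$-GC triple associated with $\mu(x_{12},y_{12},z_{12})=(x_{12},y'_{12},z_{12})$ (precisely the $\mu$-compatibility of $\Phi$ that you extract from Theorem \ref{thm:phi-psi-2} together with the fact that $\Psi$ preserves $(1,2)$-entries), and then reads off all seven coordinates from $\mathrm{tr}(P)=(3k+3)p_{12}-k$ and its analogues plus the trace conditions giving $a=b=c=k$, $d=2$. Your direct verification of $p_{12}=x_{12}$ and $r_{12}=z_{12}$ from $XYZ=T$ is a harmless additional check that is already subsumed in that compatibility statement.
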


\begin{proof}
Let $(P,Q,R):=\Phi(X,Y,Z)$. Note that $(P,Q,R)$ is a $k$-GC triple associated with $\mu(x_{12},y_{12},z_{12})=(x_{12},y'_{12},z_{12})$. Under this situation, we have
\begin{align*}
x=\mathrm{tr}(P),\  y=\mathrm{tr}(Q),\ z=\mathrm{tr}(R),\  a=k,\  b=k,\   c=k,\  d=2,
\end{align*}
as desired.
\end{proof}

We consider the intersection of $\chi(S_4^2)$ and hypersurface $a=k$, $b=k$, $c=k$, $d=2$. This can be identified with
\[\bar{H}=\{(x,y,z)\in \mathbb {C}^3\mid (x,y,z) \text{ is a solution to $\mathrm{GSME}(k)$}\},\]
by the projection $p\colon(x,y,z,a,b,c,d)\mapsto (x,y,z)$. Therefore, we have the following theorem:

\begin{theorem}
We fix $k\in \mathbb Z_{\geq 0}$ and $\ell\in \mathbb Z$.
Let \begin{align*}
    &M(k,\ell):=\{(X,Y,Z)\in SL(2,\mathbb Z)^3\mid \text{$(X,Y,Z)$ is a vertex in $\mathrm{MM}\mathbb {T}(k,\ell)$ or $\mathrm{MM}\mathbb{T}^\dag(k,\ell)$}\},\\
&\mathrm{Rep}(S^2_4)/\!/SL(2,\mathbb C)|_{M(k,\ell)}:=\{[\rho]\in \mathrm{Rep}(S^2_4)/\!/SL(2,\mathbb C)\mid \text{$\exists(X,Y,Z)\in M(k,\ell)$ s.t. $\rho=\rho_{X,Y,Z}$}\},\\
&\bar{H}_{\mathrm{IS}}:=\{(x,y,z)\in \bar{H}\mid \text{ $(x,y,z)$ is an induced solution to $\mathrm{GSME}(k)$}\}.
\end{align*}

Then the maps
\begin{align*}
&f\colon M(k,\ell) \to \mathrm{Rep}(S^2_4)/\!/SL(2,\mathbb C)|_{M(k,\ell)}, \quad f(X,Y,Z)= [\rho_{X,Y,Z}],\\
&\overline{\chi}|_{M(k,\ell)}\colon \mathrm{Rep}(S^2_4)/\!/SL(2,\mathbb C)|_{M(k,\ell)}\to \bar{H}_{\mathrm{IS}}, \quad \overline{\chi}|_{M(k,\ell)}([\rho])=p\circ\overline{\chi}(\rho)
\end{align*}
are bijections.
\end{theorem}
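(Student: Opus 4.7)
My plan is to study the composite map $g:=\overline\chi|_{M(k,\ell)}\circ f\colon M(k,\ell)\to\bar H_{\mathrm{IS}}$, prove $g$ is a bijection, and then upgrade this to bijectivity of each factor by exploiting two cheap facts. First, $f$ is surjective by construction: its codomain $\mathrm{Rep}(S_4^2)/\!/SL(2,\mathbb C)|_{M(k,\ell)}$ is \emph{defined} as the image of $f$. Second, $\overline\chi|_{M(k,\ell)}$ is injective, because it is the restriction of $p\circ\tilde\chi$, where $\tilde\chi$ is the homeomorphism from property~(3) of the character variety and $p$ is the projection $(x,y,z,a,b,c,d)\mapsto(x,y,z)$, which is clearly injective on the slice $a=b=c=k,\ d=2$.

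To handle $g$, I invoke the previous theorem of this subsection, which asserts
\[
g(X,Y,Z)=\bigl((3+3k)x_{12}-k,\ (3+3k)y'_{12}-k,\ (3+3k)z_{12}-k\bigr),
\]
where $y'_{12}=(x_{12}^2+kx_{12}z_{12}+z_{12}^2)/y_{12}$. I then factor $g$ as
\[
M(k,\ell)\xrightarrow{\ \alpha\ }\{k\text{-GM triples}\}\xrightarrow{\ \mu\ }\{k\text{-GM triples}\}\xrightarrow{\ \beta\ }\bar H_{\mathrm{IS}},
\]
where $\alpha(X,Y,Z)=(x_{12},y_{12},z_{12})$ and $\beta(a,b,c)=((3+3k)a-k,(3+3k)b-k,(3+3k)c-k)$, and verify each arrow is a bijection.

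For $\alpha$, Corollary~\ref{cor:BT-MT} states that $(X,Y,Z)\mapsto(x_{12},y_{12},z_{12})$ gives a graph isomorphism $\mathrm{MM}\mathbb T(k,\ell)\to\mathrm{M}\mathbb T(k)$, and the corollary immediately after Theorem~\ref{thm:BT-CT2dag} provides the parallel statement for $\mathrm{MM}\mathbb T^\dag(k,\ell)\to\mathrm{M}\mathbb T^\dag(k)$. Since every $k$-GM triple lies in exactly one of $\mathrm{M}\mathbb T(k)$ or $\mathrm{M}\mathbb T^\dag(k)$ according to whether its middle entry is strictly maximal, and $M(k,\ell)$ is the disjoint union of the two $\mathrm{MM}\mathbb T$-trees, $\alpha$ is a bijection onto the whole set of $k$-GM triples. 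For $\mu$, Proposition~\ref{pr:rho-mor} together with the subsequent remark identify $\mu$ as an involutive bijection exchanging $\mathrm{M}\mathbb T(k)$ and $\mathrm{M}\mathbb T^\dag(k)$, hence a bijection on the whole set. For $\beta$, Proposition~\ref{pr:rationalsolution} combined with the definition of an induced solution gives exactly this bijection.

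Once $g$ is known to be bijective, a short diagram chase finishes the proof: injectivity of $g$ with surjectivity of $f$ forces $\overline\chi|_{M(k,\ell)}$ onto $\bar H_{\mathrm{IS}}$, while surjectivity of $g$ with injectivity of $\overline\chi|_{M(k,\ell)}$ forces $f$ to be injective. I expect the only real obstacle to be the bookkeeping of which $k$-GM triple is associated with $\Phi(X,Y,Z)$, so that the previous theorem's character formula indeed matches $\beta\circ\mu\circ\alpha$; this should fall out directly from the identities~\eqref{eq:basis-transformation} together with the tree isomorphism of Corollary~\ref{cor:tree-iso-BT-CTdag}.
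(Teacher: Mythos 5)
Your proposal is correct and amounts to the formalization of the paper's own (largely implicit) argument: the paper likewise obtains the theorem from the preceding trace-formula theorem $\chi(\rho_{X,Y,Z})=((3+3k)x_{12}-k,(3+3k)y'_{12}-k,(3+3k)z_{12}-k,k,k,k,2)$, the graph isomorphisms sending $(X,Y,Z)$ to $(x_{12},y_{12},z_{12})$ together with $\mu$ and Proposition \ref{pr:rationalsolution}, surjectivity of $f$ by definition of the restricted quotient, and injectivity of $\overline{\chi}|_{M(k,\ell)}$ via the homeomorphism $\tilde{\chi}$ on the slice $a=b=c=k$, $d=2$. The only blemish is the final chase, where the hypotheses are paired with the wrong conclusions (it is injectivity of $g$ that forces $f$ to be injective, and surjectivity of $g$ that forces $\overline{\chi}|_{M(k,\ell)}$ to be onto $\bar{H}_{\mathrm{IS}}$), but since you have already proved $g$ bijective this is harmless.
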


This theorem allows us to interpret both $k$-GM triples (or their induced solutions) and $k$-MM triples, as the same point in the character variety, with the former emerging when viewed as an algebraic variety $H$ and the latter when viewed as a quotient of the $SL(2, \mathbb{C})$ representation of $\pi_1(S_4^2)$.

The authors do not know the interpretation the another map $\Psi$, which provides a bijection between $k$-GC triples and $k$-MM triples.

\begin{question}
What is the meaning of the map $\Psi$ in the context of the representation of $\pi_1(S_4^2)$?    
\end{question}

Let 
\[\Gamma_2^\ast:=\{M\in PGL(2,\mathbb Z)\mid M \equiv I_2 \mod 2\},\]
where $I_2$ is the identity matrix. The dynamics of $\Gamma_2^\ast$ on $\chi(S_4^2)|_{a,b,c,d}$ is related to the Painlev\'e VI equation (for example, see \cite{CaLo}). It is interesting to look for relation between $k$-GM numbers and the Painlev\'e VI equation.

\begin{remark}
 Several papers which deal with $SL(2,\mathbb{C})$-representations of $\pi_1(S_4^2)$, the signs of $x,y,z,a,b,c,d$ are opposite to the setting in this paper. To adapt the description in this paper to this setting, it suffices to simply multiply the $k$-GC matrix by $-1$. This is the more natural setting for this subsection.
\end{remark}

\subsection{Sign of entries of $k$-MM matrix in $k$-MM tree}
 In this subsection, we discuss the sign of entries of $k$-MM matrices, in particular, the second components of vertices in $\mathrm{MM}\mathbb T(k,\ell)$. Clearly, for each $k$-MM matrix $Y$, the sign of $(1,2)$-entry of $Y$ is positive. We begin with the following lemma.

\begin{lemma}\label{lem:y21<0}
For $(X,Y,Z)\in \mathrm{MM}\mathbb T(k,\ell)$, we have $y_{21}<0$. 
\end{lemma}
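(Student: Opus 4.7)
The plan is to proceed by induction on the depth of $(X,Y,Z)$ in $\mathrm{MM}\mathbb T(k,\ell)$.

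For the base case I would substitute the explicit formula for $Y_\ell$ given in the definition of $\mathrm{MM}\mathbb T(k,\ell)$: direct computation yields
\[
y_{21} = -k\ell^{2} - 2\ell^{2} + k\ell + 2\ell - 1 = -(k+2)\,\ell(\ell-1) - 1.
\]
Since $\ell(\ell-1)\geq 0$ for every integer $\ell$, this is $\leq -1 < 0$.

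For the inductive step, the two tree operations replace the middle matrix by $YZY^{-1}$ (left child) or $Y^{-1}XY$ (right child); I treat the left-child case, the other being symmetric. Writing $Y=\bigl(\begin{smallmatrix}a&b\\c&d\end{smallmatrix}\bigr)$ with $c = y_{21} < 0$ by induction hypothesis, and $Z=\bigl(\begin{smallmatrix}\alpha&\beta\\\gamma&\delta\end{smallmatrix}\bigr)$ with $\beta = z_{12} > 0$, a direct expansion gives
\[
(YZY^{-1})_{12} = F_{Z}(a,b), \qquad (YZY^{-1})_{21} = -F_{Z}(c,d),
\]
where $F_{Z}(u,v) := \beta u^{2} - (\alpha-\delta)uv - \gamma v^{2}$. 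Using $\alpha+\delta = -k$ and $\alpha\delta - \beta\gamma = 1$, the discriminant of $F_Z$ is
\[
(\alpha-\delta)^{2} + 4\beta\gamma \;=\; (\alpha+\delta)^{2} - 4(\alpha\delta - \beta\gamma) \;=\; k^{2} - 4.
\]
Now $(YZY^{-1})_{12}$ is the $(1,2)$-entry of a $k$-MM matrix and hence a $k$-GM number, so it is positive and $F_{Z}(a,b) > 0$; the inductive step thus reduces to showing $F_{Z}(c,d) > 0$.

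For $k\in\{0,1\}$ the discriminant is negative and the leading coefficient $\beta > 0$, so $F_{Z}$ is positive-definite; then $F_{Z}(c,d) > 0$ for every $(c,d)\neq (0,0)$, which is the case here since $c < 0$, and the induction closes.

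For $k\geq 2$ the form $F_{Z}$ is only positive semi-definite ($k=2$) or indefinite ($k\geq 3$), and the sign of $F_{Z}(c,d)$ is no longer automatic. I would strengthen the inductive hypothesis to require, in addition, the invariant $y_{11}y_{22}\leq 0$ on the middle matrix (equivalently, using $y_{11}+y_{22}=-k$, that $y_{22}\geq 1$ or $y_{22}\leq -(k+1)$). This is enough: combined with $\det Y = 1$ it gives $y_{12}y_{21} = y_{11}y_{22} - 1 \leq -1$, and hence $y_{21} < 0$. The invariant holds at the root because $y_{22} = 1 - (k+2)\ell$ can never land in $\{-k,\ldots,0\}$ for an integer $\ell$. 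The main obstacle — and the technical heart of the proof — is showing that this invariant is preserved by both tree operations; this requires explicit computation of $(YZY^{-1})_{11}$ and $(YZY^{-1})_{22}$ (and the analogues for the right child) in terms of the parent triple, together with the inductive sign and magnitude information on $X$, $Y$, and $Z$ carried by the stronger hypothesis.
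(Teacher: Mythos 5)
Your base case is right, and your treatment of $k\in\{0,1\}$ is a correct and clean argument: the identity $(YZY^{-1})_{21}=-F_Z(y_{21},y_{22})$ with $F_Z$ of discriminant $k^2-4$ is accurate, and for $k\leq 1$ positive definiteness of $F_Z$ (and of the analogous form for $Y^{-1}XY$) finishes those cases without even needing the inductive hypothesis. But for $k\geq 2$ your argument is a plan, not a proof, and that is where the entire difficulty of the lemma lives. The invariant you propose to add to the induction ($y_{11}y_{22}\leq 0$, i.e.\ control of the diagonal signs) is exactly what the paper obtains only \emph{after} this lemma: Corollary \ref{cor:y_11y_22<0} is deduced from Lemma \ref{lem:y21<0}, and the propagation of the diagonal signs down the tree (Proposition \ref{prop:sign-Y-inductive}) is then proved using both of them together with magnitude inputs such as $y_{12}>x_{12},z_{12}$ and the Vieta-jumping identity $z_{12}\,(YZY^{-1})_{12}=x_{12}^2+kx_{12}y_{12}+y_{12}^2$. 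To close your induction you would have to establish that propagation from scratch, for all three matrices of the triple, for both children and all $\ell$ (note the exceptional sign behaviour of $X_\ell$ recorded in Remark \ref{rem:aboutXZ}); you acknowledge this is "the technical heart" but give no indication of how to carry it out, so the proof is complete only for $k\in\{0,1\}$.

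For comparison, the paper does not induct at all for the main inequality: it assumes $y_{21}\geq 0$ at a non-root vertex, deduces $y_{11}y_{22}>0$ and hence that both diagonal entries are negative, excludes $y_{21}=0$ (immediate from $\det Y=1$ and $\mathrm{tr}(Y)=-k$ when $k\neq 2$), and then plays the arithmetic--geometric mean bound $y_{11}y_{22}\leq k^2/4$ against the size bound $y_{12}\geq 2k^2+6k+5$ valid at every non-root vertex to reach a contradiction; the case $k=2$, where $y_{21}=0$ cannot be excluded so cheaply, takes a long separate computation using $XYZ=T$ and the coprimality of $k$-GM numbers (Proposition \ref{relatively-prime}). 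None of these ingredients appears in your sketch, and for $k=2$ your form $F_Z$ is only positive semidefinite, so its possible vanishing is precisely the $y_{21}=0$ degeneracy that the paper must rule out by hand. Either supply the full propagation argument for your strengthened invariant, or switch to the paper's non-inductive contradiction argument for $k\geq 2$.
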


\begin{proof}
First, we prove the case that $(X,Y,Z)$ is the root of $\mathrm{MM}\mathbb T(k,\ell)$. Now, $y_{21}$ is given by $-k\ell^2-2\ell^2+k\ell+2\ell -1$ and it is less than $0$ clearly.
Next, we prove the case that $(X,Y,Z)$ is not the root. First, we prove the case $k\neq 2$. We assume that $y_{21}\geq 0$. Since $y_{12}>0$, we have $y_{11}y_{22}=y_{12}y_{21}+1>0$. Therefore, $y_{11}$ and $y_{22}$ are nonzero and have the same sign.  When $k=0$, since $y_{11}+y_{22}=0$, it is contradiction and this finishes the proof for $k=0$ case. We assume that $k\neq 0$. If $y_{11}>0$ and $y_{22}>0$ hold, then it conflicts with $y_{11}+y_{22}=-k$, therefore we have $y_{11}<0$ and $y_{22}<0$. If $y_{21}=0$ holds, since we have $\det (Y)=1$, we have $y_{11}=y_{22}=-1$, and it is in contradiction to $y_{11}+y_{22}=-k$. Therefore, we have $y_{21}\geq 1$. Since $\dfrac{(-y_{11})+(-y_{22})}{2}=\dfrac{k}{2}$ holds, we have $y_{11}y_{22}\leq \dfrac{k^2}{4}$ by arithmetic-geometric mean. Therefore, we have $y_{12}y_{21}\leq \dfrac{k^2}{4}-1$. However, we have $y_{12}\geq 2k^2+6k+5$ because $(X,Y,Z)$ is not the root and the smallest $k$-GM number that is larger than  $k+2$ is $2k^2+6k+5$. It conflicts with $y_{21}\geq 1$. Therefore, we have $y_{21}<0$.

Next, we prove the case $k=2$. We can prove it in parallel with the case $k\neq 2$ other than the step of proving $y_{21}\neq 0$. Therefore, it is enough to show that $y_{21}\neq 0$. We assume that $y_{21}=0$, then we have $Y=\begin{bmatrix}
  -1 &y_{12}\\ 0&-1  
\end{bmatrix}$. Since $XYZ=\begin{bmatrix}
    -1&0\\9&-1
\end{bmatrix}$ holds by definition of the $2$-MM triple, we have the following equality:
\begin{align}
    -x_{11}z_{11}+x_{11}y_{12}z_{21}-x_{12}z_{21}&=-1,\label{eq:XYZ-1}\\
    -x_{11}z_{12}+x_{11}y_{12}z_{22}-x_{12}z_{22}&=0,\label{eq:XYZ-2}\\
    -x_{21}z_{11}+x_{21}y_{12}z_{21}-x_{22}z_{21}&=9,\label{eq:XYZ-3}\\
    -x_{21}z_{12}+x_{21}y_{12}z_{22}-x_{22}z_{22}&=-1.\label{eq:XYZ-4}
\end{align}
Then, we have 
\begin{align*}
    x_{11}z_{11}=x_{11}z_{12}\dfrac{z_{11}}{z_{12}}\overset{\text{\eqref{eq:XYZ-2}}}{=}x_{11}y_{12}\dfrac{z_{11}z_{22}}{z_{12}}-x_{12}\dfrac{z_{11}z_{22}}{z_{12}}\overset{\text{$\det(Z)=1$}}{=}x_{11}y_{12}z_{21}+\dfrac{x_{11}y_{12}}{z_{12}}-x_{12}z_{21}-\dfrac{x_{12}}{z_{12}}.
\end{align*}
Therefore, by substituting the above equality with \eqref{eq:XYZ-1}, we have 
\[x_{11}y_{12}-x_{12}=z_{12}.\]
Moreover, by substituting the above equality with \eqref{eq:XYZ-2}, we have \[-x_{11}z_{12}+z_{12}z_{22}=0,\] 
and it leads to $x_{11}=z_{22}$.
Moreover, we have 
\begin{align*}
x_{12}z_{11}&=x_{12}z_{12}\dfrac{z_{11}}{z_{12}}\overset{\text{\eqref{eq:XYZ-4}}}{=}x_{21}y_{12}\dfrac{z_{11}z_{22}}{z_{12}}-x_{22}\dfrac{z_{11}z_{12}}{z_{22}}+\dfrac{z_{11}}{z_{12}}\\
&\overset{\det(Z)=1}{=}x_{21}y_{12}z_{21}+\dfrac{x_{21}y_{12}}{z_{12}}-x_{22}z_{21}-\dfrac{x_{22}}{z_{12}}+\dfrac{z_{11}}{z_{12}}.
\end{align*}
Therefore, by substituting the above equation with \eqref{eq:XYZ-3}, we have 
\[x_{21}y_{12}-x_{22}=-9z_{12}-z_{11}.\]
Moreover, by substituting the above with \eqref{eq:XYZ-4}, we have \[-x_{21}z_{12}-9z_{12}z_{22}-z_{11}z_{22}=-1,\] 
and therefore we have $x_{21}=\dfrac{-9z_{12}z_{22}-z_{11}z_{22}+1}{z_{12}}=-9z_{22}-z_{21}$. By the above argument, we have
\[X=\begin{bmatrix}
    z_{22}& x_{12}\\-9z_{22}-z_{21} &-z_{22}-2
\end{bmatrix}.\]
We note that $-9z_{22}-z_{21}\neq 0$. Indeed, if $-9z_{22}-z_{21}= 0$ hold, we have
\[X=\begin{bmatrix}
    -1&x_{12}\\0&-1,
\end{bmatrix}, \quad Z=\begin{bmatrix}
    -1 &z_{12}\\9&-1
\end{bmatrix},\]
and we have $x_{12}+y_{12}+z_{12}=0$ by $XYZ=T$, and it is a contradiction.
Since $\det(X)=\det(Z)=1$, we have $x_{12}=\dfrac{1-z_{22}(-z_{22}-2)}{9x_{22}+z_{21}}=-\dfrac{z_{12}z_{21}}{9x_{22}+z_{21}}$ (note that $z_{11}=-z_{22}-2$). Therefore, $\left(-\dfrac{z_{12}z_{21}}{9z_{22}+z_{21}},y_{12},z_{12}\right)$ is a $2$-GM triple. By Proposition \ref{relatively-prime}, $-\dfrac{z_{12}z_{21}}{9z_{22}+z_{21}}$ and $z_{12}$ are relatively prime, and hence there exists $\alpha\in \mathbb Z\setminus \{0\}$ such that $-(9z_{22}+z_{21})=\alpha z_{12}$. Therefore, we have $x_{12}=\dfrac{z_{21}}{\alpha}$, and 
\[X=\begin{bmatrix}
    z_{22}&\dfrac{z_{21}}{\alpha}\\-9z_{22}-z_{21}&-z_{22}-2
\end{bmatrix}, \quad Z=\begin{bmatrix}
    -z_{22}-2&\dfrac{-9z_{22}-z_{21}}{\alpha}\\z_{21}&z_{22}
\end{bmatrix}.\]
Since $\det (X)=1$, we have
\[-1=z_{22}^2+2z_{22}-\dfrac{9z_{21}z_{22}}{\alpha}-\dfrac{z_{21}^2}{\alpha},\] and since $Y=X^{-1}\begin{bmatrix}
    -1&0\\9&-1
\end{bmatrix}Z^{-1}$, we get
\begin{align*}
    y_{12}&=\left(z_{22}+2-\dfrac{9z_{21}}{\alpha}\right)\dfrac{9z_{22}+z_{21}}{\alpha}+\dfrac{z_{21}(-z_{22}-2)}{\alpha}\\
    &=\dfrac{9}{\alpha}\left(z_{22}^2+2z_{22}-\dfrac{9z_{21}z_{22}}{\alpha}-\dfrac{z_{21}^2}{\alpha}\right)=-\dfrac{9}{\alpha}.
  \end{align*} 
However, it conflicts with $y_{12}\geq 2k^2+6k+5=25$.  
\end{proof}

\begin{remark}
There is a $2$-MM triple such that $y_{21}=0$. Indeed, $(X_{1;2},Y_{1;2},Z_{1;2})$ for $k=2$ satisfies $y_{21}=0$. Clearly, $(X_{1;2},Y_{1;2},Z_{1;2})$ is not in $\mathrm{MM}\mathbb T(k,\ell)$. 
\end{remark}
\begin{corollary}\label{cor:y_11y_22<0}
For $(X,Y,Z)\in \mathrm{MM}\mathbb T(k,\ell)$, if $(X,Y,Z)$ is not the root, we have $y_{11}y_{22}<0$. 
\end{corollary}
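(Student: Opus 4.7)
The plan is to derive this directly from Lemma \ref{lem:y21<0} combined with the determinant relation $\det(Y) = 1$ and a lower bound on $y_{12}$. Since $Y \in SL(2,\mathbb Z)$, the identity $y_{11}y_{22} = 1 + y_{12}y_{21}$ holds, so the goal reduces to showing $y_{12}y_{21} \leq -2$, i.e., $y_{12}\,|y_{21}| \geq 2$.

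First I would record that $y_{12}$ is a $k$-GM number and hence a positive integer, so $y_{12} \geq 1$. By Lemma \ref{lem:y21<0}, $y_{21} < 0$, and since $y_{21} \in \mathbb Z$ this means $y_{21} \leq -1$. The remaining input is a size bound on $y_{12}$: by Corollary \ref{cor:BT-MT}, the middle entries $y_{12}$ appearing in $\mathrm{MM}\mathbb T(k,\ell)$ are exactly the middle entries of $k$-GM triples in $\mathrm{M}\mathbb T(k)$, i.e., $k$-GM numbers $b$ occurring as the strict maximum of some $k$-GM triple $(a,b,c)$. The root of $\mathrm{M}\mathbb T(k)$ has middle entry $k+2$, and (as noted in the proof of Lemma \ref{lem:y21<0}) every non-root vertex has middle entry at least $2k^{2}+6k+5 \geq 5$.

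Putting these together, for a non-root vertex I get $y_{12}\,y_{21} \leq -y_{12} \leq -(2k^{2}+6k+5) \leq -5$, hence
\[
y_{11}y_{22} \;=\; 1 + y_{12}y_{21} \;\leq\; 1 - (2k^{2}+6k+5) \;<\; 0,
\]
which is the claim. This is essentially a one-line corollary of the preceding lemma once one invokes the size bound on the middle $k$-GM number at a non-root vertex.

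The only point of care is justifying why the root must be excluded: at the root we only know $y_{12} = k+2$, which for $k=0$ gives $y_{12}=2$, and the argument still goes through formally $(y_{11}y_{22}\leq 1-2=-1)$; however, the cleanest statement simply separates the root (where the entries are written out explicitly in the definition of $\mathrm{MM}\mathbb T(k,\ell)$) from the rest. There is no real obstacle here: the substantive work was already done in Lemma \ref{lem:y21<0}, and this corollary is just the combination of $\det Y = 1$, the sign of $y_{21}$, and the integrality plus lower bound on $y_{12}$.
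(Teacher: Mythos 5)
Your proof is correct and follows essentially the same route as the paper: the paper also writes $y_{11}y_{22}=1+y_{12}y_{21}$ from $\det(Y)=1$, invokes Lemma \ref{lem:y21<0} for $y_{21}<0$, and uses a lower bound on $y_{12}$ (the paper is content with $y_{12}>1$, while you invoke the sharper bound $y_{12}\geq 2k^{2}+6k+5$ at non-root vertices, which is the same fact already used inside the proof of Lemma \ref{lem:y21<0}). No gap; your side remark about the root is also consistent with the paper, which simply excludes it from the statement.
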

\begin{proof}
Since $\det (Y)=1$, we have $y_{11}y_{22}-y_{12}y_{21}=1$. Since $y_{12}>1$ and $y_{21}<0$ hold by Lemma \ref{lem:y21<0}, we have $y_{11}y_{22}=1+y_{12}y_{21}<0$.    
\end{proof}
\begin{proposition}\label{prop:sign-Y-inductive}
   Let $(X,Y,Z)\in \mathrm{MM}\mathbb T(k,\ell)$. We set
   \[YZY^{-1}=\begin{bmatrix}
       y'_{11} & y'_{12}\\ y'_{21} & y'_{22}
   \end{bmatrix},\quad  Y^{-1}XY=\begin{bmatrix}
       y''_{11} & y''_{12}\\ y''_{21} & y''_{22}
   \end{bmatrix}.\]
\begin{itemize}\setlength{\leftskip}{-15pt}
    \item [(1)] If $y_{11}<0$ and $y_{22}>0$, then we have $y'_{11}<0$, $y'_{22}>0$, $y''_{11}<0$ and $y''_{22}>0$,
    \item [(2)] if $y_{11}>0$ and $y_{22}<0$, then we have $y'_{11}>0$, $y'_{22}<0$, $y''_{11}>0$ and $y''_{22}<0$.
\end{itemize}
\end{proposition}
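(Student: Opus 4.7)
The plan is to prove the proposition by direct matrix computation, relying on the defining identity $XYZ=T$ to put the relevant diagonal entries in a tractable form. Substituting $YZ=X^{-1}T$ and reading off entries, one obtains
\begin{align*}
y'_{11} &= -y_{22}\bigl(x_{22}+(3k+3)x_{12}\bigr)-x_{12}y_{21}, &
y'_{22} &= -x_{11}y_{11}-y_{12}\bigl(x_{21}+(3k+3)x_{11}\bigr),
\end{align*}
and symmetrically from $XY=TZ^{-1}$,
\begin{align*}
y''_{11} &= -z_{22}\bigl(y_{22}+(3k+3)y_{12}\bigr)-y_{12}z_{21}, &
y''_{22} &= -y_{11}z_{11}-z_{12}\bigl(y_{21}+(3k+3)y_{11}\bigr).
\end{align*}

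Lemma~\ref{lem:y21<0} gives $y_{21}<0$, and the MM-matrix definition gives $x_{12},y_{12},z_{12}>0$. Combined with the sign hypothesis on $y_{11},y_{22}$, many summands in the four displayed quantities acquire a definite sign. In particular, the combinations $y_{22}+(3k+3)y_{12}$ and $y_{21}+(3k+3)y_{11}$ appearing in the formulas for $y''_{11},y''_{22}$ are directly controlled by the hypothesis together with $y_{12}\geq 1$, which already pins down the sign of one summand in each of those two expressions.

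The main obstacle is that the signs of the analogous $X$-combinations $x_{22}+(3k+3)x_{12}$ and $x_{21}+(3k+3)x_{11}$, and of the outer entries $z_{21},z_{22}$, are not forced by the hypothesis alone. I would resolve this by strengthening the proposition to a simultaneous statement that also controls the sign patterns of all entries of the outer matrices $X$ and $Z$ (in the spirit of Lemma~\ref{lem:y21<0} and Corollary~\ref{cor:y_11y_22<0} for the middle), and proving the enlarged statement by induction on the depth in $\mathrm{MM}\mathbb{T}(k,\ell)$. The base case is verified by explicit substitution into $(X_\ell,Y_\ell,Z_\ell)$; for the inductive step, at a left child $(X,YZY^{-1},Y)$ the new outer matrix $Y$ is the parent's middle, so its sign pattern is already available from the inductive hypothesis, and the analogous statement handles the right child.

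Case~(2) follows from exactly the same computation with the hypothesis on $y_{11},y_{22}$ reversed: since the hypothesis enters each of the four displayed formulas only through the factors $y_{11}$ and $y_{22}$, the sign analysis is entirely parallel to case~(1) and no further ideas are required.
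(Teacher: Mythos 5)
Your starting point coincides with the paper's: from $XYZ=T$ one gets $YZY^{-1}=X^{-1}TY^{-1}$ and $Y^{-1}XY=Y^{-1}TZ^{-1}$, and your four entry formulas are correct. The gap is in how you propose to finish. Strengthening the statement to a sign-pattern induction over all entries of $X$, $Y$, $Z$ cannot close the argument, because even complete sign information leaves every one of the four relevant diagonal entries as a sum of terms of \emph{both} signs. Concretely, in the generic pattern for $\ell\leq 0$ (all $(1,2)$-entries positive, all $(2,1)$-entries negative, $x_{11},y_{11},z_{11}<0$, $x_{22},y_{22},z_{22}>0$) the three terms of $y'_{11}=-x_{22}y_{22}-(3k+3)x_{12}y_{22}-x_{12}y_{21}$ have signs $-,-,+$, and the two terms of $y'_{22}=-x_{11}y_{11}-y_{12}\bigl(x_{21}+(3k+3)x_{11}\bigr)$ have signs $-,+$; the situation for $y''_{11},y''_{22}$ and for the $\ell\geq 1$ pattern is the same. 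For instance, at $k=1$, $\ell=0$, the vertex with $X=\begin{bmatrix}-2&3\\-1&1\end{bmatrix}$, $Y=\begin{bmatrix}-10&13\\-7&9\end{bmatrix}$ gives $y'_{11}=-9-162+21=-150$: the positive summand $+21$ is genuinely there and is only dominated, which no sign bookkeeping can detect. So a magnitude comparison is unavoidable, and your plan contains no mechanism for producing one. (A secondary point: the outer matrices on the extreme branches stay equal to the root matrices $X_\ell$, $Z_\ell$, whose sign patterns vary with $\ell$ and $k$ and include zero entries, as in Remark \ref{rem:aboutXZ}; your strengthened inductive hypothesis would have to carry all of those exceptional cases as well.)

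The paper's proof supplies exactly the missing quantitative ingredient: by Corollary \ref{cor:y_11y_22<0} it suffices to fix the sign of one diagonal entry of each child, and then the troublesome product $x_{22}y_{22}$ (resp. $y_{11}z_{11}$) is rewritten using $\det(Y)=1$ together with the Vieta-jumping identity for the $(1,2)$-entry, e.g.\ $x_{22}y_{12}+(3k+3)x_{12}y_{12}+x_{12}y_{11}=\dfrac{x_{12}^2+kx_{12}y_{12}+y_{12}^2}{z_{12}}$, and the strict maximality $y_{12}>\max\{x_{12},z_{12}\}$ valid in $\mathrm{MM}\mathbb{T}(k,\ell)$; these combine into an explicit bound such as $y'_{11}<-((2+k)y_{22}-1)\dfrac{x_{12}^2}{y_{12}^2}<0$. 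If you want to salvage your route, you would have to build estimates of this kind into the inductive step, at which point the induction itself becomes superfluous — the statement is provable vertex by vertex, as the paper does. Your final remark that case (2) is "entirely parallel" is acceptable in spirit (the paper also proves only (1)), but as written it presupposes a sign analysis that, for the reasons above, does not yet exist.
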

\begin{proof}
 We prove only (1). First, we prove $y'_{11}<0$ and $y'_{22}>0$. By Corollary \ref{cor:y_11y_22<0}, it is enough to show $y'_{11}<0$. Since $XYZ=T$, we have $YZY^{-1}=X^{-1}TY^{-1}$. Then, we have
 \begin{align}
     y'_{11}&=-x_{22}y_{22}-(3k+3)x_{12}y_{22}-x_{12}y_{21},\label{eq:y'11}\\
     y'_{12}&=x_{22}y_{12}+(3k+3)x_{12}y_{12}+x_{12}y_{11}
     =\dfrac{x_{12}^2+kx_{12}y_{12}+y_{12}^2}{z_{12}},\label{eq:y'12}
 \end{align}
 where the second equality of \eqref{eq:y'12} comes from the Vieta jumping of $\textrm{GME}(k)$. Then, we have
 \begin{align*}
     x_{22}y_{22}&=x_{22}y_{12}\dfrac{y_{22}}{y_{12}}\\
     &\overset{\text{\eqref{eq:y'12}}}{=}-(3k+3)x_{12}y_{22}-x_{12}\dfrac{y_{11}y_{22}}{y_{12}}+\dfrac{y_{22}(x_{12}^2+kx_{12}y_{12}+y_{12}^2)}{y_{12}z_{12}}\\
    &\overset{\det(Y)=1}{=}-(3k+3)x_{12}y_{22}-x_{12}y_{21}-\dfrac{x_{12}}{y_{12}}+\dfrac{y_{22}(x_{12}^2+kx_{12}y_{12}+y_{12}^2)}{y_{12}z_{12}}\\
    &>-(3k+3)x_{12}y_{22}-x_{12}y_{21}+\dfrac{y_{22}x_{12}^2+(y_{22}k-1)x_{12}y_{12}+y_{22}y_{12}^2}{y_{12}^2},
 \end{align*}
 where the last inequality follows from $y_{12}>z_{12}$. Therefore, by the above inequality and \eqref{eq:y'11}, we have
 \begin{align*}
     y_{11}'<-\dfrac{y_{22}x_{12}^2+(y_{22}k-1)x_{12}y_{12}+y_{22}y_{12}^2}{y_{12}^2}<-((2+k)y_{22}-1)\dfrac{x_{12}^2}{y_{12}^2}<0,
 \end{align*}
 as desired (we note that $x_{12}<y_{12}$). Second, we prove $y''_{11}<0$ and $y''_{22}>0$. By Corollary \ref{cor:y_11y_22<0}, it is enough to show $y''_{22}>0$. Since $XYZ=T$, we have $Y^{-1}XY=Y^{-1}TZ^{-1}$. Then, we have
 \begin{align}
     y''_{22}&=-y_{21}z_{12}-(3k+3)y_{11}z_{12}-y_{11}z_{11},\label{eq:y''22}\\
     y''_{12}&=y_{22}z_{12}+(3k+3)y_{12}z_{12}+y_{12}z_{11}
     =\dfrac{y_{12}^2+ky_{12}z_{12}+z_{12}^2}{x_{12}}.\label{eq:y''12}
 \end{align}
 Then, we have
 \begin{align*}
     y_{11}z_{11}&=y_{12}z_{11}\dfrac{y_{11}}{y_{12}}\\
     &\overset{\text{\eqref{eq:y''12}}}{=}-z_{12}\dfrac{y_{11}y_{22}}{y_{12}}-(3k+3)y_{11}z_{12}+\dfrac{y_{11}(y_{12}^2+ky_{12}z_{12}+z_{12}^2)}{x_{12}y_{12}}\\
    &\overset{\det(Y)=1}{=}-y_{21}z_{12}-\dfrac{z_{12}}{y_{12}}-(3k+3)y_{11}z_{12}+\dfrac{y_{11}(y_{12}^2+ky_{12}z_{12}+z_{12}^2)}{x_{12}y_{12}}\\
    &<-y_{21}z_{12}-(3k+3)y_{11}z_{12}+\dfrac{y_{11}y_{12}^2+(y_{11}k-1)y_{12}z_{12}+y_{11}z_{12}^2}{y^2_{12}},
 \end{align*}
 where the last inequality follows from $y_{12}>x_{12}$. Therefore, by the above ineqation and \eqref{eq:y''22}, we have
 \begin{align*}
     y_{22}''>-\dfrac{y_{11}y_{12}^2+(y_{11}k-1)y_{12}z_{12}+y_{11}z_{12}^2}{y_{12}^2}>0,
 \end{align*}
 as desired.
\end{proof}

\begin{corollary}\label{cor:sign-Markov-monodromy}
 For $(X,Y,Z)\in \mathrm{MM}\mathbb T(k,\ell)$, we have
 \[\begin{cases}y_{11}>0,y_{12}>0, y_{21}<0,y_{22}<0 \quad &\text {if $\ell\geq 1$,}\\ y_{11}<0,y_{12}>0, y_{21}<0,y_{22}>0\quad &\text {if $\ell\leq 0$}.\end{cases}\]  
\end{corollary}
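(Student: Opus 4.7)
The plan is a straightforward induction on the depth of the vertex in $\mathrm{MM}\mathbb{T}(k,\ell)$, combining the three ingredients that have already been established: $y_{12}>0$ is immediate from the definition of a $k$-MM matrix (since $y_{12}$ is a $k$-GM number), $y_{21}<0$ is Lemma~\ref{lem:y21<0}, and Proposition~\ref{prop:sign-Y-inductive} propagates the sign pattern of $(y_{11},y_{22})$ from a vertex to the $Y$-components of both children. So the entire work is concentrated on the root.

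For the base case, I would unpack the explicit form
\[
Y_\ell=\begin{bmatrix}
(k+2)\ell-(k+1) & k+2\\
-(k+2)\ell^{2}+(k+2)\ell-1 & -(k+2)\ell+1
\end{bmatrix},
\]
so that the diagonal entries are $y_{11}=(k+2)\ell-(k+1)$ and $y_{22}=-(k+2)\ell+1$. If $\ell\geq 1$, then $y_{11}\geq (k+2)-(k+1)=1>0$ and $y_{22}\leq -(k+2)+1=-(k+1)<0$; if $\ell\leq 0$, then $y_{11}\leq -(k+1)<0$ and $y_{22}\geq 1>0$. This matches the claim at the root.

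For the inductive step, suppose $(X,Y,Z)\in\mathrm{MM}\mathbb{T}(k,\ell)$ satisfies the asserted signs. Its two children are $(X,YZY^{-1},Y)$ and $(Y,Y^{-1}XY,Z)$, so the new $Y$-component is either $YZY^{-1}$ or $Y^{-1}XY$. Proposition~\ref{prop:sign-Y-inductive} says precisely that the sign pattern of the $(1,1)$- and $(2,2)$-entries is preserved under both of these operations, so the child $Y$-component again has $y_{11}>0,\ y_{22}<0$ when $\ell\geq 1$ and $y_{11}<0,\ y_{22}>0$ when $\ell\leq 0$. Combined with the universal facts $y_{12}>0$ and $y_{21}<0$, this closes the induction.

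There is no genuine obstacle left at this point; every nontrivial inequality has been absorbed into Lemma~\ref{lem:y21<0} and Proposition~\ref{prop:sign-Y-inductive}. The only thing to keep honest is that the sign assertion for $y_{22}$ at the root is strict even when $\ell=1$ or $\ell=0$, which is what the little computations $-(k+1)<0$ and $1>0$ above verify.
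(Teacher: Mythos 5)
Your proof is correct and follows essentially the same route as the paper: a direct sign check at the root $Y_\ell$ together with an induction using Lemma~\ref{lem:y21<0} and Proposition~\ref{prop:sign-Y-inductive}, with $y_{12}>0$ coming from the $(1,2)$-entry being a $k$-GM number. Your explicit simplification $y_{11}=(k+2)\ell-(k+1)$, $y_{22}=-(k+2)\ell+1$ just makes the paper's ``directly checked'' base case visible; nothing further is needed.
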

\begin{proof}
 It is directly checked for the root of $\mathrm{MM}\mathbb T(k,\ell)$. By Proposition \ref{prop:sign-Y-inductive} and Lemma \ref{lem:y21<0}, the statement follows inductively.   
\end{proof}

\begin{remark}\label{rem:aboutXZ}
 The signs of entries of $X$ and $Z$ except for $X_{\ell}(=X_{1;\ell})$ and $Z_\ell(=Y_{1;\ell})$ are the same as those of entries of $Y$ because each matrix coincides with the second component of some $k$-MM triple in $\mathrm{MM}\mathbb T(k,\ell)$. For $X_{\ell}$, we have the following instead of Corollary \ref{cor:sign-Markov-monodromy}: if $k> 2$, we have
 \[\begin{cases}x_{11}>0, x_{21}<0,x_{22}<0 \quad &\text {if $\ell\geq 1$,}\\ 
 x_{11}=0, x_{21}=-1,x_{22}<0 \quad &\text {if $\ell=0$,}\\ 
 x_{11}< 0, x_{21}>0,x_{22}< 0\quad &\text {if $-1\leq \ell\leq -k+1$,}\\
  x_{11}<0, x_{21}=-1,x_{22}=0 \quad &\text {if $\ell=-k$,}\\ 
 x_{11}<0, x_{21}<0,x_{22}>0\quad &\text {if $\ell\leq -k-1$},\end{cases}\]
 and if $k= 0$, then we have
\[\begin{cases}x_{11}>0,x_{21}<0,x_{22}<0 \quad &\text {if $\ell\geq 1$,}\\ 
 x_{11}= 0,x_{21}=-1,x_{22}= 0\quad &\text {if $\ell=0$,}\\
 x_{11}<0,x_{21}<0,x_{22}>0\quad &\text {if $\ell\leq-1$},\end{cases}\]
 and if $k= 1$, then we have 
 \[\begin{cases}x_{11}>0, x_{21}<0,x_{22}<0 \quad &\text {if $\ell\geq 1$,}\\ 
 x_{11}=0, x_{21}=-1,x_{22}=-1\quad &\text {if $\ell=0$,}\\ 
  x_{11}=-1, x_{21}=-1,x_{22}=0 \quad &\text {if $\ell=-1$,}\\ 
 x_{11}<0, x_{21}<0,x_{22}>0\quad &\text {if $\ell\leq -2$},\end{cases}\]
 and if $k= 2$, then we have 
 \[\begin{cases}x_{11}>0, x_{21}<0,x_{22}<0 \quad &\text {if $\ell\geq 1$,}\\ 
 x_{11}=0, x_{21}=-1,x_{22}=-2 \quad &\text {if $\ell=0$,}\\ 
  x_{11}=-1, x_{21}=0,x_{22}=-1\quad &\text {if $\ell=-1$,}\\
  x_{11}=-2, x_{21}=-1,x_{22}=0 \quad &\text {if $\ell=-2$,}\\ 
 x_{11}<0, x_{21}<0,x_{22}>0\quad &\text {if $\ell\leq -3$}.\end{cases}\]
We note that $Z_{\ell}=X_{-k+\ell+1}$ (thus we omit the signs of $Z_{\ell}$).
\end{remark}

\subsection{M\"obius transformation of $k$-MM matrix to $0$ or $\infty$}
We regard a $k$-MM matrix as a M\"obius transformation on $\mathbb RP^1=\mathbb R \cup \{\infty\}$, and we will study the configuration of the image of $\infty$ and $0$. These will play an important role in the next section.

We consider the tree $\mathrm{LMM}\mathbb T(k,\ell)$, which is the full subtree of $\mathrm{MM}\mathbb T(k,\ell)$ whose root is $\sigma_1 ^{-1}(X,Y,Z)$, that is,
\begin{align*}
  \tilde{X}_\ell&=\begin{bmatrix}
    \ell &1\\-\ell^2-k\ell-1&-k-\ell
\end{bmatrix},\\
    \tilde{Y}_\ell&=\begin{bmatrix}
      2k^2\ell + 6k\ell - 2k + 5\ell - 2  &2k^2 + 6k + 5\\-2k^2\ell^2 - 6k\ell^2 + 3k\ell - 5\ell^2 + 4\ell - 1 &-2k^2\ell - 6k\ell + k - 5\ell + 2
\end{bmatrix},\\
  \tilde{Z}_\ell&=\begin{bmatrix}
    k\ell-k+2\ell-1 &k+2\\-k\ell^2-2\ell^2+k\ell+2\ell-1&-k\ell-2\ell+1
\end{bmatrix}.
\end{align*}
Similarly, we consider the tree $\mathrm{LM}\mathbb T(k)$, which is the full subtree of $\mathrm{M}\mathbb T(k)$ whose root is the left child of the root of $\mathrm{M}\mathbb T(k)$, that is, $(1,2k^2+6k+5,k+2)$.
The restriction of the correspondence $(X,Y,Z)\mapsto (x_{12},y_{12},z_{12})$ to $\mathrm{LMM}\mathbb T(k,\ell)\to \mathrm{LM}\mathbb T(k)$ gives a bijection from vertices in $\mathrm{LMM}\mathbb T(k,\ell)$ and all $k$-GM triples but $(1,1,1)$ and $(1,k+2,1)$ up to order.

\begin{lemma}\label{lem:Y-inf-Z-inf}
We fix $\ell\in \mathbb Z_{\leq 0}$. For a vertex $(X,Y,Z)\in \mathrm{LMM}\mathbb T(k,\ell)$, $Y^{-1}(\infty)$, and $Z(\infty)$ are not $\infty$ and $Z(\infty)<Y^{-1}(\infty)$ holds with respect to the standard order of $\mathbb R$.   
\end{lemma}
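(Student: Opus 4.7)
The plan is to convert the geometric inequality $Z(\infty) < Y^{-1}(\infty)$ into an algebraic inequality on a single matrix entry, and then evaluate this entry by using the defining identity $XYZ = T$. Writing $Y^{-1}(\infty) = -y_{22}/y_{21}$ and $Z(\infty) = z_{11}/z_{21}$, both values are finite iff $y_{21} \neq 0$ and $z_{21} \neq 0$. Since $\mathrm{LMM}\mathbb T(k,\ell)$ is a full subtree of $\mathrm{MM}\mathbb T(k,\ell)$, every $Y$-slot and every $Z$-slot that appears is the middle component of some vertex of $\mathrm{MM}\mathbb T(k,\ell)$: this is immediate for $Y$, and follows for $Z$ by a one-line induction along the two local moves of the tree (left child overwrites $Z$ by the parent $Y$, right child keeps $Z$ unchanged, and the root of $\mathrm{LMM}\mathbb T(k,\ell)$ already has $Z = Y_{\ell}$). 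Lemma \ref{lem:y21<0} then gives $y_{21} < 0$ and $z_{21} < 0$.

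Because $y_{21} z_{21} > 0$, the inequality $z_{11}/z_{21} < -y_{22}/y_{21}$ is equivalent to $y_{21} z_{11} + y_{22} z_{21} < 0$, which is precisely $(YZ)_{21} < 0$. From $XYZ = T$ we have $YZ = X^{-1} T$, and a direct $2\times 2$ multiplication gives
\[
(YZ)_{21} \;=\; x_{21} + (3k+3)\, x_{11}.
\]
So the task is reduced to verifying $x_{21} + (3k+3) x_{11} < 0$ at every vertex $(X,Y,Z)$ of $\mathrm{LMM}\mathbb T(k,\ell)$ when $\ell \le 0$.

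I would finish with a case split on the $X$-slot according to the tree structure. Along the leftmost branch of $\mathrm{LMM}\mathbb T(k,\ell)$, the $X$-slot equals the initial matrix $X_{\ell}$ itself, for which $x_{11} = \ell$ and $x_{21} = -\ell^2 - k\ell - 1$; substituting yields $x_{21} + (3k+3) x_{11} = -\ell^2 + (2k+3)\ell - 1$, which is $\le -1$ for every $\ell \le 0$ and $k \ge 0$. For any other vertex, at least one right-child move $\sigma_2$ has been performed, after which the $X$-slot coincides with the middle component of some ancestor in $\mathrm{MM}\mathbb T(k,\ell)$; subsequent moves either preserve this $X$-slot (left child) or overwrite it with another such middle component (right child). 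Corollary \ref{cor:sign-Markov-monodromy}, applied for $\ell \le 0$, then gives $x_{11} < 0$ and $x_{21} < 0$, so the required inequality is automatic.

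The hard part is the first reduction: realizing that the two-variable comparison $Z(\infty) < Y^{-1}(\infty)$ collapses to the sign of a single entry of the product $YZ$, and that this entry can in turn be expressed purely in terms of the $X$-slot via $YZ = X^{-1}T$. Once this pivot is in place, the only delicate point is that $X_{\ell}$ itself is not the middle component of any vertex of $\mathrm{MM}\mathbb T(k,\ell)$, so it falls outside the generic sign pattern of Corollary \ref{cor:sign-Markov-monodromy}; this is what forces the separate elementary computation along the leftmost branch, but that computation is an easy quadratic estimate in $\ell \le 0$.
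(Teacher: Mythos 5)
Your proof is correct, and its first move is exactly the paper's: after securing $y_{21}<0$ and $z_{21}<0$, both arguments reduce $Z(\infty)<Y^{-1}(\infty)$ to the single sign condition $(YZ)_{21}<0$. Where you genuinely diverge is in how that sign is established. The paper runs an induction along $\mathrm{LMM}\mathbb T(k,\ell)$: the base case is the explicit product $\tilde Y_\ell\tilde Z_\ell$ (obtained by conjugation from $\ell=0$), the left child preserves $YZ$, and the right child yields $Y^{-1}XYZ=Y^{-1}T$, whose $(2,1)$-entry $y_{21}+(3k+3)y_{11}$ is negative by Corollary \ref{cor:sign-Markov-monodromy}. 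You instead apply $XYZ=T$ uniformly to write $(YZ)_{21}=x_{21}+(3k+3)x_{11}$ at every vertex and then classify the possible $X$-slots: on the leftmost branch the slot is $\tilde X_\ell$, disposed of by the estimate $-\ell^2+(2k+3)\ell-1\le -1$ for $\ell\le 0$, and otherwise the slot is the middle component of some vertex of $\mathrm{MM}\mathbb T(k,\ell)$, disposed of by the same sign corollary; similarly, your observation that every $Z$-slot of $\mathrm{LMM}\mathbb T(k,\ell)$ is a middle component replaces the paper's appeal to Remark \ref{rem:aboutXZ} for $z_{21}\neq 0$. The ingredients are the same, but your organization eliminates the induction and the base-case matrix product in favor of a short structural analysis of which matrices can occupy the $X$-slot, which is arguably cleaner. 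Incidentally, your value $-\ell^2+(2k+3)\ell-1$ is the correct $(2,1)$-entry of $\tilde Y_\ell\tilde Z_\ell$: the entry $2\ell(k+2)-1$ displayed in the paper's proof is a typo (the two agree only for $\ell\in\{0,-1\}$, and the paper's displayed matrix would not have determinant $1$), though this affects neither argument since both quantities are negative for all $\ell\le 0$.
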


\begin{proof}
 Since $y_{21}\neq 0$ and $z_{21}\neq 0$ by Lemma \ref{lem:y21<0} and Remark \ref{rem:aboutXZ}, $Y^{-1}(\infty)$ and $Z(\infty)$ are not $\infty$. 
 We note that $Z(\infty)=\dfrac{z_{11}}{z_{21}}$, and  $Y^{-1}(\infty)=-\dfrac{y_{22}}{y_{21}}$. Since
 \[Y^{-1}(\infty)-Z(\infty)=\dfrac{-y_{22}z_{21}-y_{21}z_{11}}{y_{21}z_{21}},\]
 and $y_{21}z_{21}>0$ by Lemma \ref{lem:y21<0} and Remark \ref{rem:aboutXZ}, it suffices to show that $-y_{22}z_{21}-y_{21}z_{11}>0$. Since $y_{22}z_{21}+y_{21}z_{11}$ is the $(2,1)$-entry of $YZ$, we will prove that it is negative. To prove that the $(2,1)$-entry of $YZ$ is negative for any $(X,Y,Z)$, we use the induction on distance from the root in $\mathrm{LMM}\mathbb T(k,\ell)$. First, we will check this for the root vertex. For $\ell=0$, we have
 \[\tilde{Y}_{0}\tilde{Z}_0=\begin{bmatrix}
     -2k-3&1\\-1&0
 \end{bmatrix},\]
and the $(2,1)$-entry of $YZ$ is negative. Since 
$\tilde{Y}_{\ell}\tilde{Z}_\ell=L^{-1}\tilde{Y}_{0}\tilde{Z}_0 L$ by Proposition \ref{Markov-monodromy-matrix-ltol'}, where $L=\begin{bmatrix}
    1&0\\\ell&1 
\end{bmatrix}$,  we have
\[\tilde{Y}_{\ell}\tilde{Z}_\ell=\begin{bmatrix}
     -2k-3+\ell&1\\2\ell(k+2)-1&-\ell
 \end{bmatrix}\]
for any $\ell\in \mathbb Z_{\leq 0}$. Therefore, we have $y_{22}z_{21}+y_{21}z_{11}=2\ell(k+2)-1<0$, as desired. 
We assume that $(X,Y,Z)\in \mathrm{LMM}\mathbb T(k,\ell)$ satisfies that the $(2,1)$-entry of $YZ$ is negative. Then, we will show that so does $(X,YZY^{-1},Y)$ and $(Y,Y^{-1}XY,Z)$. The former is clear because $YZY^{-1}Y=YZ$. We will prove the latter. Since $XYZ=T$, we have
\[Y^{-1}XYZ=Y^{-1}T=\begin{bmatrix}
    y_{22} &-y_{12} \\-y_{21} &y_{11}
\end{bmatrix}\begin{bmatrix}
    -1 &0 \\3k+3 &-1
\end{bmatrix}=\begin{bmatrix}
    \ast &\ast\\ y_{21}+y_{11}(3k+3) &\ast
\end{bmatrix}.\]
Since $y_{11}<0$ and $y_{21}<0$ by Corollary \ref{cor:sign-Markov-monodromy}, the $(2,1)$-entry of $Y^{-1}XYZ$ is negative. 
\end{proof}

The above lemma does not hold for $\ell\geq 1$. In the case $\ell\geq 1$, the following lemma holds instead of Lemma \ref{lem:Y-inf-Z-inf}: 

\begin{lemma}\label{lem:Y-0-Z-0}
We fix $\ell\in \mathbb Z_{\geq 1}$. For a vertex $(X,Y,Z)\in \mathrm{LMM}\mathbb T(k,\ell)$, $Y^{-1}(0)$ and $Z(0)$ are not $\infty$, and $Z(0)<Y^{-1}(0)$ holds.   
\end{lemma}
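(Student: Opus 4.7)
The proof will run exactly parallel to that of Lemma~\ref{lem:Y-inf-Z-inf}, with the role of the $(2,1)$-entry of $YZ$ replaced by the $(1,2)$-entry, and with the signs of $y_{11}, y_{22}, z_{11}, z_{22}$ furnished by Corollary~\ref{cor:sign-Markov-monodromy} (and the induction built into the tree structure, where any $X$ or $Z$ in a non-root vertex of $\mathrm{LMM}\mathbb T(k,\ell)$ occurs as a middle component of some earlier triple; the root $\tilde Z_\ell = Y_\ell$ is also literally a middle component).

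The plan is as follows. First I record that for $\ell\geq 1$ and $(X,Y,Z)\in \mathrm{LMM}\mathbb T(k,\ell)$, both $Y$ and $Z$ satisfy $y_{11},z_{11}>0$ and $y_{22},z_{22}<0$ by Corollary~\ref{cor:sign-Markov-monodromy}. In particular $y_{11}\neq 0$ and $z_{22}\neq 0$, so the M\"obius images
\[
Z(0)=\frac{z_{12}}{z_{22}},\qquad Y^{-1}(0)=-\frac{y_{12}}{y_{11}}
\]
are both finite, establishing the first assertion. Then a direct calculation gives
\[
Y^{-1}(0)-Z(0)=\frac{-y_{12}z_{22}-y_{11}z_{12}}{y_{11}z_{22}}=-\frac{(YZ)_{12}}{y_{11}z_{22}}.
\]
Since $y_{11}>0$ and $z_{22}<0$, the denominator is negative, so the inequality $Z(0)<Y^{-1}(0)$ is equivalent to $(YZ)_{12}>0$.

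The remaining step is to prove $(YZ)_{12}>0$ for every $(X,Y,Z)\in \mathrm{LMM}\mathbb T(k,\ell)$ by induction on the distance from the root. For the base case I use $XYZ=T$, which gives $\tilde Y_\ell\tilde Z_\ell = Y_\ell Z_\ell = X_\ell^{-1}T$. Using $X_\ell=\bigl[\begin{smallmatrix}\ell & 1\\ -\ell^2-k\ell-1 & -k-\ell\end{smallmatrix}\bigr]$ one computes
\[
X_\ell^{-1}T=\begin{bmatrix}-2k+\ell-3 & 1\\ -\ell^2+2k\ell+3\ell-1 & -\ell\end{bmatrix},
\]
so the $(1,2)$-entry is $1>0$. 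For the inductive step, suppose the claim holds at $(X,Y,Z)$. For the left child $(X,YZY^{-1},Y)$ the relevant product collapses to $(YZY^{-1})\cdot Y=YZ$, so the $(1,2)$-entry is unchanged. For the right child $(Y,Y^{-1}XY,Z)$ one has $(Y^{-1}XY)Z=Y^{-1}XYZ=Y^{-1}T$, whose $(1,2)$-entry is $(Y^{-1}T)_{12}=y_{12}>0$ by the definition of a $k$-MM matrix. This closes the induction.

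I do not anticipate a genuine obstacle: the argument is the mirror image of the proof of Lemma~\ref{lem:Y-inf-Z-inf}, with the indices of the entries of $YZ$ swapped, because $M(0)$ uses the second column of $M$ whereas $M(\infty)$ uses the first column. The only point to double-check is the base case computation of $(YZ)_{12}$ for the root, which is uniform in $\ell$ because the $(1,2)$-entry of $X_\ell$ equals $1$ for every $\ell\in\mathbb Z$.
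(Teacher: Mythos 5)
Your proof is correct and is exactly the argument the paper intends: the paper omits the proof of this lemma as being almost the same as that of Lemma \ref{lem:Y-inf-Z-inf}, and your reduction to the positivity of the $(1,2)$-entry of $YZ$ (using $y_{11}>0$, $z_{22}<0$ from Corollary \ref{cor:sign-Markov-monodromy}), with base case $\tilde Y_\ell\tilde Z_\ell=Y_\ell Z_\ell=X_\ell^{-1}T$ and inductive step $(YZY^{-1})Y=YZ$, $(Y^{-1}XY)Z=Y^{-1}T$ with $(Y^{-1}T)_{12}=y_{12}>0$, mirrors that proof with $0$ in place of $\infty$. One minor remark: your parenthetical claim that every $X$-component of a non-root vertex of $\mathrm{LMM}\mathbb T(k,\ell)$ occurs as a middle component of an earlier triple is not accurate (the leftmost branch keeps $X=X_{1;\ell}$), but this is harmless since your argument only uses the signs of entries of $Y$ and $Z$, for which the justification is sound.
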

We will omit the proof because it is almost the same as that of Lemma \ref{lem:Y-inf-Z-inf}.

The next two Lemmas are related to M\"obius transformations by $X$ and $Y$. The proofs are similar to those of Lemma \ref{lem:Y-inf-Z-inf}, so they will be omitted.

\begin{lemma}\label{lem:Y-0-X-0}
We fix $\ell\in \mathbb Z_{\leq 0}$. For a vertex $(X,Y,Z)\in \mathrm{LMM}\mathbb T(k,\ell)$, if $X\neq\tilde{X}_0$, then $X^{-1}(0)$ and $Y(0)$ are not $\infty$, and $Y(0)<X^{-1}(0)$ holds. 
\end{lemma}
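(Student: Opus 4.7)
The plan is to follow the structure set by Lemma~\ref{lem:Y-inf-Z-inf}, but the argument here collapses to a one-line sign count because the identity $XYZ=T$ directly produces the entry we need, with no induction required.

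First I would write out $X^{-1}(0)=-x_{12}/x_{11}$ and $Y(0)=y_{12}/y_{22}$, so the question of finiteness reduces to the non-vanishing of $x_{11}$ and $y_{22}$. Corollary~\ref{cor:sign-Markov-monodromy} immediately gives $y_{22}>0$ for $\ell\leq 0$. For $x_{11}$, a short case analysis is required: the first component of a vertex of $\mathrm{LMM}\mathbb{T}(k,\ell)$ is either $\tilde{X}_\ell=X_{1;\ell}$ (precisely when the vertex lies on the leftmost branch from the root) or it coincides with the middle component $Y_v$ of some proper ancestor $v$ (as soon as the path to the root uses at least one right edge). In the first case $x_{11}=\ell\leq 0$, with equality iff $\ell=0$, that is, iff $X=\tilde{X}_0$. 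In the second case $X$ is itself the middle component of a $k$-MM triple in $\mathrm{MM}\mathbb{T}(k,\ell)$, so Corollary~\ref{cor:sign-Markov-monodromy} yields $x_{11}<0$. Thus the hypothesis $X\neq\tilde{X}_0$ forces $x_{11}<0$, and both $X^{-1}(0)$ and $Y(0)$ lie in $\mathbb{R}$.

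The crucial computation is then
\[
X^{-1}(0)-Y(0)\;=\;-\frac{x_{12}y_{22}+x_{11}y_{12}}{x_{11}y_{22}}\;=\;-\frac{(XY)_{12}}{x_{11}y_{22}}.
\]
From $XYZ=T$ I substitute $XY=TZ^{-1}$, and a one-line matrix multiplication gives $(TZ^{-1})_{12}=z_{12}$. Since $z_{12}$ is a positive $k$-GM number and $x_{11}y_{22}<0$, the displayed expression is strictly positive, which is exactly the claimed inequality $Y(0)<X^{-1}(0)$.

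The only delicate step is the case analysis for the sign of $x_{11}$, which is precisely what forces the exceptional hypothesis $X\neq\tilde{X}_0$: this is the unique vertex of $\mathrm{LMM}\mathbb{T}(k,0)$ whose first component has $x_{11}=0$, producing $X^{-1}(0)=\infty$. Everything else reduces to reading signs off Corollary~\ref{cor:sign-Markov-monodromy} together with Remark~\ref{rem:aboutXZ}, combined with the single identity $XYZ=T$. Consequently I do not expect any genuine obstacle; the proof can be written in the same compact style as Lemmas~\ref{lem:Y-inf-Z-inf} and~\ref{lem:Y-0-Z-0}.
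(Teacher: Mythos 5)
Your proof is correct, and it reaches the conclusion by a more direct route than the one the paper intends. The paper omits this proof, indicating it should follow the template of Lemma~\ref{lem:Y-inf-Z-inf}: reduce the inequality to the sign of one entry of a product matrix and then establish that sign by induction on the distance from the root of $\mathrm{LMM}\mathbb{T}(k,\ell)$ (root case plus the two children, exactly as done there for the $(2,1)$-entry of $YZ$). You avoid that induction entirely: writing $X^{-1}(0)-Y(0)=-\,(XY)_{12}/(x_{11}y_{22})$ and substituting $XY=TZ^{-1}$ gives $(XY)_{12}=z_{12}$, which is a $k$-GM number and hence positive, so the only remaining work is the sign bookkeeping $y_{22}>0$ and $x_{11}<0$. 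Your dichotomy for the first component (either $X=\tilde X_\ell$, where $x_{11}=\ell$, or $X$ is the middle component of an ancestor vertex of $\mathrm{MM}\mathbb{T}(k,\ell)$, where $x_{11}<0$ by Corollary~\ref{cor:sign-Markov-monodromy}) is exactly the role Remark~\ref{rem:aboutXZ} plays in the paper, and it correctly isolates $X=\tilde X_0$ as the only excluded case for $\ell\leq 0$. Both arguments ultimately rest on Corollary~\ref{cor:sign-Markov-monodromy} (itself proved inductively); what yours buys is a one-line, induction-free treatment of the key entry, while the paper's template has the virtue of being uniform across Lemmas~\ref{lem:Y-inf-Z-inf}--\ref{lem:X-inf-Y-inf}. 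All the computations check out ($X^{-1}(0)=-x_{12}/x_{11}$, $Y(0)=y_{12}/y_{22}$, $(TZ^{-1})_{12}=z_{12}$), so I see no gap.
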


\begin{lemma}\label{lem:X-inf-Y-inf}
We fix $\ell\in \mathbb Z_{\geq 1}$. For a vertex $(X,Y,Z)\in \mathrm{LMM}\mathbb T(k,\ell)$, $X^{-1}(\infty)$ and $Y(\infty)$ are not $\infty$, and $Y(\infty)<X^{-1}(\infty)$ holds.   
\end{lemma}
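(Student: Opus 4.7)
The plan is to mirror the proof of Lemma~\ref{lem:Y-inf-Z-inf}, swapping the roles of $(Y,Z)$ and $\infty$ there for $(X,Y)$ and $\infty$ here, and using Corollary~\ref{cor:sign-Markov-monodromy} in place of Lemma~\ref{lem:y21<0}. The first step is to rule out $\infty$ as an image. Since $(X,Y,Z)\in \mathrm{LMM}\mathbb{T}(k,\ell)\subset \mathrm{MM}\mathbb{T}(k,\ell)$ with $\ell\geq 1$, Corollary~\ref{cor:sign-Markov-monodromy} yields $y_{21}<0$. For $X$, I would observe that either the vertex is the root of $\mathrm{LMM}\mathbb{T}(k,\ell)$, where $X=\tilde{X}_\ell=X_\ell$ and Remark~\ref{rem:aboutXZ} gives $x_{21}<0$ for $\ell\geq 1$, or the vertex has received at least one right-child step, in which case $X$ coincides with the $Y$-component of an ancestor triple in $\mathrm{MM}\mathbb{T}(k,\ell)$ and Corollary~\ref{cor:sign-Markov-monodromy} again gives $x_{21}<0$. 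Hence $X^{-1}(\infty)=-x_{22}/x_{21}$ and $Y(\infty)=y_{11}/y_{21}$ are genuine real numbers.

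Next I would rewrite
\[
X^{-1}(\infty)-Y(\infty)=\frac{-x_{22}y_{21}-x_{21}y_{11}}{x_{21}y_{21}}=-\frac{(XY)_{21}}{x_{21}y_{21}}.
\]
The denominator is positive by the previous paragraph, so the inequality $Y(\infty)<X^{-1}(\infty)$ reduces to showing that the $(2,1)$-entry of $XY$ is strictly negative for every $(X,Y,Z)\in \mathrm{LMM}\mathbb{T}(k,\ell)$.

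I would prove this last claim by induction on depth in $\mathrm{LMM}\mathbb{T}(k,\ell)$. For the base case $(\tilde{X}_\ell,\tilde{Y}_\ell,\tilde{Z}_\ell)=(X_\ell, Y_\ell Z_\ell Y_\ell^{-1}, Y_\ell)$, the identity $X_\ell Y_\ell Z_\ell=T$ gives
\[
\tilde{X}_\ell\tilde{Y}_\ell=X_\ell\cdot Y_\ell Z_\ell Y_\ell^{-1}=TY_\ell^{-1},
\]
whose $(2,1)$-entry equals $(3k+3)(Y_\ell)_{22}+(Y_\ell)_{21}$; Corollary~\ref{cor:sign-Markov-monodromy} applied at the root of $\mathrm{MM}\mathbb{T}(k,\ell)$ gives $(Y_\ell)_{21},(Y_\ell)_{22}<0$ for $\ell\geq 1$, so this is negative. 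For the inductive step, suppose $(XY)_{21}<0$. For the left child $(X,YZY^{-1},Y)$, the new product is $X\cdot YZY^{-1}=(XYZ)Y^{-1}=TY^{-1}$, whose $(2,1)$-entry is $(3k+3)y_{22}+y_{21}<0$ by Corollary~\ref{cor:sign-Markov-monodromy} (applied to the current $Y$). For the right child $(Y,Y^{-1}XY,Z)$, the new product collapses to $Y\cdot Y^{-1}XY=XY$, so the $(2,1)$-entry is unchanged and the inductive hypothesis carries over.

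The only subtle point, which I expect to be the main obstacle, is the book-keeping in the very first paragraph: one must verify that the $X$-component of an arbitrary vertex of $\mathrm{LMM}\mathbb{T}(k,\ell)$ either is the distinguished matrix $X_\ell$ or appears as a $Y$-component in $\mathrm{MM}\mathbb{T}(k,\ell)$, so that Corollary~\ref{cor:sign-Markov-monodromy} and Remark~\ref{rem:aboutXZ} together control $x_{21}$. Once this is in place, the remainder is a direct $2\times 2$ calculation, entirely parallel to the argument of Lemma~\ref{lem:Y-inf-Z-inf}.
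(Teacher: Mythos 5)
Your proof is correct and is essentially the mirrored argument the paper intends (it omits the proof of Lemma \ref{lem:X-inf-Y-inf} as being ``almost the same'' as that of Lemma \ref{lem:Y-inf-Z-inf}): you reduce the inequality to negativity of $(XY)_{21}$, note that right children preserve $XY$ while left children give $X(YZY^{-1})=TY^{-1}$ with $(2,1)$-entry $(3k+3)y_{22}+y_{21}<0$ by Corollary \ref{cor:sign-Markov-monodromy}, and your bookkeeping for $x_{21}<0$ via Remark \ref{rem:aboutXZ} is exactly the paper's own observation. The only cosmetic difference is that you handle the root via $TY_\ell^{-1}$ and the sign corollary rather than by an explicit matrix computation plus conjugation by $L$, which is fine.
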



\section{Parabolic case $k=2$ and classical Markov numbers}

In this section, we consider the case $k=2$. Let $X$ be a $2$-MM matrix. We regard $X$ as a M\"obius transformation, and we act it on $\mathbb RP^1$. Since $(\mathrm{tr} X)^2=4$, $X$ is of parabolic type. Therefore, $X$ has only one fixed point $p_{X}$ in $\mathbb RP^1$.

\subsection{Fixed points and classical Markov numbers}
In this subsection, we calculate fixed points of $2$-MM matrices and introduce relation between fixed points and classical Markov numbers ($0$-GM numbers).
\begin{proposition}\label{prop:fixed-point}
 Let $X$ be a $2$-MM matrix included in $\mathrm{MM}\mathbb T(2,\ell)$ with $x_{21}\neq 0$ and $p_X$ the fixed point of $X$ in $\mathbb R$. Then, we have
 \[p_X=\begin{cases}
     \sqrt{-\dfrac{x_{12}}{x_{21}}} \quad &\text{if $x_{11}<x_{22}$},\vspace{2mm}\\
     -\sqrt{-\dfrac{x_{12}}{x_{21}}} \quad &\text{if $x_{11}>x_{22}$}.
 \end{cases}\]
\end{proposition}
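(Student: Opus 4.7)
The plan is a direct computation of the fixed point of the Möbius transformation $X(z) = (x_{11}z + x_{12})/(x_{21}z + x_{22})$, combined with sign tracking from the sign results in Lemma~\ref{lem:y21<0} and Remark~\ref{rem:aboutXZ}.

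First I would set up the fixed-point equation. The condition $X(z) = z$ (for $z \neq \infty$) is equivalent, after clearing denominators, to
\[
x_{21}z^2 + (x_{22} - x_{11})z - x_{12} = 0.
\]
Since $x_{21} \neq 0$ by hypothesis, this is a genuine quadratic. Next, I would use the parabolic condition: because $X \in SL(2,\mathbb Z)$ satisfies $\mathrm{tr}(X) = -k = -2$, we have $x_{11} + x_{22} = -2$ and $x_{11}x_{22} - x_{12}x_{21} = 1$. The discriminant of the quadratic is
\[
(x_{22} - x_{11})^2 + 4x_{21}x_{12} = (x_{11}+x_{22})^2 - 4(x_{11}x_{22} - x_{12}x_{21}) = 4 - 4 = 0,
\]
confirming that there is a unique fixed point
\[
p_X = \frac{x_{11} - x_{22}}{2x_{21}}.
\]
From the vanishing of the discriminant, $(x_{11} - x_{22})^2 = -4x_{12}x_{21}$, and hence
\[
p_X^2 = \frac{(x_{11} - x_{22})^2}{4x_{21}^2} = -\frac{x_{12}}{x_{21}},
\]
so $|p_X| = \sqrt{-x_{12}/x_{21}}$ (in particular $x_{12}/x_{21} \leq 0$, which is consistent with $x_{12}>0$ since $x_{12}$ is a $k$-GM number).

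It remains to pin down the sign, and this is where the sign lemmas come in. I need $x_{21} < 0$: if $X$ is the middle entry of some triple in $\mathrm{MM}\mathbb T(2,\ell)$ this is Lemma~\ref{lem:y21<0}, and if $X$ arises as a first or third entry then either $X$ coincides with the middle entry of another triple in the tree (and the same lemma applies) or $X = \tilde X_\ell$ or $\tilde Z_\ell = \tilde X_{-\ell-1}$, for which the signs are listed case by case in Remark~\ref{rem:aboutXZ}; in every case with $x_{21} \neq 0$ one reads off $x_{21} < 0$. Once $x_{21} < 0$ is established, the expression $p_X = (x_{11} - x_{22})/(2x_{21})$ has sign opposite to $x_{11} - x_{22}$: when $x_{11} < x_{22}$ we get $p_X > 0$, hence $p_X = +\sqrt{-x_{12}/x_{21}}$; when $x_{11} > x_{22}$ we get $p_X < 0$, hence $p_X = -\sqrt{-x_{12}/x_{21}}$.

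I do not anticipate a substantive obstacle: the only nontrivial input is the sign $x_{21} < 0$, which is already available from earlier in the paper. The computation itself is a three-line exercise using $\mathrm{tr}(X) = -2$ and $\det X = 1$.
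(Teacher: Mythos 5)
Your proof is correct and follows essentially the same route as the paper: set up the fixed-point quadratic, use $\mathrm{tr}(X)=-2$ and $\det X=1$ to see the discriminant vanishes (the paper phrases this as factoring the quadratic into a perfect square $(\sqrt{-x_{21}}\,p_X\mp\sqrt{x_{12}})^2$), and read off the sign from $x_{21}<0$, which the paper likewise takes from Lemma~\ref{lem:y21<0} and Remark~\ref{rem:aboutXZ}. Your intermediate formula $p_X=(x_{11}-x_{22})/(2x_{21})$ is exactly the paper's Lemma~\ref{lem:fixed-point2}, so the two arguments differ only in bookkeeping.
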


\begin{proof}
 The fixed point $p_X$ satisfies $-x_{21}p_X^2+(x_{11}-x_{22})p_X+x_{12}=0$ (we note that $x_{21}<0$). Since $X$ is of parabolic type, there is the unique solution to $-x_{21}z^2+(x_{11}-x_{22})z+x_{12}=0$. Therefore, we get $(\sqrt{-x_{21}}p_X-\sqrt{x_{12}})^2=0$ or $(\sqrt{-x_{21}}p_X+\sqrt{x_{12}})^2=0$. If $x_{11}<x_{22}$ holds, then we have the former, and otherwise, we have the latter.  
\end{proof}

\begin{remark}
    When $x_{21}=0$, the fixed point of $X$ is $\infty$ in $\mathbb R P^1$. There is the unique $2$-MM matrix in $\cup_\ell\mathrm{MM}\mathbb T(2,\ell)$ such that $x_{21}=0$. Indeed, since Proposition \ref{lem:y21<0} and the generation rule of $\mathrm{MM}\mathbb T(2,\ell)$, only ${X}_{1;-1}(=Y_{1;k}=Z_{1;2k+1})$ meets the condition (see also Remark \ref{rem:aboutXZ}).
\end{remark}

Comparing the coefficient of $p_X$ of $-x_{21}p_X^2+(x_{11}-x_{22})p_X+x_{12}$ and $(\sqrt{-x_{21}}p_X\pm\sqrt{x_{12}})^2$, we see that $\sqrt{-x_{12}x_{21}}$ is an integer (note that since $x_{11}+x_{12}=-2$, $x_{11}-x_{12}$ is also even). Moreover, the following fact is known:

\begin{proposition}[\cite{gyomatsu}*{Theorem 11}]\label{squre-markov}
If $(a,b,c)$ is a $0$-GM triple, then $(a^2,b^2,c^2)$ is a $2$-GM triple. Conversely, if $(A,B,C)$ is a $2$-GM triple, then $(\sqrt{A},\sqrt{B},\sqrt{C})$ is a $0$-GM triple. Moreover, the correspondence $(a,b,c)\mapsto(a^2,b^2,c^2)$ induces the canonical graph isomorphism from $\mathrm{M}\mathbb{T}(0)$ to $\mathrm{M}\mathbb{T}(2)$. 
\end{proposition}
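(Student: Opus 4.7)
My plan is to reduce everything to two ingredients: squaring the $0$-GM equation produces the $2$-GM equation, and the squaring map intertwines the Vieta-jumping rules of the trees $\mathrm{M}\mathbb{T}(0)$ and $\mathrm{M}\mathbb{T}(2)$.

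For the first (straightforward) implication, assume $(a,b,c)$ is a $0$-GM triple, so $a^2+b^2+c^2 = 3abc$. Squaring this identity and using $(a^2+b^2+c^2)^2 = a^4+b^4+c^4 + 2(a^2b^2+b^2c^2+c^2a^2)$ yields
\[
(a^2)^2 + (b^2)^2 + (c^2)^2 + 2\bigl(b^2c^2 + c^2a^2 + a^2b^2\bigr) = 9\,a^2 b^2 c^2,
\]
which is exactly $\mathrm{GME}(2)$ evaluated at $(a^2,b^2,c^2)$. Hence $(a^2,b^2,c^2)$ is a positive integer solution to $\mathrm{GME}(2)$, i.e.\ a $2$-GM triple.

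Next I would check that $\sigma \colon (a,b,c) \mapsto (a^2,b^2,c^2)$ intertwines the generation rules of $\mathrm{M}\mathbb{T}(0)$ and $\mathrm{M}\mathbb{T}(2)$. Rearranging the $0$-GM equation gives $a^2 + b^2 = c(3ab-c)$, so $c \mid a^2+b^2$ and the left-jumping output $c' := (a^2+b^2)/c$ is a positive integer; then $(c')^2 = (a^2+b^2)^2/c^2$ matches the $k=2$ left-jumping formula $\bigl((a^2)^2 + 2(a^2)(b^2) + (b^2)^2\bigr)/c^2$ applied to the squared triple. The right jumping is symmetric. Since the roots match, $(1,2,1) \mapsto (1,4,1)$, and squaring is strictly monotone on $\ZZ_{>0}$ so preserves the strictly-maximal-middle condition, a depth induction shows that $\sigma$ is a well-defined canonical graph homomorphism $\mathrm{M}\mathbb{T}(0) \to \mathrm{M}\mathbb{T}(2)$.

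Finally, injectivity of $\sigma$ is immediate from injectivity of squaring on $\ZZ_{>0}$, and surjectivity follows by a parallel induction: the root of $\mathrm{M}\mathbb{T}(2)$ lies in the image, and if a vertex of $\mathrm{M}\mathbb{T}(2)$ lies in the image, so do its two children, by the intertwining established above. Hence $\sigma$ is a canonical graph isomorphism. For the converse applied to a general $2$-GM triple $(A,B,C)$ without strictly maximal middle, either it can be permuted into $\mathrm{M}\mathbb{T}(2)$ and the result follows, or by Proposition \ref{singular-solution} it is one of the exceptional triples $(1,1,1), (4,1,1), (1,4,1), (1,1,4)$, each of which is visibly the squaring of a $0$-GM triple. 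The main conceptual step is the converse direction---every $2$-GM integer is a perfect square---which is not shown by a direct number-theoretic argument but rather extracted from the tree-level surjectivity of $\sigma$.
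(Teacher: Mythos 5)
Your argument is correct. Note that the paper itself gives no proof of this proposition --- it is quoted from \cite{gyomatsu}*{Theorem 11} --- so there is no internal proof to compare against; judged on its own, your proof is complete and uses exactly the ingredients available in the paper. The squaring computation correctly turns $a^2+b^2+c^2=3abc$ into $\mathrm{GME}(2)$ at $(a^2,b^2,c^2)$; the intertwining of the jumpings is right, since for $k=2$ the left child's middle entry is $\bigl((A+B)^2\bigr)/C=\bigl((a^2+b^2)/c\bigr)^2$, and the roots $(1,2,1)\mapsto(1,4,1)$ match; and the converse is legitimately extracted from tree-level surjectivity via Proposition \ref{prop:all-markov} (i.e.\ Theorem \ref{Diophantinetheorem}), with the repeated-entry triples disposed of by Proposition \ref{singular-solution}. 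Two cosmetic remarks: the observation that squaring preserves the strictly-maximal-middle condition is not actually needed, because both trees are defined purely by the root and the two generation rules, so the induction on tree addresses already gives the canonical isomorphism; and in the permutation step of the converse you should add the one-line remark that $\mathrm{GME}(0)$ is symmetric, so that the unpermuted $(\sqrt{A},\sqrt{B},\sqrt{C})$ is a $0$-GM triple once some permutation of it is. Neither point is a gap.
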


By Proposition \ref{squre-markov}, $\sqrt{x_{12}}$ is an integer (in particular, a classical Markov number) and thus $\sqrt{-x_{21}}$ is also an integer. Let us look further at the relation between these two numbers. Before describing proposition, we extend the definition of relatively prime. 

\begin{definition}
Non-negative integers $a$ and $b$ with $(a,b)\neq (0,0)$ are said to be \emph{relatively prime} if there are no $a'$ and $b'\in \mathbb Z_{\geq 0}$ such that $ca'=a$ and $cb'=b$ for any $c\in \mathbb Z_{>1}$.
\end{definition}

If $a$ and $b$ are both positive integers, then the above definition is the same as the usual sense. We consider the case $a=0$. If $b=1$, then $a$ and $b$ are relatively prime, and otherwise, $a$ and $b$ are not relatively prime.

\begin{proposition}\label{prop:relative-prime-y12-y21}
For $(X,Y,Z)\in \mathrm{MM}\mathbb T(2,\ell)$, 
\begin{itemize}\setlength{\leftskip}{-15pt}
    \item [(1)] $\sqrt{x_{12}}$ and $\sqrt{-x_{21}}$ are relative prime,
    \item [(2)] $\sqrt{y_{12}}$ and $\sqrt{-y_{21}}$ are relative prime,
    \item [(3)] $\sqrt{z_{12}}$ and $\sqrt{-z_{21}}$ are relative prime.
\end{itemize}
\end{proposition}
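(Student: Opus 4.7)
My plan is to do induction on the depth in $\mathrm{MM}\TT(2,\ell)$, using a single algebraic identity as the main tool. For any $M\in SL(2,\ZZ)$ with $\mathrm{tr}(M)=-2$, combining $\det M=1$ with $m_{22}=-2-m_{11}$ yields $-m_{12}m_{21}=(m_{11}+1)^2$. In particular, whenever $m_{12}>0$ and $m_{21}<0$ (which holds for each component of a triple in $\mathrm{MM}\TT(2,\ell)$ by Lemma~\ref{lem:y21<0} and Remark~\ref{rem:aboutXZ}), one has
\[
\sqrt{m_{12}}\cdot\sqrt{-m_{21}}=|m_{11}+1|.
\]

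From this identity I will extract a conjugation lemma: if $M'=WMW^{-1}$ in $SL(2,\ZZ)$ and $M$ satisfies the coprime property, then so does $M'$ (provided the sign conditions hold on both). Indeed, a prime $p$ dividing both $\sqrt{m'_{12}}$ and $\sqrt{-m'_{21}}$ would force $p^2\mid m'_{12}$, $p^2\mid m'_{21}$, and hence by the identity $p^2\mid m'_{11}+1$; together with $m'_{11}+m'_{22}=-2$ this gives $M'\equiv -I\pmod{p^2}$. Conjugating back, $M=W^{-1}M'W\equiv -I\pmod{p^2}$, so $p^2\mid m_{12}$ and $p^2\mid m_{21}$, i.e.\ $p\mid\sqrt{m_{12}}$ and $p\mid\sqrt{-m_{21}}$, contradicting the assumption on $M$. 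The argument is uniform in $p$, handling $p=2$ with no case analysis because squaring is built into the divisibility from the start.

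The base case is a direct verification at the root $(\tilde X_\ell,\tilde Y_\ell,\tilde Z_\ell)$: one reads off $(\sqrt{x_{12}},\sqrt{-x_{21}})=(1,|\ell+1|)$, $(\sqrt{y_{12}},\sqrt{-y_{21}})=(2,|2\ell-1|)$, and $(\sqrt{z_{12}},\sqrt{-z_{21}})=(1,|\ell-2|)$, all coprime (the middle pair because $2\ell-1$ is odd; the other two trivially). For the inductive step, the children $(X,YZY^{-1},Y)$ and $(Y,Y^{-1}XY,Z)$ each differ from the parent $(X,Y,Z)$ only in the middle slot, and that new matrix is an $SL(2,\ZZ)$-conjugate of a component of the parent's triple. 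The sign conditions $m_{12}>0$, $m_{21}<0$ are preserved throughout $\mathrm{MM}\TT(2,\ell)$ by Lemma~\ref{lem:y21<0} and Remark~\ref{rem:aboutXZ}, so the conjugation lemma applies and the coprime property propagates to all of $\mathrm{MM}\TT(2,\ell)$.

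The only place I would have expected subtlety is the prime $p=2$ in the conjugation lemma, but because the identity delivers $p^2\mid m'_{11}+1$ directly, the mod-$p^2$ reduction is immediate and no parity bookkeeping is needed.
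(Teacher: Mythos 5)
Your proof is correct, and its skeleton agrees with the paper's: the coprimality property only needs to be transported through the tree moves, each of which replaces the middle slot by an $SL(2,\ZZ)$-conjugate of a component of the parent triple, so everything reduces to a check at the root. What differs is the mechanism of transport and the direction of travel. The paper argues upstream by contradiction: a common divisor $c>1$ of $\sqrt{y_{12}}$ and $\sqrt{-y_{21}}$ divides $y_{11}-y_{22}$, and explicit formulas for the entries of $Z^{-1}YZ$ and $XYX^{-1}$ show that $c$ then divides the $(1,2)$- and $(2,1)$-entries of these conjugates; iterating $\sigma_1$ and $\sigma_2^{-1}$ pushes this divisor up to the root $(X_\ell,Y_\ell,Z_\ell)$, whose entries $1$, $-(\ell+1)^2$, $2^2$, $-(2\ell-1)^2$, $1$, $-(\ell-2)^2$ rule it out. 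You go downward instead and replace the entry computations by the observation that $p\mid\sqrt{m'_{12}}$ and $p\mid\sqrt{-m'_{21}}$ force $M'\equiv -I \pmod{p^2}$, via $-m_{12}m_{21}=(m_{11}+1)^2$ and $\mathrm{tr}=-2$; since that congruence is manifestly conjugation-invariant, the transfer step is computation-free, which is the main thing your route buys, at the cost of implicitly using that $m'_{12}$ and $-m'_{21}$ are perfect squares (this is supplied by the discussion preceding the proposition via Proposition \ref{squre-markov}, or directly by your identity once $m'_{12}$ is a square). Two small repairs: the triple you verify at the base is actually $(X_\ell,Y_\ell,Z_\ell)$, the root of $\mathrm{MM}\TT(2,\ell)$ --- in the paper's notation $(\tilde X_\ell,\tilde Y_\ell,\tilde Z_\ell)$ is the root of the subtree $\mathrm{LMM}\TT(2,\ell)$, whose middle slot would give the pair $(5,|5\ell-1|)$ rather than $(2,|2\ell-1|)$ --- so relabel the root but keep your numbers; and the blanket claim that $m_{21}<0$ for every component has the single exception $X_{1;-1}$ (occurring only for $\ell=-1$), where $x_{21}=0$; there $\sqrt{-x_{21}}=0$, the paper's extended notion of relative primality forces $\sqrt{x_{12}}=1$, and your contradiction $p^2\mid x_{12}$ still lands, so the lemma survives, but this case should be stated rather than swept under the cited sign conditions.
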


\begin{proof}
We only prove (2). We assume that $\sqrt{y_{12}}$ and $\sqrt{-y_{21}}$ have a non-trivial common divisor $c\in \mathbb Z_{>1}$, that is, there exists $a',b'\in \mathbb Z_{\geq 0}$ such that $da'=\sqrt{y_{12}}$ and $cb'=\sqrt{-y_{21}}$. Since $y_{11}-y_{22}=\pm 2\sqrt{-y_{12}y_{21}}=\pm2c^2a'b'$, $c$ is a divisor of $|y_{11}-y_{22}|$.  Now, we set
 \[Z^{-1}YZ=\begin{bmatrix}
     \alpha_{11}&\alpha_{12}\\ \alpha_{21}&\alpha_{22}
 \end{bmatrix},\quad XYX^{-1}=\begin{bmatrix}
     \beta_{11}&\beta_{12}\\ \beta_{21}&\beta_{22}
 \end{bmatrix}.\]

Then, we have
\begin{align*}
\alpha_{12}&=z_{12}z_{22}y_{11}-z_{12}^2y_{21}+z_{22}^2y_{12}-z_{12}z_{22}y_{22}=(z_{12}z_{22})(y_{11}-y_{22})-z_{12}^2y_{21}+z_{22}^2y_{12}, \\  
\alpha_{21}&=z_{11}z_{21}y_{11}-z_{11}^2y_{21}+z_{21}^2y_{12}-z_{11}z_{21}y_{22}=(z_{11}z_{21})(y_{11}-y_{22})-z_{11}^2y_{21}+z_{21}^2y_{12}, \\
\beta_{12}&=-x_{11}x_{12}y_{11}-x_{12}^2y_{21}+x_{11}^2y_{12}+x_{11}x_{12}y_{22}=-(x_{11}x_{12})(y_{11}-y_{22})-x_{12}^2y_{21}+x_{11}^2y_{12},\\
\beta_{21}&=-x_{21}x_{22}y_{11}-x_{22}^2y_{21}+x_{21}^2y_{12}+x_{21}x_{22}y_{22}=-(x_{21}x_{22})(y_{11}-y_{22})-x_{22}^2y_{21}+x_{21}^2y_{12}.
\end{align*}
 Therefore, $c$ is a common divisor of $\alpha_{12}$ and $-\alpha_{21}$ (resp. $\beta_{12}$ and $-\beta_{21}$). We consider going upstream from $(X,Y,Z)$ to the root in $\mathrm{MM}\mathbb T(2,\ell)$. We apply $\sigma_1\colon (X',Y',Z')\mapsto (X',Z',Z'^{-1}Y'Z')$ or $\sigma_2^{-1}\colon (X',Y',Z')\mapsto (X'Y'X'^{-1},X',Z')$ to $(X,Y,Z)$ repeatedly. If there exists a matrix in $(X',Y',Z')$ such that the $(1,2)$-entry and (the absolute value of) the $(2,1)$-entry of it have a common divisor $c$, then so do $\sigma_1 (X',Y',Z')$ and $\sigma_2^{-1}(X',Y',Z')$ by the previous argument. Therefore, by assumption, there exists a matrix in the root $({X}_\ell,{Y}_\ell,{Z}_\ell)$ such that the $(1,2)$-entry and (the absolute value of) the $(2,1)$-entry of it have a common divisor $c$. However, since
 \[X_\ell=\begin{bmatrix}\ell& 1^2\\ -(\ell+1)^2 &-\ell-2
     \end{bmatrix}, \ Y_\ell=\begin{bmatrix}
     4\ell-3& 2^2\\ -(2\ell-1)^2 &-4\ell+1
 \end{bmatrix},Z_\ell=\begin{bmatrix}
     \ell-3& 1^2\\ -(\ell-2)^2 &-\ell+1
 \end{bmatrix},\]
it is a contradiction.
\end{proof}

We will define the irreducible fraction.

\begin{definition}\label{irreducible-fraction}
Let $q\in\QQ_{\geq 0}\cup\{\infty\}$ and $n$ and $d\in\ZZ_{\geq 0}$. The symbol $\dfrac{n}{d}$ is called the \emph{reduced expression} of $q$ if $n$ and $d$ are relatively prime and $q = \dfrac{n}{d}$, where $\dfrac{n}{d}$ is regarded as $\infty$ when $d = 0$ and $n > 0$.
Moreover, a fraction $\dfrac{n}{d}$ is said to be \emph{irreducible} if there exists $q\in\QQ_{\geq 0}\cup\{\infty\}$ such that $\dfrac{n}{d}$ is the reduced expression of $q$.
\end{definition}

From the above argument, we can see the following relation between $\mathrm{MM}\mathbb T(2,\ell)$ and $\mathrm{M}\mathbb T(0)$.

\begin{corollary}\label{parabolic-markov}
For $(X,Y,Z) \in \mathrm{MM}\mathbb T(2,\ell)$, we denote by $\left(\dfrac{p}{p'},\dfrac{q}{q'},\dfrac{r}{r'}\right)$ fixed points of $(X,Y,Z)$. If $\left|\dfrac{p}{p'}\right|,\left|\dfrac{q}{q'}\right|,\left|\dfrac{r}{r'}\right|$ are reduced expressions, then $(|p|,|q|,|r|)$ is a $0$-GM Markov triple. Moreover, the correspondence $(X,Y,Z) \mapsto (|p|,|q|,|r|)$ induces the canonical graph isomorphism from $\mathrm{MM}\mathbb T(2,\ell)$ to $\mathrm{M}\mathbb T(0)$.
\end{corollary}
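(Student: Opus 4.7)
The plan is to reduce everything to the squaring isomorphism of Proposition \ref{squre-markov} by explicitly identifying the numerator of each fixed point with the square root of the corresponding $(1,2)$-entry. Once the numerator identification $|p|=\sqrt{x_{12}}$, $|q|=\sqrt{y_{12}}$, $|r|=\sqrt{z_{12}}$ is in hand, the first claim follows immediately from Proposition \ref{squre-markov}, and the canonical graph isomorphism follows from composing the tree isomorphism $\mathrm{MM}\mathbb T(2,\ell)\to \mathrm{M}\mathbb T(2)$ of Corollary \ref{cor:BT-MT} with the inverse of the squaring isomorphism.

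Concretely, I would first apply Proposition \ref{prop:fixed-point} to write the fixed point of $X$ as $p_X=\pm\sqrt{-x_{12}/x_{21}}$, and similarly for $Y$ and $Z$. Since $(x_{12},y_{12},z_{12})$ is a $2$-GM triple by definition of $\mathrm{MM}\mathbb T(2,\ell)$, Proposition \ref{squre-markov} ensures $\sqrt{x_{12}}\in\mathbb Z_{\geq 1}$; the fact that $X$ is parabolic (discriminant zero) then forces $\sqrt{-x_{21}}$ to be an integer as well, so $\sqrt{x_{12}}/\sqrt{-x_{21}}$ is a bona fide rational expression for $|p_X|$. Proposition \ref{prop:relative-prime-y12-y21} says this expression is already reduced, hence $|p|=\sqrt{x_{12}}$ and $|p'|=\sqrt{-x_{21}}$. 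The same reasoning applied to $Y$ and $Z$ yields $(|p|,|q|,|r|)=(\sqrt{x_{12}},\sqrt{y_{12}},\sqrt{z_{12}})$, which is a $0$-GM triple by the converse direction of Proposition \ref{squre-markov}.

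For the canonical graph isomorphism, Corollary \ref{cor:BT-MT} gives a canonical graph isomorphism $\mathrm{MM}\mathbb T(2,\ell)\to \mathrm{M}\mathbb T(2)$, $(X,Y,Z)\mapsto(x_{12},y_{12},z_{12})$, and Proposition \ref{squre-markov} gives a canonical graph isomorphism $\mathrm{M}\mathbb T(0)\to \mathrm{M}\mathbb T(2)$, $(a,b,c)\mapsto (a^2,b^2,c^2)$, whose inverse is the coordinatewise square root. Composing gives exactly $(X,Y,Z)\mapsto (\sqrt{x_{12}},\sqrt{y_{12}},\sqrt{z_{12}})=(|p|,|q|,|r|)$, which is therefore a canonical graph isomorphism $\mathrm{MM}\mathbb T(2,\ell)\to \mathrm{M}\mathbb T(0)$ as required.

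The one delicate point will be the degenerate situation where some $(2,1)$-entry vanishes, so the corresponding fixed point is $\infty$ rather than a finite real number and Proposition \ref{prop:fixed-point} does not literally apply. Lemma \ref{lem:y21<0} rules out $y_{21}=0$ at every vertex, while Remark \ref{rem:aboutXZ} shows that $x_{21}=0$ (respectively $z_{21}=0$) can occur only at the root of $\mathrm{MM}\mathbb T(2,\ell)$ for a single value of $\ell$; in those boundary cases one checks directly that $x_{12}=1$ (respectively $z_{12}=1$), and then the convention $\infty = 1/0$ from Definition \ref{irreducible-fraction} yields numerator $1=\sqrt{1}$, keeping the identification $|p|=\sqrt{x_{12}}$ intact. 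Apart from this short case check, together with tracking the sign ambiguity in Proposition \ref{prop:fixed-point} via Corollary \ref{cor:sign-Markov-monodromy}, the argument is essentially a bookkeeping exercise chaining together the isomorphisms already established.
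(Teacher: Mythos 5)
Your proposal is correct and follows essentially the same route as the paper's proof: it chains the canonical graph isomorphism $\mathrm{MM}\mathbb T(2,\ell)\to\mathrm{M}\mathbb T(2)$ with the squaring isomorphism of Proposition \ref{squre-markov}, and identifies the reduced fixed-point expressions via Propositions \ref{prop:fixed-point} and \ref{prop:relative-prime-y12-y21}. Your extra care with the $x_{21}=0$ case (fixed point $\infty$) is handled in the paper by a short remark immediately following the corollary, so it is a welcome but not divergent addition.
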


\begin{proof}
 Since the former statement follows from the latter statement, we will prove the latter statement. By Proposition \ref{pr:Cohn-Markov-monodromy} and the definition of $k$-MM triples, $(X,Y,Z)\mapsto (x_{12},y_{12},z_{12})$ induces the canonical graph isomorphism from $\mathrm{MM}\mathbb T(2,\ell)$ to $\mathrm{M}\mathbb T(2)$. Moreover, by Proposition \ref{squre-markov}, $(x_{12},y_{12},z_{12})\mapsto (\sqrt{x_{12}},\sqrt{y_{12}},\sqrt{z_{12}})$ induces the canonical graph isomorphism from $\mathrm{M}\mathbb T(2)$ to $\mathrm{M}\mathbb T(0)$. On the other hand, the absolute values of fixed points of $(X,Y,Z)$ are $\left(\dfrac{\sqrt{x_{12}}}{\sqrt{-x_{21}}}, \dfrac{\sqrt{y_{12}}}{\sqrt{-y_{21}}},\dfrac{\sqrt{z_{12}}}{\sqrt{-z_{21}}} \right)$ by Proposition \ref{prop:fixed-point}. Moreover, by Proposition \ref{prop:relative-prime-y12-y21}, they are reduced expressions. This finishes the proof.
\end{proof}

\begin{remark}
 Since the reduced expression of $\infty$ is $\dfrac{1}{0}$, we can include the case $x_{21}=0$ in Corollary \ref{parabolic-markov}.   
\end{remark}

From here to the end of this subsection, we will consider refining Corollary \ref{parabolic-markov}. Let $p_X,p_Z$ be the fixed points of $X,Z$ in $\mathbb RP^1$ respectively. Then the fixed point of $YZY^{-1}$ is $Y(p_Z)=\dfrac{y_{11}p_Z+y_{12}}{y_{21}p_Z+y_{22}}$, and that of $Y^{-1}XY$ is $Y^{-1}(p_X)=\dfrac{y_{22}p_X-y_{12}}{-y_{21}p_X+y_{11}}$. From Propositions \ref{prop:fixed-point} and \ref{prop:relative-prime-y12-y21}, we have $y_{12} = q^2, \quad y_{21} = -q'^2$.
Furthermore, from the definition of $k$-MM matrices, we have $y_{11}(-y_{11} - k) + q^2q'^2 = 1$
and thus $y_{11}^2 + 2y_{11} - q^2q'^2 + 1 = 0$.
Similarly, we obtain $
y_{22}^2 + 2y_{22} - q^2q'^2 + 1 = 0$.
Solving these, we find that $y_{11}$ and $y_{22}$ are either $-1 \pm qq'$. Considering the sign of $q'$ and by Proposition \ref{prop:relative-prime-y12-y21}, we determine that $y_{11} = -1 - qq'$ and $y_{22} = -1 + qq'$. Substituting these into the fixed point $\dfrac{y_{11}r + y_{12}r'}{y_{21}r + y_{22}r'}$ of $YZY^{-1}$, we find that the fixed point of $YZY^{-1}$ is given by $\dfrac{q^2r' - qq'r - r}{-q'^2r + qq'r' - r'}$. Similarly, the fixed point of $Y^{-1}XY$ for the right child $(Y, Y^{-1}XY, Z)$ of $(X, Y, Z)$ is $\dfrac{-q^2p' + qq'p - p}{q'^2p - qq'p' - p'}$. Thus, from given a $k$-MM triple $(X,Y,Z)$ in $\mathrm{MM}\mathbb T(2,\ell)$ and their fixed points $\left(\dfrac{p}{p'},\dfrac{q}{q'},\dfrac{r}{r'}\right)$, the fixed points of $(X,YZY^{-1},Y)$ and $(Y,Y^{-1}ZY,Z)$ are expressed as

\[\left(\dfrac{p}{p'},\dfrac{q^2r'-qq'r-r}{-q'^2r+qq'r'-r'},\dfrac{q}{q'}\right),\quad \left(\dfrac{q}{q'}\dfrac{-q^2p'+qq'p-p}{q'^2p-qq'p'-p'},\dfrac{r}{r'}\right).
\]

Based on this, we define a \emph{parabolic fixed point tree} $\mathrm{P}\mathbb T(\ell)$ as follows:
we fix $\ell\in \mathbb Z$.
\begin{itemize}\setlength{\leftskip}{-15pt}
    \item [(1)] The root vertex is 
        \[\left(\begin{bmatrix}
            1\\-\ell-1
        \end{bmatrix},\begin{bmatrix}
            2\\-2\ell+1
        \end{bmatrix},\begin{bmatrix}
            1\\-\ell+2
        \end{bmatrix}\right),\]
    \item [(2)] for a vertex $\left(\begin{bmatrix}
        p\\p'
    \end{bmatrix},\begin{bmatrix}
        q\\q'
    \end{bmatrix},\begin{bmatrix}
        r\\r'
    \end{bmatrix}\right)$, we consider the following two children of it:
\[\begin{xy}(0,0)*+{\left(\begin{bmatrix}
        p\\p'
    \end{bmatrix},\begin{bmatrix}
        q\\q'
    \end{bmatrix},\begin{bmatrix}
        r\\r'
    \end{bmatrix}\right)}="1",(-40,-15)*+{\left(\begin{bmatrix}
        p\\p'
    \end{bmatrix},\begin{bmatrix}
        q^2r'-qq'r-r\\-q'^2r+qq'r'-r'
    \end{bmatrix},\begin{bmatrix}
        q\\q'
    \end{bmatrix}\right)}="2",(40,-15)*+{\left(\begin{bmatrix}
        q\\q'
    \end{bmatrix},\begin{bmatrix}
        -q^2p'+qq'p-p\\q'^2p-qq'p'-p'
    \end{bmatrix},\begin{bmatrix}
        r\\r'
    \end{bmatrix}\right).}="3", \ar@{-}"1";"2"\ar@{-}"1";"3"
\end{xy}\]
\end{itemize}

\begin{example}\label{ex:PT}
When $\ell=0$, the tree $\mathrm{P}\mathbb{T}(\ell)$ is given by the following.
\begin{align*}
\begin{xy}(-30,0)*+{\left(\begin{bmatrix}1\\-1\end{bmatrix},\begin{bmatrix}2\\1\end{bmatrix},\begin{bmatrix}1\\2\end{bmatrix}\right)}="1",(10,-16)*+{\left(\begin{bmatrix}1\\-1\end{bmatrix},\begin{bmatrix}5\\1\end{bmatrix},\begin{bmatrix}2\\1\end{bmatrix}\right)}="2",(10,16)*+{\left(\begin{bmatrix}2\\1\end{bmatrix},\begin{bmatrix}5\\4\end{bmatrix},\begin{bmatrix}1\\2\end{bmatrix}\right)}="3", 
(60,-24)*+{\left(\begin{bmatrix}1\\-1\end{bmatrix},\begin{bmatrix}13\\2\end{bmatrix},\begin{bmatrix}5\\1\end{bmatrix}\right)\cdots}="4",(60,-8)*+{\left(\begin{bmatrix}5\\1\end{bmatrix},\begin{bmatrix}29\\7\end{bmatrix},\begin{bmatrix}2\\1\end{bmatrix}\right)\cdots}="5",(60,8)*+{\left(\begin{bmatrix}2\\1\end{bmatrix},\begin{bmatrix}29\\22\end{bmatrix},\begin{bmatrix}5\\4\end{bmatrix}\right)\cdots}="6",(60,24)*+{\left(\begin{bmatrix}5\\4\end{bmatrix},\begin{bmatrix}13\\11\end{bmatrix},\begin{bmatrix}1\\2\end{bmatrix}\right)\cdots}="7",\ar@{-}"1";"2"\ar@{-}"1";"3"\ar@{-}"2";"4"\ar@{-}"2";"5"\ar@{-}"3";"6"\ar@{-}"3";"7"
\end{xy}
\end{align*}
\end{example}
\begin{proposition}\label{pr:pqr-positivity}
 For a vertex $\left(\begin{bmatrix}
        p\\p'
    \end{bmatrix},\begin{bmatrix}
        q\\q'
    \end{bmatrix},\begin{bmatrix}
        r\\r'
    \end{bmatrix}\right)$ in $\mathrm{P}\mathbb T(\ell)$, 
    \begin{itemize}\setlength{\leftskip}{-15pt}
        \item [(1)] $p,q,r>0$ hold, 
        \item [(2)] $p$ and $|p'|$, $q$ and $|q'|$, $r$ and $|r'|$ are relatively prime.
    \end{itemize}
\end{proposition}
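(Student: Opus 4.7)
The plan is to prove (1) and (2) together by induction on depth in $\mathrm{P}\mathbb T(\ell)$, with a strengthened inductive hypothesis identifying each vertex with the corresponding $2$-MM triple in $\mathrm{MM}\mathbb T(2,\ell)$: namely, the associated triple $(X,Y,Z)\in\mathrm{MM}\mathbb T(2,\ell)$ at the analogous position has the form
\[
X=\begin{bmatrix}-1-pp' & p^2\\ -p'^2 & -1+pp'\end{bmatrix},\ Y=\begin{bmatrix}-1-qq' & q^2\\ -q'^2 & -1+qq'\end{bmatrix},\ Z=\begin{bmatrix}-1-rr' & r^2\\ -r'^2 & -1+rr'\end{bmatrix},
\]
with $p,q,r>0$. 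Granted this identification, (1) is built into the strengthened hypothesis, and (2) is an immediate consequence of Proposition \ref{prop:relative-prime-y12-y21} applied to $(X,Y,Z)$.

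For the base case, direct substitution of $(p,p',q,q',r,r')=(1,-\ell-1,2,-2\ell+1,1,-\ell+2)$ into the three matrices above recovers $(X_\ell,Y_\ell,Z_\ell)$ at $k=2$, i.e.\ the root of $\mathrm{MM}\mathbb T(2,\ell)$. For the inductive step at the left child, whose corresponding triple is $(X,YZY^{-1},Y)$, the first and third slots inherit from the parent, while multiplying out $YZY^{-1}$ with the IH forms of $Y$ and $Z$ yields
\[
YZY^{-1}=\begin{bmatrix}-1-ND & N^2\\ -D^2 & -1+ND\end{bmatrix},\quad N=q^2r'-qq'r-r,\ D=-q'^2r+qq'r'-r',
\]
exactly matching the tree formulas. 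Only $N>0$ (and symmetrically $N'>0$ for the right child) is then outstanding.

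For the positivity, I rewrite $N=q\delta_2-r$ with $\delta_2=qr'-rq'$, and run a parallel induction on the auxiliary triple $(\delta_1,\delta_2,\delta_3):=(pq'-qp',\,qr'-rq',\,rp'-pr')$. A direct algebraic check confirms that both the Markov-type identity $\delta_1^2+\delta_2^2+\delta_3^2+\delta_1\delta_2\delta_3=0$ and the sign pattern $\delta_1,\delta_2>0$, $\delta_3<0$ are preserved by the two child operations; preservation of the sign of the new $\delta_1$-coordinate after a left step uses the identity itself to rewrite $\delta_1\delta_2+\delta_3=(\delta_1^2+\delta_2^2)/|\delta_3|>0$. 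On the other hand, once the matrix form at the parent is in hand, Corollary \ref{parabolic-markov} makes $(p,q,r)$ into a classical Markov triple, and Proposition \ref{prop:all-markov} forces $q>\max\{p,r\}$. Since $\delta_2$ is a positive integer, $\delta_2\geq 1$, and so $N=q\delta_2-r\geq q-r>0$. The right child is symmetric, via $N'=q\delta_1-p$.

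The main obstacle is this positivity analysis: the matrix identity for $YZY^{-1}$ and the relative-primality part of (2) are routine, but pinning down $N>0$ forces introducing the auxiliary $\delta$-triple, verifying that its Markov-type identity is preserved by the tree dynamics, and combining this with the classical Markov-triple structure of $(p,q,r)$ obtained from Corollary \ref{parabolic-markov}.
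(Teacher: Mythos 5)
Your argument is correct, but it follows a genuinely different route from the paper's. The paper proves (1) by a self-contained induction inside $\mathrm{P}\mathbb{T}(\ell)$ (Lemma \ref{lem:qp'-q'p}): it propagates the inequalities $qp'-q'p<-1$, $qr'-q'r>1$, $rp'-r'p<-1$ together with $q>p$ and $q>r$, and positivity of all entries then falls out of the generation rule; (2) is proved separately by running the inverse child operations back up to the root and noting that a putative common divisor of $p$ and $p'$ would persist all the way to the root, exactly as in the proof of Proposition \ref{prop:relative-prime-y12-y21}. You instead strengthen the induction so as to carry the explicit $2$-MM normal forms: your base case does match $(X_\ell,Y_\ell,Z_\ell)$ at $k=2$, and the rank-one computation of $YZY^{-1}$ (and of $Y^{-1}XY$) does reproduce the tree formulas, so the identification propagates. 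Granted that, deducing (2) from Proposition \ref{prop:relative-prime-y12-y21} is legitimate and shorter than the paper's upstream argument, and importing $q>\max\{p,r\}$ from Corollary \ref{parabolic-markov} together with Proposition \ref{prop:all-markov} is non-circular, since both precede this proposition. Your $\delta$-induction is also sound: the two child operations send $(\delta_1,\delta_2,\delta_3)$ to $(\delta_1\delta_2+\delta_3,\delta_2,-\delta_1)$ and to $(\delta_1,\delta_1\delta_2+\delta_3,-\delta_2)$, both of which preserve the identity $\delta_1^2+\delta_2^2+\delta_3^2+\delta_1\delta_2\delta_3=0$ and the sign pattern, and your use of the identity to rewrite $\delta_1\delta_2+\delta_3=(\delta_1^2+\delta_2^2)/|\delta_3|>0$ is precisely the step that mere sign bookkeeping would not give. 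In comparison, the paper's route buys self-containedness (everything happens inside the tree recursion), while yours buys reuse of the established $2$-MM structure: (2) comes for free, your matrix normal form is essentially Proposition \ref{thm:eigen-vector} (and the content of Theorem \ref{thm:PT-LLMT}) obtained in advance, and your $\delta$-invariant is a shadow of Theorem \ref{thm:iso-PT-MTdag}, which says the three determinants form three times a Markov triple. For a final write-up you should display the two $\delta$-transformations and the one-line verification that the invariant and signs persist, which you currently assert without detail; they do check out.
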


To prove Proposition \ref{pr:pqr-positivity} (1), the following lemma is essential:

\begin{lemma}\label{lem:qp'-q'p}
For a vertex $\left(\begin{bmatrix}
        p\\p'
    \end{bmatrix},\begin{bmatrix}
        q\\q'
    \end{bmatrix},\begin{bmatrix}
        r\\r'
    \end{bmatrix}\right)$ in $\mathrm{P}\mathbb T(\ell)$, 
\begin{itemize}\setlength{\leftskip}{-15pt}  
 \item[(1)] $qp'-q'p<-1$, $qr'-q'r>1$, $rp'-r'p<-1$ hold, 
 \item[(2)] $q>r$ and $q>p$ hold.
\end{itemize}  
\end{lemma}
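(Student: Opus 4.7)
My plan will be to prove (1) and (2) together with the auxiliary \emph{second Markov identity}
\[
A^2+B^2+C^2=ABC
\]
by induction on the depth of the vertex in $\mathrm{P}\mathbb{T}(\ell)$, writing $A:=qp'-q'p$, $B:=qr'-q'r$, and $C:=rp'-r'p$. Since $A,B,C\in\mathbb{Z}$, part (1) will amount to $A\le -2$, $B\ge 2$, $C\le -2$. The base case at the root is a direct check: $(A,B,C)=(-3,3,-3)$ and $(p,q,r)=(1,2,1)$, so all three assertions hold simultaneously.

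For the induction at the left child I will use the formulas $\tilde q=qB-r$, $\tilde q'=q'B-r'$, $\tilde p=p$, $\tilde p'=p'$, $\tilde r=q$, $\tilde r'=q'$ to obtain $(\tilde A,\tilde B,\tilde C)=(AB-C,\,B,\,A)$. Substituting and expanding, $\tilde A^2+\tilde B^2+\tilde C^2-\tilde A\tilde B\tilde C$ collapses to $A^2+B^2+C^2-ABC$, which vanishes by the inductive hypothesis, so the Markov identity is preserved. Part (2) will follow at once from $B\ge 2$ together with $q>r$ and $q>p$: $\tilde q-\tilde r=q(B-1)-r\ge q-r>0$ and $\tilde q-\tilde p=qB-r-p\ge 2q-r-p>0$. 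The right child is symmetric via $(\tilde A,\tilde B,\tilde C)=(A,\,C-AB,\,-B)$ and can be handled by the same scheme.

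The hard part will be the two-sided bound on the ``new'' coordinate, namely $\tilde A\le -2$ for the left child (and equivalently $\tilde B\ge 2$ for the right); here the Markov identity is essential. Regarded as a quadratic in $C$, its two roots are $C$ and $AB-C$, so $\tilde A\cdot C=A^2+B^2>0$; combined with $C<0$ this forces $\tilde A<0$ and $|\tilde A|=(A^2+B^2)/|C|$. Applying the Markov identity again gives $|A||B||C|=A^2+B^2+C^2>C^2$, so $|A||B|>|C|$, and hence $|A||B|\ge|C|+1$ by integrality. The AM-GM estimate $A^2+B^2\ge 2|A||B|$ then yields $A^2+B^2\ge 2|C|+2$, so $|\tilde A|\ge 3>1$. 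The right child is analogous with the roles of $A$ and $B$ interchanged, and with this the induction closes.
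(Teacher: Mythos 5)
Your proposal is correct, and it takes a genuinely different — and in fact more self-contained — route than the paper. The paper proves (1) by a purely linear induction: for the left child it displays the identity $\tilde{q}\tilde{p}'-\tilde{q}'\tilde{p}=(qr'-q'r-1)(qp'-q'p)+(rp'-r'p)$, from which $<-1$ is immediate from the sign hypotheses alone. The correct expansion, however, is $(qr'-q'r)(qp'-q'p)-(rp'-r'p)$, i.e. exactly your recursion $(\tilde A,\tilde B,\tilde C)=(AB-C,\,B,\,A)$ (the first step $(-3,3,-3)\mapsto(-6,3,-3)$ confirms this and rules out the printed formula), and with this identity the bound $AB-C<-1$ does \emph{not} follow from $A\le -2$, $B\ge 2$, $C\le -2$ alone (consider $(-2,2,-100)$). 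Your additional inductive invariant $A^2+B^2+C^2=ABC$ supplies precisely the missing control of $|C|$ against $|AB|$: the Vieta observation $\tilde A\cdot C=A^2+B^2$, the integrality bound $|A||B|\ge |C|+1$, and AM--GM give $\tilde A\le -3$, and your check that the invariant is preserved under both $(AB-C,B,A)$ and $(A,C-AB,-B)$ is correct, as is the base case. In effect you have inlined the content of the paper's later Theorem \ref{thm:iso-PT-MTdag} (the determinant triple is $3$ times a Markov triple, i.e. a solution of $\mathrm{GSME}(0)$), which the paper does not invoke at this point, so your argument also repairs the identity as printed. Two small points to tidy up: in part (2) the steps $q(B-1)-r\ge q-r$ and $qB-r-p\ge 2q-r-p$ use $q>0$, so positivity of $p,q,r$ should be carried explicitly through the induction (it holds at the root, and each child's entries are either old entries or the new middle entry, which your estimate shows exceeds $q$); the paper's proof of (2) leaves the same step implicit. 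Also, for the right child the phrase ``roles of $A$ and $B$ interchanged'' is loose — the new middle value $C-AB$ is the negative of the left-child value $AB-C$ — but the recursion you wrote is the correct one and the identical estimate applies, so the induction closes.
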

\begin{proof}
    First, we prove (1). We can see that the root of $\mathrm{P}\mathbb T(\ell)$ satisfies (1) by a direct calculation. We assume that $\left(\begin{bmatrix}
        p\\p'
    \end{bmatrix},\begin{bmatrix}
        q\\q'
    \end{bmatrix},\begin{bmatrix}
        r\\r'
    \end{bmatrix}\right)$ satisfies (1). We will prove the statement (1) for the left child of $\left(\begin{bmatrix}
        p\\p'
    \end{bmatrix},\begin{bmatrix}
        q\\q'
    \end{bmatrix},\begin{bmatrix}
        r\\r'
    \end{bmatrix}\right)$. We set \[\left(\begin{bmatrix}
        \tilde{p}\\\tilde{p}'
    \end{bmatrix},\begin{bmatrix}
\tilde{q}\\\tilde{q}'
    \end{bmatrix},\begin{bmatrix}
        \tilde{r}\\\tilde{r}'
    \end{bmatrix}\right):=\left(\begin{bmatrix}
        p\\p'
    \end{bmatrix},\begin{bmatrix}
        q^2r'-qq'r-r\\-q'^2r+qq'r'-r'
    \end{bmatrix},\begin{bmatrix}
        q\\q'
    \end{bmatrix}\right).\] 
Then, we have
\begin{align*}
    \tilde{q}\tilde{p}'-\tilde{q}'\tilde{p}&=(qr'-q'r-1)(qp'-q'p)+(rp'-r'p)<-1,\\
    \tilde{q}\tilde{r}'-\tilde{q}'\tilde{r}&=qr'-q'r>1,\\
    \tilde{r}\tilde{p}'-\tilde{r}'\tilde{p}&=qp'-q'p<-1,
\end{align*}
as desired. We can prove (1) for the right child in the same way. Second, we will prove (2). We assume that $\left(\begin{bmatrix}
        p\\p'
    \end{bmatrix},\begin{bmatrix}
        q\\q'
    \end{bmatrix},\begin{bmatrix}
        r\\r'
    \end{bmatrix}\right)$ satisfies (2). We will prove that the left child of $\left(\begin{bmatrix}
        p\\p'
    \end{bmatrix},\begin{bmatrix}
        q\\q'
    \end{bmatrix},\begin{bmatrix}
        r\\r'
    \end{bmatrix}\right)$ satisfies (2). By (1), we have
\begin{align*}
    \tilde{q}-\tilde{r}&=q^2r'-qq'r-r-q=q(qr'-q'r-1)-r>q-r>0,\\
    \tilde{q}-\tilde{p}&=q^2r'-qq'r-r-p>q(qr'-q'r-1)-p>q-p>0,
    \end{align*}
as desired. We can prove (2) for the right child in the same way.
\end{proof}

\begin{proof}[Proof of Proposition \ref{pr:pqr-positivity}]
The statement (1) follows from Lemma \ref{lem:qp'-q'p} (2) and the generation rule of $\mathrm{P}\mathbb T(\ell)$. We will prove (2). We consider the correspondences
\begin{align*}
\left(\begin{bmatrix}
        p\\p'
    \end{bmatrix},\begin{bmatrix}
        q\\q'
    \end{bmatrix},\begin{bmatrix}
        r\\r'
    \end{bmatrix}\right)&\mapsto \left(\begin{bmatrix}
        p\\p'
    \end{bmatrix},\begin{bmatrix}
        r\\r'
    \end{bmatrix},\begin{bmatrix}
        -r^2q'+rr'q-q\\r'^2q-rr'q'-q'
    \end{bmatrix}\right),\\
\left(\begin{bmatrix}
        p\\p'
    \end{bmatrix},\begin{bmatrix}
        q\\q'
    \end{bmatrix},\begin{bmatrix}
        r\\r'
    \end{bmatrix}\right)&\mapsto \left(\begin{bmatrix}
        p^2q'-pp'q-q\\-p'^2q+pp'q'-q'
    \end{bmatrix},\begin{bmatrix}
        p\\p'
    \end{bmatrix},\begin{bmatrix}
        r\\r'
    \end{bmatrix}\right).
\end{align*}
They are the inverses of operations of taking the left child and taking the right child in $\mathrm{P}\mathbb T(\ell)$,  respectively. We assume that $p$ and $p'$ have a non-trivial common divisor $d$. Then, by using these inverses and going upstream from $\left(\begin{bmatrix}
        p\\p'
    \end{bmatrix},\begin{bmatrix}
        q\\q'
    \end{bmatrix},\begin{bmatrix}
        r\\r'
    \end{bmatrix}\right)$ in $\mathrm{P}\mathbb T(\ell)$, we can see that there exists a vector in the root vertex in $\mathrm{P}\mathbb T(\ell)$ such that two entries have a common divisor $d$ (cf. the proof of Proposition \ref{prop:relative-prime-y12-y21}). This is a contradiction.
\end{proof}

From the above argument, the tree $\mathrm{P}\mathbb T(\ell)$ gives a way to compute Markov numbers.

\begin{theorem}\label{thm:PT-LLMT}
We fix $\ell\in \mathbb Z$. The correspondence $\left(\begin{bmatrix}
        p\\p'
    \end{bmatrix},\begin{bmatrix}
        q\\q'
    \end{bmatrix},\begin{bmatrix}
        r\\r'
    \end{bmatrix}\right)\mapsto (p,q,r)$ induces the canonical graph isomorphism from $\mathrm{P}\mathbb T(\ell)$ to $\mathrm{M}\mathbb T(0)$.     
\end{theorem}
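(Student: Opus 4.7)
The plan is to factor the proof through the already-established correspondence between $\mathrm{MM}\mathbb T(2,\ell)$ and $\mathrm{M}\mathbb T(0)$. I would define a map $F\colon \mathrm{MM}\mathbb T(2,\ell)\to \mathrm{P}\mathbb T(\ell)$ sending $(X,Y,Z)$ to the triple of column vectors $\left(\begin{bmatrix}p\\p'\end{bmatrix},\begin{bmatrix}q\\q'\end{bmatrix},\begin{bmatrix}r\\r'\end{bmatrix}\right)$ where $p/p',q/q',r/r'$ are the fixed points of $X,Y,Z$ written so that the numerators are positive and coprime to the denominators (such a normalization is well-defined by Propositions \ref{prop:fixed-point} and \ref{prop:relative-prime-y12-y21}). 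First I would check on the root: for $X_\ell,Y_\ell,Z_\ell$ a direct computation using Proposition \ref{prop:fixed-point} and the comparison of diagonal entries in Remark \ref{rem:aboutXZ} gives the fixed points $\tfrac{1}{-\ell-1}, \tfrac{2}{-2\ell+1}, \tfrac{1}{-\ell+2}$, which match the root of $\mathrm{P}\mathbb T(\ell)$.

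Next I would verify that $F$ commutes with the left-child and right-child operations. This is essentially the calculation carried out in the text preceding the theorem: writing $y_{12}=q^2$, $y_{21}=-q'^2$ (from Propositions \ref{prop:fixed-point} and \ref{prop:relative-prime-y12-y21}), and deducing from $\mathrm{tr}(Y)=-2$ and $\det(Y)=1$ that $y_{11}=-1-qq'$ and $y_{22}=-1+qq'$, the M\"obius action
\[
Y\!\left(\tfrac{r}{r'}\right)=\frac{(-1-qq')r+q^2r'}{-q'^2 r+(-1+qq')r'}=\frac{q^2r'-qq'r-r}{-q'^2r+qq'r'-r'}
\]
is exactly the middle entry produced by the generation rule of $\mathrm{P}\mathbb T(\ell)$; similarly for $Y^{-1}(p_X)$. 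Preservation of the positivity normalization under this transformation follows from Lemma \ref{lem:qp'-q'p}, and coprimality of the resulting entries is Proposition \ref{pr:pqr-positivity}.

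Finally, since $F$ sends the root to the root and commutes with both children operations of full binary trees, it is a canonical graph isomorphism. Composing its inverse with the canonical graph isomorphism $\mathrm{MM}\mathbb T(2,\ell)\to \mathrm{M}\mathbb T(0)$ of Corollary \ref{parabolic-markov} (which sends $(X,Y,Z)$ to $(|p|,|q|,|r|)$) yields a canonical graph isomorphism $\mathrm{P}\mathbb T(\ell)\to \mathrm{M}\mathbb T(0)$. By Proposition \ref{pr:pqr-positivity} we have $p,q,r>0$ at every vertex of $\mathrm{P}\mathbb T(\ell)$, so this composition is precisely the map $\left(\begin{bmatrix}p\\p'\end{bmatrix},\begin{bmatrix}q\\q'\end{bmatrix},\begin{bmatrix}r\\r'\end{bmatrix}\right)\mapsto(p,q,r)$ of the statement.

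The main obstacle I anticipate is the sign bookkeeping: Proposition \ref{prop:fixed-point} only determines the fixed point up to sign according to whether $x_{11}\lessgtr x_{22}$, and the identification $y_{11}=-1-qq'$, $y_{22}=-1+qq'$ (rather than the opposite assignment) is precisely the one compatible with the positive-numerator normalization regardless of the sign of $q'$. Verifying that this single assignment works throughout both regimes $\ell\geq 1$ and $\ell\leq 0$ requires invoking the sign analysis of Corollary \ref{cor:sign-Markov-monodromy} and Remark \ref{rem:aboutXZ}, which dictates how the sign of $q'$ evolves under the children operations.
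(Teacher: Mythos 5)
Your proposal is correct and follows essentially the same route as the paper: the paper's proof likewise identifies the vertices of $\mathrm{P}\mathbb T(\ell)$ with the (normalized) fixed-point triples of $2$-MM triples in $\mathrm{MM}\mathbb T(2,\ell)$ — the M\"obius computation with $y_{11}=-1-qq'$, $y_{22}=-1+qq'$ being exactly the calculation built into the definition of $\mathrm{P}\mathbb T(\ell)$ — and then composes with Corollary \ref{parabolic-markov}, using Proposition \ref{pr:pqr-positivity} to handle coprimality and to drop the absolute values. Your explicit treatment of the root, the child operations, and the sign normalization simply spells out what the paper compresses into ``by the definition of $\mathrm{P}\mathbb T(\ell)$''.
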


\begin{proof}
By the definition of $\mathrm{P}\mathbb T(\ell)$, the map $\left(\begin{bmatrix}
        p\\p'
    \end{bmatrix},\begin{bmatrix}
        q\\q'
    \end{bmatrix},\begin{bmatrix}
        r\\r'
    \end{bmatrix}\right)\mapsto\left(\dfrac{p}{p'},\dfrac{q}{q'},\dfrac{r}{r'}\right)$ induces a bijection from vertices in $\mathrm{P}\mathbb T(\ell)$ to fixed point triples of $2$-MM triples in $\mathrm{MM}\mathbb T(2,\ell)$. By Proposition \ref{pr:pqr-positivity} (2) and Corollary \ref{parabolic-markov},  the map $\left(\begin{bmatrix}
        p\\p'
    \end{bmatrix},\begin{bmatrix}
        q\\q'
    \end{bmatrix},\begin{bmatrix}
        r\\r'
    \end{bmatrix}\right)\mapsto\left(|p|,|q|,|r|\right)$ induces the canonical graph isomorphism from $\mathrm{P}\mathbb T(\ell)$ to $\mathrm{M}\mathbb T(0)$. Moreover, by Proposition \ref{pr:pqr-positivity} (1), we have $|p|=p$, $|q|=q$, $|r|=r$.    
\end{proof}

Vertices in $\mathrm{P}\mathbb T(\ell)$ also have the following meaning.

\begin{proposition}\label{thm:eigen-vector}
For the vertex $\left(\begin{bmatrix}
        p\\p'
    \end{bmatrix},\begin{bmatrix}
        q\\q'
    \end{bmatrix},\begin{bmatrix}
        r\\r'
    \end{bmatrix}\right)$ in $\mathrm{P}\mathbb T(\ell)$, $\begin{bmatrix}
        p\\p'
    \end{bmatrix},\begin{bmatrix}
        q\\q'
    \end{bmatrix},\begin{bmatrix}
        r\\r'
    \end{bmatrix}$ are eigenvectors of the corresponding $2$-MM matrices in $(X,Y,Z)\in \mathrm{MM}\mathbb T(2,\ell)$. Moreover, their eigenvalues are $-1$.
\end{proposition}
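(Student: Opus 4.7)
The plan is to induct on distance from the root in $\mathrm{P}\mathbb{T}(\ell)$, using the canonical identification with $\mathrm{MM}\mathbb{T}(2,\ell)$ supplied by Theorem~\ref{thm:PT-LLMT}. The eigenvalue claim is automatic once the eigenvector claim is established: every $2$-MM matrix $M$ has $\mathrm{tr}(M)=-2$ and $\det(M)=1$, so its characteristic polynomial is $(\lambda+1)^2$ and $-1$ is its sole eigenvalue.

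For the base case, a direct matrix-vector multiplication using the explicit formulas for $X_\ell,Y_\ell,Z_\ell$ displayed at the start of Section~5 verifies
\[
X_\ell\begin{bmatrix}1\\-\ell-1\end{bmatrix}=-\begin{bmatrix}1\\-\ell-1\end{bmatrix},\quad
Y_\ell\begin{bmatrix}2\\-2\ell+1\end{bmatrix}=-\begin{bmatrix}2\\-2\ell+1\end{bmatrix},\quad
Z_\ell\begin{bmatrix}1\\-\ell+2\end{bmatrix}=-\begin{bmatrix}1\\-\ell+2\end{bmatrix},
\]
matching the root of $\mathrm{P}\mathbb{T}(\ell)$. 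For the inductive step, assume the claim at a vertex $(X,Y,Z)$ corresponding to $\left(\begin{bmatrix}p\\p'\end{bmatrix},\begin{bmatrix}q\\q'\end{bmatrix},\begin{bmatrix}r\\r'\end{bmatrix}\right)$ and consider the left child $(X,YZY^{-1},Y)$. The first and third matrices retain their eigenvectors from the hypothesis, so only the middle entry needs attention. Since $Z\begin{bmatrix}r\\r'\end{bmatrix}=-\begin{bmatrix}r\\r'\end{bmatrix}$, the conjugation identity yields $YZY^{-1}\cdot\left(Y\begin{bmatrix}r\\r'\end{bmatrix}\right)=-Y\begin{bmatrix}r\\r'\end{bmatrix}$. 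Using the parametrization $y_{11}=-1-qq'$, $y_{12}=q^2$, $y_{21}=-q'^2$, $y_{22}=-1+qq'$ derived in the paragraph preceding Example~\ref{ex:PT}, direct multiplication gives
\[
Y\begin{bmatrix}r\\r'\end{bmatrix}=\begin{bmatrix}q^2r'-qq'r-r\\-q'^2r+qq'r'-r'\end{bmatrix},
\]
which is precisely the new middle vector prescribed by the generation rule of $\mathrm{P}\mathbb{T}(\ell)$. The right child $(Y,Y^{-1}XY,Z)$ is symmetric: writing $Y^{-1}=\begin{bmatrix}y_{22}&-y_{12}\\-y_{21}&y_{11}\end{bmatrix}$ and computing $Y^{-1}\begin{bmatrix}p\\p'\end{bmatrix}$ yields $\begin{bmatrix}-q^2p'+qq'p-p\\q'^2p-qq'p'-p'\end{bmatrix}$, exactly as required.

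No serious obstacle is anticipated; the content of the proposition is essentially that the jumping formulas defining $\mathrm{P}\mathbb{T}(\ell)$ are nothing other than the linear action of $Y$ (or $Y^{-1}$) on a vector, once $Y$ is put into the normal form dictated by its $(1,2)$- and $(2,1)$-entries being perfect squares. The one point warranting care is that this normal form for $Y$ persists at every vertex of $\mathrm{MM}\mathbb{T}(2,\ell)$, which is already guaranteed by Proposition~\ref{prop:relative-prime-y12-y21}.
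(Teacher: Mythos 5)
Your argument is correct, but it takes a different route from the paper's. The paper's proof is essentially a two-line observation: by the construction of $\mathrm{P}\mathbb{T}(\ell)$ (made explicit in the proof of Theorem \ref{thm:PT-LLMT}), the entries of a vertex are the reduced fixed points $\frac{p}{p'},\frac{q}{q'},\frac{r}{r'}$ of the corresponding $2$-MM matrices, and cross-multiplying the fixed-point equation $\frac{x_{11}p+x_{12}p'}{x_{21}p+x_{22}p'}=\frac{p}{p'}$ immediately exhibits $\begin{bmatrix}p\\p'\end{bmatrix}$ as an eigenvector, the eigenvalue being $-1$ because the trace is $-2$ (and the determinant is $1$). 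You instead run an induction along the tree: the root is verified by direct multiplication with $X_\ell,Y_\ell,Z_\ell$, and the inductive step combines the conjugation identity $YZY^{-1}\bigl(Yv\bigr)=-Yv$ with the normal form $y_{11}=-1-qq'$, $y_{12}=q^2$, $y_{21}=-q'^2$, $y_{22}=-1+qq'$ to identify $Y\begin{bmatrix}r\\r'\end{bmatrix}$ (resp.\ $Y^{-1}\begin{bmatrix}p\\p'\end{bmatrix}$) with the new middle vector of the left (resp.\ right) child. This is, in eigenvector language, a re-derivation of the computation the paper carries out in the paragraph preceding Example \ref{ex:PT} when designing the generation rule of $\mathrm{P}\mathbb{T}(\ell)$; what your version buys is a self-contained proof that never mentions M\"obius fixed points, at the cost of repeating that computation. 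One point worth making explicit: to invoke the normal form at an arbitrary vertex you must know that your inductively given eigenvector $\begin{bmatrix}q\\q'\end{bmatrix}$ is exactly the primitive one with positive first entry, i.e.\ $q=\sqrt{y_{12}}$ and $q'=\pm\sqrt{-y_{21}}$; this follows from the inductive hypothesis together with Proposition \ref{pr:pqr-positivity} (positivity and coprimality of the $\mathrm{P}\mathbb{T}(\ell)$ entries) and Propositions \ref{prop:fixed-point} and \ref{prop:relative-prime-y12-y21}, so it is a citation to add rather than a gap.
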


\begin{proof}
We will prove the former statement. Let $X=\begin{bmatrix}
     a&b\\c&d
 \end{bmatrix}$. Since $\dfrac{p}{p'}$ is the fixed point of the action of $X$ as a M\"obius transformation, we have
 \[\dfrac{ap+bp'}{cp+dp'}=\dfrac{p}{p'}.\]
 Therefore, there exists $e\in \mathbb R\setminus\{0\}$ such that 
 \[ap+bp'=ep, \quad cp+dp'=ep'.\]
 Hence we have
 \[X\begin{bmatrix}
     p\\p'
 \end{bmatrix}=\begin{bmatrix}
     ap+bp'\\cp+dp'
 \end{bmatrix}=e\begin{bmatrix}
     p\\p'
 \end{bmatrix}, \]
as desired. The latter statement follows from $\mathrm{tr}(X)=\mathrm{tr}(Y)=\mathrm{tr}(Z)=-2$.
\end{proof}

\begin{remark}
When $\ell=0$, $p',q',r'$ in each vertex of $\mathrm{P}\mathbb {T}(\ell)$ is described by using the number of perfect matchings of a certain graph. See Theorem \ref{thm:p'q'r'}.
\end{remark}

\subsection{Configuration of fixed point}
Let $\mathrm{LP}\mathbb T(\ell)$ be the full subtree of $\mathrm{P}\mathbb T(\ell)$ with the root
\[\left(\begin{bmatrix}
            1\\-\ell-1
        \end{bmatrix},\begin{bmatrix}
            5\\-5\ell+1
        \end{bmatrix},\begin{bmatrix}
            2\\-2\ell+1
        \end{bmatrix}\right),\]
which is the left child of the root of $\mathrm{P}\mathbb T(\ell)$. We discuss the positions of the fixed points of $2$-MM triples in $\mathrm{LP}\mathbb T(\ell)$. First, we consider the fixed points of $Y$ and $Z$. 

\begin{proposition}\label{pr:q/q'>r/r'}
 For a vertex $\left(\begin{bmatrix}
        p\\p'
    \end{bmatrix},\begin{bmatrix}
        q\\q'
    \end{bmatrix},\begin{bmatrix}
        r\\r'
    \end{bmatrix}\right)$ in $\mathrm{LP}\mathbb T(\ell)$, $\dfrac{q}{q'}>\dfrac{r}{r'}$ holds.
\end{proposition}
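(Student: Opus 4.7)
The plan is to reduce the problem to a sign analysis. Writing $D = qr'-q'r$, one has the elementary identity
\[
\frac{q}{q'} - \frac{r}{r'} \;=\; \frac{qr' - q'r}{q'r'} \;=\; \frac{D}{q'r'},
\]
so since Lemma~\ref{lem:qp'-q'p} already gives $D>1>0$ for every vertex of $\mathrm{P}\mathbb T(\ell)$, the claim reduces to showing that $q'$ and $r'$ have the same sign, i.e.\ $q'r'>0$.

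For this I would pass to the $2$-MM triple behind the vertex. By the construction of $\mathrm{P}\mathbb T(\ell)$, together with the eigenvector interpretation in Proposition~\ref{thm:eigen-vector}, each vertex of $\mathrm{LP}\mathbb T(\ell)$ records the fixed points of the components of a $2$-MM triple $(X,Y,Z)\in\mathrm{LMM}\mathbb T(2,\ell)$: specifically $q/q' = p_Y$ and $r/r' = p_Z$. Since $q,r>0$ by Proposition~\ref{pr:pqr-positivity}, the sign of $q'$ equals the sign of $p_Y$ and the sign of $r'$ equals the sign of $p_Z$; hence it is enough to show that $p_Y$ and $p_Z$ have the same sign. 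Proposition~\ref{prop:fixed-point} expresses these fixed points as $\pm\sqrt{-y_{12}/y_{21}}$ (resp.\ $\pm\sqrt{-z_{12}/z_{21}}$), the sign being determined by whether $y_{11}\lessgtr y_{22}$ (resp.\ $z_{11}\lessgtr z_{22}$), as soon as $y_{21}, z_{21}\neq 0$.

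The diagonal sign pattern is then supplied by Corollary~\ref{cor:sign-Markov-monodromy}: for the $Y$-component of any vertex of $\mathrm{MM}\mathbb T(2,\ell)$ we have $y_{11}<0<y_{22}$ when $\ell\leq 0$, and $y_{11}>0>y_{22}$ when $\ell\geq 1$; combined with Proposition~\ref{prop:fixed-point} this yields $p_Y>0$ or $p_Y<0$ accordingly. To transfer this to $p_Z$, I would observe that in the subtree $\mathrm{LMM}\mathbb T(2,\ell)$ every $Z$-slot is itself the $Y$-component of some vertex of $\mathrm{MM}\mathbb T(2,\ell)$: the root $\tilde Z_\ell$ equals $Y_\ell$, and the tree operations $(X,Y,Z)\mapsto(X,YZY^{-1},Y)$ and $(X,Y,Z)\mapsto(Y,Y^{-1}XY,Z)$ either leave $Z$ unchanged or replace it by the current $Y$-component. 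By induction on the tree, Corollary~\ref{cor:sign-Markov-monodromy} therefore also controls $z_{11}, z_{22}, z_{21}$ and forces $p_Z$ to have the same sign as $p_Y$ in both regimes $\ell\leq 0$ and $\ell\geq 1$. This delivers $q'r'>0$, and the proposition follows at once from the displayed identity.

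The main obstacle is precisely this last bookkeeping step: Corollary~\ref{cor:sign-Markov-monodromy} is phrased only for the middle ($Y$-) component of a triple in $\mathrm{MM}\mathbb T(k,\ell)$, so one has to argue carefully that the $Z$-slot of a vertex in $\mathrm{LMM}\mathbb T(2,\ell)$ inherits the very same sign pattern via the tree-induction identification above. Once that transfer is justified, the remainder of the argument is a direct assembly of results already established in Sections~5 and~6.
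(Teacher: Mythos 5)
Your proposal is correct, but it takes a genuinely different route from the paper. You reduce the claim, via the identity $\frac{q}{q'}-\frac{r}{r'}=\frac{qr'-q'r}{q'r'}$ and the inequality $qr'-q'r>1$ from Lemma~\ref{lem:qp'-q'p} (which is proved by a purely combinatorial induction on $\mathrm{P}\mathbb T(\ell)$, so there is no circularity), to the single sign statement $q'r'>0$; you then obtain the signs of $q'$ and $r'$ from the signs of the fixed points $p_Y,p_Z$ via Proposition~\ref{prop:fixed-point} and Corollary~\ref{cor:sign-Markov-monodromy}, and your tree induction transferring the $Y$-sign pattern to the $Z$-slot is exactly the observation recorded in Remark~\ref{rem:aboutXZ}; it works here because the root of $\mathrm{LMM}\mathbb T(2,\ell)$ has $Z$-slot $Y_\ell$, so the exceptional matrices $X_{1;\ell}$ and $Z_\ell=Y_{1;\ell}$ never occupy the $Y$- or $Z$-slot inside the left subtree, and $y_{21},z_{21}<0$ (Lemma~\ref{lem:y21<0}) rules out the fixed point $\infty$. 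The paper argues differently: it writes $\frac{q}{q'}=\frac{-y_{22}-1}{y_{21}}$ and $\frac{r}{r'}=\frac{z_{11}+1}{z_{21}}$ using Lemma~\ref{lem:fixed-point2}, bounds these by $Y^{-1}(\infty)$ and $Z(\infty)$ when $\ell\leq 0$ (respectively $Y^{-1}(0)$ and $Z(0)$ when $\ell\geq 1$), and concludes from the configuration Lemmas~\ref{lem:Y-inf-Z-inf} and \ref{lem:Y-0-Z-0}. What each buys: your argument is more self-contained for this particular statement, bypassing the M\"obius-image lemmas of Section~5.6 in favor of the determinant inequality already proved for $\mathrm{P}\mathbb T(\ell)$; the paper's machinery, on the other hand, is reused essentially verbatim for the companion Proposition~\ref{pr:p/p'>q/q'}, where the exceptional $X$-slot cases (e.g.\ $X=\tilde X_\ell$) do arise and your sign-transfer trick would not apply as cleanly.
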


 We will begin with an easy lemma. This lemma gives a different presentation of the fixed point than in Proposition \ref{prop:fixed-point}.

 \begin{lemma}\label{lem:fixed-point2}
If $X=\begin{bmatrix}
x_{11}&x_{12}\\x_{21}&x_{22}    
\end{bmatrix} \in SL(2,\mathbb R)$ has the unique fixed point $p_X$ on $\mathbb R$, then, we have
 \[p_X=\dfrac{x_{11}-x_{22}}{2x_{21}}.\]
 \end{lemma}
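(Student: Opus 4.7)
The plan is to derive the fixed-point equation of the Möbius action and exploit the uniqueness hypothesis to pin down the root via the quadratic formula. First I would write the condition $X \cdot p_X = p_X$ as $\dfrac{x_{11}p_X + x_{12}}{x_{21}p_X + x_{22}} = p_X$, and clear denominators to obtain the quadratic
\[
x_{21}p_X^2 + (x_{22} - x_{11})p_X - x_{12} = 0.
\]
Since $p_X$ lies in $\mathbb{R}$ (not $\infty$), we must have $x_{21} \neq 0$, for otherwise $\infty$ would be another fixed point in $\mathbb{R}P^1$, and together with a finite fixed point this would contradict the uniqueness hypothesis. So the equation is genuinely quadratic.

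Next I would invoke uniqueness: a quadratic with real coefficients has a unique real root precisely when its discriminant vanishes, so
\[
(x_{22} - x_{11})^2 + 4x_{21}x_{12} = 0.
\]
Then the quadratic formula collapses to the single value
\[
p_X = \frac{-(x_{22} - x_{11})}{2x_{21}} = \frac{x_{11} - x_{22}}{2x_{21}},
\]
which is exactly the claimed expression.

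There is essentially no obstacle here; the only point requiring a sentence of justification is ruling out $x_{21} = 0$, which follows immediately from the assumption that the unique fixed point lies in $\mathbb{R}$ rather than at $\infty$. (Equivalently, one can note that $\mathrm{tr}(X)^2 = 4\det(X) = 4$ for a parabolic element of $SL(2,\mathbb{R})$, which together with $\det X = 1$ forces the discriminant of the fixed-point quadratic to vanish automatically, confirming the computation.)
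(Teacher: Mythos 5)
Your proof is correct and follows essentially the same route as the paper: both set up the quadratic fixed-point equation and use the uniqueness hypothesis (vanishing discriminant / double root) to identify $p_X$ as $\dfrac{x_{11}-x_{22}}{2x_{21}}$, the paper doing so by completing the square rather than quoting the quadratic formula. Your explicit remark ruling out $x_{21}=0$ is a small extra care the paper leaves implicit, and your sign for the constant term ($-x_{12}$) is in fact the correct one, though this does not affect the double-root location in either version.
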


\begin{proof}
The fixed point $p_X$ satisfies $x_{21}p_X^2+(x_{22}-x_{11})p_X+x_{12}=0$. Completing the square, we obtain the following equation:
\[x_{21}\left(p_X-\dfrac{x_{11}-x_{22}}{2x_{21}}\right)^2-\left(\dfrac{x_{11}-x_{22}}{2x_{21}}\right)^2+x_{12}=0.\]
Since this equation has a double root, we have $-\left(\dfrac{x_{11}-x_{22}}{2x_{21}}\right)^2+x_{12}=0$ and $p_X=\dfrac{x_{11}-x_{22}}{2x_{21}}$.
\end{proof} 

\begin{proof}[Proof of Proposition \ref{pr:q/q'>r/r'}]
 We take $(X,Y,Z) \in \mathrm{LMM}\mathbb T(2,\ell)$ such that the fixed point of $X$ (resp. $Y,Z$) is $\dfrac{p}{p'}$ (resp. $\dfrac{q}{q'}$, $\dfrac{r}{r'}$). 
First, we prove the statement in the case $\ell\leq 0$. By Lemma \ref{lem:fixed-point2}, we have
 \[\dfrac{q}{q'}=\dfrac{y_{11}-y_{22}}{2y_{21}}=\dfrac{-y_{22}-1}{y_{21}},\quad \dfrac{r}{r'}=\dfrac{z_{11}-z_{22}}{2z_{21}}=\dfrac{z_{11}+1}{z_{21}}.\]
 Therefore, we have $\dfrac{r}{r'}<Z(\infty)$ and $Y^{-1}(\infty)<\dfrac{q}{q'}$ (we note that $y_{21},z_{21}<0$). Therefore, by Lemma \ref{lem:Y-inf-Z-inf}, we have $\dfrac{q}{q'}>\dfrac{r}{r'}$.

 Second, we prove the statement in the case $\ell\geq 1$. Now, $\dfrac{r}{r'}\leq Z(0)$ and $Y^{-1}(0)<\dfrac{q}{q'}$ hold. Indeed, we have
 \begin{align*}
     Z(0)-\dfrac{r}{r'}&=\dfrac{z_{12}}{z_{22}}-\dfrac{z_{11}+1}{z_{21}}=\dfrac{-1-z_{22}}{z_{21}z_{22}}\geq 0,\\
     \dfrac{q}{q'}-Y^{-1}(0)&=\dfrac{-y_{22}-1}{y_{21}}+\dfrac{y_{12}}{y_{11}}=\dfrac{-1-y_{11}}{y_{11}y_{21}}\geq 0
 \end{align*}
 (we note that $y_{11}>0$, $y_{21},z_{21},z_{22}<0$). Therefore, by Lemma \ref{lem:Y-0-Z-0}, we have $\dfrac{q}{q'}>\dfrac{r}{r'}$.
\end{proof}

Next, we discuss the fixed points of $X$ and $Y$. As we saw in Remark \ref{rem:aboutXZ}, a case separation is required when $X=\tilde{X}_\ell$. 

\begin{proposition}\label{pr:p/p'>q/q'}
 For a vertex $\left(\begin{bmatrix}
        p\\p'
    \end{bmatrix},\begin{bmatrix}
        q\\q'
    \end{bmatrix},\begin{bmatrix}
        r\\r'
    \end{bmatrix}\right)$ in $\mathrm{LP}\mathbb T(\ell)$ with $(\ell,p)\neq (0,1)$, $\dfrac{p}{p'}>\dfrac{q}{q'}$ holds, where $\dfrac{1}{0}$ is regarded as a fraction that is larger than any real numbers.
\end{proposition}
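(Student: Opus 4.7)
My plan is to mimic the proof of Proposition \ref{pr:q/q'>r/r'}, with the roles of $(Y,Z)$ now played by $(X,Y)$ and with Lemmas \ref{lem:Y-0-X-0} and \ref{lem:X-inf-Y-inf} replacing Lemmas \ref{lem:Y-inf-Z-inf} and \ref{lem:Y-0-Z-0}. Let $(X,Y,Z)\in\mathrm{LMM}\mathbb{T}(2,\ell)$ be the $2$-MM triple whose matrix fixed points form the given vertex of $\mathrm{LP}\mathbb{T}(\ell)$. Lemma \ref{lem:fixed-point2} combined with $\mathrm{tr}(X)=\mathrm{tr}(Y)=-2$ recasts the fixed points as
\[
\frac{p}{p'}=\frac{x_{11}+1}{x_{21}}=-\frac{x_{22}+1}{x_{21}},\qquad
\frac{q}{q'}=\frac{y_{11}+1}{y_{21}}=-\frac{y_{22}+1}{y_{21}}.
\]
The argument then splits into the two cases $\ell\ge 1$ and $\ell\le 0$.

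For $\ell\ge 1$ I aim for the chain $\frac{p}{p'}>X^{-1}(\infty)>Y(\infty)>\frac{q}{q'}$. The middle inequality is Lemma \ref{lem:X-inf-Y-inf}, and short computations reduce the two outer gaps to $-1/x_{21}$ and $-1/y_{21}$ respectively; both are positive since $x_{21},y_{21}<0$ by Corollary \ref{cor:sign-Markov-monodromy} (and by Remark \ref{rem:aboutXZ} in the boundary case $X=\tilde{X}_\ell$).

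For $\ell\le 0$ with $(\ell,p)\ne(0,1)$ I instead aim for $\frac{p}{p'}\ge X^{-1}(0)>Y(0)\ge \frac{q}{q'}$, the middle inequality being Lemma \ref{lem:Y-0-X-0}. A routine calculation using $\det X=1$ and $x_{11}+x_{22}=-2$ rewrites the outer gaps as $-(x_{11}+1)/(x_{11}x_{21})$ and $-(y_{22}+1)/(y_{22}y_{21})$; the second is positive by $y_{22}\ge 1$ and $y_{21}<0$ (Corollary \ref{cor:sign-Markov-monodromy}), and the first is positive by a short case split. When $X$ is a middle $Y$-type matrix, Corollary \ref{cor:sign-Markov-monodromy} gives $x_{11}\le -3$ and $x_{21}<0$; when $X=\tilde{X}_\ell$, the signs are read off from Remark \ref{rem:aboutXZ}, with the case $\ell=-1$ (where $x_{21}=0$ forces $\frac{p}{p'}=\infty$) handled by the convention $\tfrac{1}{0}=\infty$.

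It remains to justify that the hypothesis $X\ne\tilde{X}_0$ of Lemma \ref{lem:Y-0-X-0} is equivalent to the exclusion $(\ell,p)\ne(0,1)$. Because every middle $Y$-type matrix in $\mathrm{LMM}\mathbb{T}(2,0)$ has second entry $\ge 25$ (corresponding to $0$-Markov numbers $\ge 5$ via Proposition \ref{squre-markov}), the equality $p=1$ in $\mathrm{LP}\mathbb{T}(0)$ forces $X=\tilde{X}_0$; a direct computation in that excluded case produces $\frac{p}{p'}=-1<5=\frac{q}{q'}$, showing the exclusion is sharp. I expect the main care needed in the write-up will be the sign and vanishing bookkeeping for the boundary matrices $\tilde{X}_\ell$ with $\ell\in\{-1,-2\}$, where certain entries of $X$ vanish and the fixed-point formulas must be read projectively; once this is handled, the argument parallels the proof of Proposition \ref{pr:q/q'>r/r'} step for step.
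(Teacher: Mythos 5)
Your proposal matches the paper's proof essentially step for step: the same case split on $\ell\geq 1$ versus $\ell\leq 0$, the same chains through $X^{-1}(\infty),Y(\infty)$ (via Lemma \ref{lem:X-inf-Y-inf}) and $X^{-1}(0),Y(0)$ (via Lemma \ref{lem:Y-0-X-0}), the same gap computations $\frac{-1-x_{11}}{x_{11}x_{21}}$ and $\frac{-1-y_{22}}{y_{21}y_{22}}$ with the sign bookkeeping from Corollary \ref{cor:sign-Markov-monodromy} and Remark \ref{rem:aboutXZ}, and the same separate treatment of the boundary cases $X=\tilde{X}_\ell$ (in particular $(\ell,p)=(-1,1)$ where $\frac{p}{p'}=\frac{1}{0}$). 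It is correct as outlined and needs no change of strategy.
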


\begin{proof}
 We take $(X,Y,Z) \in \mathrm{LMM}\mathbb T(2,\ell)$ such that the fixed point of $X$ (resp. $Y,Z$) is $\dfrac{p}{p'}$ (resp. $\dfrac{q}{q'}$, $\dfrac{r}{r'}$). We note that $p=1$ if and only if $X=\tilde X_{\ell}$.
First, we prove the statement in the case $\ell\leq 0$.  By Lemma \ref{lem:fixed-point2}, we have
 \[\dfrac{p}{p'}=\dfrac{x_{11}-x_{22}}{2x_{21}}=\dfrac{-x_{22}-1}{x_{21}},\quad \dfrac{q}{q'}=\dfrac{y_{11}-y_{22}}{2y_{21}}=\dfrac{y_{11}+1}{y_{21}}.\] Now, if $(\ell,p)\neq (0,1), (-1,1)$, then $\dfrac{q}{q'}\leq Y(0)$ and $X^{-1}(0)<\dfrac{p}{p'}$ hold. Indeed, we have
 \begin{align*}
     Y(0)-\dfrac{q}{q'}&=\dfrac{y_{12}}{y_{22}}-\dfrac{y_{11}+1}{y_{21}}=\dfrac{-1-y_{22}}{y_{21}y_{22}}> 0,\\
     \dfrac{p}{p'}-X^{-1}(0)&=\dfrac{-x_{22}-1}{x_{21}}+\dfrac{x_{12}}{x_{11}}=\dfrac{-1-x_{11}}{x_{11}x_{21}}\geq 0
 \end{align*}
 (we note that $y_{22}>0$, $x_{21},y_{21},x_{11}<0$). 
 Therefore, by Lemma \ref{lem:Y-0-X-0}, we have $\dfrac{p}{p'}>\dfrac{q}{q'}$. When $(\ell,p)=(-1,1)$, since $\dfrac{p}{p'}=\dfrac{1}{0}$, we have $\dfrac{p}{p'}>\dfrac{q}{q'}$.  
 Second, we prove the case $\ell\geq 1$.  We have $\dfrac{q}{q'}<Y(\infty)$ and $X^{-1}(\infty)<\dfrac{p}{p'}$, (we note that $x_{21},y_{21}<0$). Therefore, by Lemma \ref{lem:X-inf-Y-inf}, we have $\dfrac{p}{p'}>\dfrac{q}{q'}$.
 \end{proof}

We summarize the configuration of $\dfrac{p}{p'}$, $\dfrac{q}{q'}$, and $\dfrac{r}{r'}$, including the case where $X=\tilde X_\ell$.

\begin{theorem}\label{thm:p/p'<q/q'<r/r'}
 For a vertex $\left(\begin{bmatrix}
        p\\p'
    \end{bmatrix},\begin{bmatrix}
        q\\q'
    \end{bmatrix},\begin{bmatrix}
        r\\r'
    \end{bmatrix}\right)$ in $\mathrm{LP}\mathbb T(\ell)$,
\begin{itemize}\setlength{\leftskip}{-15pt}   
\item[(1)] if $\ell\geq 1$, then $\dfrac{r}{r'}<\dfrac{q}{q'}<\dfrac{p}{p'}<0$ holds,\vspace{2mm}
\item[(2)] if $\ell= 0$ and $p \neq 1$, then $0<\dfrac{r}{r'}<\dfrac{q}{q'}<\dfrac{p}{p'}$ holds,\vspace{2mm}
\item[(3)] if $\ell= 0$ and $p=1$, then $\dfrac{p}{p'}<0<\dfrac{r}{r'}<\dfrac{q}{q'}$ holds,\vspace{2mm}
\item[(4)] if $\ell\leq -1$, then $0<\dfrac{r}{r'}<\dfrac{q}{q'}<\dfrac{p}{p'}$ holds.
\end{itemize} 
In particular, for any $\ell\in \mathbb Z$, the second components of all vertices of $\mathrm{LP}\mathbb{T}(\ell)$ are distinct. 
\end{theorem}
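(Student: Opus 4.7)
The plan is to combine Propositions \ref{pr:q/q'>r/r'} and \ref{pr:p/p'>q/q'} with an induction on the depth in $\mathrm{LP}\mathbb T(\ell)$ that tracks the signs of the denominators $p',q',r'$, and then conclude the distinctness statement by a Stern--Brocot style nesting argument on the middle fractions.

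First I would observe that the orderings in (1), (2), (4) are an immediate application of Propositions \ref{pr:q/q'>r/r'} and \ref{pr:p/p'>q/q'} to each vertex, since those propositions already give $\tfrac{r}{r'}<\tfrac{q}{q'}$ and (outside the excluded case) $\tfrac{q}{q'}<\tfrac{p}{p'}$. The only configuration not covered is $(\ell,p)=(0,1)$, which is exactly case (3); here $X=\tilde X_0$ forces $\tfrac{p}{p'}=-1<0$ by a direct computation, and Proposition \ref{pr:q/q'>r/r'} still supplies $\tfrac{r}{r'}<\tfrac{q}{q'}$, so the ordering asserted in (3) reduces to fixing the signs of $q'$ and $r'$.

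Second, I would fix the sign pattern of the denominators by induction on the depth in $\mathrm{LP}\mathbb T(\ell)$. The base case uses the explicit root formulas $p'=-\ell-1$, $q'=-5\ell+1$, $r'=-2\ell+1$, which are all negative for $\ell\geq 1$, equal to $(-1,1,1)$ for $\ell=0$, and satisfy $p'\geq 0$, $q',r'>0$ for $\ell\leq -1$ (with $p'=0$ when $\ell=-1$, interpreted as $\tfrac{p}{p'}=+\infty$). For the inductive step I use that the left child $v_L$ has $(p_L,p_L')=(p,p')$ and $(r_L,r_L')=(q,q')$, while the right child $v_R$ has $(p_R,p_R')=(q,q')$ and $(r_R,r_R')=(r,r')$; since $q_L,q_R>0$ by Proposition \ref{pr:pqr-positivity}, the sign of the new middle denominator is forced by the ordering proved above. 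Left-moves preserve $p=1$, so case (3) remains case (3); right-moves replace $p$ by $q\geq 5$, so case (3) is pushed into case (2). Cases (1), (2), (4) are each stable under both moves by the same sign-propagation argument.

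Third, for distinctness, I would establish by induction on depth the following nesting property: every descendant of $v_L$ has its middle fraction in the open interval $\bigl(\tfrac{q_v}{q_v'},\tfrac{p_v}{p_v'}\bigr)$, and every descendant of $v_R$ has its middle fraction in $\bigl(\tfrac{r_v}{r_v'},\tfrac{q_v}{q_v'}\bigr)$. Indeed $v_L$'s own middle fraction lies in $(\tfrac{r_{v_L}}{r_{v_L}'},\tfrac{p_{v_L}}{p_{v_L}'})=(\tfrac{q_v}{q_v'},\tfrac{p_v}{p_v'})$ by the ordering, and deeper descendants are contained by induction. Consequently, for any two distinct vertices $v,w$ with lowest common ancestor $u$, the middle fractions $\tfrac{q_v}{q_v'}$ and $\tfrac{q_w}{q_w'}$ lie in the two disjoint open intervals separated by $\tfrac{q_u}{q_u'}$, or one of them equals $\tfrac{q_u}{q_u'}$ and the other is in one of those intervals; in either case they are unequal. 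Since the vector $\begin{bmatrix}q\\q'\end{bmatrix}$ trivially determines the fraction $\tfrac{q}{q'}$, distinct middle fractions give distinct second components.

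The main obstacle is the boundary behaviour: the exceptional case $(\ell,p)=(0,1)$ where Proposition \ref{pr:p/p'>q/q'} fails (handled by the explicit value of $\tilde X_0$) and the case $\ell=-1$ where $p'=0$ along the left spine (handled consistently with the convention $\tfrac{1}{0}=+\infty$ introduced in Proposition \ref{pr:p/p'>q/q'}). Once these are absorbed into the inductive bookkeeping, the four cases and the distinctness statement all follow simultaneously.
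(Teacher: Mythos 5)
Your treatment of the orderings (1)--(4) is essentially sound and close to the paper's route: the strict inequalities come from Propositions \ref{pr:q/q'>r/r'} and \ref{pr:p/p'>q/q'}, and the sign information that the paper extracts from Remark \ref{rem:aboutXZ} and Proposition \ref{prop:fixed-point} you recover instead by inducting on the signs of $p',q',r'$ from the explicit root of $\mathrm{LP}\mathbb T(\ell)$, together with Proposition \ref{pr:pqr-positivity}; that works, as does your handling of the excluded case $(\ell,p)=(0,1)$ via the explicit matrix $\tilde X_0$.

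The gap is in the distinctness argument. Your nesting hypothesis --- every descendant of $v_L$ has its middle fraction in $\bigl(\tfrac{q_v}{q_v'},\tfrac{p_v}{p_v'}\bigr)$, justified because ``$v_L$'s own middle lies in $\bigl(\tfrac{r_{v_L}}{r_{v_L}'},\tfrac{p_{v_L}}{p_{v_L}'}\bigr)$ by the ordering'' --- is false exactly on the case-(3) vertices, i.e.\ along the left spine of $\mathrm{LP}\mathbb T(0)$. There the ordering you proved is $\tfrac{p}{p'}<0<\tfrac{r}{r'}<\tfrac{q}{q'}$: the middle fraction is \emph{larger} than both outer ones, not between them. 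Concretely, at the root of $\mathrm{LP}\mathbb T(0)$ one has $\bigl(\tfrac{p}{p'},\tfrac{q}{q'},\tfrac{r}{r'}\bigr)=(-1,5,2)$, while the left child's middle fraction is $\tfrac{13}{2}$, which does not lie between $-1$ and $5$ in the real order. So the induction never gets started along that spine, and every comparison whose lowest common ancestor is a left-spine vertex of $\mathrm{LP}\mathbb T(0)$ is left unproved; your closing remark about the obstacle $(\ell,p)=(0,1)$ only repairs the ordering step, not this one. The argument can be saved by a case split in the induction: for a case-(3) vertex $v$, show that all middles in the subtree of $v_L$ lie in $\bigl(\tfrac{q_v}{q_v'},+\infty\bigr)$ (note $v_L$ is again case (3) with $r_{v_L}=q_v$), while all middles in the subtree of $v_R$ lie in $\bigl(\tfrac{r_v}{r_v'},\tfrac{q_v}{q_v'}\bigr)$ (since $v_R$ is a case-(2) vertex, the usual interval statement applies to it); alternatively, run the whole nesting argument in $\mathbb{R}P^1$ with cyclic order, where the arc from $\tfrac{q_v}{q_v'}$ to $\tfrac{p_v}{p_v'}=-1$ passes through $\infty$. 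With such a correction your Stern--Brocot separation at the lowest common ancestor does give the distinctness; as written, it does not cover $\ell=0$.
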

\begin{proof} 
The statements (1),(2),(4) follow from Remark \ref{rem:aboutXZ} and Propositions \ref{prop:fixed-point}, \ref{pr:q/q'>r/r'}, \ref{pr:p/p'>q/q'}. The statement (3) can be checked easily.
\end{proof}

\subsection{Determinants of matrix constructed by fixed points}

By Lemma \ref{lem:qp'-q'p} (1), the determinants of $\begin{bmatrix}
    q&r\\q'&r'
\end{bmatrix}$, $\begin{bmatrix}
    p&r\\p'&r'
\end{bmatrix}$, $\begin{bmatrix}
    p&q\\p'&q'
\end{bmatrix}$
are larger than $1$. We will see these determinants in more detail. 

\begin{theorem}\label{thm:iso-PT-MTdag}
For a vertex $\left(\begin{bmatrix}
        p\\p'
    \end{bmatrix},\begin{bmatrix}
        q\\q'
    \end{bmatrix},\begin{bmatrix}
        r\\r'
    \end{bmatrix}\right)$ in $\mathrm{P}\mathbb T(\ell)$, we have 
    \[\left(\det\begin{bmatrix}
    q&r\\q'&r'
\end{bmatrix},\det\begin{bmatrix}
    p&r\\p'&r'
\end{bmatrix},\det\begin{bmatrix}
    p&q\\p'&q'
\end{bmatrix}\right)=3\mu(p,q,r),\]
where $\mu$ is the same notation as in Proposition \ref{pr:rho-mor}.
In particular, the correspondence \[\left(\begin{bmatrix}
        p\\p'
    \end{bmatrix},\begin{bmatrix}
        q\\q'
    \end{bmatrix},\begin{bmatrix}
        r\\r'
    \end{bmatrix}\right)\mapsto\dfrac{1}{3}\left(\det\begin{bmatrix}
    q&r\\q'&r'
\end{bmatrix},\det\begin{bmatrix}
    p&r\\p'&r'
\end{bmatrix},\det\begin{bmatrix}
    p&q\\p'&q'
\end{bmatrix}\right)\] induces the canonical graph isomorphism from $\mathrm{P}\mathbb{T}(\ell)$ to $\mathrm{M}\mathbb{T}^\dag (0)$.
\end{theorem}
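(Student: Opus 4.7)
The plan is to prove the determinant identity by induction on depth in $\mathrm{P}\mathbb T(\ell)$, and then deduce the canonical graph isomorphism by composing with the known isomorphisms of Theorem \ref{thm:PT-LLMT} and Proposition \ref{pr:rho-mor}.

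For the base case, I would verify at the root $\left(\begin{bmatrix}1\\-\ell-1\end{bmatrix},\begin{bmatrix}2\\-2\ell+1\end{bmatrix},\begin{bmatrix}1\\-\ell+2\end{bmatrix}\right)$ that each of the three determinants equals $3$ by direct substitution. Since $\mu(1,2,1)=(1,1,1)$ for $k=0$, this gives $(3,3,3) = 3\mu(1,2,1)$ as required.

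For the induction step, assume the identity at a vertex $v$ with Markov triple $(p,q,r)$ (so $q>\max\{p,r\}$ by Lemma \ref{lem:qp'-q'p}(2)); explicitly, $qr' - q'r = 3p$, $\ pr' - p'r = 3(p^2+r^2)/q$, and $pq' - p'q = 3r$. I treat the left child; the right child is symmetric. With $\tilde p = p$, $\tilde q = q^2 r' - qq'r - r$, $\tilde r = q$ and the primes substituted similarly, direct expansion shows the two new ``outer'' determinants $\tilde q \tilde r' - \tilde q' \tilde r$ and $\tilde p \tilde r' - \tilde p' \tilde r$ collapse immediately to $qr'-q'r = 3p$ and $pq' - p'q = 3r$ respectively. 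The new ``middle'' determinant $\tilde p \tilde q' - \tilde p' \tilde q$ is the delicate one: by a careful grouping of the six resulting monomials it rearranges into
\[
\tilde p \tilde q' - \tilde p' \tilde q \;=\; -(qp' - q'p)(qr' - q'r)\; -\; (pr' - p'r),
\]
which by the induction hypothesis equals $9pr - 3(p^2+r^2)/q = 3(3pqr - p^2 - r^2)/q$, and this simplifies to $3q$ by the Markov equation $p^2 + q^2 + r^2 = 3pqr$. Since Theorem \ref{thm:PT-LLMT} identifies the new Markov triple as $(p,(p^2+q^2)/r,q)$ and $\mu$ applied to it is $(p,r,q)$, the new determinant triple $(3p,3r,3q)$ is exactly $3\mu$ of the new Markov triple, completing the induction.

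The isomorphism statement then follows by composition: Theorem \ref{thm:PT-LLMT} provides the canonical graph isomorphism $\mathrm{P}\mathbb T(\ell) \to \mathrm{M}\mathbb T(0)$, and Proposition \ref{pr:rho-mor} provides the canonical graph isomorphism $\mu\colon \mathrm{M}\mathbb T(0) \to \mathrm{M}\mathbb T^\dag(0)$; the just-established formula shows that the determinant correspondence realizes this composition (after dividing by $3$). The main obstacle is the middle-determinant identity in the induction step, since it requires both the nonobvious factorization into a product of two other determinants and the invocation of the Markov equation to reduce $(3pqr-p^2-r^2)/q$ to $q$.
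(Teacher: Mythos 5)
Your proposal is correct and follows essentially the same route as the paper: a direct check at the root followed by induction along the tree using exactly the same two determinant expansions (the new outer determinant collapsing to $qr'-q'r$, and the new middle one factoring as $(pq'-p'q)(qr'-q'r)-(pr'-p'r)$), then composing with Theorem \ref{thm:PT-LLMT} and Proposition \ref{pr:rho-mor} for the isomorphism statement. The only (cosmetic) difference is that you substitute the inductive values and simplify via the Markov equation $p^2+q^2+r^2=3pqr$, whereas the paper phrases the same step as the new triple being a permuted Vieta jumping of a $\mathrm{GSME}(0)$ solution consistent with the left/right-child rule of $\mathrm{M}\mathbb{T}^\dag(0)$.
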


\begin{proof}
When $ \left(\begin{bmatrix}
        p\\p'
    \end{bmatrix},\begin{bmatrix}
        q\\q'
    \end{bmatrix},\begin{bmatrix}
        r\\r'
    \end{bmatrix}\right)=\left(\begin{bmatrix}
            1\\-\ell-1
        \end{bmatrix},\begin{bmatrix}
           2\\-2\ell+1
        \end{bmatrix},\begin{bmatrix}
           1\\-\ell+2
        \end{bmatrix}\right)$, we can check the statement directly.
Next, we assume that $\left(\begin{bmatrix}
        p\\p'
    \end{bmatrix},\begin{bmatrix}
        q\\q'
    \end{bmatrix},\begin{bmatrix}
        r\\r'
    \end{bmatrix}\right)$ satisfies the statement. Then, for the left child $\left(\begin{bmatrix}
        p\\p'
    \end{bmatrix},\begin{bmatrix}
        q^2r'-qq'r-r\\-q'^2r+qq'r'-r'
    \end{bmatrix},\begin{bmatrix}
        q\\q'
    \end{bmatrix}\right)$, we have
\begin{align*}
\det\begin{bmatrix}
        q^2r'-qq'r-r & q\\-q'^2r+qq'r'-r' &q'
    \end{bmatrix}&=qr'-q'r=\det\begin{bmatrix}
    q&r\\q'&r'\end{bmatrix},\\
  \det\begin{bmatrix}
        p&q^2r'-qq'r-r\\p'&-q'^2r+qq'r'-r'
    \end{bmatrix}&=(pq'-p'q)(qr'-qr)-(pr'-p'r)\\&=\det\begin{bmatrix}
    p&q\\p'&q'\end{bmatrix}\det\begin{bmatrix}
    q&r\\q'&r'\end{bmatrix}-\det\begin{bmatrix}
    p&r\\p'&r'\end{bmatrix}.
\end{align*}
By assumption, $\left(\det\begin{bmatrix}
    q&r\\q'&r'
\end{bmatrix},\det\begin{bmatrix}
    p&r\\p'&r'
\end{bmatrix},\det\begin{bmatrix}
    p&q\\p'&q'
\end{bmatrix}\right)$ is a solution to $\mathrm{GSME}(0)$, and thus so is $\left(\det\begin{bmatrix}
    q&r\\q'&r'
\end{bmatrix},\det\begin{bmatrix}
    p&q\\p'&q'\end{bmatrix},\det\begin{bmatrix}
    p&q\\p'&q'\end{bmatrix}\det\begin{bmatrix}
    q&r\\q'&r'\end{bmatrix}-\det\begin{bmatrix}
    p&r\\p'&r'\end{bmatrix}\right)$ because it is a permutation of the Vieta jumping. The permutation rule is consistent with the operation taking the left child of $\mathrm{M}\TT^\dag(0)$. The same is true for the right child. Therefore, we can prove that the statement holds inductively.
\end{proof}

We define the \emph{inverse parabolic fixed point tree} $\mathrm{P}\TT^\dag(\ell)$:
\begin{itemize}\setlength{\leftskip}{-15pt}
    \item [(1)] the root vertex is  \[\left(\begin{bmatrix}
            1\\-\ell-1
        \end{bmatrix},\begin{bmatrix}
            1\\-\ell+2
        \end{bmatrix},\begin{bmatrix}
            1\\-\ell+5
        \end{bmatrix}\right),\]
     \item [(2)] for a vertex $\left(\begin{bmatrix}
        p\\p'
    \end{bmatrix},\begin{bmatrix}
        q\\q'
    \end{bmatrix},\begin{bmatrix}
        r\\r'
    \end{bmatrix}\right)$, we consider the following two children of it:
\[\begin{xy}(0,0)*+{\left(\begin{bmatrix}
        p\\p'
    \end{bmatrix},\begin{bmatrix}
        q\\q'
    \end{bmatrix},\begin{bmatrix}
        r\\r'
    \end{bmatrix}\right)}="1",(-40,-15)*+{\left(\begin{bmatrix}
        p\\p'
    \end{bmatrix},\begin{bmatrix}
        r\\r'
    \end{bmatrix},\begin{bmatrix}
        -r^2q'+rr'q-q\\r'^2q-rr'q'-q'
    \end{bmatrix}\right)}="2",(40,-15)*+{\left(\begin{bmatrix}
        p^2q'-pp'q-q\\-p'^2q+pp'q'-q'
    \end{bmatrix},\begin{bmatrix}
        p\\p'
    \end{bmatrix},\begin{bmatrix}
        r\\r'
    \end{bmatrix}\right).}="3", \ar@{-}"1";"2"\ar@{-}"1";"3"
\end{xy}\]    
\end{itemize}

\begin{example}
When $\ell=0$, the tree $\mathrm{P}\mathbb{T}^\dag(\ell)$ is given by the following.
\begin{align*}
\begin{xy}(-30,0)*+{\left(\begin{bmatrix}1\\-1\end{bmatrix},\begin{bmatrix}1\\2\end{bmatrix},\begin{bmatrix}1\\5\end{bmatrix}\right)}="1",(10,-16)*+{\left(\begin{bmatrix}1\\-1\end{bmatrix},\begin{bmatrix}1\\5\end{bmatrix},\begin{bmatrix}2\\13\end{bmatrix}\right)}="2",(10,16)*+{\left(\begin{bmatrix}2\\-5\end{bmatrix},\begin{bmatrix}1\\-1\end{bmatrix},\begin{bmatrix}1\\5\end{bmatrix}\right)}="3", 
(60,-24)*+{\left(\begin{bmatrix}1\\-1\end{bmatrix},\begin{bmatrix}2\\13\end{bmatrix},\begin{bmatrix}5\\34\end{bmatrix}\right)\dots}="4",(60,-8)*+{\left(\begin{bmatrix}5\\-11\end{bmatrix},\begin{bmatrix}1\\-1\end{bmatrix},\begin{bmatrix}2\\13\end{bmatrix}\right)\dots}="5",(60,8)*+{\left(\begin{bmatrix}2\\-5\end{bmatrix},\begin{bmatrix}1\\5\end{bmatrix},\begin{bmatrix}5\\31\end{bmatrix}\right)\dots}="6",(60,24)*+{\left(\begin{bmatrix}5\\-14\end{bmatrix},\begin{bmatrix}2\\-5\end{bmatrix},\begin{bmatrix}1\\5\end{bmatrix}\right)\dots}="7",\ar@{-}"1";"2"\ar@{-}"1";"3"\ar@{-}"2";"4"\ar@{-}"2";"5"\ar@{-}"3";"6"\ar@{-}"3";"7"
\end{xy}
\end{align*}
\end{example}

In parallel with $\mathrm{P}\TT(\ell)$, we have the following properties:

\begin{proposition}\label{pr:pqr-positivity-dual}
 For a vertex $\left(\begin{bmatrix}
        p\\p'
    \end{bmatrix},\begin{bmatrix}
        q\\q'
    \end{bmatrix},\begin{bmatrix}
        r\\r'
    \end{bmatrix}\right)$ in $\mathrm{P}\mathbb T^\dag(\ell)$, 
    \begin{itemize}\setlength{\leftskip}{-15pt}
        \item [(1)] $p,q,r>0$ hold, 
        \item [(2)] $p$ and $|p'|$ (resp. $q$ and $|q'|$, $r$ and $|r'|$) are relatively prime.
    \end{itemize}
\end{proposition}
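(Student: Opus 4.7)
The plan is to run the proof in parallel with that of Proposition \ref{pr:pqr-positivity}, with inductive invariants adapted to the inverse tree. I would prove by induction on depth in $\mathrm{P}\mathbb T^\dag(\ell)$ the following stronger statement at every vertex: the triple $(p,q,r)$ coincides with the triple at the corresponding vertex of $\mathrm{M}\mathbb T^\dag(0)$ (so, in particular, it is a $0$-GM triple), and moreover the determinant identities
\begin{equation*}
pq' - p'q = 3r, \qquad qr' - q'r = 3p
\end{equation*}
hold. The base case at the root $((1,-\ell-1),(1,-\ell+2),(1,-\ell+5))$ is a direct verification: $(p,q,r)=(1,1,1)$, and $pq'-p'q = qr'-q'r = 3$.

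The key tool for the inductive step is the linear dependence $B\,(p,p') - C\,(q,q') + A\,(r,r') = 0$ of the three column vectors in $\mathbb Z^2$, where $A:=pq'-p'q$, $B:=qr'-q'r$, $C:=pr'-p'r$. Comparing first coordinates and substituting the inductive hypotheses $A=3r$ and $B=3p$ gives $Cq = 3(p^2+r^2)$, and the Markov equation $p^2+q^2+r^2=3pqr$ then simplifies this to $C=3(3pr-q)$. For the left child, the formula $\tilde r = r(qr'-q'r)-q$ from the generation rule of $\mathrm{P}\mathbb T^\dag(\ell)$ collapses to $\tilde r = 3pr - q = (p^2+r^2)/q$, which is precisely the third component of the left child of $(p,q,r)$ in $\mathrm{M}\mathbb T^\dag(0)$. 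The new invariants $\tilde p\tilde q' - \tilde p'\tilde q = 3\tilde r$ and $\tilde q\tilde r' - \tilde q'\tilde r = 3\tilde p$ are then confirmed by a short direct calculation from the explicit formulas for $\tilde r,\tilde r'$, reducing them to $C$ and $B$ of the parent. The right child is handled symmetrically: its new first entry is $\tilde p = pA - q = 3pr - q$, and the new invariants reduce to $A$ and $C$.

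Granting this stronger claim, part (1) is immediate, since entries of $0$-GM triples are positive. For part (2), take $d := \gcd(p,|p'|)$. Then $d \mid A = 3r$, and since $\gcd(p,r)=1$ by the pairwise coprimality of $0$-GM triples (Proposition \ref{relatively-prime}), we get $d \mid 3$. Since no $0$-GM number is divisible by $3$ (by an elementary induction from $(1,1,1)$ using that $(p,q,r) \not\equiv 0 \pmod 3$ entrywise implies $3pq - r \equiv -r \not\equiv 0 \pmod 3$, justified by Theorem \ref{Diophantinetheorem}), it follows that $d=1$. The claims for $(q,|q'|)$ and $(r,|r'|)$ follow analogously, the latter using $d \mid C = 3(3pr-q)$ together with $d \mid p$ to reduce to $d \mid 3q$ and hence $d \mid 3$.

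The main obstacle is locating the correct pair of inductive invariants. Without the identities $A=3r$ and $B=3p$, the quadratic formulas for the new components produced by the generation rule of $\mathrm{P}\mathbb T^\dag(\ell)$ make both positivity and primality awkward to track simultaneously. Once these two linear identities are carried along, the Markov equation together with the Plücker-type linear dependence furnishes $C=3(3pr-q)$ for free, and both halves of the proposition then close cleanly.
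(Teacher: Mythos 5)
Your proof is correct, but it takes a genuinely different route from the one the paper intends. The paper gives no written proof here: the proposition is presented as "in parallel with $\mathrm{P}\mathbb{T}(\ell)$", i.e.\ one is meant to mirror the proof of Proposition \ref{pr:pqr-positivity}, which runs an inequality-based induction (a dual analogue of Lemma \ref{lem:qp'-q'p}: signs and magnitudes of the three $2\times 2$ determinants, plus $q>p,r$) for positivity, and then a ``going upstream to the root'' divisibility-propagation argument for coprimality. You instead carry exact identities as the inductive invariant: the upper entries track $\mathrm{M}\mathbb{T}^\dag(0)$ and $pq'-p'q=3r$, $qr'-q'r=3p$, from which the Pl\"ucker-type relation $B(p,p')-C(q,q')+A(r,r')=0$ and the Markov equation give $pr'-p'r=3(3pr-q)$ for free; positivity is then inherited from positivity of Markov triples, and coprimality follows arithmetically from Proposition \ref{relatively-prime} together with the (correct, and correctly justified via Theorem \ref{Diophantinetheorem}) observation that no Markov number is divisible by $3$. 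This is a stronger statement than the proposition itself --- it simultaneously establishes the content of Theorem \ref{thm:iso-PTdag-MT} and the graph isomorphism $\mathrm{P}\mathbb{T}^\dag(\ell)\to\mathrm{M}\mathbb{T}^\dag(0)$ for the upper entries, which the paper states separately without proof --- and it avoids the inequality bookkeeping; the paper's route, by contrast, needs no knowledge of the exact values of the determinants and reuses machinery already set up for $\mathrm{P}\mathbb{T}(\ell)$. One trivial slip: in the $(r,|r'|)$ case you write ``together with $d\mid p$''; there you only know $d\mid r$, which is what you actually need to pass from $d\mid 3(3pr-q)$ to $d\mid 3q$ (or, more directly, use $d\mid B=3p$ and $\gcd(r,p)=1$). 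This does not affect the validity of the argument.
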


\begin{theorem}
We fix $\ell\in \mathbb Z$. The correspondence $\left(\begin{bmatrix}
        p\\p'
    \end{bmatrix},\begin{bmatrix}
        q\\q'
    \end{bmatrix},\begin{bmatrix}
        r\\r'
    \end{bmatrix}\right)\mapsto (p,q,r)$ induces the canonical graph isomorphism from $\mathrm{P}\mathbb T^\dag(\ell)$ to $\mathrm{M}\mathbb T^\dag(0)$.     
\end{theorem}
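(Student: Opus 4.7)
The plan is to mirror the proof of Theorem \ref{thm:PT-LLMT}, replacing $\mathrm{P}\mathbb T(\ell)$, $\mathrm{MM}\mathbb T(2,\ell)$, and $\mathrm{M}\mathbb T(0)$ by their inverse counterparts. The tree $\mathrm{P}\mathbb T^\dag(\ell)$ is designed to encode the transformation of fixed points of $2$-MM matrices under the generating operations $(X,Y,Z)\mapsto(X,Z,Z^{-1}YZ)$ and $(X,Y,Z)\mapsto(XYX^{-1},X,Z)$ of $\mathrm{MM}\mathbb T^\dag(2,\ell)$, so the proof amounts to verifying this compatibility and then composing with isomorphisms already available in the paper.

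First I would check that the map $\left(\begin{bmatrix}p\\p'\end{bmatrix},\begin{bmatrix}q\\q'\end{bmatrix},\begin{bmatrix}r\\r'\end{bmatrix}\right)\mapsto\left(\tfrac{p}{p'},\tfrac{q}{q'},\tfrac{r}{r'}\right)$ yields a bijection between vertices of $\mathrm{P}\mathbb T^\dag(\ell)$ and fixed-point triples of $2$-MM triples in $\mathrm{MM}\mathbb T^\dag(2,\ell)$. At the root, Lemma \ref{lem:fixed-point2} applied to $X_{1;\ell}, Y_{1;\ell}, Z_{1;\ell}$ at $k=2$ yields the fixed points $\tfrac{1}{-\ell-1},\ \tfrac{1}{-\ell+2},\ \tfrac{1}{-\ell+5}$, which match. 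For the inductive step I would reuse the parameterization $y_{11}=-1-qq'$, $y_{22}=-1+qq'$, $y_{12}=q^2$, $y_{21}=-q'^2$ of a $2$-MM matrix with fixed point $q/q'$ (derived as in the discussion preceding Example \ref{ex:PT}), and verify by direct computation that $Z^{-1}(q/q')$ and $X(q/q')$ reduce to $\tfrac{-r^2 q'+rr'q-q}{r'^2 q-rr'q'-q'}$ and $\tfrac{p^2q'-pp'q-q}{-p'^2q+pp'q'-q'}$ respectively, matching the formulas defining the two children in $\mathrm{P}\mathbb T^\dag(\ell)$.

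Next I would establish the dual of Corollary \ref{parabolic-markov}: the fixed-point correspondence $(X,Y,Z)\mapsto(|p|,|q|,|r|)$ induces the canonical graph isomorphism from $\mathrm{MM}\mathbb T^\dag(2,\ell)$ to $\mathrm{M}\mathbb T^\dag(0)$. The proof is the same as for Corollary \ref{parabolic-markov}: compose the isomorphism $\mathrm{MM}\mathbb T^\dag(2,\ell)\cong\mathrm{M}\mathbb T^\dag(2)$ (the unlabeled corollary stated just before Proposition \ref{pr:all-Markov-monodromy-triple2}) with the dual square-root isomorphism $\mathrm{M}\mathbb T^\dag(2)\cong\mathrm{M}\mathbb T^\dag(0)$, the latter being immediate from Proposition \ref{squre-markov} since the operations on the dual trees are formal inverses of those on $\mathrm{M}\mathbb T(k)$. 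Combined with the fixed-point bijection above, this yields the canonical graph isomorphism $\mathrm{P}\mathbb T^\dag(\ell)\to\mathrm{M}\mathbb T^\dag(0)$ sending a vertex to $(|p|,|q|,|r|)$; then Proposition \ref{pr:pqr-positivity-dual}(2) ensures the fractions are reduced and part (1) gives $|p|=p,\ |q|=q,\ |r|=r$.

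The main obstacle is Proposition \ref{pr:pqr-positivity-dual} itself, which the excerpt states without proof. I expect it to follow by the strategy of Lemma \ref{lem:qp'-q'p} and Proposition \ref{pr:pqr-positivity}: first establish the dual sign inequalities $qp'-q'p<-1$, $qr'-q'r>1$, $rp'-r'p<-1$ by induction, noting that at the root of $\mathrm{P}\mathbb T^\dag(\ell)$ these cross-products equal $-3,\ 3,\ -6$, and that they propagate through the two child operations by the same algebraic manipulations as in Lemma \ref{lem:qp'-q'p}; then part (1) follows from the analogue of the magnitude inequality $q>r$, $q>p$; finally the coprimality in part (2) follows by a reverse-induction argument tracing a hypothetical common divisor back to the root, precisely as in the proof of Proposition \ref{pr:pqr-positivity}. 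All remaining steps are direct transcriptions of the corresponding steps in the proof of Theorem \ref{thm:PT-LLMT}.
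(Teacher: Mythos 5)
Your overall route is the one the paper intends (the theorem is stated without proof as the parallel of Theorem \ref{thm:PT-LLMT}), and several steps do transcribe correctly: the root fixed points are indeed $1/(-\ell-1)$, $1/(-\ell+2)$, $1/(-\ell+5)$; the conjugation step works (writing $Y+I$ as the rank-one matrix with column $(q,q')$ and row $(-q',q)$, the matrices $Z^{-1}YZ+I$ and $XYX^{-1}+I$ have the same form with vectors $B\,(r,r')-(q,q')$ and $(-A)\,(p,p')-(q,q')$, where $A=qp'-q'p$, $B=qr'-q'r$, matching the children of $\mathrm{P}\mathbb T^{\dag}(\ell)$); the composition $\mathrm{MM}\mathbb T^{\dag}(2,\ell)\cong\mathrm{M}\mathbb T^{\dag}(2)\cong\mathrm{M}\mathbb T^{\dag}(0)$ is available; and the coprimality part of Proposition \ref{pr:pqr-positivity-dual} does follow by tracing a common divisor upstream to the root.

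The genuine gap is in your sketch of the dual of Lemma \ref{lem:qp'-q'p}, hence of Proposition \ref{pr:pqr-positivity-dual}(1). With $C=rp'-r'p$, the child operations of $\mathrm{P}\mathbb T^{\dag}(\ell)$ give new cross-products $(A',B',C')=(C,\,B,\,BC-A)$ for the left child and $(A,\,-C,\,-AC+B)$ for the right child. These identities differ from the one in Lemma \ref{lem:qp'-q'p}, and the hypotheses $A<-1$, $B>1$, $C<-1$ alone do not force $BC-A<-1$ or $-AC+B<-1$ (take $A=-10$, $B=2$, $C=-2$: then $BC-A=6$). So the three inequalities do not propagate ``by the same algebraic manipulations''; you must strengthen the induction hypothesis, e.g.\ by the dominance conditions $-C>B$ and $-C>-A$ (true at the root, where $(A,B,C)=(-3,3,-6)$; the five conditions together do propagate through both children) --- this is precisely the structure recorded in Theorem \ref{thm:iso-PTdag-MT}. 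Likewise, the magnitude statement you propose to dualize, $q>p$ and $q>r$, is false in $\mathrm{P}\mathbb T^{\dag}(\ell)$ (the middle numerator is the non-maximal one, as in $\mathrm{M}\mathbb T^{\dag}(0)$), so positivity of the new numerators $rB-q$ and $-pA-q$ needs a different argument; the cleanest fix is to prove this theorem and Theorem \ref{thm:iso-PTdag-MT} simultaneously by induction, since $B=3p$, $-A=3r$, $-C=3(p^{2}+r^{2})/q$ yield $rB-q=3pr-q=(p^{2}+r^{2})/q>0$ via the Markov equation. A smaller point: the parameterization $y_{11}=-1-qq'$, $y_{22}=-1+qq'$, $y_{12}=q^{2}$, $y_{21}=-q'^{2}$ cannot simply be cited from the discussion preceding Example \ref{ex:PT}, whose derivation uses Lemma \ref{lem:y21<0} and Proposition \ref{prop:relative-prime-y12-y21}, proved only for $\mathrm{MM}\mathbb T(2,\ell)$; for the dagger tree you should establish it by the same root-plus-conjugation induction you already carry out.
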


\begin{theorem}\label{thm:iso-PTdag-MT}
For a vertex $\left(\begin{bmatrix}
        p\\p'
    \end{bmatrix},\begin{bmatrix}
        q\\q'
    \end{bmatrix},\begin{bmatrix}
        r\\r'
    \end{bmatrix}\right)$ in $\mathrm{P}\mathbb T^\dag(\ell)$, 
    \[\left(\det\begin{bmatrix}
    q&r\\q'&r'
\end{bmatrix},\det\begin{bmatrix}
    p&r\\p'&r'
\end{bmatrix},\det\begin{bmatrix}
    p&q\\p'&q'
\end{bmatrix}\right)=3\mu(p,q,r).\]
In particular, the correspondence \[\left(\begin{bmatrix}
        p\\p'
    \end{bmatrix},\begin{bmatrix}
        q\\q'
    \end{bmatrix},\begin{bmatrix}
        r\\r'
    \end{bmatrix}\right)\mapsto\dfrac{1}{3}\left(\det\begin{bmatrix}
    q&r\\q'&r'
\end{bmatrix},\det\begin{bmatrix}
    p&r\\p'&r'
\end{bmatrix},\det\begin{bmatrix}
    p&q\\p'&q'
\end{bmatrix}\right)\] induces the canonical graph isomorphism from $\mathrm{P}\mathbb{T}^\dag(\ell)$ to $\mathrm{M}\mathbb{T} (0)$.
\end{theorem}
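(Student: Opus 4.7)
The plan is to mimic the proof of Theorem~\ref{thm:iso-PT-MTdag} essentially verbatim, working with the dual tree. First, by the theorem immediately preceding, the map $\left(\begin{bmatrix}p\\p'\end{bmatrix},\begin{bmatrix}q\\q'\end{bmatrix},\begin{bmatrix}r\\r'\end{bmatrix}\right)\mapsto(p,q,r)$ is already a canonical graph isomorphism $\mathrm{P}\mathbb{T}^\dag(\ell)\simeq\mathrm{M}\mathbb{T}^\dag(0)$, and $\mu$ of Proposition~\ref{pr:rho-mor} is a canonical graph isomorphism $\mathrm{M}\mathbb{T}^\dag(0)\simeq\mathrm{M}\mathbb{T}(0)$. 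Hence once the determinantal identity is established, the ``in particular'' clause is automatic.

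I will prove the identity by induction on the depth in $\mathrm{P}\mathbb{T}^\dag(\ell)$. For the base case, plugging the root $\left(\begin{bmatrix}1\\-\ell-1\end{bmatrix},\begin{bmatrix}1\\-\ell+2\end{bmatrix},\begin{bmatrix}1\\-\ell+5\end{bmatrix}\right)$ into the three $2\times 2$ minors makes the $\ell$'s cancel, yielding $(3,6,3)=3(1,2,1)=3\mu(1,1,1)$. For the inductive step, write $A=qr'-q'r$, $B=pr'-p'r$, $C=pq'-p'q$, and assume $(A,B,C)=3\mu(p,q,r)$; then $(A/3,B/3,C/3)$ is a $0$-Markov triple, so $A^2+B^2+C^2=ABC$, i.e.\ $(A,B,C)$ is an induced solution to $\mathrm{GSME}(0)$. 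For the left child, whose third vector is $(-r^2q'+rr'q-q,\,r'^2q-rr'q'-q')$, a direct expansion of the three new minors collapses to $(A,\,AB-C,\,B)$; using $A^2+B^2+C^2=ABC$ one rewrites $AB-C=(A^2+B^2)/C$, so dividing by $3$ recovers precisely the left-child rule $(a,b,c)\mapsto(a,(a^2+b^2)/c,b)$ of $\mathrm{M}\mathbb{T}(0)$ applied to $\mu(p,q,r)$. The right-child calculation is entirely analogous and produces $(B,\,BC-A,\,C)$, matching the right-child rule of $\mathrm{M}\mathbb{T}(0)$ via the same $\mathrm{GSME}(0)$ identity.

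The main obstacle is simply tracking signs and entry positions through the three minor expansions: once the middle minor is seen to collapse to $AB-C$ (respectively $BC-A$) with the correct sign, the generalized second Markov relation does all the remaining work, and no new ideas beyond Theorem~\ref{thm:iso-PT-MTdag} are required.
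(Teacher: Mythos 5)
Your proposal is correct and is essentially the argument the paper intends: it omits a proof of this theorem precisely because it is proved by the same induction as Theorem \ref{thm:iso-PT-MTdag}, namely checking the root directly and expanding the three minors at each child, where the child minor triples $(A,AB-C,B)$ and $(B,BC-A,C)$ together with $A^2+B^2+C^2=ABC$ reproduce the child rules of $\mathrm{M}\mathbb{T}(0)$. Your base-case value $(3,6,3)=3\mu(1,1,1)$ and both child computations check out, so no gap.
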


By the above argument, the tree $\mathrm{M}\mathbb T^\dag(0)$ (resp. $\mathrm{M}\mathbb T(0)$) can be constructed from the determinants of a matrices composed of fixed points of $k$-MM matrices in $\mathrm{MM}\mathbb T(2,\ell)$ (resp. $\mathrm{MM}\mathbb T^\dag(2,\ell)$). Does there exist algebraic or geometric theory that can effectively explain phenomena of Theorems \ref{thm:iso-PT-MTdag} and \ref{thm:iso-PTdag-MT}?

\section{Combinatorics of $k$-GM number}
In this section, we provide a way to calculate certain $k$-GC matrices and certain $k$-MM matrices by using a combinatorial method.

\subsection{Farey tree and fraction labeling}

We fix $k\in \mathbb Z_{\geq 0}$. In this subsection, we recall the Farey tree, and we label $k$-GC matrices in $\mathrm{GC}\mathbb T(k,\ell)$ and $k$-MM matrices in $\mathrm{MM}\mathbb T(k,\ell)$ with irreducible fractions.  

\begin{definition}
For $\dfrac{a}{b}$ and $\dfrac{c}{d}$, we denote $ad-bc$ by $\det\left(\dfrac{a}{b},\dfrac{c}{d}\right)$. A triple $\left(\dfrac{a}{b},\dfrac{c}{d},\dfrac{e}{f}\right)$ is called a \emph{Farey triple} if the following conditions hold:
\begin{itemize}\setlength{\leftskip}{-15pt}
    \item [(1)] $\dfrac{a}{b},\dfrac{c}{d}$ and $\dfrac{e}{f}$ are irreducible fractions, and
    \item [(2)] $\left|\det\middle(\dfrac{a}{b},\dfrac{c}{d}\middle)\middle|=\middle|\det\middle(\dfrac{c}{d},\dfrac{e}{f}\middle)\middle|=\middle|\det\middle(\dfrac{e}{f},\dfrac{a}{b}\middle)\right|=1$.
\end{itemize}
\end{definition}
We define the \emph{Farey tree} $\mathrm{F}\mathbb T$ as follows:
\begin{itemize}\setlength{\leftskip}{-15pt}
\item [(1)] the root vertex is $\left(\dfrac{0}{1},\dfrac{1}{1},\dfrac{1}{0}\right)$, and
\item[(2)]every vertex $\left(\dfrac{a}{b},\dfrac{c}{d},\dfrac{e}{f}\right)$ has the following two children:
\[\begin{xy}(0,0)*+{\left(\dfrac{a}{b},\dfrac{c}{d},\dfrac{e}{f}\right)}="1",(-30,-15)*+{\left(\dfrac{a}{b},\dfrac{a+c}{b+d},\dfrac{c}{d}\right)}="2",(30,-15)*+{\left(\dfrac{c}{d},\dfrac{c+e}{d+f},\dfrac{e}{f}\right).}="3", \ar@{-}"1";"2"\ar@{-}"1";"3"
\end{xy}\]
\end{itemize}
The first few vertices of $\mathrm{F}\mathbb{T}$ are given by the following.
\relsize{+1}
\begin{align*}
\begin{xy}(0,0)*+{\left(\frac{0}{1},\frac{1}{1},\frac{1}{0}\right)}="1",(20,-14)*+{\left(\frac{0}{1},\frac{1}{2},\frac{1}{1}\right)}="2",(20,14)*+{\left(\frac{1}{1},\frac{2}{1},\frac{1}{0}\right)}="3", 
(50,-24)*+{\left(\frac{0}{1},\frac{1}{3},\frac{1}{2}\right)}="4",(50,-8)*+{\left(\frac{1}{2},\frac{2}{3},\frac{1}{1}\right)}="5",(50,8)*+{\left(\frac{1}{1},\frac{3}{2},\frac{2}{1}\right)}="6",(50,24)*+{\left(\frac{2}{1},\frac{3}{1},\frac{1}{0}\right)}="7",(85,-28)*+{\left(\frac{0}{1},\frac{1}{4},\frac{1}{3}\right)\cdots}="8",(85,-20)*+{\left(\frac{1}{3},\frac{2}{5},\frac{1}{2}\right)\cdots}="9",(85,-12)*+{\left(\frac{1}{2},\frac{3}{5},\frac{2}{3}\right)\cdots}="10",(85,-4)*+{\left(\frac{2}{3},\frac{3}{4},\frac{1}{1}\right)\cdots}="11",(85,4)*+{\left(\frac{1}{1},\frac{4}{3},\frac{3}{2}\right)\cdots}="12",(85,12)*+{\left(\frac{3}{2},\frac{5}{3},\frac{2}{1}\right)\cdots}="13",(85,20)*+{\left(\frac{2}{1},\frac{5}{2},\frac{3}{1}\right)\cdots}="14",(85,28)*+{\left(\frac{3}{1},\frac{4}{1},\frac{1}{0}\right)\cdots}="15",\ar@{-}"1";"2"\ar@{-}"1";"3"\ar@{-}"2";"4"\ar@{-}"2";"5"\ar@{-}"3";"6"\ar@{-}"3";"7"\ar@{-}"4";"8"\ar@{-}"4";"9"\ar@{-}"5";"10"\ar@{-}"5";"11"\ar@{-}"6";"12"\ar@{-}"6";"13"\ar@{-}"7";"14"\ar@{-}"7";"15"
\end{xy}
\end{align*}
\relsize{-1}
\begin{proposition}[see \cite{aig}*{Section 3.2}]\label{prop:property-farey}\indent
\begin{itemize}\setlength{\leftskip}{-15pt}
    \item [(1)]If $\left(\dfrac{a}{b},\dfrac{c}{d},\dfrac{e}{f}\right)$ is a Farey triple, then so are $\left(\dfrac{a}{b},\dfrac{a+c}{b+d},\dfrac{c}{d}\right)$ and $\left(\dfrac{c}{d},\dfrac{c+e}{d+f},\dfrac{e}{f}\right)$. In particular, each vertex in $\mathrm{F}\mathbb T$ is a Farey triple. 
    \item [(2)] For every irreducible fraction $\dfrac{a}{b} \in \mathbb Q_{> 0}$, there exists a unique Farey triple $F$ in $\mathrm{F}\mathbb T$ such that $\dfrac{a}{b}$ is the second entry of $F$.
    \item [(3)] For $\left(\dfrac{a}{b},\dfrac{c}{d},\dfrac{e}{f}\right)$ in $\mathrm{F}\mathbb T$, $\dfrac{a}{b}<\dfrac{c}{d}<\dfrac{e}{f}$ holds. 
\end{itemize}
\end{proposition}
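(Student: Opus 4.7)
The plan is to prove the three items in the order (1), (3), (2) by induction on depth in $\mathrm{F}\mathbb T$, since (2) relies on structural properties established in (1) and (3).

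For (1), I would verify directly that if $(a/b, c/d, e/f)$ is a Farey triple then
\[
a(b+d)-b(a+c)=ad-bc,\qquad (a+c)d-(b+d)c=ad-bc,
\]
so all three pairwise determinants of the left-child triple $\left(\dfrac{a}{b},\dfrac{a+c}{b+d},\dfrac{c}{d}\right)$ have absolute value $|ad-bc|=1$. Any common divisor of $a+c$ and $b+d$ would divide $a(b+d)-b(a+c)=\pm 1$, so $\dfrac{a+c}{b+d}$ is irreducible; the right-child case is symmetric. Starting from the manifestly Farey root $\left(\dfrac{0}{1},\dfrac{1}{1},\dfrac{1}{0}\right)$, induction gives (1).

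For (3), the standard mediant inequality $\dfrac{a}{b}<\dfrac{a+c}{b+d}<\dfrac{c}{d}$ (valid whenever $b,d>0$ and $\dfrac{a}{b}<\dfrac{c}{d}$, since $\dfrac{a+c}{b+d}-\dfrac{a}{b}=\dfrac{bc-ad}{b(b+d)}>0$ by (1), and analogously for the other side) together with the base case $\dfrac{0}{1}<\dfrac{1}{1}<\dfrac{1}{0}:=\infty$ gives (3) by induction.

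The substance lies in (2). I would first note that every vertex $(a/b, c/d, e/f)$ in $\mathrm{F}\mathbb T$ satisfies the mediant relation $c=a+e$, $d=b+f$, which holds at the root and is preserved by both child operations. For existence of an irreducible $p/q \in \mathbb Q_{>0}$ as a middle entry, I would induct on $p+q$: for $p+q>2$, B\'ezout's identity produces $0<a<p$, $0<b<q$ with $aq-bp=\pm 1$, and setting $e=p-a$, $f=q-b$ yields irreducible fractions $\dfrac{a}{b},\dfrac{e}{f}$ of smaller numerator-plus-denominator sum with $(a+e)/(b+f)=p/q$ and $|af-be|=1$. A secondary induction on adjacent pairs (consecutive outer entries in some vertex) then shows that $\{a/b, e/f\}$ is realized as the outer entries of some vertex, which places $p/q$ as that vertex's middle entry. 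For uniqueness, (3) implies that the open interval $(a/b, e/f)$ associated with a vertex splits under the child operations into the disjoint open subintervals $(a/b, c/d)$ and $(c/d, e/f)$, neither of which contains $c/d$; hence the middle entry of a vertex never reappears in any descendant, and tracking the unique root-to-vertex path pins down the vertex. The main obstacle will be the secondary induction in the existence argument, specifically verifying that the B\'ezout-produced parent pair is realized as consecutive outer entries of some ancestor vertex.
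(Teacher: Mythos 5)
The paper itself gives no proof of this proposition (it is quoted from Aigner, Section 3.2), so your attempt can only be measured against the standard Stern--Brocot/Farey-tree argument. Your parts (1) and (3) are correct and are essentially that argument: the two determinant identities $a(b+d)-b(a+c)=ad-bc$ and $(a+c)d-(b+d)c=ad-bc$ give the child determinants, the same identity gives irreducibility of the mediant, and the mediant inequality plus induction from the root gives (3) (with the harmless convention check along the rightmost branch, where the outer entry $\frac{1}{0}=\infty$ makes the inequality trivial; your ``$b,d>0$'' hypothesis does not literally cover this case, but it is a one-line fix).

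Part (2) is where the proof is genuinely incomplete. Two concrete problems. First, the B\'ezout step as stated, producing $0<a<p$ and $0<b<q$ with $aq-bp=\pm1$, is impossible when $p=1$ or $q=1$ (the fractions $\frac{1}{n}$ and $\frac{n}{1}$); these need the boundary fractions $\frac{0}{1}$ and $\frac{1}{0}$ as parents and must be treated separately (they are the middle entries along the leftmost and rightmost branches). Second, and more seriously, the heart of the existence proof --- that the parent pair $\left\{\frac{a}{b},\frac{e}{f}\right\}$ with $|af-be|=1$ actually occurs as the pair of outer entries of some vertex of $\mathrm{F}\mathbb{T}$ --- is only announced as a ``secondary induction'' and explicitly flagged as the main obstacle; as written this is an unproved claim, not a proof. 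To close it you could show, by descent, that any such Farey pair other than $\left(\frac{0}{1},\frac{1}{0}\right)$ has the property that the member with larger numerator-plus-denominator sum is the mediant of the other member and a uniquely determined ``smaller'' fraction, which reconstructs the parent vertex and terminates at the root pair. Alternatively, the cleaner standard route avoids the parent pair entirely: walk down the tree from the root, moving left or right according to whether $\frac{p}{q}$ is less or greater than the current middle entry, and prove termination by writing $p=\alpha a+\beta e$, $q=\alpha b+\beta f$ with $\alpha=eq-fp\geq 1$ and $\beta=pb-qa\geq 1$, observing that $\alpha+\beta$ strictly decreases at each step and that $\frac{p}{q}$ equals the middle entry exactly when $\alpha=\beta=1$. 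Your uniqueness sketch via the nested disjoint open intervals is adequate once (3) and the nesting property are in place.
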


By using the canonical graph isomorphism from the Farey tree to the $k$-GC tree, we provide the correspondence from Farey triple in $\mathrm{F}\mathbb T$ to $k$-GC triple in $\mathrm{GC}\mathbb T(k,\ell)$. This correspondence induces the map from irreducible fractions in $(0,\infty)$ to $k$-GC matrices which are the second components of $k$-GC triples in $\mathrm{GC}\mathbb T(k,\ell)$. This map is called the \emph{fraction labeling to $k$-GC matrices}. We denote by $C_t(k,\ell)$ the $k$-GC matrix labeled with a fraction $t$. In the same way, we define the map from the set of irreducible fractions to the set of $k$-MM matrices in $\mathrm{MM}\mathbb T(k,\ell)$. This map is called the \emph{fraction labeling to $k$-MM matrices}. We denote by $M_t(k,\ell)$ the $k$-MM matrix labeled with a fraction $t$.  

All $k$-GC matrices (or $k$-MM matrices) labeled with irreducible fractions between $0$ and $1$ are included in $\mathrm{LGC}\mathbb T(k, \ell)$ (or $\mathrm{LMM}\mathbb T(k, \ell)$), where $\mathrm{LGC}\mathbb T(k, \ell)$ is the full subtree of $\mathrm{GC}\mathbb T(k, \ell)$ whose root is the left child of the root of $\mathrm{GC}\mathbb T(k, \ell)$.   Moreover, if we restrict $\mathrm{M}\mathbb T(k)$ to $\mathrm{LM}\mathbb T(k)$, then all $k$-GM triples but $(1,1,1)$ and $(1,k+2,1)$ appear exactly once without overlap (here, triples that differ only in order are regarded as the same solution). Therefore, the correspondence between a Farey triple $(r, t, s)$ and the $(1,2)$-entries of the $k$-GC triple $(C_r(k,\ell), C_t(k,\ell), C_s(k,\ell))$ (or the $k$-MM triple $(M_r(k,\ell), M_t(k,\ell), M_s(k,\ell))$) induces a bijection from the set of Farey triples in $[0,1]^3$ to the set of all $k$-GM triples but $(1,1,1)$ and $(1,k+2,1)$. If we take the second entries of $(r,t,s)$, then this bijection gives a fraction labeling of each $k$-GM number except for $1$ and $k+2$. We call it the \emph{fraction labeling to $k$-GM numbers}, and for every irreducible fraction $t\in (0,1)$, we denote by $m_{k,t}$ the corresponding $k$-GM number. Also, we set $m_{k,\frac{0}{1}}=1$ and $m_{k,\frac{1}{1}}=k+2$. Note that whether this labeling is injective is an open problem (equivalent to Conjecture \ref{conj:gen-Markov}).

\subsection{Continued fraction and $k$-GM snake graph}

In this subsection, we construct pre-snake graphs, and construct continued fractions from them. Moreover, we will calculate $k$-GM numbers by using these continued fractions. 

We will begin with recalling the relation between the snake graphs and continued fractions.  
We set
\[[a_1,\dots,a_\ell]:=a_1+\dfrac{1}{a_2+\dfrac{1}{\ddots+\dfrac{\ddots}{a_{\ell-1}+\dfrac{1}{a_\ell}}}}\]
and $[\ ]:=1$.

In this paper, we assume $a_i\in \mathbb Z_{\geq 1}$. First, we will recall the \emph{snake graph} associated with a continued fraction $[a_1,\dots,a_\ell]$ according to \cite{cs18}. For a given continued fraction $[a_1,\dots,a_\ell]$ with $(\ell,a_1)\neq (1,1)$, 
arrange $(a_1 + \cdots + a_\ell)$ signs such that the first $a_1$ signs are $-$, the next $a_2$ signs are $+$, the following $a_3$ signs are $+$, and so on, alternating between $-$ and $+$. Remove the first and last signs in the $(a_1 + \cdots + a_\ell)$-tuple of signs given above, leaving $(a_1 + \cdots + a_\ell-2)$-tuple. We denote this tuple by $S$. Using it, arrange the two types of tiles with signs as shown in Figure \ref{fig:signedtile} by connecting their edges in a way that satisfies the following rules:

\begin{itemize}\setlength{\leftskip}{-15pt}
\item[$\bullet$] The first (the leftmost) tile is the one on the left in Figure \ref{fig:signedtile},
\item [$\bullet$] a new tile is placed either of the right or the above of the preceding tile, 
\item [$\bullet$] the signs on the adjoining parts of the two adjacent tiles coincide,
\item [$\bullet$] $S$ coincides with the sequence the signs on the adjoining parts arranged in order from the leftmost side.
\end{itemize}
This graph is called the \emph{snake graph}. For the empty continued fraction $[\ ]$, we set $\mathcal G[\ ]= \emptyset$, and for the continued fraction $[1]$, we set that  $\mathcal G[1]$ is a line segment. We denote by $\mathcal{G} [a_1,\dots,a_\ell]$ the snake graph obtained from a continued fraction $[a_1,\dots,a_\ell]$. We remark that $\mathcal{G} [2]$ is constructed by a single tile. 

\begin{figure}[ht]
    \centering
    \includegraphics[scale=0.3]{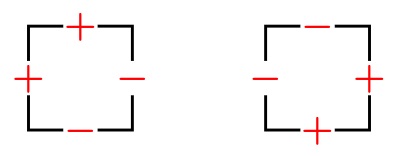}
    \caption{Signed tiles}
    \label{fig:signedtile}
\end{figure}

\begin{example}
For a continued fraction $[2,4,2,1]$, the snake graph associated with $[2,4,2,1]$ is given as in Figure \ref{fig:ex-snakegraph}. Indeed, the signs located inside the connected tiles are arranged from the leftmost to the rightmost as follows: there are ($2-1$) consecutive ``$-$" signs, followed by 4 ``$+$" signs, then 2 ``$-$" signs, and finally $(1-1)$ (therefore, no) consecutive ``$+$" signs.
\begin{figure}[ht]
    \centering
    \includegraphics[scale=0.12]{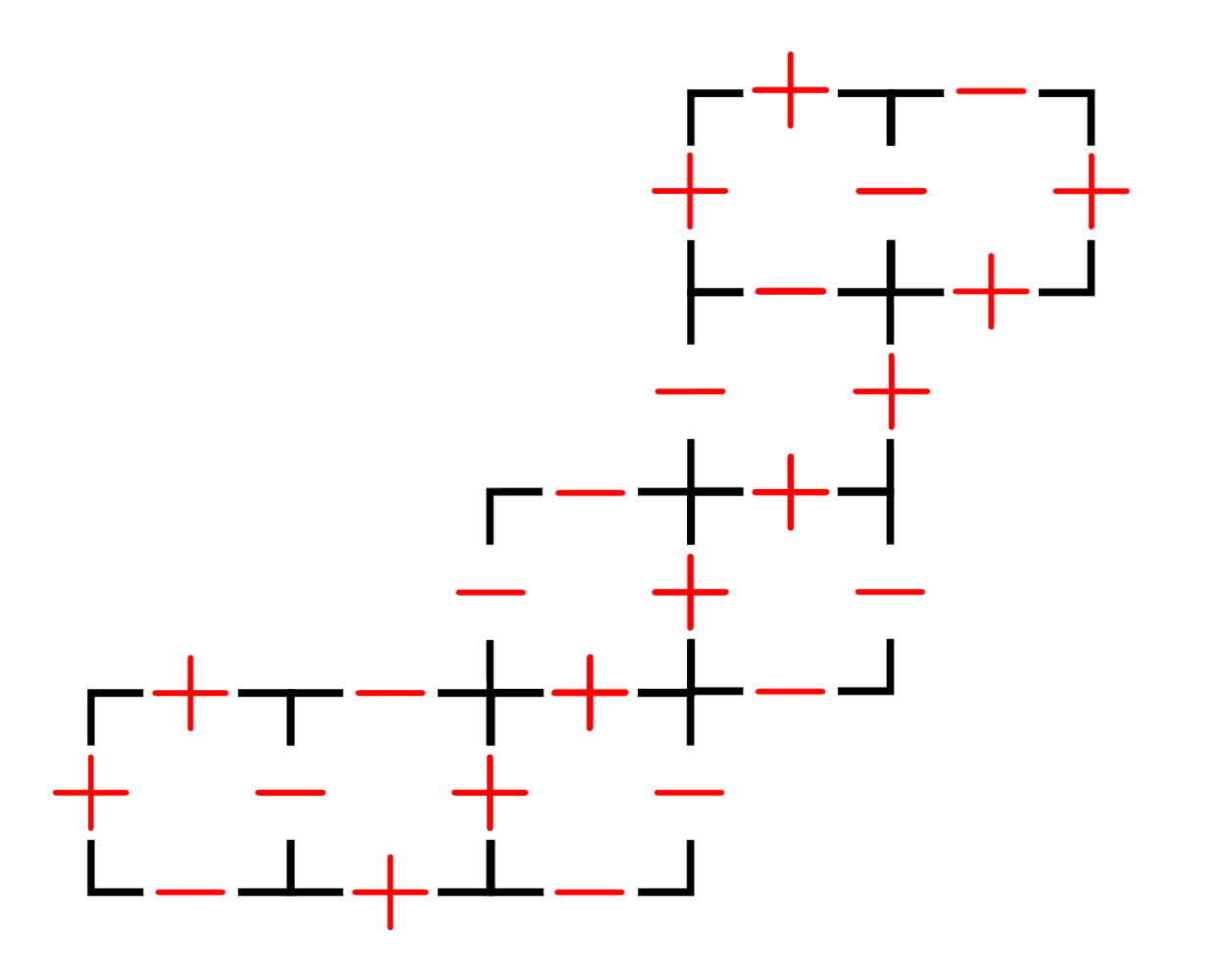}
    \caption{Snake graph associated with $[2,4,2,1]$}
    \label{fig:ex-snakegraph}
\end{figure}
\end{example}

\begin{remark}
The sign on the right edge in each tile is different from one on the upper edge. Therefore, for a continued fraction $[a_1,\dots,a_\ell]$, there is a unique snake graph associated with $[a_1,\dots,a_\ell]$.
\end{remark}

Let $G$ be an undirected graph. We recall that a subset $P$ of the edge set of $G$ is called a \emph{perfect matching} of $G$ if each vertex of $G$ is incident to exactly one edge in $P$. We denote by $m(\mathcal G[a_1,\dots,a_\ell])$ the number of perfect matchings of $\mathcal{G} [a_1,\dots,a_\ell]$. We set $m(\mathcal{G}[\ ])=1$.

\begin{example}
By an induction argument, we have $m(\mathcal G [n])=n$. The Figure \ref{fig:list-of-perfect-matching} is the list of perfect matchings of $\mathcal{G}[5]$. 
\begin{figure}[ht]
    \centering
\scalebox{3}{\begin{tikzpicture}[baseline=0mm]
\coordinate(1) at (0,0){}; 
\coordinate(2) at (0.2,0){}; 
\coordinate(3) at (0.4,0){}; 
\coordinate(5) at (-0.2,-0.2){}; 
\coordinate(6) at (0,-0.2){}; 
\coordinate(7) at (0.2,-0.2){};
\coordinate(8) at (0.4,-0.2){};
\coordinate(10) at (-0.2,-0.4){}; 
\coordinate(11) at (0,-0.4){}; 
\coordinate(12) at (0.2,-0.4){};
\draw[very thick, red](1) to (2);
\draw(2) to (3);
\draw(5) to (6);
\draw(6) to (7);
\draw(7) to (8);
\draw(10) to (11);
\draw(11) to (12);
\draw[very thick, red](3) to (8);
\draw(2) to (7);
\draw[very thick, red](7) to (12);
\draw(1) to (6);
\draw[very thick, red](6) to (11);
\draw[very thick, red](5) to (10);
\end{tikzpicture}}\hspace{3mm}
\scalebox{3}{\begin{tikzpicture}[baseline=0mm]
\coordinate(1) at (0,0){}; 
\coordinate(2) at (0.2,0){}; 
\coordinate(3) at (0.4,0){}; 
\coordinate(5) at (-0.2,-0.2){}; 
\coordinate(6) at (0,-0.2){}; 
\coordinate(7) at (0.2,-0.2){};
\coordinate(8) at (0.4,-0.2){};
\coordinate(10) at (-0.2,-0.4){}; 
\coordinate(11) at (0,-0.4){}; 
\coordinate(12) at (0.2,-0.4){};
\draw[very thick, red](1) to (2);
\draw(2) to (3);
\draw[very thick, red](5) to (6);
\draw(6) to (7);
\draw(7) to (8);
\draw[very thick, red](10) to (11);
\draw(11) to (12);
\draw[very thick, red](3) to (8);
\draw(2) to (7);
\draw[very thick, red](7) to (12);
\draw(1) to (6);
\draw(6) to (11);
\draw(5) to (10);
\end{tikzpicture}}
\hspace{3mm}
\scalebox{3}{\begin{tikzpicture}[baseline=0mm]
\coordinate(1) at (0,0){}; 
\coordinate(2) at (0.2,0){}; 
\coordinate(3) at (0.4,0){}; 
\coordinate(5) at (-0.2,-0.2){}; 
\coordinate(6) at (0,-0.2){}; 
\coordinate(7) at (0.2,-0.2){};
\coordinate(8) at (0.4,-0.2){};
\coordinate(10) at (-0.2,-0.4){}; 
\coordinate(11) at (0,-0.4){}; 
\coordinate(12) at (0.2,-0.4){};
\draw[very thick, red](1) to (2);
\draw(2) to (3);
\draw(5) to (6);
\draw[very thick, red](6) to (7);
\draw(7) to (8);
\draw(10) to (11);
\draw[very thick, red](11) to (12);
\draw[very thick, red](3) to (8);
\draw(2) to (7);
\draw(7) to (12);
\draw(1) to (6);
\draw(6) to (11);
\draw[very thick, red](5) to (10);
\end{tikzpicture}}
\hspace{3mm}
\scalebox{3}{\begin{tikzpicture}[baseline=0mm]
\coordinate(1) at (0,0){}; 
\coordinate(2) at (0.2,0){}; 
\coordinate(3) at (0.4,0){}; 
\coordinate(5) at (-0.2,-0.2){}; 
\coordinate(6) at (0,-0.2){}; 
\coordinate(7) at (0.2,-0.2){};
\coordinate(8) at (0.4,-0.2){};
\coordinate(10) at (-0.2,-0.4){}; 
\coordinate(11) at (0,-0.4){}; 
\coordinate(12) at (0.2,-0.4){};
\draw(1) to (2);
\draw(2) to (3);
\draw(5) to (6);
\draw(6) to (7);
\draw(7) to (8);
\draw(10) to (11);
\draw[very thick, red](11) to (12);
\draw[very thick, red](3) to (8);
\draw[very thick, red](2) to (7);
\draw(7) to (12);
\draw[very thick, red](1) to (6);
\draw(6) to (11);
\draw[very thick, red](5) to (10);
\end{tikzpicture}}
\hspace{3mm}
\scalebox{3}{\begin{tikzpicture}[baseline=0mm]
\coordinate(1) at (0,0){}; 
\coordinate(2) at (0.2,0){}; 
\coordinate(3) at (0.4,0){}; 
\coordinate(5) at (-0.2,-0.2){}; 
\coordinate(6) at (0,-0.2){}; 
\coordinate(7) at (0.2,-0.2){};
\coordinate(8) at (0.4,-0.2){};
\coordinate(10) at (-0.2,-0.4){}; 
\coordinate(11) at (0,-0.4){}; 
\coordinate(12) at (0.2,-0.4){};
\draw(1) to (2);
\draw[very thick, red](2) to (3);
\draw(5) to (6);
\draw(6) to (7);
\draw[very thick, red](7) to (8);
\draw(10) to (11);
\draw[very thick, red](11) to (12);
\draw(3) to (8);
\draw(2) to (7);
\draw(7) to (12);
\draw[very thick, red](1) to (6);
\draw(6) to (11);
\draw[very thick, red](5) to (10);
\end{tikzpicture}}
\caption{List of perfect matchings of $\mathcal{G}[5]$}\label{fig:list-of-perfect-matching}
\end{figure}
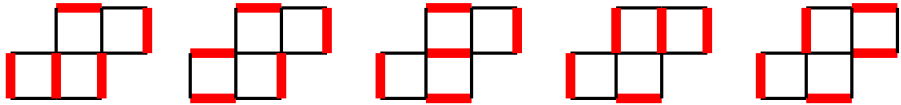
\end{example}

In \cite{cs18}, they give the following relation between a continued fraction and the corresponding snake graph:

\begin{theorem}[\cite{cs18}*{Theorem 3.4}]\label{thm:snakegraph-continuedfraction}
 The following equality holds:
 \[[a_1.\dots,a_\ell]=\dfrac{m(\mathcal G[a_1,\dots,a_\ell])}{m(\mathcal G[a_2,\dots,a_\ell])}.\]
\end{theorem}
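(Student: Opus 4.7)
My plan is to proceed by induction on the length $\ell$ of the continued fraction. The base cases are $\ell = 0$ and $\ell = 1$. For $\ell = 0$ both sides equal $1$ by the conventions $[\,] = 1$ and $m(\mathcal{G}[\,]) = 1$; for $\ell = 1$ the identity reduces to $m(\mathcal{G}[a_1]) = a_1$, which follows from a direct enumeration of perfect matchings of the staircase snake graph built from $a_1 - 1$ tiles connected by same-sign interfaces.

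For the inductive step, I would combine the classical arithmetic identity
\[
[a_1, a_2, \ldots, a_\ell] \;=\; a_1 + \frac{1}{[a_2, \ldots, a_\ell]}
\]
with the inductive hypothesis applied to the shorter fraction $[a_2, \ldots, a_\ell]$. A short algebraic manipulation then reduces the theorem to the combinatorial recursion
\[
m(\mathcal{G}[a_1, \ldots, a_\ell]) \;=\; a_1 \cdot m(\mathcal{G}[a_2, \ldots, a_\ell]) + m(\mathcal{G}[a_3, \ldots, a_\ell]).
\]
Once this recursion is established, the induction closes.

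To prove the recursion, I would classify perfect matchings of $\mathcal{G}[a_1, \ldots, a_\ell]$ according to how they interact with the \emph{interface edge} $e$ joining the first block of $a_1$ tiles (corresponding to the leading partial quotient $a_1$) to the second block. If $e$ belongs to the matching $P$, then the first block together with the first tile of the second block is forced, and deleting these yields a graph isomorphic to $\mathcal{G}[a_3, \ldots, a_\ell]$; this case contributes $m(\mathcal{G}[a_3, \ldots, a_\ell])$. If $e \notin P$, then the first block must be matched internally and the remainder of $P$ traces out a perfect matching of $\mathcal{G}[a_2, \ldots, a_\ell]$; a direct count shows that the internal matchings of the $a_1$-length staircase with one exposed boundary at $e$ number exactly $a_1$, contributing the factor $a_1 \cdot m(\mathcal{G}[a_2, \ldots, a_\ell])$.

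The main obstacle will be the careful local bookkeeping at the interface. One must verify that the graphs obtained by cutting off the first block (in each case) carry the correct sign pattern so as to be genuinely isomorphic to $\mathcal{G}[a_2, \ldots, a_\ell]$ and $\mathcal{G}[a_3, \ldots, a_\ell]$ respectively, and one must confirm that a truncated staircase of $a_1 - 1$ tiles with one exposed boundary admits precisely $a_1$ perfect matchings, rather than the Fibonacci-type count that would arise for an unconstrained $2 \times a_1$ grid. Both verifications, while routine, are sensitive to the exact signed-tile convention and constitute the combinatorial heart of the argument.
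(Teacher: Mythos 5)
The paper itself gives no proof of this statement — it is imported verbatim from \cite{cs18}*{Theorem 3.4} — so the only question is whether your argument stands on its own. Your architecture is fine and is essentially the standard one: induct on $\ell$, use $[a_1,\dots,a_\ell]=a_1+1/[a_2,\dots,a_\ell]$, and reduce everything to the continuant recursion
\[
m(\mathcal G[a_1,\dots,a_\ell])=a_1\,m(\mathcal G[a_2,\dots,a_\ell])+m(\mathcal G[a_3,\dots,a_\ell]),
\]
and your base case $m(\mathcal G[a_1])=a_1$ for the staircase is correct. The gap is in your proof of the recursion: the dichotomy ``$e\in P$ versus $e\notin P$'' for the interface edge $e$ between the first $a_1$ tiles and the rest does not produce the claimed contributions. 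Test it on $[a_1,a_2]=[2,2]$: here $\mathcal G[2,2]$ is the straight $2\times 4$ grid with $5$ perfect matchings, and the interface edge between the second and third tiles lies in exactly $2$ of them and is missing from $3$, whereas your two cases are supposed to contribute $m(\mathcal G[\,])=1$ and $a_1\,m(\mathcal G[2])=4$. Moreover, in this example with $e\in P$ the first block is \emph{not} forced (the first tile can still be matched in two ways), and with $e\notin P$ the first block is \emph{not} always matched internally (two of the three matchings use the horizontal edges crossing into the last tile). So the case split itself is wrong, not just the bookkeeping you flagged.

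The dichotomy that does work is: either the sub-snake graph $H$ on the last $a_2+\cdots+a_\ell-1$ tiles (which is isomorphic to $\mathcal G[a_2,\dots,a_\ell]$) is matched internally, or it is not. In the first case the complement is the $(a_1-1)$-tile staircase $\mathcal G[a_1]$ (because the sign change at tile $T_{a_1}$ makes its two interfaces opposite edges), giving the factor $a_1\,m(\mathcal G[a_2,\dots,a_\ell])$. In the second case a parity argument forces \emph{both} interface vertices to be matched into $T_{a_1}$ along its two boundary edges, and the resulting forcing cascade propagates through the entire zigzag first segment of $H$ — all $a_2$ tiles, not just ``the first tile of the second block'' — before terminating at the next sign change; only then is the leftover graph isomorphic to $\mathcal G[a_3,\dots,a_\ell]$ (deleting just one extra tile gives a graph with the wrong number of tiles as soon as $a_2\geq 2$). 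This corrected decomposition is exactly the style used both in \cite{cs18} and in the paper's own matching computations in the proof of Theorem \ref{thm:M_t-C_t-combinatorics} (the cases (1-I)/(1-II) there), so if you rewrite your interface analysis along these lines the induction closes as you intended.
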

We note that the claim of Theorem \ref{thm:snakegraph-continuedfraction} contains the case that the denominator of right-hand side in the equality is $m(\mathcal G[\ ])$.

Next, we will construct the pre-snake graph from an irreducible fraction $t$. For a given irreducible fraction $t\in (0,1]$, we construct the \emph{pre-snake graph} associated with $t$ as follows:

\begin{itemize}\setlength{\leftskip}{-15pt}
    \item [(1)] in the 2-dimensional plane $\mathbb R^2$, for a shortest line segment of slope $t$ whose endpoints are distinct points in $\mathbb Z^2$ (we denote the line segment by $L_t$), consider a graph consisting of all unit squares with integer lattice vertices through which the line segment passes, and
    \item [(2)]for each unit square in the graph given in (1), draw a diagonal edge connecting the upper left and lower right vertices.
\end{itemize}
We denote by $\mathcal {PG}(t)$ the pre-snake graph associated with $t$. We note that $\mathcal {PG}(t)$ does not contain $L_t$.

For example, the pre-snake graph associated with $\dfrac{2}{5}$ is given as in Figure \ref{fig:ex-presnakegraph}.

\begin{figure}[ht]
    \centering
    \includegraphics[scale=0.25]{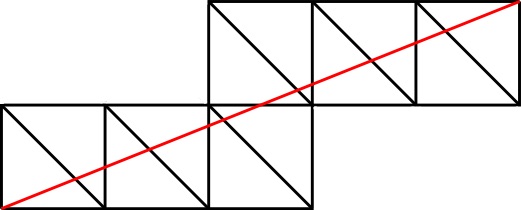}
    \caption{Pre-snake graph associated with $t=\dfrac{2}{5}$}
    \label{fig:ex-presnakegraph}
\end{figure}

By using $\mathcal {PG}(t)$, we construct a continued fraction $F^{+}(k,t)$ associated with $k\in \mathbb Z_{\geq 0}$ and $t$ as follows:

\begin{itemize}\setlength{\leftskip}{-15pt}
    \item [(1)] we set the orientation of $L_t$ from left to right. For each right-angled triangle in the pre-snake graph, assign a sign in $\{+,-\}$ as follows:
    \begin{itemize}\setlength{\leftskip}{-15pt}
        \item [(i)]  we assign $-$ to the following triangles (see Figure \ref{fig:minus-righttriangles}): 
        \begin{itemize}\setlength{\leftskip}{-30pt}
            \item[$\bullet$] the left-most triangle,
            \item[$\bullet$] triangles whose left-hand side part of $L_t$ is a quadrilateral,
        \end{itemize}
        \begin{figure}[ht]
    \centering
\begin{tikzpicture}
\draw (0,0) -- (1,0) -- (0,1) -- cycle;
\draw[red] (0,0) -- (0.6,0.6);
\end{tikzpicture}
\hspace{0.7cm}
\begin{tikzpicture}[baseline=0mm]
\draw (0,0) -- (1,0) -- (0,1) -- cycle;
\draw[red] (0,-0.1) -- (1.1,0.4);
\end{tikzpicture}\hspace{0.4cm}
\begin{tikzpicture}[baseline=0mm]
\draw (1,0) -- (1,1) -- (0,1) -- cycle;
\draw[red] (1.1,0.6) -- (0.6,0.3);
\end{tikzpicture}
    \caption{Right-angled triangles with $-$}
    \label{fig:minus-righttriangles}
\end{figure}
        \item [(ii)]  we assign $+$ to the others than (i) (see Figure \ref{fig:plus-righttriangles}),
 \begin{figure}[ht]
    \centering
\rotatebox{180}{\begin{tikzpicture}[baseline=0mm]
\draw (1,0) -- (1,1) -- (0,1) -- cycle;
\draw[red] (1.1,0.6) -- (0.6,0.3);
\end{tikzpicture}}
\hspace{0.3cm}
\rotatebox{180}{
\begin{tikzpicture}[baseline=0mm]
\draw (0,0) -- (1,0) -- (0,1) -- cycle;
\draw[red] (0,-0.1) -- (1.1,0.4);
\end{tikzpicture}}\hspace{0.5cm}
\rotatebox{180}{\begin{tikzpicture}
\draw (0,0) -- (1,0) -- (0,1) -- cycle;
\draw[red] (0,0) -- (0.6,0.6);
\end{tikzpicture}}
    \caption{Right-angled triangles with $+$}
    \label{fig:plus-righttriangles} 
\end{figure}       
    \end{itemize}
    \item [(2)] for each edge in the pre-snake graph that intersects with the interior of $L_t$, assign $k$ signs as follows:
    \begin{itemize}\setlength{\leftskip}{-15pt}
        \item [(i)] we assign $k$ minus signs ($-$) to each edge whose midpoint is not on the right side of $L_t$ (see Figure \ref{fig:minus-edge}).
        \begin{figure}[ht]
    \centering
\begin{tikzpicture}[baseline=0mm]
\draw (-0.5,0) -- (0.5,0);
\fill (0,0) circle (1.5pt);
\draw[red] (-0.5,-0.3) -- (0.5,0.3);
\end{tikzpicture}
\hspace{0.7cm}
\begin{tikzpicture}[baseline=0mm]
\draw (-0.5,0) -- (0.5,0);
\fill (0,0) circle (1.5pt);
\draw[red] (-0.5,-0.3) -- (0.5,0.1);
\end{tikzpicture}
\hspace{0.7cm}
\begin{tikzpicture}[baseline=0mm]
\draw (0,-0.5) -- (0,0.5);
\fill (0,0) circle (1.5pt);
\draw[red] (-0.5,-0.3) -- (0.5,0.3);
\end{tikzpicture}\hspace{0.6cm}
\begin{tikzpicture}[baseline=0mm]
\draw (0,-0.5) -- (0,0.5);
\fill (0,0) circle (1.5pt);
\draw[red] (-0.5,-0.3) -- (0.5,0);
\end{tikzpicture}\hspace{0.6cm}
\begin{tikzpicture}[baseline=0mm]
\draw (-0.5,0.5) -- (0.5,-0.5);
\fill (0,0) circle (1.5pt);
\draw[red] (-0.5,-0.3) -- (0.5,0.3);
\end{tikzpicture}\hspace{0.6cm}
\begin{tikzpicture}[baseline=0mm]
\draw (-0.5,0.5) -- (0.5,-0.5);
\fill (0,0) circle (1.5pt);
\draw[red] (-0.5,-0.3) -- (0.5,0);
\end{tikzpicture}
    \caption{Edges with $-$}
    \label{fig:minus-edge}
\end{figure}
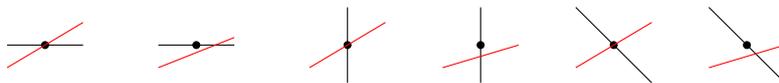
     \item [(ii)] we assign $k$ plus signs $(+)$ to the others than (i) (see Figure \ref{fig:plus-edge}),  
 \begin{figure}[ht]
    \centering
\begin{tikzpicture}[baseline=0mm]
\draw (-0.5,0) -- (0.5,0);
\fill (0,0) circle (1.5pt);
\draw[red] (-0.5,-0.1) -- (0.5,0.3);
\end{tikzpicture}
\hspace{0.7cm}
\begin{tikzpicture}[baseline=0mm]
\draw (0,-0.5) -- (0,0.5);
\fill (0,0) circle (1.5pt);
\draw[red] (-0.5,-0) -- (0.5,0.3);
\end{tikzpicture}\hspace{0.6cm}
\begin{tikzpicture}[baseline=0mm]
\draw (-0.5,0.5) -- (0.5,-0.5);
\fill (0,0) circle (1.5pt);
\draw[red] (-0.5,-0) -- (0.5,0.3);
\end{tikzpicture}
    \caption{Edges with $+$}
    \label{fig:plus-edge}
\end{figure}
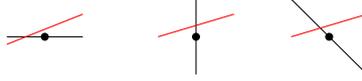
\end{itemize}
    \item [(3)] arrange the signs assigned to the triangles and edges in the order in which $L_t$ passes through them,
    \item [(4)] the sequence of integers $(a_1, \dots, a_\ell)$ is constructed from the numbers of consecutive occurrences of the same sign in the sequence of signs in (3), and we define $F^+(k,t)$ as the continued fraction $[a_1, \dots, a_\ell]$,
\end{itemize}
and we set $F^{+}\left(k,0/1\right):=1$ for any $k$. For an irreducible fraction $t\in [0,1]$, $\mathcal G (F^+(k,t))$ is called the \emph{$k$-GM snake graph} associated with $t$.
\begin{example}\label{ex:continued-fraction-Markov}
Let $t=\dfrac{2}{5}$. The signs assigned to triangles and edges in $\mathcal {PG}(2/5)$ are given as in Figure \ref{fig:ex-signed-presnakegraph}. For each $k=0,1,2,3$, $F^+(k,2/5)$ has the following expression:
\begin{align*}
    F^+(0,2/5)&=[2,1,1,2,2,1,1,2]=\frac{194}{75},\\
    F^+(1,2/5)&=[4,2,1,4,5,1,2,4]=\frac{4683}{1075},\\
    F^+(2,2/5)&=[6,3,1,6,8,1,3,6]=\frac{37636}{6013},\\
    F^+(3,2/5)&=[8,4,1,8,11,1,4,8]=\frac{176405}{21501}.
\end{align*}
\begin{figure}[ht]
    \centering
    \includegraphics[scale=0.08]{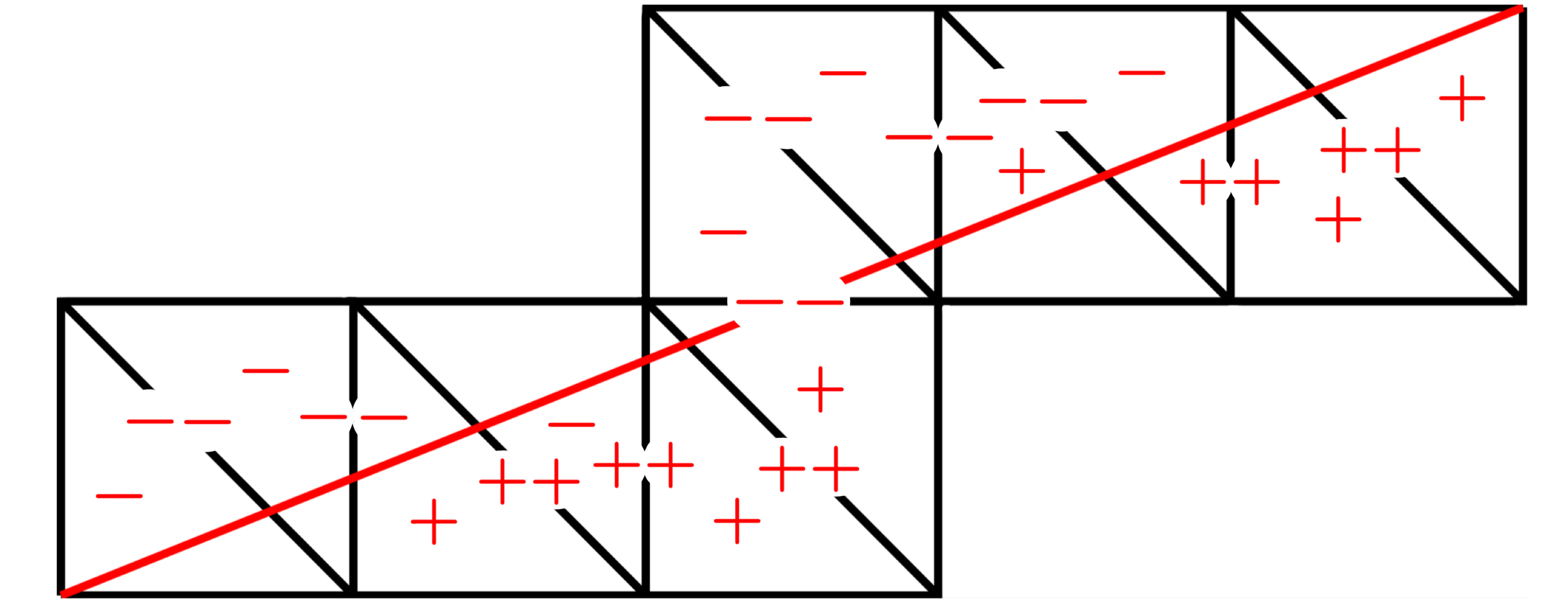}
    \caption{Signs assigned to triangles and edges in  $\mathcal {PG}(2/5)$ when $k=2$}
    \label{fig:ex-signed-presnakegraph}
\end{figure}
We can see that the numerator of $F^+(k,t)$ equals to $m_{k,t}$.
\end{example}
We note that if $F^+(k,t)=[a_1,\dots,a_\ell]$ holds for $t\in (0,1]$, then $\ell$ is even because the sequence of signs associated with $\mathcal {PG}(t)$ starts from $-$ and ends at $+$. Moreover, because of the point-symmetry of the pre-snake graph, we have the following lemma:

\begin{lemma}\label{lem:semi-palindrome}
A continued fraction $F^{+}(k,t)$ with $t\in (0,1]$ has the expression
\begin{align}
    F^{+}(k,t)=[a_1,\dots,a_{\frac{\ell}{2}},a_{\frac{\ell}{2}}+k,a_{\frac{\ell}{2}-1},\dots,a_1] \text{ or }[a_1,\dots,a_{\frac{\ell}{2}},a_{\frac{\ell}{2}}-k,a_{\frac{\ell}{2}-1},\dots,a_1],\label{eq:semi-palindrome-expression}    
\end{align}
where $a_i$ is the number of successive signs in the sign sequence obtained from (3) in the construction of $F^{+}(k,t)$. Moreover, if $\dfrac{\ell}{2}$ is even, then $F^{+}(k,t)$ takes the form given by the first expression in \eqref{eq:semi-palindrome-expression}; otherwise, $F^{+}(k,t)$ takes the form given by the second expression.
\end{lemma}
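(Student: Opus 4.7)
The plan is to exploit the $180^\circ$ rotational symmetry of $\mathcal{PG}(t)$ about the midpoint $M$ of $L_t$. Writing $t=p/q$ in lowest terms and $L_t$ from $(0,0)$ to $(q,p)$, we have $M=(q/2,p/2)$, which is never a lattice point since $\gcd(p,q)=1$. The rotation $\rho$ about $M$ sends $L_t$ to itself while reversing its orientation, and therefore permutes the unit squares crossed by $L_t$ and, since each drawn diagonal passes through the center of its square, also permutes the diagonals. Hence $\rho$ is an automorphism of $\mathcal{PG}(t)$.

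Next I analyze the action of $\rho$ on the signs. Because $\rho$ reverses the orientation of $L_t$, it interchanges the left-hand and right-hand sides of $L_t$ and swaps ``leftmost'' with ``rightmost''. Comparing Figures~\ref{fig:minus-righttriangles}--\ref{fig:plus-edge}, every non-$\rho$-fixed triangle or edge has its sign flipped by $\rho$: the three $(-)$-triangle configurations are the $\rho$-images of the three $(+)$-triangle configurations, and the three $(+)$-edge configurations pair with the three $(-)$-edge configurations whose midpoint lies strictly off $L_t$. The only $\rho$-fixed object traversed by $L_t$ is the edge containing $M$ as its midpoint---the diagonal of the central unit square when both $p,q$ are odd, and a horizontal or vertical lattice edge through $M$ when exactly one of $p,q$ is even. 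Since this midpoint lies on $L_t$, the sign rule assigns this central edge the sign $-$, contributing $k$ consecutive minus signs at the center of the sign sequence.

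The semi-palindrome structure now follows. Let the sign sequence be $s_1,\dots,s_N$ with $N=2m+k$, where the $k$ central minus signs occupy positions $m+1,\dots,m+k$. The symmetry gives $s_{N+1-i}=\overline{s_i}$ for every $i\le m$, so the right half $(s_{m+k+1},\dots,s_N)$ is the sign-flipped reverse of the left half $(s_1,\dots,s_m)$. Let $(b_1,\dots,b_{\ell/2})$ be the block decomposition of the left half, so that it consists of $b_1$ consecutive $-$'s, then $b_2$ consecutive $+$'s, and so on; then the right half decomposes as $(b_{\ell/2},\dots,b_1)$ with first block-sign $\overline{s_m}$.

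Finally I merge the $k$ central minus signs with the adjacent block. If $\ell/2$ is odd, then $s_m=-$ and the central block merges leftward, producing the overall block sequence $(b_1,\dots,b_{\ell/2-1},\,b_{\ell/2}+k,\,b_{\ell/2},\,b_{\ell/2-1},\dots,b_1)$; setting $a_{\ell/2}:=b_{\ell/2}+k$ and $a_i:=b_i$ for $i<\ell/2$ rewrites this as $[a_1,\dots,a_{\ell/2},a_{\ell/2}-k,a_{\ell/2-1},\dots,a_1]$, which is the second expression of \eqref{eq:semi-palindrome-expression}. If $\ell/2$ is even, then $s_m=+$ and the central block merges rightward, producing $(b_1,\dots,b_{\ell/2},\,b_{\ell/2}+k,\,b_{\ell/2-1},\dots,b_1)$, which with $a_i:=b_i$ is the first expression. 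The main obstacle will be Step~2: verifying figure by figure that every $(-)$-configuration is either the $180^\circ$-rotation of a $(+)$-configuration or else $\rho$-fixed with midpoint on $L_t$; this is a finite case check but requires careful attention to the orientation conventions encoded in Figures~\ref{fig:minus-righttriangles}--\ref{fig:plus-edge}.
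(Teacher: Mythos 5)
Your proposal is correct and follows exactly the route the paper itself indicates: the paper derives the lemma in one line ``because of the point-symmetry of the pre-snake graph,'' and your argument is precisely that point-symmetry made explicit (the $180^\circ$ rotation about the midpoint of $L_t$ flips all signs except the $k$ minus signs on the unique central edge, whose midpoint lies on $L_t$, and the block-merging case analysis on the parity of $\ell/2$ then yields the two semi-palindrome forms). The only part you flag as remaining, the figure-by-figure check that each $(-)$-configuration is the rotation of a $(+)$-configuration, is indeed a routine finite verification consistent with Figures~\ref{fig:minus-righttriangles}--\ref{fig:plus-edge} and with Remark~\ref{rmk:reverse-sign}.
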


The expression \eqref{eq:semi-palindrome-expression} of a continued fraction is called the \emph{canonical semi-palindrome expression}. In this paper, when expressing $F^+(k,t)$ as a continued fraction, we use the canonical semi-palindrome expression.

\begin{remark}\label{rmk:reverse-sign}
Let $F^+(k, t) = [a_1, \dots, a_\ell] $. We consider constructing the continued fraction by using the same procedure as obtaining $ F^+(k, t) $ after changing the $ k $ negative signs associated with the center edge in $ \mathcal{PG}(t)$ to positive signs. Then we obtain the continued fraction $[a_\ell, \dots, a_1]$ by Lemma \ref{lem:semi-palindrome}.   
\end{remark}

Our goal in this subsection is the following theorem:

\begin{theorem}\label{thm:M_t-C_t-combinatorics}
For any $t\in (0,1]$, we set $F^{+}(k,t)=[a_1,\dots,a_\ell]$. The following equalities hold:
\begin{itemize}\setlength{\leftskip}{-15pt}
    \item[(1)] 

    $M_t(k,0)=\begin{bmatrix}
        -m(\mathcal{G}[a_1,\dots,a_{\ell-1}]) & m(\mathcal{G}[a_1,\dots,a_{\ell}])\\
        -m(\mathcal{G}[a_2,\dots,a_{\ell-1}]) &m(\mathcal{G}[a_2,\dots,a_{\ell}])
    \end{bmatrix},$\vspace{3mm}
    \item[(2)]$C_t(k,-k)=\begin{bmatrix}
        m(\mathcal{G}[a_2,\dots,a_{\ell}]) & m(\mathcal{G}[a_1,\dots,a_{\ell}])\\[0.5em]
        \hspace{-1cm}(3k+3)m(\mathcal{G}[a_2,\dots,a_{\ell}]) & \hspace{-1cm}(3k+3)m(\mathcal{G}[a_1,\dots,a_{\ell}])\\
        \hspace{2cm}-m(\mathcal{G}[a_2,\dots,a_{\ell-1}])&  \hspace{2cm} - m(\mathcal{G}[a_1,\dots,a_{\ell-1}])
    \end{bmatrix}.$
\end{itemize}
In particular, we have $m(\mathcal{G}[a_1,\dots,a_{\ell}])=m_{k,t}$.
\end{theorem}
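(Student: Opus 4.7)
The plan is to prove (1) and (2) simultaneously by induction on the depth of $t$ in the Farey tree $\mathrm{F}\mathbb T$, restricted to $t\in(0,1]$. By Proposition \ref{prop:property-farey}, every such $t$ other than $\tfrac01$ and $\tfrac11$ sits uniquely as the middle entry of a Farey triple $(r,t,s)$ with $r<t<s$ and $t=\tfrac{a+c}{b+d}$ the mediant of $r=\tfrac{a}{b}$ and $s=\tfrac{c}{d}$. Via the fraction labelings, the Farey subtree with root $(\tfrac01,\tfrac12,\tfrac11)$ is canonically isomorphic to the full left subtrees of $\mathrm{MM}\TT(k,0)$ and of $\mathrm{GC}\TT(k,-k)$, so it suffices to check a small base case and then show the right-hand sides of (1) and (2) transform under the Farey mediant operation exactly as $M_t(k,0)$ and $C_t(k,-k)$ transform under the tree operations $(X,Y,Z)\mapsto(X,YZY^{-1},Y)$ and $(P,Q,R)\mapsto(P,PQ\text{-}S,Q)$.

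For the base case I would verify the claim directly at $t=\tfrac01$ and $t=\tfrac11$ (and, if convenient for bootstrapping the mediant step, also at $t=\tfrac12$), where $F^+(k,t)$ is computed by hand from its definition and $M_t(k,0)$, $C_t(k,-k)$ are read off from the explicit root data $(X_\ell,Y_\ell,Z_\ell)$ and $(P_\ell,Q_\ell,R_\ell)$ of Sections 4 and 5. The inductive step rests on a geometric lemma describing the decomposition of the pre-snake graph: when $t$ is the Farey mediant of $r$ and $s$, the line segment $L_t$ is, up to affine reparametrization, obtained by concatenating $L_r$ and $L_s$, and the sign sequence used in Section 7.2 to read off $F^+(k,t)$ is the concatenation of the sign sequences for $r$ and one of $s$, joined by a central block of $k$ signs whose placement is governed by Lemma \ref{lem:semi-palindrome} and Remark \ref{rmk:reverse-sign}. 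Combining this geometric gluing with Theorem \ref{thm:snakegraph-continuedfraction} and the classical convergent recursion
\[
\begin{bmatrix}
m(\mathcal G[a_1,\dots,a_\ell]) & m(\mathcal G[a_1,\dots,a_{\ell-1}])\\
m(\mathcal G[a_2,\dots,a_\ell]) & m(\mathcal G[a_2,\dots,a_{\ell-1}])
\end{bmatrix}
=\prod_{i=1}^{\ell}\begin{bmatrix} a_i & 1\\ 1 & 0\end{bmatrix}
\]
identifies the right-hand sides of (1) and (2) as signed transfer-matrix products indexed by the sign sequence of $F^+(k,t)$; the concatenation of sign sequences then translates into the matrix product that defines the MM-tree and GC-tree operations. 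Reading off the $(1,2)$-entry at the end yields $m(\mathcal G[a_1,\ldots,a_\ell])=m_{k,t}$.

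The main obstacle I anticipate is the sign bookkeeping around the central $k$-block. Because $F^+(k,t)$ has the canonical semi-palindrome form $[a_1,\dots,a_{\ell/2},a_{\ell/2}\pm k,a_{\ell/2-1},\dots,a_1]$, the middle entry is \emph{not} the naive concatenation of the middle entries coming from $r$ and $s$; it is corrected by $\pm k$, and this correction must match precisely the appearance of the constant matrix $S=\bigl[\begin{smallmatrix}k&0\\3k^2+3k&k\end{smallmatrix}\bigr]$ in $Q=PR-S$ (for the Cohn side) and the conjugation structure $YZY^{-1}$ (for the MM side), where the relation between the two is governed by the map $\Psi$ of Theorem \ref{thm:BT-CT2}. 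I would manage this by introducing an auxiliary signed transfer-matrix presentation in which the $k$ consecutive central signs are encoded as a single factor; verifying a clean identity for this factor modulo $S$ is the technical core, after which both (1) and (2) fall out of the induction in parallel. The very last assertion $m(\mathcal G[a_1,\dots,a_\ell])=m_{k,t}$ follows immediately by comparing the $(1,2)$-entry of either (1) or (2) with the definition of the fraction labeling to $k$-GM numbers.
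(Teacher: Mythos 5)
Your proposal follows the paper's skeleton --- induction over the Farey tree anchored at explicitly computed root matrices, driven by the decomposition of the pre-snake graph at a Farey mediant (this is exactly Proposition \ref{prop:presnake-relation}, whose proof rests on the Christoffel-word concatenation of Theorem \ref{thm:christoffel}, so the ``geometric lemma'' you defer is genuine work, not a formality) --- but your engine for the inductive step is different. The paper verifies the needed identities among the numbers $m(\mathcal G[\cdots])$ by counting perfect matchings directly (the tail and joint case analyses), once for the recursion coming from $M_rM_tM_s=T$ and again for $C_t=C_rC_s-S$; you instead encode the right-hand sides as continuant products $\prod_i\bigl[\begin{smallmatrix} a_i&1\\ 1&0\end{smallmatrix}\bigr]$ (times $J=\bigl[\begin{smallmatrix} 0&1\\ -1&0\end{smallmatrix}\bigr]$) and convert the concatenation formula for $F^+(k,t)$ into matrix algebra, where the reversal of the $r$-block becomes a transpose and the junction contributes the single factor $\bigl[\begin{smallmatrix} 3k+2&1\\ 1&0\end{smallmatrix}\bigr]\bigl[\begin{smallmatrix} 1&1\\ 1&0\end{smallmatrix}\bigr]$. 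This is a legitimate and arguably cleaner route: it replaces all of the paper's matching counts (equations of the type \eqref{eq:1-1}--\eqref{eq:3-1}) by elementary $2\times2$ identities.

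Two concrete repairs are needed. First, your transfer-matrix identity uses $m(\mathcal G[a_1,\dots,a_\ell])$ entrywise as a continuant, whereas Theorem \ref{thm:snakegraph-continuedfraction} as quoted only gives the ratio; you must either import from \cite{cs18} the recursion $m(\mathcal G[a_1,\dots,a_\ell])=a_\ell\,m(\mathcal G[a_1,\dots,a_{\ell-1}])+m(\mathcal G[a_1,\dots,a_{\ell-2}])$ or prove it by a short matching count before the algebra can start. Second, deducing (2) from (1) ``in parallel via $\Psi$'' is off by an $\ell$-shift: Theorem \ref{thm:BT-CT2} sends $\mathrm{MM}\mathbb T(k,0)$ to $\mathrm{GC}\mathbb T(k,0)$, so $\psi(M_t(k,0))=C_t(k,0)$, not the matrix $C_t(k,-k)$ appearing in (2); you additionally need the conjugation by $L=\bigl[\begin{smallmatrix} 1&0\\ -k&1\end{smallmatrix}\bigr]$ of Proposition \ref{prop:cohnmatrix-ltol'} together with $\mathrm{tr}\,M_t(k,0)=-k$ (Corollary \ref{cor:trdet}(1)) to land on the stated form --- or, as the paper does, recover the second row of $C_t(k,-k)$ from its first row via the trace and determinant conditions. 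With these two points supplied, your plan goes through and yields the same conclusion, including $m(\mathcal G[a_1,\dots,a_\ell])=m_{k,t}$ from the $(1,2)$-entry.
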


Combining Theorem \ref{thm:M_t-C_t-combinatorics} and Theorem \ref{thm:snakegraph-continuedfraction}, we have the following corollary:

\begin{corollary}\label{cor:k-gen.snake-markov}
We denote by $N(k,t)$ the numerator of $F^{+}(k,t)$. The following statements hold:
\begin{itemize}\setlength{\leftskip}{-15pt}
    \item[(1)] for any irreducible fraction $t\in [0,1]$, we have $N(k,t)=m_{k,t}$,
    \item[(2)] for any $k$-GM number $b$, there exists $t\in [0,1]$ such that $b=N(k,t)$,
    \item[(3)] for any $r,t,s\in [0,1]$, $(N(k,r),N(k,t),N(k,s))$ is in $\mathrm{LM}\mathbb T(k)$ if and only if $(r,t,s)$ is in $\mathrm{F}\mathbb T$. 
\end{itemize}
\end{corollary}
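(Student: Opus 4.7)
The plan is to read off all three parts of the corollary by combining the two cited theorems, with the only subtle point being a coprimality check. For (1), I would begin by writing $F^+(k,t)=[a_1,\dots,a_\ell]$ for a given irreducible fraction $t\in(0,1]$. Theorem \ref{thm:snakegraph-continuedfraction} then gives
\[
    F^+(k,t)=\frac{m(\mathcal G[a_1,\dots,a_\ell])}{m(\mathcal G[a_2,\dots,a_\ell])},
\]
while Theorem \ref{thm:M_t-C_t-combinatorics}(1) identifies the $(1,2)$-entry of $M_t(k,0)$---which equals $m_{k,t}$ by the construction of the fraction labeling---with $m(\mathcal G[a_1,\dots,a_\ell])$. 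To conclude that $N(k,t)=m_{k,t}$, I must ensure that the above representation is already in lowest terms. This is where the matrix viewpoint pays off: expanding $\det M_t(k,0)=1$ using the explicit formula in Theorem \ref{thm:M_t-C_t-combinatorics}(1) yields a B\'ezout identity between $m(\mathcal G[a_1,\dots,a_\ell])$ and $m(\mathcal G[a_2,\dots,a_\ell])$, which forces their gcd to be $1$. The boundary case $t=0/1$ is then handled by the convention $F^+(k,0/1)=1=m_{k,0/1}$.

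For (2), I would invoke the surjectivity of the fraction labeling onto the set of all $k$-GM numbers. Every $k$-GM number is either $1$, $k+2$, or appears as the middle entry of some $k$-GM triple in $\mathrm{LM}\mathbb T(k)$. The first two are accounted for by the conventions $m_{k,0/1}=1$ and $m_{k,1/1}=k+2$, together with the direct computation from $\mathcal{PG}(1/1)$ giving $F^+(k,1/1)=[1+k,1]$ whose numerator is $k+2$. For any other $k$-GM number $b$, the bijection (stated in the paragraph preceding Section 7.2) between Farey triples in $[0,1]^3$ and $k$-GM triples in $\mathrm{LM}\mathbb T(k)$ furnishes some $t\in(0,1)$ with $b=m_{k,t}$, and then (1) yields $b=N(k,t)$.

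Finally, claim (3) follows by a direct translation. By (1) the substitution $N(k,\cdot)\leftrightarrow m_{k,\cdot}$ is valid, so the claim reduces to: a triple $(r,t,s)\in[0,1]^3$ is a vertex of $\mathrm{F}\mathbb T$ if and only if $(m_{k,r},m_{k,t},m_{k,s})$ is a vertex of $\mathrm{LM}\mathbb T(k)$. This is precisely the defining property of the fraction labeling, constructed as the canonical graph isomorphism from the subtree of $\mathrm{F}\mathbb T$ rooted at $(0/1,1/2,1/1)$ onto $\mathrm{LM}\mathbb T(k)$. The main obstacle, accordingly, lies not in the corollary itself but in the already-established Theorem \ref{thm:M_t-C_t-combinatorics}; once that is in hand, the only step beyond bookkeeping is the short coprimality argument in (1).
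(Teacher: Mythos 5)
Your proposal is correct and follows essentially the same route as the paper, which obtains this corollary simply by combining Theorem \ref{thm:snakegraph-continuedfraction} with Theorem \ref{thm:M_t-C_t-combinatorics}; the coprimality check you make explicit via $\det M_t(k,0)=1$ is precisely the identity the paper records as Corollary \ref{cor:trdet}\,(2). The boundary conventions for $t=0/1$, $t=1/1$ and the translation of (3) through the fraction labeling are the same routine bookkeeping the paper leaves implicit.
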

\subsection{Proof of Theorem \ref{thm:M_t-C_t-combinatorics}}
To prove Theorem \ref{thm:M_t-C_t-combinatorics}, we will give a relation between $F^{+}(k,r)$, $F^{+}(k,t)$, and $F^{+}(k,s)$ for a Farey triple $(r,t,s)$.
\begin{proposition}\label{prop:presnake-relation}
 For a Farey triple $(r,t,s)$ with $t\in (0,1)$, the following three statements hold:
 \begin{itemize}\setlength{\leftskip}{-15pt}
     \item[(1)] We assume that $r=\dfrac{0}{1}$ and $s\neq \dfrac{1}{1}$. If $F^{+}(k,s)=[b_{1},\dots,b_{m}]$, then we have\[F^{+}(k,t)=[2k+2,1,b_{m}-1,b_{m-1},\dots,b_{1}].\]
      \item[(2)] We assume that $r\neq \dfrac{0}{1}$ and $s= \dfrac{1}{1}$. If $F^{+}(k,r)=[a_{1},\dots,a_{\ell}]$, then we have\[F^{+}(k,t)=[a_{\ell},\dots,a_{1},3k+2,k+2].\]
     \item[(3)] We assume that $r\neq \dfrac{0}{1}$ and $s\neq \dfrac{1}{1}$. If $F^{+}(k,r)=[a_{1},\dots,a_{\ell}]$ and $F^{+}(k,s)=[b_{1},\dots,b_{m}]$, then we have\[F^{+}(k,t)=[a_{\ell},\dots,a_{1},3k+2,1,b_{m}-1,b_{m-1},\dots,m_1].\]
 \end{itemize}
\end{proposition}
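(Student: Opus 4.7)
The plan is to exploit the geometric decomposition of $\mathcal{PG}(t)$ coming from the mediant relation $a_t = a_r + a_s$, $b_t = b_r + b_s$ (where $r = a_r/b_r$ and $s = a_s/b_s$). First I would establish that the unit squares of $\mathcal{PG}(t)$ partition as $\mathcal{PG}(r) \sqcup \{\sigma\} \sqcup \bigl(\mathcal{PG}(s) + (b_r, a_r)\bigr)$, where $\sigma = [b_r - 1, b_r] \times [a_r, a_r + 1]$ is a ``transition square'' sitting atop the rightmost square of $\mathcal{PG}(r)$ (with the obvious degenerations in cases (1) and (2) when one of $\mathcal{PG}(r)$, $\mathcal{PG}(s)$ collapses). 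The Farey condition $a_s b_r - a_r b_s = 1$ forces $L_t$ to pass at vertical distance $1/b_t$ above the lattice point $(b_r, a_r)$, so $L_t$ enters $\sigma$ through its bottom edge and exits through its right edge.

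Next, I would analyze the sign sequence of $L_t$ region by region, with Remark~\ref{rmk:reverse-sign} as the main tool. In the $\mathcal{PG}(r)$-region, the lines $L_r$ and $L_t$ share the starting point $(0,0)$ and traverse the same ordered list of triangles and edges. I would show that each triangle contributes the same sign under $L_t$ as under $L_r$ (since the combinatorial type of the upper region depends only on which edges the line enters and exits through, and these coincide for both lines), and that each edge midpoint lies on the same side of $L_t$ as of $L_r$, \emph{except} the unique ``middle edge'' whose midpoint coincides with $(b_r/2, a_r/2)$, the midpoint of $L_r$ itself. The identity
\[
\frac{a_t}{b_t}\cdot\frac{b_r}{2} - \frac{a_r}{2} = \frac{a_s b_r - a_r b_s}{2 b_t} = \frac{1}{2 b_t} > 0
\]
shows this midpoint lies strictly below $L_t$, so the $k$ signs of the middle edge flip from $-$ to $+$; by Remark~\ref{rmk:reverse-sign} this single flip reverses the continued fraction, contributing $[a_\ell, \dots, a_1]$ to $F^+(k, t)$. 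A parallel argument in the translated $\mathcal{PG}(s)$-region produces two flips relative to $L_s$: the middle edge flips for the same slope reason, and additionally the first triangle of the region flips because the leftmost-triangle rule no longer applies to it within $\mathcal{PG}(t)$. Tracking block lengths, these two flips convert the signed block pattern of $[b_1, \dots, b_m]$ into that of $[1, b_m - 1, b_{m-1}, \dots, b_1]$. Finally, a direct computation in $\sigma$ confirms that its $3k+2$ signs are all $-$: the midpoints of the bottom edge, the diagonal, and the right edge of $\sigma$ all lie on or above $L_t$ (reducing to the inequalities $a_t \geq 2$ and $b_r + b_s \geq 3$, both of which hold in case (3)), and both triangles of $\sigma$ have quadrilateral upper parts.

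Cases (1) and (2) follow by tracking the degenerations: in case (1), $\mathcal{PG}(r) = \emptyset$, the ``incoming'' edge of $\sigma$ disappears (since $L_t$ starts at the corner $(0,0)$), and the transition block shrinks from $3k+2$ to $2k+2$ signs; in case (2), $\mathcal{PG}(s)$ is a single square, so formally $b_m - 1 = 0$ and the blocks of length $1$, $0$, $b_1 = k+1$ collapse into the single block of length $k+2$ stated in the formula. The main obstacle I anticipate is the rigorous verification that non-middle-edge midpoints cannot strictly flip sides when the slope changes from $r$ to $t$ (and symmetrically from $s$ to $t$); the required bound comes from the Farey relations, which force any diagonal midpoint $(i + 1/2, j + 1/2)$ satisfying $(2j+1) b_r - (2i+1) a_r = 1$ to obey $2i + 1 \leq b_t$ in case (3), so the ``worst-case'' midpoint lies at best \emph{on} $L_t$ (giving sign $-$, hence no effective flip) rather than strictly below it. An analogous case analysis handles shared-edge midpoints and the symmetric argument in the translated $\mathcal{PG}(s)$-region.
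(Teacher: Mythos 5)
Your proposal is correct and rests on the same core mechanism as the paper's proof: split $\mathcal{PG}(t)$ at the Farey point into the $r$-part, one transition tile, and the translated $s$-part, check that the only sign changes relative to the standalone graphs are at the two central edges (plus the first triangle of the $s$-part), and use Remark~\ref{rmk:reverse-sign} together with the semi-palindrome structure to read off the blocks $[a_\ell,\dots,a_1]$, $3k+2$, and $[1,b_m-1,\dots,b_1]$. Where you differ is in how the scaffolding is built: the paper proves (1) and (2) first by direct height-comparison inequalities for the special fractions $1/(p+1)$ and $(p+1)/(p+2)$, and only then obtains the decomposition needed for (3) by quoting the Christoffel-word concatenation theorem (Theorem~\ref{thm:christoffel}, giving the $1/(b+d)$ offset at the junction), whereas you derive the partition of unit squares and the offset directly from the mediant identities $a_t=a_r+a_s$, $b_t=b_r+b_s$ and $a_sb_r-a_rb_s=1$, and then recover (1) and (2) as degenerations of (3). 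Your route buys a self-contained, uniform treatment (the $\frac{1}{2b_t}$ margin computation for the central midpoints and the floor-equality $\lfloor ti\rfloor=\lfloor ri\rfloor$ for $0\le i\le b_r$ replace the imported Christoffel machinery), at the cost of some bookkeeping you would still have to write out carefully: the verification that no non-central midpoint or triangle changes type near the junction (your "main obstacle" paragraph correctly identifies the Farey bound that settles this), and the degeneration bookkeeping in (1) and (2) (the tile losing its bottom edge when $\mathcal{PG}(r)=\emptyset$, and the block merge $1+0+(k+1)=k+2$ when $s=1/1$), both of which you handle correctly. The paper's route, by contrast, outsources the decomposition to a known result and keeps the case analysis closer to explicit pictures. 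I see no genuine gap in your argument.
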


\begin{proof}[Proof of Proposition \ref{prop:presnake-relation} (1) and (2)]
First, we will prove (1). Under the assumption $r=\dfrac{0}{1}$ and $s\neq \dfrac{1}{1}$, there exists $p\in \mathbb Z_{> 1}$ such that $s=\dfrac{1}{p}$. We will prove the statement for $t=\dfrac{1}{p+1}$. Since $p\geq 2$, the first $2+2k$ signs in $\mathcal{PG}(1/(p+1))$ are $-$, and the next sign is $+$ (see Figure \ref{fig:presnake-1-8}). 
\begin{figure}[ht]
    \centering
    \includegraphics[scale=0.25]{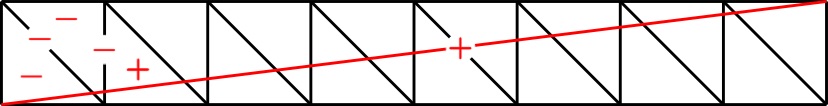}
    \caption{$\mathcal{PG}(1/(p+1))$ with $k=1$ and $p=7$}
    \label{fig:presnake-1-8}
\end{figure}

Let us compare the sign sequence of $\mathcal{PG}(1/(p+1))$ after removing the first tile and that of $\mathcal{PG}(1/p)$ (compare Figures \ref{fig:presnake-1-8} and \ref{fig:presnake-1-7}). We denote by $\mathcal{SPG}(1/(p+1))$ the former graph.
\begin{figure}[ht]
    \centering
    \includegraphics[scale=0.25]{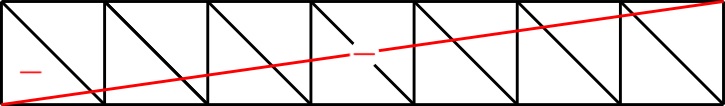}
    \caption{$\mathcal{PG}(1/p)$ with $k=1$ and $p=7$}
    \label{fig:presnake-1-7}
\end{figure}

 We will prove that only the signs associated with the first triangles and the central edges in $\mathcal{SPG}(1/(p+1))$ and $\mathcal{PG}(1/p)$ differ. It is clear that signs at these two places differ. We will prove that all other signs coincide. Clearly, the signs assigned to right-angled triangles coincide. We will consider signs assigned to vertical edges. The height of the intersection point between the $(a+1)$-th vertical edge from the left of $\mathcal{PG}(1/p)$ and the line segment $L_\frac{1}{p}$ is $\dfrac{a}{p}$. Moreover, the height of the intersection point between the $(a+1)$-th vertical edge from the left of $\mathcal{SPG}(1/(p+1))$ and the line segment $L_\frac{1}{p+1}$ is $\dfrac{a+1}{p+1}$. Since $\dfrac{a}{p} < \dfrac{a+1}{p+1}$, it is sufficient to show $\dfrac{a+1}{p+1} \leq \dfrac{1}{2}$ if $\dfrac{a}{p} < \dfrac{1}{2}$. Since $2a\leq p-1$, we have
\[\dfrac{1}{2}-\dfrac{a+1}{p+1}=\dfrac{p-2a-1}{2(p+1)}\geq 0,\]
as desired. We can prove about signs assigned to diagonal edges in the same way. Therefore, only the signs associated with the first triangles and the central edges in $\mathcal{SPG}(1/(p+1))$ and $\mathcal{PG}(1/p)$ differ. By Remark \ref{rmk:reverse-sign}, the continued fraction constructed from the sign sequence of $\mathcal{SPG}(1/(p+1))$ is $[1,b_m-1,b_{m-1},\dots, b_1]$. Combining the continued fraction given by the $2+2k$ signs associated with the initial tile in $\mathcal {PG}(1/(p+1))$, we obtain the claim.

Second, we will prove (2). Under the assumption $r\neq\dfrac{0}{1}$ and $s=\dfrac{1}{1}$, there exists $p\in \mathbb Z_{> 1}$ such that $s=\dfrac{p}{p+1}$. We will prove the statement for $t=\dfrac{p+1}{p+2}$. Since $p\geq 2$, the last $4k+4$ signs in $\mathcal{PG}((p+1)/(p+2))$ are $3k+2$ minus signs and $k+2$ plus signs (see Figure \ref{fig:presnake-4-5}).
\begin{figure}[ht]
    \centering
    \includegraphics[scale=0.25]{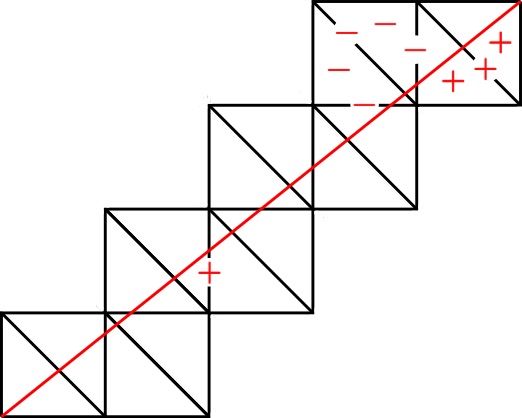}
    \caption{$\mathcal{PG}((p+1)/(p+2))$ with $k=1$ and $p=3$}
    \label{fig:presnake-4-5}
\end{figure}

Let us compare the sign sequence of $\mathcal{PG}((p+1)/(p+2))$ after removing the last two tiles and that of $\mathcal{PG}(p/(p+1))$ (compare Figures \ref{fig:presnake-4-5} and \ref{fig:presnake-3-4}). We denote by $\mathcal{SPG}((p+1)/(p+2))$ the former graph.
\begin{figure}[ht]
    \centering
    \includegraphics[scale=0.25]{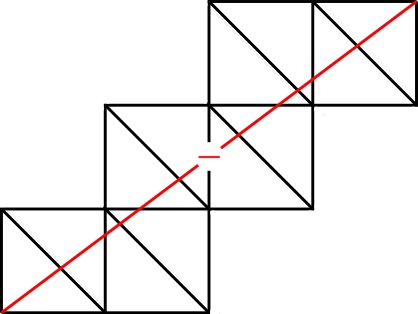}
    \caption{$\mathcal{PG}(p/(p+1))$ with $k=1$ and $p=3$}
    \label{fig:presnake-3-4}
\end{figure}

We will prove that only the signs associated with the central edges in $\mathcal{SPG}((p+1)/(p+2))$ and $\mathcal{PG}(p/(p+1))$ differ. It is clear that signs at this places differ. We will prove that all other signs coincide. Clearly, the signs assigned to right-angled triangles coincide. The height of the intersection point of the $(a+1)$-th vertical edge from the left of $\mathcal{PG}(p/(p+1))$ and the line segment $L_\frac{p}{p+1}$ is $\dfrac{ap}{p+1}$, and the height of the intersection point of the $(a+1)$-th vertical edge from the left of $\mathcal{SPG}((p+1)/(p+2))$ and the line segment $L_\frac{p+1}{p+2}$ is $\dfrac{a(p+1)}{p+2}$. It is sufficient to show that if $\dfrac{ap}{p+1}-(a-1)<\dfrac{1}{2}$, then $\dfrac{a(p+1)}{p+2}-(a-1)\leq\dfrac{1}{2}$ holds. Since $2a\geq p+2$, we have
\[\dfrac{1}{2}-\dfrac{a(p+1)}{p+2}+(a-1)=\dfrac{-p+2a-2}{2(p+2)}\geq 0,\]
as desired. We can prove about signs assigned to diagonal edges and horizontal edges in the same way. The discussion afterward is the same as in case (1). 
\end{proof}

To prove Proposition \ref{prop:presnake-relation} (3), we recall the Christoffel word. We denote by $\{A,B\}^*$ the set of words consisting of $A$ and $B$. Let $\dfrac{a}{b}$ be an irreducible fraction. For $1\leq i\leq b+1$, we denote by $y_i$ the height of the intersection point of $L_t$ and the $i$-th vertical line from the left in $\mathcal{PG}(t)$. We denote the integer part of $x$ by $\lfloor x \rfloor$. The \emph{Christoffel word $\mathrm{ch}_{a/b}$ associated with $\dfrac{a}{b}$} is defined as $\mathrm{ch}_{a/b}:=w_1\cdots w_{b}\in\{A,B\}^\ast$, where
\begin{align*}
    w_i=\begin{cases}
        A & \text{if $\lfloor y_{i+1} \rfloor-\lfloor y_i \rfloor=0$},\\
        B & \text{if $\lfloor y_{i+1} \rfloor-\lfloor y_i \rfloor=1$}.
    \end{cases}
\end{align*}

\begin{example}
The Christoffel word $\mathrm{ch}_{2/5}$ is $AABAB$. See also Figure \ref{fig:christoffel}.
\begin{figure}[ht]
    \centering
    \begin{tikzpicture}
    \draw[thick] (0,0) grid (5,2);
    \draw[red,thick] (0,0) -- (5,2);
    \draw[line width=2pt] (0,0) -- (2,0)-- (3,1) -- (4,1) -- (5,2);
    \node at (0.5,-0.5) {$A$};
    \node at (1.5,-0.5) {$A$};
    \node at (2.5,-0.5) {$B$};
    \node at (3.5,-0.5) {$A$};
    \node at (4.5,-0.5) {$B$};
\end{tikzpicture}
    \caption{Christoffel word $\mathrm{ch}_{2/5}$}
    \label{fig:christoffel}
\end{figure}
\end{example}

In \cite{aig}*{Theorem 7.6}, the following theorem about the Christoffel word is proved by using the argument based on the \emph{Cohn word}.

\begin{theorem}[\cite{aig}*{Theorem 7.6}]\label{thm:christoffel}
For $(r,t,s)\in \mathrm{F}\mathbb{T}$, we have
\[\mathrm{ch}_r\cdot\mathrm{ch}_s=\mathrm{ch}_t,\]
where $\cdot$ means the concatenation of words. Moreover, if $r=\dfrac{a}{b}$ and $s=\dfrac{c}{d}$ (thus $t=\dfrac{a+c}{b+d}$) hold, then we have $y_{b+1}-\lfloor y_{b+1} \rfloor=\dfrac{1}{b+d}$, where $y_i$ is the height of the intersection point of $L_t$ and the $i$-th vertical line from the left in $\mathcal{PG}(t)$.
\end{theorem}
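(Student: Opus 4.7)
The plan is to prove both assertions by a direct arithmetic comparison of the heights $y_i = (i-1)t$ along $L_t$ with the analogous heights on $L_r$ and $L_s$, exploiting the Farey relation $bc-ad = 1$ that is inherited at every vertex of $\mathrm{F}\mathbb T$ by Proposition~\ref{prop:property-farey}. I would dispatch the ``moreover'' clause first, since it is a one-line calculation whose output reappears as a uniform bound in the concatenation argument. Writing $t = (a+c)/(b+d)$ and subtracting $a$ from $y_{b+1} = b(a+c)/(b+d)$ gives $y_{b+1} - a = (bc-ad)/(b+d) = 1/(b+d) \in (0,1)$, whence $\lfloor y_{b+1}\rfloor = a$ and the fractional part equals $1/(b+d)$.

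For the concatenation $\mathrm{ch}_r \cdot \mathrm{ch}_s = \mathrm{ch}_t$, I would denote by $y_i^{(r)} = (i-1)a/b$ and $y_i^{(s)} = (i-1)c/d$ the heights along $L_r$ and $L_s$, and compare them to $y_i$ column by column. Direct subtraction, again using $bc - ad = 1$, yields
\[
y_i - y_i^{(r)} = \frac{i-1}{b(b+d)} \qquad (1 \leq i \leq b+1)
\]
and
\[
y_{b+i} - \bigl(a + y_i^{(s)}\bigr) = \frac{d+1-i}{d(b+d)} \qquad (1 \leq i \leq d+1),
\]
both nonnegative and in fact bounded above by $1/(b+d)$. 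Since the fractional parts $\{y_i^{(r)}\}$ always lie in $\{0, 1/b, \ldots, (b-1)/b\}$, a nonnegative perturbation of size strictly less than $1/b$ cannot change the integer part of $y_i^{(r)}$, and symmetrically for $y_i^{(s)}$ with $1/d$. Consequently $\lfloor y_i\rfloor = \lfloor y_i^{(r)}\rfloor$ for $1 \leq i \leq b+1$ and $\lfloor y_{b+i}\rfloor = a + \lfloor y_i^{(s)}\rfloor$ for $1 \leq i \leq d+1$; subtracting consecutive floors then recovers $w_i = w_i^{(r)}$ for $1 \leq i \leq b$ and $w_{b+i} = w_i^{(s)}$ for $1 \leq i \leq d$, which is exactly the concatenation identity.

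The main subtlety I expect is the borderline case $\{y_i^{(r)}\} = (b-1)/b$, where the perturbation is closest to forcing a jump across the next integer. I would handle it by unfolding the required inequality $(b-1)/b + (i-1)/(b(b+d)) < 1$ into $i - 1 < b+d$, which is automatic from $i \leq b+1$ and $d \geq 1$; the symmetric bound on the $s$-side is identical. After that single verification the proof is complete. I note that the result could alternatively be obtained by induction on the depth in $\mathrm{F}\mathbb T$ starting from a base Farey triple such as $(0/1, 1/2, 1/1)$, which would avoid the explicit arithmetic altogether, but the direct approach above is cleaner and also immediately produces the ``moreover'' clause as a byproduct.
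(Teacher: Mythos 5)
Your proof is correct, but it is worth noting that the paper itself contains no proof of this statement: it is quoted from Aigner's book (Theorem 7.6 there), and the paper remarks that the cited argument goes through Cohn words, i.e.\ an algebraic/word-theoretic induction along the tree rather than direct lattice geometry. Your route is genuinely different and entirely self-contained: the identity $y_{b+1}-a=\frac{bc-ad}{b+d}=\frac{1}{b+d}$ gives the ``moreover'' clause at once, and the two perturbation formulas $y_i-y_i^{(r)}=\frac{i-1}{b(b+d)}$ for $1\le i\le b+1$ and $y_{b+i}-\bigl(a+y_i^{(s)}\bigr)=\frac{d+1-i}{d(b+d)}$ for $1\le i\le d+1$ are exactly right (the numerator of the second reduces to $(bc-ad)(d+1-i)$). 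Since the fractional parts of $y_i^{(r)}$ and $y_i^{(s)}$ are multiples of $\frac{1}{b}$ and $\frac{1}{d}$ respectively, and the perturbations are nonnegative and bounded by $\frac{1}{b+d}$, which is strictly smaller than both $\frac{1}{b}$ and $\frac{1}{d}$ because $b,d\ge 1$, the floors are preserved, and taking consecutive differences of floors yields the concatenation letter by letter; your treatment of the borderline fractional part $\frac{b-1}{b}$ is the only place where strictness matters and you verify it correctly. Two small remarks: the sign $bc-ad=+1$ (rather than $-1$) uses $r<s$, which is supplied by Proposition \ref{prop:property-farey} (3), so you should cite that ordering explicitly; and, like the paper's statement itself, the argument should be read for triples of finite slopes with $b,d\ge 1$ (in the paper the theorem is only invoked for $t\in(0,1)$, where this holds), since the paper's definition of $\mathrm{ch}_{a/b}$ degenerates at $\frac{1}{0}$. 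Compared with the cited Cohn-word proof, your computation is more elementary and produces the second assertion as a byproduct, while Aigner's approach embeds the statement in the general morphism/$SL_2$ machinery for Christoffel and Markov words; either is acceptable, and your alternative inductive suggestion along $\mathrm{F}\mathbb{T}$ would in fact be closer in spirit to the cited proof.
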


From the above theorem, we have the following decomposition of the pre-snake graph.

\begin{corollary}\label{lem:decomposition-lemma}
 For $(r,t,s)\in \mathrm{F}\mathbb{T}$ with $t\in (0,1)$, $\mathcal{PG}(t)$ is decomposed into $\mathcal{PG}(r)$, a tile, and $\mathcal{PG}(s)$ in the order from the lower left to the upper right.
\end{corollary}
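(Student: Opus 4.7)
Write $r=a/b$ and $s=c/d$ in lowest terms with $r<s$, so $bc-ad=1$ and $t=(a+c)/(b+d)$. The plan is to compare $L_t$ to $L_r$ on $x\in[0,b]$ and to the translate $L_s+(b,a)$ on $x\in[b,b+d]$, and to show that in both regions $L_t$ visits exactly the same unit squares as the model line, except for a single extra square $[b-1,b]\times[a,a+1]$ at the meeting point, which is the claimed middle tile.

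First I would establish offset bounds. Using $bc-ad=1$, a direct computation gives
\[
L_t(x)-L_r(x)=\frac{x}{b(b+d)}\ \text{for}\ x\in[0,b],\qquad L_t(b+u)-\bigl(L_s(u)+a\bigr)=\frac{d-u}{d(b+d)}\ \text{for}\ u\in[0,d],
\]
so in each region the vertical discrepancy is nonnegative and strictly less than $1/(b+d)<1$. Combined with Theorem \ref{thm:christoffel}, which identifies the first $b$ letters of $\mathrm{ch}_t$ with $\mathrm{ch}_r$ and the last $d$ letters with $\mathrm{ch}_s$, this forces $\lfloor L_t(i)\rfloor=\lfloor L_r(i)\rfloor$ for $0\le i\le b$ and $\lfloor L_t(b+i)\rfloor=a+\lfloor L_s(i)\rfloor$ for $0\le i\le d$. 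A column-by-column comparison then yields, for $1\le i\le b-1$, that $L_t$ and $L_r$ enter and leave column $i$ in the same integer horizontal strip and with the same Christoffel letter, hence visit exactly the same unit square(s); the same argument applied to $L_t$ versus $(b,a)+L_s$ handles columns $b+1,\dots,b+d$.

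The key step, and the source of the middle tile, is column $b$. Here $L_r$ exits at the lattice corner $(b,a)$ and visits only the bottom square $[b-1,b]\times[a-1,a]$, whereas $L_t$ exits at $(b,a+1/(b+d))$ strictly above $y=a$; thus $L_t$ crosses the horizontal line $y=a$ inside column $b$ and picks up the additional square $[b-1,b]\times[a,a+1]$ absent from $\mathcal{PG}(r)$. The analogous endpoint effect at $x=b+d$ produces no new square, because both $L_t$ and the translated $L_s$ terminate at the same lattice point $(b+d,a+c)$. Assembling the three ranges gives
\[
\mathcal{PG}(t)=\mathcal{PG}(r)\;\sqcup\;\bigl\{[b-1,b]\times[a,a+1]\bigr\}\;\sqcup\;\bigl((b,a)+\mathcal{PG}(s)\bigr),
\]
the asserted left-to-right decomposition; as a sanity check the square counts satisfy $(a+b-1)+1+(c+d-1)=(a+c)+(b+d)-1$. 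The main obstacle is the careful bookkeeping of the degenerate endpoint columns, where a $B$-letter of the Christoffel word need not correspond to two unit squares when the line terminates at a lattice point; the boundary cases $r=0/1$ or $s=1/1$ (where $\mathcal{PG}(r)$ or $\mathcal{PG}(s)$ is empty or a single tile) fit into the same formula and should be treated as limiting instances.
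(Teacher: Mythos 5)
Your proposal is correct and takes essentially the same route as the paper, which states this corollary as an immediate consequence of Theorem \ref{thm:christoffel}: the concatenation $\mathrm{ch}_r\cdot\mathrm{ch}_s=\mathrm{ch}_t$ together with the junction offset $\frac{1}{b+d}$ is exactly what your column-by-column comparison makes explicit, including the identification of the extra middle tile $[b-1,b]\times[a,a+1]$ in column $b$. Your explicit discrepancy formulas for $L_t$ versus $L_r$ and the shifted $L_s$ simply spell out the bookkeeping (and the endpoint degeneracies you flag) that the paper leaves implicit.
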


\begin{proof}[Proof of Proposition \ref{prop:presnake-relation} (3)]
By Corollary \ref{lem:decomposition-lemma}, $\mathcal{PG}(t)$ is decomposed into $\mathcal{PG}(r)$, a tile, and $\mathcal{PG}(s)$ (see Figure \ref{fig:decomposition}). We denote by $\mathcal{SPG}(r)$ (resp. $\mathcal{SPG}(s)$) the $\mathcal{PG}(r)$- (resp.  $\mathcal{PG}(s)$-)part in $\mathcal{PG}(t)$. Note that signs assigned to the tile of the second component in the decomposition are all $-$.
\begin{figure}[ht]
    \centering
    \includegraphics[scale=0.1]{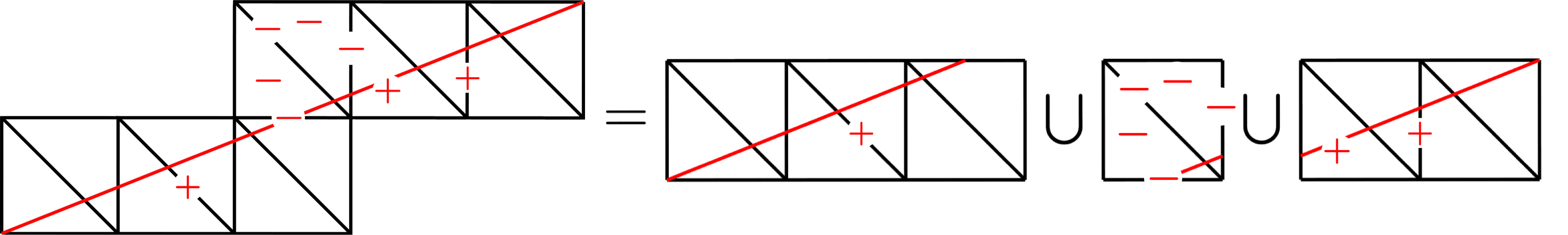}
    \caption{Decomposition of pre-snake graph with $r=\dfrac{1}{3}$, $t=\dfrac{2}{5}$, $s=\dfrac{1}{2}$}
    \label{fig:decomposition}
\end{figure}

We assume that $r=\dfrac{a}{b}, s=\dfrac{c}{d}$. By Theorem \ref{thm:christoffel}, the length from the intersection point with the first vertical edge of $\mathcal{SPG}(s)$ and $L_{t}$ to the bottom endpoint is $\dfrac{1}{b+d}$, and the length from the intersection point with the rightmost horizontal edge of $\mathcal{SPG}(r)$ and $L_{t}$ to the right endpoint of $\mathcal{SPG}(r)$ is $\dfrac{1}{a+c}$. Comparing the sign sequence of $\mathcal{SPG}(r)$ and that of $\mathcal{PG}(r)$, we can see that all signs except for ones at the center edges coincide by using the same argument in (2). Furthermore, comparing the sign sequence of $\mathcal{SPG}(s)$ and that of $\mathcal{PG}(s)$, we can see that all signs except for ones at the first triangles and center edges coincide by using the same argument in (1). Therefore, we obtain the conclusion.
\end{proof}

We have the following corollary of Proposition \ref{prop:presnake-relation}:

\begin{corollary}
 For a Farey triple $(r,t,s)$ with $t\in (0,1)$, the following two statements hold:
 \begin{itemize}\setlength{\leftskip}{-15pt}
     \item[(1)] We assume that $r=\dfrac{0}{1}$ and $s\neq \dfrac{1}{1}$. If $F^{+}(k,t)=[b_{1},\dots,b_{m}]$, then we have\[F^{+}(k,s)=[b_{m},b_{m-1},\dots,b_{3}+1].\]
      \item[(2)] We assume that $r\neq \dfrac{0}{1}$ and $s= \dfrac{1}{1}$. If $F^{+}(k,t)=[a_{1},\dots,a_{\ell}]$, then we have\[F^{+}(k,r)=[a_{\ell-2},a_{\ell-3}\dots,a_1].\]
\end{itemize}      
\end{corollary}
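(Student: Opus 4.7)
The plan is to obtain both statements as immediate algebraic consequences of Proposition 7.2, so no new pre-snake-graph analysis is required; only an index-matching argument. The key observation is that Proposition 7.2 already expresses $F^{+}(k,t)$ in terms of $F^{+}(k,r)$ or $F^{+}(k,s)$, so the corollary is obtained by solving those identities for the unknown continued fraction.

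For (1), I would begin by writing $F^{+}(k,s)=[c_{1},\ldots,c_{m'}]$ with unspecified length $m'$. Applying Proposition 7.2(1) gives
\[
F^{+}(k,t)=[\,2k+2,\ 1,\ c_{m'}-1,\ c_{m'-1},\ \ldots,\ c_{1}\,],
\]
which has length $m=m'+2$. Comparing this entry-by-entry with the hypothesis $F^{+}(k,t)=[b_{1},\ldots,b_{m}]$ yields $b_{1}=2k+2$, $b_{2}=1$, $b_{3}=c_{m-2}-1$, and $b_{j}=c_{m-j+1}$ for $4\leq j\leq m$. Inverting these relations gives $c_{j}=b_{m-j+1}$ for $1\leq j\leq m-3$ and $c_{m-2}=b_{3}+1$, hence
\[
F^{+}(k,s)=[c_{1},\ldots,c_{m-2}]=[\,b_{m},\ b_{m-1},\ \ldots,\ b_{4},\ b_{3}+1\,],
\]
as claimed.

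For (2), I would run the parallel argument starting from $F^{+}(k,r)=[c_{1},\ldots,c_{\ell'}]$. Proposition 7.2(2) gives
\[
F^{+}(k,t)=[\,c_{\ell'},\ c_{\ell'-1},\ \ldots,\ c_{1},\ 3k+2,\ k+2\,],
\]
of length $\ell=\ell'+2$. Matching with $F^{+}(k,t)=[a_{1},\ldots,a_{\ell}]$ forces $a_{\ell-1}=3k+2$, $a_{\ell}=k+2$, and $a_{j}=c_{\ell-1-j}$ for $1\leq j\leq \ell-2$, which inverts to $c_{j}=a_{\ell-1-j}$. This gives exactly $F^{+}(k,r)=[a_{\ell-2},a_{\ell-3},\ldots,a_{1}]$.

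There is no genuine obstacle: the corollary is a purely formal inversion of identities already established in Proposition 7.2. The only point to verify is that the resulting sequences are bona fide continued fractions with positive integer entries, which is immediate since each $b_{i},c_{i}\geq 1$ ensures that entries such as $b_{3}+1\geq 2$ and $b_{3}\geq 1$ cause no degeneracy. Consequently neither Corollary 7.1 nor any further combinatorial work on $\mathcal{PG}(t)$ is needed.
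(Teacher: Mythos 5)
Your argument is correct and coincides with what the paper does: the corollary is stated there without proof as an immediate consequence of Proposition \ref{prop:presnake-relation}, and the intended justification is exactly your entry-by-entry inversion of the identities in its parts (1) and (2). Since Proposition \ref{prop:presnake-relation} is an identity of the canonical sign-sequence expressions (and by Lemma \ref{lem:semi-palindrome} the last entry of $F^{+}(k,\cdot)$ equals its first entry, which is at least $k+2$, so no run degenerates), the formal index matching you carry out is all that is needed.
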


By using Proposition \ref{prop:presnake-relation}, we prove Theorem \ref{thm:M_t-C_t-combinatorics} (1).

\begin{proof}[Proof of Theorem \ref{thm:M_t-C_t-combinatorics} (1)]
In this proof, we abbreviate $M_t(k,0)$ to $M_t$, and $m(\mathcal{G}[a_1,\dots,a_\ell])$ to $m(a_1,\dots,a_\ell)$. We will prove the following four cases: (0) $r=\dfrac{0}{1}, s=\dfrac{1}{1}$, (1) $r=\dfrac{0}{1}, s\neq\dfrac{1}{1}$ (2) $r\neq\dfrac{0}{1}, s=\dfrac{1}{1}$, (3) $r\neq \dfrac{0}{1}, s\neq\dfrac{1}{1}$.

We prove the case (0). Now, $t=\dfrac{1}{2}$ holds.  Since $M_{\frac{1}{2}}=\tilde{Y}_0$ in Section 5, we have
\[M_{\frac{1}{2}}=\begin{bmatrix} -(2k+2)&2k^2+6k+5\\-1&k+2 \end{bmatrix}.\]
On the other hand, we have $F^{+}(k,1/2)=[2k+2,k+2]$. Since 
\[[2k+2,k+2]=\dfrac{2k^2+6k+5}{k+2}\]
hold, by $m(k+2)=k+2$ and Theorem \ref{thm:snakegraph-continuedfraction}, we have  \[m(2k+2,k+2)=2k^2+6k+5.\] Therefore, we have 
\[M_{\frac{1}{2}}=\begin{bmatrix} -m(2k+2)&m(2k+2,k+2)\\-m(\ )&m(k+2)\end{bmatrix},\]
as desired.

Next, we will prove the case (1). There exists $p\in \mathbb Z_{>1}$ such that $s=\dfrac{1}{p}$ and $t=\dfrac{1}{p+1}$. We will prove the statement by using induction on $p$. When $p=2$, $M_{\frac{1}{2}}$ satisfies the statement by the argument in the case (0). We assume that $M_{\frac{1}{p}}$ satisfies the statement, and prove that $M_{\frac{1}{p+1}}$ also satisfies the statement. We set
\[M_{\frac{1}{p}}=\begin{bmatrix}
    -m(b_1,\dots,b_{m-1})& m(b_1,\dots,b_{m})\\ -m(b_2,\dots,b_{m-1})& m(b_2,\dots,b_{m})\end{bmatrix}.\]
Since $M_{\frac{0}{1}}=\tilde{X}_0=\begin{bmatrix}
    0&1\\-1&k
\end{bmatrix}$ and $M_{\frac{1}{p+1}}=M^{-1}_{\frac{0}{1}}TM_{\frac{1}{p}}^{-1}$ hold, where $T=\begin{bmatrix}
    -1&0\\3k+3&-1
\end{bmatrix}$, we have
\[M_{\frac{1}{p+1}}=\begin{bmatrix}
        \hspace{-1cm}-((2k+3)m(b_2,\dots,b_{m}) & \hspace{-1cm}(2k+3)m(b_1,\dots,b_{m})\\
        \hspace{2cm}-m(b_2,\dots,b_{m-1}))&  \hspace{2cm} - m(b_1,\dots,b_{m-1})\\[0.5em]
        -m(b_2,\dots,b_{m}) & m(b_1,\dots,b_{m})
    \end{bmatrix}. \]
On the other hand, since $F^+(k,1/(p+1))=[2k+2,1,b_{m}-1,b_{m-1},\dots,b_{1}]$ by Proposition \ref{prop:presnake-relation}, it suffices to show the following four equalities: 
\begin{align}
    m(2k+2,1,b_{m}-1,b_{m-1},\dots,b_{2})&=(2k+3)m(b_2,\dots,b_{m})-m(b_2,\dots,b_{m-1}),\label{eq:1-1}\\
    m(2k+2,1,b_{m}-1,b_{m-1},\dots,b_{1})&=(2k+3)m(b_1,\dots,b_{m})-m(b_1,\dots,b_{m-1}),\label{eq:1-2}\\
    m(1,b_{m}-1,b_{m-1},\dots,b_{2})&=m(b_2,\dots,b_m),\label{eq:1-3}\\
    m(1,b_{m}-1,b_{m-1},\dots,b_{1})&=m(b_1,\dots,b_m).\label{eq:1-4}
\end{align}
First, we prove \eqref{eq:1-3} and \eqref{eq:1-4}. Since the graph $\mathcal{G}[1,b_m-1,b_{m-1},\dots,b_2]$ is congruent to $\mathcal{G}[b_2,\dots,b_{m-1},b_{m}-1,1]$, we have
\[m(1,b_m-1,b_{m-1},\dots,b_2)=m(b_2,\dots,b_{m-1},b_{m}-1,1).\]
Moreover, by the construction of the snake graph, $\mathcal{G}[b_2,\dots,b_m]$ coincides with $\mathcal{G}[b_2,\dots,b_{m}-1,1]$ and thus \[m(b_2,\dots,b_m)=m(b_2,\dots,b_{m}-1,1).\]
Therefore, we have \eqref{eq:1-3}. We can obtain \eqref{eq:1-4} by the same argument. Second, we prove \eqref{eq:1-1} and \eqref{eq:1-2}. The snake graph $\mathcal{G}[2k+2,1,b_m-1,b_{m-1}\dots,b_2]$ is given as in Figure \ref{fig:case1} when $b_m$ is even, and in Figure \ref{fig:case1a} when $b_m$ is odd. Since there is no difference in the argument in either case, the discussion will proceed in the case where $b_m$ is even (and we will use Figure \ref{fig:case1} and will not use Figure \ref{fig:case1a}). 
The diagram consisting of the first $2k+2$ tiles is called the \emph{tail} in $\mathcal{G}[2k+2,1,b_m-1,b_{m-1}\dots,b_2]$. The graph obtained by removing the tail from $\mathcal {G}[2k+2,1, b_{m}-1, b_{m-1}, ..., b_2]$ is isomorphic to $\mathcal {G}[b_{m}, ..., b_2]$. The left ``$\cdots$" part and the middle ``$\cdots$" part in Figure \ref{fig:case1} form  staircases because they consist of the same signs in succession. 
\begin{figure}[ht]
    \centering
    \includegraphics[scale=0.2]{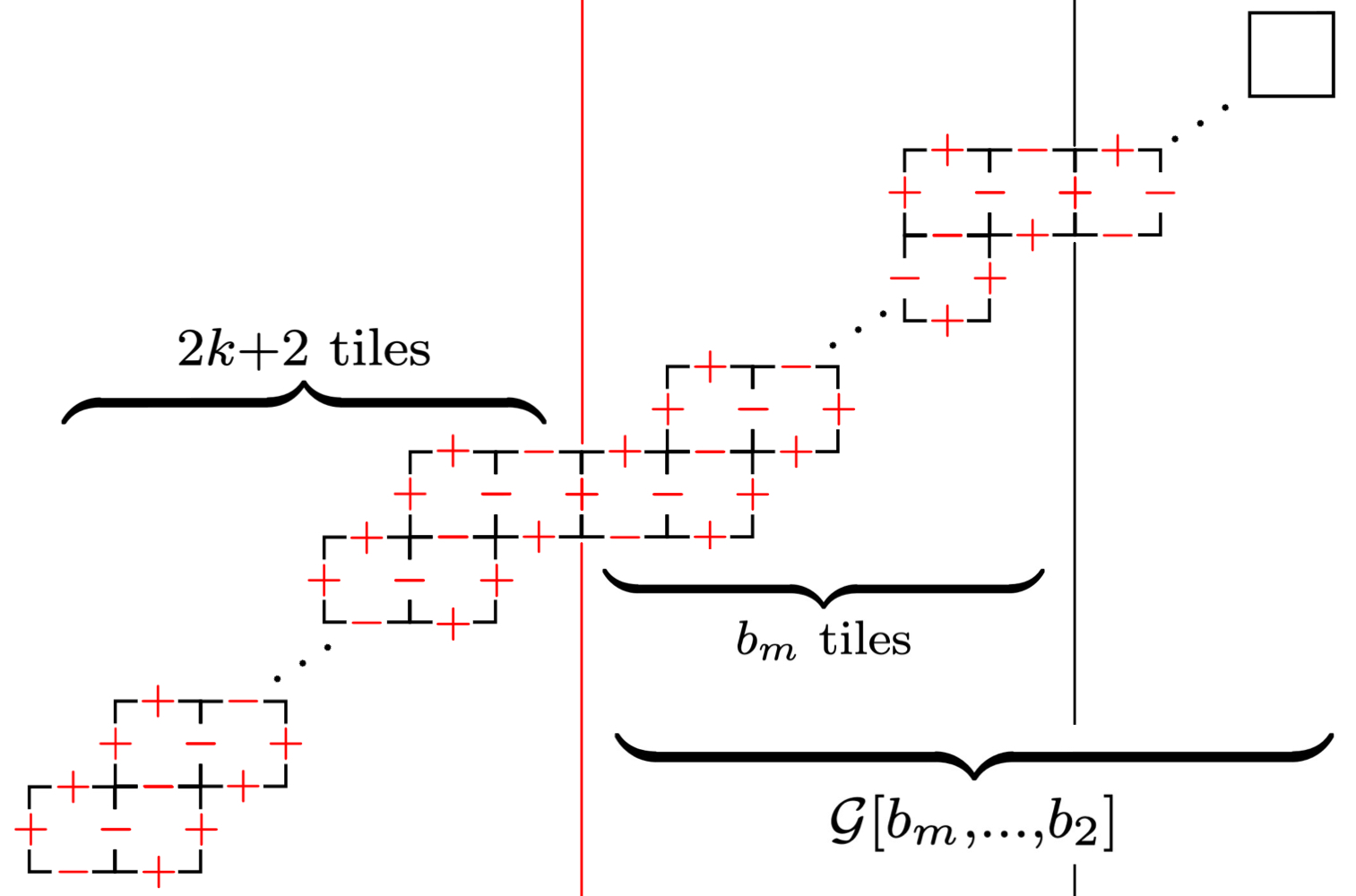}
    \caption{Snake graph $\mathcal{G}[2k+2,1,b_{m}-1,b_{m-1},\dots,b_{2}]$ when $b_m$ is even}
    \label{fig:case1}
\end{figure}
\begin{figure}[ht]
    \centering
    \includegraphics[scale=0.24]{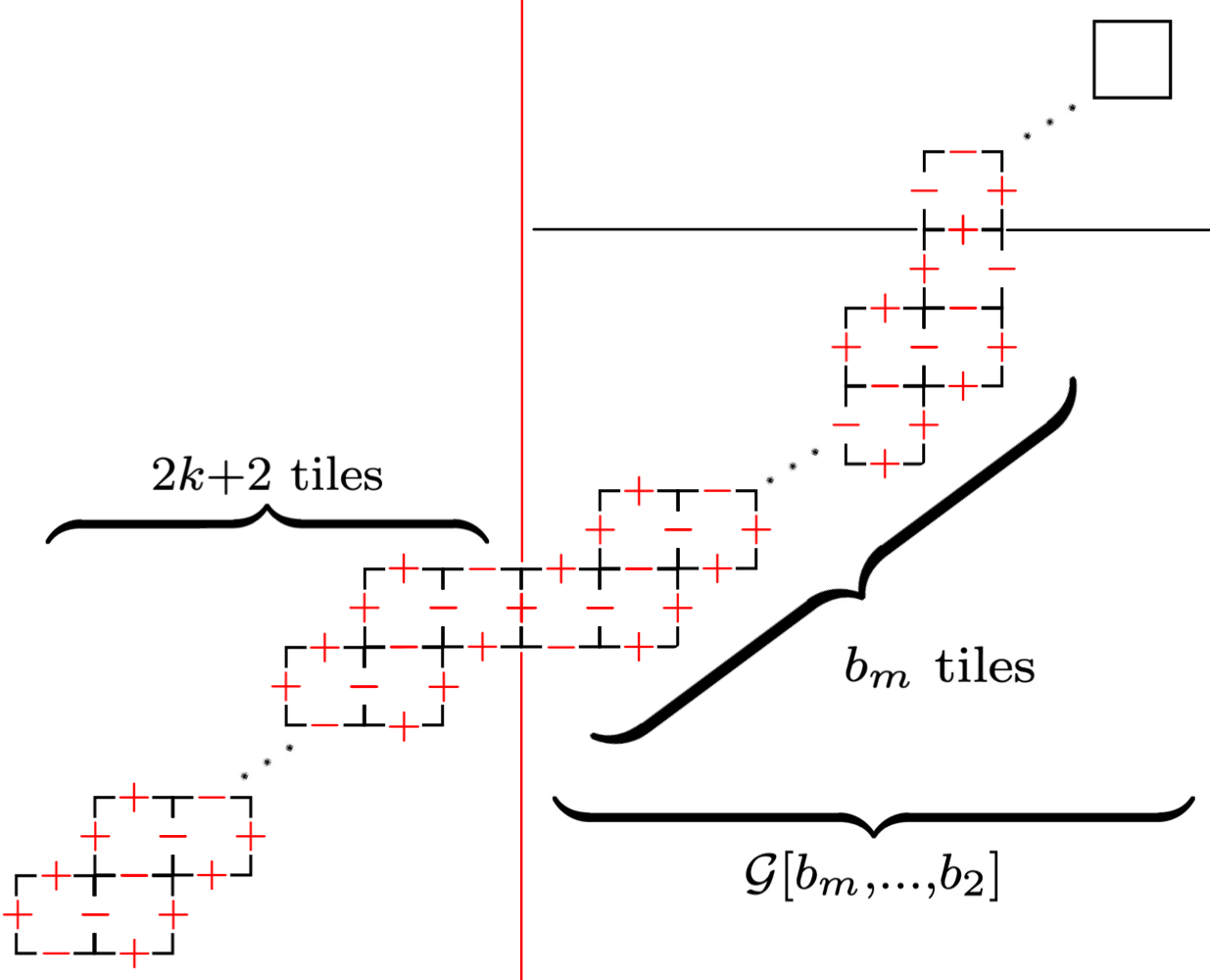}
    \caption{Snake graph $\mathcal{G}[2k+2,1,b_{m}-1,b_{m-1},\dots,b_{2}]$ when $b_m$ is odd}
    \label{fig:case1a}
\end{figure}

We will count the number of perfect matchings of $\mathcal{G}[2k+2,1,b_m-1,b_{m-1}\dots,b_2]$. Any perfect matching of this snake graph belongs to exactly one of the two cases described below:
\begin{itemize}\setlength{\leftskip}{-5pt}
    \item [(1-I)] it contains a perfect matching of the $\mathcal {G}[b_{m}, ..., b_2]$-part,
    \item [(1-II)] it contains the upper and the lower edges in the rightmost tile in the tail.
\end{itemize}
It is not possible that any perfect matching does not belong to either (1-I) or (1-II). Let us first explain this.
Suppose a perfect matching $P$ includes the lower edge of the rightmost tile in the tail and does not include the upper edge of that tile. Then, a subset of vertices of the tail obtained by removing the upper rightmost vertex from the set of vertices of the tail will be covered by a subset of $P$, but this is contradictory since the cardinality of the vertex set is an odd number. The same goes for the case that $P$ includes the upper edge of the rightmost tile of the tail but does not include the lower edge of that tile.

We will count the number of the perfect matchings belonging to (1-I). 
In this case, the perfect matching refers to combinations of perfect matchings within the red region and the blue region in Figure \ref{fig:case1-c1}.

\begin{figure}[ht]
    \centering
    \includegraphics[scale=0.2]{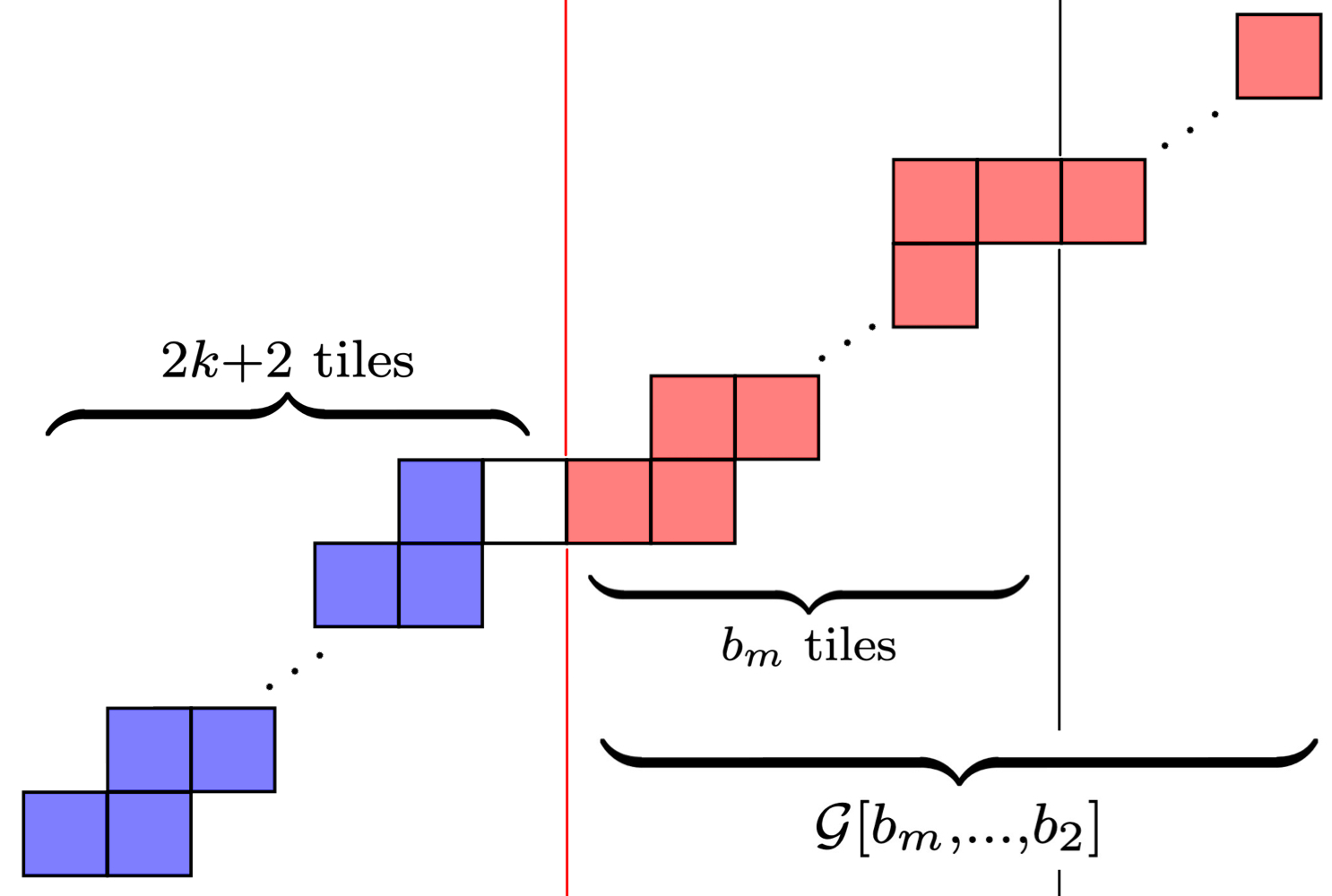}
    \caption{Case (1-I)}
    \label{fig:case1-c1}
\end{figure}
Therefore, the number of such matchings is given by the product of the number of perfect matchings in $\mathcal{G}[b_m,\dots,b_2]$ and the number of perfect matchings in the graph consisting of the first $(2k+1)$ tiles of the tail. Since the latter graph is $\mathcal{G}[2k+2]$, the number of perfect matchings belonging to (1-I) is
\[m(b_m,\dots,b_2)m(2k+2)=(2k+2)m(b_2,\dots,b_m).\]
Next, we will count the number of the perfect matchings belonging to (1-II). In this case, edges of the tail in the perfect matching is uniquely determined. Therefore, this number coincides with the number of perfect matchings of the graph removed the leftmost tile from $\mathcal{G}[b_m,\dots,b_2]$ (see Figure \ref{fig:case1-c2}). We denote this graph by $\mathcal{G'}$.
\begin{figure}[ht]
    \centering
    \includegraphics[scale=0.2]{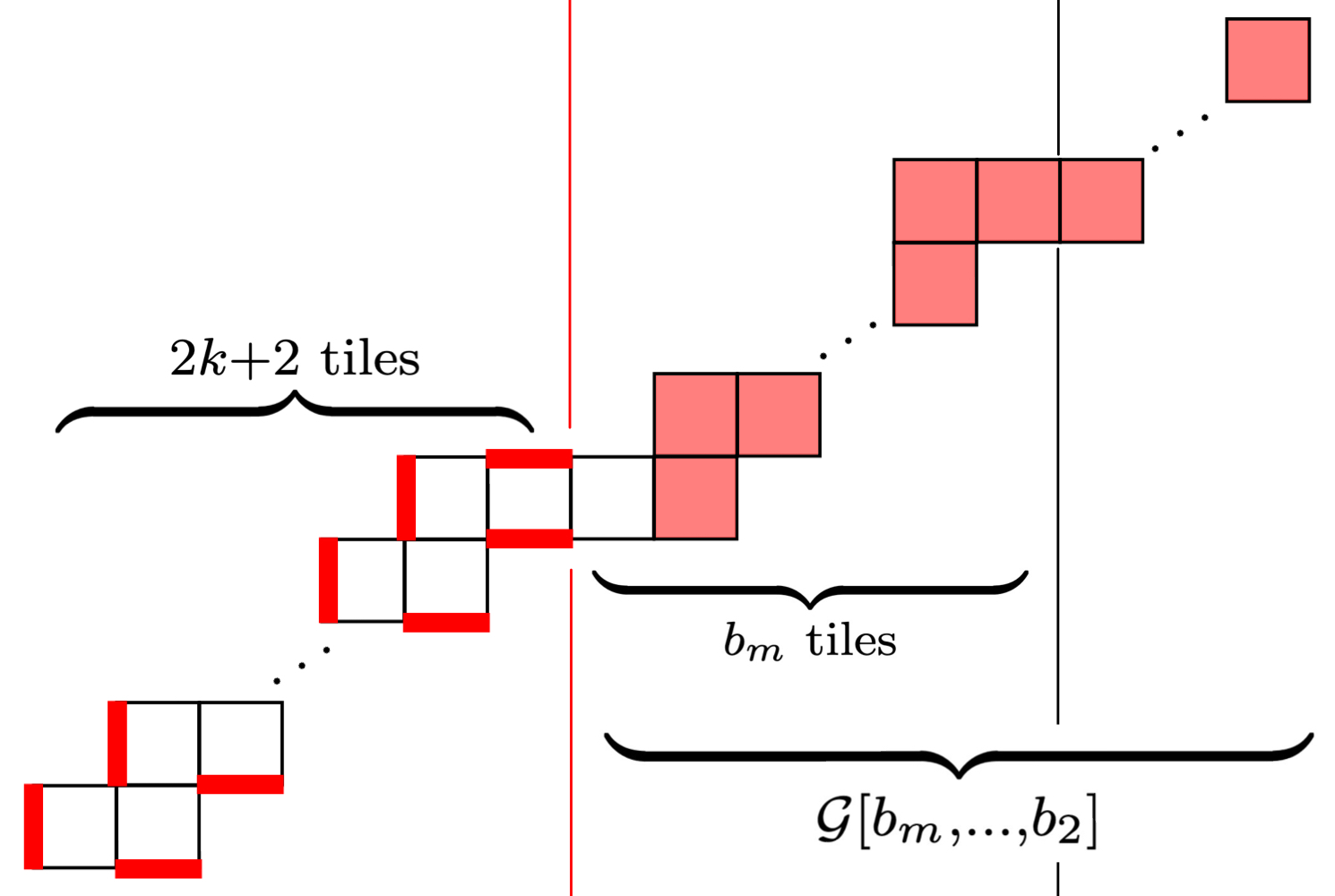}
    \caption{Case (1-II)}
    \label{fig:case1-c2}
\end{figure}

We will calculate $m(\mathcal G')$. Any perfect matching of $\mathcal{G}[b_m,\dots,b_2]$ belongs to exactly one of the two cases described below:
\begin{itemize}\setlength{\leftskip}{5pt}
    \item [(1-II-i)] it contains the left most vertical edge,
    \item [(1-II-ii)] it contains the upper and the lower edges in the leftmost tile.
\end{itemize}
The number of perfect matchings belonging to (1-II-i) coincides with $m(\mathcal G')$. The number of perfect matchings belonging to (1-II-ii) coincides with $m(b_{m-1},\dots,b_2)$. Indeed, edges in a perfect matchings belonging to (1-II-ii) other than $\mathcal{G}[b_{m-1},\dots,m_2]$-part are determined uniquely (see Figure \ref{fig:case1-c2-1}). Therefore, we have
\[m(b_{m},\dots,b_2)=m(\mathcal{G}')+m(b_{m-1},\dots,b_2),\]
hence
\[m(\mathcal{G}')=m(b_{m},\dots,b_2)-m(b_{m-1},\dots,b_2)=m(b_{2},\dots,b_m)-m(b_{2},\dots,b_{m-1}).\]
\begin{figure}[ht]
    \centering
    \includegraphics[scale=0.15]{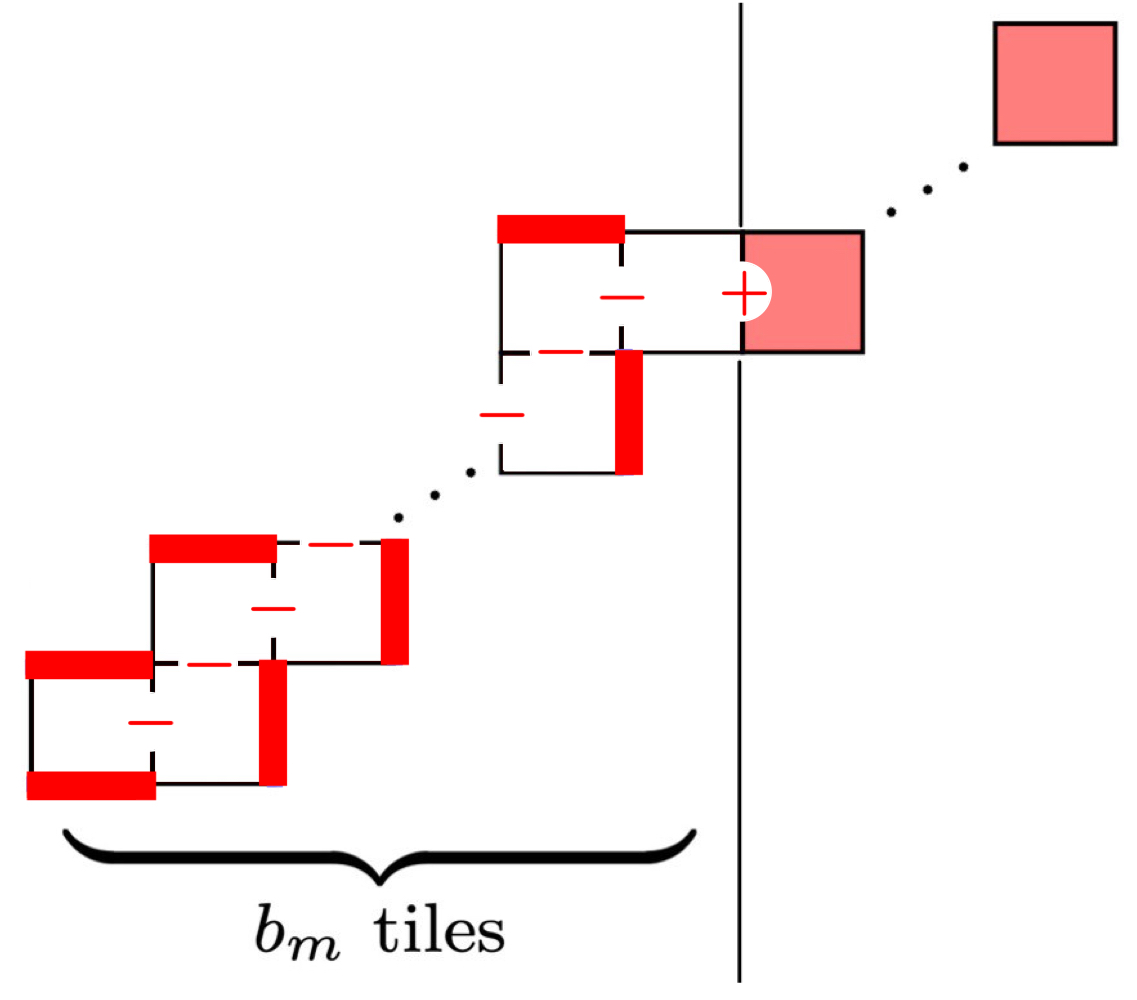}
    \caption{Case (1-II-ii)}
    \label{fig:case1-c2-1}
\end{figure} 
Therefore, combining results on (1-I) and (1-II), we have
\[m(2k+2,1,b_{m}-1,b_{m-1},\dots,b_{2})=(2k+3)m(b_2,\dots,b_{m})-m(b_2,\dots,b_{m-1}),\]
as desired. We can prove \eqref{eq:1-2} in the same way.

Next, we will prove the case (2). There exists $p\in \mathbb Z_{\geq 1}$ such that $r=\dfrac{p}{p+1}$ and $t=\dfrac{p+1}{p+2}$. We will prove the statement by using induction on $p$. When $p=1$, $M_{\frac{1}{2}}$ satisfies the statement by the argument in the case (0). We assume that $M_{\frac{p}{p+1}}$ satisfies the statement, and prove that $M_{\frac{p+1}{p+2}}$ also satisfies the statement. We set
\[M_{\frac{p}{p+1}}=\begin{bmatrix}
    -m(a_1,\dots,a_{\ell-1})& m(a_1,\dots,a_{\ell})\\ -m(a_2,\dots,a_{\ell-1})& m(a_2,\dots,a_{\ell})\end{bmatrix}.\]
Since $M_{\frac{1}{1}}=\tilde{Z}_0=\begin{bmatrix}
    -(k+1)&k+2\\-1&1
\end{bmatrix}$ and $M_{\frac{p+1}{p+2}}=M^{-1}_{\frac{p}{p+1}}TM_{\frac{1}{1}}^{-1}$, by the definition of the $k$-MM triple, we have
\[M_{\frac{1}{p+1}}=\begin{bmatrix}
        \hspace{-0.6cm}-((3k+2)m(a_1,\dots,a_{\ell}) & \hspace{-1cm}(3k^2+8k+5)[a_1,\dots,a_{\ell}]\\
        \hspace{2cm}+m(a_2,\dots,a_{\ell}))&  \hspace{2cm} +(k+2)m(a_2,\dots,a_{\ell})\\[0.5em] \hspace{-0.6cm}-((3k+2)m(a_1,\dots,a_{\ell-1}) & \hspace{-1cm}(3k^2+8k+5)[a_1,\dots,a_{\ell-1}]\\
        \hspace{2cm}+m(a_2,\dots,a_{\ell-1}))&  \hspace{2cm} +(k+2)m(a_2,\dots,a_{\ell-1})
    \end{bmatrix}. \]
On the other hand, since $F^+(k,(p+1)/(p+2))=[a_\ell,\dots,a_1,3k+2,k+2]$ by Proposition \ref{prop:presnake-relation}, it suffices to show the following four equalities: 
\begin{align}
    m(a_{\ell},\dots,a_{1},3k+2)&=(3k+2)m(a_1,\dots,a_{\ell})+m(a_2,\dots,a_{\ell}),\label{eq:2-1}\\
    m(a_{\ell},\dots,a_{1},3k+2,k+2)&=(3k^2+8k+5)m(a_1,\dots,a_{\ell})+(k+2)m(a_2,\dots,a_{\ell}),\label{eq:2-2}\\
    m(a_{\ell-1},\dots,a_{1},3k+2)&=(3k+2)m(a_1,\dots,a_{\ell-1})+m(a_2,\dots,a_{\ell-1}),\label{eq:2-3}\\
     m(a_{\ell-1},\dots,a_{1},3k+2,k+2)&=(3k^2+8k+5)m(a_1,\dots,a_{\ell-1})+(k+2)m(a_2,\dots,a_{\ell-1}).\label{eq:2-4}
\end{align}
Since we can prove \eqref{eq:2-3} (resp. \eqref{eq:2-4}) in the same way as \eqref{eq:2-1} (resp. \eqref{eq:2-2}), we only prove \eqref{eq:2-1} and \eqref{eq:2-2}. First, we will prove \eqref{eq:2-1}. The Figure \ref{fig:case2} is the snake graph $\mathcal G[a_{\ell},\dots,a_{1},3k+2]$ in the case that $a_\ell+\cdots+a_2$ is even, and $a_1$ and $3k+2$ are odd. Since we can also apply the same argument to other cases, we only prove the above case. The diagram consisting of the last $3k+2$ tiles is called the \emph{tail}. The graph obtained by removing the tail from $\mathcal {G}[a_{\ell},\dots,a_{1},3k+2]$ is congruent to $\mathcal {G}[a_{\ell}, ..., a_1]$. The  middle ``$\cdots$" part and the right ``$\cdots$" part form staircases because they consist of the same signs in succession.
\begin{figure}
    \centering
    \includegraphics[scale=0.2]{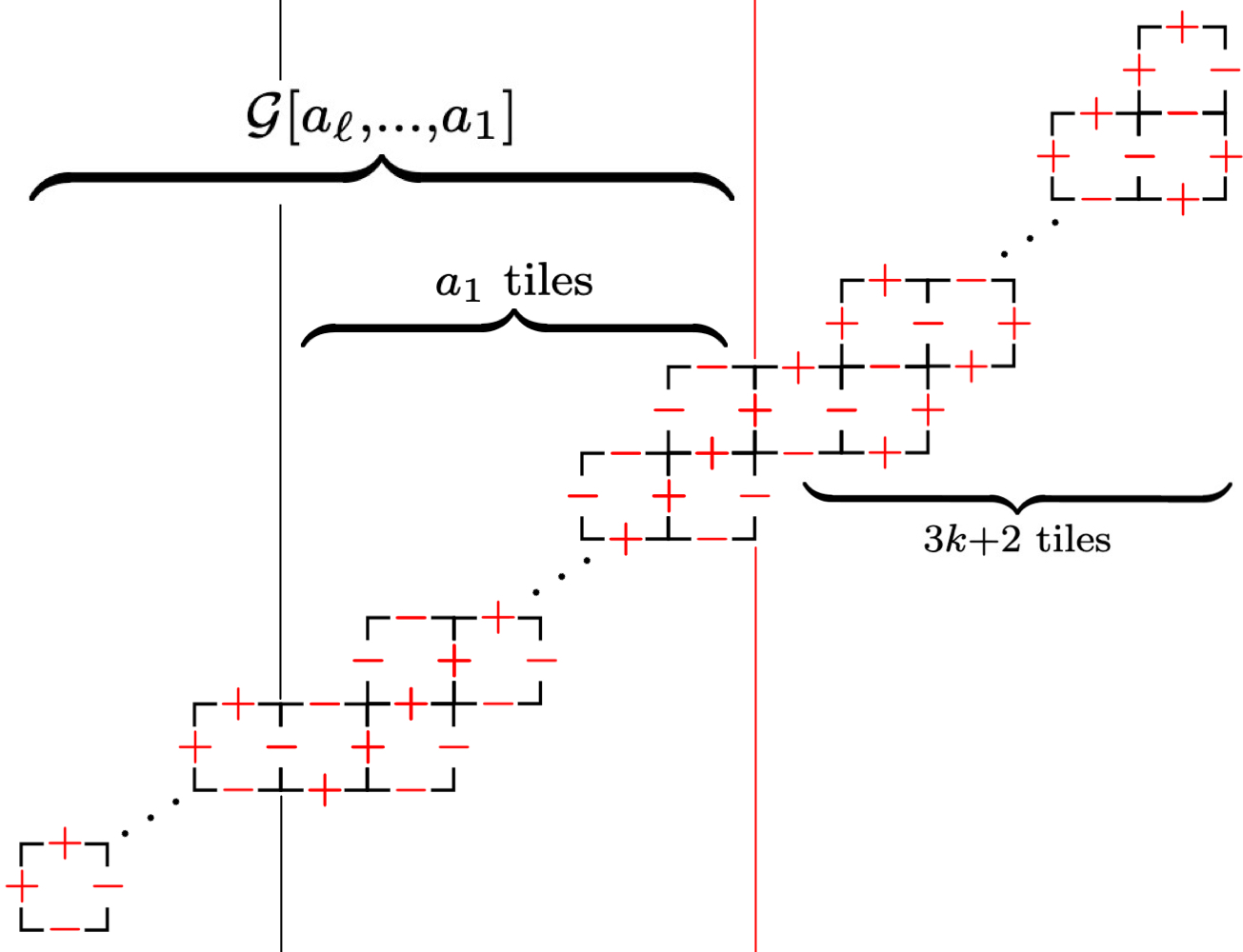}
    \caption{Snake graph $\mathcal G[a_\ell,\dots,a_1,3k+2]$}
    \label{fig:case2}
\end{figure}

We will count the number of perfect matchings of $\mathcal G[a_\ell,\dots,a_1,3k+2]$. Any perfect matching of this snake graph belongs to exactly one of the two cases described below:
\begin{itemize}\setlength{\leftskip}{5pt}
    \item [(2-a-I)] it contains a perfect matching of the $\mathcal {G}[a_{\ell}, ..., a_1]$-part,
    \item [(2-a-II)] it contains the upper and the lower edges in the leftmost tile in the tail.
\end{itemize}

We will count the number of the perfect matchings belonging to (2-a-I). In the same way as the case (1-I), we can see that this number coincides with the product of the number of perfect matchings in $\mathcal{G}[a_\ell,\dots,a_1]$ and the number of perfect matchings in the graph consisting of the last $(3k+1)$ tiles of the tail. Since the latter graph is $\mathcal{G}[3k+2]$, the number of perfect matchings belonging to (2-a-I) is
\[m(a_\ell,\dots,a_1)m(3k+2)=(3k+2)m(a_1,\dots,a_\ell).\]
We will count the number of the perfect matchings belonging to (2-a-II). If a perfect matching contains the upper and the lower edges in the leftmost tile in the tail, then edges of the tail and the rightmost $a_1$ tiles of the $\mathcal{G}[a_\ell,\dots,a_1]$-part in the perfect matching are uniquely determined. Therefore, this number coincides with the number of perfect matchings of $\mathcal{G}[a_\ell,\dots,a_2]=\mathcal{G}[a_2,\dots,a_\ell]$ (see Figure \ref{fig:case2-c2}).
Therefore, combining results on (2-a-I) and (2-a-II), we have
\[m(a_{\ell},\dots,a_{1},3k+2)=(3k+2)m(a_1,\dots,a_{\ell})+m(a_2,\dots,a_\ell),\]
\begin{figure}[ht]
    \centering
    \includegraphics[scale=0.2]{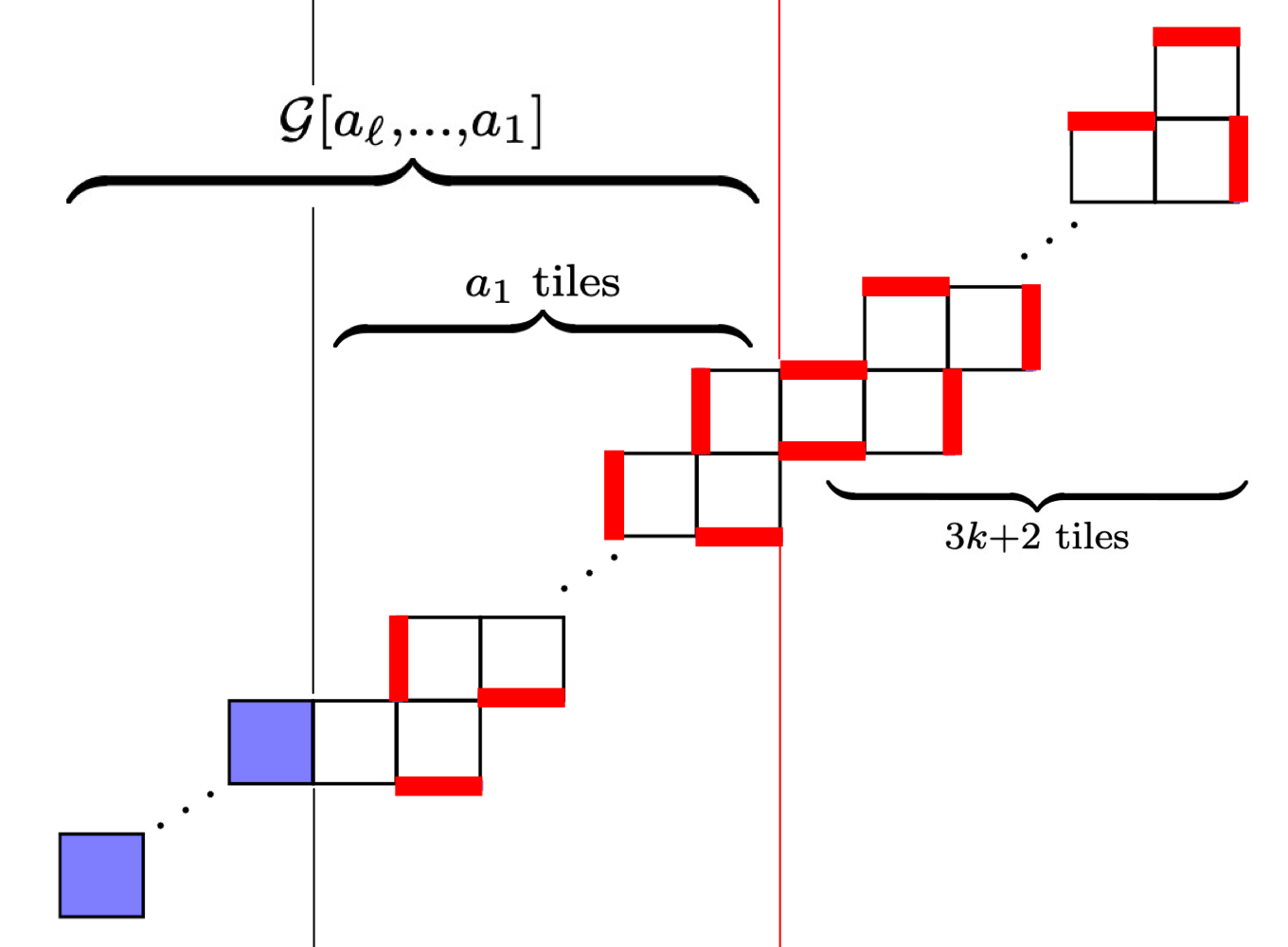}
    \caption{Case (2-a-II)}
    \label{fig:case2-c2}
\end{figure}
as desired. Next, we will prove \eqref{eq:2-2}. The snake graph $\mathcal G[a_{\ell},\dots,a_{1},3k+2,k+2]$ is given as in Figure \ref{fig:case22}. It is the figure in the case that $a_\ell+\cdots+a_2$ is even, and $a_1$ and $3k+2$ are odd. In the other cases, the shape of the snake graph is changed, but we can also apply the same argument. Here, we only prove the statement in the case that $a_\ell+\cdots+a_2$ is even, and $a_1$ and $3k+2$ are odd. The diagram consisting of $3k+2$ tiles between the second line and third line is called the \emph{first tail}, and the diagram consisting of last $k+2$ tiles is called the \emph{second tail} in Figure \ref{fig:case22}. The graph obtained by removing the first and the second tails from $\mathcal {G}[a_{\ell},\dots,a_{1},3k+2,k+2]$ is congruent to $\mathcal {G}[a_{\ell}, ..., a_1]$. The ``$\cdots$" part except for leftmost one form a staircase because they consist of the same signs in succession.
\begin{figure}
    \centering
    \includegraphics[scale=0.25]{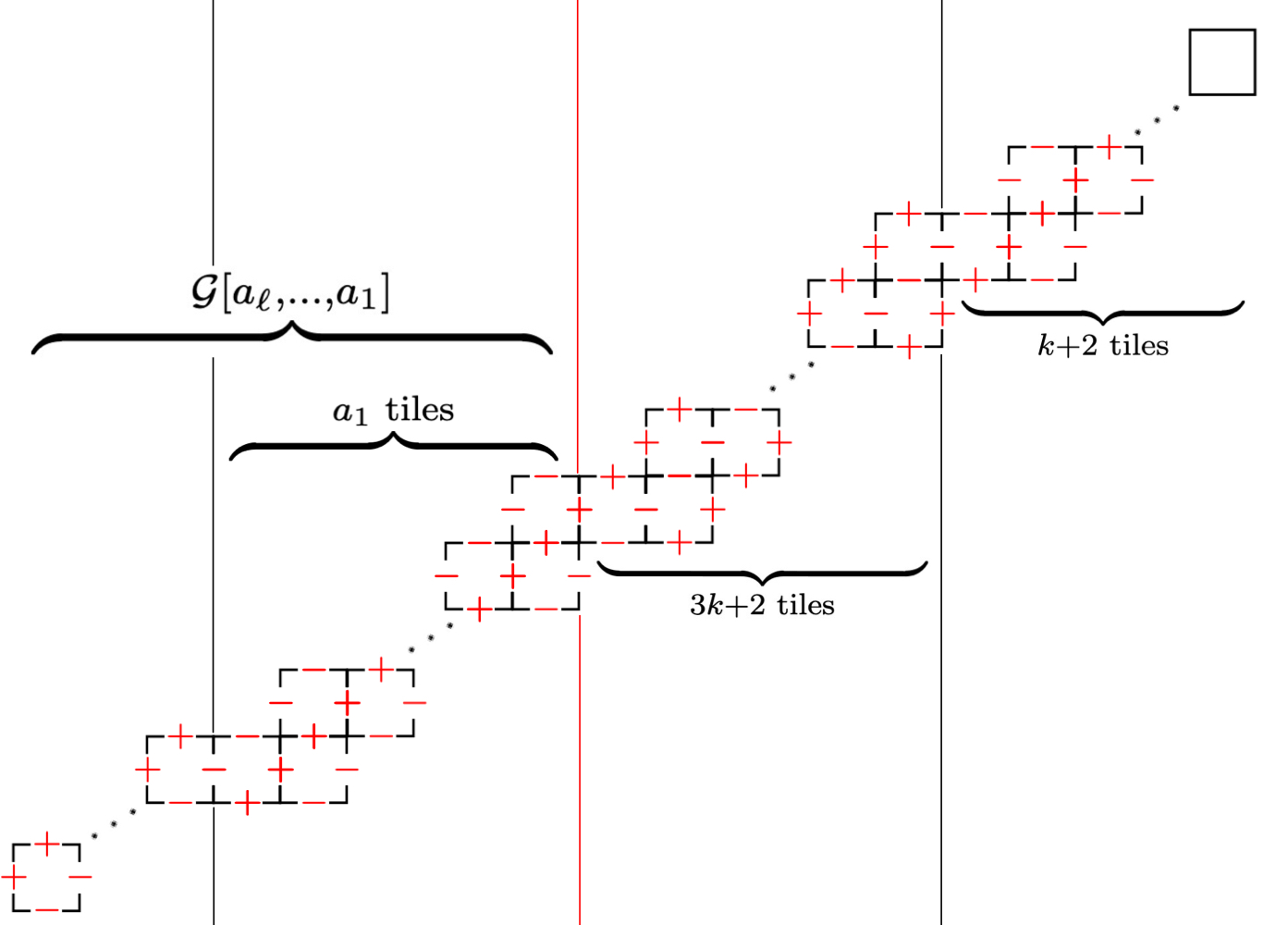}
    \caption{Snake graph $\mathcal{G}[a_\ell,\dots,a_1,3k+2,k+2]$}
    \label{fig:case22}
\end{figure}

We will count the number of perfect matchings of $\mathcal G[a_\ell,\dots,a_1,3k+2,k+2]$. Any perfect matching of this snake graph belongs to exactly one of the two cases described below:
\begin{itemize}\setlength{\leftskip}{5pt}
    \item [(2-b-I)] it contains a perfect matching of the $\mathcal {G}[a_{\ell}, ..., a_1]$-part,
    \item [(2-b-II)] it contains the upper and the lower edges in the leftmost tile in the second tail.
\end{itemize}

We will count the number of the perfect matchings belonging to (2-b-I). In the same way as the case (1-I), we can see that this number coincides with the product of the number of perfect matchings in $\mathcal{G}[a_\ell,\dots,a_1]$ and the number of perfect matchings in the graph removing the leftmost tile from the union of the first and the second tail. Since the latter graph is $m(k+2,3k+2)=3k^2+8k+5$ (by using Theorem \ref{thm:snakegraph-continuedfraction}), the number of perfect matchings belonging to (2-b-I) is
\[m(a_\ell,\dots,a_1)m(k+2,3k+2)=(3k^2+8k+5)m(a_1,\dots,a_\ell).\]
We will count the number of the perfect matchings belonging to (2-b-II). If a perfect matching contains the upper and the lower edges in the leftmost tile in the first tail, then edges of the first tail and the rightmost $a_1$ tiles of the $\mathcal{G}[a_\ell,\dots,a_1]$-part in the perfect matching is uniquely determined. Therefore, this number coincides with the product of the number of perfect matchings of $\mathcal{G}[a_\ell,\dots,a_2]$ and the number of perfect matchings of graph consisting of the last $k+1$ tiles  (see Figure \ref{fig:case22-c2}). 
\begin{figure}[ht]
    \centering
    \includegraphics[scale=0.25]{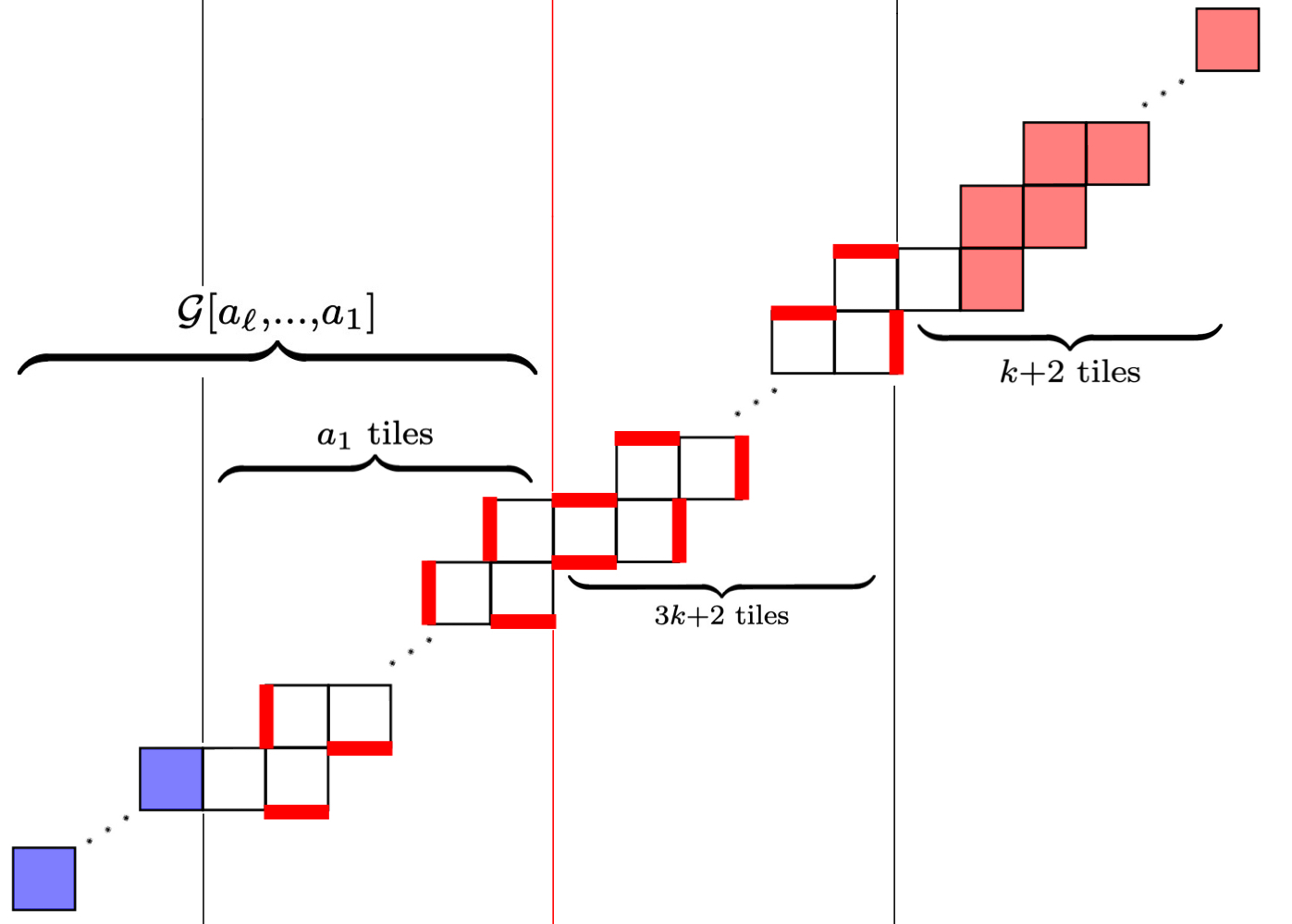}
    \caption{Case (2-b-II)}
    \label{fig:case22-c2}
\end{figure}

The latter number is $m(k+2)=k+2$, and therefore the number of perfect matchings belonging to (2-b-II) is
\[m(a_\ell,\dots,a_2)m(k+2)=(k+2)m(a_2,\dots,a_\ell).\]
Combining results on (2-b-I) and (2-b-II), we have
\[m(a_{\ell},\dots,a_{1},3k+2,k+2)=(3k^2+8k+5)m(a_1,\dots,a_{\ell})+(k+2)m(a_2,\dots,a_\ell),\]
as desired. 

Finally, we will prove the case (3). By the results of (1) and (2), it suffices to show that $M_t$ satisfies the statement under the assumption that $M_r$ and $M_s$ satisfy the statement. We set
\[M_{r}=\begin{bmatrix}
    -m(a_1,\dots,a_{\ell-1})& m(a_1,\dots,a_{\ell})\\ -m(a_2,\dots,a_{\ell-1})& m(a_2,\dots,a_{\ell})\end{bmatrix}, M_s=\begin{bmatrix}
    -m(b_1,\dots,b_{m-1})& m(b_1,\dots,b_{m})\\ -m(b_2,\dots,b_{m-1})& m(b_2,\dots,b_{m})\end{bmatrix}.\]
Since $M_t=M_r^{-1}TM_s$, where $T=\begin{bmatrix}
    -1&0\\3k+3& -1
\end{bmatrix}$, the $(1,1)$-entry of $M_t$ is
\[-(((3k+3)m(a_1,\dots,a_\ell)+m(a_2,\dots,a_\ell))m(b_2,\dots,b_m)-m(a_1,\dots,a_\ell)m(b_2,\dots,b_{m-1})),\]
the $(1,2)$-entry is
\[((3k+3)m(a_1,\dots,a_\ell)+m(a_2,\dots,a_\ell))m(b_1,\dots,b_m)-m(a_1,\dots,a_\ell)m(b_1,\dots,b_{m-1}),\]
the $(2,1)$-entry is
\[-(((3k+3)m(a_1,\dots,a_{\ell-1})+m(a_2,\dots,a_{\ell-1}))m(b_2,\dots,b_m)-m(a_1,\dots,a_{\ell-1})m(b_2,\dots,b_{m-1})),\]
the $(2,2)$-entry is
\[((3k+3)m(a_1,\dots,a_{\ell-1})+m(a_2,\dots,a_{\ell-1}))m(b_1,\dots,b_m)-m(a_1,\dots,a_{\ell-1})m(b_1,\dots,b_{m-1}).\]
On the other hand, since $F^+(k,t)=[a_\ell,\dots,a_1,3k+2,1,b_{m}-1,b_{m-1},\dots,b_1]$ by Proposition \ref{prop:presnake-relation}, it suffices to show the following equality:
\begin{align}
    &m(a_x,\dots,a_1,3k+2,1,b_{m}-1,b_{m-1},\dots,b_y)\label{eq:3-1}\\
    &=((3k+3)m(a_1,\dots,a_x)+m(a_2,\dots,a_x))m(b_y,\dots,b_m)-m(a_1,\dots,a_x)m(b_y,\dots,b_{m-1})\nonumber,
\end{align}
where $x\in\{\ell-1,\ell\}$ and $y\in\{1,2\}$. Since the following argument can be applied to any pair of $x$ and $y$, we will only prove \eqref{eq:3-1} in the case that $x = \ell$ and $y = 2$. The snake graph $\mathcal{G}[a_\ell,\dots, a_1,3k+2,1,b_m-1,b_{m-1}\dots,b_2]$ is given as in Figure \ref{fig:case3}. The diagram consisting of $3k+3$ tiles between the second and third vertical lines in Figure \ref{fig:case3} is called the \emph{joint}. We can divide $\mathcal{G}[a_\ell,\dots, a_1,3k+2,1,b_m-1,b_{m-1}\dots,b_2]$ into three parts, the $\mathcal{G}[a_\ell,\dots, a_1]$-part, the joint, and the $\mathcal{G}[b_m,\dots, b_2]$-part. The ``$\cdots$" part except for the leftmost one and the rightmost one form a staircase because they consist of the same signs in succession.
\begin{figure}[ht]
    \centering
    \includegraphics[scale=0.25]{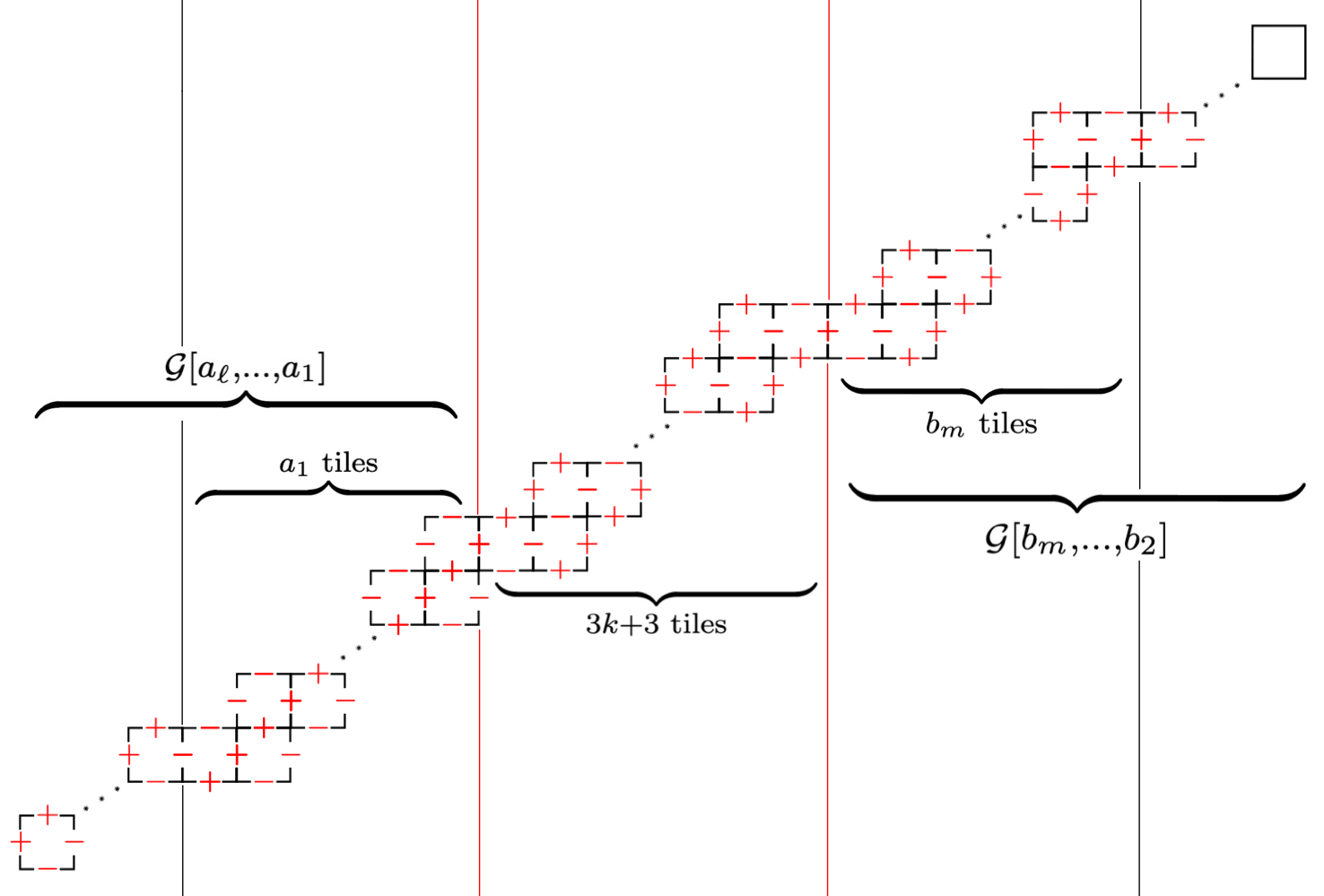}
    \caption{snake graph $\mathcal{G}[a_\ell,\dots, a_1,3k+2,1,b_m-1,b_{m-1}\dots,b_2]$}
    \label{fig:case3}
\end{figure}
We will count the number of perfect matchings of $\mathcal G[a_\ell,\dots,a_1,3k+2,1,b_m-1,b_{m-1},\dots,b_2]$. Any perfect matching of this snake graph belongs to exactly one of the three cases described below:
\begin{itemize}\setlength{\leftskip}{5pt}
    \item [(3-I)] it contains perfect matchings of the $\mathcal {G}[a_{\ell}, ..., a_1]$-part and $\mathcal {G}[b_{m}, ..., b_2]$-part,
    \item [(3-II)] it contains an edge of the leftmost tile in the joint such that its intersection with the $\mathcal {G}[a_{\ell}, ..., a_1]$-part is only one point,
    \item [(3-III)] it contains an edge of the rightmost tile in the joint such that its intersection with the $\mathcal {G}[b_{m}, ..., b_2]$-part is only one point.
\end{itemize}
We will count the number of the perfect matchings belonging to (3-I). 
In this case, the perfect matching refers to combinations of perfect matchings within the three color-coded regions in Figure \ref{fig:case3-c1}.
\begin{figure}[ht]
    \centering
    \includegraphics[scale=0.25]{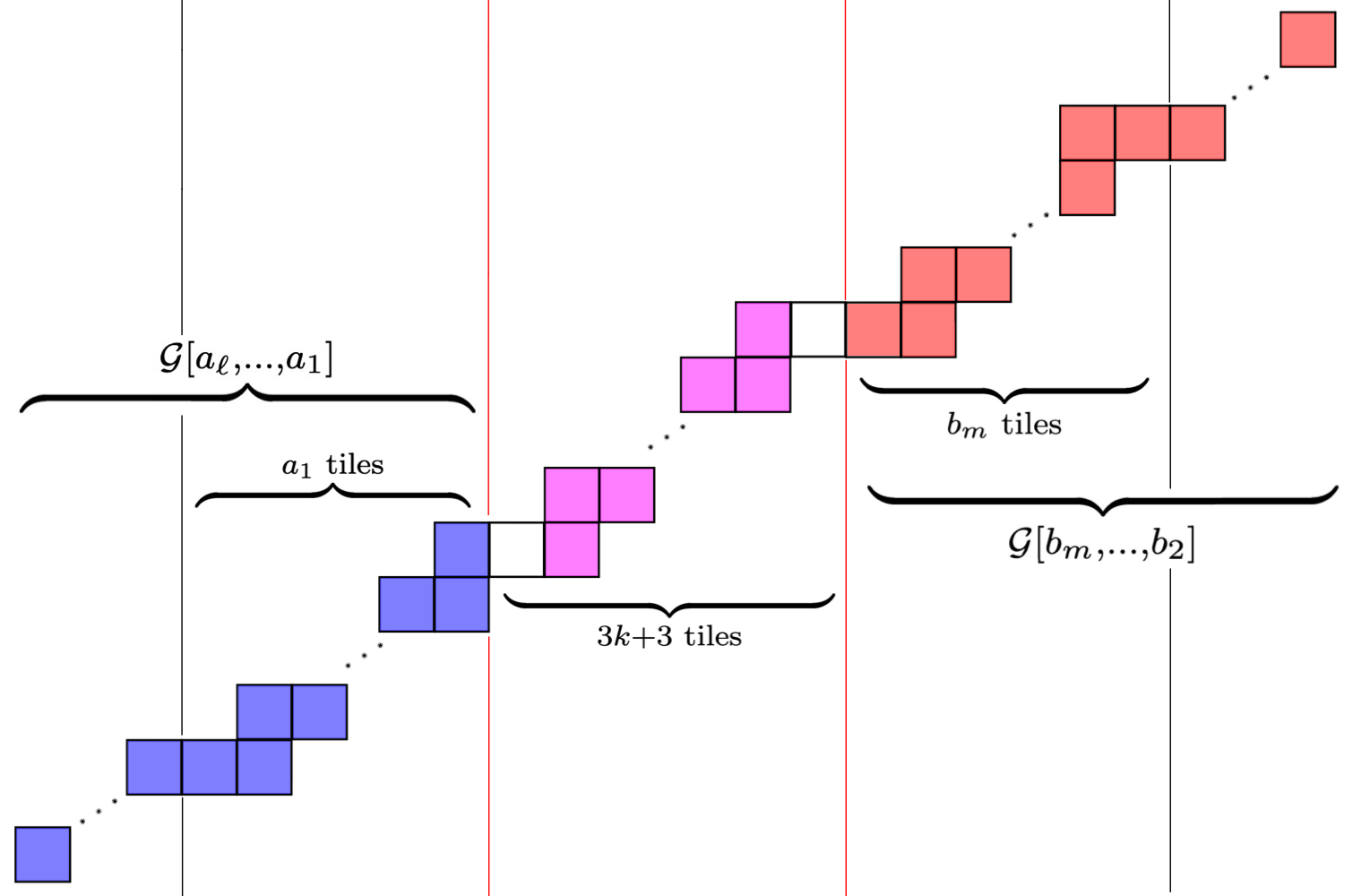}
    \caption{Case (3-I)}
    \label{fig:case3-c1}
\end{figure}
Therefore, the number of such matchings is given by the product of the number of perfect matchings in $\mathcal{G}[a_\ell,\dots,a_1]$, that of $\mathcal{G}[b_m,\dots,b_2]$, and that of the graph obtained from the joint by removing the leftmost and the rightmost tiles. Since the last graph is $\mathcal{G}[3k+2]$, the number of perfect matchings belonging to (3-I) is
\[m(a_\ell,\dots,a_1)m(3k+2)m(b_m,\dots,b_2)=(3k+2)m(a_1,\dots,a_\ell)m(b_2,\dots,b_m).\]
We will count the number of the perfect matchings belonging to (3-II). If a perfect matching contains the upper and the lower edges in the leftmost tile in the joint, then edges of the joint and the rightmost $a_1$ tiles of the $\mathcal{G}[a_\ell,\dots,a_1]$-part in the perfect matching is uniquely determined. Therefore, this number coincides with the product of the number of perfect matchings of $\mathcal{G}[a_\ell,\dots,a_2]$, and that of $\mathcal{G}[b_m,\dots,b_2]$ (see Figure \ref{fig:case3-c2}). 
\begin{figure}[ht]
    \centering
    \includegraphics[scale=0.25]{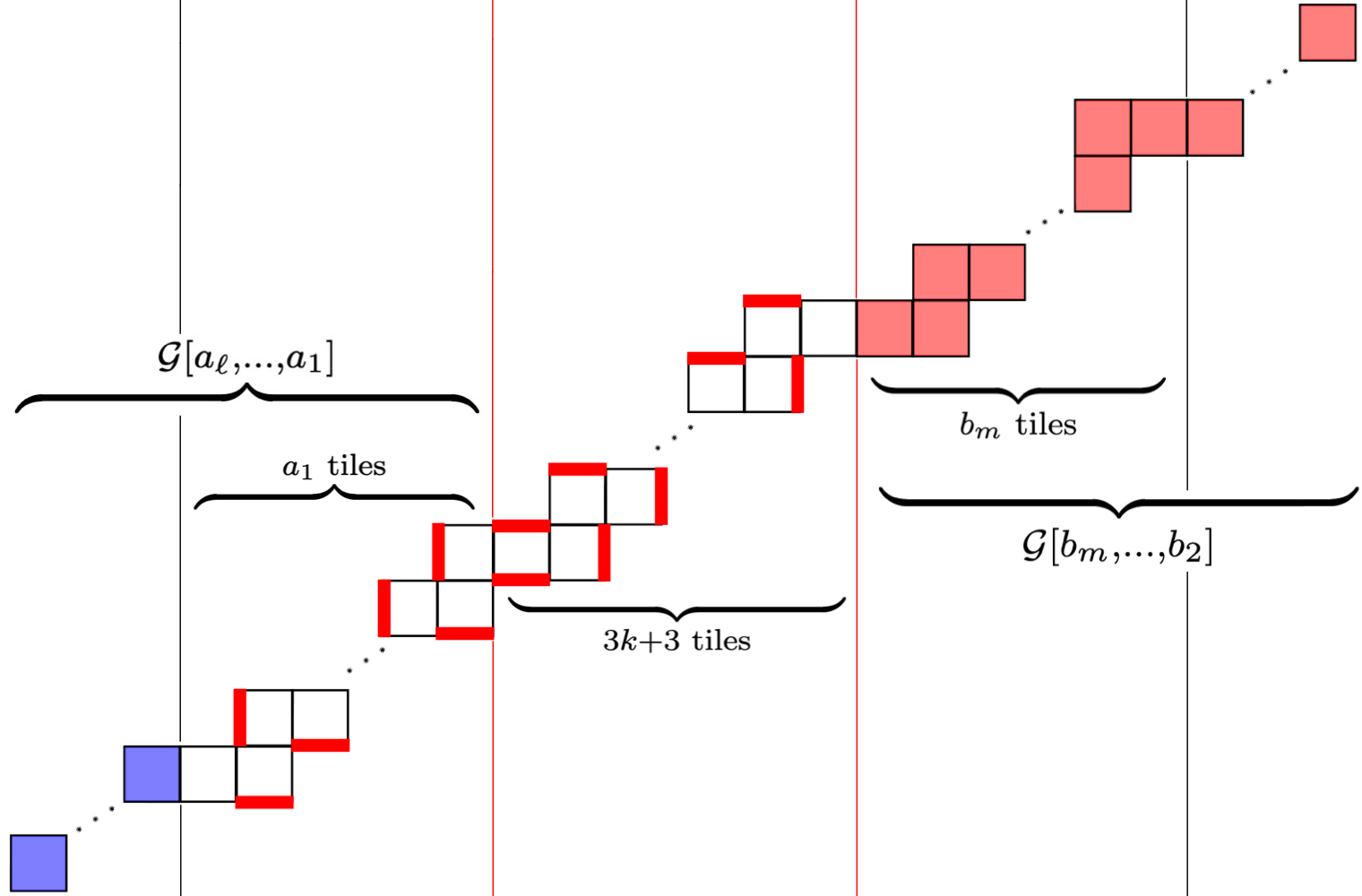}
    \caption{Case (3-II)}
    \label{fig:case3-c2}
\end{figure}
Therefore, the number of perfect matchings belonging to (3-II) is
\[m(a_\ell,\dots,a_2)m(b_m,\dots,b_2)=m(a_2,\dots,a_\ell)m(b_2,\dots,b_m).\]

Next, we will count the number of the perfect mathings belonging to (3-III). In this case, edges of the joint in a perfect matching is uniquely determined. Therefore, this number coincides with the product of the number of perfect matchings of $\mathcal{G}[a_\ell,\dots,a_1]$ and that of the graph removed the leftmost tile from $\mathcal{G}[b_m,\dots,b_2]$ (see Figure \ref{fig:case3-c3}).
\begin{figure}[ht]
    \centering
    \includegraphics[scale=0.25]{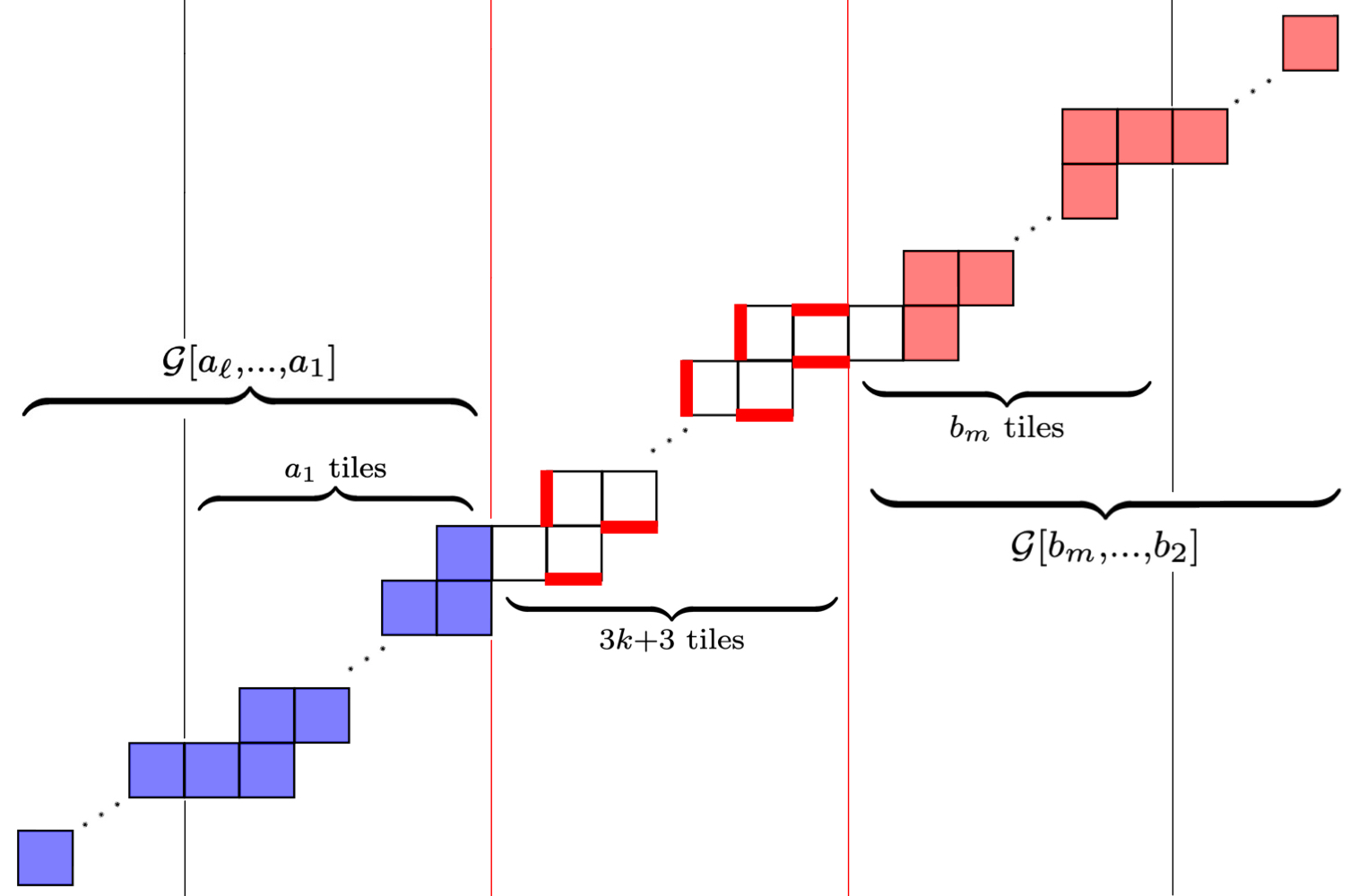}
    \caption{Case (3-III)}
    \label{fig:case3-c3}
\end{figure}
By the argument in the case (1), the latter number is $m(b_{2},\dots,b_m)-m(b_{2},\dots,b_{m-1})$. Therefore, the number of perfect matchings belonging to (3-III) is
\[m(a_1,\dots,a_\ell)(m(b_2,\dots,b_m)-m(b_2,\dots,b_{m-1})).\]
Combining the results on (3-I), (3-II) and (3-III), we have
\begin{align*}
   &m(a_\ell,\dots,a_1,3k+2,1,b_{m}-1,b_{m-1},\dots,b_2)\\
   &=((3k+3)m(a_1,\dots,a_\ell)+m(a_2,\dots,a_\ell))m(b_2,\dots,b_m)-m(a_1,\dots,a_\ell)m(b_2,\dots,b_{m-1}), 
\end{align*}
as desired. This finishes the proof.
\end{proof}

From Theorem \ref{thm:M_t-C_t-combinatorics} (1), we have the following corollary:

\begin{corollary}\label{cor:trdet}
Let $F^+(k,t)=[a_1,\dots,a_\ell]$. The following equalities hold:
\begin{itemize}\setlength{\leftskip}{-15pt}
    \item [(1)] $m(a_1,\dots,a_{\ell-1})-m(a_2,\dots,a_{\ell})=k,$
    \item [(2)] $m(a_1,\dots,a_{\ell-1})m(a_2,\dots,a_{\ell})-m(a_2,\dots,a_{\ell-1})m(a_1,\dots,a_{\ell})=-1.$
\end{itemize}
\end{corollary}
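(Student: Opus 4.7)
The plan is to read off both equalities directly from the matrix identity in Theorem~\ref{thm:M_t-C_t-combinatorics}(1), using the two defining properties of a $k$-MM matrix, namely that it lies in $SL(2,\mathbb Z)$ and has trace $-k$.

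First, I would recall that $M_t(k,0)$ is by construction a $k$-MM matrix, so in particular $\det M_t(k,0)=1$ and $\mathrm{tr}\,M_t(k,0)=-k$. Then I would substitute the explicit form
\[
M_t(k,0)=\begin{bmatrix}
-m(a_1,\dots,a_{\ell-1}) & m(a_1,\dots,a_{\ell})\\
-m(a_2,\dots,a_{\ell-1}) & m(a_2,\dots,a_{\ell})
\end{bmatrix}
\]
from Theorem~\ref{thm:M_t-C_t-combinatorics}(1). Computing the trace gives
\[
-m(a_1,\dots,a_{\ell-1})+m(a_2,\dots,a_{\ell})=-k,
\]
which rearranges to statement~(1). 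Computing the determinant gives
\[
-m(a_1,\dots,a_{\ell-1})\,m(a_2,\dots,a_{\ell})+m(a_1,\dots,a_{\ell})\,m(a_2,\dots,a_{\ell-1})=1,
\]
which is exactly statement~(2) after moving signs.

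Since this is simply a matter of equating the trace and determinant of the matrix produced by Theorem~\ref{thm:M_t-C_t-combinatorics}(1), there is really no obstacle: the entire combinatorial content has already been packaged into that theorem. The only thing to note is that the corollary is stated uniformly in $t\in(0,1]$, whereas the convention $F^+(k,0/1)=1$ corresponds to the degenerate snake graph $\mathcal G[\,]$; one should briefly check that Theorem~\ref{thm:M_t-C_t-combinatorics}(1) covers the boundary cases (for instance $t=1/1$, where $\mathcal G[a_2,\dots,a_{\ell-1}]=\mathcal G[\,]$ has $m=1$), so that the identities hold for every irreducible $t\in(0,1]$.
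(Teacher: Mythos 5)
Your proposal is correct and follows exactly the paper's own argument: the paper derives both identities from Theorem \ref{thm:M_t-C_t-combinatorics}(1) together with $\mathrm{tr}(M_t(k,0))=-k$ and $\det(M_t(k,0))=1$. Your extra remark about the boundary case is a harmless addition but does not change the route.
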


\begin{proof}
It follows from Theorem \ref{thm:M_t-C_t-combinatorics} (1), $\mathrm{tr}(M_t(k,0))=-k$, and $\det(M_t(k,0))=1$.    
\end{proof}

\begin{remark}
Corollary \ref{cor:trdet} (2) is already proved by \cite{cs18} in the more general situation, the case where $[a_1,\dots,a_\ell]$ is simply a continuous fraction. More precisely, this statement is obtained from \cite{cs18}*{Theorem 5.2}(b) by substituting $i=2$ and $j=n-3$. Moreover, Corollary \ref{cor:trdet} (1) can be proved in the case where $[a_1,\dots,a_\ell]$ is a semi-palindrome continuous fraction. When $k=1$, we can see the proof in \cite{bansen}*{Lemma 5}.
\end{remark}

Next, we will prove Theorem \ref{thm:M_t-C_t-combinatorics} (2).

\begin{proof}[Proof of Theorem \ref{thm:M_t-C_t-combinatorics} (2)]
In this proof, we abbreviate $C_t(k,-k)$ to $C_t$, and $m(\mathcal{G}[a_1,\dots,a_\ell])$ to $m(a_1,\dots,a_\ell)$. First, we show that it suffices to satisfy the assertion regarding the $(1,1)$-entry and the $(1,2)$-entry of the matrix to prove this theorem. Assuming the $(1,1)$-entry is  $m(a_2,\dots,a_\ell)$ and $(1,2)$-entry is $m(a_1,\dots,a_\ell)$, we prove that the $(2,1)$-entry $c_{21}$ is $(3k+3)m(a_2,\dots,a_{\ell})-m(a_2,\dots,a_{\ell-1})$ and the $(2,2)$-entry $c_{22}$ is $(3k+3)m(a_1,\dots,a_{\ell})-m(a_1,\dots,a_{\ell-1})$. First, we prove the latter assertion. From the condition of the trace of the $k$-GC matrix, we have
\[m(a_2,\dots,a_\ell)+c_{22}=(3+3k)m(a_1,\dots,a_{\ell})-k.\]
Therefore, from Corollary \ref{cor:trdet} (1), we have
\[c_{22}=(3+3k)m(a_1,\dots,a_{\ell})-m(a_1,\dots,a_{\ell-1}),\]
as desired. Next, we prove the former assertion.  From the condition of the determinant of the $k$-GC matrix, we have
\[m(a_2,\dots,a_\ell)((3+3k)m(a_1,\dots,a_{\ell})-m(a_1,\dots,a_{\ell-1}))-c_{21}m(a_1,\dots,a_\ell)=1.\]
Therefore, by Corollary \ref{cor:trdet} (2), we have
\begin{align*}
c_{21}&=\dfrac{(3+3k)m(a_1,\dots,a_{\ell})m(a_2,\dots,a_\ell)-m(a_1,\dots,a_{\ell-1})m(a_2,\dots,a_\ell)-1}{m(a_1,\dots,a_\ell)}\\
&=(3k+3)m(a_2,\dots,a_{\ell})-m(a_2,\dots,a_{\ell-1}),
\end{align*}
as desired.
We will prove that the assertion regarding the $(1,1)$-entry and the $(1,2)$-entry of the matrix is satisfied in the following four cases: (0) $r=\dfrac{0}{1}, s=\dfrac{1}{1}$, (1) $r=\dfrac{0}{1}, s\neq\dfrac{1}{1}$ (2) $r\neq\dfrac{0}{1}, s=\dfrac{1}{1}$, (3) $r\neq \dfrac{0}{1}, s\neq\dfrac{1}{1}$.

We prove the case (0). Now, $t=\dfrac{1}{2}$ holds. By a direct calculation, we have
\[C_{\frac{1}{2}}=\begin{bmatrix} k+2&2k^2+6k+5\\3k^2+9k+5&6k^3+24k^2+31k+13 \end{bmatrix}.\]
Moreover, we have $F^{+}(k,1/2)=[2k+2,k+2]$. Since 
\[[2k+2,k+2]=\dfrac{2k^2+6k+5}{k+2}\]
hold, by Theorem \ref{thm:snakegraph-continuedfraction}, we have \[m(2k+2,k+2)=2k^2+6k+5.\] 

Next, we will prove the case (1). There exists $p\in \mathbb Z_{>1}$ such that $s=\dfrac{1}{p}$ and $t=\dfrac{1}{p+1}$. We will prove the statement for $C_{\frac{1}{p}}$ by using induction on $p$. When $p=2$, $C_{\frac{1}{2}}$ satisfies the statement by the argument in the case (0).  We assume that $C_{\frac{1}{p}}$ satisfies the statement, and prove that $C_{\frac{1}{p+1}}$ also satisfies the statement. We set
$F^+(k,1/p)=[b_{1},\dots,b_m]$ and \[C_{\frac{1}{p}}=\begin{bmatrix}
        m(b_2,\dots,b_{m}) & m(b_1,\dots,b_{m})\\[0.5em]\hspace{-1cm}(3k+3)m(b_2,\dots,b_{m}) & \hspace{-1cm}(3k+3)m(b_1,\dots,b_{m})\\
        \hspace{2cm}-m(b_2,\dots,b_{m-1})&  \hspace{2cm} - m(b_1,\dots,b_{m-1})
    \end{bmatrix}.\]
Since $C_{\frac{0}{1}}=\begin{bmatrix}
    -k&1\\-(3k^2+3k+1)&3k+3
\end{bmatrix}$ and $C_{\frac{1}{p+1}}=C_{\frac{0}{1}}C_{\frac{1}{p}}-S$, where $S=\begin{bmatrix}
    k&0\\3k^2+3k&k
\end{bmatrix}$, we have
\[C_{\frac{1}{p+1}}=\begin{bmatrix}
       \hspace{-5mm}(2k+3)m(b_2,\dots,b_{m})& \hspace{-5mm}(2k+3)m(b_1,\dots,b_{m})\\
       \hspace{1cm}-m(b_2,\dots,b_{m-1})-k & \hspace{1cm}-m(b_1,\dots,b_{m-1}) \\[0.5em]
        \ast & \ast
    \end{bmatrix}. \]
Since $F^+(k,1/(p+1))=[2k+2,1,b_{m}-1,b_{m-1},\dots,b_{1}]$ by Proposition \ref{prop:presnake-relation} (1), it suffices to show the following two equalities: 
\begin{align}
    m(1,b_{m}-1,b_{m-1},\dots,b_{1})&=(2k+3)m(b_2,\dots,b_{m})-m(b_2,\dots,b_{m-1})-k,\label{eq:1-1-2}\\
    m(2k+2,1,b_{m}-1,b_{m-1},\dots,b_{1})&=(2k+3)m(b_1,\dots,b_{m})-m(b_1,\dots,b_{m-1}).\label{eq:1-2-2}
\end{align}
The equality \eqref{eq:1-2-2} coincides with \eqref{eq:1-2}, and it is already proved in the proof of Theorem \ref{thm:M_t-C_t-combinatorics} (1). Next, we will prove \eqref{eq:1-1-2}.  When $p=2$, we have \eqref{eq:1-1-2} by a direct calculation. We assume that $p\geq 3$. The left-hand side of \eqref{eq:1-1-2} equals to $m(b_{1},\dots,b_{m})$, and it is the $(1,2)$-entry of $C_{\frac{1}{p}}$. Applying Proposition \ref{prop:presnake-relation} (1) to $F^+(k,1/p)$ and $F^+(k,1/(p-1))$, we have $F^+(k,1/(p-1))=[b_{m},\dots,b_3+1]=[b_{m},\dots,b_3,1]$. Moreover, since $b_2=1$ by Proposition \ref{prop:presnake-relation} (1), $F^+(k,1/(p-1))$ coincides with $[b_{m},\dots,b_3,b_2]$. Therefore, we have
 \[C_{\frac{1}{p-1}}=\begin{bmatrix}
        m(b_{m-1},\dots,b_{2}) & m(b_{m},\dots,b_{2})\\[0.5em]\hspace{-1cm}(3k+3)m(b_{m-1},\dots,b_{2}) & \hspace{-1cm}(3k+3)m(b_{m},\dots,b_{2})\\
        \hspace{2cm}-m(b_{m-1},\dots,b_{3})&  \hspace{2cm} - m(b_m,\dots,b_{3})
    \end{bmatrix}.\]
Comparing $(1,2)$-entries of 
\[C_{\frac{1}{p}}=C_{\frac{0}{1}}C_{\frac{1}{p-1}}-S,\]
we have
\begin{align*}
m(b_1,\dots,b_m)&=(2k+3)m(b_m,\dots,b_2)- m(b_m,\dots,b_{3})\\
&=(2k+3)m(b_2,\dots,b_m)- m(b_2,\dots,b_{m-1})-k,
\end{align*}
as desired. Note that in the last equality, we use a relation derived from the application of Corollary \ref{cor:trdet} (1) to $F^+(k,1/(p-1))=[b_m,\dots,b_2]$ (we note that $a_1=b_m,a_2=b_{m-1},\dots,a_\ell=b_2$ in Corollary \ref{cor:trdet} (1)).

Next, we will prove the case (2). There exists $p\in \mathbb Z_{\geq 1}$ such that $s=\dfrac{p}{p+1}$ and $t=\dfrac{p+1}{p+2}$. We will prove the statement by using induction on $p$. When $p=1$, $C_{\frac{1}{2}}$ satisfies the statement by the argument in the case (0).  We assume that $C_{\frac{p}{p+1}}$ satisfies the statement, and prove that $C_{\frac{p+1}{p+2}}$ also satisfies the statement. We set
\[C_{\frac{p}{p+1}}=\begin{bmatrix}
        m(a_2,\dots,a_{\ell}) & m(a_1,\dots,a_{\ell})\\[0.5em]\hspace{-1cm}(3k+3)m(a_2,\dots,a_{\ell}) & \hspace{-1cm}(3k+3)m(a_1,\dots,a_{\ell})\\
        \hspace{2cm}-m(a_2,\dots,a_{\ell-1})&  \hspace{2cm} - m(a_1,\dots,a_{\ell-1})
    \end{bmatrix}.\]
Since $C_{\frac{1}{1}}=\begin{bmatrix}
    1&k+2\\3k+2&3k^2+8k+5
\end{bmatrix}$ and $C_{\frac{p+1}{p+2}}=C_{\frac{p}{p+1}}C_{\frac{1}{1}}-S$, where $S=\begin{bmatrix}
    k&0\\3k^2+3k&k
\end{bmatrix}$, we have
\[C_{\frac{p+1}{p+2}}=\begin{bmatrix}
       \hspace{-2mm}(3k+2)m(a_1,\dots,a_{\ell})& \hspace{5mm}(3k^2+8k+5)m(a_1,\dots,a_{\ell})\\
       \hspace{1.5cm}+m(a_2,\dots,a_\ell)-k & \hspace{2.5cm}+(k+2)m(a_2,\dots,a_{\ell}) \\[0.5em]
        \ast & \ast
    \end{bmatrix}. \]
Since $F^+(k,(p+1)/(p+2))=[a_{\ell},\dots,a_{1},3k+2,k+2]$ by Proposition \ref{prop:presnake-relation} (2), it suffices to show the following two equalities: 
\begin{align}
    m(a_{\ell-1},\dots,a_{1},3k+2,k+2)&=(3k+2)m(a_1,\dots,a_{\ell})+m(a_2,\dots,a_{\ell})-k,\label{eq:2-1-2}\\
    m(a_{\ell},\dots,a_{1},3k+2,k+2)&=(3k^2+8k+5)m(a_1,\dots,a_{\ell})+(k+2)m(a_2,\dots,a_{\ell}).\label{eq:2-2-2}
\end{align}
We will prove \eqref{eq:2-1-2}. By Corollary \ref{cor:trdet} (1), it suffices to show
\[m(a_\ell,\dots,a_{1},3k+2)=(3k+2)m(a_1,\dots,a_{\ell})+m(a_2,\dots,a_{\ell}),\]
and it coincides with \eqref{eq:2-1}, and it is already proved. The equality \eqref{eq:2-2-2} coincides with \eqref{eq:2-2}, and it is also already proved.

Finally, we will prove the case (3). By the results of (1) and (2), it suffices to show that $C_t$ satisfies the statement under the assumption that $C_r$ and $C_s$ satisfy the statement. We set
\begin{align*}
    C_{r}&=\begin{bmatrix}
        m(a_2,\dots,a_{\ell}) & m(a_1,\dots,a_{\ell})\\[0.5em]\hspace{-1cm}(3k+3)m(a_2,\dots,a_{\ell}) & \hspace{-1cm}(3k+3)m(a_1,\dots,a_{\ell})\\
        \hspace{2cm}-m(a_2,\dots,a_{\ell-1})&  \hspace{2cm} - m(a_1,\dots,a_{\ell-1})
    \end{bmatrix},\\
    C_{s}&=\begin{bmatrix}
        m(b_2,\dots,b_{m}) & m(b_1,\dots,b_{m})\\[0.5em]\hspace{-1cm}(3k+3)m(b_2,\dots,b_{m}) & \hspace{-1cm}(3k+3)m(b_1,\dots,b_{m})\\
        \hspace{2cm}-m(b_2,\dots,b_{m-1})&  \hspace{2cm} - m(b_1,\dots,b_{m-1})
    \end{bmatrix}.
\end{align*}
Since $C_t=C_rC_s-S$, the $(1,1)$-entry of $C_t$ is 
\[((3k+3)m(a_1,\dots,a_\ell)+m(a_2,\dots,a_\ell))m(b_2,\dots,b_m)-m(a_1,\dots,a_\ell)m(b_2,\dots,b_{m-1})-k,\]
the $(1,2)$-entry is 
\[((3k+3)m(a_1,\dots,a_\ell))+m(a_2,\dots,a_\ell))m(b_1,\dots,b_m)-m(a_1,\dots,a_\ell)m(b_1,\dots,b_{m-1}).\]
Since $F^+(k,t)=[a_\ell,\dots, a_1,3k+2,1,b_{m}-1,b_{m-1},\dots,b_{1}]$ by Proposition \ref{prop:presnake-relation} (3), it suffices to show the following two equalities: 
\begin{align}
&m(a_{\ell-1},\dots, a_1,3k+2,1,b_{m}-1,b_{m-1},\dots,b_{1})\label{eq:3-1-2}\\
&=((3k+3)m(a_1,\dots,a_\ell)+m(a_2,\dots,a_\ell))m(b_2,\dots,b_m)-m(a_1,\dots,a_\ell)m(b_2,\dots,b_{m-1})-k,\nonumber\\
&m(a_{\ell},\dots, a_1,3k+2,1,b_{m}-1,b_{m-1},\dots,b_{1})\label{eq:3-2-2}\\
&=((3k+3)m(a_1,\dots,a_\ell)+m(a_2,\dots,a_\ell))m(b_1,\dots,b_m)-m(a_1,\dots,a_\ell)m(b_1,\dots,b_{m-1}).\nonumber
\end{align}
The equality \eqref{eq:3-2-2} coincides with \eqref{eq:3-1} for $x=\ell$ and $y=1$, and it is already proved. We will prove \eqref{eq:3-1-2}. Transposing $-k$ to the left-hand side and using Corollary \ref{cor:trdet} (1), \eqref{eq:3-1-2} is equivalent to
\begin{align*}
&m(a_{\ell},\dots, a_1,3k+2,1,b_{m}-1,b_{m-1},\dots,b_{2})\\
&=((3k+3)m(a_1,\dots,a_\ell)+m(a_2,\dots,a_\ell))m(b_2,\dots,b_m)-m(a_1,\dots,a_\ell)m(b_2,\dots,b_{m-1}).\nonumber
\end{align*}
It coincides with \eqref{eq:3-1} for $x=\ell$ and $y=2$, and it is already proved.
\end{proof}

\begin{remark}
A snake graph $\mathcal G (F^+(k,t))$ obtained from a pre-snake graph $\mathcal{PG}(t)$ coincides with the following graphs in other papers: when $k=0$,
\begin{itemize}\setlength{\leftskip}{-15pt}
    \item a \emph{domino graph} obtained from a \emph{snake graph} in \cite{aig}*{Section 7}, 
    \item a snake graph obtained from a triangulation on the once-punctured torus in \cite{msw}*{Section 4},
\end{itemize}
when $k=1$,
\begin{itemize}\setlength{\leftskip}{-15pt}
    \item a snake graph obtained from a pre-snake graph in \cite{gyo21}*{Section 3}, 
    \item a snake graph obtained from a line segment in \cite{bansen}*{Section 3}.
\end{itemize}
\end{remark}

By using Theorem \ref{thm:M_t-C_t-combinatorics}, we will give meanings of the numbers $p',q',r'$ in each vertex $\left(\begin{bmatrix}
        p\\p'
    \end{bmatrix},\begin{bmatrix}
        q\\q'
    \end{bmatrix},\begin{bmatrix}
        r\\r'
    \end{bmatrix}\right)$ in $\mathrm{P}\mathbb T(0)$.

\begin{theorem}\label{thm:p'q'r'}
Let $\dfrac{p}{p'}$ be the (unique) fixed point of the M\"obius transformation given by the $2$-MM matrix $M_t(2,0)$, where $p$ and $p'$ are relatively prime. If $F^+(2,t)=[a_1,\dots,a_\ell]$, then we have $p=\sqrt{m([\mathcal{G}[a_1,\dots,a_\ell])}$ and $p'=\sqrt{m(\mathcal G [a_2,\dots,a_{\ell-1}]})$.
\end{theorem}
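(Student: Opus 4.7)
The plan is to combine three previously-established results: the explicit matrix formula for $M_t(2,0)$ from Theorem~\ref{thm:M_t-C_t-combinatorics}~(1), the parabolic fixed-point formula of Proposition~\ref{prop:fixed-point}, and the relative primality statement of Proposition~\ref{prop:relative-prime-y12-y21}. Together these will force the fixed point to equal the claimed ratio of square roots of perfect matching counts and guarantee that this ratio is already in lowest terms.

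First I would specialize Theorem~\ref{thm:M_t-C_t-combinatorics}~(1) to $k=2$ to obtain
\[
M_t(2,0)=\begin{bmatrix}-m(\mathcal{G}[a_1,\dots,a_{\ell-1}]) & m(\mathcal{G}[a_1,\dots,a_{\ell}])\\ -m(\mathcal{G}[a_2,\dots,a_{\ell-1}]) & m(\mathcal{G}[a_2,\dots,a_{\ell}])\end{bmatrix}.
\]
The $(1,1)$-entry is negative while the $(2,2)$-entry is positive, so $x_{11}<x_{22}$, and Proposition~\ref{prop:fixed-point} gives
\[
p_X=\sqrt{-\tfrac{x_{12}}{x_{21}}}=\sqrt{m(\mathcal G[a_1,\dots,a_\ell])\big/m(\mathcal G[a_2,\dots,a_{\ell-1}])}>0.
\]
Since $x_{12}$ is the $(1,2)$-entry of a $2$-MM matrix, it is a $2$-GM number, and Proposition~\ref{squre-markov} identifies $2$-GM numbers with squares of classical Markov numbers, so $\sqrt{m(\mathcal G[a_1,\dots,a_\ell])}\in\mathbb Z_{>0}$. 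Because $p_X$ is rational and its numerator is integral, $\sqrt{m(\mathcal G[a_2,\dots,a_{\ell-1}])}$ is a positive integer as well.

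To finish, I would apply Proposition~\ref{prop:relative-prime-y12-y21}, which states that for any $2$-MM triple in $\mathrm{MM}\mathbb T(2,\ell)$ the square roots of the $(1,2)$-entry and of the negative of the $(2,1)$-entry of each component are relatively prime; since the statement is symmetric in the three entries, it applies to $M_t(2,0)$ whether it occurs as the first, middle, or third slot of its triple. Thus
\[
\frac{p}{p'}=\frac{\sqrt{m(\mathcal G[a_1,\dots,a_\ell])}}{\sqrt{m(\mathcal G[a_2,\dots,a_{\ell-1}])}}
\]
is the reduced expression, yielding the desired identities. The only conceivable obstacle is the case $x_{21}=0$, which would make the fixed point $1/0=\infty$; by Lemma~\ref{lem:y21<0} together with Remark~\ref{rem:aboutXZ}, no matrix in $\mathrm{MM}\mathbb T(2,0)$ has vanishing $(2,1)$-entry, so this case does not arise and the proof is complete.
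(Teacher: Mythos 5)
Your proposal is correct and follows essentially the same route the paper intends: it states Theorem \ref{thm:p'q'r'} as an immediate consequence of Theorem \ref{thm:M_t-C_t-combinatorics}~(1) combined with the fixed-point formula of Proposition \ref{prop:fixed-point} (via Lemma \ref{lem:fixed-point2} for rationality) and the coprimality statement of Proposition \ref{prop:relative-prime-y12-y21}, exactly the ingredients you assemble. Your sign check ($x_{11}<0<x_{22}$, $x_{21}<0$ since the matching numbers are positive) and the observation that Proposition \ref{prop:relative-prime-y12-y21} covers all three slots of the triple close the only points the paper leaves implicit, so nothing is missing.
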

\subsection{Characteristic numbers of $k$-GM triple}
In previous subsections, we see the numerator of $F^+(k,t)$ is a $k$-GM number associated with $t$. in this subsection, we will see that the denominator of $F^+(k,t)$ is the \emph{characteristic number} . 

First, we will recall the characteristic number. When we consider $k$-GM numbers labeled with $t\in [0,1]$ at a fixed $k$, we often simply denote $m_{k,t}$ by $m_t$. We fix $k\in \mathbb Z_{\geq 0}$ and a $k$-GM triple $(m_r,m_t,m_s)$ in $\mathrm{LM}\mathbb T(k)$. Note that $m_t>\max\{m_r,m_s\}$ and $m_r\neq m_s$. We consider solutions $x$ to equations
\begin{align*}
    m_rx&\equiv m_s \mod m_t,\\
    m_rx&\equiv -m_s \mod m_t,\\
    m_sx&\equiv m_r \mod m_t,\\
    m_sx&\equiv -m_r \mod m_t.
\end{align*} Since $m_r$ and $m_t$ are relatively prime from Proposition \ref{relatively-prime}, each solution is unique in the range $\left(0,m_t\right)$. These numbers are called the \emph{characteristic numbers} and we denote them by $u^+_t, u^-_t,v^+_t, v^-_t$, respectively.

\begin{remark}
    The characteristic numbers depend only on $t$ because a Farey triple $(r,t,s)$ in $\mathrm{F}\mathbb{T}$ is determined uniquely by $t$. Therefore, $u^\pm_t, v^{\pm}_t$ are often simply referred to as the characteristic numbers of $t$.
\end{remark}

When we need to emphasize $k$, we also denote them by $u^{\pm}_{k,t}, v^{\pm}_{k,t}$.

These four numbers have the following relations:

\begin{proposition}\label{prop:uv-relation}
 For characteristic numbers $u_t^\pm$ and $v_t^\pm$, the following inequalities hold:
 \begin{itemize}\setlength{\leftskip}{-15pt}
     \item [(1)] $0<u_t^+,v_t^-<\dfrac{m_t}{2}$, \quad $\dfrac{m_t}{2}<u_t^-,v_t^+<m_t$,
    \item [(2)] $u^-_t=m_t-u^+_t$, \quad $v^{+}_t=m_t-u_t^+-k$ \quad$v^-_t=u_t^++k$. 
 \end{itemize}
\end{proposition}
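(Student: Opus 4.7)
The plan is to derive both parts simultaneously by realizing the characteristic numbers as entries of the $k$-MM matrix $M_t(k,0)$, and then exploiting the explicit combinatorial form of $M_t(k,0)$ given by Theorem \ref{thm:M_t-C_t-combinatorics}(1).

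First, I would take the $k$-MM triple $(M_r,M_t,M_s)\in\mathrm{MM}\mathbb T(k,0)$ associated with the Farey triple $(r,t,s)$, so that $M_rM_tM_s=T$. Looking at the $(1,2)$-entries of the identities $M_rM_t=TM_s^{-1}$ and $M_tM_s=M_r^{-1}T$, the summands carrying a factor $m_t$ vanish modulo $m_t$, and one obtains
\[
m_r\cdot (M_t)_{22}\equiv m_s\pmod{m_t},\qquad (M_t)_{11}\cdot m_s\equiv m_r\pmod{m_t}.
\]
Writing $F^{+}(k,t)=[a_1,\dots,a_\ell]$ and invoking Theorem \ref{thm:M_t-C_t-combinatorics}(1), we have $(M_t)_{22}=m(\mathcal{G}[a_2,\dots,a_\ell])$ and $(M_t)_{11}=-m(\mathcal{G}[a_1,\dots,a_{\ell-1}])$, so the two congruences say $u^{+}_t\equiv m(\mathcal{G}[a_2,\dots,a_\ell])$ and $v^{+}_t\equiv -m(\mathcal{G}[a_1,\dots,a_{\ell-1}])\pmod{m_t}$.

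Next, I would convert these congruences into honest equalities in $(0,m_t)$ by pinning down ranges. Both $m(\mathcal{G}[a_2,\dots,a_\ell])$ and $m(\mathcal{G}[a_1,\dots,a_{\ell-1}])$ are positive; by Theorem \ref{thm:snakegraph-continuedfraction} they are strictly less than $m_t/2$ provided $[a_1,\dots,a_\ell]>2$ and $[a_\ell,\dots,a_1]>2$, and this reduces to $a_1\geq 2$ and $a_\ell\geq 2$. I would verify the stronger bounds $a_1,a_\ell\geq k+2$ by induction on the depth in the Farey tree, using Proposition \ref{prop:presnake-relation}: the base case $F^{+}(k,1/2)=[2k+2,k+2]$ has both endpoints $\geq k+2$, and the three recursion cases of Proposition \ref{prop:presnake-relation} all propagate this, thanks to the semi-palindrome structure of Lemma \ref{lem:semi-palindrome} which identifies $a_\ell$ with $a_1$ when $\ell\geq 4$ and with $a_1-k$ when $\ell=2$. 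Consequently $u^{+}_t=m(\mathcal{G}[a_2,\dots,a_\ell])\in(0,m_t/2)$ and $v^{-}_t=m_t-v^{+}_t=m(\mathcal{G}[a_1,\dots,a_{\ell-1}])\in(0,m_t/2)$, which immediately gives all four inequalities in (1), since $u^{-}_t=m_t-u^{+}_t$ and $v^{+}_t=m_t-v^{-}_t$ then lie in $(m_t/2,m_t)$. For part (2), the relations $u^{-}_t=m_t-u^{+}_t$ and $v^{+}_t=m_t-v^{-}_t$ are immediate from the defining congruences and the uniqueness of representatives in $(0,m_t)$. The identity $v^{-}_t=u^{+}_t+k$ is then exactly Corollary \ref{cor:trdet}(1) in the form $m(\mathcal{G}[a_1,\dots,a_{\ell-1}])-m(\mathcal{G}[a_2,\dots,a_\ell])=k$, once the two matrix entries have been identified with $v^{-}_t$ and $u^{+}_t$.

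The main obstacle I anticipate is the inductive verification that $a_1,a_\ell\geq k+2$ for every $t\in(0,1]$: this is a bookkeeping argument over the Farey tree that has to interact cleanly with the three cases of Proposition \ref{prop:presnake-relation} and with the special behaviour of the semi-palindrome expression at $\ell=2$. A secondary subtlety is that the clean equalities in (2) genuinely require both the congruences from Step 2 and the sharp range bound of (1) in order to single out the correct representative in $(0,m_t)$ — a priori $v^{-}_t$ could equal $u^{+}_t+k-m_t$ — so parts (1) and (2) must be proved in tandem rather than sequentially.
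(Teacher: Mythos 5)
Your proof is correct, but it takes a genuinely different route from the paper's. The paper's own argument is purely arithmetic and very short: it imports the inequality $u_t^++k<\dfrac{m_t}{2}$ from the earlier work (Lemma \ref{lem:ut+k<mt/2}), proves the single congruence $m_s(u_t^++k)\equiv -m_r \pmod{m_t}$ by manipulating $m_r^2+km_rm_s+m_s^2\equiv 0\pmod{m_t}$ together with $m_ru_t^+\equiv m_s$ (Lemma \ref{lem:ut-u't-modmt}), and then gets $v_t^-=u_t^++k$ and all the remaining identities and bounds from uniqueness of representatives in $(0,m_t)$; no snake-graph input appears. You instead read the congruences $m_r(M_t)_{22}\equiv m_s$ and $m_s\,m(\mathcal{G}[a_1,\dots,a_{\ell-1}])\equiv -m_r\pmod{m_t}$ off the relation $M_rM_tM_s=T$ (your entry computations are correct), identify the entries via Theorem \ref{thm:M_t-C_t-combinatorics}(1), pin them below $\dfrac{m_t}{2}$ using Theorem \ref{thm:snakegraph-continuedfraction} plus the endpoint bound $a_1,a_\ell\geq 2$ — your induction through Proposition \ref{prop:presnake-relation} indeed shows $a_1,a_\ell\in\{k+2,2k+2\}$, and note your bound on $m(\mathcal{G}[a_1,\dots,a_{\ell-1}])$ also tacitly uses the reversal invariance $m(\mathcal{G}[a_1,\dots,a_{\ell-1}])=m(\mathcal{G}[a_{\ell-1},\dots,a_1])$, which the paper uses freely — and then obtain $v_t^-=u_t^++k$ from Corollary \ref{cor:trdet}(1). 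There is no circularity, since Theorem \ref{thm:M_t-C_t-combinatorics}, Corollary \ref{cor:trdet} and Proposition \ref{prop:presnake-relation} are established before and independently of the characteristic numbers. What your route buys is independence from the inequality cited from \cite{gyo-maru} and, as a by-product, the matrix/matching-number interpretation of all four characteristic numbers (essentially Theorem \ref{thm:u_t-denominator} and Theorem \ref{thm:u_t-Markov-monodromy-cohn}(1) proved directly on the $k$-MM side); what the paper's route buys is brevity and complete independence from the combinatorial machinery of Sections 7.2--7.3.
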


First, we will consider the property of $u^+_t$. There is the following characterization.
\begin{proposition}[\cite{gyo-maru}*{Lemma 4.5}]\label{lem:index-of-Ct}
   For an irreducible fraction $t\in (0,1)$, the following equality holds:\[C_{t}(k,-k)=\begin{bmatrix}
   u^+_{t}&m_{t}\\ \ast &\ast
   \end{bmatrix}.\]
\end{proposition}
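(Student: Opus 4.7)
The plan is to combine the matrix equation $C_t(k,-k) + S = C_r(k,-k)\,C_s(k,-k)$ coming from the $k$-GC triple structure with the explicit matching-number formula for $C_t(k,-k)$ supplied by Theorem \ref{thm:M_t-C_t-combinatorics}~(2). Here $(r,t,s)$ denotes the unique Farey triple in $\mathrm{F}\mathbb{T}$ with middle entry $t$, which exists by Proposition \ref{prop:property-farey}~(2), and the associated $k$-GC triple in $\mathrm{GC}\mathbb{T}(k,-k)$ under the fraction labeling is $(C_r(k,-k),C_t(k,-k),C_s(k,-k))$. The $(1,2)$-entry of $C_t(k,-k)$ is $m_t$ by construction of the labeling, so the entire content of the proposition is the identification of the $(1,1)$-entry with $u^+_t$.

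First I would rearrange the triple identity as $C_s(k,-k) = C_r(k,-k)^{-1}(C_t(k,-k)+S)$, which is legitimate since $\det C_r(k,-k)=1$, and compute its $(1,2)$-entry. Writing
\[
C_r(k,-k)^{-1}=\begin{bmatrix}(C_r)_{22}&-m_r\\ -(C_r)_{21}&(C_r)_{11}\end{bmatrix}
\]
and using $S_{12}=0$, $S_{22}=k$, one reads off
\[
m_s=(C_r)_{22}\,m_t-m_r\bigl((C_t)_{22}+k\bigr).
\]
Reducing modulo $m_t$ and substituting the trace identity $(C_t)_{22}=(3k+3)m_t-k-(C_t)_{11}$ for a $k$-GC matrix, the additive $k$-terms cancel and we obtain the key congruence
\[
m_r\,(C_t(k,-k))_{11}\equiv m_s\pmod{m_t}.
\]

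To upgrade this congruence to the exact equality $(C_t(k,-k))_{11}=u^+_t$, I need the $(1,1)$-entry to lie in the window $(0,m_t)$. This is where Theorem \ref{thm:M_t-C_t-combinatorics}~(2) enters: writing $F^{+}(k,t)=[a_1,\dots,a_\ell]$, it gives $(C_t(k,-k))_{11}=m(\mathcal{G}[a_2,\dots,a_\ell])$, a positive integer, and its ratio with $m_t=m(\mathcal{G}[a_1,\dots,a_\ell])$ equals $[a_1,\dots,a_\ell]$ by Theorem \ref{thm:snakegraph-continuedfraction}. Since $t\in(0,1)$ forces $\ell\geq 2$ with every $a_i\geq 1$, this continued fraction strictly exceeds $1$, so $0<(C_t(k,-k))_{11}<m_t$. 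Uniqueness of $u^+_t$ in $(0,m_t)$ then yields the claim. The only delicate step is the algebraic extraction of the modular relation; the crucial observation is that the trace condition built into the definition of a $k$-GC matrix is precisely what makes the additive $k$-terms drop out modulo $m_t$.
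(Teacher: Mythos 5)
Your argument is correct, but note that the paper does not prove this proposition at all: it imports it verbatim from \cite{gyo-maru}*{Lemma 4.5}, and then later combines it with Theorem \ref{thm:M_t-C_t-combinatorics}~(2) to deduce Theorem \ref{thm:u_t-denominator}. You run the logic in the opposite direction: you take the matching-number description of $C_t(k,-k)$ from Theorem \ref{thm:M_t-C_t-combinatorics}~(2) as input, extract the congruence $m_r\,(C_t)_{11}\equiv m_s \pmod{m_t}$ from the triple relation $C_t=C_rC_s-S$ together with $\det C_r=1$ and the trace condition of a $k$-GC matrix, and then use $0<m(\mathcal{G}[a_2,\dots,a_\ell])<m(\mathcal{G}[a_1,\dots,a_\ell])=m_t$ (via Theorem \ref{thm:snakegraph-continuedfraction}, $\ell\geq 2$) plus $\gcd(m_r,m_t)=1$ (Proposition \ref{relatively-prime}) to pin down the $(1,1)$-entry as $u^+_t$. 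This is not circular within the present paper, since the inductive proof of Theorem \ref{thm:M_t-C_t-combinatorics}~(2) nowhere uses Proposition \ref{lem:index-of-Ct}; with your derivation, Theorem \ref{thm:u_t-denominator} becomes an immediate byproduct rather than a separate combination step. What the paper's (cited) route buys is independence from the snake-graph machinery, so the lemma can be used as an external input; what your route buys is a self-contained proof inside this paper and a cleaner logical chain from the combinatorial theorem to the characteristic numbers. Two small points worth making explicit if you write this up: that the first and third components of the $k$-GC triple at the vertex labeled $(r,t,s)$ are indeed $C_r(k,-k)$ and $C_s(k,-k)$ (including the boundary labels $0/1$ and $1/1$, exactly as used in the paper's proof of Theorem \ref{thm:M_t-C_t-combinatorics}~(2)), and that the uniqueness of the solution in $(0,m_t)$ is exactly the coprimality statement quoted in the definition of the characteristic numbers.
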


By using it, we will give a sharper estimate of $u_t^+$.

\begin{lemma}[\cite{gyo-maru}*{Lemma 4.8}]\label{lem:ut+k<mt/2}
 For an irreducible fraction $t\in(0,1)$, the inequality $u_t^++k<\dfrac{m_t}{2}$ holds. 
\end{lemma}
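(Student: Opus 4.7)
The plan is to translate the inequality into a statement about perfect-matching counts in the $k$-GM snake graph and then deduce it from the standard one-step recurrence. Write $F^+(k,t)=[a_1,\dots,a_\ell]$. Combining Theorem~\ref{thm:M_t-C_t-combinatorics}(2) with Proposition~\ref{lem:index-of-Ct} gives $u_t^+ = m(\mathcal{G}[a_2,\dots,a_\ell])$ and $m_t = m(\mathcal{G}[a_1,\dots,a_\ell])$, and then Corollary~\ref{cor:trdet}(1) yields $u_t^+ + k = m(\mathcal{G}[a_1,\dots,a_{\ell-1}])$. Thus the lemma is equivalent to the matching inequality
\[
2\,m(\mathcal{G}[a_1,\dots,a_{\ell-1}]) \,<\, m(\mathcal{G}[a_1,\dots,a_\ell]).
\]

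To attack this, I would use the standard snake-graph recurrence
\[
m(\mathcal{G}[a_1,\dots,a_\ell]) \,=\, a_\ell\cdot m(\mathcal{G}[a_1,\dots,a_{\ell-1}]) + m(\mathcal{G}[a_1,\dots,a_{\ell-2}]),
\]
which follows from the planar congruence $\mathcal{G}[a_1,\dots,a_\ell]\cong \mathcal{G}[a_\ell,\dots,a_1]$ together with the ``front'' recurrence $m(\mathcal{G}[a_1,\dots,a_\ell]) = a_1\, m(\mathcal{G}[a_2,\dots,a_\ell]) + m(\mathcal{G}[a_3,\dots,a_\ell])$; the latter is a direct consequence of Theorem~\ref{thm:snakegraph-continuedfraction} and the continued-fraction identity $[a_1,\dots,a_\ell] = a_1 + 1/[a_2,\dots,a_\ell]$. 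Once the recurrence is available, verifying $a_\ell \geq 2$ gives $m(\mathcal{G}[a_1,\dots,a_\ell]) \geq 2\,m(\mathcal{G}[a_1,\dots,a_{\ell-1}]) + m(\mathcal{G}[a_1,\dots,a_{\ell-2}]) > 2\,m(\mathcal{G}[a_1,\dots,a_{\ell-1}])$, as required.

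The main obstacle is therefore showing $a_\ell \geq 2$. By the canonical semi-palindrome expression (Lemma~\ref{lem:semi-palindrome}), $a_\ell$ coincides with the first entry $a_1$, so it suffices to verify that the first two entries of the sign sequence associated with $\mathcal{PG}(t)$ are both $-$. The first sign is always $-$ because the leftmost triangle satisfies rule~(1)(i). For the second sign I would split on $k$. If $k\geq 1$, the next object encountered along $L_t$ is the diagonal of the first unit square; its midpoint $(1/2,1/2)$ lies strictly above $L_t$ whenever $t\in(0,1)$, so rule~(2)(i) assigns it $k$ consecutive $-$ signs, yielding $a_1 \geq 1+k \geq 2$. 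If $k=0$, the next object is the upper-right triangle of the first square; since $L_t$ enters it on the diagonal at $\bigl(\tfrac{1}{t+1},\tfrac{t}{t+1}\bigr)$ and exits through the right edge at $(1,t)$ with $t<1$, its intersection with the left-hand side of $L_t$ is the quadrilateral with vertices $\bigl(\tfrac{1}{t+1},\tfrac{t}{t+1}\bigr)$, $(1,t)$, $(1,1)$, $(0,1)$, and rule~(1)(i) again assigns it $-$. In every case $a_1 \geq 2$, which closes the argument.
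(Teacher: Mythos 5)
The paper never proves this lemma internally: it is imported verbatim from \cite{gyo-maru}*{Lemma 4.8}, so there is no in-paper proof to compare with. Your argument is an independent, combinatorial derivation built on the Section~7 machinery, and its skeleton is sound. The identifications $m_t=m(\mathcal{G}[a_1,\dots,a_\ell])$ and $u_t^+=m(\mathcal{G}[a_2,\dots,a_\ell])$ obtained by comparing Theorem~\ref{thm:M_t-C_t-combinatorics}(2) with Proposition~\ref{lem:index-of-Ct} (this is exactly how the paper later proves Theorem~\ref{thm:u_t-denominator}), together with Corollary~\ref{cor:trdet}(1), do reduce the claim to $2\,m(\mathcal{G}[a_1,\dots,a_{\ell-1}])<m(\mathcal{G}[a_1,\dots,a_\ell])$; none of these inputs depends on the lemma being proved (within this paper \textendash\ Proposition~\ref{lem:index-of-Ct} is itself quoted from \cite{gyo-maru}, Lemma~4.5, which precedes Lemma~4.8 there, so no circularity), and your back recurrence via the reversal congruence $\mathcal{G}[a_1,\dots,a_\ell]\cong\mathcal{G}[a_\ell,\dots,a_1]$ plus Theorem~\ref{thm:snakegraph-continuedfraction} is the same kind of congruence manipulation the paper uses inside the proof of Theorem~\ref{thm:M_t-C_t-combinatorics}. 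What your route buys is a uniform-in-$k$, transparent proof (the inequality amounts to $[a_\ell,\dots,a_1]>2$), at the cost of resting on the heavier snake-graph results rather than on the arithmetic of the GM equation presumably used in the source; note also that in the paper's logical order the lemma sits before Theorem~\ref{thm:u_t-denominator}, so if this proof were to replace the citation one would reorder accordingly.

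One step needs repair: the claim that the canonical semi-palindrome expression forces $a_\ell=a_1$ is only correct for $\ell\geq 4$. When $\ell=2$ \textendash\ which does occur in the range $t\in(0,1)$, namely $t=\tfrac{1}{2}$ with $F^+(k,1/2)=[2k+2,k+2]$ \textendash\ Lemma~\ref{lem:semi-palindrome} (odd case $\ell/2=1$) gives last entry $a_1-k$, not $a_1$. The needed bound $a_\ell\geq 2$ still holds there, since $a_1-k=k+2\geq 2$, so it suffices to treat this case separately; alternatively, drop the semi-palindrome step entirely and bound the trailing run of $+$ signs by the point-symmetric mirror of your leading-run argument (the last triangle is $+$, the adjacent crossed edge contributes $k$ more $+$ signs when $k\geq1$, and for $k=0$ the second-to-last triangle is also $+$). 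Your verification that $a_1\geq 2$ is correct: the leftmost triangle gets $-$; for $k\geq 1$ the first diagonal has midpoint $(1/2,1/2)$ strictly above $L_t$ and contributes $k$ further $-$ signs; and for $k=0$ the upper-right triangle of the first unit square has quadrilateral left-hand part (it is even $-$ for all $k$, so in fact $a_1\geq k+2$). With the $\ell=2$ case patched, the proof is complete.
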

The following lemma implies $v^-_t\equiv u^+_t +k \mod m_t$:

\begin{lemma}\label{lem:ut-u't-modmt}
The following equality holds:
\[m_s(u^+_t+k)\equiv - m_r \mod m_t.\] 
\end{lemma}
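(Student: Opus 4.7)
The plan is to work directly with the $k$-GM equation modulo $m_t$. First, I would reduce the defining equation
\[
m_r^2+m_t^2+m_s^2+k(m_tm_s+m_sm_r+m_rm_t)=(3+3k)m_rm_tm_s
\]
modulo $m_t$. All terms containing a factor of $m_t$ drop out, yielding the quadratic relation
\[
m_r^2+km_rm_s+m_s^2\equiv 0 \pmod{m_t},
\]
or equivalently $m_r(m_r+km_s)\equiv -m_s^2\pmod{m_t}$.

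Next, I would multiply both sides of this congruence by $u^+_t$. By the definition of $u^+_t$ we have $m_r u^+_t\equiv m_s\pmod{m_t}$, so the left-hand side collapses to $m_s(m_r+km_s)\pmod{m_t}$, giving
\[
m_s(m_r+km_s)\equiv -m_s^2\, u^+_t\pmod{m_t}.
\]
Since $\gcd(m_s,m_t)=1$ by Proposition \ref{relatively-prime}, I can cancel one factor of $m_s$ (i.e.\ multiply by a modular inverse of $m_s$), obtaining $m_r+km_s\equiv -m_s\, u^+_t\pmod{m_t}$. Rearranging gives exactly $m_s(u^+_t+k)\equiv -m_r\pmod{m_t}$, as required.

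There is no substantial obstacle here: the proof is a two-line manipulation once one notices that the $k$-GM equation reduced modulo $m_t$ supplies the needed quadratic identity relating $m_r$ and $m_s$, and that multiplying by $u^+_t$ converts the factor $m_r$ into $m_s$. No case analysis, induction on the tree $\mathrm{M}\mathbb{T}(k)$, or reference to the Cohn/Markov-monodromy matrix machinery is needed; the statement is a purely number-theoretic consequence of $\mathrm{GME}(k)$ together with the defining congruence of $u^+_t$. (Together with Lemma \ref{lem:ut+k<mt/2}, which guarantees $0<u^+_t+k<m_t/2$, this congruence will then upgrade to the equality $v^-_t=u^+_t+k$ asserted in Proposition \ref{prop:uv-relation}.)
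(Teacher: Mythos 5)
Your proof is correct and is essentially the same as the paper's: both reduce $\mathrm{GME}(k)$ modulo $m_t$ to get $m_r^2+km_rm_s+m_s^2\equiv 0$, combine it with $m_ru^+_t\equiv m_s$, and cancel a factor coprime to $m_t$ (you cancel $m_s$, the paper cancels $m_r$ after substituting $m_s\equiv m_ru^+_t$ — a purely cosmetic difference).
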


\begin{proof}
By $m_r^2+m_s^2+km_rm_s\equiv 0 \mod m_t$ and $m_ru^+_t \equiv  m_s\mod m_t$, we have 
\[m_r^2(u_t^+)^2\equiv -m_r^2-km_rm_s \mod m_t,\] and by multiplying $1/m_r$ to both sides of the congruence, we have
\[m_r(u^+_t)^2\equiv -m_r-km_s \mod m_t.\] 
This implies
\[m_su_t\equiv  m_r(u^+_t)^2\equiv - m_r- km_s \mod m_t,\] and we have
\[m_s(u^+_t+ k)\equiv - m_r \mod m_t.\]
\end{proof}

\begin{proof}[Proof of Proposition \ref{prop:uv-relation}]
By Lemma \ref{lem:ut+k<mt/2}, we have $0<u_t^++k<\dfrac{m_t}{2}$, and in particular, $0<u_t^++k<m_t$. By the uniqueness of $v^-_t$ and Lemma \ref{lem:ut-u't-modmt}, we have $v^-_t=u_t^++k$ and  $0<v^{-}_t<\dfrac{m_t}{2}$. The rest of statements are clear.
\end{proof}

Combining Theorem \ref{thm:M_t-C_t-combinatorics} (2) and Proposition \ref{lem:index-of-Ct}, the following theorem is proved:

\begin{theorem}\label{thm:u_t-denominator}
Let $t\in (0,1)$. We set $F^+(k,t)=[a_1,\dots,a_\ell]$. Then, we have $m(a_2,\dots,a_\ell)=u^+_{k,t}$. In particular, we have $F^+(k,t)=\dfrac{m_{k,t}}{u^+_{k,t}}$.  
\end{theorem}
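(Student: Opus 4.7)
The proof reduces almost immediately to combining two earlier results by reading off the $(1,1)$-entry of the $k$-GC matrix $C_t(k,-k)$ in two different ways. First I would invoke Theorem \ref{thm:M_t-C_t-combinatorics}(2), which expresses every entry of $C_t(k,-k)$ as a perfect matching count on the $k$-GM snake graph attached to $F^+(k,t)=[a_1,\dots,a_\ell]$. In particular its $(1,1)$-entry equals $m(\mathcal{G}[a_2,\dots,a_\ell])$.

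Second, Proposition \ref{lem:index-of-Ct} identifies the $(1,1)$-entry of $C_t(k,-k)$ with the characteristic number $u^+_{k,t}$. Comparing these two expressions yields
\[
u^+_{k,t} = m(\mathcal{G}[a_2,\dots,a_\ell]),
\]
which is the main assertion of the theorem.

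For the final ``In particular'' clause, I would feed this equality into the {\c{C}}anak\c{c}{\i}--Schiffler formula (Theorem \ref{thm:snakegraph-continuedfraction}):
\[
[a_1,\dots,a_\ell] \;=\; \frac{m(\mathcal{G}[a_1,\dots,a_\ell])}{m(\mathcal{G}[a_2,\dots,a_\ell])}.
\]
Since Theorem \ref{thm:M_t-C_t-combinatorics} already records that $m(\mathcal{G}[a_1,\dots,a_\ell]) = m_{k,t}$, substituting both numerator and denominator gives $F^+(k,t) = m_{k,t}/u^+_{k,t}$, as required. Essentially all of the work has been carried out in establishing Theorem \ref{thm:M_t-C_t-combinatorics}(2) and Proposition \ref{lem:index-of-Ct}, so I do not anticipate any substantive obstacle; the only bookkeeping point is to confirm that the fraction labeling of $C_t(k,-k)$ used in Theorem \ref{thm:M_t-C_t-combinatorics}(2) coincides with the labeling used in Proposition \ref{lem:index-of-Ct}, but both arise from the same canonical graph isomorphism from $\mathrm{F}\mathbb{T}$ to $\mathrm{GC}\mathbb{T}(k,-k)$, so the matching is automatic.
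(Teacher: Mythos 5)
Your argument is exactly the paper's: the authors also obtain the theorem by comparing the $(1,1)$-entry of $C_t(k,-k)$ as computed in Theorem \ref{thm:M_t-C_t-combinatorics}(2) with the identification of that entry as $u^+_{k,t}$ in Proposition \ref{lem:index-of-Ct}, with the {\c{C}}anak\c{c}{\i}--Schiffler formula supplying the final fraction identity. No gaps; the labeling compatibility you flag is indeed automatic since both statements use the same fraction labeling of $\mathrm{GC}\mathbb{T}(k,-k)$.
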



Moreover, we can express $C_t(k,-k)$ and $M_t(k,0)$ by using the characteristic numbers:

\begin{theorem}\label{thm:u_t-Markov-monodromy-cohn}
For an irreducible fraction $t\in (0,1)$, the following equalities hold:
\begin{itemize}\setlength{\leftskip}{-15pt}
\item [(1)] $M_{t}(k,0)=\begin{bmatrix}
   -v^-_{k,t}&m_{k,t}\\ -w_{k,t} &u^+_{k,t}\end{bmatrix},$
    \item [(2)] $C_{t}(k,-k)=\begin{bmatrix}
   u^+_{k,t}&m_{k,t}\\ (3k+3)u^+_{k,t}-w_{k,t} &(3k+3)m_{k,t}-v_{k,t}^{-}\end{bmatrix}$,
\end{itemize}
where $w_{k.t}=\dfrac{ u^+_{k,t}v^-_{k,t}+1}{m_{k,t}}$.
\end{theorem}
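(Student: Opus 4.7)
The plan is to read off the matrix entries directly from Theorem \ref{thm:M_t-C_t-combinatorics} and then translate each perfect-matching count into the appropriate characteristic number. Write $F^{+}(k,t)=[a_1,\dots,a_\ell]$. By Theorem \ref{thm:M_t-C_t-combinatorics}(1) we have
\[
M_t(k,0)=\begin{bmatrix}
-m(\mathcal{G}[a_1,\dots,a_{\ell-1}]) & m(\mathcal{G}[a_1,\dots,a_{\ell}])\\
-m(\mathcal{G}[a_2,\dots,a_{\ell-1}]) & m(\mathcal{G}[a_2,\dots,a_{\ell}])
\end{bmatrix},
\]
so the whole task is to compute these four numbers of perfect matchings in terms of $u^+_{k,t}$, $v^-_{k,t}$, $m_{k,t}$ and $w_{k,t}$.

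First I would identify the $(1,2)$- and $(2,2)$-entries: $m(\mathcal{G}[a_1,\dots,a_\ell])=m_{k,t}$ is immediate from Corollary \ref{cor:k-gen.snake-markov}(1) (or the last line of Theorem \ref{thm:M_t-C_t-combinatorics}), and $m(\mathcal{G}[a_2,\dots,a_\ell])=u^+_{k,t}$ is exactly Theorem \ref{thm:u_t-denominator}. Next, the $(1,1)$-entry follows from Corollary \ref{cor:trdet}(1):
\[
m(\mathcal{G}[a_1,\dots,a_{\ell-1}])=m(\mathcal{G}[a_2,\dots,a_\ell])+k=u^+_{k,t}+k=v^-_{k,t},
\]
where the last equality is Proposition \ref{prop:uv-relation}(2). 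Finally, for the $(2,1)$-entry I would solve Corollary \ref{cor:trdet}(2) for the one remaining unknown:
\[
m(\mathcal{G}[a_2,\dots,a_{\ell-1}])=\frac{m(\mathcal{G}[a_1,\dots,a_{\ell-1}])\,m(\mathcal{G}[a_2,\dots,a_\ell])+1}{m(\mathcal{G}[a_1,\dots,a_\ell])}=\frac{v^-_{k,t}\,u^+_{k,t}+1}{m_{k,t}}=w_{k,t}.
\]
This proves (1).

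For (2), I would run the parallel computation using Theorem \ref{thm:M_t-C_t-combinatorics}(2), whose $(1,1)$- and $(1,2)$-entries are already $m(\mathcal{G}[a_2,\dots,a_\ell])=u^+_{k,t}$ and $m(\mathcal{G}[a_1,\dots,a_\ell])=m_{k,t}$, and whose $(2,1)$- and $(2,2)$-entries are
\[
(3k+3)m(\mathcal{G}[a_2,\dots,a_\ell])-m(\mathcal{G}[a_2,\dots,a_{\ell-1}])=(3k+3)u^+_{k,t}-w_{k,t},
\]
\[
(3k+3)m(\mathcal{G}[a_1,\dots,a_\ell])-m(\mathcal{G}[a_1,\dots,a_{\ell-1}])=(3k+3)m_{k,t}-v^-_{k,t},
\]
by the same two substitutions. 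Alternatively, one could derive (2) from (1) together with the bijection $\psi$ of Proposition \ref{pr:Cohn-Markov-monodromy} and the conjugation relation of Proposition \ref{prop:cohnmatrix-ltol'} taking $\mathrm{GC}\mathbb{T}(k,0)$ to $\mathrm{GC}\mathbb{T}(k,-k)$. No step requires new combinatorial input; the whole proof is a matter of bookkeeping, and the only subtlety, namely that $m(\mathcal{G}[a_2,\dots,a_{\ell-1}])$ is indeed an integer equal to $w_{k,t}=(u^+_{k,t}v^-_{k,t}+1)/m_{k,t}$, is guaranteed by Corollary \ref{cor:trdet}(2). Accordingly I expect no real obstacle here.
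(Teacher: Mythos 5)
Your proposal is correct and follows essentially the same route as the paper, whose proof is a one-line citation of Theorem \ref{thm:M_t-C_t-combinatorics}, Theorem \ref{thm:u_t-denominator}, Proposition \ref{prop:uv-relation}(2), and Corollary \ref{cor:trdet}; you simply spell out the bookkeeping, including the determinant identity (Corollary \ref{cor:trdet}(2), equivalently $\det M_t(k,0)=1$) that pins down the $(2,1)$-entry as $w_{k,t}$.
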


\begin{proof}
It follows from Theorems \ref{thm:M_t-C_t-combinatorics}, \ref{thm:u_t-denominator}, Proposition \ref{prop:uv-relation} (2) and Corollary \ref{cor:trdet} (1).  
\end{proof}


In the rules for obtaining the continued fraction $F^+(k,t)$ from the pre-snake graph, by changing the sign associated with the central edge from $-$ to $+$, a new continued fraction $G^+(k,t)$ is obtained.  
Moreover, we extend a continued fraction $F^+(k,t)$  and $G^+(k,t)$ to $t\in (0,\infty)$ by allowing for cases where the slope of the line segment for constructing the pre-snake graph is greater than $1$ (the sign rule is not changed). We will prove the following theorem:

\begin{theorem}\label{thm:uv-continued-fraction}
 For any $t\in(0,1)$, the following equalities hold:
 \begin{itemize}\setlength{\leftskip}{-15pt}
     \item [(1)] $F^+(k,1/t)=\dfrac{m_{k,t}}{v^+_{k,t}}$,\vspace{2mm}
     \item [(2)] $G^+(k,t)=\dfrac{m_{k,t}}{v^-_{k,t}}$\vspace{2mm},
     \item [(3)] $G^+(k,1/t)=\dfrac{m_{k,t}}{u^-_{k,t}}$.
 \end{itemize}
\end{theorem}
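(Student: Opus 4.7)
Part (2) follows directly from Remark~\ref{rmk:reverse-sign}: if $F^+(k,t)=[a_1,\ldots,a_\ell]$, then $G^+(k,t)=[a_\ell,\ldots,a_1]$. Since reversing the tile order of a snake graph produces an isomorphic graph, $m(\mathcal{G}[a_\ell,\ldots,a_1])=m_{k,t}$ and $m(\mathcal{G}[a_{\ell-1},\ldots,a_1])=m(\mathcal{G}[a_1,\ldots,a_{\ell-1}])$. Theorem~\ref{thm:snakegraph-continuedfraction} gives $G^+(k,t)=m_{k,t}/m(\mathcal{G}[a_1,\ldots,a_{\ell-1}])$, and Corollary~\ref{cor:trdet}(1) together with Proposition~\ref{prop:uv-relation}(2) yield $m(\mathcal{G}[a_1,\ldots,a_{\ell-1}])=u^+_{k,t}+k=v^-_{k,t}$, proving (2).

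For part (1), the plan is to analyze the reflection $\sigma\colon(x,y)\mapsto(y,x)$, which maps $\mathcal{PG}(t)$ bijectively onto $\mathcal{PG}(1/t)$, sends $L_t$ to $L_{1/t}$ preserving orientation from the origin, and preserves the order of traversal of triangles and edges. Since $\sigma$ swaps the two open half-planes determined by the line, the sign assigned to each triangle or edge in $\mathcal{PG}(1/t)$ is the opposite of the sign of its $\sigma$-preimage in $\mathcal{PG}(t)$, except in three situations where the defining rule does not distinguish left from right: the leftmost triangle (always $-$ by its special rule in both graphs), the rightmost triangle (always $+$), and any edge whose midpoint lies on $L_t$ (its $\sigma$-image lies on $L_{1/t}$, so the rule gives $-$ in both). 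A brief case analysis of the equations $q(2j+1)=2pi$, $2qi=p(2j+1)$, and $q(2j+1)=p(2i+1)$ for coprime $p,q$ in the range of the pre-snake graph shows that the unique such edge is the central edge, whose midpoint is the centroid $(q/2,p/2)$ of $L_t$.

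Combined with the canonical semi-palindrome expression from Lemma~\ref{lem:semi-palindrome}, a block-by-block check places the central $k$ signs at the boundary between blocks $\ell/2$ and $\ell/2+1$, and the sign-flipping rule above yields
\begin{equation*}
F^+(k,1/t)=[1,\, a_1-1,\, a_2,\,\ldots,\, a_{\ell/2-1},\, (\text{middle of }G^+(k,t)),\, a_{\ell/2-1},\,\ldots,\, a_2,\, a_1-1,\, 1],
\end{equation*}
where the ``middle of $G^+(k,t)$'' means the two central entries $a_{\ell/2}$ and $a_{\ell/2}\pm k$ appear in swapped order. To extract the value, note that $G^+(k,t)$'s matrix equals $\bigl(\begin{smallmatrix} m & u^+ \\ v^- & w \end{smallmatrix}\bigr)$ with $m=m_{k,t}$, $u^+=u^+_{k,t}$, $v^-=v^-_{k,t}$, $w=(u^+v^-+1)/m$ by part (2) and Theorem~\ref{thm:u_t-Markov-monodromy-cohn}. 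Writing this matrix as $A_1B'A_1$ with $A_1=\bigl(\begin{smallmatrix}a_1&1\\1&0\end{smallmatrix}\bigr)$ determines $B'$ uniquely. Then, using $P=\bigl(\begin{smallmatrix}1&1\\1&0\end{smallmatrix}\bigr)$, $A_1'=\bigl(\begin{smallmatrix}a_1-1&1\\1&0\end{smallmatrix}\bigr)$, and the identity $PA_1'=\bigl(\begin{smallmatrix}a_1&1\\a_1-1&1\end{smallmatrix}\bigr)$ together with its transpose on the right, a direct computation of the matrix $N=PA_1'B'A_1'P$ for $F^+(k,1/t)$ yields $N_{11}=m$, $N_{12}=m-u^+$, and $N_{21}=m-v^-$. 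Hence $F^+(k,1/t)=N_{11}/N_{21}=m/(m-v^-)=m/v^+_{k,t}$ by Proposition~\ref{prop:uv-relation}(2).

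Part (3) then follows from part (1) by applying Remark~\ref{rmk:reverse-sign} to $\mathcal{PG}(1/t)$: $G^+(k,1/t)$ is the reversal of $F^+(k,1/t)$'s continued fraction, whose matrix is $N^T$, so $G^+(k,1/t)=N_{11}/N_{12}=m/(m-u^+)=m/u^-_{k,t}$. The main obstacle is the geometric sign analysis in part (1), in particular the number-theoretic lemma identifying the central edge as the unique edge with midpoint on $L_t$ and the careful block-restructuring bookkeeping for both semi-palindrome cases of Lemma~\ref{lem:semi-palindrome}; the matrix calculation, though notationally heavy, is essentially mechanical once the new continued fraction's block structure is fixed.
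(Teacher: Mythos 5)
Your proposal is correct and follows essentially the same route as the paper: part (2) via the central-sign reversal together with Theorem~\ref{thm:snakegraph-continuedfraction}, Corollary~\ref{cor:trdet} and Proposition~\ref{prop:uv-relation}; part (1) via the slope-one reflection relating $\mathcal{PG}(t)$ and $\mathcal{PG}(1/t)$, which is exactly the content of Proposition~\ref{prop:tto1/t} that you re-derive; and part (3) by reversing once more (taking care, as you do, to reverse the canonical semi-palindrome expression). The only real difference is bookkeeping: you extract the values by a transfer-matrix computation (which indeed yields $N_{11}=m$, $N_{21}=m-v^-_{k,t}$, $N_{12}=m-u^+_{k,t}$, even in the degenerate case $\ell=2$ where $a_\ell\neq a_1$), whereas the paper applies the elementary identity that $[1,b_1-1,b_2,\dots,b_n]=p/(p-q)$ when $[b_1,\dots,b_n]=p/q$.
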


By Proposition \ref{lem:semi-palindrome}, we have the following proposition.

\begin{proposition}\label{prop:F-G-relation}
 Let $t\in(0,\infty)$. The equality $F^+(k,t)=[a_1,\dots,a_\ell]$ holds, if and only if $G^+(k,t)=[a_\ell,\dots,a_1]$ holds, where $[a_1,\dots,a_\ell]$ is the canonical semi-palindrome expression. 
\end{proposition}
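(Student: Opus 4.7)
The plan is to show that the sign sequence defining $G^+(k,t)$ is obtained from the one defining $F^+(k,t)$ by reversing its order and flipping all signs, an operation that leaves the multiset of run lengths unchanged but reverses their order. First, I will invoke Lemma~\ref{lem:semi-palindrome} (whose proof extends from $t\in(0,1]$ to $t\in(0,\infty)$ verbatim, since the $180^\circ$ point symmetry of $\mathcal{PG}(t)$ does not depend on whether $t\leq 1$) to decompose the sign sequence $S$ of $F^+(k,t)$ as $S = L\cdot (-)^k\cdot R$, where $(-)^k$ is the block of $k$ minus signs coming from the center edge of $\mathcal{PG}(t)$ and $L$, $R$ are the sign sequences arising from the two halves of $\mathcal{PG}(t)$ on either side of that edge.

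The next step is to exploit the $180^\circ$ rotational symmetry of $\mathcal{PG}(t)$ about the midpoint of $L_t$. Under this rotation, each minus-signed triangle in Figure~\ref{fig:minus-righttriangles} maps to a plus-signed triangle in Figure~\ref{fig:plus-righttriangles}, and each minus-signed edge configuration in Figure~\ref{fig:minus-edge} maps to a plus-signed configuration in Figure~\ref{fig:plus-edge}. Consequently $R = -\mathrm{rev}(L)$, meaning $R$ is obtained from $L$ by reversing the order and flipping all signs. Moreover, the $k$ central signs are always negative: by Lemma~\ref{lem:semi-palindrome}, the run containing them sits at a position whose parity in the alternating pattern $-,+,-,+,\dots$ (starting from the leftmost triangle) forces its sign to be $-$ in both canonical forms of the semi-palindrome expression.

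By definition, the sign sequence of $G^+(k,t)$ is $S' = L\cdot (+)^k\cdot R$. Using $R=-\mathrm{rev}(L)$, one checks directly that
\[
-\mathrm{rev}(S) \;=\; -\mathrm{rev}(L\cdot (-)^k\cdot R) \;=\; (-\mathrm{rev}(R))\cdot (+)^k\cdot (-\mathrm{rev}(L)) \;=\; L\cdot (+)^k\cdot R \;=\; S'.
\]
Since reversing a sign sequence reverses the order of its runs without altering their lengths and flipping signs preserves run lengths, the runs of $S'$ are exactly $(a_\ell, a_{\ell-1}, \dots, a_1)$, and therefore $G^+(k,t)=[a_\ell,\dots,a_1]$. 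This proves the forward direction, and the converse is immediate: both $F^+(k,t)$ and $G^+(k,t)$ are uniquely determined by $(k,t)$, and the operation $F^+\mapsto G^+$ is an involution (flipping the $k$ central signs twice restores them), so the correspondence $[a_1,\dots,a_\ell]\leftrightarrow[a_\ell,\dots,a_1]$ is bijective.

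The main obstacle will be the uniform case analysis required to verify $R=-\mathrm{rev}(L)$: one has to check that the $180^\circ$ rotation swaps each of the three signed triangle shapes in Figures~\ref{fig:minus-righttriangles}--\ref{fig:plus-righttriangles} and each of the edge configurations in Figures~\ref{fig:minus-edge}--\ref{fig:plus-edge}, and that these individual swaps assemble consistently into the claimed symmetry of the full sign sequence. This is largely bookkeeping, but must be executed uniformly across all triangle orientations and edge types. The only other subtlety is confirming the sign of the central block, which is ultimately forced by the parity analysis already contained in Lemma~\ref{lem:semi-palindrome}.
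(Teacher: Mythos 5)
Your proposal is correct and follows essentially the same route as the paper: the paper's proof is precisely the point-symmetry argument packaged in Lemma \ref{lem:semi-palindrome} together with Remark \ref{rmk:reverse-sign}, and you are simply making the symmetry bookkeeping ($R=-\mathrm{rev}(L)$, hence $S'=-\mathrm{rev}(S)$) explicit. One small remark: the negativity of the $k$ central signs needs no parity argument---it is immediate from the sign rule, since the midpoint of the central edge lies on $L_t$ and hence is ``not on the right side'' of $L_t$.
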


Moreover, we have a relation between $F^+(k,t)$ and $F^+(k,1/t)$ (resp. $G^+(k,t)$ and $G^+(k,1/t)$) as follows:

\begin{proposition}\label{prop:tto1/t}
 Let $t\in(0,1]$. If $F^+(k,t)=[a_1,\dots,a_\ell]$, then we have $F^+(k,1/t)=[1,a_{\ell}-1,a_{\ell-1},\dots,a_2,a_1-1, 1]$. The same holds true for $G^+(k,t)$ as well.
\end{proposition}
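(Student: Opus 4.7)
The plan is to reduce Proposition \ref{prop:tto1/t} to an elementary continued-fraction manipulation by expressing both sides through the characteristic numbers $u^+_{k,t}$ and $v^{\pm}_{k,t}$.

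By Theorem \ref{thm:u_t-denominator}, the hypothesis $F^+(k,t)=[a_1,\dots,a_\ell]$ is the same as $[a_1,\dots,a_\ell]=m_{k,t}/u^+_{k,t}$; since $\ell$ is even, Proposition \ref{prop:F-G-relation} gives $[a_\ell,\dots,a_1]=G^+(k,t)$, which by Theorem \ref{thm:uv-continued-fraction}(2) equals $m_{k,t}/v^-_{k,t}$. Applying the elementary identity $[c_1,c_2,\dots]-1=[c_1-1,c_2,\dots]$ (valid when $c_1\ge 2$, which holds here since $a_\ell=a_1=2k+2$ by the semi-palindrome structure and Proposition \ref{prop:presnake-relation}) yields
\[
[a_\ell-1,a_{\ell-1},\dots,a_1]=\frac{m_{k,t}-v^-_{k,t}}{v^-_{k,t}},
\]
and hence
\[
[1,a_\ell-1,a_{\ell-1},\dots,a_1]=1+\frac{v^-_{k,t}}{m_{k,t}-v^-_{k,t}}=\frac{m_{k,t}}{m_{k,t}-v^-_{k,t}}=\frac{m_{k,t}}{v^+_{k,t}}=F^+(k,1/t),
\]
using Proposition \ref{prop:uv-relation}(2) for the penultimate equality and Theorem \ref{thm:uv-continued-fraction}(1) for the last. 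The trailing ``$,1$'' in the claimed formula is produced by the standard absorption $[\dots,a_1]=[\dots,a_1-1,1]$, again valid since $a_1\ge 2$. The $G^+$ assertion follows by running the same argument with $(u^+_{k,t},v^+_{k,t})$ and $(v^-_{k,t},u^-_{k,t})$ swapped, using the symmetric identity $u^-_{k,t}=m_{k,t}-u^+_{k,t}$ and Theorem \ref{thm:uv-continued-fraction}(3).

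The main point of care is the edge case $t=1$, which lies in the statement's range but outside the open interval $(0,1)$ in which the formula $a_1=2k+2$ holds; here one verifies the proposition by hand using $F^+(k,1)=[k+1,1]$ and the convention $[\dots,c,0,d,\dots]=[\dots,c+d,\dots]$ to interpret $[1,0,k,1]=[k+1,1]$. A cleaner (and combinatorially transparent) alternative, which avoids any appeal to Theorem \ref{thm:uv-continued-fraction}, is to reflect $\mathcal{PG}(t)$ about the line $y=x$---a move that carries $\mathcal{PG}(t)$ to $\mathcal{PG}(1/t)$ and swaps the two half-planes of the defining segment---and observe that this flips every sign in the sign sequence except three: the leftmost triangle (fixed as $-$ by its dedicated rule), the rightmost triangle (fixed as $+$ since $L_t$ exits through a vertex, making both sides triangles), and the unique central edge whose midpoint equals the center $(b/2,a/2)$ of $L_t$ (fixed as $-$, since its midpoint lies on $L_t$). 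A parity argument using $\gcd(a,b)=1$ shows the central edge always exists. Tracking this ``flip-except-three'' operation through the block structure of Lemma \ref{lem:semi-palindrome} redistributes the blocks to produce $[1,a_\ell-1,a_{\ell-1},\dots,a_2,a_1-1,1]$ directly.
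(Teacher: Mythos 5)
Your primary argument is circular relative to the way these results are established in the paper. The identification $F^+(k,1/t)=m_{k,t}/v^+_{k,t}$ that you invoke at the last step is exactly Theorem \ref{thm:uv-continued-fraction}(1), and the paper proves that statement (and part (3), which your $G^+$ argument needs) \emph{from} Proposition \ref{prop:tto1/t}; only part (2), $G^+(k,t)=m_{k,t}/v^-_{k,t}$, is proved independently of it. So as written the main route begs the question. It could be rescued only by supplying an independent proof that the numerator/denominator of $F^+(k,1/t)$ are $m_{k,t}$ and $v^+_{k,t}$, e.g.\ by extending the snake-graph/matrix induction of Theorem \ref{thm:M_t-C_t-combinatorics} to slopes greater than $1$ (this is what Theorem \ref{thm:b_t-c_t-uv1} records), which is a substantially longer detour than the proposition itself. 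Two smaller points in the same passage: the claim $a_1=a_\ell=2k+2$ is not true in general (e.g.\ $F^+(k,1/2)=[2k+2,k+2]$); what you actually need is only that the first and last entries are at least $2$, which does hold since by Proposition \ref{prop:presnake-relation} they are always $2k+2$ or $k+2$. And an equality of values of continued fractions does not by itself identify the sign-count sequence $F^+(k,1/t)$ with the displayed tuple; you should add that the number of blocks is even (the sign sequence starts with $-$ and ends with $+$), which together with the standard uniqueness of regular continued-fraction expansions removes the final-$1$ ambiguity.

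Your ``alternative'' argument, by contrast, is essentially the paper's own proof: the paper reflects $\mathcal{PG}(t)$ across the slope-$1$ line through the bottom-left vertex, observes that this carries $\mathcal{PG}(t)$ to $\mathcal{PG}(1/t)$ while changing every sign except those at the initial triangle, the terminal triangle, and the central edge, and then reads off the run lengths using the semi-palindrome structure of Lemma \ref{lem:semi-palindrome}. Your bookkeeping (flip-except-three, existence of the central edge via $\gcd$ parity) matches this, and your explicit treatment of the boundary case $t=1$, where the stated formula requires the convention $[\dots,c,0,d,\dots]=[\dots,c+d,\dots]$, is a detail the paper passes over. So the correct course is to promote the reflection argument to the main proof and either drop the characteristic-number derivation or present it only as a consistency check, since in the paper's logical order it is a consequence, not a source, of this proposition.
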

\begin{proof}
 By applying the reflection of a slope 1 to $\mathcal{PG}(t)$ along the line passing through the vertex at the bottom-left of the graph, we obtain $\mathcal{PG}(1/t)$. In this case, the associated signs on $\mathcal{PG}(t)$ change for all except those associated with the initial triangle, terminal triangle, and the central edge. If we  trace this sequence of signs in reverse, then all signs except the last one coincide with those associated with $\mathcal{PG}(t)$.   
\end{proof}

\begin{proof}[Proof of Theorem \ref{thm:uv-continued-fraction}]
We set $F^+(k,t)=[a_1,\dots,a_\ell]$. First, we prove (2). By Proposition \ref{prop:F-G-relation}, we have $G^+(k,t)=[a_\ell,\dots,a_1]$. By Theorem \ref{thm:snakegraph-continuedfraction}, we have
\[G^+(k,t)=[a_\ell,\dots,a_1]=\dfrac{m(a_\ell,\dots,a_1)}{m(a_{\ell-1},\dots,a_1)}.\]
By Corollary \ref{cor:k-gen.snake-markov} (1), we have $m(a_\ell,\dots,a_1)=m_{k,t}$. Moreover, by Corollary \ref{cor:trdet} (1), Theorem \ref{thm:u_t-denominator}, and Proposition \ref{prop:uv-relation} (2), we have 
\[m(a_{\ell-1},\dots,a_1)=m(a_{1},\dots,a_{\ell-1})=m(a_{2},\dots,a_{\ell})+k=u_{t}^++k=v^-_{t}.\]
Next, we will prove (1). By Proposition \ref{prop:tto1/t}, we have
\[F^+(k,1/t)=[1,a_\ell-1,\dots,a_2,a_1-1,1]=[1,a_\ell-1,\dots,a_2,a_1].\]
By the result of (2), we have \[[a_\ell,\dots,a_1]=\dfrac{m_{k,t}}{u^+_{k,t}+k}.\]
Therefore, by Proposition \ref{prop:uv-relation} (2), we have 
\[F^+(k,1/t)=[1,a_\ell-1,\dots,a_1]=\dfrac{m_{k,t}}{m_{k,t}-u^+_{k,t}-k}=\dfrac{m_{k,t}}{v^{+}_{k,t}}.\]
Finally, we will prove (3). By Proposition \ref{prop:F-G-relation} and the result of (2), we have
\[G^+(k,1/t)=[1,a_1-1,\dots,a_\ell-1,1]\]
(note that to use Proposition \ref{prop:F-G-relation}, we must use the canonical semi-palindrome expression of $F^+(k,1/t)$). By Theorem \ref{thm:u_t-denominator} and Proposition \ref{prop:uv-relation}, we have
\[G^+(k,1/t)=[1,a_1-1,\dots,a_\ell-1,1]=[1,a_1-1,\dots,a_\ell]=\dfrac{m_{k,t}}{m_{k,t}-u^+_{k,t}}=\dfrac{m_{k,t}}{u^{-}_{k,t}}.\]
\end{proof}

\begin{remark}
The four graphs $\mathcal{G}(F^+(k,t))$, $\mathcal{G}(F^+(k,1/t))$, $\mathcal{G}(G^+(k,t))$, and $\mathcal{G}(F^+(k,1/t))$ are congruent.
\end{remark}

While the fraction labelings of $k$-GC triples and $k$-MM matrices are considered only for the interval $[0,1]$ in the above, these are also conceivable for irreducible fractions greater than $1$. In this case, similar relations with continued fractions as those seen for the interval $[0,1]$ can be observed.

\begin{theorem}\label{thm:b_t-c_t-uv1}
   For an irreducible fraction $t\in (0,1)$, if $F^{+}(k,t)=[a_1,\dots,a_n]$, then the following equalities hold:
   \begin{itemize}\setlength{\leftskip}{-15pt}
       \item [(1)] $M_{\frac{1}{t}}(k,0)=\begin{bmatrix}
        -m(a_1-1,\dots,a_\ell) &   m(a_1,\dots,a_\ell)\\
         -m(a_1-1,\dots,a_\ell-1) &m (a_1,\dots,a_\ell-1)
       \end{bmatrix}=\begin{bmatrix}
   -u^-_{k,t}&m_{k,t}\\ -w'_{k,t} & v^+_{k,t}
   \end{bmatrix}$,
   \vspace{2mm}
   \item [(2)] $C_{\frac{1}{t}}(k,-k)=\begin{bmatrix}
   v^+_{k,t}&m_{k,t}\\ (3k+3)v^+_{k,t}-w'_{k,t} & (3k+3)m_{k,t}-u^-_{k,t}
   \end{bmatrix}$,
   \end{itemize}
where $w'_{k,t}=\dfrac{u_{k,t}^-v_{k,t}^++1}{m_{k,t}}$.   
\end{theorem}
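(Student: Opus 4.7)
The plan is to reduce the statement to the combinatorial formula already established in Theorem \ref{thm:M_t-C_t-combinatorics}, via Proposition \ref{prop:tto1/t} (which gives $F^+(k,1/t)=[1,a_\ell-1,a_{\ell-1},\dots,a_2,a_1-1,1]$) together with two elementary snake-graph identities: the reversal symmetry $m(\mathcal{G}[b_1,\dots,b_m])=m(\mathcal{G}[b_m,\dots,b_1])$ (the two graphs are congruent under a $180^\circ$ rotation), and the absorption identity $\mathcal{G}[1,b_1,\dots,b_m]=\mathcal{G}[b_1+1,b_2,\dots,b_m]$ built into the snake-graph construction (together with its mirror at the right end).

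The first step is to extend Theorem \ref{thm:M_t-C_t-combinatorics}(1) from $s\in(0,1]$ to every irreducible $s\in(0,\infty)$. For $s>1$ the Farey triple containing $s$ is the image under $u\mapsto 1/u$ of a Farey triple in $[0,1]$, and the recursion derived from $M_{1/s}(k,0)M_{1/t}(k,0)M_{1/r}(k,0)=T$ has the same shape as in the original proof, so the three-case analysis from Proposition \ref{prop:presnake-relation} and the induction on depth from the proof of Theorem \ref{thm:M_t-C_t-combinatorics} carry over verbatim after swapping the roles of ``left'' and ``right.'' The second step applies this extended formula to $F^+(k,1/t)=[c_1,\dots,c_{\ell+2}]$; applying the reversal and absorption identities at both endpoints collapses the four resulting matching numbers to $m(a_1,\dots,a_\ell)$, $m(a_1,\dots,a_\ell-1)$, $m(a_1-1,\dots,a_\ell)$, and $m(a_1-1,\dots,a_\ell-1)$, which is the first matrix in statement~(1).

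To identify these entries with characteristic numbers, I would use: Corollary \ref{cor:k-gen.snake-markov} for the $(1,2)$-entry $m_{k,t}$; Theorem \ref{thm:uv-continued-fraction}(1) (applied to $F^+(k,1/t)$) for the $(2,2)$-entry $v^+_{k,t}$; the extended Corollary \ref{cor:trdet}(1) applied to $F^+(k,1/t)$ for the $(1,1)$-entry, which yields $m(a_1-1,\dots,a_\ell)=v^+_{k,t}+k=u^-_{k,t}$ via Proposition \ref{prop:uv-relation}(2); and Corollary \ref{cor:trdet}(2) applied to $F^+(k,1/t)$, which gives $u^-_{k,t}v^+_{k,t}-m(a_1-1,\dots,a_\ell-1)\,m_{k,t}=-1$ and hence $m(a_1-1,\dots,a_\ell-1)=w'_{k,t}$. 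Statement~(2) is then derived from~(1) exactly as Theorem \ref{thm:u_t-Markov-monodromy-cohn}(2) is obtained from Theorem \ref{thm:u_t-Markov-monodromy-cohn}(1): the trace condition $\mathrm{tr}(C_{1/t}(k,-k))=(3k+3)m_{k,t}-k$ and $\det C_{1/t}(k,-k)=1$ determine the bottom row from the top row.

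The main obstacle I expect is the first step, the rigorous extension of Theorem \ref{thm:M_t-C_t-combinatorics}(1) and its supporting results (Proposition \ref{prop:presnake-relation}, Corollary \ref{cor:trdet}) to fractions $s\geq 1$. Although the inductive skeleton is formally identical after inverting fractions, it requires re-examining the base cases in the right subtree of $\mathrm{F}\mathbb{T}$ and checking that the sign conventions in the construction of $F^+(k,s)$ remain consistent on $(1,\infty)$. The remaining snake-graph manipulations are routine once this extension is established.
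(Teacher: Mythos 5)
Your plan for part (1) is essentially the route the paper intends: the paper omits the proof precisely because it is ``almost the same'' as Theorem \ref{thm:M_t-C_t-combinatorics}, i.e.\ one reruns the Farey-tree induction (base cases plus the case analysis of Proposition \ref{prop:presnake-relation}) on fractions greater than $1$, using $XYZ=T$ for the $k$-MM triple at each vertex. Your extra layer --- extending the general formula to $s\in(0,\infty)$, then specializing to $s=1/t$ via Proposition \ref{prop:tto1/t} and collapsing with the reversal/absorption identities --- does produce exactly the four matching numbers $m(a_1,\dots,a_\ell)$, $m(a_1,\dots,a_\ell-1)$, $m(a_1-1,\dots,a_\ell)$, $m(a_1-1,\dots,a_\ell-1)$, and the identifications with $m_{k,t}$, $v^+_{k,t}$, $u^-_{k,t}$, $w'_{k,t}$ via Corollary \ref{cor:k-gen.snake-markov}, Theorem \ref{thm:uv-continued-fraction}(1), the extended Corollary \ref{cor:trdet} and Proposition \ref{prop:uv-relation}(2) are correct, granted the extension you flag as the main obstacle.

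The genuine gap is in your derivation of part (2). The trace and determinant of a $k$-GC matrix determine the bottom row only once the \emph{entire} top row is known; knowing just the $(1,2)$-entry $m_{k,t}$ together with $\mathrm{tr}=(3k+3)m_{k,t}-k$ and $\det=1$ leaves a one-parameter family (the matrices $C_{1/t}(k,\ell)$ for varying $\ell$ all share these data but have different $(1,1)$-entries). In the template you cite, the $(1,1)$-entry $u^+_{k,t}$ of $C_t(k,-k)$ was available beforehand --- from the induction in the proof of Theorem \ref{thm:M_t-C_t-combinatorics}(2), or from Proposition \ref{lem:index-of-Ct}, which is stated only for $t\in(0,1)$ --- whereas nothing in your argument shows that the $(1,1)$-entry of $C_{1/t}(k,-k)$ is $v^+_{k,t}$; statement (1) concerns $M_{1/t}(k,0)$, whose $(1,1)$-entry is $-u^-_{k,t}$, not an entry of the Cohn matrix. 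Two ways to close this: either extend the induction of Theorem \ref{thm:M_t-C_t-combinatorics}(2) to the right subtree as well (this is what the paper's ``almost the same'' proof does), or transfer (1) into (2) exactly, using $C_s(k,-k)=\psi\bigl(M_s(k,k)\bigr)=\psi\bigl(L^{-1}M_s(k,0)L\bigr)$ with $L=\begin{bmatrix}1&0\\k&1\end{bmatrix}$, which follows from Theorem \ref{thm:BT-CT2} and Proposition \ref{Markov-monodromy-matrix-ltol'}. A short computation using $\mathrm{tr}\,M_s(k,0)=-k$ shows that if $M_s(k,0)=\begin{bmatrix}\alpha&\beta\\\gamma&\delta\end{bmatrix}$ then $C_s(k,-k)=\begin{bmatrix}\delta&\beta\\(3k+3)\delta+\gamma&(3k+3)\beta+\alpha\end{bmatrix}$, and substituting $(\alpha,\beta,\gamma,\delta)=(-u^-_{k,t},\,m_{k,t},\,-w'_{k,t},\,v^+_{k,t})$ yields statement (2) verbatim.
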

We omit the proof of the above theorem because it is almost the same as Theorem \ref{thm:M_t-C_t-combinatorics}.
Theorems introduced so far states that only two out of the four characteristic numbers appear as the $(1,1)$-entry of the $k$-GC matrix. However, there is a case where the remaining two also appear. This occurs when $\ell$ in $C_t(k,\ell)$ is taken as $-k-1$.

\begin{theorem}\label{thm:b_t-c_t-uv2}
   For an irreducible fraction $t\in (0,1)$, the following equalities hold:
   \begin{itemize}\setlength{\leftskip}{-15pt}
    \item [(1)] $M_{t}(k,1)=\begin{bmatrix}
   v^+_{k,t}&m_{k,t}\\ -w'_{k,t} &-u^-_{k,t}
   \end{bmatrix}$,\quad $M_{\frac{1}{t}}(k,1)=\begin{bmatrix}
   u^+_{k,t}&m_{k,t}\\ -w_{k,t} &-v^-_{k,t}
   \end{bmatrix}$.
       \item [(2)] $C_{t}(k,-k-1)=\begin{bmatrix}
   -u^-_{k,t}&m_{k,t}\\ -((3k+3)u^-_{k,t}+w'_{k,t} ) &(3k+3)m_{k,t}+v_{k,t}
^+
   \end{bmatrix}$,\vspace{2mm}\\
   $C_{\frac{1}{t}}(k,-k-1)=\begin{bmatrix}
   -v^-_{k,t}&m_{k,t}\\ -((3k+3)v^-_{k,t}+w_{k,t} ) &(3k+3)m_{k,t}+u_{k,t}
^+  \end{bmatrix}$. 
   \vspace{2mm}
   \end{itemize}
\end{theorem}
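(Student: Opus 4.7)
The plan is to obtain all four matrices by conjugating the matrices already computed in Theorems \ref{thm:u_t-Markov-monodromy-cohn} and \ref{thm:b_t-c_t-uv1}, using the shift-by-$L$ isomorphisms in Propositions \ref{Markov-monodromy-matrix-ltol'} and \ref{prop:cohnmatrix-ltol'}, and then rewriting the entries via the linear identities between the characteristic numbers coming from Proposition \ref{prop:uv-relation}.

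For (1), I would start from
\[
M_t(k,0)=\begin{bmatrix}-v^-_{k,t} & m_{k,t}\\ -w_{k,t} & u^+_{k,t}\end{bmatrix}
\]
as in Theorem \ref{thm:u_t-Markov-monodromy-cohn}(1), and apply Proposition \ref{Markov-monodromy-matrix-ltol'} with $\ell=0$, $\ell'=1$, so that $L=\begin{bmatrix}1&0\\1&1\end{bmatrix}$ and $M_t(k,1)=L^{-1}M_t(k,0)L$. A direct computation then reduces the four entries to $-v^-_{k,t}+m_{k,t}$, $m_{k,t}$, $v^-_{k,t}-w_{k,t}-m_{k,t}+u^+_{k,t}$ and $-m_{k,t}+u^+_{k,t}$. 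Using $v^-_{k,t}=u^+_{k,t}+k$ and $u^-_{k,t}=m_{k,t}-u^+_{k,t}$ from Proposition \ref{prop:uv-relation}(2), the diagonal collapses immediately to $v^+_{k,t}$ and $-u^-_{k,t}$. The treatment of $M_{1/t}(k,1)$ is parallel: begin with the formula for $M_{1/t}(k,0)$ in Theorem \ref{thm:b_t-c_t-uv1}(1) and conjugate by the same $L$.

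For (2), I would apply Proposition \ref{prop:cohnmatrix-ltol'} to the matrices $C_t(k,-k)$ (from Theorem \ref{thm:u_t-Markov-monodromy-cohn}(2)) and $C_{1/t}(k,-k)$ (from Theorem \ref{thm:b_t-c_t-uv1}(2)), with $\ell=-k$, $\ell'=-k-1$, and $L=\begin{bmatrix}1&0\\-1&1\end{bmatrix}$. Again the $(1,1)$- and $(1,2)$-entries simplify directly, while the $(2,2)$-entry is handled using $v^+_{k,t}=m_{k,t}-u^+_{k,t}-k=(3k+4)m_{k,t}-(3k+3)m_{k,t}-v^-_{k,t}+\text{shift}$; this is just a reorganisation of the identity $v^+_{k,t}+v^-_{k,t}=m_{k,t}$ using $v^-_{k,t}=u^+_{k,t}+k$.

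The only nontrivial step is verifying the crossed-auxiliary identity
\[
w'_{k,t}=w_{k,t}+m_{k,t}-2u^+_{k,t}-k,
\]
which is exactly what is needed to convert the $(2,1)$-entries produced by the conjugation into the stated expressions involving $w'_{k,t}$ (in part (1)) or $w_{k,t}$ (in part (2) for $C_{1/t}$). I would verify it as follows: expand $m_{k,t}\,w'_{k,t}=u^-_{k,t}v^+_{k,t}+1=(m_{k,t}-u^+_{k,t})(m_{k,t}-u^+_{k,t}-k)+1$, and compare with $m_{k,t}\,w_{k,t}=u^+_{k,t}(u^+_{k,t}+k)+1$; the difference is $m_{k,t}^2-m_{k,t}(2u^+_{k,t}+k)$, giving the identity after dividing by $m_{k,t}$. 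This identity, together with Proposition \ref{prop:uv-relation}(2), then converts every output of the conjugation into the target form, completing the proof; the remaining work is purely routine $2\times 2$ matrix arithmetic and has no combinatorial or geometric obstruction.
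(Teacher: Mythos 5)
Your proposal is correct, and I checked the arithmetic: with $L=\begin{bmatrix}1&0\\1&1\end{bmatrix}$ one gets $L^{-1}M_t(k,0)L=\begin{bmatrix}m_{k,t}-v^-_{k,t}&m_{k,t}\\ v^-_{k,t}+u^+_{k,t}-m_{k,t}-w_{k,t}&u^+_{k,t}-m_{k,t}\end{bmatrix}$, and the relations $v^-_{k,t}=u^+_{k,t}+k$, $u^-_{k,t}=m_{k,t}-u^+_{k,t}$, $v^+_{k,t}+v^-_{k,t}=m_{k,t}$ together with your crossed identity $w'_{k,t}=w_{k,t}+m_{k,t}-2u^+_{k,t}-k$ (which does follow from $m_{k,t}w'_{k,t}-m_{k,t}w_{k,t}=u^-_{k,t}v^+_{k,t}-u^+_{k,t}v^-_{k,t}=m_{k,t}(m_{k,t}-2u^+_{k,t}-k)$) turn all four matrices into the stated forms; the $C$-cases with $L=\begin{bmatrix}1&0\\-1&1\end{bmatrix}$ work the same way. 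The paper gives no proof of this theorem at all, and its nearest proved analogues (Theorems \ref{thm:M_t-C_t-combinatorics} and \ref{thm:u_t-Markov-monodromy-cohn}, with Theorem \ref{thm:b_t-c_t-uv1} explicitly left to the reader) are established by the snake-graph/continued-fraction induction; your route instead leverages the $\ell$-shift conjugation isomorphisms of Propositions \ref{prop:cohnmatrix-ltol'} and \ref{Markov-monodromy-matrix-ltol'} to reduce everything to one $2\times 2$ computation plus the $w$--$w'$ identity, which is shorter and avoids any new combinatorics. Two small points you should make explicit: (i) the identification $M_t(k,\ell')=L^{-1}M_t(k,\ell)L$ (and likewise for $C_t$) uses that the fraction labeling is defined through the canonical graph isomorphism from the Farey tree, and canonical graph isomorphisms of full planar binary trees are unique, so the conjugation isomorphism must carry the vertex labeled $t$ to the vertex labeled $t$; (ii) your phrase ``$v^+_{k,t}=(3k+4)m_{k,t}-(3k+3)m_{k,t}-v^-_{k,t}+\text{shift}$'' is garbled and should simply read $(3k+4)m_{k,t}-v^-_{k,t}=(3k+3)m_{k,t}+v^+_{k,t}$, i.e.\ the identity $v^+_{k,t}+v^-_{k,t}=m_{k,t}$.
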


By taking appropriate values of $\ell$, we can construct $k$-GC matrices whose $(1,1)$ and $(2,2)$-entries of $C_t(k,-k),C_t(k,-k-1),C_{1/t}(k,-k),C_{1/t}(k,-k-1)$ are switched respectively.  

\begin{theorem}
   For an irreducible fraction $t\in (0,1)$, the following equalities hold:
   \begin{itemize}\setlength{\leftskip}{-15pt}
       \item [(1)] $C_{t}(k,2k+2)=\begin{bmatrix}
   (3k+3)m_{k,t}+u^-_{k,t} &m_{k,t}\\ (3k+3)v_{k,t}+w'_{k,t} & v^+_{k,t}
   \end{bmatrix}$,\vspace{2mm}\\
   $C_{\frac{1}{t}}(k,2k+2)=\begin{bmatrix}
    (3k+3)m_{k,t}+v^-_{k,t} &m_{k,t}\\ (3k+3)u^+_{k,t}+w_{k,t} & u^+_{k,t}
   \end{bmatrix}$, 
   \vspace{2mm}
   \item [(2)] $C_{t}(k,2k+3)=\begin{bmatrix}
   (3k+3)m_{k,t}+u^{+}_{k,t} &m_{k,t}\\ - ((3k+3)v^{-}_{k,t}+w_{k,t}) & -v^-_{k,t}
   \end{bmatrix}$,\quad \vspace{2mm}\\
   $C_{\frac{1}{t}}(k,2k+3)=\begin{bmatrix}
  (3k+3)m_{k,t}+v^{+}_{k,t} &m_{k,t}\\ -((3k+3)u^{-}_{k,t}+w'_{k,t}) & -u^-_{k,t}
   \end{bmatrix}$. 
   \end{itemize}
\end{theorem}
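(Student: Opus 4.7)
The plan is to apply the conjugation isomorphism of Proposition \ref{prop:cohnmatrix-ltol'}, which realizes $\mathrm{GC}\mathbb T(k,\ell) \to \mathrm{GC}\mathbb T(k,\ell')$ via $P \mapsto L^{-1}PL$ with $L = \begin{bmatrix}1&0\\ \ell'-\ell&1\end{bmatrix}$, to the known explicit formulas for $C_t(k,-k)$ and $C_{1/t}(k,-k)$ obtained in Theorems \ref{thm:u_t-Markov-monodromy-cohn} and \ref{thm:b_t-c_t-uv1}. A direct computation shows that for $P = \begin{bmatrix}a&b\\c&d\end{bmatrix}$ and $n = \ell'-\ell$,
\[
L^{-1} P L \;=\; \begin{bmatrix} a + nb & b \\ c + n(d-a) - n^2 b & d - nb \end{bmatrix},
\]
so that the $(1,2)$-entry is preserved, $nb$ is added to the $(1,1)$-entry, and $nb$ is subtracted from the $(2,2)$-entry. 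This makes the desired entries of $C_t(k,2k+2)$, $C_t(k,2k+3)$, $C_{1/t}(k,2k+2)$, and $C_{1/t}(k,2k+3)$ immediately accessible from the base case $\ell=-k$.

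For the first formula of (1), I would take $P = C_t(k,-k)$ and shift by $n = 2k+2-(-k) = 3k+2$. The new diagonal becomes $u^+_{k,t} + (3k+2)m_{k,t}$ and $(3k+3)m_{k,t} - v^-_{k,t} - (3k+2)m_{k,t} = m_{k,t} - v^-_{k,t}$. Using Proposition \ref{prop:uv-relation}(2), namely $v^-_{k,t} = u^+_{k,t}+k$ and $v^+_{k,t} = m_{k,t}-u^+_{k,t}-k$, the latter collapses to $v^+_{k,t}$, while the former can be rewritten via $u^-_{k,t} = m_{k,t}-u^+_{k,t}$ into the stated form expressed in terms of $(3k+3)m_{k,t}$ and $u^-_{k,t}$. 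The second formula of (1) is obtained in the same way from $C_{1/t}(k,-k)$. For (2), I would repeat the procedure with shift $n=3k+3$: starting from $C_t(k,-k)$ the new $(1,1)$-entry is directly $u^+_{k,t}+(3k+3)m_{k,t}$, and the new $(2,2)$-entry telescopes to $-v^-_{k,t}$; similarly for $C_{1/t}(k,-k)$.

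The $(2,1)$-entries are then forced by $\det C_t(k,\ell') = 1$, combined with the identities $u^+_{k,t}v^-_{k,t}+1 = m_{k,t}w_{k,t}$ and $u^-_{k,t}v^+_{k,t}+1 = m_{k,t}w'_{k,t}$ built into the definitions of $w_{k,t}$ and $w'_{k,t}$. Alternatively one substitutes directly into $c + n(d-a) - n^2 b$; an intermediate identity that shortcuts the algebra is
\[
w_{k,t} - w'_{k,t} \;=\; \frac{u^+_{k,t}v^-_{k,t} - u^-_{k,t}v^+_{k,t}}{m_{k,t}} \;=\; -(m_{k,t} - 2u^+_{k,t} - k),
\]
which is precisely the quantity needed to translate between the various equivalent presentations of the $(2,1)$-entry (for instance between $u^+_{k,t} - w_{k,t} + (3k+2)v^+_{k,t}$ and the stated $(3k+3)v^+_{k,t} \pm w'_{k,t}$).

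The main obstacle is therefore not conceptual but combinatorial: the four characteristic numbers $u^\pm_{k,t}, v^\pm_{k,t}$ are linked by the three relations of Proposition \ref{prop:uv-relation}(2), and each matrix entry admits several equivalent rewritings; one must simply pick the one that matches the form asserted in the theorem. Once the shift $n$ is chosen and the identity for $w_{k,t}-w'_{k,t}$ is in hand, no new structural input beyond the results of Section 7 is required.
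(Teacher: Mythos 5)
Your overall strategy is sound, and since the paper states this theorem without any proof, the $\ell$-shift conjugation you propose (Proposition \ref{prop:cohnmatrix-ltol'} applied to the explicit matrices $C_t(k,-k)$ and $C_{1/t}(k,-k)$ from Theorems \ref{thm:u_t-Markov-monodromy-cohn} and \ref{thm:b_t-c_t-uv1}) is exactly the argument the surrounding text suggests. Your conjugation formula, the choice of shifts $n=3k+2$ and $n=3k+3$, the use of Proposition \ref{prop:uv-relation}, and the identity $w_{k,t}-w'_{k,t}=2u^+_{k,t}+k-m_{k,t}$ are all correct, and your computation for part (2) does reproduce the stated matrices.

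For part (1), however, you assert that the result "can be rewritten \dots into the stated form" and you leave the $(2,1)$-entry as "$(3k+3)v^+_{k,t}\pm w'_{k,t}$"; this glosses over a genuine discrepancy. Carrying your own computation through, the $(1,1)$-entry of $C_t(k,2k+2)$ is $u^+_{k,t}+(3k+2)m_{k,t}=(3k+3)m_{k,t}-u^-_{k,t}$, with a minus sign, and the $(2,1)$-entry forced by $\det=1$ is $(3k+3)v^+_{k,t}-w'_{k,t}$; likewise $C_{1/t}(k,2k+2)$ has $(1,1)$-entry $(3k+3)m_{k,t}-v^-_{k,t}$ and $(2,1)$-entry $(3k+3)u^+_{k,t}-w_{k,t}$. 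The plus-sign matrices printed in part (1) have trace $(3k+5)m_{k,t}-2u^+_{k,t}-k\neq(3k+3)m_{k,t}-k$ and determinant $-1$, so they are not $k$-GC matrices at all; concretely, for $k=0$, $t=\tfrac12$ the conjugation gives $\left[\begin{smallmatrix}12&5\\7&3\end{smallmatrix}\right]$, whereas the printed formula would give $\left[\begin{smallmatrix}18&5\\11&3\end{smallmatrix}\right]$, which has determinant $-1$. So either part (1) of the statement carries sign typos that your proof should correct explicitly, or your claim of agreement with the stated form is unjustified; in any case the sign in "$\pm w'_{k,t}$" is not optional --- it is forced to be minus. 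Once you state and prove the corrected version of part (1) (and keep part (2) as you have it), the argument is complete and requires nothing beyond Section 7.
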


\section{Quotient singularities and $k$-GM numbers}
In this section, we discuss applications of $k$-GM numbers to algebraic geometry. In particular, we consider Hirzebruch-Jung continued fractions (shortly, HJ-continued fractions) for a $k$-GM number and its characteristic numbers. For simplicity of notation, we write HJ-continued fractions as follows:
\[[[b_1,\dots,b_\ell]]=b_1-\dfrac{1}{b_2-\dfrac{1}{\ddots-\dfrac{\ddots}{b_{\ell-1}-\dfrac{1}{b_\ell,}}}}\]
where $b_1,\dots,b_{\ell}$ are integers greater than or equal to $2$.
In this section, we treat the following HJ-continued fraction.
\begin{definition}\label{def:kwahlchainstwo}
Let $k\in \mathbb Z_{\geq 0}$. \emph{$k$-Wahl chains} are defined as follows.
\begin{itemize}\setlength{\leftskip}{-15pt}
\item [(i)] $[[k+2]]$ is a $k$-Wahl chain.
\item[(ii)] If $[[b_1,\dots,b_l]]$ is a $k$-Wahl chain, then $[[b_1+1,b_2,\dots,b_\ell,2]]$ and $[[2,b_1,\dots, b_{\ell-1},b_{\ell}+1]]$ are also $k$-Wahl chains.
\end{itemize}
\end{definition}

\begin{theorem}\label{thm:k-wahl-chain-Markov}
Let $m_{k,t}$ be a $k$-GM number labeled with an irreducible fraction $t \in (0,1]$, and let $u_{k,t}^+$ be its characteristic number. Then the HJ-continued fraction of $m_{k,t}/u_{k,t}^+$ is a $k$-Wahl chain.
\end{theorem}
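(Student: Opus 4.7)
The plan is to induct on the depth of $t$ in the Farey tree $\mathrm{F}\TT$ restricted to $(0,1]$. The base case $t=1/1$ is immediate: $m_{k,1/1}/u^+_{k,1/1}=(k+2)/1=[[k+2]]$, the seed of Definition \ref{def:kwahlchainstwo}(i). As a second anchor I would handle $t=1/2$ directly: Proposition \ref{lem:index-of-Ct} gives $u^+_{k,1/2}=k+2$ and $m_{k,1/2}=2k^2+6k+5$, and a short HJ-CF computation yields
\[
\frac{m_{k,1/2}}{u^+_{k,1/2}}=\frac{2k^2+6k+5}{k+2}=\left[\left[\,2k+3,\,\underbrace{2,\ldots,2}_{k+1}\,\right]\right],
\]
which is built from $[[k+2]]$ by $k+1$ successive applications of the append-and-increment operation $[[b_1,\ldots,b_\ell]]\mapsto[[b_1+1,b_2,\ldots,b_\ell,2]]$ of Definition \ref{def:kwahlchainstwo}(ii).

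For the inductive step at $t$ with Farey triple $(r,t,s)$ interior to $(0,1]$, the main tool is Theorem \ref{thm:smallk-GM}:
\[
\frac{m_{k,t}}{u^+_{k,t}}=\left[\left[\frac{m_{k,r}}{v^-_{k,r}},\ 3k+4,\ \frac{m_{k,s}}{v^-_{k,s}}\right]\right].
\]
By Proposition \ref{prop:uv-relation}(2) the side pieces involve $v^-_{k,\tau}=u^+_{k,\tau}+k$, so generically they are not themselves $k$-Wahl chains (for instance, with $k=1$ and $\tau=1/3$ one has $m_{k,\tau}/v^-_{k,\tau}=61/14=[[5,2,3,2,2,2]]$, which is directly seen not to be a $1$-Wahl chain). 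The goal is nonetheless to show that the three-part concatenation on the right is a $k$-Wahl chain by producing an explicit reverse unraveling, via the two operations of Definition \ref{def:kwahlchainstwo}(ii), down to the seed $[[k+2]]$.

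The main obstacle is a technical lemma pinning down the HJ-CF of $m_{k,\tau}/v^-_{k,\tau}$ in terms of the Wahl chain $m_{k,\tau}/u^+_{k,\tau}$ available by induction; the shift $u\mapsto u+k$ is non-local and affects both endpoints of the chain at once. I plan to establish this lemma via the $SL(2,\ZZ)$-matrix encoding $[[b_1,\ldots,b_\ell]]\leftrightarrow\prod_i\begin{bmatrix}b_i&-1\\1&0\end{bmatrix}$ combined with the explicit Cohn-matrix formulas from Theorem \ref{thm:u_t-Markov-monodromy-cohn}, pinpointing the effect of the $+k$ shift as a controlled insertion of a $2$-block into the chain. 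Substituting this description into Theorem \ref{thm:smallk-GM}, the central $3k+4$ merges cleanly with the adjacent blocks; one then strips the tail from the $s$-side via reverse append-and-increment steps until $3k+4$ is exposed, removes it by one further reverse step, and peels the head from the $r$-side symmetrically via reverse prepend-and-increment steps, reducing to a Wahl chain at shallower depth that is covered by the inductive hypothesis. The spine boundary cases $t=1/n$ and $t=(n-1)/n$, where Theorem \ref{thm:smallk-GM} does not apply directly because $0/1$ and $1/1$ are not interior Farey mediants, are handled by a separate induction along each spine, using the matrix recurrence $M_{1/(n+1)}(k,0)=M_{0/1}(k,0)^{-1}T\,M_{1/n}(k,0)^{-1}$ from the proof of Theorem \ref{thm:M_t-C_t-combinatorics}(1) to identify the single Wahl operation taking $m_{k,1/n}/u^+_{k,1/n}$ to $m_{k,1/(n+1)}/u^+_{k,1/(n+1)}$ (and symmetrically for the right spine).
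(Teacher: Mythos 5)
Your base cases are fine, but the inductive step has a genuine gap, and it sits exactly where the real content of the theorem lies. You start from Theorem \ref{thm:smallk-GM}, $\frac{m_{k,t}}{u^+_{k,t}}=\bigl[\bigl[\frac{m_{k,r}}{v^-_{k,r}},3k+4,\frac{m_{k,s}}{v^-_{k,s}}\bigr]\bigr]$, and claim one can ``strip the tail from the $s$-side until $3k+4$ is exposed, remove it, and peel the head from the $r$-side symmetrically.'' But the reverse of each generation rule in Definition \ref{def:kwahlchainstwo}(ii) acts on \emph{both} ends at once: it deletes a $2$ at one end while decrementing the entry at the opposite end. So stripping the $s$-side necessarily consumes the head entries coming from the $r$-side, and the reduction only closes if the lengths of the trailing $2$-blocks match the head entries exactly. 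That matching is precisely the semi-palindrome symmetry of $F^+(k,t)$ (Lemma \ref{lem:semi-palindrome}), which ties the continued-fraction digits of $r$ to those of $s$; since $r$ and $s$ are unrelated fractions, nothing in your setup supplies this. Moreover the reduction, when it does work, does not land on a shallower $m_{k,t'}/u^+_{k,t'}$: for $k=0$, $t=2/5$ one has $194/75=[[3,3,2,4,3,2]]$, and the reverse moves pass through $[[3,2,4,2]]=29/12$, which is $m/u^-$ for $29$, not $m/u^+$ of any Farey ancestor, before terminating at the seed $[[2]]$. So the inductive hypothesis you propose to invoke is not what the unraveling reaches. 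Your spine claim is also false as stated: for $k=0$, $m_{0,1/2}/u^+=5/2=[[3,2]]$ and $m_{0,1/3}/u^+=13/5=[[3,3,2]]$ are not related by a single Wahl operation (the one-step children of $[[3,2]]$ are $[[4,2,2]]$ and $[[2,3,3]]$); the actual spine recurrence (Proposition \ref{prop:fibonacchi-branch}) relates $1/a$ to $1/(a+2)$ and uses $2k+2$ operations. Finally, your ``technical lemma'' on $m_{k,\tau}/v^-_{k,\tau}$ is only a plan; note that by Proposition \ref{prop:F-G-relation} and Theorem \ref{thm:uv-continued-fraction}(2) this fraction is $G^+(k,\tau)$, the \emph{reversal} of the regular continued fraction $F^+(k,\tau)$, whose HJ-expansion differs globally from that of $m_{k,\tau}/u^+_{k,\tau}$, not by a local insertion of a $2$-block.

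For comparison, the paper's proof needs no induction on the Farey tree at all: Lemma \ref{lem:semi-palindrome} and Theorem \ref{thm:u_t-denominator} give the semi-palindromic regular continued-fraction expansion of $m_{k,t}/u^+_{k,t}$, Corollary \ref{cor:posi-to-negative-cf} converts it into an explicit HJ-continued fraction, and the two-sided peeling (using that $[[b_1+2,b_2,\dots,b_s,(2)^{b_1}]]$ is a $k$-Wahl chain if and only if $[[2,b_2,\dots,b_s]]$ is) reduces it directly to $[[k+2]]$ — the palindromic matching that your argument is missing is built in from the start. If you want to salvage your route, you would have to import that symmetry anyway, at which point the direct argument is shorter.
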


 $0$-Wahl chains are defined by Urzúa and Zúñiga \cite{UZ}, and they showed that continued fractions obtained from Markov numbers are $0$-Wahl chains. Theorem \ref{thm:k-wahl-chain-Markov} is a generalization of their result.

\subsection{Hirzebruch-Jung continued fractions and toric surface singularities}
We recall a relation between HJ-continued fractions and algebraic geometry.
 Let $G$ be a finite cyclic group generated by the matrix $\begin{pmatrix} 
 \varepsilon^a & 0 \\ 0 & \varepsilon^b 
\end{pmatrix}$, where $a, b$, and $r$ are  positive integers, and $\varepsilon$ is a primitive $r$-th root of unity. We abbreviate this matrix to $\displaystyle \frac{1}{r}(a,b)$.
Since the group $G$ acts on $\CC^2$ by
$
(x,y) \mapsto (\varepsilon^a x, \varepsilon^b y)
$, then we have the quotient space $\CC^2/G$, which is called a \emph{two-dimensional cyclic quotient singularity}.
This is a classical research object in algebraic geometry. The following facts are well known (see \cite{CLS}*{Chapter 10}):
\begin{itemize}\setlength{\leftskip}{-15pt}
  \item If $G$ is a subgroup of $\slmc{2}$, then $\CC^2/G$ is a $A_{r-1}$-type singularity,
  \item an affine toric singular surface is isomorphic to a two-dimensional cyclic quotient singularity, and
  \item a minimal resolution of a two-dimensional cyclic quotient singularity is constructed by the HJ-continued fractions.
\end{itemize}

\begin{definition}
  Let $X$ be a normal variety and denote by $X_{\rm{sing}}$ the set of singular points of $X$. Let $Y$ be a variety. A birational morphism $f:Y \to X$ is a \emph{resolution of singularities} of $X$ if $Y$ is smooth and $f$ induces an isomorphism 
  $$
  Y \backslash f^{-1}(X_{\rm{sing}}) \cong X \backslash X_{\rm{sing}}
  $$
  as varieties.
\end{definition}

 The subset $E$ of $Y$ is called the \emph{exceptional set} if $f(E)=X_{{\rm sing}}$ holds. In geometry of singularities, the properties of singularities appear in the exceptional set of a resolution of singularities.
For a two-dimensional cyclic quotient singularity, the exceptional set is a union of curves $E_1,\dots, E_s$. The self-intersection number of each exceptional curve is given by the HJ-continued fraction $[[b_1,\dots,b_s]]$ (that is, the number of self-intersections of $E_i$ is $-b_i$). In addition, in toric geometry, we can specifically construct a resolution using the HJ-continued fraction.

\subsection{Cyclic quotient singularities and Markov numbers}

\begin{definition}
For a $k$-GM triple $(m_r,m_t,m_s)$ with $m_t > m_r ,m_s$, we define a \emph{$k$-GM group} $G_{m_t}$ as a cyclic group generated by $\displaystyle \frac{1}{m_t}(m_r,m_s)$. 
The quotient space $\CC^2/G_{m_t}$ is called a \emph{$k$-GM quotient singularity}.
\end{definition}

\begin{proposition}\label{Markovquotient}
Let $(m_r,m_t,m_s)$ be a $k$-GM triple and let $u^+_{t}$ and $v^+_{t}$ be characteristic numbers of $m_t$. Then we have
$$
G_{m_t}=\left\langle \frac{1}{m_t}(1,u^+_{t}) \right\rangle=\left\langle \frac{1}{m_t}(v^+_{t},1) \right\rangle.
$$

\end{proposition}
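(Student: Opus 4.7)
The plan is to reduce the equality of cyclic subgroups of $GL(2,\mathbb{C})$ to a simple congruence check and then read off the congruences from the defining relations of the characteristic numbers. The key observation is the following standard criterion: for positive integers $a,b,c,d$ with $\gcd(a,b,m_t)=\gcd(c,d,m_t)=1$, we have
\[
\left\langle \tfrac{1}{m_t}(a,b)\right\rangle=\left\langle \tfrac{1}{m_t}(c,d)\right\rangle
\]
as subgroups of $GL(2,\mathbb{C})$ if and only if there exists $e\in\mathbb{Z}$ with $\gcd(e,m_t)=1$ such that $(c,d)\equiv e(a,b)\pmod{m_t}$. I would first record this criterion (it follows by setting $\varepsilon$ to be a primitive $m_t$-th root of unity and comparing the generators $\mathrm{diag}(\varepsilon^{ja},\varepsilon^{jb})$ of the two groups; the coprimality of $e$ with $m_t$ is forced by $\gcd(c,d,m_t)=1$).

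Next I would verify the hypotheses of the criterion. By Proposition \ref{relatively-prime} the pairs $(m_r,m_t)$ and $(m_s,m_t)$ are coprime, so $\gcd(m_r,m_s,m_t)=1$; the triples $(1,u_t^+,m_t)$ and $(v_t^+,1,m_t)$ are trivially coprime. Hence all three cyclic subgroups in question have order exactly $m_t$.

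Now I would apply the criterion twice. For the first equality, take $e=m_r$; by Proposition \ref{relatively-prime} we have $\gcd(m_r,m_t)=1$, and the defining congruence $m_r u_t^+\equiv m_s\pmod{m_t}$ yields
\[
e\cdot(1,u_t^+)=(m_r,m_r u_t^+)\equiv(m_r,m_s)\pmod{m_t},
\]
so $\langle \tfrac{1}{m_t}(1,u_t^+)\rangle=\langle \tfrac{1}{m_t}(m_r,m_s)\rangle=G_{m_t}$. For the second equality, take $e=m_s$; again $\gcd(m_s,m_t)=1$, and the defining congruence $m_s v_t^+\equiv m_r\pmod{m_t}$ gives
\[
e\cdot(v_t^+,1)=(m_s v_t^+,m_s)\equiv(m_r,m_s)\pmod{m_t},
\]
whence $\langle \tfrac{1}{m_t}(v_t^+,1)\rangle=G_{m_t}$.

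There is no real obstacle here: once the criterion for equality of cyclic subgroups is in place, the proposition is essentially a translation of the definitions of $u_t^+$ and $v_t^+$. The only point requiring care is the invocation of Proposition \ref{relatively-prime} to ensure that the scalars $m_r$ and $m_s$ used as multipliers $e$ are units modulo $m_t$; without this, the two generators would produce subgroups of different orders and the argument would fail.
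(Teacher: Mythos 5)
Your proof is correct and is essentially the argument the paper has in mind: the paper dismisses this as "easy to check by the definition of characteristic numbers," and your write-up simply supplies those details — the unit-multiple criterion for equality of the cyclic groups, the coprimality of $m_r,m_s$ with $m_t$ from Proposition \ref{relatively-prime}, and the defining congruences $m_ru_t^+\equiv m_s$ and $m_sv_t^+\equiv m_r \pmod{m_t}$.
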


\begin{proof}
It is easy to check by the definition of characteristic numbers. 
\end{proof}


In order to characterize $k$-GM quotient singularities, we recall the following classes of singularities.

\begin{definition}[\cite{KSB}*{Definition 3.7}] 
A normal surface singularity is of \emph{class $T$} if it is a two-dimensional quotient singularity and admits a $\QQ$-Gorenstein one parameter smoothing.
\end{definition}

\begin{proposition}[\cite{KSB}*{Proposition 3.10}] 
The quotient singularity of class $T$ is either a rational double point or a finite cyclic singularity of type $\displaystyle \frac{1}{dm^2}(1,adm-1)$ with relatively prime integers $d,a > 0 $, where $m > 1$. 
\end{proposition}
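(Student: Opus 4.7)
The plan is to combine the classification of finite small subgroups of $GL(2,\mathbb{C})$ with an analysis of $\mathbb{Q}$-Gorenstein deformation theory via P-resolutions and Wahl chains. After replacing $G$ by a small subgroup (removing pseudo-reflections), one first reduces to the cyclic case: when $G \subset SL(2,\mathbb{C})$ the quotient is Du Val, Gorenstein, and any smoothing is automatically $\mathbb{Q}$-Gorenstein, yielding the rational double point alternative; for non-cyclic small $G \not\subset SL(2,\mathbb{C})$ (binary dihedral, tetrahedral, octahedral, icosahedral type extended by scalars), one shows that the semi-universal $\mathbb{Q}$-Gorenstein deformation space has no smoothing component by computing $T^1_{\mathbb{Q}\text{-Gor}}$ via the description of $\mathbb{C}^2/G$ as an explicit complete intersection.

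Second, for the cyclic quotient $\frac{1}{r}(1,q)$ with $\gcd(r,q)=1$, I would invoke the Koll\'ar--Shepherd-Barron correspondence between $\mathbb{Q}$-Gorenstein smoothings and P-resolutions: partial resolutions $Y \to X$ whose exceptional divisor carries only $T$-singularities (defined inductively by the same classification we are trying to prove) with $K_Y$ nef over $X$. Translating to the HJ-continued fraction $r/q = [[b_1,\dots,b_\ell]]$ of the minimal resolution, such a P-resolution exists precisely when $[[b_1,\dots,b_\ell]]$ is a Wahl chain in the classical sense, i.e.\ the $k=2$ case of Definition \ref{def:kwahlchainstwo}. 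The induction is set up so that the ``new'' $T$-singularities appearing on a P-resolution are strictly simpler (smaller $dm^2$), allowing one to close the loop.

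Third, I would identify Wahl chains with fractions $\frac{dm^2}{adm-1}$ combinatorially. Starting from the seed $[[4]] = \frac{1\cdot 2^2}{1\cdot 1\cdot 2 - 1}$, one checks by induction on the chain length that the two Wahl moves $[[b_1,\dots,b_\ell]] \mapsto [[b_1+1,b_2,\dots,b_\ell,2]]$ and $[[b_1,\dots,b_\ell]] \mapsto [[2,b_1,\dots,b_\ell+1]]$ preserve the form $\frac{dm^2}{adm-1}$ with $\gcd(d,a)=1$, and that every such fraction so arises. The update rules for $(d,a,m)$ under the moves can be written down explicitly by tracking the numerator and denominator recursions induced by the HJ-continued fraction operations.

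The main obstacle I expect is the non-cyclic part of the first step: ruling out $\mathbb{Q}$-Gorenstein smoothings for all non-cyclic, non-Du Val small quotient singularities requires delicate computations of obstruction spaces and is the technical heart of the KSB argument. The combinatorics of the third step is essentially mechanical once the second is established, though the coprimality $\gcd(d,a)=1$ must be propagated carefully through the induction, and one has to verify that the two Wahl moves are exhaustive, i.e.\ that no other P-resolution moves produce new class $T$ singularities.
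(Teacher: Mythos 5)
First, a point of reference: the paper does not prove this statement at all; it is quoted verbatim from Koll\'ar--Shepherd-Barron \cite{KSB}*{Proposition 3.10} and used as a black box, so there is no internal proof to compare against. Judged against the actual argument in [KSB], your proposal has two genuine gaps. The most serious is circularity in your second step: the correspondence between $\mathbb{Q}$-Gorenstein smoothings and P-resolutions is formulated (and proved) in [KSB] \emph{after} the classification of class $T$ singularities, and the very definition of a P-resolution requires knowing what the class $T$ singularities are. Invoking that correspondence to derive Proposition 3.10 is backwards, and the suggested induction on $dm^2$ does not repair this, because the correspondence is not proved one singularity at a time in a way that would let you feed in only ``strictly simpler'' class $T$ germs; its proof already presupposes the full classification. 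The standard, non-circular route is the index-one (canonical) cover: for a class $T$ singularity $X$ the cover $Y \to X$ is a Gorenstein quotient singularity, hence Du Val, the $\mathbb{Q}$-Gorenstein smoothing of $X$ lifts to a $\mu_n$-equivariant smoothing of $Y$, the group acts freely on the Milnor fibre, and an Euler-characteristic/fundamental-group computation on Du Val Milnor fibres forces type $A_{dn-1}$ with the stated numerics $\frac{1}{dn^2}(1,dna-1)$. Your proposal never touches this mechanism.

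The second gap is in your treatment of the non-cyclic small subgroups: you propose to rule out $\mathbb{Q}$-Gorenstein smoothings by computing $T^1_{\mathbb{Q}\text{-Gor}}$ ``via the description of $\mathbb{C}^2/G$ as an explicit complete intersection,'' but for the relevant groups (non-cyclic small $G \not\subset SL(2,\mathbb{C})$, and indeed for most cyclic $\frac{1}{r}(1,q)$) the quotient is \emph{not} a complete intersection --- the complete-intersection/hypersurface cases are essentially the Du Val ones --- so the proposed computation does not get started, and ``one shows there is no smoothing component'' is left as an unproved assertion that is in fact the hard content. The index-one-cover argument above disposes of cyclic and non-cyclic cases uniformly, which is precisely why [KSB] use it. Your third step (the combinatorial identification of Wahl chains with fractions $\frac{dm^2}{adm-1}$, with the bookkeeping of $\gcd(d,a)=1$ under the two chain moves) is correct and standard, but it is not needed for the statement of Proposition 3.10 itself and cannot substitute for the deformation-theoretic core.
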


The singularity of class $T$ is an important object in the deformation theory of the quotient singularity. Especially when $d=1$, the finite cyclic singularities of type $\displaystyle \frac{1}{m^2}(1,am-1)$ have $\QQ$-Gorenstein smoothings whose Milnor number is $0$. It is called \emph{a Wahl singularity}. 
Originally, ``a Wahl chain" indicates the HJ-continued fraction of $\displaystyle \frac{m^2}{am-1}$. It corresponds to the case $k=2$ in Definition \ref{def:kwahlchainstwo} (that is a $2$-Wahl chain).  The relation between a Wahl singularity and the Markov equation is studied by Hacking and Prokhorov (\cite{hp10}) and  Perling (\cite{Per22}). They considered the minimal resolution of singularities for weighted projective planes $\PP(a^2,b^2,c^2)$, where $(a,b,c)$ is a Markov triple. This essentially means that they are examining the quotient singularities determined by the integer solutions of the $2$-GM equation and their characteristic numbers. We show that the $2$-GM quotient singularity is a Wahl singularity.

\begin{lemma}[\cite{gyo-maru}*{Lemma 4.7}]\label{lem:characteristic equation} 
  Let $m_{k,t}$  be a $k$-GM number labeled with an irreducible fraction $t$, and let $u^+_{k,t}$ be its characteristic number. Then $u^+_{k,t}$ is a solution to $x^2 + kx + 1 \equiv 0 \ {\rm mod} \  m_{k,t}$.
\end{lemma}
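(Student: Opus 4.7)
The plan is to derive the congruence directly from the $k$-GM equation reduced modulo $m_{k,t}$, then substitute the defining congruence of $u^+_{k,t}$ and cancel using coprimality.

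Concretely, let $(m_r,m_t,m_s)$ be the $k$-GM triple in $\mathrm{LM}\mathbb{T}(k)$ with $m_t > m_r, m_s$ such that $u^+_{k,t}$ is determined by the congruence $m_r u^+_{k,t} \equiv m_s \pmod{m_t}$ (writing $m_t = m_{k,t}$ for brevity). First I would reduce the $k$-GM equation
\[
m_r^2 + m_t^2 + m_s^2 + k(m_t m_s + m_s m_r + m_r m_t) = (3+3k)\, m_r m_s m_t
\]
modulo $m_t$, which collapses it to
\[
m_r^2 + m_s^2 + k\, m_r m_s \equiv 0 \pmod{m_t}.
\]
Next I would substitute $m_s \equiv m_r u^+_{k,t} \pmod{m_t}$ into this congruence, obtaining
\[
m_r^2 + m_r^2 (u^+_{k,t})^2 + k\, m_r^2 u^+_{k,t} \equiv 0 \pmod{m_t},
\]
that is, $m_r^2\bigl((u^+_{k,t})^2 + k u^+_{k,t} + 1\bigr) \equiv 0 \pmod{m_t}$.

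Finally, by Proposition \ref{relatively-prime}, $m_r$ and $m_t$ are relatively prime, so $m_r^2$ is a unit modulo $m_t$ and may be cancelled, yielding
\[
(u^+_{k,t})^2 + k\, u^+_{k,t} + 1 \equiv 0 \pmod{m_t},
\]
which is the required conclusion.

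There is essentially no obstacle in this argument; it is a short modular computation. The only point that needs a moment's care is confirming that the Farey triple $(r,t,s)$ associated with $t$ is uniquely determined (so the definition of $u^+_{k,t}$ is unambiguous) and that the hypothesis $t\in(0,1)$ places us in the regime where $m_t$ is strictly maximum, which is exactly what Proposition \ref{prop:property-farey} and the construction in Section 7.1 provide. The coprimality input (Proposition \ref{relatively-prime}) is the load-bearing ingredient in the final cancellation.
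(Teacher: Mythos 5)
Your argument is correct: reducing the $k$-GM equation modulo $m_{k,t}$ to $m_r^2+m_s^2+km_rm_s\equiv 0 \pmod{m_t}$, substituting the defining congruence $m_r u^+_{k,t}\equiv m_s \pmod{m_t}$, and cancelling $m_r^2$ via Proposition \ref{relatively-prime} is a complete and valid derivation. Note that this paper does not prove the lemma itself but quotes it from \cite{gyo-maru}*{Lemma 4.7}; however, the two ingredients you use are exactly the ones the paper manipulates in its proof of Lemma \ref{lem:ut-u't-modmt}, so your route matches the paper's own style of computation, and your closing remark about the well-definedness of $u^+_{k,t}$ (uniqueness of the Farey triple and maximality of $m_t$) correctly identifies the only hypothesis that needs checking.
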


\begin{proposition}\label{prop:2-wahl-chain}
The HJ-continued fraction of $\displaystyle \frac{m_{2,t}}{u^+_{2,t}}$ is a $2$-Wahl chain. Namely, the $2$-GM quotient singularity is a Wahl singularity.
\end{proposition}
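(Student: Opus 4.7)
The plan is to first extract the precise arithmetic form of $u^+_{2,t}$ relative to $m_{2,t}$ using Lemma \ref{lem:characteristic equation} together with Proposition \ref{squre-markov}, and then to recognise the resulting cyclic quotient as a Wahl singularity in its standard presentation $\tfrac{1}{m^2}(1,am-1)$. The assertion that the associated HJ-continued fraction is a $2$-Wahl chain in the sense of Definition \ref{def:kwahlchainstwo} will then follow from the classical characterisation of Wahl chains, or equivalently by specialising Theorem \ref{thm:k-wahl-chain-Markov} to $k=2$.

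First I would specialise Lemma \ref{lem:characteristic equation} to $k=2$, which yields
\[
(u^+_{2,t})^2 + 2\,u^+_{2,t} + 1 \equiv 0 \pmod{m_{2,t}},
\]
that is, $(u^+_{2,t}+1)^2 \equiv 0 \pmod{m_{2,t}}$. By Proposition \ref{squre-markov}, $m_{2,t}$ is the square of a classical Markov number, so I may write $m_{2,t}=m^2$. Since $m^2 \mid (u^+_{2,t}+1)^2$, unique factorisation in $\ZZ$ forces $m \mid u^+_{2,t}+1$, hence there exists a positive integer $a$ with $u^+_{2,t} = am-1$ (and the bounds $1\le a\le m$ come from the range $0 < u^+_{2,t} < m^2$). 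Plugging this into Proposition \ref{Markovquotient} gives
\[
G_{m_{2,t}} \;=\; \left\langle \tfrac{1}{m^2}(1,\,am-1)\right\rangle,
\]
which is exactly the standard form of a finite cyclic singularity of class $T$ with $d=1$, i.e.\ a Wahl singularity. This settles the second half of the proposition.

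For the continued-fraction statement, the fraction $m^2/(am-1)$ is, by the original Wahl/Kollár–Shepherd-Barron definition, the fraction whose HJ-continued fraction is a Wahl chain; the surrounding discussion of the paper already records that this classical notion coincides with a $2$-Wahl chain in the sense of Definition \ref{def:kwahlchainstwo} (whose inductive description begins at $[[4]]$, which is $[[k+2]]$ at $k=2$, and is generated by the two append-operations). Consequently the HJ-continued fraction of $m_{2,t}/u^+_{2,t}$ is a $2$-Wahl chain, completing the argument. I do not anticipate any serious technical obstacle: the arithmetic reduction to $\tfrac{1}{m^2}(1,am-1)$ is a one-line consequence of the perfect-square structure of $m_{2,t}$ combined with the characteristic-number congruence, and the identification of classical Wahl chains with $2$-Wahl chains is already explicit in the surrounding text; should a self-contained derivation be preferred over citing Wahl's theorem, one can instead invoke Theorem \ref{thm:k-wahl-chain-Markov} specialised to $k=2$.
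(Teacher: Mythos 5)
Your proposal is correct and follows essentially the same route as the paper: specialise Lemma \ref{lem:characteristic equation} to $k=2$, use Proposition \ref{squre-markov} to write $m_{2,t}=m_{0,t}^2$ so that $(u^+_{2,t}+1)^2\equiv 0 \pmod{m_{0,t}^2}$ forces $u^+_{2,t}=a\,m_{0,t}-1$, and then identify $m_{0,t}^2/(a\,m_{0,t}-1)$ with the classical Wahl form, whose HJ-expansion is a $2$-Wahl chain. The extra remarks (explicit appeal to Proposition \ref{Markovquotient} for the class-$T$ statement, and the alternative of citing Theorem \ref{thm:k-wahl-chain-Markov} at $k=2$) are consistent with the paper and introduce no circularity.
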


\begin{proof}
By Lemma \ref{lem:characteristic equation} and Proposition \ref{squre-markov}, we have the formula
\[
(u^+_{{2,t}})^2+2u^+_{{2,t}}+1 \equiv 0 \mod m_{0,t}^2.
\] 
From this, it follows that $u^+_{{2,t}}+1 $ is divisible by $m_{0,t}$. Since there exists a positive integer $a$ which satisfies $u^+_{{2,t}}+1=a \cdot m_{0,t}$, the fraction  $\displaystyle \frac{m_{2,t}}{u^+_{{2,t}}}=\frac{(m_{0,t})^2}{a\cdot m_{0,t}-1}$ is a $2$-Wahl chain.

\end{proof}

We define \emph{the $k$-Wahl chain tree} as follows:
\begin{itemize}\setlength{\leftskip}{-15pt}
  \item[(1)] The root vertex is $[[k+2]]$,
  \item[(2)] every vertex $[[b_1,\dots,b_l]]$ has two children $[[b_1+1,b_2,\dots,b_\ell,2]]$ and $[[2,b_1,\dots, b_{\ell-1},b_{\ell}+1]]$.
\end{itemize}
There is a one to one correspondence between the $2$-Wahl chain tree and the Farey tree (see \cite{UZ}).

\subsection{Proof of Theorem \ref{thm:k-wahl-chain-Markov}}

The following lemma gives a relation between regular continued fractions and HJ-continued fractions.

\begin{lemma}
For a variable $x$ and positive integers $a_1,a_2$, the following equality holds: \[[a_1,a_2,x]=[[a_1+1,(2)^{a_2-1},x+1]],\]
 where $(2)^{\ell}$ denotes a string of $\ell\ 2'$s. 
\end{lemma}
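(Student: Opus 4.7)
The plan is to reduce both sides to a single closed-form rational expression in $x$ and check that they agree. The key preliminary ingredient is an explicit formula for an HJ-continued fraction whose interior entries are all equal to $2$, namely
\[
[[\underbrace{2,2,\dots,2}_{n}, y]] \;=\; \frac{(n+1)y - n}{ny - (n-1)},
\]
which I would establish by induction on $n\geq 0$. The base case $n=0$ gives $[[y]] = y$, and the inductive step uses the defining recursion $[[2,2,\dots,2,y]] = 2 - 1/[[2,\dots,2,y]]$, where one expands the right-hand side and collects terms. This closed form is the engine of the whole proof.

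Applying this formula with $n = a_2 - 1$ and $y = x+1$, I get
\[
[[(2)^{a_2-1}, x+1]] \;=\; \frac{a_2 x + 1}{(a_2-1)x + 1}.
\]
Peeling off the outer layer $(a_1+1) - 1/\bigl(\,\cdot\,\bigr)$ of the HJ-continued fraction on the right-hand side of the lemma and simplifying then yields
\[
[[a_1+1,(2)^{a_2-1},x+1]] \;=\; (a_1+1) - \frac{(a_2-1)x + 1}{a_2 x + 1} \;=\; \frac{a_1 a_2 x + a_1 + x}{a_2 x + 1}.
\]

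On the left-hand side, the regular continued fraction unfolds directly as
\[
[a_1,a_2,x] \;=\; a_1 + \frac{1}{a_2 + 1/x} \;=\; a_1 + \frac{x}{a_2 x + 1} \;=\; \frac{a_1 a_2 x + a_1 + x}{a_2 x + 1},
\]
which matches the expression above, completing the proof.

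The argument is essentially an algebraic identity, so no step is a genuine obstacle; the only thing requiring care is to keep the two continued fraction conventions distinct and to handle the shifts $a_1 \mapsto a_1+1$ and $x \mapsto x+1$ correctly when collapsing the string of $2$'s. A sanity check with a small case (for instance $a_1=1$, $a_2=2$, $x=3$, where both sides equal $10/7$) can be used to guard against sign errors in the HJ recursion.
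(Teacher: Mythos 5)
Your proposal is correct. Both the closed form $[[(2)^n,y]]=\bigl((n+1)y-n\bigr)/\bigl(ny-(n-1)\bigr)$ and its inductive proof are right, and both sides of the lemma do collapse to $\dfrac{a_1a_2x+a_1+x}{a_2x+1}$, so the argument goes through.

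The route is genuinely different from the paper's, though. The paper inducts on $a_2$ directly on the stated two-sided identity: the base case is the computation $[a_1,1,x]=a_1+\frac{x}{x+1}=a_1+1-\frac{1}{x+1}=[[a_1+1,x+1]]$, and the inductive step uses the rewriting $[a_1,k+1,x]=[a_1,k,x/(x+1)]$ on the regular side together with $1+\frac{x}{x+1}=2-\frac{1}{x+1}$ to absorb one additional $2$ on the HJ side, so a new $2$ is peeled off at each step without ever writing a closed formula. You instead isolate the string of $2$'s in an auxiliary lemma with an explicit closed form, proved by induction on the length of the string, and then reduce the whole statement to an identity of rational functions. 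What your version buys is explicitness: the formula for $[[(2)^n,y]]$ makes the final check purely mechanical and gives the intermediate convergents for free. What the paper's version buys is that the one-step "substitute $x\mapsto x/(x+1)$ and trade it for a $2$" mechanism is exactly the manipulation reused immediately afterwards to convert entire regular continued fractions into HJ form (the corollary following the lemma), so the proof doubles as a template for that corollary. Either proof is acceptable; just keep in mind, as you note, that the identity is one of rational functions, so positivity of the entries guarantees no denominator vanishes in the intended range.
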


\begin{proof} We will prove by using induction on $a_2$.
  When $a_2=1$, we have
  \[ [a_1,1,x]=a_1+\frac{x}{x+1}=a_1+1-\frac{1}{x+1}=[[a_1+1,x+1]].\] 
  We assume that $[a_1,k,x]=[[a_1+1,(2)^{k-1},x+1]]$, then it follows that 
  \[
  [a_1,k+1,x]=\left[a_1, k , x/(x+1)\right]=[[a_1+1,(2)^{k-1},1+x/(x+1)]].
  \]
  Since $\displaystyle 1+\frac{x}{x+1}=2-\frac{1}{x+1}$, we conclude that
  \[[a_1, k+1, x]=[[a_1+1,(2)^{k-1},2,x+1]]=[[a_1+1,(2)^{k},x+1]]. \] 
\end{proof}

This lemma leads to the following corollary.

\begin{corollary}\label{cor:posi-to-negative-cf}
  Let $a_i$ be a positive integer for all $i=1,\dots,\ell$. Then we have
\[
   [a_1,\dots,a_{\ell}] = \left\{
\begin{array}{ll}
[[a_1+1,(2)^{a_2-1},a_3+2, (2)^{a_4-1},\dots, a_{\ell-1}+2, (2)^{a_{\ell}-1}]] & \text{if $\ell$ is even,}\\
\lbrack\lbrack a_1+1,(2)^{a_2-1},a_3+2, (2)^{a_4-1},\dots, (2)^{a_{\ell-1}-1}, a_{\ell} + 1\rbrack\rbrack & \text{if $\ell$ is odd.}
\end{array}
\right.
\]

\end{corollary}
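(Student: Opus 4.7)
The plan is to prove the corollary by induction on the length $\ell$, using the preceding lemma as the key single-step reduction. For the base case $\ell=3$, the statement is precisely the preceding lemma with $x:=a_3$. For $\ell=2$ I would verify directly: an easy induction yields $[[(2)^n]]=(n+1)/n$ for all $n\geq 1$, whence
\[
[[a_1+1,(2)^{a_2-1}]] \;=\; a_1+1-\tfrac{a_2-1}{a_2} \;=\; a_1+\tfrac{1}{a_2} \;=\; [a_1,a_2].
\]

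For the inductive step, suppose the corollary holds for all positive-integer regular continued fractions of length strictly less than $\ell$ (with $\ell\geq 4$), and write
\[
[a_1,a_2,a_3,\dots,a_\ell] \;=\; [a_1,a_2,y],\qquad y \,:=\, [a_3,a_4,\dots,a_\ell].
\]
The preceding lemma then produces $[[a_1+1,(2)^{a_2-1},y+1]]$. The key observation is the identity
\[
y+1 \;=\; [a_3,a_4,\dots,a_\ell]+1 \;=\; [a_3+1,a_4,\dots,a_\ell],
\]
which reduces matters to a regular continued fraction of length $\ell-2$, having the same parity as $\ell$, whose leading entry has been incremented by one. Applying the induction hypothesis expresses this as an HJ-continued fraction with leading entry $(a_3+1)+1=a_3+2$. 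Substituting back in place of $y+1$ via the evident composition rule $[[c_1,\dots,c_k,[[b_1,\dots,b_m]]]]=[[c_1,\dots,c_k,b_1,\dots,b_m]]$ then yields the claimed formula in both parity cases.

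The main task, insofar as there is one, is careful bookkeeping: one must check that the parity cases line up, that each interior odd-indexed entry $a_{2i+1}$ (for $i\geq 1$) acquires the coefficient $+2$ from the combination of the $+1$ produced by the outer lemma step with the $+1$ built into the recursive expression, and that degenerate blocks of the form $(2)^0$ (which arise whenever some $a_i=1$) cause no syntactic trouble. None of this is conceptually difficult; the substantive content of the corollary is fully captured by the one-step reduction in the preceding lemma.
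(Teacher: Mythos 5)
Your proof is correct and is essentially the paper's (implicit) argument: the paper derives the corollary simply by iterating the preceding lemma, which is exactly your two-step induction via $[a_3,\dots,a_\ell]+1=[a_3+1,a_4,\dots,a_\ell]$ and the composition rule for HJ-continued fractions. Your explicit handling of the base cases $\ell=2,3$ and of the degenerate $(2)^0$ blocks fills in the bookkeeping the paper leaves unstated.
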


We will prove Theorem \ref{thm:k-wahl-chain-Markov}. 
\begin{proof}[Proof of Theorem \ref{thm:k-wahl-chain-Markov}]
Let $m_{k,t}$ be a $k$-GM number labeled with an irreducible fraction $t \in (0,1]$ and  $u_{k,t}^+$ its characteristic number.
By Lemma \ref{lem:semi-palindrome} and Theorem \ref{thm:u_t-denominator}, we have
\[
\frac{m_{k,t}}{u_{k,t}^+}=[a_1,\dots,a_{\frac{\ell}{2}},a_{\frac{\ell}{2}}+k,a_{\frac{\ell}{2}-1},\dots,a_1] \text{ or }[a_1,\dots,a_{\frac{\ell}{2}},a_{\frac{\ell}{2}}-k,a_{\frac{\ell}{2}-1},\dots,a_1] .
\]
We assume $\frac{\ell}{2}$ is even. Then $\displaystyle \frac{m_{k,t}}{u_{k,t}^+}=[a_1,\dots,a_{\frac{\ell}{2}},a_{\frac{\ell}{2}}+k,a_{\frac{\ell}{2}-1},\dots,a_1]$. Applying Corollary \ref{cor:posi-to-negative-cf} to this continued fraction, we have
\[
\frac{m_{k,t}}{u_{k,t}^+}=[[a_1+1,(2)^{a_2-1},a_3+2,\dots,a_{\frac{\ell}{2}-1}+2, (2)^{a_{\frac{\ell}{2}}-1}, a_{\frac{\ell}{2}}+k+2, (2)^{a_{\frac{\ell}{2}-1}-1},\dots, a_{2}+2, (2)^{a_1-1}]].
\]
By definition, $[[b_1+2, b_2,\dots, b_s, (2)^{b_1}]]$ is a $k$-Wahl chain if and only if $[[2, b_2,\dots, b_s ]]$ is a $k$-Wahl chain. We reduce the continued fractions as follows:
  \begin{align*}
&\ [[a_1+1,(2)^{a_2-1},a_3+2,\dots,a_{\frac{\ell}{2}-1}+2, (2)^{a_{\frac{\ell}{2}}-1}, a_{\frac{\ell}{2}}+k+2, (2)^{a_{\frac{\ell}{2}-1}-1},\dots, a_{2}+2, (2)^{a_1-1}]]\\
\to& \  [[2,(2)^{a_2-1},a_3+2,\dots,a_{\frac{\ell}{2}-1}+2, (2)^{a_{\frac{\ell}{2}}-1}, a_{\frac{\ell}{2}}+k+2, (2)^{a_{\frac{\ell}{2}-1}-1},\dots, a_{2}+2]]\\
\to& \  [[a_3+2,\dots,a_{\frac{\ell}{2}-1}+2, (2)^{a_{\frac{\ell}{2}}-1}, a_{\frac{\ell}{2}}+k+2, (2)^{a_{\frac{\ell}{2}-1}-1},\dots,(2)^{a_3-1},2]]\\
\to& \  \dots \to [[(2)^{a_{\frac{\ell}{2}}}, a_{\frac{\ell}{2}}+ k + 2]]\\
\to& \ [[k+2]].
\end{align*}

  \normalsize
Therefore, $\displaystyle \frac{m_{k,t}}{u_{k,t}^+}$ is a $k$-Wahl chain. We can apply the same argument to the case $\frac{\ell}{2}$ is odd. 

\end{proof}

\begin{remark}
The converse of Theorem \ref{thm:k-wahl-chain-Markov} does not hold. Indeed, the HJ-continued fraction $\displaystyle \frac{10}{3}=[[422]]$ is a $0$-Wahl chain, but $10$ is not a $0$-GM number.
\end{remark}

By Proposition \ref{Markovquotient}, $\displaystyle \frac{m_{k,t}}{v_{k,t}^+}$ is a $k$-Wahl chain. However,  $\displaystyle \frac{m_{k,t}}{u_{k,t}^-}$ and  $\displaystyle \frac{m_{k,t}}{v_{k,t}^-}$ are not $k$-Wahl chains. Instead, they are Wahl chains starting with $[[(2)^{k+1}]]$.

\begin{definition}\label{def:dualkwahlchains}
Let $k\in \mathbb Z_{\geq 0}$. \emph{Dual $k$-Wahl chains} are defined as follows.
\begin{itemize}\setlength{\leftskip}{-15pt}
\item [(i)] $[[(2)^{k+1}]]$ is a dual $k$-Wahl chain.
\item[(ii)] If $[[b_1,\dots,b_l]]$ is a dual $k$-Wahl chain, then $[[b_1+1,b_2,\dots,b_\ell,2]]$ and $[[2,b_1,\dots, b_{\ell-1},b_{\ell}+1]]$ are also dual $k$-Wahl chains.
\end{itemize}
\end{definition}

\begin{proposition}\label{prop:dual-k-wahl-chain-Markov}
Let $m_{k,t}$ be a $k$-GM number labeled with an irreducible fraction $t \in (0,1]$, and let $u_{k,t}^-$ be its characteristic number. Then the HJ-continued fraction of $m_{k,t}/u_{k,t}^-$ is a dual $k$-Wahl chain.
\end{proposition}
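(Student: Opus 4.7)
The plan is to mimic the proof of Theorem \ref{thm:k-wahl-chain-Markov} with the identity $m_{k,t}/u^+_{k,t}=F^+(k,t)$ replaced by $m_{k,t}/u^-_{k,t}=G^+(k,1/t)$ from Theorem \ref{thm:uv-continued-fraction}(3), and with the reduction target shifted from $[[k+2]]$ to $[[(2)^{k+1}]]$.

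First I would obtain an explicit regular continued fraction for $G^+(k,1/t)$. Starting from the canonical semi-palindrome form
\[
F^+(k,t)=[a_1,\dots,a_{\ell/2},a_{\ell/2}\pm k,a_{\ell/2-1},\dots,a_1]
\]
of Lemma \ref{lem:semi-palindrome}, Proposition \ref{prop:tto1/t} gives the length-$(\ell+2)$ expansion of $F^+(k,1/t)$, and Proposition \ref{prop:F-G-relation} then identifies $G^+(k,1/t)$ with its reversal, yielding
\[
G^+(k,1/t)=[1,\,a_1-1,\,a_2,\,\dots,\,a_{\ell/2},\,a_{\ell/2}\pm k,\,a_{\ell/2-1},\,\dots,\,a_2,\,a_1-1,\,1].
\]
The crucial structural feature, and the source of the sign difference with Theorem \ref{thm:k-wahl-chain-Markov}, is that the $\pm k$-bump has moved one position to the right of where it sits in $F^+(k,1/t)$. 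Applying Corollary \ref{cor:posi-to-negative-cf} to this even-length continued fraction then produces an HJ-expansion of the shape
\[
[[2,\,(2)^{a_1-2},\,a_2+2,\,(2)^{a_3-1},\,\dots,\,(2)^{a_{\ell/2}\pm k-1},\,\dots,\,(2)^{a_2-1},\,a_1+1]]
\]
in the parity case where the bump lands on an even position of $G^+(k,1/t)$, with a parallel expression in the other parity case where the central $(2)^{a_{\ell/2}\pm k-1}$ block is replaced by a single entry $a_{\ell/2}\pm k+2$ flanked by $(2)^{a_{\ell/2}-1}$ blocks.

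Finally I would reduce this HJ-fraction to $[[(2)^{k+1}]]$ by iterating the inverses of the two generating operations in Definition \ref{def:dualkwahlchains}, namely $[[2,b_1,\dots,b_{s-1},b_s+1]]\mapsto[[b_1,\dots,b_s]]$ and $[[b_1+1,b_2,\dots,b_{s-1},2]]\mapsto[[b_1,\dots,b_{s-1}]]$. Peeling the outermost layer requires $a_1-1$ applications of the first inverse, which empty the leading $(2)^{a_1-1}$ and simultaneously decrement the trailing $a_1+1$ down to $2$; the next layer is then consumed by $a_2$ applications of the second inverse, which decrement the new leading $a_2+2$ to $2$ and strip the matching $a_2$ trailing twos. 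Alternating in this way through $a_3,a_4,\dots,a_{\ell/2}$ peels one palindrome layer at a time, and what remains at the center is exactly $[[(2)^{k+1}]]$; the degenerate case $t=1$ is disposed of directly, since $G^+(k,1)=[1,k+1]$ converts by Corollary \ref{cor:posi-to-negative-cf} to $[[2,(2)^k]]=[[(2)^{k+1}]]$ in one step. The main obstacle will be the bookkeeping across the two parity cases of $\ell/2$: verifying that the alternation of the two inverse operations, applied in the correct counts and order, terminates precisely at $k+1$ twos—neither leaving a residual bump nor running past the base—with the final central reduction of shape $[[(2)^{a_{\ell/2}},a_{\ell/2}\pm k+2]]\to[[(2)^{k+1}]]$ being the most delicate step. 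The template example $k=1$, $t=2/3$, in which $m/u^-=217/150=[[2,2,6,2,2,2,2,4]]$ reduces in seven steps to $[[2,2]]=[[(2)^{k+1}]]$, serves as the key sanity check and a model for the general peeling argument.
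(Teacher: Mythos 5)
Your plan is the paper's proof: the paper likewise takes the expansion $m_{k,t}/u^-_{k,t}=[1,a_1-1,a_2,\dots,a_{\ell/2},a_{\ell/2}\pm k,a_{\ell/2-1},\dots,a_1]$ from (the proof of) Theorem \ref{thm:uv-continued-fraction}(3), converts it with Corollary \ref{cor:posi-to-negative-cf}, and then reduces by the inverse moves of Definition \ref{def:dualkwahlchains} ``by the similar argument to the proof of Theorem \ref{thm:k-wahl-chain-Markov}''; your route through Propositions \ref{prop:tto1/t} and \ref{prop:F-G-relation}, your alternating peeling counts $a_1-1,a_2,a_3,\dots$, and your separate treatment of $t=1$ are exactly that sketch made explicit.

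One detail in your write-up is off and should be corrected before it becomes a proof. The central configuration you announce, $[[(2)^{a_{\ell/2}},a_{\ell/2}\pm k+2]]$, is the one occurring in Theorem \ref{thm:k-wahl-chain-Markov}, and under the peeling moves it reduces to $[[k+2]]$, not to $[[(2)^{k+1}]]$. In the dual case the continued fraction $[1,a_1-1,a_2,\dots]$ has odd length, so the parity of the position of the $\pm k$ bump is shifted by one relative to $F^+(k,t)$: when $\ell/2$ is even the bump is absorbed into the run of $2$'s adjacent to the central isolated entry, and the last batch of moves has the shape $[[a_{\ell/2}+2,(2)^{a_{\ell/2}\pm k}]]\to[[(2)^{k+1}]]$ (this is exactly what your example $217/150=[[2,2,6,2,2,2,2,4]]$ shows at the stage $[[6,2,2,2,2,2]]$, and the reduction there takes six moves, not seven); when $\ell/2$ is odd the bump stays an isolated entry and the final batch terminates early inside the adjacent run, as in $m_{k,1/2}/u^-_{k,1/2}=[[(2)^{2k+1},k+3]]\to[[(2)^{k+1}]]$ after $k+1$ moves. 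With the central shapes stated for the two parities as above, the peeling argument closes and coincides with the paper's.
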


\begin{proof}
    By the proof of Theorem \ref{thm:uv-continued-fraction}, we have 
    \[
\frac{m_{k,t}}{u_{k,t}^-}=[1,a_1-1,a_2,\dots,a_{\frac{\ell}{2}},a_{\frac{\ell}{2}}+k,a_{\frac{\ell}{2}-1},\dots,a_1] \text{ or }[1, a_1-1, a_2, \dots,a_{\frac{\ell}{2}},a_{\frac{\ell}{2}}-k,a_{\frac{\ell}{2}-1},\dots,a_1] .
\]
Note that the length of this regular continued fraction is odd. Applying Corollary \ref{cor:posi-to-negative-cf}, we have
\[
\frac{m_{k,t}}{u_{k,t}^-}=[[2,(2)^{a_1-2},a_2+2, (2)^{a_3-1},\dots,(2)^{a_{\frac{\ell}{2}-1}}, a_{\frac{\ell}{2}}+2, (2)^{a_{\frac{\ell}{2}+k}-1}, a_{\frac{\ell}{2}-1}+1,\dots, (2)^{a_2-1},a_1+1 ]].
\]
By the similar argument to the proof of Theorem \ref{thm:k-wahl-chain-Markov}, we conclude that this HJ-continued fraction is a dual $k$-Wahl chain.
\end{proof}

\subsection{Characterization of HJ-continued fractions obtained from $k$-GM numbers}
 We will propose a generalization of Propoition \ref{UZ}. 
For an irreducible fraction $t \in (0, 1)$ and a non-negative integer $k$, we assume that $F^+(k,t)=[a_1,\dots,a_{\ell}]$. We will set $F^-(k,t)=[[b_1,\dots,b_s]]$ the HJ-continued fraction of $[a_1,\dots,a_{\ell}]$, that is, $[a_1,\dots,a_{\ell}]=[[b_1,\dots,b_s]]$.

\begin{proposition}\label{prop:fibonacchi-branch}
For positive integers $a, a_1,\dots, a_s$, we set $F^-\left(k,1/a\right)=[[a_1,\dots,a_s]]$. Then we have
\[
 F^-(k,1/(a+2)) =[[2k+3,a_1,\dots,a_{s-1},a_s+1,(2)^{2k+1}]].
\]

\end{proposition}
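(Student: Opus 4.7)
The plan is to reduce the claim to a direct comparison of HJ-continued fraction expressions by applying Proposition \ref{prop:presnake-relation}(1) twice at the level of regular continued fractions and then converting both sides via Corollary \ref{cor:posi-to-negative-cf}. The key structural input is that for $a\geq 2$ the regular continued fraction $F^+(k,1/a)=[c_1,\dots,c_m]$ always begins with $c_1=2k+2$---the leading block of $2k+2$ minus signs of $\mathcal{PG}(1/a)$ coming from the leftmost triangle, the first diagonal edge, the upper triangle of the first square, and the first vertical edge---and by Lemma \ref{lem:semi-palindrome} it also ends with $c_m=2k+2$ once $m\geq 4$, i.e.\ once $a\geq 3$.

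Under the hypothesis $a\geq 3$, I would apply Proposition \ref{prop:presnake-relation}(1) to the Farey triple $(0/1,1/(a+1),1/a)$ to obtain $F^+(k,1/(a+1))=[2k+2,1,c_m-1,c_{m-1},\dots,c_1]$, and then apply it again to $(0/1,1/(a+2),1/(a+1))$. After substituting $c_1=c_m=2k+2$ this yields
\[
F^+(k,1/(a+2))=[2k+2,\,1,\,2k+1,\,c_2,\,c_3,\,\dots,\,c_{m-1},\,2k+1,\,1,\,2k+2],
\]
a regular continued fraction of even length $m+4$. Converting via Corollary \ref{cor:posi-to-negative-cf} (even-length case) gives
\[
F^-(k,1/(a+2))=[[2k+3,\,2k+3,\,(2)^{c_2-1},\,c_3+2,\,(2)^{c_4-1},\,\dots,\,c_{m-1}+2,\,(2)^{2k},\,3,\,(2)^{2k+1}]],
\]
while the same corollary applied to $F^+(k,1/a)$ gives
\[
F^-(k,1/a)=[[2k+3,\,(2)^{c_2-1},\,c_3+2,\,(2)^{c_4-1},\,\dots,\,c_{m-1}+2,\,(2)^{2k+1}]].
\]
Reading this last expression as $[[a_1,\dots,a_s]]$, the trailing block $(2)^{2k+1}$ (which comes from $c_m-1=2k+1$) makes $a_s=2$; replacing $a_s$ by $a_s+1=3$ converts the trailing $(2)^{2k+1}$ into $(2)^{2k},3$, and then prepending $2k+3$ and re-appending $(2)^{2k+1}$ reproduces exactly the displayed expression for $F^-(k,1/(a+2))$, which is the claim.

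The small cases $a=1$ and $a=2$ must be treated separately, since the semi-palindrome does not force $c_m=2k+2$ when $m=2$. For $a=2$, the double application of Proposition \ref{prop:presnake-relation}(1) still goes through and produces $F^+(k,1/4)=[2k+2,1,2k+1,k+1,1,2k+2]$, yielding $F^-(k,1/4)=[[2k+3,2k+3,(2)^k,3,(2)^{2k+1}]]$, which agrees with the claim applied to $F^-(k,1/2)=[[2k+3,(2)^{k+1}]]$. For $a=1$, Proposition \ref{prop:presnake-relation}(1) is not applicable to the root Farey triple $(0/1,1/2,1/1)$, so I would instead use the direct evaluation $F^+(k,1/2)=[2k+2,k+2]$, apply Proposition \ref{prop:presnake-relation}(1) just once to obtain $F^+(k,1/3)=[2k+2,1,k+1,2k+2]$, and convert to $F^-(k,1/3)=[[2k+3,k+3,(2)^{2k+1}]]$; this matches the claim against $F^-(k,1/1)=[[k+2]]$, where $s=1$ and the block $a_1,\dots,a_{s-1}$ is empty.

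The main obstacle is the bookkeeping needed to identify the trailing $2k+1$ block of $2$'s in $F^-(k,1/a)$ with its counterpart in $F^-(k,1/(a+2))$ under the conversion between regular and HJ-continued fractions, and to confirm this identification is consistent for every $k\geq 0$---in particular the degenerate case $k=0$, where the trailing block $(2)^{2k+1}$ consists of a single $2$ and $a_{s-1}=c_{m-1}+2\geq 3$ rather than $2$, so one must verify directly that the replacement $a_s\mapsto a_s+1$ still lines up correctly with the inserted entry $3$ in the derived formula.
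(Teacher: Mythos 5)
Your proof is correct and follows essentially the same route as the paper's: an application of Proposition \ref{prop:presnake-relation} (1) (iterated) to express $F^+(k,1/(a+2))$ in terms of $F^+(k,1/a)$, followed by conversion to HJ-form via Corollary \ref{cor:posi-to-negative-cf} and a term-by-term comparison. Your write-up is in fact more explicit than the paper's—spelling out the identification $c_1=c_m=2k+2$, the bookkeeping of the $2$-blocks, and the small cases $a=1,2$ and $k=0$—but the underlying argument is the same.
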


\begin{proof}
We assume $\displaystyle F^+\left(k,1/a\right)=[b_1,\dots,b_{\ell}]$. By Proposition \ref{prop:presnake-relation}, we have $\displaystyle F^+\left(k,1/(a+2)\right)=[2k+2,1,b_1-1,b_2,\dots,b_{l-1},b_l-1,2k+2]$.
This continued fraction can be transformed into a HJ-continued fraction by applying Corollary \ref{cor:posi-to-negative-cf}. Thus we have $\displaystyle F^-\left(k,1/(a+2)\right)=[[2k+3,a_1,\dots,a_{s-1},a_s+1,(2)^{2k+1}]]$.

\end{proof}

Next, we consider $F^-(k,t)$ for general irreducible fraction $t$. For a Farey triple $(r,t,s)$ with $\displaystyle r\neq \frac{0}{1}, s\neq\frac{1}{1}$, we set 
$F^+(k,r)=[a_1,\dots,a_{\ell}]$, and $F^+(k,s)=[b_1,\dots,b_{m}]$. By Proposition \ref{prop:presnake-relation}, we have $F^+(k,t)=[a_{\ell},\dots, a_1, 3k+2, 1 ,b_{m}-1, b_{m-1}, \dots, b_1]$.
By Lemma \ref{lem:semi-palindrome}, we have
\[
 F^+(k,r) = \left\{
\begin{array}{ll}
 [a_1,\dots,a_{\frac{\ell}{2}},a_{\frac{\ell}{2}}+k,a_{\frac{\ell}{2}-1}, \dots ,a_1] & \text{if $\frac{\ell}{2}$ is even,}\\
\lbrack a_1,\dots,a_{\frac{\ell}{2}},a_{\frac{\ell}{2}}-k,a_{\frac{\ell}{2}-1}, \dots ,a_1 \rbrack & \text{if $\frac{\ell}{2}$ is odd.}
\end{array}
\right. 
\]

We apply Proposition \ref{prop:F-G-relation} to $F^+(k,r)$, then 
$$
 G^+(k,r) = \left\{
\begin{array}{ll}
 [a_1,\dots,a_{\frac{\ell}{2}-1},a_{\frac{\ell}{2}}+k,a_{\frac{\ell}{2}},a_{\frac{\ell}{2}-1}, \dots ,a_1] & \text{if $\frac{\ell}{2}$ is even,}\\
\lbrack a_1,\dots,a_{\frac{\ell}{2}-1},a_{\frac{\ell}{2}}-k,a_{\frac{\ell}{2}},a_{\frac{\ell}{2}-1}, \dots ,a_1 \rbrack & \text{if $\frac{\ell}{2}$ is odd.}
\end{array}
\right. 
$$
In addition, we will denote by $G^{-}(k,r)$ the HJ-continued fraction determined by $G^{+}(k,r)$.
Corollary \ref{cor:posi-to-negative-cf} leads to the following Theorem. 

\begin{theorem} \label{thm:smallk-GM}
Under the above assumptions, the following holds:
\[
F^-(k,t)=[[ G^{-}(k,r), 3k+4, G^{-}(k,s) ]].
\]
\end{theorem}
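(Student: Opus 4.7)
The plan is to string together three ingredients developed earlier in the paper: the pre-snake-graph decomposition of Proposition~\ref{prop:presnake-relation}~(3), the reversal identity of Proposition~\ref{prop:F-G-relation}, and the explicit conversion from regular to Hirzebruch--Jung continued fractions of Corollary~\ref{cor:posi-to-negative-cf}. Since the claim is a symbolic identity between continued fractions, the argument should reduce to a symbol-by-symbol alignment once a single parity point is verified.

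First I will write $F^+(k,r) = [a_1,\ldots,a_\ell]$ and $F^+(k,s) = [b_1,\ldots,b_m]$, with $\ell$ and $m$ both even (as recorded after Example~\ref{ex:continued-fraction-Markov}), and invoke Proposition~\ref{prop:presnake-relation}~(3) to obtain
\[
F^+(k,t) = [a_\ell, a_{\ell-1}, \ldots, a_1,\, 3k+2,\, 1,\, b_m-1, b_{m-1}, \ldots, b_1],
\]
a sequence of even length $\ell + m + 2$. I will then feed this into Corollary~\ref{cor:posi-to-negative-cf}. The crucial observation is that because $\ell$ is even, the entry $3k+2$ sits at an odd position, so the junction pair $(3k+2, 1)$ is one of the pairs consumed by the corollary and contributes the single HJ entry $(3k+2)+2 = 3k+4$ (the $(2)^{1-1}$ block being empty).

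With the parity secured, the initial block of $\ell/2$ pairs produces
\[
a_\ell + 1,\, (2)^{a_{\ell-1}-1},\, a_{\ell-2}+2,\, \ldots,\, a_2 + 2,\, (2)^{a_1-1},
\]
which, after applying Proposition~\ref{prop:F-G-relation} to identify $G^+(k,r) = [a_\ell,\ldots,a_1]$ and then Corollary~\ref{cor:posi-to-negative-cf}, is precisely $G^-(k,r)$. Symmetrically, the terminal block of $m/2$ pairs becomes
\[
b_m + 1,\, (2)^{b_{m-1}-1},\, b_{m-2}+2,\, \ldots,\, b_2 + 2,\, (2)^{b_1-1},
\]
where I will emphasize that the ``$-1$'' on $b_m-1$ combines with the internal ``$+2$'' shift prescribed by Corollary~\ref{cor:posi-to-negative-cf} to produce exactly the leading ``$+1$'' shift that characterizes the HJ expansion of $G^+(k,s) = [b_m,\ldots,b_1]$, i.e.\ of $G^-(k,s)$. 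Concatenating the three blocks yields $F^-(k,t) = [[G^-(k,r),\,3k+4,\,G^-(k,s)]]$.

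The main obstacle is really just bookkeeping at the two junctions: one must verify that the parity of $\ell$ pairs $(3k+2,1)$ together as a single block of the conversion, and that the ``$-1$'' on $b_m-1$ interacts correctly with the internal ``$+2$'' shift so that the terminal block reproduces $G^-(k,s)$ with its special leading ``$+1$''. Both checks are elementary given the structural results already built up in Sections~7.2--7.3, so Theorem~\ref{thm:smallk-GM} should emerge essentially as a direct corollary of the three cited results.
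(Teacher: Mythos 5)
Your proposal is correct and follows essentially the same route as the paper: Proposition \ref{prop:presnake-relation} (3) to concatenate the regular continued fractions, Proposition \ref{prop:F-G-relation} to recognize the reversed blocks as $G^+(k,r)$ and $G^+(k,s)$, and Corollary \ref{cor:posi-to-negative-cf} to convert to the HJ expansion, with the same parity bookkeeping at the junction $(3k+2,1)$ producing the entry $3k+4$. The alignment of the blocks with $G^-(k,r)$ and $G^-(k,s)$, including the interaction of the ``$-1$'' on $b_m-1$ with the internal ``$+2$'' shift, is exactly how the paper's argument works.
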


For example, if $\dfrac{\ell}{2}$ is even and $\dfrac{m}{2}$ is odd, then we have
  \begin{align*}
   F^-(k,r)&=[[a_1+1, (2)^{a_2-1},\dots, (2)^{a_{\frac{\ell}{2}}-1}, a_{\frac{\ell}{2}}+k+2, \dots, (2)^{a_1-1}]],\\
F^-(k,s)&=[[b_1+1, (2)^{b_2-1},\dots, b_{\frac{m}{2}}+2, (2)^{b_{\frac{m}{2}}-k-1}, b_{\frac{m}{2}-1}+2, \dots, (2)^{b_1-1}]],\\
F^-(k,t)&=[[a_1+1, (2)^{a_2-1},\dots, (2)^{a_{\frac{\ell}{2}}+k-1}, a_{\frac{\ell}{2}}+2, \dots, (2)^{a_1-1}, 3k+4, \\
\quad & \qquad  b_1+1, (2)^{b_2-1},\dots, (2)^{b_{\frac{m}{2}-1}-1}, b_{\frac{m}{2}}-k+2, (2)^{b_{\frac{m}{2}}-1}, b_{\frac{m}{2}-1}+2, \dots, (2)^{b_1-1}]].
\end{align*}

By the above theorem and Theorem \ref{thm:uv-continued-fraction}, we have 
\[
\frac{m_{k,t}}{u_{k,t}^+}=\left[\left[ \frac{m_{k,r}}{v_{k,r}^-}, 3k+4, \frac{m_{k,s}}{v_{k,s}^-} \right]\right],
\]
where $(m_{k,r},m_{k,t},m_{k,s})$ is a $k$-GM triple and $u^+_{k,t}, v^-_{k,r}, v^-_{k,s}$ are these characteristic numbers.
Since $G^-(k,r)=F^-(k,r)$ holds if $k=0$, this theorem is a generalization of Proposition \ref{UZ}.

\subsection{Hilbert basis and HJ-continued fractions}
In this section, we recall some definition and notation of toric geometry, and we explain how the HJ-continued fraction induces resolution of singularities. For details of a toric variety and proofs of propositions and theorems, see \cite{CLS}.
We construct an affine toric variety determined by a polyhedral cone. For simplicity, we only deal with $2$-dimensional cones and toric surfaces.

Let $N$ be $\ZZ^2$ and $N_{\RR} = N \otimes_{\ZZ}\RR$, that is, $N_{\RR} \cong \RR^{2}$. 
Let $\mathbf{e}_1=(1,0),\mathbf{e}_2=(0,1)$ be the canonical basis of the vector space $N_{\RR}$. 
For some $v_1, v_2 \in N$, we define a \emph{rational strongly convex polyhedral cone} $\sigma$ as $\sigma =\RR_{\geq0}v_1 + \RR_{\geq0}v_2$, where $\RR_{\geq0}$ is the set of all non negative elements in $\RR$. We write $\sigma={\rm Cone}(v_1,v_2)$. The \emph{dimension} of a cone $\sigma$ is defined as the dimension of the vector space over $\RR$ generated by $(v_1, v_2)$. 

The dual of a lattice $N$ is defined as $M = N^{\vee} = \mathrm{Hom}_{\ZZ}(N, \ZZ)$, and it is denoted by $\langle n, m \rangle = m(n)$ for $n \in N$ and $m \in M$. For $M_{\RR}= M \otimes_{\ZZ} \RR$ , we will also denote by $\langle \  ,\   \rangle : M_{\RR} \times N_{\RR} \to \RR$ the natural pairing.
The dual of $\sigma$ is given by
$$
\sigma^{\vee} = \{u \in M_{\RR} \mid \langle u, v \rangle \geq 0 \text{ for all } v \in \sigma \}.
$$ 
We introduce a semigroup $S_{\sigma}$ and an affine toric variety $U_{\sigma}$ associated with the cone $\sigma$ as follows:
\begin{align*}
S_{\sigma} &= \sigma^{\vee} \cap M = \{u \in M \mid \langle u, v \rangle \geq 0 \text{ for all } v \in \sigma \}, \\
U_{\sigma} &= \mathrm{Spec}\left(\CC[S_{\sigma}]\right),
\end{align*}
where $\CC[S_{\sigma}]$ is the group ring generated by the semigroup $S_{\sigma}$, and $\mathrm{Spec}\left(\CC[S_{\sigma}]\right)$ denotes an affine variety with coordinate ring $\CC[S_{\sigma}]$.

\begin{remark}
  Let $r$, $a$ be positive integers that are relatively prime. We set $G=\frac{1}{r}(1,a)$ and $\sigma={\rm Cone}(\mathbf{e}_2,r\mathbf{e}_1-a\mathbf{e}_2)$. Let $N^{\prime}$ denote the sublattice generated by the ray generators of $\sigma$. Then we have $G\cong N/N^{\prime}$ and $\CC[S_{\sigma}] \cong \CC[x,y]^G$. Namely, $U_{\sigma}$ is isomorphic to $\CC^2/G$.
\end{remark}

\begin{definition}
  For a  rational strongly convex polyhedral cone $\sigma$, we define a {\it face} $\tau \subset \sigma$ as 
\[
\tau=\sigma \cap u^\perp=\{ v \in \sigma | \langle u,v \rangle =0 \}
\]
for some $u \in \sigma^{\vee}$.
\end{definition}

\begin{definition}
  A set $\Sigma$ of rational strongly convex polyhedral cones  is called a {\it fan} if it satisfies the following:
  \begin{itemize}\setlength{\leftskip}{-15pt}
    \item Each face of a cone in $\Sigma$ is also a cone in $\Sigma$.
    \item The intersection of two cones in $\Sigma$ is a face of each cone.
  \end{itemize}
\end{definition}

A toric variety $X_{\Sigma}$ is defined by naturally gluing affine toric varieties corresponding to each cone in the fan $\Sigma$.

\begin{definition}
   A rational strongly convex polyhedral cone $\sigma$ is {\it smooth} if its
minimal set of generators is a part of an integral basis of $N$. A fan is {\it  smooth} if every cone in the fan is smooth.
\end{definition}
\begin{proposition}[\cite{CLS}*{Theorem 1.3.12}]
 A cone $\sigma$ is smooth if and only if $U_{\sigma}$ is a smooth surface.
\end{proposition}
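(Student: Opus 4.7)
The plan is to reduce both directions to a local analysis at the unique torus-fixed point $x_{\sigma} \in U_{\sigma}$, which corresponds to the maximal ideal $\mathfrak{m}_{\sigma} = \bigoplus_{u \in S_{\sigma} \setminus \{0\}} \CC \cdot \chi^u$ of $\CC[S_{\sigma}]$. Because the torus $T_N$ acts on $U_{\sigma}$ with $x_{\sigma}$ in the closure of every orbit, smoothness of $U_{\sigma}$ is equivalent to regularity of the local ring $\mathcal{O}_{U_{\sigma}, x_{\sigma}}$, i.e.\ to $\dim_{\CC}\mathfrak{m}_{\sigma}/\mathfrak{m}_{\sigma}^2 = 2$.

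For $(\Rightarrow)$, I start from the assumption that the minimal generators $v_1, v_2$ of $\sigma$ are part of an integral basis of $N$; since $\dim \sigma = 2 = \mathrm{rank}\, N$, they already form a $\ZZ$-basis of $N$. Letting $u_1, u_2 \in M$ be the dual basis, a direct verification gives $\sigma^{\vee} = \RR_{\geq 0} u_1 + \RR_{\geq 0} u_2$ and $S_{\sigma} = \ZZ_{\geq 0} u_1 \oplus \ZZ_{\geq 0} u_2$, hence $\CC[S_{\sigma}] \cong \CC[x_1, x_2]$ and $U_{\sigma} \cong \CC^2$ is smooth.

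For $(\Leftarrow)$, I will identify the Zariski cotangent space as $\mathfrak{m}_{\sigma}/\mathfrak{m}_{\sigma}^2 \cong \bigoplus_{u \in \mathcal{H}} \CC \cdot \overline{\chi^u}$, where $\mathcal{H} \subset S_{\sigma}$ is the Hilbert basis, i.e.\ the unique minimal set of semigroup generators of $S_{\sigma}$. The key observations are that any $u \in S_{\sigma} \setminus (\mathcal{H} \cup \{0\})$ splits as $u = u' + u''$ with $u', u'' \in S_{\sigma} \setminus \{0\}$, placing $\chi^u$ in $\mathfrak{m}_{\sigma}^2$, while the classes of $\chi^u$ for $u \in \mathcal{H}$ are linearly independent modulo $\mathfrak{m}_{\sigma}^2$ by minimality of $\mathcal{H}$. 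Smoothness therefore forces $|\mathcal{H}| = 2$, say $\mathcal{H} = \{u_1, u_2\}$.

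The main obstacle will be to promote ``$|\mathcal{H}| = 2$'' to ``$\{u_1, u_2\}$ is a $\ZZ$-basis of $M$''. The two vectors are $\RR$-linearly independent because $\sigma^{\vee}$ is $2$-dimensional. For integrality, I will use that $S_{\sigma}$ generates $M$ as a group (a standard consequence of $\sigma$ being strongly convex with $\sigma^{\vee}$ full-dimensional) and that $\mathcal{H}$ generates $S_{\sigma}$ as a semigroup, so $\ZZ \langle u_1, u_2 \rangle = M$. Thus $(u_1, u_2)$ is a $\ZZ$-basis of $M$, its dual basis is a $\ZZ$-basis of $N$, and this dual basis consists precisely of the ray generators of $\sigma$; hence $\sigma$ is smooth.
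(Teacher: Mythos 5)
The paper does not actually prove this proposition---it is quoted verbatim from \cite{CLS}*{Theorem 1.3.12}---and your argument is essentially the standard proof given in that reference: identify the Zariski cotangent space at the torus-fixed point $x_\sigma$ with the span of the characters indexed by the Hilbert basis of $S_\sigma$, deduce from smoothness that there are exactly two irreducible generators $u_1,u_2$, and then use that $S_\sigma$ generates $M$ as a group to conclude $\{u_1,u_2\}$ is a $\ZZ$-basis, whose dual basis gives the ray generators of $\sigma=(\sigma^\vee)^\vee$. The only implicit restriction is that $\sigma$ is two-dimensional (otherwise the torus-fixed point you work at does not exist), but in the lower-dimensional cases both sides of the equivalence hold trivially (a primitive vector in $\ZZ^2$ is always part of a basis, and $U_\sigma\cong \CC\times\CC^*$ or $(\CC^*)^2$ is smooth), so your proof is correct.
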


Let us explain a relation between the resolution of toric surfaces and the HJ-continued fractions. 

\begin{theorem}[\cite{CLS}*{Theorems 10.2.3, 10.2.5}]\label{thm:Hilbert basis}
  Let $\sigma$ be ${\rm Cone}(\mathbf{e}_2,r\mathbf{e}_1-a\mathbf{e}_2)$, and let $\dfrac{r}{a}=[[a_1,\dots, a_s]]$. Let $u_0, u_1,\dots, u_{s+1}$ be vectors which satisfy 
  $$
  u_{i-1}+u_{i+1} = a_iu_i, \qquad \text{for} \  1\leq i \leq s,
  $$
   where $u_0=\mathbf{e}_2$ and $u_{s+1}=r\mathbf{e}_1-a\mathbf{e}_2.$
Then the cones $\sigma_i={\rm Cone}(u_{i-1},u_i)$ have the following properties:
\begin{itemize}\setlength{\leftskip}{-15pt}
  \item[(i)] Each $\sigma_i$ is a smooth cone and $\sigma_1 \cup \dots \cup \sigma_{s+1} = \sigma$.
  \item[(ii)] For each $i$, $\sigma_{i} \cap \sigma_{i+1}={\rm Cone}(u_i)$.
  \item[(iii)] Let $\Sigma$  be a fan consisting of the $\sigma_i$'s and their faces. Then the toric morphism $\phi: X_{\Sigma} \to U_{\sigma}$ is a resolution of singularities.
  \item[(iv)]  Let $E_i$ be an exceptional curve corresponding to a one-dimensional cone ${\rm Cone}(u_i)$ for $1 \leq i \leq s$. Then its self-intersection number is $-b_i$.
\end{itemize}
  
\end{theorem}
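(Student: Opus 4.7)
The plan is to treat the four assertions in sequence, with the recursion $u_{i+1} = a_i u_i - u_{i-1}$ as the engine. The key algebraic observation is that this recursion implies $\det(u_i, u_{i+1}) = -\det(u_{i-1}, u_i)$, so the absolute value of the determinant of a consecutive pair is an invariant of the sequence. Since $u_0 = \mathbf{e}_2$ and the definition of the HJ-continued fraction forces $u_1$ to be the primitive lattice vector adjacent to $u_0$ inside $\sigma$ with $|\det(u_0, u_1)| = 1$, an immediate induction gives that every pair $(u_{i-1}, u_i)$ is a $\ZZ$-basis of $N$. This is exactly the smoothness claim in (i).

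For the geometric half of (i) and for (ii), I would write $u_i = (\alpha_i, \beta_i)$ in coordinates and prove that the rays $\RR_{\geq 0} u_i$ sweep monotonically across $\sigma$ as $i$ runs from $0$ to $s+1$, i.e. each $u_{i+1}$ lies strictly on the opposite side of $\RR_{\geq 0} u_i$ from $u_{i-1}$ while remaining in $\sigma$. The input here is the hypothesis $a_i \geq 2$, which forces the recursion to move strictly outward at each step, together with the boundary condition $u_{s+1} = r\mathbf{e}_1 - a\mathbf{e}_2$, which anchors the sweep at the opposite edge of $\sigma$. Monotonicity of the sweep yields both $\bigcup_i \sigma_i = \sigma$ and $\sigma_i \cap \sigma_{i+1} = {\rm Cone}(u_i)$.

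Assertion (iii) is then formal. The toric variety $X_\Sigma$ is smooth because every maximal cone of $\Sigma$ is smooth by (i), and the toric morphism $\phi\colon X_\Sigma \to U_\sigma$ induced by the refinement $\Sigma$ of $\{\sigma\}$ is proper, birational, and restricts to an isomorphism over the open torus $U_\sigma \setminus \{0\}$, so $\phi$ is a resolution of singularities. For (iv) I would invoke the standard self-intersection formula on a smooth toric surface: if the two rays adjacent to ${\rm Cone}(u_i)$ inside $\Sigma$ have primitive generators $u_{i-1}$ and $u_{i+1}$, then $E_i^2 = -b$, where $b$ is the unique integer satisfying $u_{i-1} + u_{i+1} = b\, u_i$. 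Our recursion reads $u_{i-1} + u_{i+1} = a_i u_i$, so $E_i^2 = -a_i$, as claimed.

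The main obstacle I anticipate is the monotonicity step in the second paragraph. One must handle the boundary case carefully (in particular $u_0 = \mathbf{e}_2$ has zero first coordinate, so an argument purely in terms of slopes has to be started by hand) and must also prove that the sequence $u_i$ never leaves $\sigma$ before reaching $u_{s+1}$. Both points follow from chaining the inequalities $a_i \geq 2$ with the fact that the HJ-continued fraction expansion $[[a_1,\ldots,a_s]] = r/a$ terminates precisely when $u_{s+1} = r\mathbf{e}_1 - a\mathbf{e}_2$, but producing a clean induction hypothesis that simultaneously controls the $\ZZ$-basis property, the position in $\sigma$, and the determinant sign is the delicate bookkeeping that does the real work.
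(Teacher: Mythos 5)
First, a point of comparison: the paper does not prove this statement at all — it is quoted verbatim from \cite{CLS}*{Theorems 10.2.3, 10.2.5}, and the paper explicitly defers all proofs in this section to that reference. So your proposal cannot be measured against an in-paper argument; it can only be judged on its own. Your overall strategy (determinant invariance of consecutive pairs for smoothness, a monotone angular sweep for (i)--(ii), formal toric facts for (iii), and the relation $u_{i-1}+u_{i+1}=a_iu_i$ read against the standard self-intersection formula for (iv)) is exactly the standard textbook route, and you are right to silently correct the paper's typo in (iv): the self-intersection numbers are $-a_i$, there are no $b_i$ in this statement.

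That said, as written the proposal has genuine gaps. (1) The key algebraic identity has the wrong sign: from $u_{i+1}=a_iu_i-u_{i-1}$ one gets $\det(u_i,u_{i+1})=\det(u_{i-1},u_i)$, i.e.\ the determinant is \emph{preserved}, not negated. This is not cosmetic: the constant value $\det(u_{i-1},u_i)=\det(\mathbf{e}_2,\mathbf{e}_1)=-1$ is precisely the consistent orientation that makes the rays rotate in one direction; if the sign really alternated as you claim, the sweep would not be monotone and (i)--(ii) would fail. (2) The statement only fixes $u_0$ and $u_{s+1}$, so your assertion that ``the definition of the HJ-continued fraction forces $u_1$'' to be primitive with $|\det(u_0,u_1)|=1$ is not established. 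You need the continuant identity $u_{i+1}=P_iu_1-Q_iu_0$, where $P_i,Q_i$ are the numerators and denominators of $[[a_1,\dots,a_i]]$; since $P_s=r$, $Q_s=a$, the boundary condition $u_{s+1}=r\mathbf{e}_1-a\mathbf{e}_2$ forces $u_1=\mathbf{e}_1$, and only then does the unimodularity induction start. (3) The central geometric step — that every $u_i$ lies in $\sigma$ and the rays $\RR_{\geq0}u_i$ are in strict angular order, so that the $\sigma_i$ tile $\sigma$ with $\sigma_i\cap\sigma_{i+1}=\mathrm{Cone}(u_i)$ — is only announced, and you yourself flag it as the real work. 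It is not automatic from local orientation; the clean way is again the continuant formula, which gives $u_{i+1}=(P_i,-Q_i)$ and reduces everything to the strict monotonicity $P_1/Q_1>P_2/Q_2>\cdots>P_s/Q_s=r/a$ of HJ convergents (here $a_i\geq 2$ enters). Parts (iii) and (iv) are fine modulo the standard toric facts you invoke, though ``isomorphism over the open torus $U_\sigma\setminus\{0\}$'' conflates the dense torus with the complement of the singular point; what you need is that the subdivision only refines the interior of $\sigma$, hence $\phi$ is an isomorphism away from the torus-fixed point.
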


This theorem means that calculating the HJ-continued fraction will induce a resolution of singularity.

\begin{definition}
Let  $\mathrm{Hlb}_{N}(\sigma)$ be as follows:
\[
  \mathrm{Hlb}_{N}(\sigma)=\left\{ n \in \sigma \cap (N \backslash \{ 0 \}) \left|
\begin{array}{c}
 \text{$n$ can not be expressed as}\\  
 \text{a sum of two other vectors}\\ 
 \text{belonging to $\sigma \cap (N \backslash \{ 0 \})$}
\end{array}
\right.\right\}.
\]
The set $\mathrm{Hlb}_{N}(\sigma)$ is called the \emph{Hilbert basis} of $\sigma$ with reference to the lattice $N$.
\end{definition}

\begin{remark}\label{rem:hilbertbasis}
  In the above setting, the following are well known.
  \begin{itemize}\setlength{\leftskip}{-15pt}
    \item The set $\mathrm{Hlb}_{N}(\sigma)$ is $\{ u_0, \dots, u_{s+1} \}$. 
    \item The dual HJ-continued fraction $r/(r-a)=[[b_1,\dots b_t]]$ gives vectors $v_0, \dots,v_{t+1}$ in $M$ similarly to Theorem \ref{thm:Hilbert basis}. Moreover, $\mathrm{Hlb}_{M}(\sigma^{\vee})=\{ v_0, \dots,v_{t+1} \}$.
  \end{itemize}
\end{remark}

This remark indicates that the generators of the invariant ring $\CC[x,y]^G$ are obtained by the HJ-continued fraction of $\displaystyle \frac{r}{r-a}$. In other words, for an irreducible fraction $t\in(0,1]$, we have a $k$-GM number $m_t$ and its characteristic numbers $u_t^+$ and $u_t^-$. Then the HJ-continued fraction of $\displaystyle \frac{m_t}{u_t^+}$ gives a minimal resolution of the $k$-GM quotient singularity $\CC^2/G$, and the HJ-continued fraction of $\displaystyle \frac{m_t}{u_t^-}$ gives the basis of the invariant ring $\CC[x,y]^G$.
Especially, if $k=0$ and $\displaystyle \frac{m_t}{u_t^+}=[[b_1,\dots, b_s]]$, then we have $\displaystyle \frac{m_t}{u_t^-}=[[b_s,\dots,b_1]]$.


Let $f$ and $g$ be operations that give the inverse order of the HJ-continued fraction and the regular continued fraction, respectively. Let us denote by $h$ the operation that gives the dual HJ-continued fraction.
By Proposition  \ref{prop:F-G-relation}, Proposition \ref{prop:tto1/t}, Proposition \ref{Markovquotient} and Remark \ref{rem:hilbertbasis}, we have the following relations.

\[
\scalebox{1}{
\xymatrix@C+4pc@R+1pc{
F^+(t)=\frac{m_t}{u^+_t}\ar@{<->}[r]^-{f}\ar@{<->}[d]_-{g}\ar@{<->}[rd]& F^+\left(\frac{1}{t}\right)=\frac{m_t}{v^+_t}\ar@{<->}[d]^-{g}\\
G^+(t)=\frac{m_t}{v^-_t}\ar@{<->}[r]_-{f} \ar@{<->}[ur]^(0.52){h}& 
G^+\left(\frac{1}{t}\right)=\frac{m_t}{u^-_t}
}
}
\]

\subsection*{Acknowledgements}
The authors would like to express their gratitude to Tsukasa Ishibashi for providing insightful discussions, for checking the contents of Section 5.4 and for providing appropriate references. They also thank Shunsuke Kano for sharing valuable discussions. Special thanks to Toshiki Matsusaka for inviting the first author to Kyudai Daisugaku Seminar, providing a platform for lively discussions, and offering helpful comments. Gratitude is extended to Kazuhiro Hikami for providing the initial comments that inspired this research. Thanks are also due to Osamu Iyama for providing valuable advice on the direction of the research.

\subsection*{Funding}
The first author is supported by JSPS KAKENHI Grant Number JP22KJ0731, and the second author is partially supported by JSPS KAKENHI Grant Number JP23KJ1938, JP23K12971.

\subsection*{Data Availability}
Our manuscript has no associated data.

\section*{Declarations}
\subsection*{Conflict of interest}
On behalf of all authors, the corresponding author states that there is no Conflict of interest.

\bibliography{myrefs}
\end{document}